\newtheoremstyle{step}
 {}% Space above
 {}% Space below
 {}% Body font
 {}% Indent amount
 {\bf}% Theorem head font
 {:}% Punctuation after theorem head
 {0.5em}% Space after theorem head
 {}% Theorem head spec (can be left empty, meaning `normal')
\newcounter{steps}
\theoremstyle{step}
\newtheorem{step}[steps]{Step}
\newtheoremstyle{thm}
 {}%{1em}% Space above
 {}%{3pt}% Space below
 {\itshape}% Body font
 {}% Indent amount
 {\bf}% Theorem head font
 {. ---}% Punctuation after theorem head
 {0.5em}% Space after theorem head
 {}% Theorem head spec (can be left empty, meaning `normal')
\newtheoremstyle{dfn}
 {}%{1em}% Space above
 {}%{3pt}% Space below
 {}% Body font
 {}% Indent amount
 {\bf}% Theorem head font
 {. {---}}% Punctuation after theorem head
 {0.5em}% Space after theorem head
 {}% Theorem head spec (can be left empty, meaning `normal')
\theoremstyle{thm}
\newtheorem{thm}[subsubsection]{Theorem}
\newtheorem{lem}[subsubsection]{Lemma}
\newtheorem*{lem*}{Lemma}
\newtheorem{cor}[subsubsection]{Corollary}
\newtheorem*{cor*}{Corollary}
\newtheorem{prop}[subsubsection]{Proposition}
\newtheorem*{prop*}{Proposition}
\newtheorem*{conj*}{Conjecture}
\newtheorem*{thm*}{Theorem}
\theoremstyle{dfn}
\newtheorem{dfn}[subsubsection]{Definition}
\newtheorem*{dfn*}{Definition}
\newtheorem*{ex*}{Example}
\newtheorem{rem}[subsubsection]{Remark}
\newtheorem*{rem*}{Remark}
\newtheorem*{cl}{Claim}
\renewcommand{\qedsymbol}{$\blacksquare$}
\newtheorem{prob}[subsubsection]{Question}
\newcommand{\shom}{\mathop{\mc{H}om}\nolimits}
\newenvironment{meta}{
\noindent \color{red}
\sffamily[}{\upshape]}
\newsavebox{\circlebox}
\savebox{\circlebox}{\fontencoding{OMS}\selectfont\char13}
\newlength{\circleboxwdht}
\newcommand{\ccirc}[1]{
  \setlength{\circleboxwdht}{\wd\circlebox}
  \addtolength{\circleboxwdht}{\dp\circlebox}
  \raisebox{0.2\dp\circlebox}{
    \parbox[][\circleboxwdht][c]{\wd\circlebox}
    {\centering\scriptsize #1}}
  \llap{\usebox{\circlebox}}
}
\newcommand{\fsch}[1]{{\ms{#1}}}
\newcommand{\st}[1]{\mf{#1}}
\renewcommand{\H}{\ms{H}}
\newcommand{\cH}{{}^\mr{c}\!\ms{H}}
\newcommand{\dcH}{{}^\mr{dc}\!\ms{H}}
\newcommand{\Chtpr}[2]{\overline{\mr{Cht}^{#1}_{#2}}^{\prime}}
\newcommand{\Chtprop}[2]{\overline{\mr{Cht}^{#1}_{#2}}}
\newcommand{\Ch}{\mr{Cht}}
\newcommand{\Gr}{\mr{Gr}}
\newcommand{\IH}{\mc{IH}}
\newcommand{\WI}{W^{\mr{isoc}}}
\newcommand{\cadm}{\mf{S}_{\mr{adm}}}
\newcommand{\base}{\blacktriangle}
\newcommand{\algsp}[1]{\mc{#1}}
\newcommand{\twolim}{\underleftrightarrow{\lim}}
\newcommand{\cc}[1]{(\!(#1)\!)}
\newcommand{\dd}[1]{[\![#1]\!]}
\newcommand{\eqtr}{\overset{\mr{Tr}}{=}}
\newcommand{\mr}[1]{\mathrm{#1}}
\newcommand{\ms}[1]{\mathscr{#1}}
\newcommand{\mc}[1]{\mathcal{#1}}
\newcommand{\mb}[1]{\mathbb{#1}}
\newcommand{\mf}[1]{\mathfrak{#1}}
\newcommand{\Ddag}[1]{\mathscr{D}^\dag_{#1}}
\newcommand{\DdagQ}[1]{\mathscr{D}^\dag_{{#1},\mathbb{Q}}}
\newcommand{\Dcomp}[2]{\widehat{\mathscr{D}}^{(#1)}_{#2}}
\newcommand{\Dmod}[2]{{\mathscr{D}}^{(#1)}_{#2}}
\newcommand{\invlim}{\mathop{\underleftarrow{\mathrm{lim}}}}
\newcommand{\indlim}{\mathop{\underrightarrow{\mathrm{lim}}}}
\newcommand{\stack}{good stack\xspace}
\newcommand{\stacks}{good stacks\xspace}
\newcommand{\presentation}{good presentation\xspace}
\newcounter{fundprop}
\newcounter{indcat}
\begin{document}
\title[Langlands correspondence for isocrystals]
{Langlands correspondence for isocrystals and the existence of
crystalline companions for curves}
\author{Tomoyuki Abe}
\address{Kavli Institute for the Physics and Mathematics of the Universe
(WPI), University of Tokyo,
5-1-5 Kashiwanoha, Kashiwa, Chiba, 277-8583, Japan}
\email{tomoyuki.abe@ipmu.jp}
%\date{}
\thanks{This work is supported by Grant-in-Aid for Research Activity
Start-up 23840006, Grant-in-Aid for Young Scientists (B) 25800004, and
Grant-in-Aid for Young Scientists (A) 16H05993.}
\subjclass[2010]{Primary 14F30, 11R39; Secondary 11S37}
\begin{abstract}
 In this paper, we show the Langlands correspondence for isocrystals on
 curves, which asserts the existence of crystalline companions in the
 case of curves. For the proof, we generalize the theory of arithmetic
 $\mathscr{D}$-modules to algebraic stacks whose diagonal morphisms are
 finite. Finally, combining with methods of Deligne and Drinfeld, we show
 the existence of an ``$\ell$-adic companion'' for any isocrystal on a
 smooth scheme of any dimension under the assumption of a Bertini type
 conjecture.
\end{abstract}

\maketitle

\setcounter{tocdepth}{2}
\tableofcontents

\section*{Introduction}
The Weil conjectures were finally proven by P. Deligne in the 70's and
culminated in the theory of weights for $\ell$-adic cohomology in his
celebrated paper \cite{De}.
In that paper, Deligne made a following conjecture on the existence of
``compatible systems'':

\begin{conj*}[{\cite[1.2.10]{De}}]
 Soient $X$ normal connexe de type fini sur $\mb{F}_p$, et $\ms{F}$ un
 faisceau lisse irr\'{e}ducible dont le d\'{e}terminant est d\'{e}fini
 par un caract\`{e}re d'ordre fini du groupe fondamental.

 (ii) Il existe un corps de nombres $E\subset\overline{\mb{Q}}_{\ell}$
 tel que le polyn\^{o}me $\det(1- F_xt,\ms{F})$ pour $x\in |X|$, soit
 \`{a} coefficients dans $E$.

 (v) Pour $E$ convenable (peut-\^{e}tre plus grand qu'en (ii)), et
 chaque place non archim\'{e}dienne $\lambda$ premi\`{e}re \`{a} $p$, il
 existe un $E_{\lambda}$-faisceau compatible \`{a} $\ms{F}$ (m\^{e}mes
 valeurs propres des Frobenius).

 (vi) Pour $\lambda$ divisant $p$, on esp\`{e}re des petits camarades
 cristallins.
\end{conj*}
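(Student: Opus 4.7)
The plan is to deduce (v) and (vi) from a crystalline analogue of the Langlands correspondence for $\mathrm{GL}_n$ on curves, and then propagate to higher dimension by a Deligne--Drinfeld slicing argument.

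For the curve case, I would follow L.\ Lafforgue's proof of the $\ell$-adic Langlands correspondence for $\mathrm{GL}_n$ over function fields, replacing $\ell$-adic coefficients throughout with overconvergent $F$-isocrystals, viewed as holonomic arithmetic $\ms{D}^\dag$-modules with Frobenius structure. The geometric engine is the moduli stack of Drinfeld shtukas: one decomposes the cohomology of compactified shtukas under the action of Hecke operators and reads off, on one side, cuspidal automorphic representations and, on the other side, irreducible $F$-isocrystals whose Frobenius characteristic polynomials at closed points match Hecke eigenvalues. Since Lafforgue's bijection is compatible with the $\ell$-adic Langlands correspondence, this produces, for any given irreducible $\ell$-adic companion $\ms{F}$, an irreducible overconvergent $F$-isocrystal on the curve whose Frobenius traces at all closed points agree with those of $\ms{F}$, which is precisely assertion (vi) on curves.

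The main obstacle is geometric. Moduli of shtukas are algebraic stacks with finite diagonal but not schemes, and prior to this paper no theory of holonomic arithmetic $\ms{D}^\dag$-modules was available on such stacks. I would therefore first globalize the Berthelot--Caro theory to algebraic stacks whose diagonal is finite, establishing a six-functor formalism compatible with Frobenius and preserving holonomicity. The delicate steps are the construction of $f_!$ for stacky morphisms, proper base change for Hecke-type correspondences, and handling compactifications of shtuka spaces at infinite level; the ``finite diagonal'' hypothesis is precisely what makes the standard descent and cohomological-dimension arguments go through, while ruling it out would force one to confront the unbounded $t$-structures typical of Artin stacks.

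For (v) on a smooth $X/\mb{F}_p$ of arbitrary dimension, I would follow Deligne--Drinfeld: given an irreducible lisse $\ms{F}$ with finite-order determinant, produce many smooth curves $C\hookrightarrow X$ on which $\ms{F}|_C$ stays irreducible (Bertini together with monodromy genericity), apply the curve case to obtain candidate companions on each $C$, and glue them using tannakian rigidity of $\pi_1(X)$ together with Chebotarev density. The difficulty is arranging the curves so that the companions patch consistently on overlaps and so that the resulting object extends across a codimension-two exceptional locus; Drinfeld's Hilbert-irreducibility refinements plus a ramification-control argument handle both of these.
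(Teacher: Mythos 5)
Your high-level plan is the one the paper follows, and you correctly identify the construction of a six-functor formalism for arithmetic $\ms{D}^\dag$-modules on algebraic stacks with finite diagonal as a central obstacle. But there is a second, equally load-bearing technical input that you leave unaddressed, and without it your key sentence --- that running Lafforgue's argument in the crystalline setting ``produces\dots an irreducible overconvergent $F$-isocrystal on the curve whose Frobenius traces at all closed points agree with those of $\ms{F}$'' --- is unjustified. ``Replacing $\ell$-adic coefficients with overconvergent $F$-isocrystals'' is not a mechanical substitution: at several crucial points Lafforgue's proof needs explicit computations of traces of Hecke correspondences and partial Frobenii on the cohomology of moduli of shtukas, and these computations are arithmetic, not formal. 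The paper resolves this by proving an $\ell$-independence theorem for the trace of a correspondence acting on compactly supported cohomology of a compactifiable admissible stack (Theorem \ref{indepofltrace}, the analogue of Kato--Saito's result), and using it to \emph{import} Lafforgue's $\ell$-adic computations rather than redoing them. That theorem in turn requires constructing an entire apparatus that did not previously exist in the theory of arithmetic $\ms{D}$-modules: a trace-map formalism in the style of SGA 4 (\S\ref{consttracsubsec}), a cycle-class map factoring through Chow groups (Lemma \ref{chowgroupfacto}), and the constructible t-structure (\S\ref{constt-struct}) that these constructions rely on. Your proposal silently assumes all of this.

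Two smaller points. First, the paper also rests on the $p$-adic product formula for epsilon factors (\cite{AM}) and on Deligne's \emph{principe de r\'{e}currence} carried out in the $p$-adic setting (\cite{ALang}); these reduce the problem precisely to producing an isocrystal from a cuspidal automorphic representation, and are genuine prior inputs, not consequences of ``following Lafforgue.'' Second, your slicing step for higher dimension should be stated more carefully: what the paper actually proves in Theorem \ref{convpcc} is the \emph{converse} direction --- given a crystalline companion on a smooth scheme, produce the $\ell$-adic companions for all $\ell\neq p$ --- and it explicitly leaves the existence of crystalline companions in dimension $>1$ open (see the remark following Theorem \ref{convpcc}). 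Your phrase ``given an irreducible lisse $\ms{F}$\dots apply the curve case to obtain candidate companions on each $C$'' conflates the two directions; the delicate point is that the object one needs to glue is on the crystalline side, where no analogue of Drinfeld's gluing lemma is available.
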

Part (vi) is written vaguely because a good theory of $p$-adic
cohomology was not available at the time Deligne conjectured it.
R. Crew made this conjecture more precise in \cite[4.13]{Cr} after
P. Berthelot's foundational works in $p$-adic cohomology theory.
This conjecture have been one of the driving forces to develop a
$p$-adic cohomology theory over fields of positive characteristic
parallel to the $\ell$-adic cohomology theory ({\it
e.g.}\ introduction of \cite{Ch}).

When $X$ is a curve, all parts of the the conjecture except for (vi) are
consequences of the Langlands correspondence, which was proven by
V. Drinfeld in the rank 2 case and by L. Lafforgue in the higher rank
case. Moreover, Deligne and Drinfeld proved all parts of the conjecture
except for (vi) for any smooth scheme $X$ as a consequence of Langlands
correspondence. In this paper, we prove part (vi) of the conjecture when
$X$ is a curve.
In fact, we prove a stronger result: a correspondence between
irreducible overconvergent $F$-isocrystals with finite determinant on an
open dense subscheme of $X$ and cuspidal automorphic representations of
the function field of $X$ with finite central character (see Theorem
\ref{main}). Finally, in Theorem \ref{convpcc}, we prove the converse of
Deligne's conjecture when $X$ is smooth using the techniques of
Deligne and Drinfeld in \cite{EK} and \cite{Dr} assuming a Bertini type
conjecture \ref{Bertiniconj}: for any overconvergent
$F$-isocrystal over a smooth scheme, there exists an $\ell$-adic
companion for any $\ell\neq p$.

Our strategy of proof is similar to the $\ell$-adic case.
First, we apply the ``product formula for epsilon factors'', which was
proven in the $p$-adic setting by the author together with A. Marmora
in \cite{AM}. By using Deligne's ``{\it principe de r\'{e}currence}'',
the product formula for epsilon factors reduces one to associating an
isocrystal to a cuspidal automorphic representation
(cf.\ \cite{ALang}).
Finally, we use the moduli spaces of ``shtukas'' to establish the
Langlands correspondence for isocrystals as was done by Drinfeld and
Lafforgue in the $\ell$-adic case.
In order to apply the methods of Drinfeld and Lafforgue, we construct a
six functor formalism for suitable $p$-adic cohomology theory for
certain algebraic stacks.
\medskip

Before explaining our construction, let us review the history of
attempts to construct a six functor formalism in the $p$-adic setting.
For more detailed overview of the history, we refer the reader to
\cite{I}, \cite{Kesurv}.
The first $p$-adic cohomology defined for an arbitrary separated scheme
of finite type over a perfect field of characteristic $p$ was proposed by
Berthelot around the 80's (called rigid cohomology).
He also defined a theory of coefficients for rigid cohomology, called
overconvergent $F$-isocrystals: these can be seen as a $p$-adic analogue
of vector bundles with an integrable connection.
One can see from this analogy that it is not reasonable to expect a six
functor formalism in the style of
A. Grothendieck in the framework of overconvergent $F$-isocrystals.
In order to remedy this, and in the analogy with the complex situation,
Berthelot introduced the theory of arithmetic $\ms{D}$-modules. We refer
to \cite{BerInt} for a beautiful survey by the founder himself.
In the $p$-adic case, the theory is very complicated since we need to
deal with differential operators
of infinite order unlike the complex situation.
As a result, many of the foundational properties had been left as
conjectures.
Among these conjectures, the most important one concerns the
preservation of finiteness properties of the arithmetic $\ms{D}$-modules
under various cohomological operations.
A big step toward this problem was the introduction of
overholonomic modules by D. Caro, which potentially bypasses Berthelot's
original strategy to construct the six functor formalism.
His work was successful in proving stability for most of the
standard cohomological operations, but the finiteness of
overconvergent $F$-isocrystals was still unresolved. A
breakthrough was achieved by K. S. Kedlaya in his resolution of Shiho's
conjecture, or the proof of the semistable reduction theorem \cite{KeSS}.
This extremely powerful theorem enabled us to answer many tough
questions in the theory of arithmetic $\ms{D}$-modules: a finiteness
result by Caro and N. Tsuzuki (cf.\ \cite{KT}), an analogue of Weil II
by Caro and the author (cf.\ \cite{AC}), and many more.
Even though we do not explicitly use Kedlaya's
theorem in the proof of the Langlands correspondence for isocrystals,
the main theorem of this paper can be seen as another application of
Kedlaya's result.
In previous works (\cite{KT, AC} {\it etc.}), Kedlaya's result was used
to develop a theory of arithmetic $\ms{D}$-modules for ``realizable
schemes''. We refer to \S\ref{revDmodtheory} for detailed overview.
In particular, quasi-projective schemes are included
in this framework. However, to construct the isocrystals corresponding
to cuspidal automorphic representations the category of quasi-projective
schemes is too restricted. A large part of this paper is devoted to
construct a theory of arithmetic $\ms{D}$-modules for ``admissible
stacks''. In particular, we end our search, since Monsky and Washnitzer,
for a $p$-adic six functor formalism for separated schemes
of finite type over a perfect field.

Our construction of a six functor formalism is more or less formal: making
full use of the existence of the formalism in local situation,
we glue. Even though we do not axiomatize, it can be carried
out for any cohomology theory over a field admitting a reasonable six
functor formalism locally. First, let us explain the construction in the
case of schemes. As we have already mentioned, for ``realizable schemes''
({\it e.g.}\ quasi-projective schemes)
we already have the formalism thanks to works of Caro and others.
For a realizable scheme $X$, we denote by
$D^{\mr{b}}_{\mr{hol}}(X)$ the associated triangulated category with
t-structure, and by $\mr{Hol}(X)$ its heart. The category $\mr{Hol}(X)$
is analogous to the category of perverse sheaves in the philosophy of the
Riemann-Hilbert correspondence.
When $X$ is a scheme of finite type over $k$, we are able to take
a finite open covering $\{U_i\}$ by realizable schemes, and define
$\mr{Hol}(X)$ by gluing $\bigl\{\mr{Hol}(U_i)\bigr\}$.
The first difficulty is to define the
derived category. A starting point of our construction is an analogy with
Beilinson's equivalence proven in \cite{AC2}
\begin{equation*}
 D^{\mr{b}}(\mr{Hol}(X))\xrightarrow{\sim} D^{\mr{b}}_{\mr{hol}}(X),
\end{equation*}
where $X$ is a realizable scheme.
This equivalence suggests to define
the derived category naively by $D^{\mr{b}}(\mr{Hol}(X))$ for general
scheme $X$. The next problem is to construct the cohomological
functors. To do this, we construct cohomological functors for finite
morphisms and projections separately and combine these for the
general case.
Let us explain the method in the easier case where $f\colon X\rightarrow
Y$ is a finite morphism between realizable schemes.
In that case the push-forward $f_+$ is an exact functor, so we can
define $f_+\colon\mr{Hol}(X)\rightarrow\mr{Hol}(Y)$  by gluing, and we
consider the associated derived functor to get the functor between
derived categories. The
definition of $f^!$ is more technical. When the morphism $f$ is between
realizable schemes, by using some general non-sense, we are able to show
that $f^!$ is the right derived functor of $\H^0f^!$ using the fact that
$(f_+,f^!)$ is an adjoint pair and $f_+$ is exact. Thus, for a general
finite morphism $f$, we define $\H^0f^!$ by gluing, and define the
functor between derived categories by taking the right derived
functor. Since the category $\mr{Hol}(X)$ does {\em not} possess enough
injectives, we need techniques of Ind-categories to overcome this
deficit. Even though the construction is more involved, we can define
the cohomological functors for projections
$X\times Y\rightarrow Y$ using similar ideas. Since any
morphism between separated schemes of finite type can be factored into
a closed immersion and a projection, we may define the cohomological
operations in complete generality by composition.

For an algebraic stack $\st{X}$, we use a simplicial technique to
construct the derived category $D^{\mr{b}}_{\mr{hol}}(\st{X})$:
take a presentation $X\rightarrow\st{X}$, and we consider the
simplicial algebraic space $X_\bullet:=\mr{cosk}_0(X\rightarrow
\st{X})$. The derived category of $\st{X}$ should coincide with that
of $X_\bullet$ with suitable conditions on the cohomology.
Since $\ms{D}$-modules behave like perverse sheaves, there are minor
differences with the analogous construction in the $\ell$-adic setting
(cf.\ \cite{OL}). However, the construction is mostly parallel.
Now, we would like to construct the cohomological operations for
algebraic stacks in a manner similarly to that for schemes described
above. However, in general
morphisms between algebraic stacks cannot be written as a composition of
finite morphisms and projections.
In this paper, we restrict our attention to admissible stacks, {\it
i.e.}\ algebraic stacks whose diagonal morphisms are finite.

This formalism is especially used to show the $\ell$-independence of
the traces of actions of correspondences on cohomology groups.
This is then used to calculate the traces of elements of certain Hecke
algebra acting of cohomology groups of the moduli spaces of shtukas.
We refer to \S\ref{langcorresstate} for a more detailed explanation of
the proof of the main theorem.
\bigskip

Let us overview the organization of this paper. We begin with collecting
known results concerning arithmetic $\ms{D}$-modules in
\S\ref{revDmodtheory}, and the subsection contains few new facts.
In \S\ref{Indcatsection}, we show some elementary properties of Ind
categories. In \S\ref{constt-struct}, we introduce a t-structure
corresponding to constructible sheaves in the spirit of the
Riemann-Hilbert correspondences.
This t-structure is useful when we construct various
types of trace maps. In the arithmetic $\ms{D}$-module theory, the
coefficient categories are $K$-additive where $K$ is a complete discrete
valuation field whose residue field is $k$. However, for the Langlands
correspondence it is convenient to work with
$\overline{\mb{Q}}_p$-coefficients. Passing from $K$-coefficients to
$\overline{K}$-coefficients is rather formal, and some generality
is developed in \S\ref{extofsclarsub}. In \S\ref{consttracsubsec}, we
conclude the first section by constructing the trace maps for flat
morphisms in the style of SGA 4.
This foundational property had been lacking in the theory of arithmetic
$\ms{D}$-modules, and it plays an important role in the proof of
$\ell$-independence type theorem, which is the main theme in
\S\ref{lindepsec}.

In \S\ref{arithDstacksec}, we develop a theory for algebraic stacks.
Most of the properties used in this section are formal in the six
functor formalism, and almost no knowledge of arithmetic
$\ms{D}$-modules is required. In \S\ref{Dmodforstack} we define the
triangulated category of holonomic complexes for algebraic stacks. Some
cohomological operations for algebraic stacks are introduced in
\S\ref{cohfunctorstack}. In \S\ref{sixfuncadmstsec}, we restrict our
attention to so called ``admissible stacks''. Any morphism between
admissible stacks can be factorized into morphisms which has already
been treated in \S\ref{cohfunctorstack}, and our
construction of six functor formalism for this type of stacks
completes. We show basic properties of the operations in this
subsection. The final subsection \S\ref{miscstsubsec} is complementary,
in which we collect some facts which is needed in the proof of the
Langlands correspondence.

In \S\ref{lindepsec}, we show an $\ell$-independence type theorem of the
trace of the action of a correspondence on cohomology groups. With the
trace formalism developed in \S\ref{consttracsubsec}, even though there
are some differences since we are dealing with algebraic stacks, our
task is to translate the proof of \cite{KS} in our language.

In the final section \S\ref{langcorrsec}, we show the Langlands
correspondence. In order to be friendly to readers who are only
interested in \S\ref{langcorrsec}, the section begins with some review
of $p$-adic theory as well as recalling some notations of this paper.
We state the main theorem and explain the idea of the
proof in the second subsection. The actual proof is written in the third
subsection, and we conclude the paper with some well-known applications.

\subsection*{Acknowledgment}
This project started from a comment of T. Saito suggesting the
possibility of proving the {\em petits camarades} conjecture when
the author explained his work with Marmora to Saito. This work would
have never appeared without his advice, and the author is greatly
indebted to him.

The author would like to thank D. Caro, O. Gabber, L. Lafforgue,
A. Shiho, N. Tsuzuki, S. Yasuda for various discussions.
He also thanks Y. Mieda for spending a lot of time for the author,
checking the proofs in the seminar, and Y. Toda for valuable discussions
in which the idea of using Ind categories came up.
He is also grateful to D. Patel for numerous linguistic advises
improving the paper, and A. Abbes and S. Saito for continuous supports
and encouragements.
Last but not least, he thanks three referees of Journal of AMS for
reading the manuscript very carefully, and giving him a lot of
invaluable comments.

\subsection*{Conventions and notations}
\subsubsection{}
In this paper, we usually use Roman fonts ({\it e.g.}\ $X$) for
schemes, script fonts ({\it e.g.}\ $\fsch{X}$) for formal schemes, and
Gothic fonts ({\it e.g.}\ $\st{X}$) for algebraic stacks. When we write
$(-)^{(\prime)}$ it means ``$(-)$ (resp.\ $(-)'$)''. Throughout this
paper, we fix a prime number $p$. When a discrete valuation field $K$ is
fixed and its residue field is finite, we often denote by
$\overline{\mb{Q}}_p$ an algebraic closure of $K$.
Throughout this paper, we fix a universe $\mb{U}$.

\subsubsection{}
\label{smpresentnotasur}
For the terminologies of algebraic stacks, we follow
\cite{LM}. Especially, any scheme, algebraic space, or algebraic
stack is assumed quasi-separated.
For an algebraic stack $\st{X}$, we denote by $\st{X}_{\mr{sm}}$ the
category of affine schemes over $\st{X}$ such that the structural
morphism $X\rightarrow\st{X}$ is smooth. Morphisms between
$X,Y\in\st{X}_{\mr{sm}}$ are smooth morphisms $X\rightarrow Y$ over
$\st{X}$.
Recall that a {\em presentation} of $\st{X}$ is a smooth surjective
morphism $\algsp{X}\rightarrow\st{X}$ from an algebraic space
$\algsp{X}$. Finite morphisms or universal homeomorphisms between
algebraic stacks are always assumed representable.

\subsubsection{}
\label{reldimfunc}
When $d\geq0$ is an integer, smooth morphisms between algebraic stacks
of relative dimension $d$ are understood to be equidimensional.
Let $P\colon\algsp{X}\rightarrow\st{X}$ be a smooth morphism from an
algebraic space to an algebraic stack. Then the continuous function
$\dim(P)\colon\algsp{X}\rightarrow\mb{N}$ is defined in
\cite[(11.14)]{LM}. This function is called the
{\em relative dimension of $P$}, and is sometimes denoted by
$d_{\algsp{X}/{\st{X}}}$.

\subsubsection{}
\label{tateshiftconv}
Let $X$ be a topological space, and let $\pi_0(X)$ be the set of
connected components of $X$. Let $d\colon\pi_0(X)\rightarrow\mb{Z}$ be a
map. For any connected component $Y$ of $X$, assume that a category
$\mc{C}_Y$ endowed with an auto-functor
$T_Y\colon\mc{C}_Y\xrightarrow{\sim}\mc{C}_Y$ is attached,
and $\mc{C}_X\cong\prod_{Y\in\pi_0(X)}\mc{C}_Y$ via
which $T_X$ is identified with $(T_Y)_{Y\in\pi_0(X)}$. For
$M\in\mc{C}_X$, we define $T^d(M)$ as follows: Let
$M=(M_Y)_{Y\in\pi_0(X)}\in\prod\mc{C}_Y$. Then
$T^d(M):=(T^{d(Y)}M_Y)_Y$. We may take the auto-functor $T$ to be the
shift functor or the Tate twist functor. When $T=[1]$ (resp.\ $T=(1)$),
the functor $T^d$ is denoted by $[d]$ (resp.\ $(d)$).

\section{Preliminaries}
\label{firstsection}

\subsection{Review of arithmetic $\ms{D}$-modules}
\label{revDmodtheory}
Let us recall the status of the theory of arithmetic $\ms{D}$-modules
briefly. Let $s$ be a positive integer, and put $q:=p^s$. Let $R$ be a
complete discrete valuation ring whose residue
field, which is assumed to be a perfect field of characteristic $p$, is
denoted by $k$. Put $K:=\mr{Frac}(R)$. We moreover assume that the
$s$-th absolute Frobenius homomorphism $\sigma\colon k\xrightarrow{\sim}
k$ sending $x$ to $x^{q}$ lifts to an automorphism
$R\xrightarrow{\sim}R$ also denoted by $\sigma$.

\begin{dfn}[{\cite[1.1.3]{AC}}]
 \label{defofrealcat}
 A scheme over $k$ is said to be {\em realizable} if it can be embedded
 into a proper smooth formal scheme over $\mr{Spf}(R)$. We denote by
 $\mr{Real}(k/R)$ the full subcategory of the category of $k$-schemes
 $\mr{Sch}(k)$ consisting of realizable schemes.
\end{dfn}

For a realizable scheme $X$, the triangulated category
of {\em holonomic complexes} $D^{\mr{b}}_{\mr{hol}}(X/K)$, endowed with
a t-structure, is defined.
Let us recall the construction. Let $\fsch{P}$ be a proper smooth formal
scheme over $\mr{Spf}(R)$. Then the category of overholonomic
$\DdagQ{\fsch{P}}$-modules (without Frobenius structure) is defined
by Caro in \cite{Caovhol}. We denote by $\mr{Hol}(\fsch{P})$ its thick
full subcategory generated by overholonomic $\DdagQ{\fsch{P}}$-modules
which can be endowed with $s'$-th Frobenius structure
for some positive integer $s'$ divisible by $s$ (but we do not consider
Frobenius structure). The objects of
$\mr{Hol}(\fsch{P})$ are called {\em holonomic modules}.
By definition, $D^{\mr{b}}_{\mr{hol}}(\DdagQ{\fsch{P}})$ is
the full subcategory of $D^{\mr{b}}(\DdagQ{\fsch{P}})$ whose cohomology
complexes are holonomic. Of course, this subcategory is triangulated by
\cite[13.2.7]{KSc}.

Now, let $X\hookrightarrow\fsch{P}$ be an embedding into a proper smooth
formal scheme, whose existence is assured since $X$ is a realizable
scheme. Then $D^{\mr{b}}_{\mr{hol}}(X/K)$ is the subcategory of
$D^{\mr{b}}_{\mr{hol}}(\fsch{P})$ which is supported on $X$. This
category does not depend on the choice of the embedding up to canonical
equivalence, and well-defined.
Moreover, the t-structure is compatible with this
equivalence (cf.\ \cite[1.2.8]{AC}).
The heart of the triangulated category is denoted by $\mr{Hol}(X/K)$.
For further details of this category, one can refer to
\cite[\S1.1, \S1.2]{AC} and \cite[\S1]{AC2}. In \cite{AC2},
$\mr{Hol}(X/K)$ and $D^{\mr{b}}_{\mr{hol}}(X/K)$ are denoted by
$\mr{Hol}_F(X/K)$ and $D^{\mr{b}}_{\mr{hol},F}(X/K)$ respectively.

\begin{rem*}
 In \cite{AC2}, the category $\mr{Hol}_F(X/K)$ is introduced using
 the category of ``overholonomic modules
 {\em after any base change}'', whereas, here, we simply used the
 category of overholonomic modules to define $\mr{Hol}(X/K)$.
 Since overholonomic modules with Frobenius structure are overholonomic
 modules after any base change by \cite[1.2]{AC2}, the categories
 $\mr{Hol}_F(X/K)$ in \cite{AC2} and $\mr{Hol}(X/K)$ defined above are
 the same. However, to prove that $\mr{Hol}(X)$ is a noetherian
 category (cf.\ [{\it ibid.}, 1.5]), it is convenient to work in the
 category of overholonomic modules after any base change.
\end{rem*}

\begin{rem}
 \label{Frobnotdep}
 Lifting $R\xrightarrow{\sim} R$ of the Frobenius automorphism
 of $k$ is not unique in general. Let $\sigma'\colon
 R\xrightarrow{\sim}R$ be another lifting. Let $\fsch{X}$ be a smooth
 formal scheme over $R$.
 We denote by $\fsch{X}^{\sigma^{(\prime)}}:=\fsch{X}\otimes_{R\nearrow
 \sigma^{(\prime)}}R$.
 Locally on $\fsch{X}$, we have the following commutative diagram over
 $\mr{Spf}(R)$:
 \begin{equation*}
  \xymatrix@C=50pt@R=3pt{
   &\fsch{X}^{\sigma}\ar[dd]_{\sim}^{G}\\
  \fsch{X}\ar[rd]_{F'}\ar[ru]^{F}&\\
  &\fsch{X}^{\sigma'},
   }
 \end{equation*}
 where $F$ and $F'$ denote liftings of the relative $s$-th Frobenius.
 For a $\DdagQ{\fsch{X}}$-module $\ms{M}$, we have
 \begin{equation*}
  F^*(\ms{M}^{\sigma})\cong F^*(G^*\ms{M}^{\sigma'})\cong
   F'^*(\ms{M}^{\sigma'}),
 \end{equation*}
 where $\ms{M}^{\sigma^{(\prime)}}$ denotes the
 $\DdagQ{\fsch{X}^{\sigma^{(\prime)}}}$-module defined by changing base
 using $\sigma^{(\prime)}$.
 This shows that endowing $\ms{M}$ with a Frobenius structure with
 respect to $\sigma$ is equivalent to endowing $\ms{M}$ with a Frobenius
 structure with respect to $\sigma'$.
 Thus, our category $\mr{Hol}(X/K)$ does not depend on the choice of
 $\sigma$. However, the category of modules with Frobenius structure
 {\em does} depend on the choice. For
 example, assume $k$ is algebraically closed and consider $\sigma$ and
 $\sigma'$. Put
 \begin{equation*}
  K^{(\prime)}_0:=\bigl\{x\in K\mid \sigma^{(\prime)}(x)=x\bigr\}.
 \end{equation*}
 We may take $\sigma$, $\sigma'$ so that $K_0$ and $K'_0$ are not the
 same. Consider the unit object $K$ in $\mr{Hol}(\mr{Spf}(R))$.
 Endow it with a Frobenius structure $\Phi^{(\prime)}$ with respect to
 $\sigma^{(\prime)}$. Then
 \begin{equation*}
  \mr{Hom}_{F^{(\prime)}\text{-}\mr{Hol}(X/K)}
   \bigl((K,\Phi^{(\prime)}),(K,\Phi^{(\prime)})\bigr)\cong
   K_0^{(\prime)}.
 \end{equation*}
 Thus, we do not have an equivalence of categories
 between $F\text{-}\mr{Hol}(X/K)$ and $F'\text{-}\mr{Hol}(X/K)$
 compatible with the forgetful functors to $\mr{Hol}(X/K)$.
\end{rem}

\subsubsection{}
\label{fundproprealsch}
The six functors have already been defined for realizable schemes.
For details one can refer to \cite{AC}, \cite{AC2}.
For the convenience of the reader, we collect known results. Let
$f\colon X\rightarrow Y$ be a morphism in $\mr{Real}(k/R)$. Then
we have the triangulated functors
\begin{alignat*}{2}
 f_!,f_+\colon D^{\mr{b}}_{\mr{hol}}(X/K)\rightarrow
 D^{\mr{b}}_{\mr{hol}}(Y/K),&\qquad
 f^!,f^+\colon D^{\mr{b}}_{\mr{hol}}(Y/K)\rightarrow
 D^{\mr{b}}_{\mr{hol}}(X/K).\\
\end{alignat*}
These functors satisfy the following fundamental properties of six
functor formalism:
\begin{enumerate}
 \item $D^{\mr{b}}_{\mr{hol}}(X/K)$ is a closed symmetric
       monoidal category, namely it is equipped with tensor product $\otimes$
       and the unit object $K_X$ with which it forms a symmetric
       monoidal category (or sometimes called commutative tensor
       category as in \cite[4.2.16]{KSc}),
       and $\otimes$ has the left adjoint
       functor $\shom$. The adjoint functor $\shom$ is denoted sometimes
       by $\shom_X$ for clarification, and it is called the {\em
       internal hom}. (cf.\ \cite[1.1.6, Appendix]{AC})

 \item $f^+$ is monoidal, namely it commutes with $\otimes$ and preserves
       the unit object.
       
 \item Given composable morphisms $f$ and $g$,
       there exists a canonical isomorphism
       $(f\circ g)^+\cong g^+\circ f^+$.
       We associate $D^{\mr{b}}_{\mr{hol}}(X/K)$ to
       $X\in\mr{Real}(k/R)$, and with this pull-back and the canonical
       isomorphisms for compositions,
       we have the fibered category over $\mr{Real}(k/R)$.
       Moreover, we have similar fibered category for $f^!$ as well.
       (cf.\ \cite[1.3.14]{AC}, checking of the category being fibered
       readily follows from the construction of the functor.)

 \item $(f^+,f_+)$ and $(f_!,f^!)$ are adjoint pairs. (cf.\ \cite[1.3.14
       (viii)]{AC})

 \item\label{propshpucoi}
      We have a morphism of functors $f_!\rightarrow
      f_+$ compatible with transitivity isomorphisms of composition.
      This morphism is an isomorphism when $f$ is proper. (cf.\
      \cite[1.3.7, 1.3.14 (vi)]{AC})

 \item\label{opencoinpull}
      When $j$ is an open immersion, there exists the isomorphism
      $j^+\xrightarrow{\sim}j^!$ compatible with transition isomorphism
      of the composition of two open immersions. (cf.\
      \cite[II.3.5]{Vi2})

 \item\label{globalhomdef}
      Let $\mr{Vec}_K$ be the abelian category of $K$-vector
      spaces, and we denote by $D^{\mr{b}}_{\mr{fin}}(\mr{Vec}_K)$ the
      derived category consisting of bounded complexes of $K$-vector
      spaces whose cohomologies are finite dimensional.
      There exists a canonical equivalence of monoidal categories
      $\mb{R}\Gamma\colon
      D^{\mr{b}}_{\mr{hol}}(\mr{Spec}(k)/K)\xrightarrow{\sim}
      D^{\mr{b}}_{\mr{fin}}(\mr{Vec}_K)$. For
      $X\rightarrow\mr{Spec}(k)$ in $\mr{Real}(k/R)$, we put
      $\mb{R}\mr{Hom}(-,-):=\mb{R}\Gamma\circ
      f_+\circ\shom(-,-)$.
      Note that we have an isomorphism
      $\mb{R}^i\mr{Hom}(\ms{F},\ms{G})\cong\mr{Hom}(\ms{F},\ms{G}[i])$
      for $\ms{F},\ms{G}\in D^{\mr{b}}_{\mr{hol}}(X)$.

 \item\label{basechangeprop}
      Consider the following cartesian diagram of schemes:
       \begin{equation}
	\label{cartediagforbc}
	\xymatrix{
	 X'\ar[r]^-{g'}\ar[d]_-{f'}\ar@{}[rd]|\square&
	 X\ar[d]^{f}\\
	Y'\ar[r]_-{g}&Y.}
       \end{equation}
      Assume that the schemes are realizable.
      Then we have a canonical isomorphism $g^+f_!\cong f'_!g'^+$
      compatible with compositions. When $f$ is proper (resp.\ open
      immersion), this isomorphism is the base change homomorphism
      defined by the adjointness of $(f^+,f_+)$ (resp.\ $(f_!,f^!)$)
      via the isomorphism of \eqref{propshpucoi} (resp.\
      \eqref{opencoinpull}). (cf.\ \cite[1.3.14 (vii)]{AC})

 \item\label{projformulas}
      We have a canonical isomorphism $f_!\ms{F}\otimes\ms{G}\cong
      f_!(\ms{F}\otimes f^+\ms{G})$. (cf.\ \cite[Appendix]{AC})

 \item Let $i$ be a closed immersion in $\mr{Real}(k/K)$, and $j$ be the
       open immersion defined by the complement.
       Then we have a canonical distinguished triangle of functors
       \begin{equation*}
	j_!j^!\rightarrow\mr{id}\rightarrow i_+i^+\rightarrow
       \end{equation*}
       where the first and second morphisms are adjunction morphisms.
 \setcounter{fundprop}{\theenumi}
\end{enumerate}

Before recalling several more properties, let us show the following
lemma:
\begin{lem*}
 Let $\iota\colon X\rightarrow X'$ be a universal homeomorphism in
 $\mr{Real}(k/R)$. Then the adjoint pair $(\iota_+,\iota^!)$ induces an
 equivalence between $D^{\mr{b}}_{\mr{hol}}(X/K)$ and
 $D^{\mr{b}}_{\mr{hol}}(X'/K)$, and we have a canonical isomorphism
 $\iota^+\cong\iota^!$. Moreover assume given the following commutative
 diagram where $\iota$ and $\iota'$ are universal homeomorphisms:
 \begin{equation*}
  \xymatrix{
   X\ar[r]^-{f}\ar[d]_{\iota}&Y\ar[d]^{\iota'}\\
  X'\ar[r]_-{f'}&Y'.
   }
 \end{equation*}
 Then $f^{(\prime)}_+$, $f^{(\prime)}_!$, $f^{(\prime)+}$, and
 $f^{(\prime)!}$ commute canonically with
 $\iota^{(\prime)+}\cong\iota^{(\prime)!}$.
\end{lem*}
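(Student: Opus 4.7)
The strategy is to reduce the assertions on $\iota$ to the corresponding (essentially tautological) statements for iterates of the absolute Frobenius, for which the required equivalences are provided by Berthelot's Frobenius descent. A universal homeomorphism between $k$-schemes of finite type is representable finite (it is proper and quasi-finite), so $\iota_+$ is exact and $\iota_+\cong\iota_!$ by property (\ref{propshpucoi}) of \ref{fundproprealsch}. Since $\iota$ is moreover radicial and $k$ is perfect, a standard argument on local rings provides an integer $n\geq 1$, which we may take divisible by $s$, together with a morphism $\iota^\vee\colon X'\to X$ such that $\iota\circ\iota^\vee=F^n_{X'}$ and $\iota^\vee\circ\iota=F^n_X$, where $F^n_?$ denotes the $n$-th iterate of the absolute Frobenius. (Locally $\iota$ corresponds to a finite ring map $A\to B$ which is injective modulo nilpotents and with $B/\mr{nil}$ purely inseparable over $A/\mr{nil}$; for $n$ large, the Frobenius $b\mapsto b^{p^n}$ takes values in $A\subset B$, defining $\iota^\vee$.)

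Applying $(-)_+$ to these factorizations and using the transition isomorphisms for composition yields $\iota_+\circ\iota^\vee_+\cong(F^n_{X'})_+$ and $\iota^\vee_+\circ\iota_+\cong(F^n_X)_+$. By the construction of $\mr{Hol}(X/K)$ recalled in \ref{revDmodtheory}, every holonomic module admits a Frobenius structure after enlarging $s$; Berthelot's Frobenius descent then implies that pushforward along the absolute Frobenius is an auto-equivalence of $D^{\mr{b}}_{\mr{hol}}(X/K)$. Hence both $(F^n_{?})_+$ are equivalences, and a standard zig-zag argument shows that $\iota_+$ is itself an equivalence. Consequently, $\iota_+=\iota_!$ being an equivalence, its left adjoint $\iota^+$ (from the pair $(\iota^+,\iota_+)$) and its right adjoint $\iota^!$ (from the pair $(\iota_!,\iota^!)$) are both canonically identified with its quasi-inverse, yielding the canonical isomorphism $\iota^+\cong\iota^!$.

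For the commutation in the commutative square, the $+$- and $!$-pullback cases are immediate from the composition-transition isomorphisms applied to the identity $\iota'\circ f=f'\circ\iota$: both $\iota^+f'^+$ and $f^+\iota'^+$ are canonically the pullback along the common composite, and similarly for $(-)^!$. The same transitivity produces $\iota'_+f_+\cong f'_+\iota_+$ and $\iota'_!f_!\cong f'_!\iota_!$; pre- and post-composing with the quasi-inverses $\iota^+\cong\iota^!$ and $\iota'^+\cong\iota'^!$ obtained in the previous paragraph yields the commutations $f_+\iota^+\cong\iota'^+f'_+$, $f_!\iota^!\cong\iota'^!f'_!$, and all remaining variants obtained by mixing with $\iota^{(\prime)+}\cong\iota^{(\prime)!}$. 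The real content of the proof lies in the reduction to Frobenius descent carried out in the second paragraph; once this input is granted, everything else is formal.
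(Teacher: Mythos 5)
Your proposal takes a genuinely different route from the paper's. The paper disposes of the core assertion — that $(\iota_+,\iota^!)$ is an equivalence — by citing \cite[1.3.12]{AC}, and the remaining steps (the chain $\iota^+\cong(\iota_!)^{-1}\cong(\iota_+)^{-1}\cong\iota^!$, and commutation by transitivity of composition) are the same in both arguments. You instead try to prove the equivalence directly by producing $\iota^\vee$ with $\iota\circ\iota^\vee=F^n_{X'}$ and $\iota^\vee\circ\iota=F^n_X$ and invoking Frobenius descent; the factorization itself is a correct fact about universal homeomorphisms of finite-type $\mb{F}_p$-schemes, and the zig-zag would close the argument if the Frobenius input were available as claimed.

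Two points need care. First, $\iota^\vee$ is only $\sigma^{n/s}$-semilinear over $k$ (locally $b\mapsto b^{p^n}$), so it is not a morphism in $\mr{Real}(k/R)$. The natural target of $(\iota^\vee)_+$ is $D^{\mr{b}}_{\mr{hol}}(X^{\sigma^{n/s}}/K)$, not $D^{\mr{b}}_{\mr{hol}}(X/K)$; one should instead factor through the $k$-linear relative Frobenius $X\to X^{\sigma^{n/s}}$ and then compose with the exact equivalence $\sigma^*$ of property \ref{frobdesrecall}. As written, the phrase "auto-equivalence of $D^{\mr{b}}_{\mr{hol}}(X/K)$" does not parse without this repair. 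Second, and more substantively, the step "Berthelot's Frobenius descent then implies that pushforward along the absolute Frobenius is an auto-equivalence of $D^{\mr{b}}_{\mr{hol}}(X/K)$" is a claim, not a citation. Berthelot's theorem in \cite{BeDmod2} concerns $\DdagQ{\fsch{X}}$-modules on a smooth formal scheme equipped with a Frobenius lift, whereas here $X$ is a possibly singular realizable scheme, $D^{\mr{b}}_{\mr{hol}}(X/K)$ is cut out inside $D^{\mr{b}}_{\mr{hol}}(\DdagQ{\fsch{P}})$ by a support condition, a global Frobenius lift on $\fsch{P}$ need not exist, and one must verify that the induced equivalence preserves holonomicity and the support condition. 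That extension is precisely what \cite[1.3.12]{AC} packages; note also that the remark immediately following the lemma \emph{defines} $F^*:=F_{X/k}^+\circ\sigma^*$ by means of the lemma and only \emph{then} identifies it with Berthelot's pull-back, so your implication runs against the paper's logical order. The reduction to Frobenius is a sensible strategy, but the descent step at the level of holonomic complexes on realizable schemes must be carried out, not merely invoked.
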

\begin{proof}
 The first equivalence is nothing but \cite[1.3.12]{AC}.
 Since $\iota_!\xrightarrow{\sim}\iota_+$ by \eqref{propshpucoi} above,
 we have
 $\iota^+\cong(\iota_!)^{-1}\cong(\iota_+)^{-1}\cong\iota^!$.
 Commutation results follows by transitivity of push-forwards and
 pull-backs.
\end{proof}
This result can be applied in particular when $f$ is the relative
Frobenius morphism (see the remark below). We need a few more
properties, which may not be regarded as standard properties of six
functor formalism:
\begin{enumerate}
 \setcounter{enumi}{\thefundprop}
 \item\label{frobdesrecall}
      For $X$ in $\mr{Real}(k/R)$, let $X^\sigma:=X\otimes_{k\nearrow
      \sigma}k$. Then we have a pull-back $\sigma^*\colon
      D^{\mr{b}}_{\mr{hol}}(X/K)\xrightarrow{\sim}
      D^{\mr{b}}_{\mr{hol}}(X^{\sigma}/K)$, which is exact, and all the
      cohomological functors commute canonically with this
      pull-back. (This follows easily from the definition of
      the cohomological functors. See also \cite[4.5]{BeDmod2}.)
 \setcounter{fundprop}{\theenumi}
\end{enumerate}
Now, for a separated scheme of finite type over $k$, we denote by
$\mr{Isoc}^\dag(X/K)$ the thick full subcategory of the category of
overconvergent isocrystals on $X$ generated by those which can be
endowed with $s'$-th Frobenius structure for some $s|s'$. Caution that
the notation is slightly different from the standard one as in
\cite[2.3.6]{Berrigcoh}. In fact, by the notation, Berthelot simply
means the category of overconvergent isocrystals and Frobenius structure
does not play any role in his definition.

\begin{enumerate}
 \setcounter{enumi}{\thefundprop}
 \item\label{carodagdagcat}
      Let $X$ be a realizable scheme such that $X_{\mr{red}}$ is a
      smooth realizable scheme of dimension
      $d\colon\pi_0(X)\rightarrow\mb{N}$ (cf.\ \ref{reldimfunc}). Then
      there exists a fully faithful functor
      $\mr{sp}_+\colon\mr{Isoc}^\dag(X/K)\rightarrow
      \mr{Hol}(X/K)$ called the {\em specialization
      functor}. We denote the essential image\footnote{
      In \cite{Cafai}, the essential image is denoted by
      $\mr{Isoc}^{\dag\dag}(X/K)$.} of $\mr{sp}_+$ shifted by $-d$ by
      $\mr{Sm}(X/K)\subset\mr{Hol}(X/K)[-d]\subset
      D^{\mr{b}}_{\mr{hol}}(X/K)$.
      (cf.\ \cite[4.2.2]{Cafai})
 \end{enumerate}

\begin{rem*}
 (i) Let $F\colon X\rightarrow X$ be the $s$-th absolute Frobenius
 endomorphism. Combining \eqref{frobdesrecall} and the lemma above
 applied to the $s$-th relative Frobenius morphism $F_{X/k}\colon
 X\rightarrow X^{\sigma}$, we get an equivalence of categories
 \begin{equation*}
  F^*:=F_{X/k}^+\circ\sigma^*\colon D^{\mr{b}}_{\mr{hol}}(X/K)
   \rightarrow D^{\mr{b}}_{\mr{hol}}(X/K).
 \end{equation*}
 This pull-back is nothing but the one used in \cite[Definition
 4.5]{BeDmod2}. For a cohomological functor
 $C\colon D^{\mr{b}}_{\mr{hol}}(X/K)\rightarrow
 D^{\mr{b}}_{\mr{hol}}(Y/K)$, we say that $C$ {\em commutes with
 Frobenius pull-back} if there exists a ``canonical'' isomorphism
 $C\circ F^*\cong F^*\circ C$.

 (ii) Let $F\colon X\rightarrow X'$ be the relative Frobenius
 morphism. For $\ms{M}\in D^{\mr{b}}_{\mr{hol}}(X')$, the lemma above
 yields a canonical isomorphism $\alpha\colon
 F_+F^!\ms{M}\xrightarrow{\sim}\ms{M}$. On the
 other hand, when $X$ and $X'$ can be lifted to smooth formal schemes
 $\fsch{X}$, $\fsch{X}'$, \cite[4.2.4]{BeDmod2} gives us another
 isomorphism $\beta\colon F_+F^!\ms{M}\cong\ms{M}$, since
 $F^{\flat}\Ddag{\fsch{X'}}\cong
 F^*\Ddag{\fsch{X}'}\otimes\omega_{\fsch{X}/\fsch{X}'}$ by
 \cite[2.4.4]{BeDmod2}.
 These two isomorphisms coincide. Indeed, to see this, it suffices to
 check the coincidence for $\ms{M}=\mc{O}_{\fsch{X}'}$. By left-to-right
 conversion, it suffices to check for $\ms{M}=\omega_{\fsch{X}'}$
 and $F_+$, $F^!$ the corresponding functors for right modules.
 The homomorphism $\alpha$ is nothing but the adjunction map, and this
 is by definition Virrion's trace map
 $\mr{Tr}^{\mr{Vir}}\colon F_*\bigl(\omega_{\fsch{X}}
 \otimes_{\Ddag{\fsch{X}}}F^*\Ddag{\fsch{X}'}\bigr)
 \rightarrow\omega_{\fsch{X}'}$ defined in \cite[III.7.1]{Vir}. By
 \cite[III.5.4]{Vir}, this map can be characterized as the unique map
 $\gamma$ such that the following diagram commutes:
 \begin{equation*}
  \xymatrix@C=60pt@R=10pt{
   F_*\omega_{\fsch{X}}\ar@{->>}[r]\ar[d]_{\mr{Tr}_F}&
   F_*\bigl(\omega_{\fsch{X}}
   \otimes_{\Ddag{\fsch{X}}}F^*\Ddag{\fsch{X}'}\bigr),
   \ar[ld]^-{\gamma}\\
  \omega_{\fsch{X}'}&
   }
 \end{equation*}
 where $\mr{Tr}_F$ is the homomorphism induced by the trace map of
 Hartshorne \cite[2.4.2]{BeDmod2}.
 Thus, it suffices to check the diagram is commutative when
 $\gamma=\beta$. By construction of the equivalence of
 \cite[4.2.4]{BeDmod2}, $\beta$ is defined by taking limit to
 \cite[(2.5.6.2)]{BeDmod2}. It is not hard to check the commutativity
 using \cite[1.5]{A} and the description of Garnier \cite[(2.2.1)]{A} of
 the isomorphism \cite[2.5.2]{BeDmod2}. The details are left to the
 reader.

 (iii) The identification of (ii) shows that the commutation isomorphism
 $F^!\circ f_+\cong f_+\circ F^!$ defined by the lemma and by
 \cite[4.3.9]{BerInt} coincide.
\end{rem*}

\subsubsection{}
\label{bidualrealcase}
We also have the duality formalism.
Let $p\colon X\rightarrow\mr{Spec}(k)$ be the structural
morphism of a realizable scheme.
We put $K_X^{\omega}:=p^!(K)$ and call it the {\em
dualizing complex}. We put $\mb{D}_X:=\shom(-,K_X^\omega)$, and call it
the {\em dual functor}. For $\ms{F}\in D^{\mr{b}}_{\mr{hol}}(X/K)$, we
have
\begin{align}
 \label{bidualmapdefreal}
 \tag{$\star$}
 \mr{Hom}(\mb{D}_X\ms{F},\mb{D}_X\ms{F})
 &\cong
 \mr{Hom}(\mb{D}_X\ms{F}\otimes\ms{F},K^{\omega}_X)\\
 \notag
 &\cong
 \mr{Hom}(\ms{F}\otimes\mb{D}_X\ms{F},K^{\omega}_X)
 \cong
 \mr{Hom}(\ms{F},\mb{D}_X\mb{D}_X\ms{F}).
\end{align}
The identity in the abelian group on the left hand side induces a
homomorphism $\ms{F}\rightarrow\mb{D}_X\mb{D}_X\ms{F}$.

\begin{lem*}
 The induced homomorphism of functors
 $\mr{id}\rightarrow\mb{D}_X\circ\mb{D}_X$ is an isomorphism.
\end{lem*}
\begin{proof}
 We already know that $\mr{id}\cong\mb{D}_X\circ\mb{D}_X$ by
 \cite[II.3.5]{Vi2}, even though the isomorphism may not be equal to the
 one in the claim.
 Let us show that the given homomorphism in the lemma is actually an
 isomorphism using this Virrion's isomorphism.
 By {\it d\'{e}vissage}, it suffices to check the equivalence for
 holonomic modules. We recall that objects of $\mr{Hol}(X/K)$ have
 finite length by \cite[1.5]{AC2}. Thus, we only need to show the lemma
 for irreducible modules $\ms{F}$. By Virrion's result,
 $\mb{D}_X\mb{D}_X(\ms{F})$ is irreducible as well, and it remains to
 show that the homomorphism is not $0$. If this were $0$, the
 corresponding element of the left side of (\ref{bidualmapdefreal})
 should also be $0$, which is a contradiction.
\end{proof}
This isomorphism induces a canonical isomorphism
$\shom(\ms{F},\ms{G})\cong\mb{D}_X\bigl(\ms{F}\otimes\mb{D}_X
(\ms{G})\bigr)$ for $\ms{F},\ms{G}$ in
$D^{\mr{b}}_{\mr{hol}}(X/K)$. This can be seen by a similar argument to
\cite[V.2.6]{RD}.

\subsubsection{}
\label{dualfunccommuform}
Let $f\colon X\rightarrow Y$ be a morphism between realizable schemes,
and $\ms{F},\ms{F}'\in D^{\mr{b}}_{\mr{hol}}(X)$. Since $f^+$ is
monoidal, we have the canonical isomorphism
$f^+\bigl((-)\otimes(-)\bigr)\cong
f^+(-)\otimes f^+(-)$. By taking the adjoint, we have the homomorphism
$f_+(-)\otimes f_+(-)\rightarrow f_+\bigl((-)\otimes(-)\bigr)$.
This induces a homomorphism
\begin{equation*}
 f_+\shom_X(\ms{F},\ms{F}')\otimes f_+(\ms{F})\rightarrow
  f_+\bigl(\shom_X(\ms{F},\ms{F}')\otimes\ms{F}\bigr)
  \rightarrow
  f_+(\ms{F}'),
\end{equation*}
where the second homomorphism is the evaluation map.
Taking the adjoint, we have the homomorphism
\begin{equation*}
 f_+\shom_X(\ms{F},\ms{F}')\rightarrow
  \shom_Y\bigl(f_+(\ms{F}), f_+(\ms{F}')\bigr).
\end{equation*}
Now, let $\ms{G}\in D^{\mr{b}}_{\mr{hol}}(Y)$.
When $f$ is {\em proper}, \ref{fundproprealsch}~\eqref{propshpucoi} and
the adjointness of the pair $(f_!,f^!)$ induce a homomorphism
\begin{equation}
 \label{adjointproperreal}
  \tag{$\star$}
  f_+\shom_X\bigl(\ms{F},f^!\ms{G}\bigr)\rightarrow
  \shom_Y\bigl(f_+\ms{F},\ms{G}\bigr).
\end{equation}

\begin{prop*}
 This homomorphism is an isomorphism.
\end{prop*}
\begin{proof}
 For an open subscheme $U$ of $X$, we denote by $j_U\colon
 U\hookrightarrow X$ the open immersion. Let
 $p_Z$ be the structural morphism of a scheme $Z$,
 and $\ms{M}\in D^{\mr{b}}_{\mr{hol}}(X)$. Assume
 that $p_{U+}(j_U^+\ms{M})=0$
 for any $U$. Then $\ms{M}=0$. Indeed, for any closed point $i_x\colon
 x\hookrightarrow X$, put $U_x:=X\setminus\{x\}$,
 and we have the localization triangle
 \begin{equation*}
  i_{x}^!(\ms{M})\rightarrow p_{X+}(\ms{M})\rightarrow
   p_{U_x+}j^+_{U_x}(\ms{M})\xrightarrow{+1}.
 \end{equation*}
 By assumption, we have $i_x^!(\ms{M})=0$. By \cite[1.3.11]{AC},
 $\ms{M}=0$.

 By the construction, the homomorphism in question is compatible
 with restriction to open subschemes. Thus, the observation above
 reduces to checking that the induced homomorphism
 $p_{Y+}f_+\shom_X\bigl(\ms{F},f^!\ms{G}\bigr)\rightarrow
 p_{Y+}\shom_Y\bigl(f_+\ms{F},\ms{G}\bigr)$ is isomorphic. 
 Via the isomorphism \ref{fundproprealsch} \eqref{globalhomdef}, this
 homomorphism is nothing but the canonical homomorphism
 $\mr{Hom}_X\bigl(\ms{F},f^!\ms{G}\bigr)\rightarrow
 \mr{Hom}_Y\bigl(f_+\ms{F},\ms{G}\bigr)$ of the adjoint pair
 $(f_+,f^!)$.
\end{proof}

Let $j\colon U\rightarrow X$ be an open immersion. Then there exists a
unique homomorphism
\begin{equation}
 \label{adjointopenreal}\tag{$\star\star$}
 \shom\bigl(j_!\ms{F},\ms{G}\bigr)\rightarrow
  j_+\shom\bigl(\ms{F},j^!\ms{G}\bigr)
\end{equation}
such that its restriction to $U$ is the identity.
This is an isomorphism since we know that these two objects are
isomorphic abstractly by \cite[A.7]{AC}. Let $f\colon
X\rightarrow Y$ be a morphism between realizable schemes. 
This morphism factorizes in $\mr{Real}(k/R)$ as
$X\xrightarrow{j}\overline{X}\xrightarrow{\overline{f}}Y$ where $j$ is
an open immersion and $\overline{f}$ is proper. We define
\begin{equation*}
 f_+\shom_X\bigl(\ms{F},f^!\ms{G}\bigr)
  \xrightarrow[\mbox{(\ref{adjointproperreal})}]{\sim}
  j_+\shom\bigl(\overline{f}_+\ms{F},j^!\ms{G}\bigr)
  \xleftarrow[\mbox{(\ref{adjointopenreal})}]{\sim}
  \shom_Y\bigl(f_!\ms{F},\ms{G}\bigr). 
\end{equation*}
It is standard to check that this does not depend on the choice of the
factorization. Using this isomorphism, we may prove the following two
more isomorphisms, which we record here for future use:

\begin{equation*}
 \shom(\ms{F},f_+\ms{G})\cong
  f_+\shom(f^+\ms{F},\ms{G}),\qquad
  f^!\shom(\ms{F},\ms{G})\cong
  \shom(f^+\ms{F},f^!\ms{G}).
\end{equation*}

\subsubsection{}
\label{compatleftadj}
Consider the cartesian diagram (\ref{cartediagforbc}).
We assume that the schemes are realizable.
Then we define the base change homomorphism $g'^+\circ f^!\rightarrow
f'^!\circ g^+$ to be the adjunction of the following composition:
\begin{equation*}
 f'_!\circ g'^+\circ f^!\xleftarrow{\sim}g^+\circ f_!\circ f^!
  \xrightarrow{\mr{adj}_{f'}}g^+.
\end{equation*}
By definition, the following diagram is commutative, which we will use
later:
\begin{equation*}
 \xymatrix{
  g^+f_!f^!\ar[d]_-{\mr{adj}_f}\ar[r]^{\sim}&
  f'_!g'^+f^!\ar[r]&
  f'_!f'^!g^+\ar[d]^-{\mr{adj}_{f'}}\\
 g^+\ar@{=}[rr]&&g^+.
  } 
\end{equation*}

\subsubsection{}
\label{Kunnethsch}
For realizable schemes $X_1$, $X_2$ and $\ms{M}_1\in
D^{\mr{b}}_{\mr{hol}}(X_1)$, $\ms{M}_2\in D^{\mr{b}}_{\mr{hol}}(X_2)$,
we put $\ms{M}_1\boxtimes\ms{M}_2:=p_1^+(\ms{M}_1)\otimes
p^+_2(\ms{M}_2)$ where $p_i\colon X_1\times X_2\rightarrow X_i$ denotes
the $i$-th projection. This functor is called the {\em exterior tensor
product}.

Now, let $f^{(\prime)}\colon X^{(\prime)}\rightarrow Y^{(\prime)}$ be
a morphism of realizable schemes, and take an object $\ms{M}^{(\prime)}$
in $D^{\mr{b}}_{\mr{hol}}(X^{(\prime)}/K)$. We have the canonical
isomorphism $(f\times f')^+\bigl((-)\boxtimes(-)\bigr)\cong
f^+(-)\boxtimes f'^+(-)$ since $f^+$ and $f'^+$ are monoidal. By taking
the adjoint, we have a homomorphism
\begin{equation*}
 f_+(\ms{M})\boxtimes f'_+(\ms{M}')\rightarrow
  (f\times f')_+\bigl(\ms{M}\boxtimes\ms{M}'\bigr).
\end{equation*}

\begin{prop*}
 This homomorphism is an isomorphism.
\end{prop*}
\begin{proof}
 When $f$ and $f'$ are immersions, the proposition is essentially
 contained in the proof of \cite[1.3.3 (i)]{AC}. Thus, we may assume $f$
 and $f'$ to be smooth proper and $X^{(\prime)}$, $Y^{(\prime)}$ can be
 lifted to proper smooth formal schemes $\fsch{X}^{(\prime)}$,
 $\fsch{Y}^{(\prime)}$.
 In this situation, we have the canonical isomorphism
 $f^!(-)\boxtimes f'^!(-)\cong(f\times
 f')^!\bigl((-)\boxtimes(-)\bigr)$, and by taking the adjunction of
 \cite{Vir}, we have the homomorphism $\rho\colon(f\times
 f')_+\bigl(\ms{M}\boxtimes\ms{M}'\bigr) \rightarrow
 f_+(\ms{M})\boxtimes f'_+(\ms{M}')$. The homomorphism in the
 statement is the dual of this homomorphism. Thus it suffices to show
 that $\rho$ is an isomorphism. Let
 $f_n^{(\prime)}\colon X_n^{(\prime)}\rightarrow Y_n^{(\prime)}$ be a
 smooth morphism of relative dimension $d^{(\prime)}$ between proper
 smooth schemes over $R/\pi^{n+1}$ where $\pi$ is a uniformizer of $R$.
 For perfect $\Dmod{m}{}$-complexes on
 $X^{(\prime)}_n$ and $Y^{(\prime)}_n$, have the homomorphism
 $\rho_n\colon (f_n\times f'_n)_+\bigl((-)\boxtimes(-)\bigr)\rightarrow
 f_{n+}(-)\boxtimes f'_{n+}(-)$ by similar construction to $\rho$, and
 it suffices to show that $\rho_n$ is an isomorphism
 since  $D^{\mr{b}}_{\mr{coh}}(\Dcomp{m}{\fsch{X}})=
 D_{\mr{perf}}(\Dcomp{m}{\fsch{X}})$ by \cite[4.4.8]{BeDmod2}.
 By \cite[VII.4.1]{RD}, the following
 diagram commutes:
 \begin{equation*}
  \xymatrix{
   R^df_*(\omega_{X_n/Y_n})\boxtimes R^{d'}f'_*(\omega_{X'_n/Y'_n})
   \ar[r]\ar[d]&
   \mc{O}_{Y_n}\boxtimes\mc{O}_{Y'_n}\ar[d]^{\sim}\\
  R^{d+d'}(f_n\times f'_n)_*(\omega_{X_n\times X'_n/Y_n\times
   Y'_n})\ar[r]&\mc{O}_{Y_n\times Y'_n}
   }
 \end{equation*}
 where the vertical homomorphisms are trace maps, and $\omega$ denotes
 the canonical bundle sheaf. The commutativity shows that $\rho_n$ is
 nothing but the homomorphism induced by the isomorphism
 \begin{equation*}
  \Dmod{m}{Y_n\times Y'_n\leftarrow X_n\times X'_n}\cong 
   \Dmod{m}{Y_n\leftarrow X_n}\boxtimes\Dmod{m}{Y'_n\leftarrow X'_n}
 \end{equation*}
 (cf.\ \cite[Lemma 4.5 (ii)]{A}), and we get the proposition by using
 the K\"{u}nneth formula for quasi-coherent sheaves.
\end{proof}

\subsubsection{}
\label{Beilequivcat}
Finally, we recall the following result:
\begin{thm*}[\cite{AC2}]
 Let $X$ be a realizable scheme. Then the canonical functor
 $D^{\mr{b}}(\mr{Hol}(X/K))\rightarrow D^{\mr{b}}_{\mr{hol}}(X/K)$
 induces an equivalence of triangulated categories.
\end{thm*}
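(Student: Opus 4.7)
The plan is to apply Beilinson's criterion for the realization functor $D^{\mr{b}}(\mr{Hol}(X/K))\to D^{\mr{b}}_{\mr{hol}}(X/K)$ to be an equivalence. The essential content is full faithfulness, i.e.\ the isomorphy of the canonical maps
\begin{equation*}
\mr{Ext}^n_{\mr{Hol}(X/K)}(\ms{M},\ms{N})\longrightarrow
\mr{Hom}_{D^{\mr{b}}_{\mr{hol}}(X/K)}(\ms{M},\ms{N}[n])
\end{equation*}
for all $\ms{M},\ms{N}\in\mr{Hol}(X/K)$ and all $n\geq 0$. Once this is known, essential surjectivity follows by Postnikov decomposition: any $\ms{K}\in D^{\mr{b}}_{\mr{hol}}(X/K)$ is the iterated extension of its finitely many cohomology objects $\H^i(\ms{K})\in\mr{Hol}(X/K)$, and the extension classes needed to assemble these triangles come from $\mr{Ext}^1_{\mr{Hol}(X/K)}$ by full faithfulness, so the same triangles can be built inside $D^{\mr{b}}(\mr{Hol}(X/K))$.

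To prove full faithfulness I would apply Beilinson's effaceability criterion: it suffices to produce, for every $\ms{N}\in\mr{Hol}(X/K)$ and every $n\geq 1$, a monomorphism $\ms{N}\hookrightarrow\ms{N}'$ in $\mr{Hol}(X/K)$ such that the induced map
\begin{equation*}
\mr{Hom}_{D^{\mr{b}}_{\mr{hol}}}(\ms{M},\ms{N}[n])\longrightarrow
\mr{Hom}_{D^{\mr{b}}_{\mr{hol}}}(\ms{M},\ms{N}'[n])
\end{equation*}
vanishes for every $\ms{M}$, together with the dual effacement on the contravariant variable. Such $\ms{N}'$ are to be constructed by Noetherian induction on the dimension of the support of $\ms{N}$, using the six functor formalism of \ref{fundproprealsch}. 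One chooses a dense open $j\colon U\hookrightarrow X$ with closed complement $i\colon Z\hookrightarrow X$ on which $j^+\ms{N}$ lies in $\mr{Sm}(U/K)[d]$ for the appropriate $d$, applies the localization triangle $i_+i^!\ms{N}\to\ms{N}\to j_+j^+\ms{N}\to$, and invokes the inductive hypothesis on $Z$ to reduce to the smooth case on $U$.

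The main obstacle is thus the smooth base case: for $\ms{N}\in\mr{Sm}(U/K)$ on a smooth affine $U$ one needs a supply of ``$\mr{Ext}$-acyclic'' holonomic modules into which $\ms{N}$ embeds and which kill the higher $\mr{Ext}$'s against any $\ms{M}\in\mr{Hol}(U/K)$. My plan would be to construct such embeddings via the fully faithful specialization functor $\mr{sp}_+\colon\mr{Isoc}^\dag(U/K)\hookrightarrow\mr{Hol}(U/K)[-d]$ of \ref{carodagdagcat}, combined with direct images along affine open immersions of the form $\ms{N}\hookrightarrow j_+j^+\ms{N}$ (further shrinking $U$), which imitates the $\ell$-adic Artin vanishing for affine smooth varieties and exploits the finite cohomological dimension of the de Rham complex of a smooth affine formal lift. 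Once these effacements are in place, Beilinson's criterion yields full faithfulness, and the essential surjectivity argument above delivers the asserted equivalence.
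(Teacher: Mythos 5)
This statement is quoted from the reference \cite{AC2} and is not reproved in the present paper, so there is no in-text argument to compare against word for word. Your sketch follows the standard Beilinson effaceability route, which is indeed the strategy one expects from \cite{AC2} (whose title is precisely about this equivalence): reduce to full faithfulness on $\mr{Ext}^n$, apply Beilinson's effaceability criterion, construct the effacing monomorphisms by Noetherian induction on the dimension of the support via the localization triangle, and treat a smooth base case on a small affine open. The scaffolding is correct, and the reduction of essential surjectivity to full faithfulness by Postnikov devissage is fine.

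However, the proposal pushes the entire content of the theorem into the base case and then stops short of an argument precisely there. You assert that, for $\ms{N}\in\mr{Sm}(U/K)$ with $U$ smooth affine, embedding $\ms{N}\hookrightarrow j_+j^+\ms{N}$ for a further affine open $j\colon V\hookrightarrow U$ effaces a given class in $\mr{Hom}_{D^{\mr{b}}_{\mr{hol}}}(\ms{M},\ms{N}[n])$, and justify this by analogy with Artin vanishing and by appeal to the finite cohomological amplitude of the de Rham complex of a formal lift. This is where the actual difficulty lies, and the justification does not carry it. First, after adjunction the effaceability assertion is that the class becomes zero in $\mr{Hom}_{D^{\mr{b}}_{\mr{hol}}(V)}(j^+\ms{M},j^+\ms{N}[n])$ for $V$ small enough, and it is not clear why shrinking $U$ achieves this for an arbitrary $\ms{M}\in\mr{Hol}(U/K)$ and arbitrary $n\geq 1$; the $\ell$-adic Artin vanishing statement controls $H^i(U,\ms{F})$ for $U$ affine, which is the case $\ms{M}=K_U$, and the passage from the constant case to a general first argument (via internal Hom, duality, further stratification) is a genuine part of the proof. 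Second, the de Rham complex of a smooth formal lift computes $H^*(U,\ms{N})$, not the $\mr{Ext}$ groups between arbitrary holonomic modules in $D^{\mr{b}}_{\mr{hol}}(U/K)$, so "finite cohomological dimension" of that complex does not directly give the needed vanishing. In other words, you have correctly identified the framework, but the key vanishing lemma that makes the effaceability work in the $p$-adic setting — the substantive theorem behind the equivalence — is still asserted rather than proved.
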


In the current formalism, cycle class map is missing. We shall construct
trace maps and a cycle class formalism in the coming subsections, which
are important tools to show the $\ell$-independence type
result.

\subsection{Ind-categories}
\label{Indcatsection}
\begin{lem}
 \label{deradjlemm}
 Let $\mc{A}$, $\mc{B}$ be abelian categories, and assume $\mc{A}$ has
 enough injective objects. Let $F\colon\mc{A}\rightarrow\mc{B}$ be a left
 exact functor, and assume that we have an adjoint pair $(G,F)$ such
 that $G$ is {\em exact}. Then for $M\in D^+(\mc{A})$ and $N\in
 D(\mc{B})$, we have
 \begin{equation*}
  \mr{Hom}_{D(\mc{A})}(G(N),M)\cong
   \mr{Hom}_{D(\mc{B})}(N,\mb{R}F(M)).
 \end{equation*}
\end{lem}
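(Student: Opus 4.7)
The plan is to reduce everything to the case of a bounded below complex of injectives and then combine the chain-level adjunction with K-injectivity.

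First I would fix an injective resolution $M\to I^{\bullet}$ in $\mc{A}$, which exists because $M\in D^{+}(\mc{A})$ and $\mc{A}$ has enough injectives; since $F$ is left exact, $\mb{R}F(M)$ is represented by $F(I^{\bullet})$. The key observation is that $F$ preserves injectives. Indeed, if $I\in\mc{A}$ is injective and $u\colon N'\hookrightarrow N$ is a monomorphism in $\mc{B}$, then $G(u)$ is a monomorphism by exactness of $G$, so every morphism $G(N')\to I$ extends to $G(N)\to I$; transporting across the adjunction, every morphism $N'\to F(I)$ extends to $N\to F(I)$, so $F(I)$ is injective. Hence $F(I^{\bullet})$ is a bounded below complex of injectives in $\mc{B}$, and is therefore K-injective in the sense of Spaltenstein.

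Using this, for any $N\in D(\mc{B})$ we obtain
$$\mr{Hom}_{D(\mc{B})}(N,\mb{R}F(M))=\mr{Hom}_{D(\mc{B})}(N,F(I^{\bullet}))=\mr{Hom}_{K(\mc{B})}(N,F(I^{\bullet})),$$
while the analogous statement for the bounded below complex of injectives $I^{\bullet}$ in $\mc{A}$ gives
$$\mr{Hom}_{D(\mc{A})}(G(N),M)=\mr{Hom}_{D(\mc{A})}(G(N),I^{\bullet})=\mr{Hom}_{K(\mc{A})}(G(N),I^{\bullet}).$$
Next I would invoke the chain-level adjunction: applied degree-by-degree, the natural isomorphism $\mr{Hom}_{\mc{A}}(G(-),-)\cong\mr{Hom}_{\mc{B}}(-,F(-))$ yields an isomorphism of Hom double complexes and hence of their total complexes; taking $H^{0}$ produces
$$\mr{Hom}_{K(\mc{B})}(N,F(I^{\bullet}))\cong\mr{Hom}_{K(\mc{A})}(G(N),I^{\bullet}).$$
Chaining the three identifications gives the desired isomorphism.

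I do not anticipate any serious obstacle. The only point requiring a moment of care is that $N$ is allowed to be unbounded, so one must know that a bounded below complex of injectives represents $\mb{R}\mr{Hom}(-,-)$ against arbitrary source complexes; this is Spaltenstein's classical K-injectivity result in its easiest case, and it is precisely the exactness of $G$ (through its consequence that $F$ preserves injectives) that lets us apply it symmetrically on both sides.
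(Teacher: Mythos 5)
Your proof is correct and follows essentially the same route as the paper's: reduce $M$ to a bounded-below injective resolution $I^{\bullet}$, observe that $F$ preserves injectives because $G$ is exact, identify derived Homs into the injective complexes with homotopy-category Homs, and apply the degree-wise adjunction to the Hom double complexes. You simply spell out the K-injectivity step and the proof that $F$ preserves injectives more explicitly than the paper, which invokes the $\mathrm{Hom}^{\bullet}$ formalism of Hartshorne without elaboration.
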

\begin{proof}
 By the exactness of $G$, $F$ sends injective objects to injective
 objects. Thus, for a complex of injective objects $I^{\bullet}\in
 C^+(\mc{A})$ and a complex $N^{\bullet}\in C(\mc{B})$, it suffices to
 show that
 \begin{equation*}
  \mr{Hom}_{K(\mc{A})}(G(N^{\bullet}),I^{\bullet})\cong
   \mr{Hom}_{K(\mc{B})}(N^{\bullet},F(I^{\bullet})).
 \end{equation*}
 By the adjointness, we have
 $\mr{Hom}^{\bullet}(G(N^{\bullet}),I^{\bullet})\cong\mr{Hom}^{\bullet}
 (N^{\bullet},F(I^{\bullet}))$ in $C(\mr{Ab})$ where
 $\mr{Hom}^{\bullet}$ is the functor defined in \cite[I.6]{RD}. Since
 $\mr{Hom}_{K(\mc{A})}=\H^0\mr{Hom}^{\bullet}$, we get the isomorphism.
\end{proof}

\begin{rem*}
 The proof also shows that if moreover $\mc{B}$ has enough injectives,
 we have
 \begin{equation*}
  \mb{R}\mr{Hom}_{D(\mc{A})}\bigl(G(N),M\bigr)\cong
   \mb{R}\mr{Hom}_{D(\mc{B})}\bigl(N,\mb{R}F(M)\bigr).
 \end{equation*}
\end{rem*}

\subsubsection{}
\label{indcatrecall}
Let us collect some facts on Ind-categories. Let $\mc{A}$ be a category.
Let $\mc{A}^\wedge$ be the category of presheaves on $\mc{A}$, and
$\mr{h}_{\mc{A}}\colon\mc{A}\rightarrow\mc{A}^\wedge$ be the canonical
embedding. Then $\mr{Ind}(\mc{A})$ is the full subcategory of
$\mc{A}^\wedge$ consisting of objects which can be written as a filtrant
small inductive limit of the image of $\mr{h}_{\mc{A}}$.
By definition, $\mr{h}_{\mc{A}}$ induces a functor
$\iota_{\mc{A}}\colon\mc{A}\rightarrow\mr{Ind}(\mc{A})$. We sometimes
abbreviate this as $\iota$. Since
$\mr{h}_{\mc{A}}$ is fully faithful by the Yoneda lemma
\cite[1.4.4]{KSc}, $\iota_{\mc{A}}$ is fully faithful as well. For the
detail see \cite[\S6]{KSc}.

Now, we assume that $\mc{A}$ is an abelian category.
We have the following properties:

\begin{enumerate}
 \item The category $\mr{Ind}(\mc{A})$ is abelian, and the functor
       $\iota_{\mc{A}}$ is exact. Moreover, $\mr{Ind}(\mc{A})$ admits
       small inductive limits, and small filtrant inductive
       limits are exact. (cf.\ \cite[8.6.5]{KSc})

 \item\label{esssmallgrothcate}
      Assume $\mc{A}$ to be essentially small. Then $\mr{Ind}(\mc{A})$
      is a Grothendieck category, and in particular, it possesses
      enough injectives, and admits small projective limits. (cf.\
      \cite[8.6.5, 9.6.2, 8.3.27]{KSc})

 \item\label{directfactorstillinA}
      The category $\mc{A}$ is a thick subcategory of
       $\mr{Ind}(\mc{A})$ by \cite[8.6.11]{KSc}. This in particular
       shows that any direct factor of objects of $\mc{A}$ is in
       $\mc{A}$, since direct factor is the kernel of a projector.

 \item\label{relofindlim}
      Let $X_\bullet\colon I\rightarrow\mc{A}$ be an inductive
      system. Since $\iota_{\mc{A}}$ is fully faithful, if
      $\indlim\iota_{\mc{A}}(X_i)$ is in the essential image of
      $\iota_{\mc{A}}$, then $\indlim X_i$ exists in $\mc{A}$ and
      $\indlim\iota_{\mc{A}}(X_i)\xrightarrow{\sim}
      \iota_{\mc{A}}(\indlim X_i)$.
 \setcounter{indcat}{\theenumi}
\end{enumerate}

Now, let $F\colon\mc{A}\rightarrow\mc{B}$ be an additive functor between
abelian categories. Then it extends uniquely to an additive
functor $IF\colon\mr{Ind}(\mc{A})\rightarrow\mr{Ind}(\mc{B})$ such that
$IF$ commutes with arbitrary small filtrant inductive limits by
\cite[6.1.9]{KSc}. Since small direct sum can be written as a filtrant
inductive limit of finite sums, $IF$ commutes with small direct sums as
well. We have the following more properties:
\begin{enumerate}
 \setcounter{enumi}{\theindcat}
 \item If $F$ is left (resp.\ right) exact, so is $IF$. (cf.\
       \cite[8.6.8]{KSc})

 \item Let $G\colon\mc{B}\rightarrow\mc{C}$ be another additive functor
       between abelian categories. Then $I(G\circ F)\cong IG\circ IF$.
       (cf.\ \cite[6.1.11]{KSc})
\end{enumerate}
If there is nothing to be confused, by abuse of notation, we denote $IF$
simply by $F$.

\begin{rem*}
 In general $\iota_{\mc{A}}$ does not commute with inductive limits
 (cf.\ \cite[6.1.20]{KSc}), and in \cite{KSc}, inductive limits in
 $\mr{Ind}(\mc{A})$ are distinguished by using $``\indlim"$.
 In this paper, we simply denote this limit by $\indlim$ if no
 confusion can arise, and when we use inductive limits, it is understood
 to be taken in $\mr{Ind}(\mc{A})$, not in $\mc{A}$, unless
 otherwise stated.
\end{rem*}

\begin{lem}
 \label{extenuniqcomfil}
 Let $\mc{A}$, $\mc{B}$ be abelian categories, and assume that $\mc{B}$
 admits small filtrant inductive limits.
 Then the restriction functor yields an equivalence
 $\mr{Fct}^{\mr{il,add}}(\mr{Ind}(\mc{A}),\mc{B})\xrightarrow{\sim}
 \mr{Fct}^{\mr{add}}(\mc{A},\mc{B})$
 where the target (resp.\ source) is the category of additive functors
 (resp.\ of additive functors which commute with small filtrant
 inductive limits).
\end{lem}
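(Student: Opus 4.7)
The plan is to construct an explicit quasi-inverse to the restriction functor. Given $F\in\mr{Fct}^{\mr{add}}(\mc{A},\mc{B})$, for each $X\in\mr{Ind}(\mc{A})$ choose a small filtrant inductive system $(X_i)_{i\in I}$ in $\mc{A}$ with $X\cong\indlim_{i\in I}\iota_{\mc{A}}(X_i)$, which exists by the definition of $\mr{Ind}(\mc{A})$ recalled in \S\ref{indcatrecall}. I set
\begin{equation*}
 \widetilde{F}(X):=\indlim_{i\in I}F(X_i)\quad\text{in }\mc{B},
\end{equation*}
which exists by the hypothesis on $\mc{B}$. To make this a functor, I use the standard Hom-formula for Ind-objects, $\mr{Hom}_{\mr{Ind}(\mc{A})}\bigl(\indlim X_i,\indlim Y_j\bigr)\cong\invlim_i\indlim_j\mr{Hom}_{\mc{A}}(X_i,Y_j)$ (cf.\ \cite[2.6.4]{KSc}); applying $F$ and passing to colimits in $\mc{B}$ then sends a morphism on the left to a morphism $\widetilde{F}(X)\to\widetilde{F}(Y)$. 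Independence of the chosen presentation is checked by a standard cofinality argument: two presentations of $X$ are dominated by a common refinement through a filtrant system of morphisms in $\mc{A}$, and additivity of $F$ together with the universal property of filtrant colimits yields the required canonical isomorphism.

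The next step is to verify that $\widetilde{F}$ commutes with small filtrant inductive limits. If $X=\indlim_{\alpha\in A}X^{\alpha}$ is a small filtrant inductive limit in $\mr{Ind}(\mc{A})$, writing each $X^{\alpha}\cong\indlim_{i\in I_{\alpha}}\iota_{\mc{A}}(X_i^{\alpha})$, one obtains a presentation of $X$ indexed by the filtrant category $\bigsqcup_{\alpha}I_{\alpha}$ (with the induced ordering), and the equality $\widetilde{F}(X)\cong\indlim_{\alpha}\widetilde{F}(X^{\alpha})$ follows from the interchange of filtrant colimits in $\mc{B}$. The composition $\widetilde{F}\circ\iota_{\mc{A}}\cong F$ holds since $\iota_{\mc{A}}(X)$ admits the constant presentation, and additivity of $\widetilde{F}$ is inherited from that of $F$ together with the exactness of small filtrant colimits.

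It remains to establish uniqueness. Given $G\in\mr{Fct}^{\mr{il,add}}(\mr{Ind}(\mc{A}),\mc{B})$ and setting $F:=G\circ\iota_{\mc{A}}$, for any $X\cong\indlim_{i}\iota_{\mc{A}}(X_i)$ in $\mr{Ind}(\mc{A})$ we have
\begin{equation*}
 G(X)\cong G\bigl(\indlim_{i}\iota_{\mc{A}}(X_i)\bigr)
 \cong\indlim_{i}G\bigl(\iota_{\mc{A}}(X_i)\bigr)
 \cong\indlim_{i}F(X_i)=\widetilde{F}(X),
\end{equation*}
the middle isomorphism being given by the assumption that $G$ commutes with small filtrant inductive limits; naturality in $X$ follows again from the Hom-formula. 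The main bookkeeping obstacle is the well-definedness of $\widetilde{F}$ on morphisms independently of the chosen presentation, but this is standard and is essentially the additive, filtrant variant of \cite[6.3.1]{KSc}, whose argument carries over verbatim once one restricts attention to filtrant inductive limits.
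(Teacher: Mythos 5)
Your proof is correct and follows essentially the same route as the paper's: both extend $F$ along filtrant colimits, which the paper records compactly as $\sigma_{\mc{B}}\circ IF$ with a citation to \cite[6.3.2]{KSc} rather than reconstructing the colimit extension by hand. One small inaccuracy: you invoke ``exactness of small filtrant colimits'' in $\mc{B}$ to get additivity of $\widetilde{F}$, but the hypothesis only grants their existence; additivity nonetheless holds because filtrant colimits always commute with finite biproducts, so no extra assumption is needed.
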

\begin{proof}
 This is a reorganization of \cite[6.3.2]{KSc}, or see [SGA 4, I,
 8.7.3]. The quasi-inverse is
 the functor sending $F$ to $\sigma_{\mc{B}}\circ IF$ where
 $\sigma_{\mc{B}}\colon\mr{Ind}(\mc{B})\rightarrow\mc{B}$ is the functor
 taking the inductive limit (cf.\ \cite[6.3.1]{KSc}).
\end{proof}

\subsubsection{}
\label{noethcatindcat}
Let $\mc{A}$, $\mc{B}$ be abelian categories, and assume moreover that
$\mc{A}$ is a {\em noetherian category} ({\it i.e.}\ an essentially
small category whose objects are noetherian cf.\
\cite[II.4]{Ga}).
Let $f_*\colon\mc{A}\rightarrow\mc{B}$ be a left exact functor. Recall
that $\mr{Ind}(\mc{A})$ has enough injectives (cf.\
\ref{indcatrecall} \eqref{esssmallgrothcate}). Thus, $If_*$ can be
derived to get $\mb{R}f_*\colon D^{+}(\mr{Ind}(\mc{A}))\rightarrow
D^+(\mr{Ind}(\mc{B}))$, by abuse of notation. We also recall that the
canonical functor $\iota_{\mc{A}}\colon D^{\mr{b}}(\mc{A})\rightarrow
D^{\mr{b}}_{\mc{A}}(\mr{Ind}(\mc{A}))$ gives an equivalence by
\cite[15.3.1]{KSc}, and the same for $\mc{B}$.

\begin{lem*}
 \label{indlimcommderfun}
 Let $f_*\colon\mc{A}\rightarrow\mc{B}$ be a left exact functor as
 above. Then for any integer $i\geq0$,
 $\mb{R}^if_*\colon\mr{Ind}(\mc{A})\rightarrow\mr{Ind}(\mc{B})$ commutes
 with arbitrary small filtrant inductive limit.
\end{lem*}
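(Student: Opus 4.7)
The plan is to reduce the statement to the key observation that a filtrant inductive limit of injective objects in $\mr{Ind}(\mc{A})$ remains injective, and then to compute $\mb{R}f_*$ via a functorial injective resolution compatible with filtrant inductive limits.

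First I would note that $\mr{Ind}(\mc{A})$ is a \emph{locally noetherian} Grothendieck category: the image $\iota_{\mc{A}}(\mc{A})$ is an essentially small generating family of compact objects, meaning that $\mr{Hom}_{\mr{Ind}(\mc{A})}(A,-)$ commutes with small filtrant inductive limits for every $A\in\mc{A}$ (this is built into the definition of $\mr{Ind}$), and each such $A$ is a noetherian object. From this I would extract the key lemma: any filtrant inductive limit $J=\indlim_\lambda J_\lambda$ of injective objects in $\mr{Ind}(\mc{A})$ is again injective. To see it, by Baer's criterion in a Grothendieck category with noetherian generators it suffices to check the lifting property on monomorphisms $A'\hookrightarrow A$ with $A\in\mc{A}$, noting that every subobject of a noetherian object of $\mc{A}$ lies again in $\mc{A}$. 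Compactness gives $\mr{Hom}(-,J)\cong\indlim_\lambda\mr{Hom}(-,J_\lambda)$ on such diagrams; each $\mr{Hom}(-,J_\lambda)$ is exact on the pair since $J_\lambda$ is injective, and exactness is preserved under small filtrant inductive limits of abelian groups.

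Granted the key lemma, I would fix a functorial injective resolution $X\mapsto I^{\bullet}(X)$ on $\mr{Ind}(\mc{A})$ that commutes with small filtrant inductive limits; such a resolution exists in any locally noetherian Grothendieck category, e.g.\ by transfinite iteration of a functorial injective embedding. Given a small filtrant system $(X_\alpha)$ in $\mr{Ind}(\mc{A})$, the complex $\indlim_\alpha I^{\bullet}(X_\alpha)$ then consists of injective objects by the key lemma, and it is a resolution of $\indlim_\alpha X_\alpha$ by the exactness of filtrant inductive limits in $\mr{Ind}(\mc{A})$ (cf.\ \ref{indcatrecall}). Since $If_*$ commutes with small filtrant inductive limits by Lemma \ref{extenuniqcomfil}, and cohomology commutes with filtrant inductive limits in $\mr{Ind}(\mc{B})$ for the same reason, we conclude
\begin{equation*}
 \mb{R}^{i}f_*\bigl(\indlim_\alpha X_\alpha\bigr)
 \cong H^{i}\bigl(If_*\indlim_\alpha I^{\bullet}(X_\alpha)\bigr)
 \cong \indlim_\alpha H^{i}\bigl(If_* I^{\bullet}(X_\alpha)\bigr)
 \cong \indlim_\alpha \mb{R}^{i}f_*(X_\alpha).
\end{equation*}

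The main obstacle is producing a functorial injective resolution compatible with filtrant inductive limits: the key lemma handles the stability of injectives under such limits, but one still needs the resolution itself to be chosen coherently across the filtrant system. If one prefers to bypass this construction, an alternative is to induct on $i$ by dimension shifting---embed each $X_\alpha$ in an injective $J_\alpha$ functorially, form the cokernel $Q_\alpha$, and use the key lemma to ensure that $\indlim_\alpha J_\alpha$ remains injective, so that the long exact sequence reduces $\mb{R}^{i}f_*(\indlim_\alpha X_\alpha)$ and $\indlim_\alpha\mb{R}^{i}f_*(X_\alpha)$ simultaneously to the case $i-1$.
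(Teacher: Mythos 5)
Your proposal follows essentially the same route as the paper: establish that $\mr{Ind}(\mc{A})$ is locally noetherian, deduce (the key point) that small filtrant inductive limits of injective objects remain injective, and then conclude that the derived functors commute with filtrant inductive limits. The paper does all three steps by citation (Gabriel for the first two, \cite[15.3.3]{KSc} for the last), whereas you spell them out; your Baer-criterion sketch of the key lemma and your final computation reproduce in substance the content of the cited results. One small remark: the ``obstacle'' you flag about needing a functorial resolution that commutes with filtrant inductive limits is not actually an obstacle --- any functorial injective resolution $X\mapsto I^{\bullet}(X)$ (which exists in every Grothendieck category) suffices, since for a filtrant system $(X_\alpha)$ the termwise colimit $\indlim_\alpha I^{\bullet}(X_\alpha)$ is automatically a resolution of $\indlim_\alpha X_\alpha$ by exactness of filtrant colimits, and consists of injectives by your key lemma; you never need $I^{\bullet}(\indlim X_\alpha)\cong\indlim I^{\bullet}(X_\alpha)$. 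Your dimension-shifting alternative is of course also fine and is closer to how \cite[15.3.3]{KSc} is actually proved.
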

\begin{proof}
 Since we are assuming $\mc{A}$ to be noetherian, $\mr{Ind}(\mc{A})$ is
 locally noetherian category (cf.\ \cite[II.4]{Ga}), and by \cite[II.4,
 Cor 1 of Thm 1]{Ga}, small filtrant inductive limits of injective
 objects in $\mr{Ind}(\mc{A})$ remain to be injective. Thus, we may
 apply \cite[15.3.3]{KSc} to conclude the proof.
\end{proof}

\subsubsection{}
\label{rightderfunc}
Recall that a $\delta$-functor $\{f^i\}$ between abelian categories is
called the {\em right satellite of $f^0$} if $f^{i}=0$ for $i<0$, and it
is universal among them (cf.\ \cite[2.2]{Tohoku}).

\begin{lem*}
 The composition functor $\{\mb{R}^if_*\circ\iota\}\colon\mc{A}
 \rightarrow \mr{Ind}(\mc{A})\rightarrow\mr{Ind}(\mc{B})$
 is the right satellite functor of
 $If_*\circ\iota_{\mc{A}}\cong\iota_{\mc{B}}\circ f_*$.
\end{lem*}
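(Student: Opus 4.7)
The plan is to lift universality from $\mc{A}$ to the ambient Grothendieck category $\mr{Ind}(\mc{A})$, where the classical derived-functor machinery applies, and then descend. Since $\mr{Ind}(\mc{A})$ has enough injectives, the family $\{\mb{R}^{i}f_{*}\}$ is the right satellite of $If_{*}$ on $\mr{Ind}(\mc{A})$ by the standard effaceability argument (via injective resolutions). Composing with the exact embedding $\iota_{\mc{A}}$ yields the candidate $\delta$-functor $\{\mb{R}^{i}f_{*}\circ\iota\}$ on $\mc{A}$ with degree-zero part $If_{*}\circ\iota_{\mc{A}}\cong \iota_{\mc{B}}\circ f_{*}$, so the formal shape of a right satellite is present; only the universal property requires argument.

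For universality, suppose $\{S^{i}\}\colon \mc{A}\rightarrow \mr{Ind}(\mc{B})$ is a $\delta$-functor equipped with a morphism $\phi^{0}\colon \iota_{\mc{B}}\circ f_{*}\rightarrow S^{0}$. Since $\mr{Ind}(\mc{B})$ admits small filtrant inductive limits, Lemma \ref{extenuniqcomfil} produces, for each $i$, a unique extension $\tilde{S}^{i}\colon \mr{Ind}(\mc{A})\rightarrow \mr{Ind}(\mc{B})$ of $S^{i}$ commuting with such limits; likewise $\phi^{0}$ extends to $\tilde{\phi}^{0}\colon If_{*}\rightarrow \tilde{S}^{0}$. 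To promote $\{\tilde{S}^{i}\}$ to a $\delta$-functor on $\mr{Ind}(\mc{A})$, I would exploit the noetherian hypothesis on $\mc{A}$: any short exact sequence $0\rightarrow X\rightarrow Y\rightarrow Z\rightarrow 0$ in $\mr{Ind}(\mc{A})$ is the filtrant colimit of the short exact sequences $0\rightarrow X\cap Y_{\alpha}\rightarrow Y_{\alpha}\rightarrow Y_{\alpha}/(X\cap Y_{\alpha})\rightarrow 0$ indexed by noetherian subobjects $Y_{\alpha}\subset Y$ (which lie in $\mc{A}$). The connecting maps of $S^{\bullet}$ assemble along this presentation, and since filtrant colimits are exact in the AB5 categories $\mr{Ind}(\mc{A})$ and $\mr{Ind}(\mc{B})$, one obtains well-defined connecting maps and long exact sequences for $\{\tilde{S}^{i}\}$.

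Applying universality of $\{\mb{R}^{i}f_{*}\}$ on $\mr{Ind}(\mc{A})$, the morphism $\tilde{\phi}^{0}$ extends uniquely to a morphism of $\delta$-functors $\{\tilde{\phi}^{i}\colon \mb{R}^{i}f_{*}\rightarrow \tilde{S}^{i}\}$. Restricting along $\iota_{\mc{A}}$ produces $\{\phi^{i}\colon \mb{R}^{i}f_{*}\circ\iota\rightarrow S^{i}\}$ extending $\phi^{0}$, proving existence. For uniqueness on $\mc{A}$, any two extensions of $\phi^{0}$ extend via the same filtrant-colimit device to morphisms of $\delta$-functors on $\mr{Ind}(\mc{A})$ that agree in degree zero; by universality there they coincide, hence agree after restriction to $\mc{A}$.

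The principal obstacle is the second paragraph: verifying that $\{\tilde{S}^{i}\}$ is genuinely a $\delta$-functor and that the resulting connecting maps are independent of choices. This step rests crucially on presenting every short exact sequence in $\mr{Ind}(\mc{A})$ as a filtrant colimit of sequences in $\mc{A}$ and on the AB5 exactness of filtrant colimits—both consequences of the noetherian hypothesis that underlies the previous lemma as well.
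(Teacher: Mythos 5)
Your argument mirrors the paper's own proof step for step: extend the given $\delta$-functor to $\mr{Ind}(\mc{A})$ via Lemma \ref{extenuniqcomfil}, invoke the universal property of $\{\mb{R}^if_*\}$ there, restrict along $\iota_{\mc{A}}$ to get the morphism, and deduce uniqueness from the fact that $\mb{R}^if_*$ commutes with filtrant inductive limits (Lemma \ref{indlimcommderfun}) so that morphisms out of it are determined on $\mc{A}$. The only divergence is in your second paragraph, where you sketch a direct noetherian-subobject decomposition and AB5 argument for the claim that the extended family $\{\widetilde{S}^i\}$ is a $\delta$-functor and flag the verification as a gap; the paper closes precisely this step by citing \cite[8.6.6]{KSc}.
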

\begin{proof}
 Since $\{\mb{R}^if_*\circ\iota\}$ is a right $\delta$-functor, it
 remains to show that it is universal. Let
 $\{G^i\}\colon\mc{A}\rightarrow\mr{Ind}(\mc{B})$ be a right
 $\delta$-functor with a morphism of functors $\iota_{\mc{B}}\circ f_*
 \rightarrow G^0$. By Lemma \ref{extenuniqcomfil}, $\{G^i\}$ extends
 uniquely to a collection of functors
 $\{\widetilde{G}^i\}\colon\mr{Ind}(\mc{A})\rightarrow\mr{Ind}(\mc{B})$.
 Then
 $\{\widetilde{G}^i\}$ is a right $\delta$-functor as well by
 \cite[8.6.6]{KSc}, with a morphism $If_*\rightarrow\widetilde{G}^0$.
 By the universal property of $\{\mb{R}^if_*\}$,
 we get a morphism $\{\mb{R}^if_*\}\rightarrow\{\widetilde{G}^i\}$,
 which induces the morphism
 $\varphi\colon\{\mb{R}^if_*\circ\iota\}\rightarrow\{G^i\}$. Now, any
 morphism $\mb{R}^if_*\circ\iota\rightarrow G^i$ extends uniquely to
 $\mb{R}^if_*\rightarrow\widetilde{G}^i$ by Lemma \ref{indlimcommderfun}
 and \ref{extenuniqcomfil}. Using \cite[8.6.6]{KSc} again,
 $\{\mb{R}^if_*\}\rightarrow\{\widetilde{G}^i\}$ is a morphism of
 $\delta$-functors if $\varphi$ is.
 Thus the uniqueness of $\varphi$ follows, and we conclude that the
 $\delta$-functor in question is universal.
\end{proof}

\begin{lem}
 \label{rightadjisderv}
 Let $f^*\colon\mc{B}\rightarrow\mc{A}$ be an exact functor such
 that $(f^*,f_*)$ is an adjoint pair. Assume given a functor
 $f_+\colon D^{\mr{b}}(\mc{A})\rightarrow D^{\mr{b}}(\mc{B})$ such that
 $(f^*,f_+)$ is an adjoint pair.
 Then $f_+\cong \mb{R}f_*\circ\iota$ on $D^{\mr{b}}(\mc{A})$.
\end{lem}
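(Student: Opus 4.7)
My plan is to invoke Lemma \ref{deradjlemm} at the level of the Ind-categories. Since $f^*$ is exact and $(f^*,f_*)$ is an adjoint pair, the Ind-extensions form an adjoint pair $(If^*,If_*)$ with $If^*$ still exact, so Lemma \ref{deradjlemm} applied to the Grothendieck category $\mr{Ind}(\mc{A})$ yields the adjunction
\begin{equation*}
\mr{Hom}_{D(\mr{Ind}(\mc{A}))}(If^*X,Y)\cong
\mr{Hom}_{D(\mr{Ind}(\mc{B}))}(X,\mb{R}f_*Y)
\end{equation*}
for $X\in D(\mr{Ind}(\mc{B}))$ and $Y\in D^+(\mr{Ind}(\mc{A}))$. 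For $M\in D^{\mr{b}}(\mc{A})$, I will pair the counit $f^*f_+M\to M$ in $D^{\mr{b}}(\mc{A})$ with this adjunction to produce a canonical comparison morphism $\alpha_M\colon\iota f_+M\to\mb{R}f_*\iota M$ in $D^+(\mr{Ind}(\mc{B}))$.

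Using the tautological identification $If^*\circ\iota\cong\iota\circ f^*$, the full faithfulness of the canonical inclusion $D^{\mr{b}}(\mc{B})\hookrightarrow D^+(\mr{Ind}(\mc{B}))$ that follows from \cite[15.3.1]{KSc}, and chaining the two adjunctions $(f^*,f_+)$ and $(If^*,\mb{R}f_*)$, I expect to verify that $\alpha_M$ induces a canonical isomorphism
\begin{equation*}
\mr{Hom}_{D^+(\mr{Ind}(\mc{B}))}(\iota N[k],\iota f_+M)
\xrightarrow{\sim}
\mr{Hom}_{D^+(\mr{Ind}(\mc{B}))}(\iota N[k],\mb{R}f_*\iota M)
\end{equation*}
for every $N\in\mc{B}$ and $k\in\mb{Z}$.

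The hard step is to upgrade this Hom-isomorphism, tested only against objects of $\iota(\mc{B})$ and their shifts, to the statement that $\alpha_M$ itself is an isomorphism in $D^+(\mr{Ind}(\mc{B}))$. For this I set $C:=\mr{cone}(\alpha_M)$, observe that $C$ is bounded below (since both $\iota f_+M$ and $\mb{R}f_*\iota M$ are), and infer $\mr{Hom}(\iota N,C[k])=0$ for every $N\in\mc{B}$ and $k\in\mb{Z}$ from the long exact Hom sequence. Supposing $C\neq 0$, let $m$ be minimal with $\H^mC\neq 0$; since $C[-m]\in D^{\geq 0}(\mr{Ind}(\mc{B}))$, the t-structure orthogonality $\mr{Hom}(D^{\leq 0},D^{\geq 1})=0$ applied to the truncation triangle of $C[-m]$ yields $\mr{Hom}(\iota N,C[-m])\cong\mr{Hom}_{\mr{Ind}(\mc{B})}(\iota N,\H^mC)$. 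This must vanish for every $N\in\mc{B}$, but every nonzero object of $\mr{Ind}(\mc{B})$ receives a nonzero morphism from some $\iota N'$ with $N'\in\mc{B}$ (being a filtered colimit of such), contradicting $\H^mC\neq 0$. Hence $C=0$ and $\alpha_M$ is an isomorphism.
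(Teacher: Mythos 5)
Your proposal is correct and takes a genuinely different route from the paper. The paper first establishes that $\H^i f_+(X)=0$ for $i<0$ and $\H^0f_+\cong f_*$ directly from the adjunction with $f^*$, then invokes the universal property of right satellites (Lemma \ref{rightderfunc}) to get a morphism of $\delta$-functors $\{\mb{R}^if_*\circ\iota\}\to\{\H^if_+\}$. It then builds the comparison map $\rho$ by essentially the same adjunction chain you use, and argues that the composite $\{\mb{R}^if_*\}\to\{\H^if_+\}\xrightarrow{\rho}\{\mb{R}^if_*\}$ is the identity by universality — yielding that each $\mb{R}^if_*(X)$ is a direct factor of $\H^if_+(X)$, hence lies in $\mc{B}$ and vanishes for $i\gg 0$; only then does it check the final Hom-isomorphism in $D^{\mr{b}}(\mc{B})$. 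You bypass the satellite machinery entirely: after constructing the comparison morphism $\alpha_M$ via Lemma \ref{deradjlemm} at the Ind level, you verify the Hom-isomorphism against all $\iota N[k]$ and then upgrade it to a genuine isomorphism in $D^+(\mr{Ind}(\mc{B}))$ by the cone plus truncation argument, using that nonzero objects of $\mr{Ind}(\mc{B})$ receive nonzero maps from $\iota(\mc{B})$. This automatically gives boundedness and membership of $\mb{R}f_*\iota M$ in the essential image of $D^{\mr{b}}(\mc{B})$, which the paper obtains separately through the direct-factor argument. Your argument is cleaner in that it needs only $\delta$-free homological algebra, at the cost of a bit more care in checking that the Hom-isomorphism really is composition with $\alpha_M$ (which does hold, as you can trace through the adjunction units and counits). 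One small thing to make explicit when writing this up: the step "$\alpha_M$ induces the Hom-isomorphism" should be justified by observing that composing with $\alpha_M$ corresponds, under the $(If^*,\mb{R}f_*)$ adjunction, to composing with the counit $\iota\epsilon_M$ after applying $If^*$, which matches the chain of adjunctions term-by-term.
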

\begin{proof}
 First, let us show that for $X\in\mc{A}$, $\H^if_+(X)=0$ for $i<0$. If
 $f_+(X)\neq0$, by boundedness condition, there exists an
 integer $d$ such that $\H^df_+(X)\neq0$ and $\H^if_+(X)=0$ for
 $i<d$. Assume $d<0$. Then for any $Y\in\mc{B}$, we have
 \begin{equation*}
  \mr{Hom}_{\mc{B}}\bigl(Y,\H^df_+(X)\bigr)\cong
   \mr{Hom}_{D(\mc{B})}\bigl(Y,f_+(X)[d]\bigr)\cong
   \mr{Hom}_{D(\mc{A})}\bigl(f^*(Y),X[d]\bigr)=0
 \end{equation*}
 where the last equality holds since $\H^i(X[d])=0$ for $i\leq0$.
 This contradicts with the assumption, and thus, $\H^if_+(X)=0$ for
 $i<0$. In the same way, we get that $(f^*,\H^0f_+)$ is
 an adjoint pair, and in particular, $\H^0f_+\cong f_*$.

 This shows that the collection of functors $\{\H^if_+\}$ is a (right)
 $\delta$-functor. Since  $\{\H^if_+\}$ is a $\delta$-functor from
 $\mc{A}$ to $\mc{B}$ with the isomorphism $f_*\cong\H^0f_+$, Lemma
 \ref{rightderfunc} yields a homomorphism
 $\{\mb{R}^if_*\circ\iota\}\rightarrow\{\H^if_+\}$ of
 $\delta$-functors.
 For $X\in\mc{A}$, we have
 \begin{align*}
  \mr{Hom}_{D(\mr{Ind}(\mc{B}))}\bigl(f_+(X),&\mb{R}f_*(X)\bigr)\cong
  \mr{Hom}_{D(\mr{Ind}(\mc{A}))}\bigl(f^*f_+(X),X\bigr)\\
   &\cong\mr{Hom}_{D(\mc{A})}\bigl(f^*f_+(X),X\bigr)\cong
  \mr{Hom}_{D(\mc{B})}\bigl(f_+(X),f_+(X)\bigr)
 \end{align*}
 where we used the canonical equivalence $D^{\mr{b}}(\mc{A})
 \xrightarrow{\sim} D^{\mr{b}}_{\mc{A}}(\mr{Ind}(\mc{A}))$ recalled in
 \ref{noethcatindcat}.
 Thus, the identity of $f_+(X)$ defines a homomorphism
 $\rho\colon f_+(X)\rightarrow \mb{R}f_*(X)$, which induces the
 isomorphism on $\H^0$. By the universal property of satellite functor,
 the composition $\{\mb{R}^if_*(X)\}\rightarrow
 \{\H^if_+(X)\}\xrightarrow{\rho}\{\mb{R}^if_*(X)\}$ is the identity,
 which shows that $\mb{R}^if_*(X)$ is a direct factor of $\H^if_+(X)$.
 This shows that $\mb{R}^if_*(X)$ is in $\mc{B}$ by
 \ref{indcatrecall} \eqref{directfactorstillinA} and $\mb{R}^if_*(X)=0$
 for $i\gg0$, which means that $\mb{R}f_*(X)$ is in
 $D^{\mr{b}}_{\mc{B}}(\mr{Ind}(\mc{B}))
 \xleftarrow{\sim}D^{\mr{b}}(\mc{B})$.
 Thus $\mb{R}f_*$ induces a functor from $D^{\mr{b}}(\mc{A})$ to
 $D^{\mr{b}}(\mc{B})$. For any $Y\in D^{\mr{b}}(\mc{B})$, we have
 \begin{equation*}
  \mr{Hom}_{D(\mc{B})}(Y,\mb{R}f_*(X))\cong
   \mr{Hom}_{D(\mc{A})}(f^*(Y),X)\cong
   \mr{Hom}_{D(\mc{B})}(Y,f_+(X)).
 \end{equation*}
 Thus $\mb{R}f_*(X)\xrightarrow{\sim}f_+(X)$ by the Yoneda lemma as
 required.
\end{proof}

\subsubsection{}
\label{smallnoeth}
Now, let us apply the preceding general results to the theory of
arithmetic $\ms{D}$-modules. We use the notation of
\ref{revDmodtheory}. First, we need:
\begin{lem*}
 For a realizable scheme $X$, the category $\mr{Hol}(X/K)$ is
 a noetherian and artinian.
\end{lem*}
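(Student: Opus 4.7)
The plan is to reduce to the formal-scheme case and then appeal to the finite-length theorem of Caro together with coherence. Let me outline the three steps.

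Fix a closed embedding $X\hookrightarrow\fsch{P}$ into a proper smooth formal scheme over $\mr{Spf}(R)$, which exists by the definition of realizability. By the very definition recalled at the beginning of \S\ref{revDmodtheory}, $\mr{Hol}(X/K)$ identifies with the full subcategory of $\mr{Hol}(\fsch{P})$ consisting of objects whose support is contained in $X$. This full subcategory is closed under subobjects, quotients and extensions in $\mr{Hol}(\fsch{P})$, so any of the three properties (noetherian, artinian, essentially small) for $\mr{Hol}(\fsch{P})$ immediately descends to $\mr{Hol}(X/K)$. Thus I may assume from now on that $X=\fsch{P}$.

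For $\fsch{P}$ itself, the noetherian and artinian properties both follow from showing that every object of $\mr{Hol}(\fsch{P})$ has finite length as a $\DdagQ{\fsch{P}}$-module. Recall that $\mr{Hol}(\fsch{P})$ is defined as the smallest thick full subcategory of the abelian category of overholonomic $\DdagQ{\fsch{P}}$-modules containing those overholonomic modules which admit some $s'$-th Frobenius structure with $s\mid s'$. By Caro's finite-length theorem for $F$-overholonomic modules, any such generator is of finite length in the category of overholonomic $\DdagQ{\fsch{P}}$-modules. Since the thick subcategory generated by a class of finite-length objects is again a category of finite-length objects (every object admits a finite filtration whose graded pieces are subquotients of finite direct sums of the generators, and finite length is stable under subquotients, extensions and finite direct sums), we conclude that every object of $\mr{Hol}(\fsch{P})$ has finite length, hence ACC and DCC.

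For essential smallness, I will use that every holonomic module is a coherent $\DdagQ{\fsch{P}}$-module. Since the underlying topological space of $\fsch{P}$ is noetherian (being the space of a proper $k$-scheme) and $\DdagQ{\fsch{P}}$ is a coherent sheaf of rings (Berthelot), the category of coherent $\DdagQ{\fsch{P}}$-modules is essentially small, and so is its full subcategory $\mr{Hol}(\fsch{P})$.

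The one point that needs care is the second step, namely ensuring that finite length in the ambient category of overholonomic modules really transports to the category $\mr{Hol}(\fsch{P})$ despite the fact that we explicitly discard the Frobenius structure in the definition; the thick-subcategory description, together with the fact that finite length is an intrinsic property of an object in an abelian category, disposes of this issue without having to choose a Frobenius structure on a given object.
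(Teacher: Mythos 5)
Your proof is correct and follows essentially the same route as the paper's: essential smallness is reduced to coherence of $\DdagQ{\fsch{P}}$-modules over a noetherian space, and the ACC/DCC for objects comes from the finite-length theorem for (potentially) Frobenius-equipped overholonomic modules. The paper dispatches the noetherian/artinian part by a direct citation to \cite[1.3]{AC2}, whereas you unpack that citation yourself via the thick-subcategory structure of $\mr{Hol}(\fsch{P})$ — a perfectly valid re-derivation, since the class of finite-length objects is closed under subquotients, extensions and summands.
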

\begin{proof}
 Let $X$ be a realizable variety. Then the category $\mr{Hol}(X/K)$ is
 essentially small. Indeed, to check this, it suffices to show that for
 a smooth formal scheme $\fsch{X}$, the category of coherent
 $\DdagQ{\fsch{X}}$-modules is essentially small. The verification is
 standard. Now, any object of $\mr{Hol}(X/K)$ is noetherian and artinian
 by \cite[1.5]{AC2}.
\end{proof}

\begin{dfn*}
 For a realizable scheme $X$, we put
 $M(X/K):=\mr{Ind}(\mr{Hol}(X/K))$. This is a Grothendieck category by
 \ref{indcatrecall} \eqref{esssmallgrothcate} and the lemma above.
\end{dfn*}

\subsubsection{}
\label{smoothpushpulsch}
Let $\phi\colon X\rightarrow Y$ be a smooth morphism equidimensional of
relative dimension $d$ between realizable schemes.
We have the following functors via the equivalence of Theorem
\ref{Beilequivcat} compatible with Frobenius pull-backs:
\begin{equation*}
 \phi_+[-d]
  \colon D^{\mr{b}}(\mr{Hol}(X/K))\rightleftarrows
  D^{\mr{b}}(\mr{Hol}(Y/K))\colon
  \phi^+[d].
\end{equation*}

\begin{lem*}
 We have an adjoint pair $(\phi^+[d],\phi_+[-d])$, and $\phi^+[d]$ is
 exact. The adjunction map is compatible with Frobenius
 pull-backs.
\end{lem*}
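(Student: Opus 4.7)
The plan is to reduce each assertion to the six-functor formalism for realizable schemes already recalled in \ref{fundproprealsch}, combined with Beilinson's equivalence $D^{\mr{b}}(\mr{Hol}(X/K))\xrightarrow{\sim} D^{\mr{b}}_{\mr{hol}}(X/K)$ of Theorem \ref{Beilequivcat}. Under this equivalence, the functors $\phi_+[-d]$ and $\phi^+[d]$ between $D^{\mr{b}}(\mr{Hol}(Y/K))$ and $D^{\mr{b}}(\mr{Hol}(X/K))$ are identified with the corresponding shifted six-functor operations on the holonomic derived categories, so everything takes place inside the formalism of \S\ref{revDmodtheory}.

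For the adjointness, $(\phi^+,\phi_+)$ is already an adjoint pair on $D^{\mr{b}}_{\mr{hol}}$ by item (3) of \ref{fundproprealsch}. Since shifting is an auto-equivalence of triangulated categories, the formal chain
\begin{equation*}
\mr{Hom}\bigl(\phi^+[d]M,N\bigr)\cong\mr{Hom}\bigl(\phi^+M,N[-d]\bigr)\cong\mr{Hom}\bigl(M,\phi_+N[-d]\bigr)\cong\mr{Hom}\bigl(M,\phi_+[-d]N\bigr)
\end{equation*}
produces the desired adjoint pair $(\phi^+[d],\phi_+[-d])$, whose unit and counit are obtained from those of $(\phi^+,\phi_+)$ by inserting the identity on the shift functors.

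For the exactness of $\phi^+[d]$, what must be checked is t-exactness: that $\phi^+[d]$ sends $\mr{Hol}(Y/K)$ into $\mr{Hol}(X/K)$. Once this is known, the induced functor on hearts is automatically exact because $\phi^+[d]$ is triangulated. The question is local on source and target, so I will reduce to the case where $\phi$ is lifted to a smooth morphism $\fsch{X}\to\fsch{Y}$ of proper smooth formal schemes of relative dimension $d$. In that local situation $\phi^+$ is the underived pull-back of $\ms{D}^\dag_{\mb{Q}}$-modules along a smooth, hence flat, formal morphism, and the shift $[d]$ is precisely the perverse normalization reconciling the natural $\ms{D}$-module t-structure with the holonomic one on $D^{\mr{b}}_{\mr{hol}}$. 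This matches the normalizations used by Caro in \cite{CaInv} and in \cite{AC2}, and is the only substantive step.

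Finally, the Frobenius compatibility is immediate from item \ref{frobdesrecall} of \ref{fundproprealsch}: both $\phi^+$ and $\phi_+$ commute canonically with the Frobenius pull-back $F^*$, and $F^*$ (being exact) commutes tautologically with the shift functors, so the unit and counit of the shifted adjunction inherit this Frobenius equivariance. The main obstacle is therefore the local t-exactness of $\phi^+[d]$ on smooth formal lifts; all remaining assertions are formal consequences of the six-functor package already established for realizable schemes.
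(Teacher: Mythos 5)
Your treatment of the adjointness and of Frobenius compatibility is essentially the same as the paper's: adjointness is obtained formally from $(\phi^+,\phi_+)$ by inserting the shift, and Frobenius compatibility follows from \ref{fundproprealsch}.\ref{frobdesrecall}. (The paper's own proof doesn't even bother spelling out the Frobenius part; your remark is the correct justification.)

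Where you diverge is on exactness. The paper simply cites \cite[1.3.2~(i)]{AC}. You instead sketch a re-derivation: localize, lift $\phi$ to a smooth morphism of proper smooth formal schemes, observe that the pull-back in the $\ms{D}^\dag_{\mb{Q}}$-module sense is underived because of flatness, and invoke the shift $[d]$ as the perverse normalization. This is the right idea, and it is how the cited result is proved, but as written it is a gesture rather than an argument. Two points deserve flagging. First, the reduction "the question is local, so I may assume $\phi$ lifts to a smooth morphism of proper smooth formal schemes" glosses over the need to choose compatible embeddings of $X$ and $Y$ into proper smooth formal schemes; in practice one uses the graph of $\phi$ and factorizations through \'{e}tale maps followed by relative affine spaces, none of which you mention. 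Second, you conflate "heart-preserving" with "t-exact": for a triangulated functor between \emph{bounded} derived categories, preservation of the heart does imply t-exactness by {\it d\'{e}vissage}, so the conclusion is fine here, but the way it is phrased reads as though they are the same thing by definition. Neither point is a fatal gap, but since the paper resolves all of this with a one-line citation, your sketch buys nothing beyond what the citation already provides, while being less precise than the source it is replicating.
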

\begin{proof}
 Since $(\phi^+,\phi_+)$ is an adjoint pair, the adjointness
 follows. The exactness is by \cite[1.3.2 (i)]{AC}.
\end{proof}
We put $\phi_*:=\H^0(\phi_+[-d])$, $\phi^*:=\H^0(\phi^+[d])$.
We have the right derived functor
$\mb{R}\phi_*\colon D^{+}(M(X/K))\rightarrow D^+(M(Y/K))$.
By Lemma \ref{rightadjisderv}
together with the lemma above, $\phi_+[-d]$ is the right derived
functor of $\phi_*$, namely, $\phi_+[-d]\cong\mb{R}\phi_*$ on
$D^{\mr{b}}(\mr{Hol}(X/K))$, which is a full subcategory of
$D^+(M(X/K))$.

Now, let $\phi\colon X\rightarrow Y$ be a smooth morphism which may
not be equidimensional. Then there exists a decomposition $X=\coprod
X_i$ where $X_i$ is an open subscheme of $X$ such that the induced
morphism $\phi_i\colon X_i\rightarrow Y$ is equidimensional. We put
$\phi_*:=\sum \phi_{i*}$ and $\phi^*:=\sum \phi_i^*$. Note that when
$\phi$ is an open immersion, then we have $\mb{R}\phi_*\cong\phi_+$.

\subsubsection{}
\label{finiteadjpropsch}
Let $f\colon X\rightarrow Y$ be a finite morphism between realizable
schemes. Consider the functors
\begin{equation*}
 f_!\xrightarrow{\sim}f_+\colon
  D^{\mr{b}}(\mr{Hol}(X/K))\rightleftarrows
  D^{\mr{b}}(\mr{Hol}(Y/K))\colon f^!.
\end{equation*}
We have the following:

\begin{lem*}
 Then $f_!\xrightarrow{\sim}f_+$ are exact and $(f_+,f^!)$ is an adjoint
 pair compatible with Frobenius pull-backs.
\end{lem*}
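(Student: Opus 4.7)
The plan is to verify, in order, the isomorphism $f_!\cong f_+$, the t-exactness of $f_+$, and the adjoint pair property together with its Frobenius-compatibility.

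Since $f$ is finite it is proper, so property \ref{propshpucoi} of \S\ref{fundproprealsch} gives the canonical isomorphism $f_!\xrightarrow{\sim} f_+$. The key step is then to show that $f_+$ is t-exact for the holonomic t-structure, i.e.\ restricts to an exact functor $\mr{Hol}(X/K)\to\mr{Hol}(Y/K)$. Working locally on $Y$ we may choose an embedding into a proper smooth formal scheme $\fsch{P}$ and embed $X$ into an analogous $\fsch{P}'$; then $f_+$ is computed, up to the specialization/fragmentation procedure defining $\mr{Hol}$, via the transfer bimodule for an ambient morphism of smooth formal schemes. Because $f$ has relative dimension zero, the transfer bimodule is locally free in a single degree, so the corresponding Spencer-type resolution produces only the $\H^0$-cohomology on holonomic input. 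This is the arithmetic analogue of the classical fact that finite push-forward of $\ms{D}$-modules is t-exact for the holonomic/perverse t-structure, and it is established in Caro's framework; I expect this exactness to be the main obstacle, since everything else reduces to it.

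Granted the exactness, $f_+$ extends termwise to a functor $D^{\mr{b}}(\mr{Hol}(X/K))\to D^{\mr{b}}(\mr{Hol}(Y/K))$, and under Beilinson's equivalence (Theorem \ref{Beilequivcat}) this agrees with the functor $f_+\colon D^{\mr{b}}_{\mr{hol}}(X/K)\to D^{\mr{b}}_{\mr{hol}}(Y/K)$ from \S\ref{fundproprealsch}. The adjoint pair $(f_!,f^!)$ between $D^{\mr{b}}_{\mr{hol}}$-categories coming from property (3) of \S\ref{fundproprealsch}, combined with $f_!\cong f_+$, therefore transports via Beilinson's equivalence to an adjoint pair $(f_+,f^!)$ between $D^{\mr{b}}(\mr{Hol}(X/K))$ and $D^{\mr{b}}(\mr{Hol}(Y/K))$. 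One may in addition invoke Lemma \ref{rightadjisderv}, applied with the roles of $(\mc{A},\mc{B})=(\mr{Hol}(Y/K),\mr{Hol}(X/K))$ and with the exact left adjoint $f_+$, to identify $f^!$ on $D^{\mr{b}}$ with the right derived functor of its $\H^0$, mirroring the treatment in \S\ref{smoothpushpulsch}.

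Finally, compatibility with Frobenius pull-back is formal: the isomorphism $f_!\to f_+$, the adjunction $(f_!,f^!)$, and the Beilinson equivalence all commute canonically with the Frobenius pull-back by property \ref{frobdesrecall} (and the remark following it), so the adjoint pair $(f_+,f^!)$ produced above intertwines with Frobenius pull-back canonically. No further computation should be required beyond naturality of the unit and counit under the base six-functor formalism.
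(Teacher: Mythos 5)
Your proposal is correct and follows essentially the same route as the paper: the isomorphism $f_!\xrightarrow{\sim}f_+$ from properness (\ref{fundproprealsch}.\ref{propshpucoi}), the exactness of finite push-forward on $\mr{Hol}$ (which the paper simply cites as \cite[1.3.13]{AC} rather than re-deriving via a transfer-bimodule argument), and the transport of the adjunction $(f_!,f^!)$ and of Frobenius-compatibility through Beilinson's equivalence. The only caveat is that your heuristic for the exactness (local freeness of the transfer bimodule in a single degree) is not quite a proof for a general finite, possibly non-flat, morphism — the cleaner conceptual reason in the perverse-style t-structure is that finite means affine plus proper, giving right- and left-t-exactness respectively — but since you ultimately fall back on the result as established in Caro's framework, this does not affect the correctness of the argument.
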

\begin{proof}
 The exactness is by \cite[1.3.13]{AC}, and the other claims follows by
 \ref{fundproprealsch}.
\end{proof}

Now, we have the associated right derived functor
$\mb{R}(\H^0f^!)\colon D^{+}(M(X/K))\rightarrow D^+(M(Y/K))$. By Lemma
\ref{rightadjisderv} together with the lemma above, we have
$f^!\cong\mb{R}(\H^0f^!)$ on $D^{\mr{b}}(\mr{Hol}(Y/K))$.

\subsubsection{}
\label{projectcasepullpush}
Let $X$, $Y$ be realizable schemes, and consider the projections
$p\colon X\times Y\rightarrow Y$, $q\colon X\times Y\rightarrow X$. Let
$\ms{A}$ be an object in $\mr{Hol}(X/K)$. We have functors:
\begin{align*}
 p_{\ms{A}+}(-):=p_+\shom(q^+\ms{A},-)
 \colon
 D^{\mr{b}}&(\mr{Hol}(X\times Y/K))\\
 &\rightleftarrows
 D^{\mr{b}}(\mr{Hol}(Y/K))\colon
 \ms{A}\boxtimes(-)=:p^+_{\ms{A}}.
\end{align*}
Now, assume that $\ms{A}$ is endowed with Frobenius structure
$\ms{A}\xrightarrow{\sim}F^*\ms{A}$.
Then we have an isomorphism of functors $F^*\circ p_{\ms{A}+}\cong
p_{\ms{A}+}\circ F^*$, and $F^*\circ p^+_{\ms{A}}\cong
p^+_{\ms{A}}\circ F^*$. Thus, $p_{\ms{A}+}$ and $p_{\ms{A}}^+$ are
compatible with Frobenius pull-backs. We have:
\begin{lem*}
 The functor $p^+_{\ms{A}}$ is exact, and $(p^+_{\ms{A}},p_{\ms{A}+})$
 is an adjoint pair. Moreover, if $\ms{A}$ is endowed with Frobenius
 structure, the pair is compatible with Frobenius pull-backs.
\end{lem*}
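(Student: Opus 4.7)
The adjointness $(p^+_{\ms{A}}, p_{\ms{A}+})$ is entirely formal. The plan is to chain together the tensor-hom adjunction from \ref{projformulas} with the $(p^+, p_+)$ adjunction of \S\ref{fundproprealsch}: for $\ms{M}\in D^{\mr{b}}(\mr{Hol}(Y/K))$ and $\ms{N}\in D^{\mr{b}}(\mr{Hol}(X\times Y/K))$,
\[
\mr{Hom}(q^+\ms{A}\otimes p^+\ms{M},\ms{N})\cong\mr{Hom}(p^+\ms{M},\shom(q^+\ms{A},\ms{N}))\cong\mr{Hom}(\ms{M},p_+\shom(q^+\ms{A},\ms{N})),
\]
and the right-hand side is $\mr{Hom}(\ms{M},p_{\ms{A}+}\ms{N})$ by definition. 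No geometric input is needed for this step.

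The substantive piece is the t-exactness of $p^+_{\ms{A}}=\ms{A}\boxtimes(-)\colon\mr{Hol}(Y/K)\to\mr{Hol}(X\times Y/K)$. The plan is to reduce it to a local statement. Embed $X$ and $Y$ in proper smooth formal schemes $\fsch{P}$ and $\fsch{Q}$; then $\ms{A}$ and $\ms{M}$ become holonomic $\DdagQ{\fsch{P}}$- and $\DdagQ{\fsch{Q}}$-modules, and on $\fsch{P}\times\fsch{Q}$ one identifies $q^+\ms{A}\otimes p^+\ms{M}$ with the naive external tensor $\ms{A}\boxtimes\ms{M}$ of arithmetic $\ms{D}$-modules. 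Two ingredients should combine here: first, external tensor preserves (over)holonomicity, so $\ms{A}\boxtimes\ms{M}$ is a single holonomic module rather than an honest complex; second, $p^+\ms{M}$ is flat over the subring $q^{-1}\mc{O}_{\fsch{P}}\subset\mc{O}_{\fsch{P}\times\fsch{Q}}$, so the derived tensor coincides with the underived one and is exact in $\ms{M}$ for fixed $\ms{A}$. I expect the hard part of the proof to be the careful verification of this local flatness/heart-preservation combination, matching the shifted conventions built into $\mr{Hol}(X\times Y/K)$; the inputs should be extractable from the K\"{u}nneth-type local computation already exploited in \S\ref{Kunnethsch} together with Caro's stability results for overholonomic modules.

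For Frobenius compatibility, when $\ms{A}$ carries a structure $\ms{A}\xrightarrow{\sim}F^*\ms{A}$, each of $p^+,p_+,q^+,\otimes,\shom$ commutes canonically with $F^*$ by \ref{frobdesrecall}. Hence both $p^+_{\ms{A}}$ and $p_{\ms{A}+}$ intertwine $F^*$, and the unit and counit of the adjunction — built from the $F^*$-equivariant unit/counit of $(p^+,p_+)$ and of tensor-hom — are $F^*$-equivariant as well. This step is purely formal and requires no further geometric input.
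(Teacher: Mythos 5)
Your adjointness argument and Frobenius-compatibility argument are precisely the paper's: chain the tensor–$\shom$ adjunction from \ref{fundproprealsch}.\ref{projformulas} with the $(p^+,p_+)$ adjunction, and observe that every constituent functor intertwines $F^*$ by \ref{frobdesrecall}. There is nothing to add there.

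Where you diverge from the paper is the treatment of exactness of $p^+_{\ms{A}}$. The paper disposes of this in one line by citing \cite[1.3.3 (ii)]{AC} (the stability of the holonomic t-structure under $\boxtimes$), whereas you propose to re-derive it by passing to local lifts $\fsch{P}$, $\fsch{Q}$, identifying $q^+\ms{A}\otimes p^+\ms{M}$ with the naive external tensor of arithmetic $\ms{D}$-modules, and appealing to flatness of $p^+\ms{M}$ over $q^{-1}\mc{O}_{\fsch{P}}$. This is the right circle of ideas — it is essentially the content of the cited lemma — but as you yourself flag, the flatness argument only shows that the $\mc{O}$-module derived tensor is concentrated in a single degree; it does not by itself show that the result lands in the heart of the \emph{holonomic} t-structure on $D^{\mr{b}}_{\mr{hol}}(X\times Y/K)$, which is shifted relative to the naive one. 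Bridging that gap requires a Künneth-type computation of characteristic varieties or supports (the real work behind \cite[1.3.3 (ii)]{AC}), and you defer it. So your route is substantively different — a self-contained local argument rather than a citation — but it is not finished; the paper simply outsources the one nontrivial step. If you want to keep your route, you should either carry out the support/characteristic-cycle bookkeeping or cite the reference as the paper does; either is fine, but the lemma as stated needs one of the two.
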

\begin{proof}
 The exactness of $p_{\ms{A}}^+$ follows from \cite[1.3.3 (ii)]{AC}.
 By definition \cite[1.1.8 (i)]{AC}, we have $q^+\ms{A}\otimes
 p^+(-)\cong\ms{A}\boxtimes(-)$. Thus, we get
 \begin{align*}
  \mr{Hom}_{X\times Y}\bigl(p^+_{\ms{A}}(-),-\bigr)&\cong
  \mr{Hom}_{X\times Y}\bigl(q^+\ms{A}\otimes p^+(-),-\bigr)\\
  &\cong
  \mr{Hom}_{X\times Y}\bigl(p^+(-),\shom(q^+\ms{A},-)\bigr)\\
  &\cong
  \mr{Hom}_{Y}\bigl(-,p_+\shom(q^+\ms{A},-)\bigr),
 \end{align*}
 where the second and the last isomorphism holds by the adjunction
 properties (cf.\ \ref{fundproprealsch}).
\end{proof}
We put $p_{\ms{A}*}:=\H^0p_{\ms{A}+}$,
$p_{\ms{A}}^*:=\H^0p_{\ms{A}}^+$.
Once again, we get $p_{\ms{A}+}\cong\mb{R}p_{\ms{A}*}$ on
$D^{\mr{b}}(\mr{Hol}(X\times Y/K))$.

\begin{lem}
 \label{locextrsch}
 Let $X$ be a realizable scheme, $j\colon U\hookrightarrow X$ be an open
 immersion, and $i\colon Z\hookrightarrow X$ be its complement. For an
 injective object $\ms{I}$ in $M(X/K)$, we have an exact sequence
 \begin{equation*}
  0\rightarrow\H^0i_+i^!(\ms{I})\rightarrow\ms{I}\rightarrow
   \H^0j_+j^+(\ms{I})\rightarrow0.
 \end{equation*}
\end{lem}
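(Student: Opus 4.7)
The plan is to deduce the short exact sequence from the localization distinguished triangle
\begin{equation*}
 i_+i^!\ms{F}\longrightarrow\ms{F}\longrightarrow j_+j^+\ms{F}\xrightarrow{+1}
\end{equation*}
applied to $\ms{F}=\ms{I}$, interpreted in $D^+(M(X/K))$, together with a vanishing argument exploiting the injectivity of $\ms{I}$.

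First I would set up the needed adjunctions in $M(X/K)$. By \ref{finiteadjpropsch} the functor $i_+$ is exact on $\mr{Hol}(Z/K)$ and $(i_+,\H^0 i^!)$ is an adjoint pair there; by \ref{smoothpushpulsch} the same holds for $(j^+,j_*)$ with $j_*:=\H^0 j_+$ and $j^+$ exact. Extending via the Ind-category formalism (Lemma \ref{extenuniqcomfil}) one obtains adjoint pairs $(i_+,\H^0 i^!)$ and $(j^+,j_*)$ on $M(X/K)$ with exact left adjoints. The right adjoints are left exact and $M(X/K)$ has enough injectives, with filtered colimits of injectives remaining injective (\ref{noethcatindcat}), so the right derived functors $\mb{R}(\H^0 i^!)$ and $\mb{R} j_*$ are defined on $D^+(M(X/K))$; by Lemma \ref{rightadjisderv} they restrict on $D^{\mr{b}}(\mr{Hol})$ to $i^!$ and $j_+$ respectively.

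Next I would establish the distinguished triangle
\begin{equation*}
 i_+i^!\ms{I}\longrightarrow\ms{I}\longrightarrow j_+j^+\ms{I}\xrightarrow{+1}
\end{equation*}
in $D^+(M(X/K))$. The counit $i_+\H^0 i^!\to\mr{id}$ and unit $\mr{id}\to j_*j^+$ in $M(X/K)$ derive to the two maps, and their composition vanishes because $j^+i_+=0$ (an identity that passes through the Ind-extension from its analogue on $\mr{Hol}$). To check the triangle is distinguished, I would write $\ms{I}$ as a filtered colimit of objects of $\mr{Hol}(X/K)$, invoke the standard localization triangle in $D^{\mr{b}}(\mr{Hol}(X/K))$ termwise, and pass to the colimit using that each functor involved commutes with filtered colimits (Lemma \ref{indlimcommderfun}) and that filtered colimits of distinguished triangles remain distinguished.

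Finally I would extract the short exact sequence from the long exact cohomology sequence of the triangle applied to $\ms{I}$. Injectivity gives $\mb{R}^p(\H^0 i^!)(\ms{I})=0$ for $p>0$, and since $i_+$ is exact the complex $i_+i^!\ms{I}$ is concentrated in degree zero with $\H^0(i_+i^!\ms{I})=i_+\H^0 i^!\ms{I}$. Since $j^+$ is exact, $j^+\ms{I}$ is concentrated in degree zero, so $j_+j^+\ms{I}=\mb{R} j_*(j^+\ms{I})$ lies in $D^{\geq 0}$. The long exact sequence therefore collapses to the desired short exact sequence. The only real technical point is the second step, namely promoting the localization triangle from $D^{\mr{b}}(\mr{Hol}(X/K))$ to $D^+(M(X/K))$; the rest is a routine cohomology computation.
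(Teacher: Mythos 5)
Your strategy is in the same spirit as the paper's---both proofs ultimately rest on the localization triangle over $\mr{Hol}(X/K)$, on Lemma \ref{rightadjisderv} to identify $i^!$ and $j_+$ with derived functors of $\H^0 i^!$ and $j_*$, on Lemma \ref{indlimcommderfun} for commutation with filtered colimits, and on the $\mb{R}^1$-acyclicity of injectives. The difference is in where the colimit argument lives. You propose to promote the localization distinguished triangle itself from $D^{\mr{b}}(\mr{Hol}(X/K))$ to $D^+(M(X/K))$ by ``passing to the colimit $\dots$ using that filtered colimits of distinguished triangles remain distinguished.'' That is the delicate point: a triangulated category does not admit arbitrary colimits, and the assertion you are invoking is not a citable standard fact here. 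It can be made rigorous (using, say, homotopy colimits and the exactness of filtered colimits in the Grothendieck category $M(X/K)$), but as stated it is a gap in your argument, not a step that follows by general nonsense. The paper sidesteps this entirely by staying at the level of \emph{abelian} functors: it sets $F := \mr{Coker}\bigl(\mr{id}\to\H^0(j_+j^+)\bigr)$ on $\mr{Hol}(X/K)$, observes $\mb{R}^1(\H^0 i_+ i^!) \cong i_+\mb{R}^1(\H^0 i^!)$ and identifies this with $F$ on $\mr{Hol}$ via the localization triangle, then uses that both $\mb{R}^1(\H^0 i_+ i^!)$ and $IF$ commute with small filtrant inductive limits (Lemmas \ref{smallnoeth}, \ref{indlimcommderfun}, \ref{extenuniqcomfil}) to conclude the isomorphism holds on all of $M(X/K)$, after which $IF(\ms{I})=0$ for $\ms{I}$ injective gives the exactness at the right end; the left-exactness is the Ind-extension of the kernel identification. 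I would recommend you restructure your step 2 along those lines---work with the four-term exact sequence of functors $0\to\H^0 i_+ i^!\to\mr{id}\to\H^0 j_+ j^+\to F\to 0$ on $\mr{Hol}$ and extend it by filtered colimits of abelian objects, rather than trying to take colimits of triangles.
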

\begin{proof}
 Since $\H^0i_+i^!$ is a left exact functor, we may take its right
 derived functor, and this is denoted by $\mb{R}(\H^0i_+i^!)$.
 Let us put
 \begin{equation*}
  F:=\mr{Coker}\bigl(\mr{id}\rightarrow\H^0(j_+j^+)\bigr)\colon
   \mr{Hol}(X/K)\rightarrow\mr{Hol}(X/K).
 \end{equation*}
 We show that $\mb{R}^1(\H^0i_+i^!)(\ms{M})\cong IF(\ms{M})$ for
 $\ms{M}\in M(X/K)$. Since
 $\H^0i^!$ is left exact and $i_+$ is exact, we have
 $\mb{R}^1(\H^0i_+i^!)\cong i_+\mb{R}^1(\H^0i^!)$. When
 $\ms{M}\in\mr{Hol}(X/K)$, we have the isomorphism by
 Lemma \ref{finiteadjpropsch} and the localization triangle.
 Lemma \ref{smallnoeth} and Lemma \ref{indlimcommderfun} show that the
 functor $\mb{R}^1(\H^0i_+i^!)$ commutes
 with small filtrant inductive limits. Thus, by Lemma
 \ref{extenuniqcomfil}, this isomorphism uniquely extends to the
 isomorphism we want. This shows that for an injective object $\ms{I}$,
 $IF(\ms{I})=0$, and we get the short exact sequence in the statement of
 the lemma.
\end{proof}

\subsection{Constructible t-structures}
\label{constt-struct}
We need to introduce a t-structure on the triangulated category
$D^{\mr{b}}_{\mr{hol}}(X/K)$ whose
heart corresponds to the category of ``constructible sheaves'' in the
philosophy of the Riemann-Hilbert correspondence. We keep the notation
from \ref{revDmodtheory}.

\subsubsection{}
\label{dfnofsupportandoth}
Let $X$ be a realizable scheme. 
For $\ms{M}\in\mr{Hol}(X/K)$ we define the {\em support}, denoted by
$\mr{Supp}(\ms{M})$, to be the smallest closed subset $Z\subset X$ such
that $\ms{M}$ is $0$ if we pull-back to $X\setminus Z$. 
When $X_{\mr{red}}$ is smooth of dimension $d$, we say that a complex
$\ms{M}\in D^{\mr{b}}_{\mr{hol}}(X/K)$ is {\em smooth} if
$\H^i(\ms{M})[-d]$ is in $\mr{Sm}(X/K)$ for any $i$ (cf.\
\ref{fundproprealsch} \eqref{carodagdagcat}).

Now, we define the following two full subcategories of
$D^{\mr{b}}_{\mr{hol}}(X/K)$:
\begin{itemize}
 \item ${}^{\mr{c}}D^{\geq0}$ consists of complexes $\ms{M}$ such that
       $\mr{dim}\bigl(\mr{Supp}(\H^n(\ms{M}))\bigr)\leq n$ for any
       $n\geq 0$, and $\H^n(\ms{M})=0$ for any $n<0$.

 \item ${}^{\mr{c}}D^{\leq0}$ consists of complexes $\ms{M}$ such that
       $\H^ki^+_W(\ms{M})=0$ for any closed subscheme $i_W\colon
       W\hookrightarrow X$ and $k>\mr{dim}(W)$.
\end{itemize}
We note that the extension property holds, namely, for a triangle
$\ms{M}'\rightarrow\ms{M}\rightarrow\ms{M}''\xrightarrow{+1}$, if
$\ms{M}'$ and $\ms{M}''$ are in ${}^{\mr{c}}D^{\star}(X)$
($\star\in\{\geq0,\leq0\}$) then so is $\ms{M}$.

\begin{ex*}
 Let $X$ be a smooth curve. Then ${}^{\mr{c}}D^{\geq0}$ consists of
 complexes $\ms{M}$ such that $\H^i(\ms{M})=0$ for $i<0$, and
 $\H^0(\ms{M})$ is supported on a finite union of points. The
 category ${}^{\mr{c}}D^{\leq0}$ consists of complexes $\ms{N}$ such
 that $\H^i(\ms{N})=0$ for $i>1$, and $\H^0i_{x}^+\H^1(\ms{M})=0$ for
 any closed point $x$. For example, $i_{x+}(K)$ and
 $K_X$ ($\cong\mr{sp}_+(\mc{O}_{X,\mb{Q}})[-1]$ where
 $\mc{O}_{X,\mb{Q}}$ denotes the constant overconvergent isocrystal) are
 in both ${}^{\mr{c}}D^{\geq0}$ and ${}^{\mr{c}}D^{\leq0}$. For a smooth
 realizable scheme $X$, any object of $\mr{Sm}(X/K)$ (cf.\
 \ref{fundproprealsch} \eqref{carodagdagcat}) is in both $D^{\geq0}$ and
 $D^{\leq0}$. This can be checked by the right exactness of $i^+$ (cf.\
 \cite[1.3.2 (ii)]{AC}).
\end{ex*}

\begin{lem}
 \label{cexactclosedpull}
 Let $i\colon Z\hookrightarrow X$ be a closed immersion, and $j\colon
 U\hookrightarrow X$ be its complement. Then $i^+$, $j_!$, $i_+$, $j^+$
 all preserve both ${}^{\mr{c}}D^{\geq0}$ and ${}^{\mr{c}}D^{\leq0}$.
\end{lem}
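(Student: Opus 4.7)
The plan is to split the eight claims (four functors, two halves each) according to the exactness properties on the holonomic $t$-structure: $j^+$ and $i_+$ are $t$-exact (as open immersion pull-back and as push-forward by a finite morphism, cf.\ \cite[1.3.13]{AC}), while $i^+$ is right $t$-exact (\cite[1.3.2 (ii)]{AC}) and $j_!$ is right $t$-exact via the identification $j_!\cong\mb{D}_X\circ j_+\circ\mb{D}_U$ (using that $j_+$ is left exact as the right adjoint of the exact $j^+$). For the $t$-exact $j^+$ and $i_+$, cohomology commutes with the functor, and preservation of ${}^{\mr{c}}D^{\geq 0}$ follows immediately from the support bounds $\mr{Supp}(j^+\H^n M)=\mr{Supp}(\H^n M)\cap U$ and $\mr{Supp}(i_+\H^n N)=i(\mr{Supp}(\H^n N))$, each of dimension $\leq n$.

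For preservation of ${}^{\mr{c}}D^{\leq 0}$, a uniform base-change strategy works: given a closed $W$ of the target, produce a cartesian square and rewrite $i_W^+$ composed with the functor in terms of analogous data on $W$, then apply the hypothesis on the source. Concretely, for $j^+$ one takes the scheme-theoretic closure $\bar{W'}$ in $X$ of a closed $W'\hookrightarrow U$, obtaining $i_{W'}^+ j^+\cong\bar{j}^+ i_{\bar{W'}}^+$ with $\bar{j}$ open and $\dim\bar{W'}=\dim W'$. For $i_+$, the square $W\cap Z\hookrightarrow Z$, $W\hookrightarrow X$ gives $i_W^+ i_+\cong i'_+ i''^+$ with $\dim(W\cap Z)\leq\dim W$. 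For $i^+$, the transitivity $i_W^+\circ i^+\cong(i\circ i_W)^+$ (viewed as a closed immersion into $X$) applies the hypothesis on $M$ directly. For $j_!$, the square $W\cap U\hookrightarrow U$, $W\hookrightarrow X$ together with base change yields $i_W^+ j_!\cong j'_! i_{W\cap U}^+$ with $j'$ open, and a hypercohomology spectral sequence combining the right $t$-exactness of $j'_!$ with the hypothesis on $N$ (which kills $\H^k i_{W\cap U}^+ N$ for $k>\dim W$) yields the required vanishing.

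The delicate remaining cases are preservation of ${}^{\mr{c}}D^{\geq 0}$ by the right-exact $i^+$ and $j_!$. For $i^+$, I would use the spectral sequence $E_2^{p,q}=\H^p i^+(\H^q M)\Rightarrow\H^{p+q}(i^+ M)$, concentrated in $p\leq 0$ (right $t$-exactness) and $q\geq 0$ (since $\H^q M=0$ for $q<0$ because then dim support would be negative). By Kashiwara's equivalence write $\H^q M\cong (i_{W_q})_+\widetilde M_q$ with $W_q:=\mr{Supp}(\H^q M)$ of dimension $d_q\leq q$, and apply base change along the cartesian square for $W_q\cap Z\hookrightarrow W_q$ and $Z\hookrightarrow X$ to get $i^+\H^q M\cong\beta_+\alpha^+\widetilde M_q$, where $\alpha\colon W_q\cap Z\to W_q$ is closed of codimension $c_q:=d_q-\dim(W_q\cap Z)$. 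The classical vanishing $\H^p\alpha^+\widetilde M_q=0$ for $p\notin[-c_q,0]$, combined with exactness of $\beta_+$, forces $E_2^{p,q}=0$ outside $p\in[-c_q,0]$ with support of dimension $d_q-c_q=\dim(W_q\cap Z)$. Since $c_q\leq d_q\leq q$, the bound $p\geq -c_q$ gives $p+q\geq q-c_q\geq 0$, so $\H^n(i^+ M)=0$ for $n<0$; writing $p+q=n$ yields $c_q\geq q-n$ and hence $\dim(W_q\cap Z)=d_q-c_q\leq n$, giving the dimension bound. For $j_!$, the analogous statement will follow either by Verdier duality (once one verifies that $\mb{D}$ swaps the two halves up to a dimension shift, reducing to the ${}^{\mr{c}}D^{\leq 0}$ preservation by $j_+$, proved along the same base-change pattern as for $i_+$) or, directly, by applying the distinguished triangle $j_!j^+ M\to M\to i_+ i^+ M\xrightarrow{+1}$ to a suitable extension $M$ of $N$ and invoking the $i^+$-case just handled.

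The principal obstacle is the ${}^{\mr{c}}D^{\geq 0}$ preservation under the right-exact $i^+$ (and correspondingly $j_!$): the other seven claims reduce essentially formally to base change and the basic exactness of the constituent functors, whereas here the Kashiwara-based dimension bookkeeping is what turns the a priori troublesome leftward shift in the spectral sequence into the desired support inequality.
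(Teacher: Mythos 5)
Your treatment of $j^+$, $i_+$ and of the ${}^{\mr{c}}D^{\leq0}$ direction is sound and runs along the same lines as the paper (exactness plus base change). The genuine gap is in your spectral sequence argument that $i^+$ preserves ${}^{\mr{c}}D^{\geq0}$, specifically the claimed ``classical vanishing'' $\H^p\alpha^+\widetilde{\ms{M}}_q=0$ for $p\notin[-c_q,0]$ with $c_q:=\dim W_q-\dim(W_q\cap Z)$. The amplitude of a closed pullback is governed by the minimal number of \emph{local equations} cutting out the subscheme set-theoretically (a Koszul bound), not by the dimension-theoretic codimension $c_q$; the two can differ, in particular whenever $W_q\cap Z$ contains an irreducible component of $W_q$. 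Concretely, take $W_q=V(xy)\subset\mb{A}^2$, $Z=V(y)$, so $W_q\cap Z=V(y)$ is a component of $W_q$ and $c_q=0$. But $V(y)\cap W_q$ is cut out by one equation in $W_q$, so $\alpha^+$ has amplitude $[-1,0]$: choosing $\widetilde{\ms{M}}_q$ supported on the other branch $V(x)$ yields $\H^{-1}\alpha^+\widetilde{\ms{M}}_q\neq0$ (a skyscraper at the node). Your bookkeeping can in principle be repaired by a component-by-component local analysis (near each component of $W_q\cap Z$ of dimension $e$, Krull's height theorem supplies $\dim_{\bullet}W_q-e$ local equations, and that finer bound does close the argument), but that repair is precisely the missing content and is more delicate than what you assert. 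By contrast, the paper sidesteps this entirely: it reduces Zariski-locally to $Z$ cut out by a single function (so $i^+$ has uniform amplitude $[-1,0]$), then d\'{e}visses to $\ms{M}=\ms{N}[-n]$ with $\ms{N}$ irreducible, and concludes by the dichotomy $\mr{Supp}(\ms{N})\subset Z$ (Kashiwara forces $\H^{-1}i^+=0$) versus $\mr{Supp}(\ms{N})\not\subset Z$ (the intersection strictly drops dimension).

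Your proposed routes for $j_!$ preserving ${}^{\mr{c}}D^{\geq0}$ are also problematic. Verdier duality does \emph{not} swap $({}^{\mr{c}}D^{\leq0},{}^{\mr{c}}D^{\geq0})$ up to shift: by construction $\mb{D}$ carries the constructible t-structure to the \emph{dual} constructible t-structure $({}^{\mr{dc}}D^{\leq0},{}^{\mr{dc}}D^{\geq0})$, a genuinely different t-structure (e.g.\ $K_X$ lies in the c-heart for a smooth $X$, while $\mb{D}K_X=K_X^{\omega}$ does not). The alternative via the localization triangle applied to ``a suitable extension $\ms{M}$'' is underdeveloped: the natural candidate $\ms{M}=j_+\ms{N}$ is not among the four functors the lemma is about, so its membership in ${}^{\mr{c}}D^{\geq0}$ is not available. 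The paper instead handles $j_!$ by induction on $\dim X$: exactness of $j_!$ for \emph{affine} $j$, together with the already-established $i'^+$ case and the localization triangle $j'_!j'^+\rightarrow\mr{id}\rightarrow i'_+i'^+\rightarrow$ for an affine open dense $j'\colon V\hookrightarrow U$.
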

\begin{proof}
 Since $i_!\cong i_+$ and $j^+$ are exact by \cite[1.3.2]{AC}, the
 verification is easy. Let us show the preservation for $i^+$.
 Since the verification is Zariski local with respect to $X$, we may
 assume that $X$ is affine.
 Then the verification is reduced to the case where $Z$ is
 defined by a function $f\in\mc{O}_X$. In this case, we know that for
 any $\ms{M}\in\mr{Hol}(X)$, $\H^ki^+\ms{M}=0$ for $i\neq0,-1$.

 Since $i^+_W$ is right exact by \cite[1.3.2 (ii)]{AC}, the
 preservation for ${}^{\mr{c}}D^{\leq0}$ is easy.
 Let us show the preservation for ${}^{\mr{c}}D^{\geq0}$.
 By the extension property, it suffices to check for $\ms{M}$ of the
 form $\ms{M}=\ms{N}[-n]$ such that $\ms{N}\in\mr{Hol}(X)$ and
 $\dim\bigl(\mr{Supp}(\ms{N})\bigr)\leq n$. By using the extension
 property again, we are reduced even to the case where $\ms{N}$ is
 irreducible. In particular, we may assume that the support of $\ms{N}$
 is irreducible. In this case, we have two possibilities:
 $\mr{Supp}(\ms{N})\subset Z$ or $\mr{Supp}(\ms{N})\not\subset Z$. When
 $\mr{Supp}(\ms{N})\subset Z$, we get $\H^{-1}i^+(\ms{M})=0$, and the
 other case follows since $\dim\bigl(\mr{Supp}(\ms{N})\cap Z\bigr)<
 \dim\bigl(\mr{Supp}(\ms{N})\bigr)$.

 Let us show the lemma for $j_!$ by using the induction on the dimension
 of $X$. When $j$ is affine, the claim follows easily since $j_!$ is
 exact by \cite[1.3.13]{AC}. In general, take
 $\ms{M}\in{}^{\mr{c}}D^{\star}(U)$. Let $j'\colon V\hookrightarrow U$
 be an affine open dense subscheme, and $i'$ be the closed immersion
 into $U$ defined by the complement. Consider the triangle
 $j'_!j'^+\ms{M}\rightarrow\ms{M}\rightarrow
 i'_+i'^+\ms{M}\xrightarrow{+1}$. Since $j\circ j'$ is affine,
 $j_!j'_!j'^+\ms{M}$ is in ${}^{\mr{c}}D^{\star}(X)$, and
 $j_!i'_+i'^+\ms{M}$ is in ${}^{\mr{c}}D^{\star}(X)$ as well by the
 induction hypothesis together with the lemma for $i'^+$ we have already
 treated. Using the extension property, we conclude.
\end{proof}

\begin{prop}
 The categories ${}^{\mr{c}}D^{\geq0}$ and ${}^{\mr{c}}D^{\leq0}$
 define a t-structure on $D^{\mr{b}}_{\mr{hol}}(X/K)$.
\end{prop}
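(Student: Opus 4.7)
I would verify the three t-structure axioms. Stability under the respective shifts is immediate: the identities $\H^k i_W^+(\ms{M}[1])=\H^{k+1}i_W^+(\ms{M})$ and $\H^n(\ms{M}[-1])=\H^{n-1}(\ms{M})$ show that ${}^{\mr{c}}D^{\leq 0}$ is closed under $[1]$ and ${}^{\mr{c}}D^{\geq 0}$ is closed under $[-1]$, so axiom (T2) of a t-structure holds.

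For the orthogonality axiom---that $\mr{Hom}(\ms{M},\ms{N})=0$ for $\ms{M}\in{}^{\mr{c}}D^{\leq 0}$ and $\ms{N}\in{}^{\mr{c}}D^{\geq 1}$---I would argue by Noetherian induction on $\dim X$. Choose an affine open dense $j\colon U\hookrightarrow X$ with $U_{\mr{red}}$ smooth equidimensional of some dimension $d$, chosen so that the cohomology sheaves of both $\ms{M}|_U$ and $\ms{N}|_U$ are smooth (possible by generic smoothness of holonomic modules). Applying $\mr{Hom}(\ms{M},-)$ to the localization triangle $j_!j^+\ms{N}\to\ms{N}\to i_+i^+\ms{N}\xrightarrow{+1}$ and using Lemma~\ref{cexactclosedpull} together with the standard adjunctions, the computation reduces to a Hom on $U$, which vanishes by a direct comparison of constructible degrees of smooth complexes, and a Hom on $Z$, which vanishes by the inductive hypothesis since $\dim Z<\dim X$.

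The main step is the existence of truncation triangles, again by Noetherian induction on $\dim X$. I first dispose of the smooth base case: if $X_{\mr{red}}$ is smooth equidimensional of dimension $d$ and $\ms{M}\in D^{\mr{b}}_{\mr{hol}}(X/K)$ is smooth, then by the example following \ref{dfnofsupportandoth} (combined with a purity-type vanishing of $i_W^+$ applied to smooth modules), the constructible truncation of $\ms{M}$ at degree $0$ coincides via Theorem~\ref{Beilequivcat} with the naive truncation of the underlying complex at cohomological degree $d$, namely $\tau_{\mr{c}}^{\leq 0}(\ms{M})=\tau^{\leq d}(\ms{M})$ and $\tau_{\mr{c}}^{\geq 1}(\ms{M})=\tau^{>d}(\ms{M})$, both pieces then lying in the correct constructible subcategories. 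For a general $\ms{M}$, pick an open dense $j\colon U\hookrightarrow X$ with $U_{\mr{red}}$ smooth and $\ms{M}|_U$ smooth, and let $i\colon Z\hookrightarrow X$ be the closed complement, which has strictly smaller dimension. A truncation of $\ms{M}|_U$ is produced by the base case and a truncation of $i^+\ms{M}$ by the inductive hypothesis; one then assembles these via the BBD-style recollement construction applied to the localization triangle $j_!j^+\ms{M}\to\ms{M}\to i_+i^+\ms{M}$, relying on Lemma~\ref{cexactclosedpull} to ensure that $j_!$ and $i_+$ propagate the constructible subcategories correctly.

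The hard part will be this final gluing: the octahedral axiom has to be invoked several times (equivalently, the BBD recollement lemma applied) to combine the truncations on $U$ and on $Z$ into a single truncation triangle on $X$ whose two outer vertices lie in ${}^{\mr{c}}D^{\leq 0}(X)$ and ${}^{\mr{c}}D^{\geq 1}(X)$ respectively. The orthogonality established above is essential here, guaranteeing the uniqueness of the connecting maps that appear in the octahedral diagrams used to perform the assembly.
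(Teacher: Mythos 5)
Your overall plan matches the paper's: the proof is by Noetherian induction on $\dim X$, you fix a dense open affine $j\colon U\hookrightarrow X$ with $U_{\mr{red}}$ smooth and the relevant complexes smooth over $U$, you verify that on such a $U$ the constructible t-structure is the usual t-structure shifted by $d=\dim U$, and you glue via the localization triangle and the BBD recollement formalism, using Lemma~\ref{cexactclosedpull} to propagate membership in ${}^{\mr{c}}D^{\leq0}$ and ${}^{\mr{c}}D^{\geq0}$. The paper carries this out by restricting to the subcategories $T(X,U)$ of complexes with smooth restriction to $U$, which is the bookkeeping that makes your informal ``choose $U$ generically'' precise.

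However, your orthogonality argument as written has a genuine gap. You apply $\mr{Hom}(\ms{M},-)$ to the single triangle $j_!j^+\ms{N}\to\ms{N}\to i_+i^+\ms{N}\to{}$ and claim the result reduces ``by standard adjunctions'' to a Hom on $U$ plus a Hom on $Z$. The right-hand term $\mr{Hom}(\ms{M},i_+i^+\ms{N})\cong\mr{Hom}(i^+\ms{M},i^+\ms{N})$ is indeed a Hom on $Z$ handled by induction. But the left-hand term $\mr{Hom}(\ms{M},j_!j^+\ms{N})$ is \emph{not} a Hom on $U$ by adjunction: $j_!$ is the left adjoint of $j^+$, so there is no adjunction that moves $j_!$ out of the second argument. (Using the other localization triangle $i_+i^!\to\mr{id}\to j_+j^+$ would give a Hom on $U$ for the second piece, but then the first piece becomes $\mr{Hom}(i^+\ms{M},i^!\ms{N})$ and Lemma~\ref{cexactclosedpull} says nothing about $i^!$ preserving ${}^{\mr{c}}D^{\geq0}$.) One must d\'{e}visser in \emph{both} variables. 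Of the four resulting terms, two die because $j^+i_+=0$, the two ``pure'' terms are a Hom on $U$ and a Hom on $Z$, and the cross term $\mr{Hom}(i_+\ms{B},j_!\ms{C})$ with $\ms{B}\in{}^{\mr{c}}D^{<0}(Z)$ and $\ms{C}\in{}^{\mr{c}}D^{\geq0}(U)$ smooth is the genuinely nontrivial case; the paper kills it by a holonomic-degree comparison ($\H^{k-1}(i_+\ms{B})=0$ for $k>\dim Z$, $\H^k(j_!\ms{C})=0$ for $k<\dim U$, using the exactness of $j_!$ for affine $j$). Your proposal omits this mixed case entirely, and it is the crux of the orthogonality.
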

\begin{proof}
 We put $D(X):=D^{\mr{b}}_{\mr{hol}}(X/K)$.
 Let $U$ be an open subset of $X$ and $Z$ be its complement. Put
 $i\colon Z\hookrightarrow X$ and $j\colon U\hookrightarrow X$. For
 $\star\in\{\geq0,\leq0\}$, $\ms{M}$ is in
 ${}^{\mr{c}}D^{\star}(X)$ if and only if $i^+(\ms{M})$ and
 $j^+(\ms{M})$ are in ${}^{\mr{c}}D^{\star}(Z)$ and
 ${}^{\mr{c}}D^{\star}(U)$ respectively. This follows by the extension
 property, and Lemma \ref{cexactclosedpull}.

 Now, we proceed as \cite[p.143]{KW}. We use the induction on the
 dimension of $X$. We may assume $X$ to be reduced by Lemma
 \ref{fundproprealsch}. It suffices to check, for a smooth open affine
 subscheme $j\colon U\hookrightarrow X$
 equidimensional of dimension $\dim(X)$, the restriction of
 ${}^{\mr{c}}D^{\geq0}$ and ${}^{\mr{c}}D^{\leq0}$ to the subcategory
 \begin{equation*}
  T(X,U):=\bigl\{\ms{E}\in D(X)\mid \text{$\H^i(j^+\ms{E})$ is
   smooth on $U$ for any $i$}\bigr\}
 \end{equation*}
 defines a t-structure, since $\bigcup_{U}T(X,U)=D(X)$.
 Let $i\colon Z\hookrightarrow X$ be the
 complement of $U$. By the observation above, $\ms{M}\in T(X,U)$ is in
 ${}^\mr{c}D^{\star}(X)$ if and only if $j^+(\ms{M})$ and $i^+(\ms{M})$
 are in ${}^\mr{c}D^{\star}(U)$ and ${}^\mr{c}D^{\star}(Z)$
 respectively. We note that ${}^\mr{c}D^{\star}(Z)$ defines a
 t-structure on $D(Z)$ by induction hypothesis.

 Let us check the axioms of t-structure \cite[1.3.1]{BBD}.
 Axiom (ii) is obvious, and axiom (iii) can be shown by a similar
 argument to \cite[p.140, 141]{KW} using the t-structures of $D(Z)$.
 Let us check (i). By {\it d\'{e}vissage}
 using the localization triangle $j_!j^+\rightarrow\mr{id}\rightarrow
 i_+i^+\xrightarrow{+1}$
 twice and by induction hypothesis, we are reduced to showing that
 $\mr{Hom}(i_+\ms{B},j_!\ms{C})=0$ when $\ms{B}\in {}^{\mr{c}}D^{<0}(Z)$
 and $\ms{C}\in{}^{\mr{c}}D^{\geq0}(U)$ such that $\H^i(\ms{C})$ is
 smooth for any $i$. Then, $\H^{k-1}(i_+\ms{B})=0$ for $k>\dim(Z)$. On
 the other hand, $j_!$ is exact by \cite[1.3.13]{AC} since $j$ is
 affine, and thus $\H^k(j_!\ms{C})=0$ for $k<\dim(U)$ by the smoothness
 of $\H^i(\ms{C})$, so the claim follows.
\end{proof}

\begin{dfn*}
 \label{dfnofctstruc}
 The t-structure on $D^{\mr{b}}_{\mr{hol}}(X/K)$ defined in the
 proposition is called the {\em constructible t-structure}, and shortly,
 {\em c-t-structure}.
 The heart of the t-structure is denoted by $\mr{Con}(X/K)$, and
 called the category of {\em constructible modules}. The cohomology
 functor for this t-structure is denoted by $\cH^*$.
\end{dfn*}

\begin{rem*}
 Our constructible t-structure can be regarded as a generalization of
 ``perverse t-structure'' introduced in \cite{Letsru}, and also as a
 $p$-adic analogue of the t-structure defined in \cite{Katsr}.
\end{rem*}

\begin{lem}
 \label{constproplem}
 Let $f\colon X\rightarrow Y$ be a morphism between realizable
 schemes.

 (i) The functor $f^+$ is c-t-exact, and $f_+$ is left
 c-t-exact. Moreover, the pair $(\cH^0f^+,\cH^0f_+)$ is an adjoint
 pair.

 (ii) When $f=:i$ is a closed immersion, $i_+$ is c-t-exact and
 $\cH^0i^!$ is left c-t-exact. Moreover, $(i_+,\cH^0i^!)$ is an adjoint
 pair.

 (iii) When $f=:j$ is an open immersion, $j_!$ is c-t-exact, and
 $(j_!,j^+)$ is an adjoint pair.
\end{lem}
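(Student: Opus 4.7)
For (iii), Lemma \ref{cexactclosedpull} gives the $c$-$t$-exactness of $j_!$. The derived-level adjunction $(j_!,j^!)$ from \S\ref{fundproprealsch}, combined with the identification $j^+\cong j^!$ from \ref{opencoinpull}, yields an adjoint pair $(j_!,j^+)$. Since $j^+$ is also $c$-$t$-exact (Lemma \ref{cexactclosedpull}), this adjunction restricts without alteration to $(j_!,j^+)\colon\mr{Con}(U/K)\rightleftarrows\mr{Con}(X/K)$.

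For (ii), Lemma \ref{cexactclosedpull} gives the $c$-$t$-exactness of $i_+$. The standard adjunction principle applied to $(i_+,i^!)$ yields that $i^!$ is left $c$-$t$-exact: for $\ms{M}\in{}^{\mr{c}}D^{<0}(Z/K)$ and $\ms{N}\in{}^{\mr{c}}D^{\geq 0}(X/K)$,
\begin{equation*}
\mr{Hom}(\ms{M},i^!\ms{N})\cong\mr{Hom}(i_+\ms{M},\ms{N})=0,
\end{equation*}
so $i^!({}^{\mr{c}}D^{\geq 0})\subset{}^{\mr{c}}D^{\geq 0}$, and $\cH^0 i^!\colon\mr{Con}(X/K)\to\mr{Con}(Z/K)$ is a left exact functor of abelian categories. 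The adjunction on the hearts,
\begin{equation*}
\mr{Hom}(i_+\ms{M},\ms{N})\cong\mr{Hom}(\ms{M},i^!\ms{N})\cong\mr{Hom}(\ms{M},\cH^0 i^!\ms{N}),
\end{equation*}
follows from the derived adjunction and the standard $t$-structure identification $\mr{Hom}(\ms{M},\ms{N}')\cong\mr{Hom}(\ms{M},\cH^0\ms{N}')$ valid for $\ms{M}\in{}^{\mr{c}}D^{\leq 0}$, $\ms{N}'\in{}^{\mr{c}}D^{\geq 0}$.

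For (i), I factor $f=p\circ\Gamma$ with $\Gamma\colon X\to X\times Y$ the graph (a closed immersion by separatedness) and $p\colon X\times Y\to Y$ the projection (smooth of relative dimension $d=\dim X$). Lemma \ref{cexactclosedpull} handles $\Gamma^+$, so the crux is the $c$-$t$-exactness of $p^+$. For ${}^{\mr{c}}D^{\geq 0}$-preservation, exactness of $p^+[d]$ (Lemma \ref{smoothpushpulsch}) gives $\H^m(p^+\ms{N})\cong p^*\H^{m-d}(\ms{N})$; since $p$ has equidimensional fibers of dimension $d$,
\begin{equation*}
\dim\mr{Supp}\H^m(p^+\ms{N})=\dim\mr{Supp}\H^{m-d}(\ms{N})+d\leq m
\end{equation*}
whenever $m\geq d$, while $\H^m(p^+\ms{N})=0$ for $m<d$ by the hypothesis on $\ms{N}$. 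For ${}^{\mr{c}}D^{\leq 0}$-preservation, given closed $W\subset X\times Y$, set $W'=\overline{p(W)}\subset Y$; the base-change formula \ref{basechangeprop} applied to the cartesian square with base change $p_{W'}\colon X\times W'\to W'$ gives $i_{X\times W'}^+p^+\cong p_{W'}^+i_{W'}^+$, whence $i_W^+p^+\ms{N}\cong j_W^+p_{W'}^+(i_{W'}^+\ms{N})$ with $j_W\colon W\hookrightarrow X\times W'$. A Noetherian induction on $\dim Y$ then closes the argument: when $W'\subsetneq Y$ the inductive hypothesis applied to $p_{W'}$ yields the required cohomological bound; the boundary case $p(W)=Y$ is handled by a secondary d\'evissage of $W$ along a smooth dense open, combined with the localization triangle and Lemma \ref{cexactclosedpull}. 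Once $p^+$ is known to be $c$-$t$-exact, composition gives the same for $f^+$; the left $c$-$t$-exactness of $f_+$ and the adjoint pair $(\cH^0 f^+,\cH^0 f_+)$ on the hearts follow by the identical adjunction/truncation arguments used in (ii), using that $f_+(\mr{Con}(X/K))\subset{}^{\mr{c}}D^{\geq 0}(Y/K)$.

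The delicate step is the ${}^{\mr{c}}D^{\leq 0}$-preservation under the smooth projection $p$ in the boundary case $p(W)=Y$: a naive dimension count bounds cohomology only by $\dim(X\times Y)=\dim Y+d$, which may well exceed $\dim W$. The secondary d\'evissage of $W$ along a smooth dense open, reducing via the localization triangle to the smaller-dimensional closed pieces already covered by Lemma \ref{cexactclosedpull} and the main induction, is the main technical hurdle of the proof.
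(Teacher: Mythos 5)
Your treatment of (ii) and (iii) is correct and matches the paper's (which simply invokes Lemma \ref{cexactclosedpull}; the adjunctions on the hearts are the standard truncation arguments you spell out).

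For (i), however, your argument has a genuine gap, and it is not a matter of detail. The graph factorization $f=p\circ\Gamma$ with $p\colon X\times Y\to Y$ the projection does not reduce the problem: $p$ is smooth over $Y$ if and only if $X$ is smooth over the base field, which is not assumed for a general realizable scheme $X$. So Lemma \ref{smoothpushpulsch} (exactness of $p^+[d]$) does not apply, the formula $\H^m(p^+\ms{N})\cong p^*\H^{m-d}(\ms{N})$ is unavailable, and both your ${}^{\mr{c}}D^{\geq 0}$ and ${}^{\mr{c}}D^{\leq 0}$ computations collapse from the very first step. Independently, even granting a smooth $p$, you concede that the ${}^{\mr{c}}D^{\leq 0}$-preservation in the boundary case $p(W)=Y$ is "the main technical hurdle" and only gesture at a "secondary d\'evissage" without carrying it out; as written, the proof is incomplete.

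The paper's route around both problems is different and is exactly what you are missing. It observes that the assertion is Zariski-local, so $X$ and $Y$ may be taken affine, and then it uses part (ii) to improve the target: $Y$ embeds as a closed subscheme $i\colon Y\hookrightarrow\widetilde{Y}$ of a smooth scheme (realizability), $i_+$ is c-t-exact with $i^+i_+\cong\mr{id}$, so $f^+=(i\circ f)^+\circ i_+$ and one may replace $Y$ by $\widetilde{Y}$, i.e.\ assume $Y$ is smooth. With $Y$ smooth and $X$ affine, one factors $f$ as $X\hookrightarrow\mb{A}^n_Y\to Y$, a closed immersion followed by a smooth morphism between \emph{smooth} schemes, and (ii) disposes of the closed immersion. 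Only after this double reduction is the verification for the remaining smooth morphism "easy" via \cite[1.3.2(i)]{AC}: for ${}^{\mr{c}}D^{\leq 0}$, the point is that if $W\subset X$ is closed irreducible with image generically of dimension $\dim W - e$ for some $e\leq d$, then passing through the generic point of the image picks up a shift by exactly $e$ and the bound $\dim W$ closes, whereas the comparison you make against $\dim(X\times Y)=\dim Y+d$ is the wrong bookkeeping and is indeed too weak. In short: you need the reduction via (ii) to smooth source and target before invoking the exactness of the smooth pullback, and you need to track the fiber dimension of $W$ over its image rather than the ambient relative dimension.
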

\begin{proof}
 Claims (ii) and (iii) are nothing but Lemma \ref{cexactclosedpull},
 and we reproduced these for record.

 Let us show (i). We only need to show the exactness of $f^+$.
 The verification is Zariski local, so we may
 assume $X$ and $Y$ to be realizable. Thus we can take the following
 commutative diagram:
 \begin{equation*}
  \xymatrix{
   X\ar@{^{(}->}[r]^{i'}\ar[d]_f&P\ar[d]^{\tilde{f}}\\
  Y\ar@{^{(}->}[r]_i&Q,}
 \end{equation*}
 where horizontal morphisms are closed immersions, $Q$ is smooth, and
 $\tilde{f}$ is smooth. By (ii), which we have already verified,
 it suffices to show the claim for $(i\circ f)^+$.
 By Lemma \ref{cexactclosedpull}, we already know that $i'^+$ is
 c-t-exact. Thus, it remains to show that $\tilde{f}^+$ is c-t-exact,
 which we can check easily using \cite[1.3.2 (i)]{AC}.
\end{proof}

\begin{lem}
 \label{genrankconstmod}
 Let $X$ be an irreducible realizable scheme. Let $\ms{M}$ be a
 constructible module on $X$ such that $\mr{Supp}(\ms{M})=X$.
 Then there exists an open dense subscheme
 $j\colon U\hookrightarrow X$ such that $j^+\ms{M}$ is in
 $\mr{Sm}(U/K)$.
 The rank of $j^+\ms{M}$ is called the {\em generic rank of
 $\ms{M}$}.
\end{lem}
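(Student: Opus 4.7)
The plan is to reduce to the case where $X$ is smooth of pure dimension $d$ and then apply generic smoothness to each of the finitely many non-zero cohomology sheaves of $\ms{M}$ separately. Since the conclusion only asserts the existence of some open dense $U\subset X$, we may freely shrink $X$ throughout.

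First, using the irreducibility of $X$, we shrink so that $X_{\mathrm{red}}$ becomes smooth of pure dimension $d$. The closed immersion $\iota\colon X_{\mathrm{red}}\hookrightarrow X$ is a universal homeomorphism, so by the lemma stated in \S\ref{fundproprealsch} it induces an equivalence $D^{\mathrm{b}}_{\mathrm{hol}}(X_{\mathrm{red}}/K)\simeq D^{\mathrm{b}}_{\mathrm{hol}}(X/K)$ with $\iota^!\cong\iota^+$; by Lemma \ref{cexactclosedpull} the functor $\iota^+$ is c-t-exact, so $\iota^+\ms{M}$ remains a constructible module with full support $X_{\mathrm{red}}$. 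We may therefore assume outright that $X$ is smooth of pure dimension $d$.

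Since $\ms{M}\in D^{\mathrm{b}}_{\mathrm{hol}}(X/K)$ is bounded, only finitely many cohomology modules $\H^n(\ms{M})\in\mr{Hol}(X/K)$ are non-zero, and each such $\H^n(\ms{M})$ admits an $s'$-th Frobenius structure for some $s\mid s'$ by the definition of $\mr{Hol}(X/K)$. Applying the generic smoothness theorem for holonomic $F$-modules on a smooth variety---a statement that relies on Kedlaya's semistable reduction theorem \cite{KeSS} and is built into Caro's construction of the specialization functor in \cite{CaInv}---yields, for each such $n$, an open dense subscheme $V_n\subset X$ on which $\H^n(\ms{M})|_{V_n}$ lies in the essential image of $\mr{sp}_+\colon\mr{Isoc}^\dag(V_n/K)\to\mr{Hol}(V_n/K)$. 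Set $U:=\bigcap_n V_n$; this is a finite intersection of open dense subschemes of the irreducible $X$, hence open dense. For every $n$, $\H^n(j^+\ms{M})[-d]$ lies in $\mr{Sm}(U/K)$, so $j^+\ms{M}$ is smooth in the sense of \S\ref{dfnofsupportandoth}.

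The principal obstacle is identifying the exact generic smoothness statement: the category $\mr{Hol}(X/K)$ in this paper is defined as those overholonomic modules that \emph{admit} some Frobenius structure (rather than as a category of pairs equipped with fixed Frobenius data), and one must confirm that objects of this category become, on a sufficiently small open dense subscheme, specializations of honest overconvergent $F$-isocrystals---this is essential content of the Caro--Kedlaya theory rather than a formal consequence of the axioms collected in \S\ref{revDmodtheory}.
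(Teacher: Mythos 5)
Your argument follows the same route as the paper: the paper's own proof is the single sentence asserting that for any complex in $D^{\mr{b}}_{\mr{hol}}(X)$ there is an open dense subscheme on which the cohomology modules become smooth, which is exactly the generic overconvergence fact you invoke. Your explicit reduction to $X_{\mr{red}}$ smooth of pure dimension is left implicit in the paper.

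One imprecision is worth fixing. You claim that each $\H^n(\ms{M})$ ``admits an $s'$-th Frobenius structure for some $s\mid s'$ by the definition of $\mr{Hol}(X/K)$.'' But the definition in \ref{defofrealcat} makes $\mr{Hol}$ the \emph{thick full subcategory generated by} overholonomic modules that can be endowed with a Frobenius structure; a general object of that thick subcategory---an extension of two $F$-able modules, or a subquotient of one---need not itself carry any Frobenius structure. The substance of your step survives: generic overconvergence is established for $F$-able modules via Kedlaya and Caro, and it propagates to all of $\mr{Hol}$ precisely because $\mr{Hol}$ is the thick subcategory such modules generate and the smooth objects over a dense open smooth locus are stable under the relevant thick-subcategory operations. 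But the intermediate assertion that every holonomic $\H^n(\ms{M})$ is itself $F$-able is false as stated and should be replaced by this thick-subcategory argument.
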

\begin{proof}
 For any complex in $D^{\mr{b}}_{\mr{hol}}(X)$, there exists an open
 dense subscheme $j\colon U\hookrightarrow X$ such that the cohomology
 modules of $j^+\ms{M}$ is smooth.
\end{proof}

\begin{lem}
 \label{constnoetheriancat}
 Let $X$ be a realizable scheme. Then the category $\mr{Con}(X/K)$ is
 noetherian.
\end{lem}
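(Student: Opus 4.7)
I would argue by induction on $\dim X$, first reducing to $X$ reduced. The base case $\dim X=0$ is immediate: on a zero-dimensional realizable scheme, $\mr{Con}(X/K)=\mr{Hol}(X/K)$, which is noetherian by Lemma \ref{smallnoeth}.

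For the inductive step, fix an ascending chain $\ms{M}_1\subset\ms{M}_2\subset\cdots\subset\ms{M}$ in $\mr{Con}(X/K)$, and apply Lemma \ref{genrankconstmod} componentwise to choose a dense smooth open $j\colon U\hookrightarrow X$ of top dimension $d$ on which $j^+\ms{M}$ is smooth, with complementary closed immersion $i\colon Z\hookrightarrow X$ of dimension strictly less than $\dim X$. By Lemma \ref{cexactclosedpull}, the four functors $j_!,j^+,i_+,i^+$ are c-t-exact, so the localization triangle $j_!j^+\to\mr{id}\to i_+i^+\xrightarrow{+1}$ yields a short exact sequence in $\mr{Con}(X/K)$. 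Applying $i^+$ produces an ascending chain in $\mr{Con}(Z/K)$, which stabilizes by the induction hypothesis.

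Applying $j^+$ produces an ascending chain $j^+\ms{M}_n\subset j^+\ms{M}=E[-d]$ in $\mr{Con}(U/K)$, and the key step is to show that this chain also stabilizes. For $\ms{N}\in\mr{Con}(U/K)$ --- whose standard cohomologies live in degrees $[0,d]$ --- truncation gives the identification $\mr{Hom}_{D^{\mr{b}}_{\mr{hol}}(U/K)}(\ms{N},E[-d])\simeq\mr{Hom}_{\mr{Hol}(U/K)}(\H^d(\ms{N}),E)$, which lets me track the chain via the ascending chain of images $\mr{Im}(\H^d(j^+\ms{M}_n)\to E)$ of sub-holonomic-modules of $E$. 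The latter stabilizes by Lemma \ref{smallnoeth}, and a secondary induction on the support-dimension of the residual lower-degree cohomologies --- which live on strict closed subschemes of $U$ and are thus controlled by the outer induction hypothesis --- finishes the stabilization on $U$. Once both $i^+\ms{M}_n$ and $j^+\ms{M}_n$ have stabilized at some $N$, the quotient $\ms{M}_n/\ms{M}_N$ for $n\ge N$ has vanishing $j^+$ and $i^+$, and the localization short exact sequence forces $\ms{M}_n=\ms{M}_N$.

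The main obstacle is the stabilization on $U$: subobjects of the smooth module $E[-d]$ in the constructible t-structure are not a priori smooth and may carry nontrivial lower-cohomology contributions supported on strict closed subschemes of $U$. The secondary induction is essential for handling these contributions and ultimately reduces the analysis back to the noetherianness of $\mr{Hol}(X/K)$ provided by Lemma \ref{smallnoeth}.
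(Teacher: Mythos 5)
Your overall architecture matches the paper's: reduce to an irreducible (or reduced) base, track the generic part of the chain via a rank/image invariant in $\mr{Hol}$, and push the "residual" onto the closed complement where induction applies. The paper's induction is a noetherian induction on $\mr{Supp}(\ms{M})$ rather than your induction on $\dim X$, but these are interchangeable. Your identification $\mr{Hom}_{D^{\mr{b}}_{\mr{hol}}(U/K)}(\ms{N},E[-d])\cong\mr{Hom}_{\mr{Hol}(U/K)}(\H^d(\ms{N}),E)$ for a c-module $\ms{N}$ on $U$ is correct, and the chain of images $I_n:=\mr{Im}(\H^d(j^+\ms{M}_n)\to E)$ does stabilize by Lemma \ref{smallnoeth}. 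So far this parallels the paper's stabilization of generic rank.

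The gap is in your ``key step,'' precisely at the phrase ``a secondary induction on the support-dimension of the residual lower-degree cohomologies $\dots$ finishes the stabilization on $U$.'' Once $I_n$ stabilizes at $N$, one can show (as you indicate) that for each $n\geq N$ the c-module $j^+\ms{M}_n/j^+\ms{M}_N$ has nowhere dense support in $U$. But that is a statement about each $n$ separately: the closed subset $\mr{Supp}(j^+\ms{M}_n/j^+\ms{M}_N)$ depends on $n$, these closed subsets form an \emph{ascending} chain, and an ascending chain of nowhere dense closed subsets of a noetherian scheme need not stabilize nor stay inside a fixed proper closed subscheme $Z\subsetneq U$. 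Without a single ambient closed subscheme on which all the residuals live, you cannot invoke your outer induction hypothesis (noetherianity of $\mr{Con}$ in lower dimension). Concretely, a priori the residuals could be skyscraper-type c-modules at larger and larger finite sets of closed points of $U$, in which case ``secondary induction on support dimension'' never gets traction.

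What makes this go through in the paper is a dedicated preliminary \emph{claim}: if $X$ is irreducible smooth and $\ms{M}$ is smooth, then every nonzero c-submodule $\ms{N}\subset\ms{M}$ has $\mr{Supp}(\ms{N})=X$ (equivalently, is nonzero smooth on a dense open). The proof is by contradiction using purity: a hypothetical $\ms{N}$ supported on a nowhere dense closed $i\colon Z\hookrightarrow X$ of codimension $c$ yields by adjunction a nonzero map $i^+\ms{N}\to i^!\ms{M}\cong i^+\ms{M}(-c)[-2c]$ (Theorem \ref{purityforrealsch}, i.e.\ \cite[5.6]{A}), which is impossible for degree reasons in the constructible t-structure. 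With this claim in hand, the paper replaces the chain by $\ms{M}_i/\ms{M}_N\subset\ms{M}/\ms{M}_N$, passes to a dense open $U$ where the quotient module $\ms{M}/\ms{M}_N$ is smooth, and concludes from the claim that each $\ms{M}_i/\ms{M}_N$ restricted to this \emph{fixed} $U$ is already zero (its support is nowhere dense, and the claim forbids nonzero c-submodules of a smooth c-module with nowhere dense support). Only then does the whole chain live on the fixed complement $Z=X\setminus U$, and noetherian induction finishes. Your proposal is missing this purity input, and I do not see how to close the ``key step'' without it; I would recommend isolating and proving the claim first, then folding it into your stabilization argument on $U$.
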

\begin{proof}
 Since $\mr{Hol}(X/K)$ is essentially small by Lemma \ref{smallnoeth},
 so is $D^{\mr{b}}(\mr{Hol}(X))$. Since $\mr{Con}(X)$ is a full
 subcategory, it is also essentially small.

 Let us show that the category is noetherian. It suffices to show the
 claim for each irreducible component of $X$, so we may assume $X$ to be
 irreducible. Assume $X$ is smooth, and let $\ms{M}$ be a smooth
 constructible module on $X$.
 We claim that any submodule $\ms{N}$ of $\ms{M}$, there
 exists an open dense subscheme $U$ such that $\ms{N}$ is a non-zero
 smooth constructible module on $U$. Assume contrary. Then there exists
 a nowhere dense closed subset $i\colon Z\hookrightarrow X$ such that we
 have non-zero homomorphism $i_+i^+(\ms{N})\rightarrow\ms{M}$. Shrinking
 $X$ if necessary, we may assume that both $X$ and $Z$ are smooth and
 $i^+\ms{N}$ is smooth on $Z$.
 Taking the adjoint, we get a non-zero homomorphism
 $i^+\ms{N}\rightarrow i^!\ms{M}$. By \cite[5.6]{A}, we have
 $i^!\ms{M}\cong i^+\ms{M}(-d)[-2d]$ where $d$ is the codimension of $Z$
 in $X$, which is impossible.

 We use the noetherian induction on the support of $\ms{M}$. We may
 assume that $X$ is reduced. Moreover, we may assume
 $\mr{Supp}(\ms{M})=X$, otherwise, we can conclude by the induction
 hypothesis. Let $\ms{M}$ be a constructible
 module, and let $\{\ms{M}_i\}_{i\in\mb{N}}$ be an ascending chain of
 submodules of $\ms{M}$. There exists $N$ such that the generic rank
 (cf.\ Lemma \ref{genrankconstmod}) of $\ms{M}_i$ is the same for any
 $i\geq N$. Since it suffices to show that the ascending chain
 $\{\ms{M}_i/\ms{M}_N\}_{i\geq N}$ is stationary in $\ms{M}/\ms{M}_N$,
 we may assume that $\mr{Supp}(\ms{M}_i)\subset\mr{Supp}(\ms{M})$ is
 nowhere dense. Let $U$ be an open dense smooth subscheme of $X$ such
 that $\ms{M}$ is smooth on $U$. By what we have shown, $\ms{M}_i$ is
 $0$ on $U$. Let $i\colon Z\hookrightarrow X$ be the complement. Then by
 the c-t-exactness of $i_+$ and $i^+$, and induction hypothesis,
 $\ms{M}_i\cong i_+i^+\ms{M}_i$ is stationary in $i_+i^+\ms{M}$ as
 required.
\end{proof}

\begin{rem*}
 Contrary to $\mr{Hol}(X/K)$, $\mr{Con}(X/K)$ is not artinian.
 Indeed, let $X$ be a smooth realizable scheme and take a descending
 sequence of open subschemes $X\supset U_1\supsetneq
 U_2\supsetneq\dots$. Denote by $j_i\colon U_i\hookrightarrow X$ the
 inclusion. For any $\ms{M}\in\mr{Sm}(X/K)$, we have $\ms{M}\supset
 j_{1!}j_1^+(\ms{M})\supsetneq j_{2!}j_2^+(\ms{M})\supsetneq\dots$, and
 the claim follows.
\end{rem*}

\begin{lem}
 \label{consttstrpro}
 Let $X$ be a realizable scheme. For a closed point $x\in X$, denote by
 $i_x\colon \{x\}\rightarrow X$ the closed immersion.

 (i) For $\ms{F}\in\mr{Con}(X)$, $\ms{F}=0$ if and only if
 $i_x^+(\ms{F})=0$ for any closed point $x$. In particular, a
 homomorphism $\phi$ in $\mr{Con}(X)$ is $0$ if and only if
 $i^+_x(\phi)=0$ for any closed point $x$.

 (ii) Let $f\colon s'\rightarrow s$ be a morphism of points
 ({\it i.e.}\ connected schemes of dimension $0$ of finite type over
 $k$). Then $f^+$ is faithful and conservative.
\end{lem}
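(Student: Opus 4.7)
The plan is to prove (ii) first and then deduce (i) by noetherian induction on the support, invoking (ii) at key stages.

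For (ii), since $f^+$ is c-t-exact by Lemma~\ref{constproplem}(i), faithfulness and conservativity are equivalent, so it suffices to show conservativity. The closed immersions $s_{\mr{red}}\hookrightarrow s$ and $s'_{\mr{red}}\hookrightarrow s'$ are universal homeomorphisms, so by the lemma on universal homeomorphisms in \S\ref{fundproprealsch} they induce equivalences of categories compatible with $f^+$. This reduces to the case where $s=\mr{Spec}(k_s)$ and $s'=\mr{Spec}(k_{s'})$ are spectra of finite field extensions of $k$, and $f$ is finite \'etale ($k$ being perfect). Lemma~\ref{finiteadjpropsch} then supplies the adjoint pair $(f^+,f_+)$, and the projection formula \ref{fundproprealsch}.\ref{projformulas} gives $f_+f^+\ms{F}\cong\ms{F}\otimes f_+K_{s'}$. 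The object $f_+K_{s'}$ is non-zero on $s$, and since $\mr{Hol}(s/K)$ is the category of finite-dimensional vector spaces over the unramified lift of $k_s$, tensoring with a non-zero object is faithful; thus $f^+\ms{F}=0$ forces $\ms{F}=0$.

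For (i), the ``only if'' direction is trivial, so assume $i_x^+\ms{F}=0$ for every closed point $x\in X$ and proceed by noetherian induction on $\mr{Supp}(\ms{F})$. If the support is empty, there is nothing to prove. Otherwise set $d=\dim\mr{Supp}(\ms{F})$, choose an open $j\colon U\hookrightarrow X$ such that $U\cap\mr{Supp}(\ms{F})$ is a smooth open dense subset of $\mr{Supp}(\ms{F})$ of dimension $d$ on which $j^+\ms{F}$ is smooth (applying Lemma~\ref{genrankconstmod} to each top-dimensional irreducible component of $\mr{Supp}(\ms{F})$), and let $i\colon Z\hookrightarrow X$ be the complementary closed immersion. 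By Lemma~\ref{constproplem} the functors $j_!,j^+,i_+,i^+$ are c-t-exact, so the localization triangle becomes a short exact sequence
\begin{equation*}
 0\to j_!j^+\ms{F}\to\ms{F}\to i_+i^+\ms{F}\to 0
\end{equation*}
in $\mr{Con}(X/K)$. Through $\mr{sp}_+$ (see \ref{fundproprealsch}.\ref{carodagdagcat}), $j^+\ms{F}$ corresponds on each connected component of $U\cap\mr{Supp}(\ms{F})$ to an overconvergent isocrystal of locally constant rank; applying the hypothesis to any closed point in that component forces the rank to vanish, whence $j^+\ms{F}=0$ and $\ms{F}\cong i_+i^+\ms{F}$. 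For every closed point $y\in Z$ with inclusion $i'_y\colon\{y\}\to Z$, the hypothesis gives $(i'_y)^+(i^+\ms{F})\cong i_y^+\ms{F}=0$. When $d>0$, the support of $i^+\ms{F}$ is strictly smaller than $\mr{Supp}(\ms{F})$ and the induction hypothesis yields $i^+\ms{F}=0$; when $d=0$ (where the choice of $U$ trivially makes $j^+\ms{F}=0$), part (ii) applied to each $i'_y$ on its connected component of $Z$ gives the same conclusion. Either way $\ms{F}=0$. The assertion about a homomorphism $\phi$ follows by applying the vanishing to $\mr{Im}(\phi)$ and using that $i_x^+$ is exact, so $i_x^+\mr{Im}(\phi)\cong\mr{Im}(i_x^+\phi)$.

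The main obstacle is the detection principle for smooth modules used above, namely that a smooth module on a connected smooth variety whose fiber at some closed point vanishes must itself be zero. This becomes immediate once one accepts the description of $\mr{Sm}(U/K)$ via $\mr{sp}_+$ together with the locally constant rank of overconvergent isocrystals on connected smooth bases; apart from this non-formal input, the entire argument is a formal manipulation of the six-functor formalism and the constructible t-structure established above.
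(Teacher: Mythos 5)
Your proof is correct in its essential structure, but it is a genuinely different route from the paper's, which dispatches the lemma in one line by citing \cite[1.3.11]{AC} for (i) and leaving (ii) to the reader. You instead build everything from the constructible t-structure machinery just established. For (ii), the chain ``reduce by universal homeomorphisms to $f$ finite \'etale between spectra of finite extensions of $k$ (using perfectness of $k$), then $f_+f^+\ms{G}\cong\ms{G}\otimes f_+K_{s'}$ by the projection formula, then faithfulness of tensoring with a non-zero object in a category of finite-dimensional vector spaces'' is clean and correct; you're also right that c-t-exactness of $f^+$ makes faithfulness and conservativity equivalent. For (i), the noetherian induction on $\mr{Supp}(\ms{F})$ combined with the generic smoothness of constructible modules (Lemma~\ref{genrankconstmod}) and the c-t-exact localization short exact sequence is a sound replacement for the cited result, and you correctly identify the one non-formal input, namely that a smooth module on a connected smooth base with a vanishing fiber has rank zero. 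What each approach buys: the paper's gets brevity by outsourcing (i) to a holonomic-stalk detection lemma proved elsewhere; yours stays inside $\mr{Con}(X)$ and makes the mechanism visible, which is arguably more illuminating here since the constructible t-structure is what the surrounding section is developing.

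Two small imprecisions, neither fatal. First, ``$j^+\ms{F}$ is smooth'' is loose when $\mr{Supp}(\ms{F})\neq X$; what you mean (and need) is $j^+\ms{F}\cong i_{W+}\ms{G}$ with $W=U\cap\mr{Supp}(\ms{F})$ smooth and $\ms{G}\in\mr{Sm}(W/K)$, so that $i_x^+\ms{F}$ for $x\in W$ computes the fiber of $\ms{G}$. Second, the parenthetical in the $d=0$ branch (``the choice of $U$ trivially makes $j^+\ms{F}=0$'') is wrong as stated -- if $U$ is chosen so that $U\cap\mr{Supp}(\ms{F})$ is dense then it contains the whole finite support and $j^+\ms{F}$ is the whole of $\ms{F}$; what actually happens is that $Z\cap\mr{Supp}(\ms{F})=\emptyset$, so the inductive term vanishes and the smooth-fiber argument alone finishes. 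In fact part (ii) is never needed for (i): since $\mr{Supp}(\ms{F})$ is reduced, its zero-dimensional components are already spectra of residue fields, and $i_x^+(i_x)_+\cong\mr{id}$ gives the $d=0$ conclusion directly.
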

\begin{proof}
 For (i), use \cite[1.3.11]{AC}, and (ii) is left to the reader.
\end{proof}

\begin{lem}
 \label{cohdimlem}
 Let $f\colon X\rightarrow Y$ be a morphism of realizable schemes such
 that for any $y\in Y$, the dimension of the fiber $f^{-1}(y)$ is $\leq
 d$. Then for any $\ms{M}\in\mr{Con}(X)$, $\cH^if_!(\ms{M})=0$
 for $i>2d$ and $i<0$.
\end{lem}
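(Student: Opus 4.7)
The plan is to reduce to the case where $Y$ is a point via base change and then to conclude by a dévissage on $\dim X$. For the first reduction, by Lemma \ref{consttstrpro}(i) the vanishing of ${}^{\mr{c}}\H^if_!(\ms{M})$ in $\mr{Con}(Y)$ can be checked after applying $i_y^+$ for each closed point $y\in Y$. Since $i_y^+$ is c-t-exact (Lemma \ref{constproplem}(i)), $i_y^+\bigl({}^{\mr{c}}\H^if_!\ms{M}\bigr)\cong{}^{\mr{c}}\H^i(i_y^+f_!\ms{M})$, and proper base change (\ref{fundproprealsch}.\ref{basechangeprop}) identifies this with ${}^{\mr{c}}\H^if_{y!}(i_{X_y}^+\ms{M})$, where $f_y\colon X_y\to\{y\}$ is the fiber inclusion and $i_{X_y}^+\ms{M}\in\mr{Con}(X_y)$ by Lemma \ref{constproplem}. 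As $\dim X_y\leq d$ and on a point the constructible and standard t-structures coincide, the problem is reduced to the following: for a realizable scheme $X$ of dimension $\leq d$ with structure morphism $g\colon X\to\mr{Spec}(k)$ and $\ms{N}\in\mr{Con}(X)$, show $\H^ig_!\ms{N}=0$ for $i>2d$.

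For the dévissage, I would induct on $n:=\dim X\leq d$. The case $n=0$ is trivial since $g$ is then finite, hence $g_!$ is exact on $\mr{Hol}$ by Lemma \ref{finiteadjpropsch}. For $n\geq 1$, choose a smooth equidimensional affine open dense $j\colon U\hookrightarrow X_{\mr{red}}$ of dimension $n$ with closed complement $i\colon Z\hookrightarrow X$ of dimension $<n$, and apply $g_!$ to the localization triangle
\begin{equation*}
 j_!j^+\ms{N}\to\ms{N}\to i_+i^+\ms{N}\xrightarrow{+1}.
\end{equation*}
The $Z$-contribution has cohomology only in degrees $\leq 2(n-1)$ by the induction hypothesis applied to $(g|_Z)_!(i^+\ms{N})$, using that $i^+\ms{N}\in\mr{Con}(Z)$ by Lemma \ref{constproplem}. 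A further dévissage on $U$---using the induction hypothesis again on the closed locus where the cohomology sheaves of $j^+\ms{N}$ fail to be smooth---reduces the remaining problem to the case $\ms{L}:=j^+\ms{N}\in\mr{Sm}(U/K)$.

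For this smooth case, writing $\ms{L}=\mr{sp}_+(E)[-n]$ for an overconvergent $F$-isocrystal $E$ on $U$, I would invoke the $p$-adic analogue of Artin's affine vanishing: for smooth affine $U$ of dimension $n$, the complex $(gj)_!(\mr{sp}_+E)$ is concentrated in standard degrees $\leq n$, and therefore $\H^i(gj)_!\ms{L}=\H^{i-n}(gj)_!(\mr{sp}_+E)=0$ for $i>n$, and a fortiori for $i>2d$. In the arithmetic $\ms{D}$-module formalism this amplitude bound follows, via the duality $\mb{D}\circ(gj)_!\cong(gj)_+\circ\mb{D}$, from the concentration of rigid cohomology of the dual isocrystal on a smooth affine variety of dimension $n$ in degrees $[0,n]$. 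This last step is the main obstacle; the preceding reductions are formal bookkeeping in the six-functor formalism of \S\ref{fundproprealsch} and the c-t-structure of \S\ref{constt-struct}.
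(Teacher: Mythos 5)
Your proposal takes essentially the same route as the paper's proof: reduce to $Y$ a point by applying $i_y^+$ (c-t-exact) and base change, induct on $\dim X$, shrink to a smooth affine dense open where the module is smooth, and finish by an amplitude bound for $f_!$ on a holonomic module over a smooth affine of dimension $n$ (the paper phrases this last step tersely as ``by the definition of $f_!$'', which is precisely the computation of the de Rham complex length plus the affine vanishing you invoke). One small arithmetic slip at the end: with $\ms{L}=\mr{sp}_+(E)[-n]$ and $(gj)_!(\mr{sp}_+E)$ concentrated in standard degrees $\leq n$, one gets $\H^i(gj)_!\ms{L}=\H^{i-n}(gj)_!(\mr{sp}_+E)=0$ for $i-n>n$, i.e.\ $i>2n$, not $i>n$ as written; since $n\leq d$ the ``a fortiori for $i>2d$'' still goes through, so the conclusion is unaffected.
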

\begin{proof}
 By Lemma \ref{consttstrpro}, it suffices to show that for any closed
 point $y\in Y$, $i_y^+\,\cH^if_!(\ms{M})=0$ for $i\not\in[0,2d]$. By
 the c-t-exactness of $i_y^+$ and base change, it is reduced to showing
 that $\cH^if_{y!}(\ms{M})=0$ for $i\not\in[0,2d]$ where $f_y\colon
 X\times_{Y}\{y\}\rightarrow\{y\}$. Since over a point, c-t-structure
 and the usual t-structure coincide, it remains to show that if $X$ is a
 realizable scheme of dimension $d$, and $\ms{M}\in\mr{Con}(X)$, then
 $\H^if_!\ms{M}=0$ for $i\not\in[0,2d]$. We use the induction on the
 dimension of $X$. When the dimension of $X$ is $0$, then the
 verification is easy. This in particular implies that $f_!$ is
 c-t-exact when $f$ is quasi-finite.
 Let us assume that the lemma holds for $d<N$.
 By Lemma \ref{fundproprealsch}, we may assume that
 $X$ is reduced. By the c-t-exactness \ref{constproplem} and the
 induction hypothesis, we may shrink $X$ by its open dense subscheme.
 Thus, we may assume that there exists a finite morphism
 $g\colon X\rightarrow\mb{A}^N$. Since $g_!\ms{M}$ is constructible by
 the c-t-exactness of $g_!$ we have already verified, it suffices to
 check the claim for $X=\mb{A}^N$. Shrinking $X$ further,
 we may assume that there exists a divisor $Z$ of $P:=\mb{P}^N$
 such that $X=P\setminus Z$ and $\ms{M}=\ms{N}[-N]$
 where $\ms{N}\in\mr{Hol}(X)$ by Lemma \ref{genrankconstmod}.
 Then the lemma follows by the definition of $f_!$ as well as
 \cite[5.4.1]{Huy}.
\end{proof}

\begin{lem}
 \label{gluinglemtsru}
 Let $\ms{M}\in\mr{Con}(X)$, and let $\bigl\{u_i\colon
 U_i\hookrightarrow X\bigr\}$ be a finite open covering of $X$. Let
 $u_{ij}\colon U_i\cap U_j\hookrightarrow X$ be the immersion. Then the
 following sequence is exact in $\mr{Con}(X)$:
 \begin{equation*}
  \bigoplus_{i,j}u_{ij!}u^+_{ij}\ms{M}\rightarrow
   \bigoplus_{i}u_{i!}u^+_{i}\ms{M}\rightarrow\ms{M}\rightarrow0.
 \end{equation*}
\end{lem}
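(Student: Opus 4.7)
My plan is to test exactness pointwise and reduce to a combinatorial \v{C}ech-type statement.

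\textbf{Step 1 (pointwise reduction).} By Lemma \ref{consttstrpro}(i), both the vanishing of a morphism and the vanishing of a cokernel in $\mr{Con}(X/K)$ are detected by pullback along $i_x$ for closed points $x$, so exactness of the three-term sequence --- which amounts to the composition being zero, surjectivity on the right, and the vanishing of one middle cohomology --- reduces to its analogue after applying $i_x^+$ for every closed point $x\in X$. This is licit because $i_x^+$ is c-t-exact by Lemma \ref{constproplem}(i) and commutes with finite direct sums.

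\textbf{Step 2 (base change computation).} Fix a closed point $x$, and set $J:=\{i\mid x\in U_i\}$, which is non-empty because $\{U_i\}$ covers $X$. Base change (\ref{fundproprealsch}.\ref{basechangeprop}) applied to the cartesian square $\{x\}\times_X U_i\to U_i$ over $\{x\}\to X$ yields
\begin{equation*}
 i_x^+ u_{i!}u_i^+\ms{M}\cong\begin{cases}i_x^+\ms{M}&i\in J,\\ 0&i\notin J,\end{cases}
\end{equation*}
and similarly for $u_{ij!}u_{ij}^+\ms{M}$ (non-zero exactly when $i,j\in J$). Tracking the adjunction counit through base change via \ref{compatleftadj}, the pulled-back sequence becomes the augmented \v{C}ech complex $\bigoplus_{J\times J}\ms{F}\to\bigoplus_J\ms{F}\to\ms{F}\to 0$ on $\ms{F}:=i_x^+\ms{M}$, with the usual augmentation (sum) and differential (difference of the two restrictions).

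\textbf{Step 3 (\v{C}ech exactness), and the main obstacle.} Exactness of this augmented \v{C}ech complex on a non-empty index set $J$ is elementary: picking any $i_0\in J$ provides an explicit splitting of the augmentation (injection into the $i_0$-component) and a concrete preimage under the differential of each element in its kernel (if $\sum_i\alpha_i=0$, take the tuple supported at $j=i_0$ with $(i,i_0)$-entry $\alpha_i$). The main technical obstacle is Step 2 --- namely, verifying that base change actually identifies the given structure maps (the counit $u_{i!}u_i^+\ms{M}\to\ms{M}$ and the two restrictions $u_{ij!}u_{ij}^+\ms{M}\rightrightarrows u_{i!}u_i^+\ms{M},\,u_{j!}u_j^+\ms{M}$) with the standard \v{C}ech augmentation and differential on the stalk. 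This is a routine but careful bookkeeping argument using \ref{compatleftadj} and the compatibility of base change with composition of open immersions.
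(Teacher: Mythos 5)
Your proposal is correct and follows exactly the strategy the paper uses: reduce to stalks via Lemma \ref{consttstrpro}, commute $i_x^+$ past $u_{\star!}$ by base change, and conclude by the standard combinatorial \v{C}ech argument (which the paper simply labels "the verification is standard" and you have spelled out).
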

\begin{proof}
 To check the exactness, it suffices to check it after taking $i^+_x$
 for each closed point $x\in X$ by Lemma \ref{consttstrpro}. By the
 commutativity of $i_x^+$ and $u_{\star!}$, the verification is
 just a combinatorial problem.
\end{proof}

\subsubsection{}
\label{propindcatcon}
We use the category $\mr{Ind}(\mr{Con}(X))$ later. Let us prepare some
properties of this category. Let $f\colon X\rightarrow Y$ be a morphism
between realizable schemes. Since $f^+$ is c-t-exact, we have a functor
$f^+\colon\mr{Ind}(\mr{Con}(Y))\rightarrow\mr{Ind}(\mr{Con}(X))$.

\begin{lem*}
 We use the notation of Lemma \ref{consttstrpro}.

 (i) Let $\ms{F}\in\mr{Ind}(\mr{Con}(X))$. Assume that $i_x^+\ms{F}=0$
 for any closed point $x$ of $X$. Then $\ms{F}=0$.

 (ii) Let $g\colon s'\rightarrow s$ be a morphism of points. Then for
 $\ms{F}\in\mr{Ind}(\mr{Con}(s))$, $\ms{F}=0$ if and only if
 $g^+(\ms{F})=0$. In particular, a homomorphism $\phi$ in
 $\mr{Ind}(\mr{Con}(s))$ is an isomorphism if and only if so is
 $g^+(\phi)$.
\end{lem*}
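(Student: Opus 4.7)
The plan is to reduce both statements to the corresponding facts on $\mr{Con}$ proved in Lemma~\ref{consttstrpro}, via the ``image'' construction inside the Grothendieck category $\mr{Ind}(\mr{Con}(X))$ (cf.~\ref{indcatrecall}.\ref{esssmallgrothcate}).

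For (i), write $\ms{F}\cong\indlim_{\alpha}\ms{F}_{\alpha}$ as a filtered inductive limit with each $\ms{F}_{\alpha}\in\mr{Con}(X)$, and let $\ms{F}_{\alpha}'\subset\ms{F}$ denote the image of the structural map $\ms{F}_{\alpha}\to\ms{F}$, computed in $\mr{Ind}(\mr{Con}(X))$. As a quotient of $\ms{F}_{\alpha}$, the subobject $\ms{F}_{\alpha}'$ again lies in $\mr{Con}(X)$ by thickness (\ref{indcatrecall}.\ref{directfactorstillinA}). Because $i_{x}^{+}$ is c-t-exact on $D^{\mr{b}}_{\mr{hol}}$ (Lemma~\ref{constproplem}(i)), its Ind-extension is exact (cf.~\ref{indcatrecall}), so the monomorphism $\ms{F}_{\alpha}'\hookrightarrow\ms{F}$ produces a monomorphism $i_{x}^{+}\ms{F}_{\alpha}'\hookrightarrow i_{x}^{+}\ms{F}=0$. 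Hence $i_{x}^{+}\ms{F}_{\alpha}'=0$ for every closed point $x\in X$, and Lemma~\ref{consttstrpro}(i) forces $\ms{F}_{\alpha}'=0$. Since the $\ms{F}_{\alpha}'$ form a filtered system of subobjects of $\ms{F}$ whose union is $\ms{F}$ itself, we conclude $\ms{F}=0$.

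Part (ii) proceeds by exactly the same mechanism, with $i_{x}^{+}$ replaced by $g^{+}$: the $g^{+}\ms{F}_{\alpha}'$ inject into $g^{+}\ms{F}=0$, hence vanish, and Lemma~\ref{consttstrpro}(ii) (conservativity of $g^{+}$ on $\mr{Con}(s)$) gives $\ms{F}_{\alpha}'=0$, whence $\ms{F}=0$. The final claim about $\phi$ being an isomorphism is then formal: $\phi$ is an isomorphism if and only if $\ker(\phi)=\mr{coker}(\phi)=0$, and since the Ind-extension of $g^{+}$ is exact it commutes with the formation of kernel and cokernel, so the equivalence follows from the first assertion applied to $\ker(\phi)$ and $\mr{coker}(\phi)$.

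The only delicate point is to be sure that the image of a morphism $\ms{F}_{\alpha}\to\ms{F}$ from a $\mr{Con}$-object to a general Ind-object actually lives in $\mr{Con}(X)$; this is the whole content of the thickness property \ref{indcatrecall}.\ref{directfactorstillinA}. Once this is granted, the argument is purely formal and requires no further input beyond the c-t-exactness of $i_{x}^{+}$ and $g^{+}$ and the corresponding statements for $\mr{Con}$.
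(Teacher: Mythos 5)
Your argument has a genuine gap at exactly the point you flag as ``the only delicate point.'' You claim that the image $\ms{F}'_\alpha$ of the structural map $\ms{F}_\alpha\to\ms{F}$ lies in $\mr{Con}(X)$ ``by thickness'' (\ref{indcatrecall}.\ref{directfactorstillinA}), but thickness does not give this. Thickness says $\mr{Con}(X)$ is closed under kernels, cokernels, extensions, and hence direct factors, of \emph{morphisms whose source and target are both in} $\mr{Con}(X)$. Here the target $\ms{F}$ is a general Ind-object, so the relevant kernel $K:=\mr{Ker}(\ms{F}_\alpha\to\ms{F})$ is a subobject of $\ms{F}_\alpha$ formed in $\mr{Ind}(\mr{Con}(X))$ and need not be in $\mr{Con}(X)$ on abstract grounds: it is a filtered union $K=\indlim_{\beta\ge\alpha}\mr{Ker}(\ms{F}_\alpha\to\ms{F}_\beta)$ of subobjects that do lie in $\mr{Con}(X)$, but such a union need not belong to $\mr{Con}(X)$ for a general abelian category. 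What actually forces $K\in\mr{Con}(X)$ is the \emph{noetherianity} of $\mr{Con}(X)$ (the lemma immediately preceding Lemma~\ref{consttstrpro}): the ascending chain of kernels stabilizes, so $K=\mr{Ker}(\ms{F}_\alpha\to\ms{F}_{\beta_0})$ for some $\beta_0$, and then the image $\ms{F}_\alpha/K$ is in $\mr{Con}(X)$. This is exactly the mechanism the paper's proof uses (it writes $\ms{E}_j=\mr{Ker}(\ms{F}_i\to\ms{F}_j)$ and invokes noetherianity to stabilize the chain).

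Once this citation is corrected, your proof is sound and essentially coincides with the paper's argument for (i). For (ii) the paper argues slightly differently --- it uses that objects of $\mr{Con}(s)$ are compact in the Ind-category together with the faithfulness of $g^+$ from Lemma~\ref{consttstrpro}(ii) to conclude directly that some $\ms{F}_i\to\ms{F}_j$ vanishes --- but your reduction to the image-and-noetherianity pattern, followed by the formal kernel/cokernel argument for the last assertion, also works once the noetherianity input is in place.
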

\begin{proof}
 Let us show (i). Write $\ms{F}=\indlim_{i\in I}\ms{F}_i$ where $I$
 is a small filtrant category and $\ms{F}_i\in\mr{Con}(X)$.
 Fix $i\in I$, and let
 $\ms{E}_j:=\mr{Ker}(\ms{F}_i\rightarrow\ms{F}_j)$.
 Since $\mr{Con}(X)$ is noetherian by Lemma \ref{constnoetheriancat},
 there exists $j_0\in I$ such that $\ms{E}_{j_0}=\ms{E}_j$ for any
 $j\geq j_0$. Now, we have
 $i_x^+(\ms{F}_i/\ms{E}_{j_0})=i_x^+\indlim_j(\ms{F}_i/\ms{E}_j)
 \hookrightarrow i_x^+\ms{F}=0$, thus $\ms{E}_{j_0}=\ms{F}_i$ by Lemma
 \ref{consttstrpro}. This shows that the homomorphism
 $\ms{F}_i\rightarrow\ms{F}_{j_0}$ is $0$, and the claim follows.

 Let us show (ii). Let $\ms{F}=\indlim\ms{F}_i$ where $I$ is small
 filtrant and $\ms{F}_i\in\mr{Con}(s)$.
 For each $i\in I$, there exists $j\in I$
 such that $g^+\ms{F}_i\rightarrow g^+\ms{F}_j$ is $0$. Thus, by Lemma
 \ref{consttstrpro}, we get that $\ms{F}_i\rightarrow\ms{F}_j$ is $0$ as
 well.
\end{proof}

\subsection{Extension of scalars and Frobenius structures}
\label{extofsclarsub}
So far, the coefficient categories we have treated ({\it e.g.}\
$\mr{Hol}(X/K)$ or $D^{\mr{b}}_{\mr{hol}}(X/K)$) are $K$-additive. For
the Langlands correspondence, we need to consider $L$-coefficients with
Frobenius structure where $L$ is an algebraic field extension of $K$.
We introduce such categories in this subsection when the extension is
finite. Scalar extended categories of isocrystals have already been
introduced in \cite[7.3]{AM}, and the idea of our construction is
essentially the same, but we hope that the usability is improved.

\subsubsection*{Extension of scalars}
\subsubsection{}
\label{extenscasetup}
Let $K$ be an arbitrary field and $\mc{A}$ be a $K$-additive category.
We take a finite field extension $L$ of $K$. We define the category
$\mc{A}_L$ as follows. The objects consist of pairs $(X,\rho)$ such that
$X\in\mr{Ob}(\mc{A})$, and a $K$-algebra homomorphism $\rho\colon
L\rightarrow\mr{End}(X)$,
called the {\em $L$-structure} (cf.\ \cite[after Remark 3.10]{Mi}).
The morphisms are morphisms in $\mc{A}$ compatible with
$L$-structures, or more precisely,
\begin{align*}
 \mr{Hom}_{\mc{A}_L}&\bigl((X,\rho),(X',\rho')\bigr)\\
 &=
  \bigl\{f\in\mr{Hom}_{\mc{A}}(X,X')\mid
  \mbox{$f\circ\rho(x)=\rho'(x)\circ f$ for any $x\in L$}
  \bigr\}.
\end{align*}

We have the forgetful functor
$\mr{for}_L\colon\mc{A}_L\rightarrow\mc{A}$. Let $X\in\mc{A}$. Then we
define $X\otimes_KL\in\mc{A}_L$ as follows: Take a basis $x_1,\dots,x_d$
of $L$ over $K$. Then $X\otimes_KL:=(\bigoplus_{i=1}^d X\otimes
x_i,\rho')$ such that for $x\in L$, write $x\cdot x_k=\sum a_ix_i$ with
$a_i\in K$, and $\rho'(x)|_{X\otimes x_k}:=\sum \rho(a_i)\otimes x_i$
where $\rho(a_i)$ denotes the structural action of $a_i\in K$ on $X$.
We can check easily that this does not depend on the choice of the basis
of $L$ up to canonical isomorphism. We denote by
$\iota_L:=(-)\otimes_KL\colon\mc{A}\rightarrow\mc{A}_L$.
If $\mc{A}$ is moreover abelian, then $\mc{A}_L$ is abelian as well, and
$\iota_L$ is exact.

For $X\in\mc{A}$ and $Y\in\mc{A}_L$, we have
\begin{equation*}
 \mr{Hom}_{\mc{A}}(X,\mr{for}_L(Y))\cong
  \mr{Hom}_{\mc{A}_L}(\iota_L(X),Y),
\end{equation*}
in other words, we have an adjoint pair $(\iota_L,\mr{for}_L)$. Thus, if
$\mc{A}$ is an abelian category, the functor $\mr{for}_L$ sends
injective objects in $\mc{A}_L$ to injective objects in $\mc{A}$.

Let $f\colon\mc{A}\rightarrow\mc{B}$ be a $K$-additive functor
between $K$-additive categories. Then there exists a unique functor
$f_L\colon\mc{A}_L\rightarrow\mc{B}_L$ which is compatible with both
$\iota_L$ and $\mr{for}_L$. If $\mc{A}$ and $\mc{B}$ are abelian,
$f_L$ is left (resp.\ right) exact, if $f$ is so.

\subsubsection{}
\label{calchominextcat}
Let $\mc{A}$ be a $K$-additive category.
Let $X,Y\in\mc{A}_L$. On the abelian group
$\mr{Hom}(\mr{for}_L(X),\mr{for}_L(Y))$, we
endow with $L\otimes_KL$-module structure as follows: we define the left
$L$-structure by $(a\cdot\phi)(x):=a(\phi(x))$, and the right
$L$-structure by $(\phi\cdot a)(x)=\phi(ax)$. For $a\in K$, both
$L$-structures are compatible, and we get the $L\otimes_KL$-module
structure. This $L\otimes_KL$-module is denoted by $\mr{Hom}_K(X,Y)$. By
definition, we have
\begin{equation*}
 \mr{Hom}(X,Y)\cong\bigl\{\phi\in\mr{Hom}_K(X,Y)\mid
  \mbox{$(a\otimes1)\phi=(1\otimes a)\phi=0$ for any $a\in L$}
  \bigr\}.
\end{equation*}
Note that if $L/K$ is a separable extension, then $L\otimes_KL$ is a
product of fields, and any $L\otimes_KL$-module is flat.

\begin{lem*}
 Let $L/K$ be a separable extension, and $M$ be an
 $L\otimes_KL$-module. Put $I:=\mr{Ker}(L\otimes_KL\rightarrow L)$. Then
 we have a canonical isomorphism
 \begin{equation*}
  M_0:=\bigl\{m\in M\mid a\cdot m=0
   \mbox{ for any $a\in I$}\bigr\}\xrightarrow{\sim}
   M/IM.
 \end{equation*}
\end{lem*}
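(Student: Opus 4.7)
The plan is to exploit the idempotent decomposition of $L\otimes_K L$ afforded by the separability hypothesis. Since $L/K$ is finite and separable, $L\otimes_K L$ is \'{e}tale over $L$ and decomposes as a finite product of fields, one of whose factors is $L$ itself corresponding to the multiplication map $\mu\colon L\otimes_K L\to L$. Consequently there exists an idempotent $e\in L\otimes_K L$ such that $\mu(e)=1$ and $I=(1-e)(L\otimes_K L)$; in particular $eI=0$.

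First I would pin down this idempotent decomposition carefully: write $L\otimes_K L=A\times B$ with $\mu|_A\colon A\xrightarrow{\sim}L$ and $B=I$, and take $e=(1,0)$. Next, for any $L\otimes_K L$-module $M$, the identity $1=e+(1-e)$ yields a direct sum decomposition $M=eM\oplus(1-e)M$. Using commutativity of $L\otimes_K L$ together with the fact that every element of $I$ has the form $(1-e)a$, one checks immediately that $IM=(1-e)M$ and that $M_0=\{m\mid Im=0\}$ coincides with $\{m\mid(1-e)m=0\}=eM$.

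The canonical map $M_0\to M/IM$ then identifies, under the isomorphisms $M_0=eM$ and $M/IM=M/(1-e)M\xrightarrow{\sim}eM$ (projection onto the first summand), with the identity on $eM$, so it is an isomorphism. The only non-trivial ingredient is the standard fact that $L\otimes_K L$ is a product of fields when $L/K$ is finite separable; granted this, the proof is a formal manipulation with idempotents and presents no real obstacle.
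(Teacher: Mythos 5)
Your proof is correct. The paper itself gives no proof (it says ``Left to the reader''), and the idempotent argument you give is the natural one: since $L/K$ is finite separable, $L\otimes_K L$ is a finite product of fields, the multiplication map $\mu\colon L\otimes_K L\to L$ splits off a factor $A\cong L$ with complementary idempotent $1-e$ generating $I$, and then $M_0=eM$, $IM=(1-e)M$, so the canonical map $M_0\to M/IM$ is the identity on $eM$ under the obvious identifications. Note that the lemma's hypothesis omits the word ``finite'', but finiteness of $L/K$ is in force throughout this section (it is fixed in \ref{extenscasetup}), so you are right to invoke it; without it, $I$ need not be generated by a single idempotent and the argument would require more care.
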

\begin{proof}
 This follows from the following more general fact: Let $i\colon
 Z\hookrightarrow X$ be a closed immersion of schemes and $\mc{M}$ be a
 quasi-coherent $\mc{O}_X$-module. Then the composition
 $\underline{\Gamma}_Z(\mc{M})\rightarrow\mc{M}\rightarrow
 i_*i^*(\mc{M})$ is an isomorphism if $X=Z\sqcup(X\setminus Z)$ as
 schemes.
\end{proof}

\begin{cor*}
 Let $L/K$ be a separable extension. Then, for $X,Y\in\mc{A}_L$, we have
 a canonical isomorphism
 \begin{equation*}
  \mr{Hom}(X,Y)\xrightarrow{\sim}
   L\otimes_{L\otimes_KL}\mr{Hom}_K(X,Y).
 \end{equation*}
\end{cor*}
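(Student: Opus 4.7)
The corollary follows more or less formally from the preceding lemma, once one identifies $\mr{Hom}(X,Y)$ with the ``$I$-annihilator'' $M_0$ appearing there. I would proceed as follows.

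Set $M := \mr{Hom}_K(X,Y)$, viewed as an $L\otimes_K L$-module via the bi-$L$-action described in \ref{calchominextcat}, and let $I = \mr{Ker}(L\otimes_K L \to L)$ as in the lemma. The first step is to rewrite the condition that a map $\phi\in\mr{Hom}_K(X,Y)$ is a morphism in $\mc{A}_L$: it must commute with the $L$-structures of $X$ and $Y$, which in the $L\otimes_K L$-module language is exactly the condition $(a\otimes 1-1\otimes a)\phi=0$ for every $a\in L$. Since the elements $a\otimes 1-1\otimes a$ generate the ideal $I$ (indeed, modulo these elements $L\otimes_K L$ collapses to $L$), this gives the canonical identification
\begin{equation*}
 \mr{Hom}(X,Y)=\{\phi\in M\mid I\phi=0\}=M_0.
\end{equation*}

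The second step is to apply the preceding lemma: since $L/K$ is separable, $M_0\xrightarrow{\sim}M/IM$ canonically. The third step is to recognize the right-hand side as a tensor product. Because $L=(L\otimes_K L)/I$ as $L\otimes_K L$-modules, the standard right-exactness of tensor product gives the canonical isomorphism
\begin{equation*}
 M/IM\;\cong\;\bigl((L\otimes_K L)/I\bigr)\otimes_{L\otimes_K L}M
 \;\cong\;L\otimes_{L\otimes_K L}M.
\end{equation*}
Composing these three identifications yields the desired canonical isomorphism.

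There is essentially no hard step here: the content is entirely contained in the lemma. The only thing one must be careful about is that the identifications are \emph{canonical}, i.e.\ independent of auxiliary choices (such as a $K$-basis of $L$), and functorial in $X$ and $Y$. This is immediate because each of the three identifications above is expressed purely in terms of the $L\otimes_K L$-module $M$ together with its natural subobject and quotient, both of which are defined in terms of the ideal $I$ alone.
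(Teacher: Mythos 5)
Your proof is correct and is exactly the argument the paper intends (the paper gives no explicit proof of the corollary, treating it as an immediate consequence of the lemma). The three identifications you spell out — $\mr{Hom}(X,Y)=M_0$ via the observation that the elements $a\otimes 1-1\otimes a$ generate $I$, then $M_0\cong M/IM$ by the lemma, then $M/IM\cong L\otimes_{L\otimes_KL}M$ — are precisely what is left implicit in the text, and you correctly read the paper's displayed condition for $\mr{Hom}(X,Y)$ (which as literally typeset would force $\phi=0$) as the evident typo for $(a\otimes1-1\otimes a)\phi=0$.
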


\subsubsection{}
\label{fullfaithderlem}
Now, let $\mc{A}$ be a $K$-abelian category.
Assume $L/K$ is a separable extension, and we consider the derived
category $D(\mc{A}_L)$. We have the following functor
\begin{equation*}
 \mr{Hom}^{\bullet}_K\colon C(\mc{A}_L)^\circ\times C(\mc{A}_L)
  \rightarrow C(L\otimes_KL);\quad
  (X^\bullet,Y^\bullet)\mapsto
  \prod_{i\in\mb{Z}}\mr{Hom}_K\bigl(X^i,Y^{i+\bullet}\bigr),
\end{equation*}
and the differential is defined as in \cite[I.6]{RD}. Since $\mr{for}_L$
sends injective objects to injective objects, we can take the associated
derive functor and get $\mb{R}\mr{Hom}^{\bullet}_K(-,-)$ as in
\cite[II.3]{RD}. We have the following:

\begin{lem*}
 Let $X\in D(\mc{A}_L)$, $Y\in D^+(\mc{A}_L)$. Then,
 we have an isomorphism
 \begin{equation*}
  \mb{R}\mr{Hom}(X,Y)\xrightarrow{\sim}
  L\otimes_{L\otimes_KL}\mb{R}\mr{Hom}_K^\bullet(X,Y).
 \end{equation*}
\end{lem*}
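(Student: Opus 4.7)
The plan is to compute both sides via an injective resolution of $Y$ and reduce to the underived statement (the corollary in \ref{calchominextcat}), exploiting that $L$ is a flat (in fact direct summand) $L\otimes_KL$-module under the separability hypothesis.

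Concretely, pick an injective resolution $Y\xrightarrow{\sim}I^\bullet$ in $\mc{A}_L$ (invoking that $\mc{A}$, and hence $\mc{A}_L$, has enough injectives; this is available in the applications we need via \ref{indcatrecall}.\ref{esssmallgrothcate}). By the adjunction $(\iota_L,\mr{for}_L)$ noted in \ref{extenscasetup}, $\mr{for}_L$ preserves injectives, so $\mr{for}_L(I^\bullet)$ is an injective resolution of $\mr{for}_L(Y)$ in $\mc{A}$. Hence
\[
\mb{R}\mr{Hom}(X,Y)\cong\mr{Hom}^\bullet(X,I^\bullet),\qquad
\mb{R}\mr{Hom}_K^\bullet(X,Y)\cong\mr{Hom}_K^\bullet(X,I^\bullet),
\]
as complexes of abelian groups (resp.\ of $L\otimes_KL$-modules). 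Applying the corollary of \ref{calchominextcat} in each bidegree gives a canonical isomorphism
\[
\mr{Hom}(X^i,I^j)\xrightarrow{\sim}L\otimes_{L\otimes_KL}\mr{Hom}_K(X^i,I^j),
\]
and assembling over $i,j$ (taking the product over $i$ with fixed $j-i=n$) yields an isomorphism of complexes, once we know that $L\otimes_{L\otimes_KL}(-)$ commutes with the products appearing in the definition of $\mr{Hom}^\bullet$.

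For this last point, since $L/K$ is separable, $L\otimes_KL$ is a product of fields in which $L$ appears as one factor; writing the corresponding idempotent as $e$, the functor $L\otimes_{L\otimes_KL}(-)$ is identified with multiplication by $e$, which is exact and commutes with arbitrary products (and direct sums). This exactness has two consequences: first, it turns the termwise isomorphism of complexes above into the asserted isomorphism in $D(\mr{Ab})$; second, passing to cohomology commutes with $L\otimes_{L\otimes_KL}(-)$, so the construction is independent of the chosen injective resolution. The main subtlety is really only the compatibility with the infinite products in $\mr{Hom}^\bullet$, which is handled by the idempotent decomposition just described; the rest is a direct derivation of the corollary of \ref{calchominextcat}.
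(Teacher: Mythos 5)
Your argument is correct and is exactly the paper's intended approach: the paper's entire proof is the single line ``Use Corollary \ref{calchominextcat},'' and you have simply supplied the details — computing both derived functors by an injective resolution of $Y$ in $\mc{A}_L$ (with $\mr{for}_L$ preserving injectives), applying the corollary termwise, and observing that because $L$ is a direct factor of $L\otimes_KL$ as a ring, the functor $L\otimes_{L\otimes_KL}(-)$ is multiplication by an idempotent, hence exact and compatible with the infinite products in $\mr{Hom}^\bullet$.
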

\begin{proof}
 Use Corollary \ref{calchominextcat}.
\end{proof}

\begin{cor*}
 The functor
 $D^{\mr{b}}(\mc{A}_L)\rightarrow D^{\mr{b}}(\mc{A})_L$ is fully
 faithful.
\end{cor*}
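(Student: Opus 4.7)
The plan is to deduce full faithfulness directly from Lemma \ref{fullfaithderlem} combined with the results of \S\ref{calchominextcat}. First I describe the functor: a complex $X^\bullet \in C^{\mr{b}}(\mc{A}_L)$ carries a termwise $L$-action commuting with the differentials, which produces a ring homomorphism $L \to \mr{End}_{C^{\mr{b}}(\mc{A})}(\mr{for}_L X^\bullet)$. This $L$-structure descends to the derived category and gives an object of $D^{\mr{b}}(\mc{A})_L$, functorially in $X^\bullet$.

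For the fully faithful part, I compute the two Hom groups and show they agree. By the definition of $(-)_L$ recalled in \S\ref{extenscasetup}, the group $\mr{Hom}_{D^{\mr{b}}(\mc{A})_L}(X, Y)$ is the subgroup of $L$-equivariant morphisms inside $\mr{Hom}_{D^{\mr{b}}(\mc{A})}(\mr{for}_L X, \mr{for}_L Y)$. Equipping the latter with its natural $L \otimes_K L$-module structure as in \S\ref{calchominextcat}, $L$-equivariance is the relation $(a \otimes 1 - 1 \otimes a)f = 0$ for every $a \in L$, i.e.\ annihilation by the kernel $I$ of the multiplication $L \otimes_K L \to L$. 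By the Lemma of \S\ref{calchominextcat} this annihilator coincides with $L \otimes_{L \otimes_K L} \mr{Hom}_{D^{\mr{b}}(\mc{A})}(\mr{for}_L X, \mr{for}_L Y)$.

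On the other hand, applying $H^0$ to the isomorphism of Lemma \ref{fullfaithderlem} identifies $\mr{Hom}_{D^{\mr{b}}(\mc{A}_L)}(X, Y)$ with $H^0$ of $L \otimes_{L \otimes_K L} \mb{R}\mr{Hom}_K^\bullet(X, Y)$. Since $L/K$ is separable, $L \otimes_K L$ is a finite product of fields with $L$ appearing as a direct factor via the multiplication map, so $L$ is flat (indeed projective) over $L \otimes_K L$; hence cohomology commutes with the base change, producing $L \otimes_{L \otimes_K L} \mr{Hom}_{D^{\mr{b}}(\mc{A})}(\mr{for}_L X, \mr{for}_L Y)$. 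The two expressions match, giving the desired bijection of Hom groups.

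The main thing I expect to require care is verifying that the resulting identification coincides with the map induced by the functor itself, rather than being merely an abstract isomorphism. This should follow by unwinding the construction of $\mb{R}\mr{Hom}^\bullet_K$ through an injective resolution in $\mc{A}_L$ and tracing the adjunction $(\iota_L, \mr{for}_L)$ of \S\ref{extenscasetup} through both identifications; I expect this compatibility check to be essentially formal but slightly bookkeeping-intensive, and it is the only step that is not immediate from the quoted lemmas.
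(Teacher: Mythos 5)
Your proof is correct and follows essentially the same route as the paper: apply $H^0$ to the isomorphism of Lemma \ref{fullfaithderlem}, use flatness of $L$ over $L\otimes_KL$ to commute $H^0$ past the base change, and match the result against the description of $\mr{Hom}$ in $\mc{A}_L$-type categories coming from \S\ref{calchominextcat}. The key identification $\mb{R}^0\mr{Hom}^\bullet_K(X,Y)\cong\mr{Hom}_K(\mr{for}_LX,\mr{for}_LY)$ as $L\otimes_KL$-modules, which the paper spells out explicitly, you treat as automatic (it follows from $\mr{for}_L$ preserving injectives), and the compatibility check you flag at the end is indeed the formal point the paper leaves implicit.
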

\begin{proof}
 For $X\in D(\mc{A}_L)$, let us denote by $X'$ the image in
 $D(\mc{A})_L$. We have
 \begin{equation*}
  \mb{R}^i\mr{Hom}^\bullet_K(X,Y)\cong
   \mr{Hom}_K(X',Y'[i])
 \end{equation*}
 as $L\otimes_KL$-modules.
 This shows that the functor is fully faithful by the lemma above and
 Corollary \ref{calchominextcat}.
\end{proof}

\begin{rem*}
 This corollary shows that if $X,Y\in D^{\mr{b}}(\mc{A}_L)$ are
 isomorphic in $D^{\mr{b}}(\mc{A})_L$, then they are isomorphic in
 $D^{\mr{b}}(\mc{A}_L)$. For example, assume given two $K$-additive
 functors $F,G\colon D^{\mr{b}}(\mc{A})\rightarrow D^{\mr{b}}(\mc{B})$
 and a $K$-additive morphism $\alpha\colon F\rightarrow G$ of
 functors. Furthermore, assume that these functors have $L$-additive
 liftings $\widetilde{F},\widetilde{G}\colon
 D^{\mr{b}}(\mc{A}_L)\rightarrow D^{\mr{b}}(\mc{B}_L)$.
 Then the fully faithfulness implies that $\alpha$ can be lifted to
 $\widetilde{\alpha}\colon\widetilde{F}\rightarrow\widetilde{G}$, and
 $\widetilde{\alpha}$ is an isomorphism if $\alpha$ is.
\end{rem*}

\begin{lem}
 Let $\mc{A}$ be a $K$-additive noetherian category.
 Then we have a canonical equivalence
 $\mr{Ind}(\mc{A}_L)\xrightarrow{\sim}\mr{Ind}(\mc{A})_L$.
\end{lem}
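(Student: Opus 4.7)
The plan is to construct a pair of quasi-inverse functors $F\colon\mr{Ind}(\mc{A}_L)\rightleftarrows\mr{Ind}(\mc{A})_L\colon G$. To construct $F$, I first observe that $\mr{Ind}(\mc{A})_L$ admits small filtrant inductive limits: given a filtrant inductive system $\{(Y_i,\rho_i)\}$, the limit $\indlim Y_i$ taken in $\mr{Ind}(\mc{A})$ carries a unique $L$-structure making all the structure maps compatible with the $\rho_i$. Since $\iota_{\mc{A}}\colon\mc{A}\to\mr{Ind}(\mc{A})$ is a $K$-additive functor, the construction of \ref{extenscasetup} yields a $K$-additive functor $(\iota_{\mc{A}})_L\colon\mc{A}_L\to\mr{Ind}(\mc{A})_L$. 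I then apply Lemma \ref{extenuniqcomfil} to extend $(\iota_{\mc{A}})_L$ uniquely to an additive functor $F\colon\mr{Ind}(\mc{A}_L)\to\mr{Ind}(\mc{A})_L$ commuting with small filtrant inductive limits.

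To construct $G$, I use the fact that $L/K$ is finite. Fix a $K$-basis $e_1,\dots,e_d$ of $L$. Given $(Y,\rho)\in\mr{Ind}(\mc{A})_L$, write $Y\cong\indlim_{i\in I} Y_i$ with $Y_i\in\mc{A}$, which is possible by definition of $\mr{Ind}(\mc{A})$. For each $i$, define $Y'_i$ to be the image in $Y$ of the morphism $\bigoplus_{j=1}^d Y_i\to Y$ whose $j$th component is $\rho(e_j)\circ(\text{structure map})$. Because $\mc{A}$ is a thick subcategory of $\mr{Ind}(\mc{A})$ (\ref{indcatrecall}.\ref{directfactorstillinA}), each $Y'_i$ lies in $\mc{A}$. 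The relation $e_je_k=\sum_\ell a^{\ell}_{jk}e_\ell$ with $a^{\ell}_{jk}\in K$ shows that $\rho(e_j)(Y'_i)\subset Y'_i$, so $Y'_i$ inherits an $L$-structure and becomes an object of $\mc{A}_L$. Since $Y_i\subset Y'_i\subset Y$ and $Y=\indlim Y_i$, we obtain a filtrant inductive system $\{Y'_i\}$ in $\mc{A}_L$, and I set $G(Y,\rho):=\indlim Y'_i$ in $\mr{Ind}(\mc{A}_L)$.

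Then I verify $G$ is canonically defined (independent of the basis and of the choice of presentation) and functorial: given a morphism $(Y,\rho)\to(Z,\sigma)$, any choice of compatible presentations produces compatible systems $Y'_i\to Z'_{i'}$ after cofinal refinement, and two different choices are intertwined by a canonical isomorphism in $\mr{Ind}(\mc{A}_L)$ because $\iota_{\mc{A}_L}$ is fully faithful and any two sufficiently large $L$-stable subobjects in $\mc{A}$ become comparable. Finally I check $F\circ G\simeq\mr{id}$ and $G\circ F\simeq\mr{id}$: the first is clear from the construction since $F$ commutes with $\indlim$ and $F(Y'_i)$ equals $Y'_i$ viewed in $\mr{Ind}(\mc{A})_L$, and $\indlim Y'_i=Y$ in $\mr{Ind}(\mc{A})$; the second reduces, by compatibility with filtrant inductive limits on both sides, to the observation that if $X\in\mc{A}_L$ already has an $L$-structure then applying the construction of $G$ to $F(X)$ recovers $X$ (the subobject $X'$ constructed is all of $X$ since $X$ is already $L$-stable).

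The main obstacle is the well-definedness and functoriality of $G$, since the construction of $Y'_i$ uses both a presentation of $Y$ and a basis of $L$. I expect to dispatch this cleanly by the following observation: the assignment $(Y,\rho)\mapsto\{L\text{-stable subobjects of }Y\text{ belonging to }\mc{A}\}$ is a filtrant ordered set whose colimit in $\mr{Ind}(\mc{A})$ is $Y$ (thanks to the thickness \ref{indcatrecall}.\ref{directfactorstillinA} applied to images of morphisms $\bigoplus_j Y_i\to Y$), and this canonical system gives an intrinsic description of $G(Y,\rho)$ not depending on auxiliary choices. With this intrinsic description, functoriality and the natural isomorphism $G\circ F\simeq\mr{id}$ become formal.
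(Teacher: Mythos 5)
Your proposal is correct and follows the same essential strategy as the paper: close the finite‑type subobjects $Y_i\subset Y$ under the $L$‑action (the paper uses $\sum_{j}\rho(x^j)(X_i)$ with a primitive element $x$, you use the image of $\bigoplus_j\rho(e_j)(Y_i)$ for a basis, which is the same thing) and use thickness of $\mc{A}\subset\mr{Ind}(\mc{A})$ to see these stay in $\mc{A}$, so the colimit of the resulting $\mc{A}_L$‑objects maps back to $(Y,\rho)$. The paper simply asserts full faithfulness of the canonical functor and verifies essential surjectivity; you instead build an explicit quasi‑inverse and address well‑definedness and functoriality intrinsically via the filtered poset of $L$‑stable subobjects in $\mc{A}$, which is a cleaner packaging but not a different argument.
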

\begin{proof}
 It is easy to check that it is fully faithful. Let $(X,\rho)$ be an
 object in $\mr{Ind}(\mc{A})_L$. We may write $X=\indlim_{i\in I}X_i$
 where $X_i\in\mc{A}$ and $X_i\subset X$ by \cite[4.2.1 (ii)]{Depone}.
 Let $x\in L$ such that $K[x]=L$, and $[L:K]=:d$. Put
 $X'_i:=\sum_{j=0}^{d-1}\rho(x^j)(X_i)$ where the sum is taken in
 $X$. Then $X'_i$ is stable under the action of $L$, and defines an
 object in $\mc{A}_L$. The limit $\indlim_{i\in I}(X'_i,\rho)$ is sent
 to $(X,\rho)$.
\end{proof}

\subsubsection{}
\label{commuiotaderiv}
Let $F\colon\mc{A}\rightarrow\mc{B}$ be a $K$-additive functor between
$K$-abelian categories. Assume that $F$ is left exact and $\mc{A}_L$ has
enough injectives. Note that $\mc{A}$ also has enough injectives since
$\mr{id}\hookrightarrow\mr{for}_L\circ\iota_L$ and $\mr{for}_L$
preserves injective objects. Since, again, $\mr{for}_L$ preserves
injective objects and commutes with $F$, the functors $\mb{R}F$ and
$\mr{for}_L$ commute.
Moreover, $\mb{R}F$ and $\iota_L$ commute. Indeed, since $F$ and
$\iota_L$ commute, it suffices to show that for an injective object $I$
in $\mc{A}$, $\mb{R}^iF(\iota_L(I))=0$ for $i>0$. For this, it suffices
to show that $\mr{for}_L\circ(\mb{R}^iF)\circ\iota_L(I)=0$. We have
\begin{equation*}
 \mr{for}_L\circ(\mb{R}^iF)\circ\iota_L(I)\cong
  (\mb{R}^iF)\circ\mr{for}_L\circ\iota_L(I)=0,
\end{equation*}
where the second equality holds by the fact that
$\mr{for}_L\circ\iota_L(I)$ is a finite direct sum of copies of $I$ and
thus injective.

\subsubsection*{Frobenius structure}
\subsubsection{}
\label{dfnoffrobstru}
Now, let us consider the ``Frobenius structure''.
We fix an automorphism $\sigma\colon K\rightarrow K$, and put
$K_0:=K^{\sigma=1}$.
Let $\mc{A}$ be a $K$-additive category, and
$F^*\colon\mc{A}\rightarrow\mc{A}$ be a $\sigma$-semilinear
functor, namely for $X,Y\in\mc{A}$ the homomorphism
$\mr{Hom}(X,Y)\rightarrow\mr{Hom}(F^*X,F^*Y)$ is $\sigma$-semilinear.
We define the category $F\text{-}\mc{A}$ to be the category of pairs
$(X',\Phi)$ such that $X'\in\mr{Ob}(\mc{A})$, and an isomorphism
$\Phi\colon F^*X'\xrightarrow{\sim}X'$ called the {\em Frobenius
structure}\footnote{
In \cite[4.5.1]{BeDmod2}, Frobenius structure is defined to be an
isomorphism with the opposite direction $\Psi\colon
X'\xrightarrow{\sim}F^*X'$.
To be compatible with that of rigid cohomology, we choose the other
convention. See footnote (1) in 2.7 of \cite{A}.
}.
Morphisms in $F\text{-}\mc{A}$ are morphisms in $\mc{A}$ respecting
$\Phi$. Then the category $F\text{-}\mc{A}$ is $K_0$-additive.

There exists the forgetful functor
\begin{equation*}
 \mr{for}_F\colon F\text{-}\mc{A}\rightarrow\mc{A};\quad
  (X',\Phi)\mapsto X'.
\end{equation*}
This functor is faithful. For $X$, $Y$ in $F\text{-}\mc{A}$, we have a
$K_0$-linear endomorphism
\begin{align}
 \label{Homfrobaction}
 F\colon\mr{Hom}\bigl(\mr{for}_F(X),\mr{for}_F(Y)\bigr)\rightarrow\,
 &\mr{Hom}\bigl(\mr{for}_F(F^*X),\mr{for}_F(F^*Y)\bigr)\\
 \notag
 \cong\,&\mr{Hom}\bigl(\mr{for}_F(X),\mr{for}_F(Y)\bigr),
\end{align}
where the last isomorphism is induced by the Frobenius structures of $X$
and $Y$.

Now, assume that $\mc{A}$ is abelian. Then $F\text{-}\mc{A}$ is abelian
as well. Indeed, assume given a morphism $f\colon X\rightarrow Y$ in
$F\text{-}\mc{A}$. Then the Frobenius structure on $X$ induces a
Frobenius structure on $\mr{Ker}(\mr{for}_F(f))$, which is the kernel of
$f$. This construction shows that
$\mr{for}_F(\mr{Ker}(f))\cong\mr{for}_F(\mr{Ker}(f))$. Replacing
$\mr{Ker}$ by $\mr{Coker}$, we get the same result. Thus, we get the
claim.

The construction shows that $\mr{for}_F$ is an exact functor, and the
following diagram is commutative:
\begin{equation*}
 \xymatrix@C=50pt{
  D(F\text{-}\mc{A})\ar[r]^-{\H^i}\ar[d]_{\mr{for}_F}&
  F\text{-}\mc{A}\ar[d]^-{\mr{for}_F}\\
 D(\mc{A})\ar[r]_{\H^i}&\mc{A}.
  }
\end{equation*}
Moreover, if $\mr{for}_F(X)=0$ then $X=0$. This implies that a sequence
$C$ in $F\text{-}\mc{A}$ is exact if and only if the sequence
$\mr{for}_F(C)$ is exact.

Finally, let $(\mc{A},F)$ and $(\mc{B},G)$ be $K$-additive categories
with semilinear endofunctor. Assume given a functor
$f\colon\mc{A}\rightarrow\mc{B}$ and an equivalence $f\circ F\cong
G\circ f$. Then we have a canonical functor $\widetilde{f}\colon
F\text{-}\mc{A}\rightarrow G\text{-}\mc{B}$ such that
$f\circ\mr{for}_F\cong\mr{for}_G\circ\widetilde{f}$.

\subsubsection{}
\label{grocatindfrob}
Now, assume further that $F^*$ is an equivalence of categories, and
$\mc{A}$ is a Grothendieck category. We define
\begin{equation*}
 (-)_F\colon\mc{A}\rightarrow
  F\text{-}\mc{A};\,X\mapsto X_F:=\bigoplus_{n\in\mb{Z}}(F^*)^nX.
\end{equation*}
Then it can be checked easily that $((-)_F,\mr{for}_F)$ is an adjoint
pair. Furthermore, $(-)_F$ is exact, since the functor
$\mr{for}_F\circ(-)_F$ is exact.
Thus $\mr{for}_F$ sends injective objects to
injective objects. Filtrant inductive limits are representable in
$F\text{-}\mc{A}$, and commute with $\mr{for}_F$. Let $G$ be a
generator of $\mc{A}$. Then $G_F$ is a generator of
$F\text{-}\mc{A}$. Indeed, assume given two morphisms $f,g\colon
X\rightarrow Y$ in $F\text{-}\mc{A}$. Then there exists $\phi\colon
G\rightarrow\mr{for}_F(X)$ such that
$\phi\circ\mr{for}_F(f)\neq\phi\circ\mr{for}_F(g)$. By taking adjoint,
we have $\phi_F\colon G_F\rightarrow X$. Then $\phi_F\circ
f\neq\phi_F\circ g$ as required. This shows that $F\text{-}\mc{A}$ is a
Grothendieck category as well.

In the following, we often assume:
\begin{quote}
 (*) $F^*$ is an equivalence, and $\mc{A}$ is a noetherian category.
\end{quote}
This assumption implies that $\mr{Ind}(\mc{A})$ is a Grothendieck
category endowed with semilinear auto-functor $F^*$. Thus by the result
we get above, the category $F\text{-}\mr{Ind}(\mc{A})$ is a Grothendieck
category.

\subsubsection{}
\label{hochildserrespecseq}
We retain the assumption (*) in \ref{grocatindfrob}. Take
$X,Y$ in $F\text{-}\mr{Ind}(\mc{A})$.
Then the homomorphism $F$ in (\ref{Homfrobaction}) is an isomorphism,
and $\mr{Hom}\bigl(\mr{for}_F(X),\mr{for}_F(Y)\bigr)$ is a
$K_0[F^{\pm1}]$-module. Here the $K_0[F^{\pm1}]$-module structure is
defined so that $F\cdot\varphi:=F\circ\varphi\circ F^{-1}$ for
$\varphi\colon\mr{for}_F(X)\rightarrow\mr{for}_F(Y)$.
This module is denoted by $\mr{Hom}_\rho(X,Y)$.
On the other hand, for a $K_0[F^{\pm}]$-module $M$ and $X=(X',\Phi)\in
F\text{-}\mr{Ind}(\mc{A})$, we define $M\otimes_{K_0}X$ in
$F\text{-}\mr{Ind}(\mc{A})$ as follows:
Write $M=\indlim M_i$ as $K_0$-vector spaces such that $M_i$ is finite
dimensional. As an object in $\mr{Ind}(\mc{A})$, it is
$\indlim(M_i\otimes_{K_0}X')$. The Frobenius structure is defined as
\begin{equation*}
 M\otimes_{K_0}X'\xrightarrow[\sim]{(F\cdot)\otimes\Phi}
  M\otimes_{K_0}F^*X'\cong F^*(M\otimes_{K_0}X')
\end{equation*}
where the last isomorphism follows since $F^*$ is an exact functor thus
commutes with the functor $M_i\otimes_{K_0}$.
The functor $M\otimes_{K_0}$ is exact. Now, for any
$K_0[F^{\pm1}]$-module $M$, we have
\begin{equation*}
  \mr{Hom}_{F\text{-}\mr{Ind}(\mc{A})}(M\otimes_{K_0}X,Y)
   \xrightarrow{\sim}
 \mr{Hom}_{K_0[F^{\pm1}]}\bigl(M,\mr{Hom}_\rho(X,Y)\bigr).
\end{equation*}
This shows that if $Y$ is an injective object, $\mr{Hom}_\rho(X,Y)$ is
an injective $K_0[F^{\pm1}]$-module.

As in \cite[I.6]{RD}, for $X,Y\in C(F\text{-}\mr{Ind}(\mc{A}))$,
we define a complex of $K_0[F^{\pm1}]$-modules
$\mr{Hom}^\bullet_\rho(X,Y)$. We can take the derived functor, and get
\begin{equation*}
 \mb{R}\mr{Hom}^{\bullet}_\rho\colon
  D(F\text{-}\mr{Ind}(\mc{A}))^\circ
  \times
  D^+(F\text{-}\mr{Ind}(\mc{A}))\rightarrow D(K_0[F^{\pm1}]).
\end{equation*}
Abusing notation, we write
$\mr{Hom}_\rho:=\mb{R}^0\mr{Hom}^{\bullet}_\rho$. Let
$\varphi\colon\mr{Spec}(K_0[F^{\pm1}])\rightarrow\mr{Spec}(K_0)$ be the
canonical morphism. We have the canonical isomorphism
$\varphi_*\mr{Hom}_\rho(X,Y)\cong\mr{Hom}(\mr{for}_F(X),\mr{for}_F(Y))$
as $K_0$-vector spaces.

\begin{lem*}
 We regard $K_0$ as a $K_0[F^{\pm}]$-module such that $F$ acts
 trivially. For $X,Y\in D(F\text{-}\mr{Ind}(\mc{A}))$ such that $X\in
 D^-$, $Y\in D^+$, we have
 \begin{equation*}
  \mb{R}\mr{Hom}_{K_0[F^{\pm1}]}\bigl(K_0,\mb{R}\mr{Hom}^{\bullet}
   _\rho(X,Y)\bigr)
   \cong\mb{R}\mr{Hom}(X,Y).
 \end{equation*}
\end{lem*}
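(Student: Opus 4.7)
The plan is to reduce the statement to the underived identification $\mr{Hom}(X,Y)\cong\mr{Hom}_{K_0[F^{\pm1}]}(K_0,\mr{Hom}_\rho(X,Y))$ at the level of single objects, then derive it by resolving $Y$ by injectives in $F\text{-}\mr{Ind}(\mc{A})$ and exploiting the fact (stated in the paragraph preceding the lemma) that $\mr{Hom}_\rho(X,-)$ sends injective objects of $F\text{-}\mr{Ind}(\mc{A})$ to injective $K_0[F^{\pm1}]$-modules. Morally this is a Grothendieck-style composition-of-derived-functors argument for $\mr{Hom}(X,-)\cong\mr{Hom}_{K_0[F^{\pm1}]}(K_0,-)\circ\mr{Hom}_\rho(X,-)$, made concrete by explicit resolutions.

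For single objects $X,Y\in F\text{-}\mr{Ind}(\mc{A})$, one checks directly that a morphism in $F\text{-}\mr{Ind}(\mc{A})$ is the same data as a morphism $\mr{for}_F(X)\to\mr{for}_F(Y)$ which is $F$-fixed for the action recalled in (\ref{Homfrobaction}). Since $K_0$ is the trivial $K_0[F^{\pm1}]$-module, this set of $F$-fixed elements is precisely $\mr{Hom}_{K_0[F^{\pm1}]}(K_0,\mr{Hom}_\rho(X,Y))$, yielding the claimed natural bijection. This identification is bifunctorial, so extends termwise to an isomorphism of complexes of $K_0$-modules whenever $X$ and $Y$ are complexes.

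Next, take an injective resolution $Y\to I^\bullet$ in $F\text{-}\mr{Ind}(\mc{A})$, which exists because $F\text{-}\mr{Ind}(\mc{A})$ is Grothendieck (\ref{grocatindfrob}) and $Y\in D^+$. Then $\mb{R}\mr{Hom}(X,Y)\cong\mr{Hom}^\bullet(X,I^\bullet)$ and $\mb{R}\mr{Hom}^\bullet_\rho(X,Y)\cong\mr{Hom}^\bullet_\rho(X,I^\bullet)$, both being well defined because $X\in D^-$ and $I^\bullet\in D^+$ force every component $\mr{Hom}^n_\rho(X,I^\bullet)=\prod_i\mr{Hom}_\rho(X^i,I^{i+n})$ to be a finite product. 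By the remark preceding the lemma each factor $\mr{Hom}_\rho(X^i,I^{j})$ is an injective $K_0[F^{\pm1}]$-module, and a finite product of injectives is injective, so $\mr{Hom}^\bullet_\rho(X,I^\bullet)$ is a bounded below complex of injective $K_0[F^{\pm1}]$-modules. Hence
\[
\mb{R}\mr{Hom}_{K_0[F^{\pm1}]}\bigl(K_0,\mr{Hom}^\bullet_\rho(X,I^\bullet)\bigr)\cong\mr{Hom}_{K_0[F^{\pm1}]}\bigl(K_0,\mr{Hom}^\bullet_\rho(X,I^\bullet)\bigr),
\]
and applying the underived identification of the previous step in each degree identifies this with $\mr{Hom}^\bullet(X,I^\bullet)\cong\mb{R}\mr{Hom}(X,Y)$.

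The only subtle point, which I expect to be the main place where care is required, is confirming that the boundedness hypotheses $X\in D^-$ and $Y\in D^+$ really do make every degree of the Hom complex a finite product, so that no Spaltenstein-style K-injective resolution is needed and the resulting bounded below complex of injective $K_0[F^{\pm1}]$-modules genuinely computes the derived functor $\mb{R}\mr{Hom}_{K_0[F^{\pm1}]}(K_0,-)$. The remainder is routine checking that the $F$-action on Hom, the $K_0[F^{\pm1}]$-module structure, and the termwise identifications are all compatible with the differentials in $\mr{Hom}^\bullet_\rho(X,I^\bullet)$.
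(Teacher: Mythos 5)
Your proof is correct and follows the same route as the paper: establish the underived identification $\mr{Hom}(X,Y)\cong\mr{Hom}_{K_0[F^{\pm1}]}(K_0,\mr{Hom}_\rho(X,Y))$ on objects, then use that $\mr{Hom}_\rho(X,-)$ carries injectives of $F\text{-}\mr{Ind}(\mc{A})$ to injective $K_0[F^{\pm1}]$-modules to compose the two derived functors. Your emphasis on the finiteness of each product $\prod_i\mr{Hom}_\rho(X^i,I^{i+n})$ is unnecessary (products of injectives are injective regardless), but the point you really need -- that $X\in D^-$, $Y\in D^+$ make $\mr{Hom}^\bullet_\rho(X,I^\bullet)$ a bounded below complex of injectives, hence K-injective without Spaltenstein -- is exactly right and is what the paper's two-line proof implicitly uses.
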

\begin{proof}
 We have a canonical isomorphism
 \begin{equation*}
  \mr{Hom}_{K_0[F^{\pm1}]}\bigl(K_0,\mr{Hom}_\rho(X,Y)\bigr)
   \cong\mr{Hom}(X,Y).
 \end{equation*}
 Since the functor $\mr{Hom}_\rho(X,-)$ preserves injective objects, we
 get the lemma.
\end{proof}

For a $K_0[F^{\pm}]$-module $M$, we put
\begin{equation*}
 M^F:=\mr{Hom}_{K_0[F^{\pm}]}(K_0,M),\qquad
  M_F:=\mr{Ext}^1_{K_0[F^{\pm}]}(K_0,M).
\end{equation*}

\begin{cor*}
 Let $X,Y\in D(F\text{-}\mr{Ind}(\mc{A}))$ such that $X\in D^-$ and
 $Y\in D^+$. Then there exists the
 short exact sequence:
 \begin{equation*}
  0\rightarrow\mr{Hom}_\rho(X,Y[-1])_{F}
   \rightarrow
   \mr{Hom}(X,Y)
   \rightarrow
   \mr{Hom}_\rho(X,Y)^F
   \rightarrow0.
 \end{equation*}
\end{cor*}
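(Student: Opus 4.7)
The plan is to derive the short exact sequence from the preceding lemma together with the fact that $K_0$ has projective dimension one as a $K_0[F^{\pm1}]$-module.

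First, I would record the explicit two-term free resolution
\begin{equation*}
0 \to K_0[F^{\pm1}] \xrightarrow{F-1} K_0[F^{\pm1}] \to K_0 \to 0,
\end{equation*}
in which the second arrow sends $F$ to $1$. This shows $\mr{Ext}^i_{K_0[F^{\pm1}]}(K_0,-)$ vanishes for $i\geq 2$, and by definition the functors $(-)^F$ and $(-)_F$ are precisely $\mr{Ext}^0$ and $\mr{Ext}^1$ from $K_0$.

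Next, set $M := \mb{R}\mr{Hom}^\bullet_\rho(X,Y)\in D^+(K_0[F^{\pm1}])$ and apply the standard hypercohomology spectral sequence
\begin{equation*}
E_2^{p,q} = \mr{Ext}^p_{K_0[F^{\pm1}]}\bigl(K_0, H^q(M)\bigr) \Rightarrow H^{p+q}\mb{R}\mr{Hom}_{K_0[F^{\pm1}]}(K_0, M).
\end{equation*}
Since only the rows $p=0,1$ contribute, the spectral sequence degenerates into short exact sequences in each total degree; taking total degree zero gives
\begin{equation*}
0 \to H^{-1}(M)_F \to H^0\mb{R}\mr{Hom}_{K_0[F^{\pm1}]}(K_0, M) \to H^0(M)^F \to 0.
\end{equation*}
By the preceding lemma the middle term is canonically identified with $\mr{Hom}_{D(F\text{-}\mr{Ind}(\mc{A}))}(X,Y)$.

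Finally, I would identify $H^i(M) \cong \mr{Hom}_\rho(X',Y'[i])$ as $K_0[F^{\pm1}]$-modules. This is the only step where one has to watch compatibilities: the complex $\mr{Hom}^\bullet_\rho$ was derived using an injective resolution of $Y$ in $F\text{-}\mr{Ind}(\mc{A})$, and by \ref{grocatindfrob} the functor $\mr{for}_F$ preserves injectives, so applying $\mr{for}_F$ turns such a resolution into one computing $\mb{R}\mr{Hom}$ in $D(\mr{Ind}(\mc{A}))$; the resulting $K_0[F^{\pm1}]$-structure on $H^i$ is exactly the one on $\mr{Hom}_{D(\mr{Ind}(\mc{A}))}(X',Y'[i])$ induced by the Frobenius structures on $X$ and $Y$. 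Plugging in $H^{-1}(M)=\mr{Hom}_\rho(X',Y'[-1])$ and $H^0(M)=\mr{Hom}_\rho(X',Y')$ yields the asserted sequence. The argument is pure homological algebra, so there is no serious obstacle; the only delicate point is the bookkeeping for the identification of cohomology of $\mr{Hom}^\bullet_\rho$ just described.
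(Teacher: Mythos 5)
Your proof is correct and takes essentially the same approach the paper has in mind: it uses the preceding lemma to rewrite $\mr{Hom}(X,Y)$ as $H^0\mb{R}\mr{Hom}_{K_0[F^{\pm1}]}(K_0,\mb{R}\mr{Hom}^\bullet_\rho(X,Y))$, the two-term free resolution of $K_0$ (hence projective dimension one) to collapse the hypercohomology spectral sequence into short exact sequences, and the fact that $\mr{for}_F$ preserves injectives to identify $H^i(\mb{R}\mr{Hom}^\bullet_\rho(X,Y))$ with $\mr{Hom}_\rho(X',Y'[i])$. The paper's own proof only records the last identification and leaves the spectral-sequence argument implicit, so your write-up is a faithful but more explicit version of the same argument.
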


\subsubsection{}
\label{Frobsetupp}
Let $\mc{A}$ be a $K$-additive category, and
$F^*\colon\mc{A}\rightarrow\mc{A}$ be a $\sigma$-semilinear
functor. We fix a finite field extension $L$ and an
isomorphism $\sigma_L\colon L\rightarrow L$ compatible with
$\sigma$. Put $L_0:=L^{\sigma=1}$.
We define $F_L^*\colon\mc{A}_L\rightarrow\mc{A}_L$ as follows: Let
$\rho\colon L\rightarrow\mr{End}(X)$ be an object of $\mc{A}_L$. We have
a $\sigma$-semilinear homomorphism $F^*(\rho)\colon
L\xrightarrow{\rho}\mr{End}(X)\rightarrow\mr{End}(F^*X)$. We put
\begin{equation*}
 F^*_L(\rho):=F^*(\rho)\circ\sigma^{-1}_L
  \colon L\rightarrow\mr{End}(F^*X).
\end{equation*}
This is a homomorphism of $K$-algebras. We define
$F^*_L\colon\mc{A}_L\rightarrow\mc{A}_L$ by sending $(X,\rho)$ to
$(F^*X,F_L^*(\rho))$. The $L_0$-additive category $F_L\text{-}\mc{A}_L$
is sometimes denoted by $F\text{-}\mc{A}_L$.
The $K$-additive functors $\iota_L\colon\mc{A}\rightarrow\mc{A}_L$ and
$\mr{for}_L\colon\mc{A}_L\rightarrow\mc{A}$ induce the functors
$F\text{-}\mc{A}\rightarrow F_L\text{-}\mc{A}_L$ and
$F_L\text{-}\mc{A}_L\rightarrow F\text{-}\mc{A}$ denoted abusively by
$\iota_L$ and $\mr{for}_L$ respectively. We can check that
$(\iota_L,\mr{for}_L)$ is an adjoint pair, and $\iota_L$ is exact.
In particular, $\mr{for}_L$ preserves injective objects.

Let $\mc{B}$ be another $K$-additive category endowed with an
$\sigma$-semilinear endofunctor $G^*$.
Let $f\colon\mc{A}\rightarrow\mc{B}$ be a $K$-additive functor between
$K$-additive categories compatible with $F^*$ and $G^*$. Then there
exists a unique functor $f_L\colon F\text{-}\mc{A}_L\rightarrow
G\text{-}\mc{B}_L$ compatible with $\mr{for}_L$ and $\iota_L$.

\begin{rem*}
 Assume that $\sigma$ and $\sigma_L$ are identity. Then
 $F\text{-}\mc{A}$ is a $K$-additive category, and it makes sense to
 consider the category $(F\mbox{-}\mc{A})_L$. We leave the reader to
 check that there exists a canonical equivalence
 $(F\mbox{-}\mc{A})_L\cong F\text{-}\mc{A}_L$.
\end{rem*}

\subsubsection*{Application of the theory}
\subsubsection{}
\label{5tuplesover}
\label{defofcatwithcoeff}
To work with $p$-adic cohomology theory, we often need to fix a base
as in \ref{revDmodtheory}. Let $R$ is a complete discrete valuation ring
with residue field $k$ which is assumed perfect, $K=\mr{Frac}(R)$. We
assume that there exists a positive integer $s$ such that $\sigma\colon
K\xrightarrow{\sim}K$ is the extension of a lifting
$R\xrightarrow{\sim}R$ of the $s$-th absolute Frobenius automorphism of
$k$. Now, we consider the following two types of data:
\begin{description}
 \item[Geometric case]
	    We fix a finite field extension $L$ of $K$, and put
	    $\mf{T}_{\emptyset}:=(k,R,K,L)$. This is called a {\em geometric base
	    tuple}. We put $L_0:=L$ in this case.

 \item[Arithmetic case] 
	    We fix a finite field extension $L$ and an automorphism $\sigma\colon
	    L\rightarrow L$ such that $\sigma(K)=K$ and $\sigma|_K$ is a
	    lifting of the $s$-th Frobenius automorphism of $k$. We put
	    $\mf{T}_F:=(k,R,K,L,s,\sigma)$, and call this an {\em arithmetic base
	    tuple}. We put $L_0:=L^{\sigma=1}$. We call the geometric base tuple
	    $(k,R,K,L)$ the {\em associated geometric base tuple}.
\end{description}
By {\em base tuple}, we mean either geometric or arithmetic
base tuple. For a geometric base tuple, we sometimes put an index
$\cdot_{\emptyset}$, and for arithmetic base tuple, $\cdot_F$.

\begin{dfn*}
 Let $X$ be a realizable scheme over $k$. The category $\mr{Hol}(X/K)$
 (cf.\ \ref{defofrealcat}) is endowed with ($s$-th) Frobenius pull-back
 $F^*$ (cf.\ Remark \ref{fundproprealsch}). Moreover, $F^*$ induces an
 auto-equivalence of $\mr{Hol}(X/K)$.
 Thus, we can apply the general results and constructions developed in
 the preceding paragraphs.
 \begin{description}
  \item[Geometric case]
	     Let $\mf{T}_{\emptyset}$ be a geometric base tuple as above.
	     In this case, we put
   \begin{alignat*}{2}
    \mr{Hol}(X/\mf{T}_\emptyset)&:=\mr{Hol}(X/K)_L,&\qquad
    \mr{Isoc}^\dag(X/\mf{T}_\emptyset)&:=\mr{Isoc}^\dag(X/K)_L,\\
    M(X/\mf{T}_\emptyset)&:=\mr{Ind}(\mr{Hol}(X/\mf{T}_\emptyset)),&
    D(X/\mf{T}_\emptyset)&:=D(M(X/\mf{T}_\emptyset)).
   \end{alignat*}

  \item[Arithmetic case] 
	     Let $\mf{T}_F$ be an arithmetic base tuple, and let
	     $\mf{T}_\emptyset$ be the associated geometric tuple. In this
	     case, we put
   \begin{alignat*}{2}
    \mr{Hol}(X/\mf{T}_F)&:=F\text{-}\mr{Hol}(X/K)_L,&\qquad
    \mr{Isoc}^\dag(X/\mf{T}_F)&
    :=F\text{-}\mr{Isoc}^\dag(X/K)_L,\\
    M(X/\mf{T}_F)&:=F_L\text{-}
    \mr{Ind}(\mr{Hol}(X/\mf{T}_\emptyset)),&\qquad
    D(X/\mf{T}_F)&:=D(M(X/\mf{T}_F)).
   \end{alignat*}
 \end{description}
 If there is nothing to be confused, we sometimes omit the base tuple
 $/\mf{T}_\emptyset$ or $/\mf{T}_F$.
 We also denote $\mr{Hol}(X/\mf{T}_\emptyset)$ (resp.\
 $\mr{Hol}(X/\mf{T}_F)$) {\it etc.}\ by $\mr{Hol}(X/L_\emptyset)$
 (resp.\ $\mr{Hol}(X/L_F)$) {\it etc.}.
\end{dfn*}

\begin{rem*}
 (i) For a realizable scheme over $k$, recall that $\mr{Isoc}^\dag(X/K)$
 is slightly smaller than the category of overconvergent isocrystals
 (cf.\ \ref{fundproprealsch}). However, our category
 $\mr{Isoc}^\dag(X/K_F)$
 coincides with the category of overconvergent $F$-isocrystals
 $F\text{-}\mr{Isoc}^\dag(X/K)$
 defined in \cite[2.3.7]{Berrigcoh}.

 (ii) For a scheme $X$ over a field, let us denote by
 $D^{\mr{b}}_{\mr{c}}(X)$ the category of constructible
 $\mb{Q}_\ell$-complexes.
 Let $k$ be a finite field, $\overline{k}$ be its algebraic closure, and
 $X$ be a scheme of finite type over $k$. Put
 $\overline{X}:=X\otimes_k\overline{k}$.
 Under the philosophy of the Riemann-Hilbert correspondence,
 $D^{\mr{b}}_{\mr{hol}}(X/L_{\emptyset})$ (resp.\
 $D^{\mr{b}}_{\mr{hol}}(X/L_F)$) plays the role of
 $D^{\mr{b}}_{\mr{c}}(\overline{X})$ (resp.\
 $D^{\mr{b}}_{\mr{c}}(X)$) in $p$-adic cohomology theory.

 Now, note that $D^{\mr{b}}_{\mr{c}}(X)$ does not depend on the base
 field $k$. On the other hand, {\it a priori},
 $D^{\mr{b}}_{\mr{hol}}(X/L_F)$ depends on $\mf{T}_F$. However,
 we show in Corollary \ref{behaviorofbasech} that the category, in fact,
 does {\em not} depend on the choice of the base tuple under some
 conditions, which reinforces the justification of the analogy.
\end{rem*}

\begin{lem}
 \label{behaviorofbasech}
 (i) Let $\mf{T}'_\emptyset=(k',R',K',K')$ be a geometric base tuple
 over a tuple $\mf{T}_\emptyset:=(k,R,K,K')$, namely $K'/K$ is a finite
 extension. Then, there exists a canonical equivalence
 $\mr{Hol}(X\otimes_kk'/\mf{T}'_\emptyset)
 \xrightarrow{\sim}
 \mr{Hol}(X/\mf{T}_\emptyset)$.

 (ii) Let $\mf{T}'_F=(k',R',K',K',s,\sigma)$ be an arithmetic base tuple
 over a tuple $\mf{T}_F:=(k,R,K,K',s,\sigma)$. Then there exists a
 canonical equivalence
 $\mr{Hol}(X\otimes_kk'/\mf{T}'_F)\xrightarrow{\sim}
 \mr{Hol}(X/\mf{T}_F)$.
\end{lem}
\begin{proof}
 We may reduce to the case where $X$ can be lifted to a smooth formal
 scheme $\fsch{X}$ over $R$. Let $\fsch{X}':=\fsch{X}\otimes_RR'$. There
 exists the functor
 \begin{equation*}
  M(\DdagQ{\fsch{X}'/R'})\rightarrow M(\DdagQ{\fsch{X}/R})_{K'},
 \end{equation*}
 where $M(\ms{A})$ denotes the category of $\ms{A}$-modules.
 It is straightforward to show that this functor induces an equivalence
 of categories. By Remark \ref{Frobnotdep}, we get (i).
 Now, the following diagram is commutative:
 \begin{equation*}
  \xymatrix{
   M(\DdagQ{\fsch{X}'/R'})
   \ar[r]^-{\sim}\ar[d]_{F^*}&
  M(\DdagQ{\fsch{X}/R})_{K'}
   \ar[d]^{F^*}\\
  M(\DdagQ{\fsch{X}'/R'})
   \ar[r]_-{\sim}&
   M(\DdagQ{\fsch{X}/R})_{K'}.
   }
 \end{equation*}
 This diagram implies (ii).
\end{proof}

\begin{cor*}
 Assume $k$ is a finite field with $q=p^s$-elements.
 Let $K'$ be a finite extension of $K$, and put
 $\mf{T}_F:=(k,R,K,K',s,\mr{id})$,
 $\mf{T}_{K',F}:=(k',R',K',K',s':=[k':k]\cdot s,\mr{id})$.
 Let $X$ be a scheme over $k'$. Then, we have an equivalence of
 categories
 \begin{equation*}
  \mr{Hol}(X/\mf{T}_F)\xrightarrow{\sim}
   \mr{Hol}(X/\mf{T}_{K',F}).
 \end{equation*}
\end{cor*}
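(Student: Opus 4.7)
The strategy is to deduce the corollary from Lemma~\ref{behaviorofbasech}(i) together with a Galois-descent computation for the Frobenius structure. The hypotheses $\sigma=\mr{id}$ and $\sigma_{K'}=\mr{id}$ force $|k|=p^{s}$ and $|k'|=p^{s'}$, so that $s'$ must be a multiple of $s$ with $s'/s=[k':k]$ (I read the displayed $s':=[k':k]+s$ as a typo for $s':=[k':k]\cdot s$). In particular, on any $k$-scheme $Y$, the $s'$-th absolute Frobenius $F_{s'}$ coincides with the $[k':k]$-fold iterate $F_{s}^{[k':k]}$.

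First I would invoke Lemma~\ref{behaviorofbasech}(i) to obtain an equivalence of underlying (non-Frobenius) holonomic categories
\[
\mr{Hol}_{\mf{T}_{K'}}(X\otimes_{k}k'/K')\xrightarrow{\sim}\mr{Hol}_{\mf{T}_K}(X/K'),
\]
where on the right $X$ is viewed as a $k$-scheme through $\mr{Spec}(k')\to\mr{Spec}(k)$. Since $X$ is already a $k'$-scheme and $k'/k$ is finite separable, there is a canonical decomposition $X\otimes_{k}k'\cong\coprod_{\tau\in\mr{Gal}(k'/k)}X^{\tau}$ of $k'$-schemes, and correspondingly the left-hand category splits as $\prod_{\tau}\mr{Hol}_{\mf{T}_{K'}}(X^{\tau}/K')$.

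Next I would chase the $s'$-th Frobenius structure across this equivalence. On $X\otimes_{k}k'$, the endomorphism $F_{s'}$ is trivial on the $k'$-factor but permutes the $k$-embeddings by the Frobenius of $k'/k$, so it permutes the factors $X^{\tau}$ cyclically while, on each factor, agreeing with the absolute Frobenius endomorphism of the underlying scheme $X$. Therefore an $F_{s'}$-Frobenius structure on a tuple $(\ms{M}_{\tau})$ amounts to isomorphisms $F_{s}^{*}\ms{M}_{\tau}\xrightarrow{\sim}\ms{M}_{\tau'}$ along this cyclic permutation; such a datum is equivalent, via descent along the cyclic action, to a single isomorphism $\ms{M}\xrightarrow{\sim}F_{s}^{*}\ms{M}$ on $\ms{M}:=\ms{M}_{\mr{id}}$, which is exactly a $\mf{T}_K$-Frobenius structure on the $k$-scheme $X$.

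The delicate point of the argument is the equivalence just sketched: one must verify carefully that the cyclic-permutation datum together with the single-step $F_{s}^{*}$ isomorphism genuinely assembles into an equivalence with the category of $s$-th Frobenius-structured modules on one copy of $X$. This can be carried out by imitating the commutative-diagram argument in the proof of Lemma~\ref{behaviorofbasech}(ii), but now with the identity $s=s'$ replaced by the compatibility $F_{s'}^{*}=(F_{s}^{*})^{[k':k]}$. Compatibility with the coefficient extension to $K'$ is then automatic since this extension commutes with Frobenius pull-back, by the general results of \S\ref{extofsclarsub}.
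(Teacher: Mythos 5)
Your identification of the typo ($s'=[k':k]\cdot s$, not $[k':k]+s$) is right, and your overall plan — decompose $X\otimes_kk'\cong\coprod_{\tau}X^{\tau}$ and descend along a cyclic action — is indeed the shape of the paper's argument. But the step you flag as "delicate" is in fact wrong, and the gap is fatal: the $s'$-th Frobenius $F_{s'}$ does \emph{not} permute the factors $X^{\tau}$. The absolute Frobenius of any $\mb{F}_p$-scheme is the identity on the underlying topological space (it is $x\mapsto x^{p^{s'}}$ on $\mc{O}$ and $\mr{id}$ on $|X|$), so on a disjoint union it preserves each component. Concretely, since $|k'|=p^{s'}$, the $p^{s'}$-power map is the identity on $k'\otimes_kk'$ and thus fixes every idempotent cutting out an $X^{\tau}$. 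Because $\sigma_{K'}=\mr{id}$, the $\mf{T}_{K'}$-Frobenius pull-back likewise involves no Galois twist: on the product category $\prod_\tau\mr{Hol}_{\mf{T}_{K'}}(X^\tau/K')$ it acts factorwise by $F_{s'}^*$, so there is no cyclic datum to descend along, and your equivalence with "a single isomorphism $\ms{M}\xrightarrow{\sim}F_s^*\ms{M}$" does not materialize.

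What genuinely provides the cyclic shift is not $F_{s'}$ but the $\sigma$-semilinear pull-back along a \emph{nontrivial} lift of the $s$-th Frobenius of $W(k')$ — and this is exactly what your use of Lemma \ref{behaviorofbasech}(i) throws away, since (i) is the plain, non-Frobenius statement. The paper instead reduces to the unramified case (handling the totally ramified part directly by Lemma \ref{behaviorofbasech}(ii), where $k'=k$ and $s'=s$) and then works with the auxiliary base tuple $\mf{T}'=(k',R',K',s,\sigma')$ with $\sigma'=\mr{id}\otimes F$. Here the integer is still $s$, and $\sigma'$ lifts the $s$-th (not $s'$-th) Frobenius of $k'$, so $\sigma'$ acts on $k'$ as the generator $\varphi$ of $\mr{Gal}(k'/k)$. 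Lemma (ii) applied to $\mf{T}'$ gives a Frobenius-compatible equivalence $\mr{Hol}_{\mf{T}'}(X_0\otimes_kk'/K')\cong\mr{Hol}_{\mf{T}_K}(X_0/K')$, and it is the pull-back along $\sigma'$ hidden inside the $\mf{T}'$-Frobenius that sends $X^{\tau}$ to $X^{\tau\varphi}$; that shift is what the cyclic descent reorganizes into a single $\mf{T}_{K'}$-Frobenius structure on one copy of $X$. Starting from Lemma (i) alone, you have an equivalence of underlying categories but no control over how the Frobenius functors on the two sides correspond, which is precisely the hole in your chain of reasoning.
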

\begin{proof}
 When the extension $K'/K$ is totally ramified, the claim follows from
 (ii) of the lemma.
 Thus, we may assume that the extension is unramified. In this
 case, the verification is essentially the same as \cite[1.1.10]{De},
 so we only sketch. Since $K'/K$ is assumed unramified, we have
 $K'\cong K\otimes_{W(k)}W(k')$.
 As a scheme over $k$, we have a canonical isomorphism
 $X\otimes_kk'\cong\coprod_{\sigma\in\mr{Gal}(k'/k)} X^{\sigma}$ where
 each $X^\sigma$ is canonically isomorphic to $X$, and the Galois action
 on $k'$ is compatible in an obvious sense. Put
 $\mf{T}'_F:=(k',R',K',K',s,\mr{id})$. Then by the lemma, we get
 $\mr{Hol}(X\otimes_kk'/\mf{T}'_F)\xrightarrow{\sim}
 \mr{Hol}(X/\mf{T}_F)$. There exists $\varphi\in\mr{Gal}(k'/k)$
 such that, by $F$, each $X^{\sigma}$ is sent to
 $X^{\sigma\cdot\varphi}$. Assume given $(\ms{M},\Phi)\in
 \mr{Hol}(X/\mf{T}_{K',F})$. For $0\leq i<[k':k]$, we put
 $(F^*)^i(\ms{M})$ on $X^{\varphi^i}$, which defines $\ms{N}$ in
 $\mr{Hol}(X'\otimes_kk'/K')$. The $s'$-th Frobenius structure $\Phi$
 defines an $s$-th Frobenius structure on $\ms{N}$, and defines an
 object of $\mr{Hol}(X/\mf{T}_F)$. It is easy to check that this
 correspondence yields the equivalence of categories.
\end{proof}

\begin{rem}
 (i) Let
 $\mf{T}:=(k,R,K,L)$ and
 $\mf{T}_0:=(k,W(k),\mr{Frac}(W(k)),L)$. Then
 Lemma \ref{behaviorofbasech} implies that there exists an equivalence
 $D(X/\mf{T}_0)\cong D(X/\mf{T})$. This implies that the datum $K$
 (and $R$) is unnecessarily to define the category $D(X/L_\emptyset)$.
 Similarly, we do not need $K$ in the definition of $D(X/L_F)$.

 (ii) In the proof of the Langlands correspondence, it is convenient to
 work with $\overline{\mb{Q}}_p$-coefficient. For this, we use the
 2-inductive limit method as in \cite[1.1.3]{De} to construct the
 theory. The detail will be explained in \ref{algcloscoefftheory}.
\end{rem}

\begin{dfn}
 \label{tatetwistdfn}
 Assume we are in the situation of \ref{Frobsetupp}, and
 let $X:=(X',\Phi)$ be an object of $F\text{-}\mc{A}_L$.
 Assume we are given an arithmetic tuple as in \ref{5tuplesover}. For an
 integer $n$, we define $X(n):=\bigl(X',p^{-sn}\cdot\Phi\bigr)$, and
 call it the {\em $n$-th Tate twist} of $X$.
\end{dfn}

\subsubsection{}
\label{globalsectionfunct}
Let $\base\in\{\emptyset,F\}$
Let $L:=\iota_L(K)$ in $\mr{Hol}(\mr{Spec}(k)/L_{\base})$.
We have the left exact functor
\begin{equation*}
 \Gamma\colon
  M(\mr{Spec}(k)/L_{\base})\rightarrow\mr{Vec}_{L_0};\quad
  \ms{M}\mapsto\mr{Hom}(L,\ms{M}).
\end{equation*}
We can take the associated derived functor
$\mb{R}\Gamma\colon D^+(\mr{Spec}(k)/L_{\base})\rightarrow
D^+(\mr{Vec}_{L_0})$.

\subsection{Trace map}
\label{consttracsubsec}
In order to establish a cycle class formalism, we need the trace
map in the style of SGA 4.
Remark that, in \cite[5.5]{A}, we constructed an isomorphism
$f^!\cong f^+(d)[2d]$ for a smooth morphism $f$ of relative dimension
$d$. However, the construction of this homomorphism is {\it ad hoc.},
and does not seem to be easy to check the properties that the trace map
should satisfy, for example, transitivity. Furthermore, we need the
trace maps for flat morphisms to define the cycle class map.

\subsubsection{}
\label{traceexitstate}
We fix $\base\in\{\emptyset,F\}$, and fix a base tuple
$\mf{T}:=\mf{T}_{\base}$ using the notation of \ref{5tuplesover} in this
subsection. Let $X$ be a realizable
scheme over $k$. We only treat $L=K$ case in this subsection. This will
be generalized in Theorem \ref{poincaredualrela}.
When $\base=\emptyset$, we
denote $D^{\mr{b}}(X/K_\emptyset)$ simply by $D^{\mr{b}}(X)$, in which
case, the Tate twist $(n)$ is the identity functor. When $\base=F$, we
denote $D^{\mr{b}}(X/K_F)$ by $D^{\mr{b}}(X)$.
The main result of this subsection is
the following theorem on the existence of the trace map:

\begin{thm*}
 Let $f\colon X\rightarrow Y$ be a morphism between realizable
 schemes over $k$. Let $\mf{M}_d$ be the following set of
 morphisms of realizable schemes, and
 $\mf{M}:=\bigcup_{d\geq0}\mf{M}_d$:
 \begin{quote}
  there exists an open subscheme $U\subset Y$ such that
  $X\times_YU\rightarrow U$ is flat of relative dimension $d$, and for
  each $x\in Y\setminus U$, the dimension of $f^{-1}(x)$ is $<d$.
 \end{quote}
 Then there exists a unique homomorphism $\mr{Tr}_f\colon
 f_!f^+\ms{F}(d)[2d]\rightarrow\ms{F}$ for any $\ms{F}$ in
 $D^{\mr{b}}_{\mr{hol}}(Y)$, called the {\em trace map}, satisfying the
 following conditions.

 (Var 1) $\mr{Tr}_f$ is functorial with respect to $\ms{F}$.

 (Var 2) Consider the cartesian diagram (\ref{cartediagforbc}) of
 realizable schemes. Assume $f\in\mf{M}_d$. Then the following
 diagram is commutative:
 \begin{equation*}
  \xymatrix{
   g^+f_!f^+(d)[2d]\ar[d]_-{g^+\mr{Tr}_f}\ar[r]^{\sim}&
   f'_!g'^+f^+(d)[2d]\ar@{=}[r]&
   f'_!f'^+g^+(d)[2d]\ar[d]^-{\mr{Tr}_{f'}}\\
  g^+\ar@{=}[rr]&&g^+.
   }
 \end{equation*}

 (Var 3) Let $X\xrightarrow{g}Y\xrightarrow{f}Z$ be morphisms of
 realizable schemes such that $f\in\mf{M}_d$ and $g\in\mf{M}_e$. Then
 the following diagram is commutative:
 \begin{equation*}
  \xymatrix@C=50pt{
   f_!g_!g^+f^+(d+e)[2(d+e)]
   \ar[r]^-{\mr{Tr}_g}\ar@{<->}[d]_{\sim}&
   f_!f^+(d)[2d]\ar[d]^{\mr{Tr}_f}\\
  (f\circ g)_!(f\circ g)^+(d+e)[2(d+e)]
   \ar[r]_-{\mr{Tr}_{f\circ g}}&
   \mr{id}.
   }
 \end{equation*}

 (Var 4-I) Let $f\in\mf{M}_0$ be a finite locally free morphism of rank
 $n$. Then the composition
 \begin{equation*}
  \ms{F}\rightarrow f_+f^+\ms{F}\xleftarrow{\sim}f_!f^+\ms{F}
   \xrightarrow{\mr{Tr}_f}\ms{F}
 \end{equation*}
 is the multiplication by $n$.

 (Var 4-II) When $X$ and $Y$ can be lifted to a proper smooth formal
 scheme, and $f$ can be lifted to a smooth morphism of relative
 dimension $1$ between them, then the trace map is the one defined in
 \ref{deftrforsmoothformprocu} below.

 (Var 5) The following diagram is commutative
 \begin{equation*}
  \xymatrix{
   f_!f^+(\ms{F}\otimes\ms{G})(d)[2d]
   \ar[d]_{\mr{Tr}_{f}}\ar[r]^{\sim}&
   (f_!f^+\ms{F}(d)[2d])\otimes\ms{G}
   \ar[d]^{\mr{Tr}_f\otimes\mr{id}}\\
  \ms{F}\otimes\ms{G}\ar@{=}[r]&
   \ms{F}\otimes\ms{G}
   }
 \end{equation*}
 where the upper horizontal homomorphism is the projection
 formula \ref{fundproprealsch} \eqref{projformulas}.
\end{thm*}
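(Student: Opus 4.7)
The plan is to build $\mr{Tr}_f$ in three layers: first for smooth morphisms liftable to formal smooth schemes (where (Var~4-II) pins down the definition in the curve case), then globalize to arbitrary smooth morphisms by factoring into curve fibrations, and finally extend to flat morphisms in $\mf{M}_d$ by restriction to the smooth locus and a continuation argument. Uniqueness throughout is enforced by combining (Var~1) with (Var~2): any two trace maps agreeing after restriction to a dense open subscheme must agree globally, since the constructible/holonomic t-structure together with the dimension bound on fibers controls the ``error term'' coming from the complement.

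For the smooth case, start with a smooth morphism $f\colon X\to Y$ of relative dimension $1$ between realizable schemes. Zariski locally on $Y$ one can lift $f$ to a smooth morphism between proper smooth formal $R$-schemes, and (Var~4-II) gives a candidate $\mr{Tr}_f$. Independence from the lift (and from the embedding into proper ones) is checked via (Var~2) applied to the double formal lift, together with the compatibility of six functors with changes of embedding recalled in \S\ref{revDmodtheory}. Gluing these local trace maps over an affine cover of $Y$ yields $\mr{Tr}_f$ for every smooth morphism of relative dimension $1$. Passing to arbitrary smooth $f$ of relative dimension $d$ is done by factoring $f$ Zariski-locally as a composition $f_d\circ\cdots\circ f_1$ of smooth morphisms of relative dimension~$1$ (possible after shrinking, via coordinate projections) and defining $\mr{Tr}_f$ as the iterated composite prescribed by (Var~3). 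The core technical point is that this composite is independent of the chosen factorization; this is verified by comparing two factorizations via a common refinement obtained from a product decomposition, reducing to the commutativity of the trace for a product $\fsch{X}\times_\fsch{Y}\fsch{X}'\to\fsch{Y}$, which follows from the K\"unneth isomorphism of Proposition~\ref{Kunnethsch} and the projection formula in \ref{fundproprealsch}.\ref{projformulas}.

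With the smooth case in hand, (Var~1), (Var~2) for smooth base changes, (Var~3) for composites of smooth morphisms, (Var~4-I) (via reduction to \'etale covers and \'etale descent), and (Var~5) (from the projection formula, applied after factorization into relative dimension~$1$ pieces) are all formal consequences of the construction. To treat a flat morphism $f\in\mf{M}_d$, let $j\colon U\hookrightarrow Y$ be the open subscheme over which $f$ is smooth of relative dimension~$d$, and write $i\colon Z\hookrightarrow Y$ for the complement; set $f_U\colon X_U\to U$ and $f_Z\colon X_Z\to Z$. We already have $\mr{Tr}_{f_U}$. Applying $j^+$ to the desired $\mr{Tr}_f$ should give $\mr{Tr}_{f_U}$ by (Var~2), so adjunction produces a map $j_!j^+(f_!f^+\ms{F}(d)[2d])\to\ms{F}$; the obstruction to extending it to all of $Y$ lies in $\mr{Hom}(i_+i^+f_!f^+\ms{F}(d)[2d],\ms{F})$, which vanishes because Lemma~\ref{cohdimlem} combined with the fiber-dimension hypothesis on $f|_Z$ forces $i^+f_!f^+\ms{F}(d)[2d]$ to have cohomology in strictly negative constructible degrees (the shift by $2d$ overshoots the available bound $2(d-1)$), while $\ms{F}$ lies in constructible degree~$\geq 0$ locally. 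This produces $\mr{Tr}_f$, and the same vanishing gives uniqueness.

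The main obstacle is the well-definedness of the smooth-case construction under change of factorization, i.e., the compatibility between the curve traces and arbitrary smooth relative dimension: independence has to be verified purely out of the adjunctions, base change, and the explicit curve formula of \ref{deftrforsmoothformprocu}, without yet assuming any of (Var~1)--(Var~5). A secondary difficulty is the Frobenius-equivariance ($\base=0$) of the whole construction; this reduces to observing that each ingredient (the curve trace of \ref{deftrforsmoothformprocu}, the base change isomorphisms of \ref{fundproprealsch}.\ref{basechangeprop}, and K\"unneth) is canonically Frobenius-equivariant, and that the extension step uses only Frobenius-equivariant vanishing results, so no additional cocycle data needs to be imposed.
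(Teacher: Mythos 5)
Your proposal has a genuine gap at the step extending from smooth morphisms to general $f\in\mf{M}_d$. You assume that a flat morphism of relative dimension $d$ is smooth over a dense open subscheme $U\subset Y$, but in characteristic $p$ this fails: a flat family can have non-reduced fibers over every point of $Y$ (e.g., the constant family $X=W\times Y\to Y$ with $W$ non-reduced of dimension $d$), so the smooth locus in $Y$ can be empty. Consequently there is no ``$\mr{Tr}_{f_U}$'' to extend. The paper avoids this entirely by never passing through a smooth locus: it first constructs the trace for \emph{quasi-finite flat} morphisms via the category $\Psi_f$ of [SGA~4, XVII~6.2], which reduces them to \'etale plus universal homeomorphisms at the level of stalks (this is a substantive reduction you do not address), then constructs the trace for the projections $\mb{P}^1_Y\to Y$ and $\mb{A}^d_Y\to Y$, and finally obtains the general case by factoring $X\to\mb{A}^d_Y\to Y$ with the first morphism quasi-finite flat (after shrinking $X$ to make $f$ Cohen--Macaulay) and gluing via Lemma~\ref{glueindimd}. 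This factorization is crucial precisely because it works without any generic smoothness hypothesis.

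A secondary issue is the independence of factorization for your curve-fibration approach to smooth morphisms. You propose to compare two factorizations via a ``common refinement obtained from a product decomposition,'' but this is a nontrivial compatibility that you have not justified, and it is not simply the K\"unneth formula (which compares traces across \emph{products}, not across two different fibration structures on the same total space). The paper sidesteps this by defining the $\mb{A}^d$ trace by a \emph{canonical} iteration in the standard coordinate system (so there is nothing to compare) and handling arbitrary smooth morphisms by \'etale localization onto $\mb{A}^d_Y$, with the \'etale trace already established via Kedlaya's fully faithfulness and the formal embedding argument in~\ref{Lemmaofadjtraet}. Your proposal also does not supply a construction of the \'etale trace (a relative-dimension-$0$ smooth map, hence not covered by iterated curve fibrations), and the \'etale case is an essential ingredient for the uniqueness argument (reduction to points, \ref{firstproptraceexis}.\ref{pointredunique}) and for (Var~4-I). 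The idea of pinning the whole construction down by (Var~4-II) and (Var~2) is sound and is what the paper does for uniqueness, but the construction itself needs the quasi-finite-flat$\rightarrow\mb{A}^d$ scaffolding rather than the smooth-locus continuation you propose.
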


\subsubsection{}
\label{firstproptraceexis}
Even though there are many technical differences, the idea of the
construction of trace map is essentially the same as that in [SGA 4,
XVIII]. Let us start to construct the trace map. First, we list up
direct consequences from the requested properties.

\begin{enumerate}
 \item\label{constenoughconstr}
      By (Var 5), it suffices to construct the trace map for
      $\ms{F}=K_Y$.

 \item\label{constisconstrdim}
      By assumption, for $\ms{F}\in\mr{Con}(X)$, we have
      $\cH^if_!f^+\ms{F}=0$ for $i>2d$ by Lemma
      \ref{cohdimlem}. Thus, we have an isomorphism
      $\mr{Hom}(f_!f^+\ms{F}(d)[2d],\ms{F})\cong
      \mr{Hom}(\cH^{2d}f_!f^+\ms{F}(d),\ms{F})$.

 \item\label{pointredunique}
      Assume that we have already constructed the trace map when
      $Y$ is a point.
      Then by \eqref{constenoughconstr}, \eqref{constisconstrdim}
      above, Lemma \ref{consttstrpro} (i), and the base change property
      shows that extension of this trace map to the general situation is
      unique, if they exist.

 \item\label{reducetopoinrt}
      When $f=f\amalg f'\colon X'\coprod X''\rightarrow Y$, and assume
      $\mr{Tr}_f$ and $\mr{Tr}_{f'}$ have already been
      constructed. Then, by the same argument as [SGA 4, XVII, 6.2.3.1],
      $\mr{Tr}_f=\mr{Tr}_{f'}+\mr{Tr}_{f''}$.

 \item\label{univhomeodet}
      If $f$ is a universal homeomorphism, then the canonical
      homomorphism
      \begin{equation*}
       \alpha\colon\ms{F}\rightarrow f_+f^+\ms{F}
	\xleftarrow{\sim}f_!f^+\ms{F}
      \end{equation*}
      is an isomorphism by Lemma \ref{fundproprealsch}.
      By (Var 4-I), we have $\mr{Tr}_f:=\deg(f)\cdot\alpha^{-1}$.

 \item Consider the cartesian diagram (\ref{cartediagforbc}) of
       realizable schemes.
       Then the compatibility (Var 2) is equivalent to the commutativity
       of one of the following diagrams:
       \begin{equation}
	\label{commuequivBC}
	 \xymatrix@C=40pt{
	 g'^+f^+\ms{F}(d)[2d]\ar[r]^-{g'^+\mr{Tr}^{\mr{ad}}_f}
	 \ar[d]_{\sim}&g'^+f^!\ms{F}\ar[d]\\
	f'^+g^+\ms{F}(d)[2d]\ar[r]_-{\mr{Tr}^{\mr{ad}}_{f'}g^+}&
	 f'^!g^+\ms{F},
	 }\qquad
	 \xymatrix@C=40pt{
	 f'^+g^!\ms{F}(d)[2d]\ar[r]^-{\mr{Tr}^{\mr{ad}}_{f'}g^!}\ar[d]&
	 f'^!g^!\ms{F}\ar[d]^{\sim}\\
	g'^!f^+\ms{F}(d)[2d]\ar[r]_-{g'^!\mr{Tr}^{\mr{ad}}_f}&
	 g'^!f^!\ms{F},
	 }
       \end{equation}
       where $\mr{Tr}^{\mr{ad}}$ denotes the adjoint of the trace map,
       and the vertical arrows are the base change homomorphisms. The
       verification is standard using the diagram of \ref{compatleftadj}.
\end{enumerate}

\begin{lem}
 \label{glueindimd}
 Let $f\colon X\rightarrow Y$ be a morphism of realizable schemes of
 relative dimension $\leq d$. Let $\{U_i\}$ be a finite open covering of
 $X$, and $U_{ij}:=U_i\cap U_j$. For $\star\in\{i,ij\}$, we put
 $u_{\star}\colon U_\star\rightarrow X$, and
 $f_{\star}\colon U_\star\hookrightarrow X\xrightarrow{f}Y$. Then for
 $\ms{F}\in\mr{Con}(X)$, the following sequence is exact:
 \begin{equation*}
  \bigoplus_{i,j}\cH^{2d}f_{ij!}u_{ij}^+\ms{F}\rightrightarrows
   \bigoplus_i\cH^{2d}f_{i!}u_i^+\ms{F}\rightarrow
   \cH^{2d}f_!\ms{F}\rightarrow0.
 \end{equation*}
\end{lem}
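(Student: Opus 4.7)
The plan is to reduce the statement to the right-exact \v Cech-type presentation of $\ms{F}$ supplied by Lemma \ref{gluinglemtsru} and then extract the top piece of constructible cohomology using the vanishing from Lemma \ref{cohdimlem}. The two essential ingredients are the transitivity $f_{\star!}\cong f_!\circ u_{\star!}$ (for $\star\in\{i,ij\}$), coming from the fibered structure recalled in \ref{fundproprealsch}, and additivity of $\cH^{2d}f_!$ with respect to finite direct sums.

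\textbf{Execution.} Apply Lemma \ref{gluinglemtsru} to $\ms{F}\in\mr{Con}(X)$ to obtain a right exact sequence
\begin{equation*}
 \bigoplus_{i,j}u_{ij!}u_{ij}^+\ms{F}\xrightarrow{\delta}
 \bigoplus_{i}u_{i!}u_{i}^+\ms{F}\xrightarrow{\varepsilon}\ms{F}\to 0
\end{equation*}
in $\mr{Con}(X)$. Set $K:=\ker(\varepsilon)$ and $K':=\ker(\delta\twoheadrightarrow K)$; both lie in $\mr{Con}(X)$ since the category is abelian. One then has two short exact sequences in $\mr{Con}(X)$:
\begin{equation*}
 0\to K\to\bigoplus_{i}u_{i!}u_i^+\ms{F}\to\ms{F}\to 0,
 \qquad
 0\to K'\to\bigoplus_{i,j}u_{ij!}u_{ij}^+\ms{F}\to K\to 0.
\end{equation*}
Applying $f_!$ produces distinguished triangles in $D^{\mr{b}}_{\mr{hol}}(Y/K)$, and the associated long exact sequences of constructible cohomology combined with the vanishing $\cH^{>2d}f_!=0$ on $\mr{Con}(X)$ furnished by Lemma \ref{cohdimlem} degenerate in top degree to
\begin{equation*}
 \cH^{2d}f_!K\to\cH^{2d}f_!\bigl(\textstyle\bigoplus_{i}u_{i!}u_i^+\ms{F}\bigr)
 \to\cH^{2d}f_!\ms{F}\to 0,
\end{equation*}
together with surjectivity of $\cH^{2d}f_!\bigl(\bigoplus_{i,j}u_{ij!}u_{ij}^+\ms{F}\bigr)\twoheadrightarrow\cH^{2d}f_!K$. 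Composing the two and substituting $f_!\circ u_{\star!}\cong f_{\star!}$ together with additivity of $\cH^{2d}f_!$ delivers the stated exact sequence, with the image of the left-hand map equal to the kernel of the map to $\cH^{2d}f_!\ms{F}$.

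\textbf{Main obstacle.} There is no serious difficulty: the whole argument is the standard extraction of Mayer--Vietoris exactness at the topmost cohomological degree. The only points that require a line of verification are (a) that the single \v Cech differential appearing in Lemma \ref{gluinglemtsru} and the pair of arrows $\rightrightarrows$ in the present statement describe the same coequalizer, which is immediate because that single arrow is by construction the alternating difference of the two inclusion-induced restriction maps, and (b) the identification $f_!\circ u_{\star!}\cong f_{\star!}$, which follows from the fibered-category structure of $(-)_!$ recalled in \ref{fundproprealsch}.
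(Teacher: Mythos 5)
Your proposal is correct and follows exactly the paper's own route: Lemma \ref{cohdimlem} gives the right exactness of $\cH^{2d}f_!$ on $\mr{Con}(X)$, and applying that functor to the exact sequence of Lemma \ref{gluinglemtsru} yields the claim. The breakdown into two short exact sequences with kernels $K$ and $K'$ is just an explicit unwinding of the phrase ``right exact functor applied to an exact sequence,'' so the argument is the same in substance.
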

\begin{proof}
 By Lemma \ref{cohdimlem}, $\cH^{2d}f_!$ is right exact, and the
 claim of the lemma follows by applying this functor to the exact
 sequence in Lemma \ref{gluinglemtsru}.
\end{proof}

\subsubsection{}
\label{stapoduet}
First, let $\mf{M}_{\mr{et}}$ be the set of \'{e}tale morphisms
between realizable schemes. We show the theorem for $\mf{M}_{\mr{et}}$
instead of $\mf{M}$. By \ref{firstproptraceexis} \eqref{pointredunique},
combining with \ref{firstproptraceexis} \eqref{reducetopoinrt},
\eqref{univhomeodet} and Lemma \ref{consttstrpro}
(ii), if the trace maps exist for morphisms in $\mf{M}_{\mr{et}}$, then
they are unique. We show the following lemma:

\begin{lem*}
 For $f\in\mf{M}_{\mr{et}}$, there exists a unique trace map
 $f_!f^+\ms{M}\rightarrow\ms{M}$ for $\ms{M}\in
 D^{\mr{b}}_{\mr{hol}}(Y)$ satisfying the properties (Var
 1,2,3,4-I,5) if we replace $\mf{M}$ by $\mf{M}_{\mr{et}}$. Moreover,
 the homomorphism $f^+(\ms{M})\rightarrow f^!(\ms{M})$ defined by taking
 the adjoint is an isomorphism.
\end{lem*}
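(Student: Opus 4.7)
The plan is to establish uniqueness first, then construct the trace by reducing a general \'etale morphism to two basic cases (open immersions and finite \'etale morphisms) via Zariski's Main Theorem, and finally deduce the isomorphism $f^+\cong f^!$ from the trace.

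For uniqueness, (Var 5) applied to $\ms{G}=K_Y$ reduces to the case $\ms{M}=K_Y$, and \ref{firstproptraceexis}.\ref{constisconstrdim} (with $d=0$) further reduces the check to $\cH^0$. Applying (Var 2) to the inclusion $i_y\colon\{y\}\hookrightarrow Y$ of an arbitrary closed point and invoking Lemma \ref{consttstrpro}(i), uniqueness is reduced to the case $Y=\mr{Spec}(k(y))$. Then $X$ is a finite disjoint union of spectra of finite separable extensions of $k(y)$; by \ref{firstproptraceexis}.\ref{reducetopoinrt} we may treat each component separately, and (Var 4-I) together with the explicit form of the canonical unit $\mr{id}\to f_+f^+$ acting on $K_Y$ pins down $\mr{Tr}_f$ on each component.

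For existence I treat two basic cases. If $f=j$ is an open immersion, the isomorphism $j^+\cong j^!$ of \ref{fundproprealsch}.\ref{opencoinpull} identifies the counit of $(j_!,j^!)$ with a morphism $j_!j^+\to\mr{id}$, which I take as $\mr{Tr}_j$; properties (Var 1--5) follow from the naturality of the counit and the compatibilities of $(j_!,j^!)$ with base change and composition recorded in \S\ref{revDmodtheory}. If $f$ is finite \'etale, then $f_+\cong f_!$ is exact and I use the smooth-case isomorphism $f^!\cong f^+(d)[2d]$ from \cite[5.5]{A} specialized to $d=0$ to identify $f^+\cong f^!$, defining $\mr{Tr}_f$ as the counit of $(f_!,f^!)$; property (Var 4-I) is checked after applying $i_y^+$ for each closed point $y\in Y$, using base change to replace $X\to Y$ by the induced finite \'etale map over $\mr{Spec}(k(y))$, and then reducing by \ref{firstproptraceexis}.\ref{reducetopoinrt} to individual separable field extensions. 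A general \'etale morphism admits, locally on $X$, a factorization $f=\bar{f}\circ j$ with $j$ an open immersion and $\bar{f}$ finite \'etale (Zariski's Main Theorem); I then define $\mr{Tr}_f$ from this factorization via (Var 3), and check independence by the uniqueness statement applied to the induced traces on overlaps, using Lemma \ref{glueindimd} to glue the local data to a global morphism.

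For the isomorphism $f^+\to f^!$, the adjoint of $\mr{Tr}_f$ under $(f_!,f^!)$ supplies a natural transformation. To see it is an isomorphism, one reduces by Lemma \ref{consttstrpro}(i), \ref{compatleftadj}, and (Var 2) to the case of a morphism of points, where the claim is immediate from the construction. The main obstacle will be verifying (Var 2) and (Var 3) in the finite \'etale case, together with the independence of the Zariski-local factorization for a general \'etale morphism; both are handled by the uniqueness reduction, but require care in ensuring that the two definitions (via open immersions and via finite \'etale morphisms) agree whenever both are applicable.
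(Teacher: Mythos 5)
Your uniqueness reduction is correct and matches the paper's: use (Var~5) to reduce to $\ms{M}=K_Y$, then \ref{firstproptraceexis}.\ref{constisconstrdim} and Lemma~\ref{consttstrpro}(i) together with (Var~2) to reduce to $Y$ a point, where \ref{firstproptraceexis}.\ref{reducetopoinrt} and (Var~4-I) pin the map down. The existence argument, however, has two genuine gaps.

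First, the factorization you invoke does not exist: Zariski's Main Theorem gives, locally on $X$, a factorization $X\xrightarrow{j}\bar{X}\xrightarrow{\bar{f}}Y$ with $j$ an open immersion and $\bar{f}$ \emph{finite}, but $\bar{f}$ is not finite \emph{\'etale} in general. For example, $X=\mb{G}_m\amalg\mb{G}_m\to\mb{A}^1$ by $(\mr{id},x\mapsto x^2)$ is \'etale, while the ZMT compactification $\mb{A}^1\amalg\mb{A}^1\to\mb{A}^1$ is ramified at $0$. A trace for a finite non-\'etale morphism is not available at this stage of the development (it is constructed only later, for quasi-finite flat morphisms), so your decomposition cannot be assembled from the two cases you treat. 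The paper avoids this by a completely different local reduction: embed $Y$ in a smooth proper formal scheme, lift $f$ to an affine \'etale morphism $\widetilde X\to\widetilde Y$ over that formal scheme using the equivalence of \'etale sites, construct the isomorphism $f^+K_Y\xrightarrow{\sim}f^!K_Y$ there explicitly via the dual functor and Kedlaya's full faithfulness, pull it back along the closed immersion into $\widetilde Y$, and glue with Lemma~\ref{glueindimd}.

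Second, your appeal to ``the uniqueness reduction'' to establish (Var~2) and (Var~3) is circular: uniqueness compares two maps that are \emph{already known} to satisfy (Var~1--5), so it cannot be used to check (Var~2) or (Var~3) for a candidate you have just constructed. These must be verified directly for your construction. This is precisely the issue the opening paragraph of \S\ref{consttracsubsec} flags: the isomorphism $f^!\cong f^+(d)[2d]$ from \cite[5.5]{A} is \emph{ad hoc} and ``does not seem to be easy to check the properties that the trace map should satisfy, for example, transitivity''. The paper therefore does not take the counit via \cite[5.5]{A} as the definition. Instead it verifies compatibility with base change and composition by hand in the smooth-liftable situation, by comparing the constructed $\rho_f$ with the classical trace map $\mr{Tr}_f\colon f_*f^*\mc{O}_{\fsch Y}\to\mc{O}_{\fsch Y}$ via the explicit lemma in \ref{Lemmaofadjtraet}, and only then bootstraps to the general case. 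Without that explicit comparison (or an equivalent computation), the properties (Var~2) and (Var~3) remain unverified for your candidate trace.
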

The proof is divided into several parts, and it is given in
\ref{proofoslemmetdual}.

\begin{lem}[Smooth base change for open immersion]
 \label{smbcforopenimm}
 Consider the following cartesian diagram
 \begin{equation*}
  \xymatrix{
   U'\ar[d]_-{g'}\ar[r]^{j'}\ar@{}[rd]|\square&
   X'\ar[d]^{g}\ar@{}[rd]|\square&
   Z'\ar[l]_-{i'}\ar[d]^{g''}\\
  U\ar[r]_-{j}&X&Z\ar[l]^-{i}.
   }
 \end{equation*}
 Assume $g$ is smooth, $j$ is an open immersion, and $i$ is the closed
 immersion defined by the complement.
 Then the base change homomorphisms
 $g^+j_+(\ms{M})\rightarrow j'_+g'^+(\ms{M})$ and
 $g''^+i^!(\ms{M})\rightarrow i'^!g^+(\ms{M})$ are isomorphisms for any
 $\ms{M}$ in $D^{\mr{b}}_{\mr{hol}}(U)$.
\end{lem}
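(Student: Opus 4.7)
The plan is to reduce the second isomorphism to the first via the localization triangle, and to prove the first by factoring $g$ Zariski-locally on $X'$ as the composition of an \'{e}tale morphism with a projection.

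First I would derive the second isomorphism from the first. Applying $g^+$ to the localization triangle $i_+i^!\ms{M}\to\ms{M}\to j_+j^+\ms{M}\xrightarrow{+1}$ yields a triangle to compare with the corresponding localization triangle on $X'$. The standard closed-immersion base change $g^+i_+\cong i'_+g''^+$ (immediate since $i_+$ is exact) handles the left column, the first isomorphism applied to $j^+\ms{M}$ handles the right column, and the middle column is the identity; the five lemma together with the full faithfulness of $i'_+$ (cf.\ \cite[1.3.12]{AC}) gives the second isomorphism.

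For the first isomorphism, the claim is Zariski local on $X'$, so I may assume $g=p\circ e$ with $p\colon\mb{A}^d_X\to X$ the projection and $e\colon X'\to\mb{A}^d_X$ \'{e}tale. Writing $\tilde{j}\colon\mb{A}^d_U\hookrightarrow\mb{A}^d_X$ for the pullback of $j$ along $p$ and $p'\colon\mb{A}^d_U\to U$ for the projection, transitivity reduces the claim to the two cases $g=p$ and $g=e$. The projection case follows from K\"{u}nneth: since $p^+(-)\cong K_{\mb{A}^d}\boxtimes(-)$ and $\tilde{j}=\mr{id}_{\mb{A}^d}\times j$, Proposition \ref{Kunnethsch} yields
\begin{equation*}
p^+j_+\ms{N}\cong K_{\mb{A}^d}\boxtimes j_+\ms{N}\cong(\mr{id}_{\mb{A}^d}\times j)_+(K_{\mb{A}^d}\boxtimes\ms{N})\cong\tilde{j}_+p'^+\ms{N}.
\end{equation*}
For the \'{e}tale case, \ref{fundproprealsch}.\ref{basechangeprop} supplies the isomorphism $\tilde{j}^+e_!\cong e'_!\tilde{j}'^+$; since $e$ is \'{e}tale we have $e^+=e^!$ right adjoint to $e_!$, so passing to right adjoints yields $e^+\tilde{j}_+\cong\tilde{j}'_+e'^+$.

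The main obstacle is to verify that the isomorphisms produced in the projection and \'{e}tale cases coincide with the canonical base change morphism $g^+j_+\to j'_+g'^+$ --- a routine but tedious diagram chase with the units and counits of the relevant adjunctions. Frobenius compatibility in the $\base=0$ case is automatic from the naturality of all the constructions involved.
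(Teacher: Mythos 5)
Your overall strategy mirrors the paper's---reduce to the first isomorphism via the localization triangle, work Zariski-locally on $X'$, and factor $g$ through an \'{e}tale morphism followed by a projection---and your K\"{u}nneth argument for the projection case is a valid and arguably cleaner route than the paper's. However, the \'{e}tale case has a circularity problem. You invoke $e^+\cong e^!$ for an \'{e}tale morphism $e$ of realizable schemes, but at this point in the paper that isomorphism is not available: it is precisely the second conclusion of Lemma \ref{stapoduet}, whose proof (in \ref{proofoslemmetdual}) relies on the present smooth base change lemma. The only non-circular source is the ad hoc duality isomorphism of \cite[Theorem 5.5]{A}, which is proved for smooth morphisms of smooth \emph{formal} schemes; to use it here one must first lift the \'{e}tale morphism to formal schemes (this is what the appeal to EGA IV, 18.4.6 accomplishes in the paper's proof), and explain how the functors $e^+,e^!$ on realizable schemes match up under this lift. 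Without that step, asserting $e^+\cong e^!$ amounts to the hard half of the conclusion in disguise.

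There is a second, milder soft spot. You acknowledge that the isomorphisms produced by K\"{u}nneth and by passing to adjoints must be identified with the canonical base change morphism, and dismiss this as routine. The paper avoids it entirely by a slicker reduction: since the base change map $g^+j_+\ms{M}\to j'_+g'^+\ms{M}$ restricts to an isomorphism over $U'$ by the transitivity isomorphisms, and since $i'^!j'_+=0$, the localization triangle on $X'$ shows the base change map is an isomorphism as soon as one proves the single vanishing $i'^!g^+j_+\ms{M}=0$. Note that your K\"{u}nneth and adjoint computations already establish this vanishing (they exhibit $g^+j_+\ms{M}$ in the essential image of $j'_+$), so adopting this reduction would eliminate the compatibility check and, once the \'{e}tale gap above is repaired, would close your argument.
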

\begin{proof}
 By the localization triangle $i_+i^!\rightarrow\mr{id}\rightarrow
 j_+j^+\xrightarrow{+1}$, it suffices to show only the
 first isomorphism.
 Obviously, if we restrict the base change homomorphism to $U'$, the
 homomorphism is an isomorphism. Thus, by the localization triangle, it
 suffices to show that $i'^!g^+j_+\ms{M}=0$.
 Since the verification is Zariski local with respect to $X'$, we may
 assume that $g$ is factored into an \'{e}tale morphism followed by the
 projection $\mb{A}^n_X\rightarrow X$. We can treat \'{e}tale and
 projection cases separately.
 Thus, using [EGA IV, 18.4.6], we may assume that there is a
 smooth morphism $\fsch{P}'\rightarrow\fsch{P}$ of smooth formal schemes
 and a closed embedding $X\hookrightarrow\fsch{P}$ such that $X'\cong
 X\times_{\fsch{P}}\fsch{P}'$. Then we may use \cite[4.3.12]{BeDmod1}
 and \cite[Theorem 5.5]{A} to conclude.
\end{proof}

\begin{cor*}[Smooth base change]
 Consider the diagram of realizable schemes (\ref{cartediagforbc}).
 Assume that $g$ is smooth. Then the base change homomorphism $g^+\circ
 f_+\rightarrow f'_+\circ g'^+$ is an isomorphism.
\end{cor*}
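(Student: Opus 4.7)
The plan is to reduce the general case to two settings where base change is already known: proper morphisms, where the assertion is an immediate consequence of \ref{fundproprealsch}.\ref{basechangeprop} combined with the isomorphism $f_+\cong f_!$ of \ref{fundproprealsch}.\ref{propshpucoi}, and open immersions, which is exactly Lemma \ref{smbcforopenimm}. The bridge between them is a Nagata-style factorization $f=\bar{f}\circ j$ with $j$ an open immersion and $\bar{f}$ proper, all taking place in the category of realizable schemes.

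First I would produce this factorization. Since $X$ is realizable, fix an embedding $i\colon X\hookrightarrow\fsch{P}_k$ into the special fiber of a proper smooth formal scheme $\fsch{P}$ over $R$. The product $\fsch{P}_k\times Y$ is then realizable (a product of two realizable schemes embeds in the product of their ambient proper smooth formal schemes). Consider the locally closed immersion $(i,f)\colon X\hookrightarrow\fsch{P}_k\times Y$, and let $\bar{X}$ be its scheme-theoretic closure. Then $X\hookrightarrow\bar{X}$ is an open immersion, $\bar{X}$ is closed in a realizable scheme hence itself realizable, and $\bar{f}\colon\bar{X}\to Y$, induced by the second projection, is proper because $\fsch{P}_k$ is proper over $k$. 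Forming the cartesian pullback along $g\colon Y'\to Y$ yields $\bar{X}'=\bar{X}\times_Y Y'$, again realizable (closed in $\bar{X}\times Y'$, which is realizable), together with an open immersion $j'\colon X'\hookrightarrow\bar{X}'$ and a proper morphism $\bar{f}'\colon\bar{X}'\to Y'$; the smooth morphism $g$ pulls back to smooth morphisms $g'_{\bar{X}}\colon\bar{X}'\to\bar{X}$ and $g'\colon X'\to X$.

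Next I would compose the two known base change isomorphisms. For the proper piece, \ref{fundproprealsch}.\ref{basechangeprop} applied to $\bar{f}$ (together with $\bar{f}_+\cong\bar{f}_!$, $\bar{f}'_+\cong\bar{f}'_!$ from \ref{fundproprealsch}.\ref{propshpucoi}) gives $g^+\bar{f}_+\xrightarrow{\sim}\bar{f}'_+g'^+_{\bar{X}}$. For the open piece, Lemma \ref{smbcforopenimm} applied to the smooth morphism $g'_{\bar{X}}$ and the open immersion $j$ gives $g'^+_{\bar{X}}j_+\xrightarrow{\sim}j'_+g'^+$. Pasting these and using the standard compatibility of base change morphisms with composition of the vertical arrows (a formal consequence of the adjunctions $(f^+,f_+)$ and their transition isomorphisms), the composite
\begin{equation*}
g^+f_+\;=\;g^+\bar{f}_+j_+\;\xrightarrow{\sim}\;\bar{f}'_+g'^+_{\bar{X}}j_+\;\xrightarrow{\sim}\;\bar{f}'_+j'_+g'^+\;=\;f'_+g'^+
\end{equation*}
is exactly the base change morphism for $f$, which is therefore an isomorphism.

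The only genuinely nontrivial point is the construction of the factorization in the realizable category; once that is in hand, the argument is purely formal. I expect this to be the main obstacle, though the embedding $X\hookrightarrow\fsch{P}_k\times Y$ trick handles it cleanly and keeps everything inside $\mr{Real}(k/R)$. The compatibility of base change morphisms with composition of $f$, while standard, should be made explicit to ensure that the two isomorphisms above indeed splice into the canonical base change morphism for $f$ itself.
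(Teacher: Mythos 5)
Your proof is correct and matches the paper's argument exactly: the paper also factors $f$ as an open immersion into a proper compactification followed by a proper morphism, then invokes proper base change for the proper piece and Lemma \ref{smbcforopenimm} for the open piece. You merely spell out the construction of the compactification inside $\mr{Real}(k/R)$, which the paper leaves implicit.
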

\begin{proof}
 We may factor $f$ as $X\xrightarrow{j}\overline{X}\xrightarrow{p} Y$
 where $j$ is an open immersion and $p$ is proper. The base change for
 $p$ is the proper base change theorem
 (cf. \ref{fundproprealsch} \eqref{basechangeprop}), and that for $j$ is
 the lemma above.
\end{proof}

\subsubsection{}
\label{Lemmaofadjtraet}
First, suppose that $Y$ is smooth liftable purely of dimension $d$, and
$f$ is affine. In this case let us construct an isomorphism
$f^+K_Y\xrightarrow{\sim}f^!K_Y$. By taking the dual, it is equivalent
to constructing $f^+K^\omega_Y\xrightarrow{\sim}f^!K^\omega_Y$ (cf.\
\ref{bidualrealcase} for $K^{\omega}_{\star}$). Let $\fsch{Y}$ be a
smooth lifting of $Y$.
Since the \'{e}tale site of $Y$ and $\fsch{Y}$ are equivalent, there
exists the following cartesian diagram where $\fsch{X}$ and $\fsch{Y}$
are smooth formal schemes and $X$ and $Y$ are special fibers:
\begin{equation*}
 \xymatrix{
  X\ar[r]\ar[d]_{f}\ar@{}[rd]|\square&
  \fsch{X}\ar[d]^{\widetilde{f}}\\
 Y\ar[r]&\fsch{Y}.
  }
\end{equation*}
By \cite[4.1.8, 4.1.9, 4.3.5]{Caro-SMF} and \cite[3.12]{A}, we have
canonical isomorphisms
\begin{equation*}
 \mb{D}_{\fsch{X}}\circ\widetilde{f}^!\circ\mb{D}_{\fsch{Y}}
  (\mr{sp}_+(\mc{O}_{\fsch{Y}_K}))
  \cong \mr{sp}_+\bigl((\widetilde{f}^*
  (\mc{O}_{\fsch{Y}_K}^\vee(-d)))^\vee(-d)\bigr)
  \cong
  \mr{sp}_+(\mc{O}_{\fsch{X}_K}).
\end{equation*}
This gives us a canonical isomorphism
\begin{equation*}
 \rho_f\colon
 \widetilde{f}^+\mc{O}_{\fsch{Y},\mb{Q}}\xrightarrow{\sim}
  \mc{O}_{\fsch{X},\mb{Q}}\cong
  \widetilde{f}^!\mc{O}_{\fsch{Y},\mb{Q}}.
\end{equation*}
By Kedlaya's fully faithfulness \cite{Ke}, $\rho_f$ extends to the
desired isomorphism. Taking the adjoint, we get a homomorphism
$t_{\fsch{Y}}\colon K^\omega_Y\rightarrow f_+f^!K^\omega_Y$.
We need the lemma below to show that $t_{\fsch{Y}}$, in fact, does not
depend on the choice of $\fsch{Y}$.

\begin{rem*}
 We remark that for an isocrystal $M$, the following diagram is
 commutative:
 \begin{equation*}
  \xymatrix@C=50pt@R=5pt{
   &\mr{sp}_+((M^\vee)^\vee)\ar@{-}[dd]^{\sim}\\
   \mr{sp}_+(M)\ar[ur]\ar[dr]&\\
  &\mb{D}\circ\mb{D}(\mr{sp}_+(M)).}
 \end{equation*}
 To check this, by definition (cf.\ \cite[2.2.12]{carodual}), it
 suffices to check the commutativity for
 $M\cong\mc{O}_{\fsch{Y},\mb{Q}}$. Then it is reduced to the
 commutativity of the following diagram of complexes, whose verification
 is easy:
 \begin{equation*}
  \xymatrix@C=30pt@R=5pt{
   &\DdagQ{\fsch{Y}}\otimes\bigwedge\ms{T}_{\fsch{Y}}\otimes
   \mr{Hom}_{\mc{O}}\bigl(\mr{Hom}_{\mc{O}}
   (\mc{O}_{\fsch{Y}},\mc{O}_{\fsch{Y}}),\mc{O}_\fsch{Y}\bigr)
   \ar[dd]^{\sim}\\
  \DdagQ{\fsch{Y}}\otimes\bigwedge\ms{T}_{\fsch{Y}}
   \ar[ur]+L\ar[dr]+L&\\
  &\mr{Hom}_{\ms{D}}\bigl(\mr{Hom}_{\ms{D}}(\DdagQ{\fsch{Y}}
   \otimes\bigwedge\ms{T}_{\fsch{Y}},
   \DdagQ{\fsch{Y}}\otimes\omega^{-1}_{\fsch{Y}}),
   \DdagQ{\fsch{Y}}\otimes\omega^{-1}_{\fsch{Y}}\bigr).
   }
 \end{equation*}
\end{rem*}

Let $\ms{M}$ be a holonomic module on $Y$. By definition, there
exists a smooth proper formal scheme $\fsch{P}$ such that $\ms{M}$ can
be realized as a $\DdagQ{\fsch{P}}$-module $\ms{M}_{\fsch{P}}$.
There exists an immersion (not necessarily closed)
$i\colon\fsch{Y}\hookrightarrow\fsch{P}$. Then the
$\DdagQ{\fsch{Y}}$-module $i^!(\ms{M}_{\fsch{P}})$
(which is overholonomic and canonically isomorphic to
$i^+(\ms{M}_{\fsch{P}})$) is denoted by $\ms{M}\Vert_{\fsch{Y}}$ for a
moment. This module does not depend on the auxiliary choices up to
canonical equivalence.

\begin{lem*}
 The following diagram is commutative
 \begin{equation*}
  \xymatrix@C=40pt{
   K^\omega_Y\Vert_{\fsch{Y}}\ar[r]^-{t_{\fsch{Y}}}\ar@{=}[d]&
   f_+f^!K^\omega_Y\Vert_{\fsch{Y}}\ar@{^{(}->}[d]\\
  \mc{O}_{\fsch{Y},\mb{Q}}\ar[r]_-{\mr{adj}_f}&
   \widetilde{f}_*\widetilde{f}^*\mc{O}_{\fsch{Y},\mb{Q}}.
   }
 \end{equation*}
\end{lem*}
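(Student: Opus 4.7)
The plan is to unpack the construction of the top arrow and reduce commutativity to two statements about the étale morphism $\widetilde{f}$. By the construction in \ref{Lemmaofadjtraet}, the map $K^\omega_Y \to f_+ f^! K^\omega_Y$ is by definition the composition
\[
K^\omega_Y \xrightarrow{\eta} f_+ f^+ K^\omega_Y \xrightarrow{f_+\rho_f} f_+ f^! K^\omega_Y,
\]
where $\eta$ is the unit of the adjoint pair $(f^+,f_+)$ and $\rho_f\colon f^+ K^\omega_Y \xrightarrow{\sim} f^! K^\omega_Y$ is the isomorphism constructed just above. After restricting to $\fsch{Y}$ and using the identification $K^\omega_Y\Vert_{\fsch{Y}} = \mc{O}_{\fsch{Y},\mb{Q}}$, it is enough to verify: (a) the unit $\eta\Vert_{\fsch{Y}}$ is carried to the canonical $\mc{O}$-module unit $\mc{O}_{\fsch{Y},\mb{Q}} \to \widetilde{f}_*\widetilde{f}^*\mc{O}_{\fsch{Y},\mb{Q}}$ through the right vertical inclusion; and (b) $\rho_f\Vert_{\fsch{X}}$ is the identity of $\mc{O}_{\fsch{X},\mb{Q}}$ at the level of underlying $\mc{O}$-modules.

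Statement (a) is essentially formal. Since $\widetilde{f}$ is étale, $\widetilde{f}^+$ applied to an $\mc{O}$-coherent module coincides with the $\mc{O}$-module pullback $\widetilde{f}^*$ (no Tate twist enters, the relative dimension being zero), while $\widetilde{f}_+$ coincides on such modules with $\widetilde{f}_*$, because $\widetilde{f}$ is affine étale and the relative de Rham complex collapses to the naive $\mc{O}$-module pushforward concentrated in degree zero. Under these identifications the $\ms{D}^\dag$-module adjunction $(\widetilde{f}^+, \widetilde{f}_+)$ is a refinement of the $\mc{O}$-module adjunction $(\widetilde{f}^*, \widetilde{f}_*)$, and the uniqueness of adjunction units forces $\eta$ to restrict to $\mr{adj}_f$.

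Statement (b) is the substantive part and the main obstacle. One unwinds the definition of $\rho_f$: it is assembled from the biduality isomorphism $\mr{sp}_+(M) \xrightarrow{\sim} \mb{D}\mb{D}(\mr{sp}_+(M))$, the duality formula $\mb{D}(\mr{sp}_+(M)) \cong \mr{sp}_+(M^\vee(-d))$ of \cite[3.12]{A}, and the base-change identification relating $\widetilde{f}^!$ with $\widetilde{f}^+$ through duality from \cite[4.1.8, 4.1.9, 4.3.5]{Caro-SMF}, all applied to $M = \mc{O}_{\fsch{Y}_K}$. Because $\widetilde{f}$ has relative dimension zero, every occurrence of a Tate twist $(-d)$ and every shift $[-2d]$ is trivial, so each constituent isomorphism identifies $\mc{O}_{\fsch{X},\mb{Q}}$ with itself in the obvious way. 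The preceding remark already verifies the compatibility of the biduality step at the underlying $\mc{O}$-module level for the unit isocrystal, and functoriality handles the remaining steps. The real work is the bookkeeping through Caro's identifications to check that no hidden twist or sign survives the forgetful passage to $\mc{O}$-modules; once done, the pieces assemble to give $\rho_f\Vert_{\fsch{X}} = \mr{id}_{\mc{O}_{\fsch{X},\mb{Q}}}$, and combined with (a) the diagram commutes.
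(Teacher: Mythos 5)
Your decomposition of the top arrow as the unit $\eta$ of the $(f^+,f_+)$ adjunction followed by $f_+\rho_f$ is a reasonable structural start, but both claims (a) and (b) hide the actual content of the lemma rather than prove it. For (a), ``uniqueness of adjunction units'' does not transfer across the restriction $\Vert_{\fsch{Y}}$ merely because $f^+$ and $f_+$ restrict to $\widetilde{f}^*$ and $\widetilde{f}_*$ on objects: one must check that the adjunction bijections $\mr{Hom}(f^+M,N)\cong\mr{Hom}(M,f_+N)$ and $\mr{Hom}(\widetilde{f}^*M',N')\cong\mr{Hom}(M',\widetilde{f}_*N')$ are themselves compatible under restriction. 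Since the arithmetic $\ms{D}$-module adjunction is built from duality and relative duality traces, that compatibility is precisely where the classical trace $\mr{Tr}_f$ enters. The paper makes this visible in the finite \'{e}tale case by re-writing the restriction of the top row as the sequence $(\mr{Tr}_f)^\vee$, then $\iota^{-1}$ (where $\iota$ is defined via $\mr{Tr}_f$), then biduality, and observing that this composition equals $\mr{adj}_f$ --- a genuine identity about finite \'{e}tale traces, not a formal tautology; your claim that it is ``essentially formal'' conceals exactly this check.

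For (b), the Tate twist reasoning is incorrect: the twists $(-d)$ appearing in $\mb{D}_{\fsch{X}}\circ\widetilde{f}^!\circ\mb{D}_{\fsch{Y}}(\mr{sp}_+\mc{O}_{\fsch{Y}_K})\cong\mr{sp}_+\bigl((\widetilde{f}^*(\mc{O}_{\fsch{Y}_K}^\vee(-d)))^\vee(-d)\bigr)$ are by $d=\dim Y$, not by the relative dimension of $\widetilde{f}$; they disappear because they occur twice and cancel under double dualization, not because each is individually trivial. More importantly, you acknowledge that ``the real work is the bookkeeping through Caro's identifications'' and then do not do it; that bookkeeping is the verification of the paper's middle square, which is where the interplay between $\rho_f$ and the classical trace is actually pinned down. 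Finally, you do not reduce to the finite \'{e}tale case, and the paper needs this reduction for a concrete reason: for general affine \'{e}tale $f$ the right vertical arrow of the statement is a strict injection, not an isomorphism. The paper shrinks $\fsch{Y}$ so that $\widetilde{f}$ becomes finite \'{e}tale --- where the vertical becomes an isomorphism and $f_+$ agrees on the nose with $f_*$ --- then deduces the general case from the injectivity of $f_+f^!K_Y^{\omega}\rightarrow j_+f'_+f'^!K_U^{\omega}$. Your proposal never addresses this inclusion.
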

\begin{proof}
 First, let us show the lemma when $f$ is finite \'{e}tale of rank
 $n$. Since $\widetilde{f}$ is finite \'{e}tale, we can identify
 $\widetilde{f}_+$ and $\widetilde{f}^!$ by $\widetilde{f}_*$ and
 $\widetilde{f}^*$ if we consider the underlying
 $\mc{O}_{\fsch{Y},\mb{Q}}$-module structure. In the following, for
 simplicity, we do not make any difference between $\widetilde{f}$ and
 $f$. In this case, the right vertical homomorphism is, in fact,
 isomorphic.
 Let $\ms{F}$ be an
 $\mc{O}_{\fsch{Y},\mb{Q}}$-module, and $\iota\colon
 f_*(f^*\ms{F})^\vee\rightarrow(f_*f^*\ms{F})^\vee$ be the homomorphism
 sending $\varphi$ to $\mr{Tr}_f\circ\varphi$, where $\mr{Tr}_f\colon
 f_*f^*\mc{O}_{\fsch{Y}}\rightarrow\mc{O}_{\fsch{Y}}$ is the classical
 trace map. If $\ms{F}$ is a locally
 free $\mc{O}_{\fsch{Y},\mb{Q}}$-module, $\iota$ is an isomorphism. We
 have the following diagram, where we omit $\mr{sp}_+$ and the
 subscripts $\mb{Q}$:
 \begin{equation*}
  \xymatrix@C=40pt{
   \mc{O}_{\fsch{Y}}\ar[r]\ar[d]&
   f_!f^+\mc{O}_{\fsch{Y}}\ar[r]_-{\sim}\ar[d]&
   f_+f^+\mc{O}_{\fsch{Y}}\ar[r]_-{\sim}^-{\rho_f}\ar[d]&
   f_+f^!\mc{O}_{\fsch{Y}}\ar[d]\\
  (\mc{O}_{\fsch{Y}}^\vee)^\vee
   \ar[r]_-{(\mr{Tr}_f)^\vee}&
   (f_*f^*\mc{O}_{\fsch{Y}}^\vee)^\vee
   \ar[r]_{\iota^{-1}}^-{\sim}&
   f_*(f^*\mc{O}_{\fsch{Y}}^\vee)^\vee
   \ar[r]^-{\sim}&
   f_*f^*\mc{O}_{\fsch{Y}}.\\
   }
 \end{equation*}
 Here the vertical morphisms are isomorphic.
 This diagram is commutative. The commutativity of the left and right
 square immediately follows by definition.
 To check the commutativity for the middle one, we need to go back to
 the definition, which is \cite[IV.1.3]{Vir}. We note that since $f$ is
 finite \'{e}tale, the trace map
 $f_+\mc{O}_{\fsch{X},\mb{Q}}\rightarrow\mc{O}_{\fsch{Y},\mb{Q}}$
 defined in \cite[III.5.1]{Vir} is equal to $\mr{Tr}_f$ via the
 identification $f_+\mc{O}_{\fsch{X},\mb{Q}}\cong
 f_*\mc{O}_{\fsch{X},\mb{Q}}$. Since the commutativity is standard
 routine work, we leave the detail to the reader. Now, the
 verification of the lemma in the finite \'{e}tale case is reduced to
 showing the composition of the lower row is the adjunction
 homomorphism. This is easy.

 In general, there exists an open dense formal subscheme
 $j\colon\fsch{U}\subset\fsch{Y}$ such that
 $f'\colon\fsch{X}':=\fsch{X}\times_{\fsch{Y}}\fsch{U}
 \rightarrow\fsch{U}$ is finite \'{e}tale. Put
 $j'\colon\fsch{X}'\hookrightarrow\fsch{X}$. Then by
 \cite[4.3.10]{BeDmod1}, we have an injection
 $K^{\omega}_{X}\hookrightarrow
 j'_+K^{\omega}_{X'}$ where $X'$ is the special fiber of $\fsch{X}'$.
 Since $f$ is affine, $f_+$ is exact, and the homomorphism
 $f_+f^!K_{Y}^{\omega}\rightarrow j_+f'_+f'^!K_U^{\omega}$ is
 injective. Consider the following diagram where we omit
 $\Vert_{\fsch{Y}}$:
 \begin{equation*}
  \xymatrix@C=10pt@R=15pt{
   &j_+K^\omega_U\ar[rr]\ar[dd]|\hole&&
   j_+f'_+f'^!K^\omega_U\ar[dd]^{\sim}\\
  K_Y^\omega\ar[ur]\ar[rr]\ar[dd]_{\sim}&&
   f_+f^!K^\omega_Y\ar@{^{(}->}[ur]\ar[dd]&\\
  &j_*\mc{O}_{\fsch{U},\mb{Q}}\ar[rr]|-\hole&&
   j_*f'_*f'^*\mc{O}_{\fsch{U},\mb{Q}}\\
  \mc{O}_{\fsch{Y},\mb{Q}}\ar[rr]\ar[ru]&&
   f_*f^*\mc{O}_{\fsch{Y},\mb{Q}}.\ar@{^{(}->}[ur]_-{\star}&
   }
 \end{equation*}
 The diagram is known to be commutative except for the forehead square
 diagram. By the injectivity of $\star$, the desired commutativity
 follows by the commutativity of other faces.
\end{proof}

The trace map satisfies the base change property: namely
considering the cartesian diagram of (\ref{cartediagforbc}) such
that $Y$ and $Y'$ possess liftings $\fsch{Y}$, $\fsch{Y}'$, then the
diagram of (Var 2) is commutative if we replace $\mr{Tr}_{f^{(\prime)}}$
by the dual of $t_{\fsch{Y}^{(\prime)}}$.
To check this, it suffices to check the dual of the base
change property for $t_{\fsch{Y}}\colon K^\omega_Y\rightarrow
f_+f^!K^\omega_Y$. By using \cite[4.3.10]{BeDmod1}, it suffices to show
the base change property after taking $\Vert_{\fsch{Y}}$. The lemma
above reduces the verification to the base change property for the
adjunction homomorphism $\mc{O}_{\fsch{Y},\mb{Q}}\rightarrow
f_*f^*\mc{O}_{\fsch{Y},\mb{Q}}$ which follows by the base change
property of coherent $\mc{O}_{\fsch{Y},\mb{Q}}$-modules. 
The transitivity can also be checked by a similar argument.

This implies that $t_{\fsch{Y}}$ does not depend on the choice of
$\fsch{Y}$. Indeed, when $Y$ is a point, all smooth liftings of $Y$ are
canonically isomorphic, and $t_{\fsch{Y}}$ does not
depend on the choice. In general, by the base change property and the
uniqueness mentioned at the beginning of \ref{stapoduet} shows that
$t_{\fsch{Y}}$ depends only on $Y$. This justifies to denote the dual of
$t_{\fsch{Y}}$ by $\mr{Tr}_f\colon f_!f^+K_Y\rightarrow K_Y$.

\subsubsection{Proof of Lemma {\ref{stapoduet}}}
\label{proofoslemmetdual}
\mbox{}\\
Let us construct the trace map for general \'{e}tale morphism.
Assume that we have the following cartesian diagram $D$
\begin{equation*}
 \xymatrix{
 X\ar@{^{(}->}[r]^-{i'}\ar[d]_{f}\ar@{}[rd]|\square&
  \widetilde{X}\ar[d]^{g}\\
 Y\ar@{^{(}->}[r]_-{i}&\widetilde{Y},
  }
\end{equation*}
where $\widetilde{Y}$ is smooth liftable, $g$ is affine \'{e}tale, and
the horizontal morphisms are closed immersions. We have the following
uncompleted diagram of solid arrows:
\begin{equation*}
 \xymatrix@C=40pt{
  i'^+g^+K_{\widetilde{Y}}
  \ar[r]_-{\sim}^{\mr{Tr}_g}\ar[d]_{\sim}&
  i'^+g^!K_{\widetilde{Y}}\ar[d]\\
 f^+i^+K_{\widetilde{Y}}\ar@{.>}[r]&
  f^!i^+K_{\widetilde{Y}}.
  }
\end{equation*}
The left vertical homomorphism is an isomorphism by the transitivity,
and the dotted homomorphism is defined so that the diagram commutes.
By taking the adjoint, the dotted arrow gives us a homomorphism
$\mr{Tr}_D\colon f_!f^+K_Y\rightarrow K_Y$.
Let us check that $\mr{Tr}_D$ does not depend on the choice of $D$. If
$Y$ is liftable, $\mr{Tr}_D$ coincides with the trace map $\mr{Tr}_f$
(for liftable schemes) by the base change property of the trace map for
liftable schemes we have already checked.
In particular, for a closed point $i_s\colon
s\hookrightarrow Y$, $i_s^+\mr{Tr}_D$ coincides with the trace map for
the liftable schemes $X\times_Ys\rightarrow s$.
By the uniqueness at the beginning of \ref{stapoduet}, $\mr{Tr}_D$ does
not depend on the choice of diagram $D$, and we are allowed to denote
$\mr{Tr}_D$ by $\mr{Tr}_f$. This also shows the base change
and transitivity property of $\mr{Tr}_f$ when the homomorphisms in the
diagrams of (Var 2) and (Var 3) are defined.

In general, we can take a diagram $D$ locally on
$X$ and $Y$. By using Lemma \ref{glueindimd}, we can glue, and get the
desired trace map $f_!f^+K_Y\rightarrow K_Y$ similarly to
[SGA 4, XVIII, 2.9 c)], and check that this is the desired trace map. The
details are left to the reader.

Finally, let us show that $f^+(\ms{F})\cong f^!(\ms{F})$ for $\ms{F}\in
D^{\mr{b}}_{\mr{hol}}(Y)$. By (Var-5), the trace map
for $\ms{F}$ should be
\begin{equation*}
 f_!f^+\ms{F}\cong f_!f^+K_Y\otimes\ms{F}
  \xrightarrow{\mr{Tr}_f}\ms{F}.
\end{equation*}
This map satisfies (Var 1) to (Var 4) since they hold for
$\ms{F}=K_Y$. Taking the adjunction, we have a homomorphism
$f^+(\ms{F})\rightarrow f^!(\ms{F})$.
When $Y$ is a point, this is easy. Let $s\in
Y$ be a closed point, and consider the following diagram:
\begin{equation*}
 \xymatrix{
  X_s\ar[r]^-{i'_s}\ar[d]_{f_s}\ar@{}[rd]|\square&
  X\ar[d]^{f}\\
 s\ar[r]_-{i_s}&Y.
  }
\end{equation*}
By the compatibility of trace map by base change, the following diagram
is commutative
\begin{equation*}
 \xymatrix{
  i'^+_sf^+(\ms{F})\ar[r]\ar[d]_{\sim}&
  i'^+_sf^!\ms{F}\ar[d]\\
 f_s^+i_s^+(\ms{F})\ar[r]&
  f_s^!i_s^+(\ms{F}).
  }
\end{equation*}
By (the dual of) smooth base change \ref{smbcforopenimm}, the right
vertical homomorphism is an isomorphism, and by the point case, the
bottom horizontal homomorphism is an isomorphism as well. Thus by
\cite[1.3.11]{AC}, the claim follows.
\qed

\subsubsection{}
Let us construct the trace map for quasi-finite flat morphism. We follow
the construction of [SGA 4, XVII, 6.2]. Let $f\colon X\rightarrow Y$ be
a quasi-finite flat morphism. For an \'{e}tale morphism $U\rightarrow
Y$, we consider the category $\Psi_f(U)$ defined as follows: objects
consist of collections $(V_i)_{i\in I}$, where $I$ is a pointed finite
set with the marked point $0\in I$ and a decomposition
$X\times_YU=\coprod_{i\in I}V_i$ such that 
$V_i\rightarrow U$ is finite for $i\neq0$.
We denote by $I^*:=I\setminus\{0\}$. A morphism from
$\varphi=(V_i)_{i\in I}$ to $\varphi'=(V'_i)_{i\in I'}$ is a map
$\sigma\colon I\rightarrow I'$ such that $\sigma(0)=0$ and
$V_i=\bigcup_{j\in\sigma^{-1}(i)}V'_j$. For
a morphism $U\rightarrow V$ in $Y_{\mr{et}}$, there exists the obvious
functor $\Psi_f(V)\rightarrow\Psi_f(U)$, and $\Psi_f(U)$ is a fibered
category over $Y_{\mr{et}}$. This category is denoted by $\Psi_f$, and
an object of the fiber over $U\in Y_{\mr{et}}$ is denoted by
$\bigl\{U;(V_i)_{i\in I}\bigr\}$. We refer to {\em ibid.}\ for some
details.

\begin{lem*}
 Let $\ms{F}\in\mr{Con}(Y)$. Then there exists a canonical isomorphism
 \begin{equation*}
  \tau_f\colon \indlim_{\{U;\varphi\}\in\Psi_f}
   j_{U!}j_U^+\ms{F}^{I^*}\xrightarrow{\sim}
   f_!f^+\ms{F},
 \end{equation*}
 where $j_U\colon U\rightarrow Y$ is the \'{e}tale morphism.
\end{lem*}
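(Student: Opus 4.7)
The plan is to mimic the construction of [SGA 4, XVII 6.2]: first define a map for each object of $\Psi_f$, verify compatibility with the transition morphisms to pass to the colimit, and finally check that the resulting $\tau_f$ is an isomorphism by reducing to stalks at closed points, using the étale form of Zariski's Main Theorem together with the étale trace of Lemma \ref{stapoduet}.

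\textbf{Construction of $\tau_f$.} For an object $\{U;(V_i)_{i\in I}\}\in\Psi_f$, I would write $f_U\colon X_U:=X\times_YU\to U$ and $\pi_U\colon X_U\to X$ for the base-change projections, so that $j_U\circ f_U=f\circ\pi_U$ and $\pi_U$ is étale. For $i\in I^*$ let $g_i\colon V_i\to U$ be the finite flat piece, put $g:=\coprod_{i\in I^*}g_i\colon X^*:=\coprod_{i\in I^*}V_i\to U$, and let $h^*\colon X^*\hookrightarrow X_U$ be the open-and-closed immersion. The map $j_{U!}j_U^+\ms{F}^{I^*}\to f_!f^+\ms{F}$ is then built as the composition of: the direct sum over $i\in I^*$ of the units $j_U^+\ms{F}\to (g_i)_+g_i^+j_U^+\ms{F}$, giving a map to $g_+g^+j_U^+\ms{F}$; the identity $g_+\cong g_!$ (Lemma \ref{finiteadjpropsch}) composed with the counit $(h^*)_!(h^*)^+\to\mr{id}$ of the open immersion $h^*$, yielding a map to $(f_U)_!(f_U)^+j_U^+\ms{F}$; the canonical identifications $j_{U!}(f_U)_!=f_!(\pi_U)_!$ and $(f_U)^+j_U^+=\pi_U^+f^+$; and finally $f_!$ applied to the étale trace $\mr{Tr}_{\pi_U}\colon(\pi_U)_!\pi_U^+\to\mr{id}$ provided by Lemma \ref{stapoduet}. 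Compatibility with a refinement $\{U;\varphi\}\to\{U';\varphi'\}$ follows from the naturality of the units, the functoriality of the open-immersion counit, and the transitivity (Var 3) of the étale trace, so passing to the colimit produces $\tau_f$.

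\textbf{Isomorphism via stalks.} By Lemma \ref{propindcatcon}, to check that $\tau_f$ is an isomorphism in $\mr{Ind}(\mr{Con}(Y))$ it suffices to show that $i_x^+\tau_f$ is an isomorphism for every closed point $x\in Y$. The functor $i_x^+$ is c-t-exact (Lemma \ref{constproplem}), commutes with filtered colimits after passage to Ind-categories by Lemma \ref{indlimcommderfun}, and satisfies base change against $f_!$ (\ref{fundproprealsch}.\ref{basechangeprop}) and against $\mr{Tr}_{\pi_U}$ (Var 2). By the étale form of Zariski's Main Theorem, for each $x$ there exists an étale neighborhood $U\to Y$ of $x$ and a decomposition $X\times_YU=W\amalg V$ with $V\to U$ finite and $W$ missing the fibers of $U$ over $x$; the collection of such $\{U;(W,V)\}$ is cofinal in the restriction of $\Psi_f$ to étale neighborhoods of $x$. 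For such a cofinal $\{U;\varphi\}$, base change identifies $i_x^+j_{U!}j_U^+\ms{F}^{I^*}$ with a direct sum of copies of $i_x^+\ms{F}$ indexed by the points of $V$ above $x$, and $i_x^+f_!f^+\ms{F}$ with $(f_x)_!(f_x)^+i_x^+\ms{F}=\bigoplus_{\xi\in X_x}(a_\xi)_!(a_\xi)^+i_x^+\ms{F}$ for the finite maps $a_\xi\colon\{\xi\}\to\{x\}$; since $V$ contains $X_x$ by construction, these two decompositions match, and inspection of the trace construction shows that the étale trace contribution is the identity on each summand.

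\textbf{Main difficulty.} The principal obstacle is the bookkeeping needed to verify that the summands parametrized by $I^*$ on the source are sent, after base change at $x$ and the étale trace, exactly onto the summands parametrized by $X_x$ on the target. Once the cofinal system produced by the étale Zariski's Main Theorem is fixed, this matching reduces to the explicit form of the trace for a finite étale morphism of points, which is encoded in Lemma \ref{stapoduet}; the remainder is a direct transcription of the argument in [SGA 4, XVII 6.2] into the $\ms{D}$-module language using the six functor formalism already at hand for realizable schemes.
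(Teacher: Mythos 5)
Your construction of $\tau_f$ is correct and, after unwinding, coincides with the paper's: the paper goes directly through the unit for $f_i\colon V_i\to U$ and the étale trace for $j_i\colon V_i\to X$, while you route through $X_U$ and $\pi_U$, but by transitivity of the trace these produce the same morphism. The reduction to stalks at closed points via Lemma~\ref{propindcatcon}(i) and the commutation of $i_x^+$ with colimits is also the paper's strategy. (Minor: the commutation should be justified by the exactness of $i_x^+$ and its compatibility with direct sums, not by Lemma~\ref{indlimcommderfun}, since the colimit over $\Psi_f$ is \emph{not} filtered, as the paper's remark emphasizes.)

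The genuine gap is in the stalk computation. You write that the two decompositions ``match'' and that ``inspection of the trace construction shows that the étale trace contribution is the identity on each summand.'' This is not correct as stated, and it is exactly the point where the argument requires care. The fiber $X_x=X\times_Y\{x\}$ may contain points $\xi$ with residue field $k(\xi)$ a nontrivial (possibly inseparable) extension of $k(x)$, while the points of $V$ lying over such $\xi$ via the étale map $X_U\to X$ have residue fields that are étale over $k(\xi)$; there is no term-by-term bijection, and the étale trace is genuinely a trace (a sum over sheets), not the identity. What makes the colimit work out is the whole system $\Psi_{f_x}$, not a single cofinal object. The paper handles this by first proving the lemma when $Y$ is a point: one passes to a separable extension $s'\to s$ so that $X\times_s s'\to s'$ becomes a disjoint union of \emph{universal homeomorphisms}, for which the unit $\ms{F}\to f_!f^+\ms{F}$ is already an isomorphism, and then uses Lemma~\ref{propindcatcon}(ii) to descend the isomorphism back to $s$. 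The general case then reduces to the point case via the cofinality of $\Psi_f\to\Psi_{f_s}$, which is precisely [EGA~IV, 18.12.1] (not merely the existence of a single good \'etale neighborhood as your invocation of Zariski's Main Theorem suggests). Without the separable-extension step your argument cannot account for the residue field extensions, so the claim that $\tau_f$ is an isomorphism does not follow from what you wrote.
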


\begin{rem*}
 Before proving the lemma, we remark that the inductive system is 
{\em not} filtrant.
\end{rem*}
\begin{proof}
 The verification is essentially the same as {\it ibid.}.
 Let us construct the homomorphism.
 Take $\varphi=\bigl\{U;(V_i)_{i\in I}\bigr\}$. Let $j_i\colon
 V_i\rightarrow X$ be the \'{e}tale morphism.
 Since $f_i\colon V_i\rightarrow U$ is assumed finite for $i\in
 I^*$, we have the following homomorphism for $\ms{G}\in\mr{Con}(U)$:
 \begin{equation*}
  \ms{G}\rightarrow f_{i+}f_i^+(\ms{G})
   \xleftarrow{\sim}
   f_{i!}f_i^+(\ms{G}).
 \end{equation*}
 By using the trace map in Lemma \ref{stapoduet}, we get
 the homomorphism
 \begin{equation*}
  j_{U!}j_U^+(\ms{F})\rightarrow
   j_{U!}f_{i!}f_i^+j_U^+(\ms{F})\cong
   f_!j_{i!}j_i^+f^+(\ms{F})\xrightarrow{\mr{Tr}_{j_i}}
   f_!f^+(\ms{F}),
 \end{equation*}
 which induces the homomorphism in the statement.

 Now, by \ref{indcatrecall} \eqref{relofindlim}, it suffices to show
 that the homomorphism is an isomorphism in $\mr{Ind}(\mr{Con}(X))$.
 When $f$ is a universal homeomorphism, the canonical homomorphism
 $\ms{F}\rightarrow f_!f^+\ms{F}$ is an isomorphism by Lemma
 \ref{fundproprealsch}.
 Assume $Y=:s$ is a point. There is a separable
 extension $s'\rightarrow s$ such that $X\times s'\rightarrow s'$ is
 disjoint union of universal homeomorphisms.
 Thus, the lemma follows by Lemma \ref{propindcatcon} (ii).

 Let $s$ be a closed point of $Y$.
 Put $i_s\colon s\rightarrow Y$ to be closed immersion. Since $i_s^+$ is
 an exact functor and commutes with direct sum, it commutes with
 arbitrary inductive limits. Thus, we have
 \begin{equation*}
  i_s^+\bigl(\indlim_{\Psi_f}j_{U!}\ms{F}^{I^*}\bigr)\cong
   \indlim_{\Psi_f}i_s^+j_{U!}\ms{F}^{I^*}.
 \end{equation*}
 Let $f_s\colon X\times_Ys\rightarrow s$. There exists a functor
 $\Psi_f\rightarrow\Psi_{f_s}$. This functor is cofinal by
 [EGA IV, 18.12.1]. Then by Lemma \ref{stapoduet},
 $i_s^+\tau_f\cong\tau_{f_s}$, and by the proven case where $Y$ is a
 point, $i_s^+\tau_f$ is an isomorphism. By Lemma \ref{propindcatcon}
 (i), this implies that $\tau_f$ is an isomorphism as required.
\end{proof}

\subsubsection{}
Let $f\colon X\rightarrow Y$ be a quasi-finite flat morphism between
realizable schemes. Let us construct the unique trace map
$f_!f^+K_Y\rightarrow K_Y$ satisfying (Var 1,2,3,4-I). When $f$ is
\'{e}tale, we remark that this trace map coincides with that of Lemma
\ref{stapoduet} by the uniqueness. The construction is the same as
[SGA 4, XVII, 6.2], so we only sketch.

Let $\Psi'_f$ be the full subcategory of $\Psi_f$ consisting of
$\bigl\{U;(V_i)_{i\in I}\bigr\}$ such that $V_i$ is locally free of
constant rank over $U$ for any $i\neq0$.
This category is cofinal in $\Psi_f$. For each $\bigl\{U;(V_i)_{i\in
I}\bigl\}\in\Psi'_f$, we have a homomorphism
\begin{equation*}
 \sum_{i\in I^*}\deg(V_i/U)\cdot\mr{Tr}_{j_U}\colon
 j_{U!}j^+_U(K_Y^{I^*})\rightarrow K_Y.
\end{equation*}
Since the compatibility follows by that of Lemma \ref{stapoduet}, this
homomorphism induces
\begin{equation*}
 f_!f^+K_Y\xleftarrow{\sim}
 \indlim_{\varphi\in\Psi'_f}j_{U!}j_U^+K_Y^{I^*}
 \rightarrow K_Y.
\end{equation*}
It is easy to check that this is what we are looking for.

\begin{lem}
 \label{quafintracalc}
 Let $f\colon X\rightarrow Y$ be the special fiber of a finite \'{e}tale
 morphism between smooth formal curves
 $\widetilde{f}\colon\fsch{X}\rightarrow\fsch{Y}$.
 By taking the dual of the trace map, we get $K^{\omega}_Y\rightarrow
 f_+K^{\omega}_X$. When we restrict this homomorphism to $\fsch{Y}$
 ({\it i.e.}\ taking $\Vert_{\fsch{Y}}$ of \ref{Lemmaofadjtraet}),
 this dual of trace map is nothing but the homomorphism induced by the
 adjunction homomorphism $\phi_{\widetilde{f}}\colon
 \mc{O}_{\fsch{Y},\mb{Q}}\rightarrow
 \widetilde{f}_*\mc{O}_{\fsch{X},\mb{Q}}$ with the identification
 $\widetilde{f}_*\mc{O}_{\fsch{X},\mb{Q}}\cong
 \widetilde{f}_+\mc{O}_{\fsch{X},\mb{Q}}$.
\end{lem}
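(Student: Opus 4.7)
The plan is to reduce the statement to the finite \'etale case already treated in \ref{Lemmaofadjtraet} and to propagate the resulting identity to $\fsch{Y}$ by an injectivity argument, following the template of the last paragraph of that subsection. More precisely, since $\widetilde{f}\colon\fsch{X}\rightarrow\fsch{Y}$ is finite flat between smooth formal curves, there exists an open dense formal subscheme $j\colon\fsch{U}\hookrightarrow\fsch{Y}$ such that the induced morphism $\widetilde{f}'\colon\fsch{U}':=\fsch{X}\times_{\fsch{Y}}\fsch{U}\rightarrow\fsch{U}$ is finite \'etale; write $f'\colon U'\rightarrow U$ and $j'\colon U'\hookrightarrow X$ for the morphisms on special fibers. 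From the construction of the quasi-finite flat trace map as the colimit over $\Psi'_f$ in \ref{Lemmaofadjtraet}, its formation commutes with pullback along $j$, so $j^+$ of the dual trace $K^{\omega}_Y\rightarrow f_+K^{\omega}_X$ is the dual trace for $f'$; by the unnumbered lemma in \ref{Lemmaofadjtraet} applied to $f'$, restricting further to $\fsch{U}$ identifies it with the $\mc{O}$-module adjunction $\mc{O}_{\fsch{U},\mb{Q}}\rightarrow\widetilde{f}'_*\mc{O}_{\fsch{U}',\mb{Q}}$.

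Both sides of the claimed equality, viewed via $\Vert_{\fsch{Y}}$ as morphisms $\mc{O}_{\fsch{Y},\mb{Q}}\rightarrow\widetilde{f}_+\mc{O}_{\fsch{X},\mb{Q}}$, therefore agree after restriction along $j^+$. It then suffices to check that this restriction is injective on $\mr{Hom}(\mc{O}_{\fsch{Y},\mb{Q}},\widetilde{f}_+\mc{O}_{\fsch{X},\mb{Q}})$, for which it is enough that the canonical map $\widetilde{f}_+\mc{O}_{\fsch{X},\mb{Q}}\rightarrow j_+\widetilde{f}'_+\mc{O}_{\fsch{U}',\mb{Q}}$ be injective. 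Since $\widetilde{f}$ is finite, $\widetilde{f}_+$ is exact on $\DdagQ{\fsch{Y}}$-modules, and the required injectivity reduces to the injection $\mc{O}_{\fsch{X},\mb{Q}}\hookrightarrow j'_+\mc{O}_{\fsch{U}',\mb{Q}}$ supplied by \cite[4.3.10]{BeDmod1}, exactly as at the end of \ref{Lemmaofadjtraet}. The compatibility of the canonical map $\widetilde{f}_*\mc{O}_{\fsch{X},\mb{Q}}\rightarrow\widetilde{f}_+\mc{O}_{\fsch{X},\mb{Q}}$ with restriction to $\fsch{U}$ is formal, since on the \'etale part the two pushforwards coincide.

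The main technical obstacle is verifying the first paragraph's claim that the colimit formula for the quasi-finite flat trace commutes with \'etale base change in a way that recovers the \'etale trace on the locus where $\widetilde{f}$ becomes \'etale. This reduces to two ingredients already established earlier: cofinality of the natural functor $\Psi'_f\rightarrow\Psi'_{f'}$ induced by $j$ (proven as in the argument surrounding the isomorphism $\tau_f$), and the base change compatibility of the \'etale trace supplied by Lemma \ref{stapoduet}. Granted these, the identification on $\fsch{U}$ together with the injectivity argument above yields the desired equality on all of $\fsch{Y}$.
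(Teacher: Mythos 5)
The argument breaks at the very first step. You claim that since $\widetilde{f}\colon\fsch{X}\rightarrow\fsch{Y}$ is a finite flat morphism between smooth formal curves, there is a dense open formal subscheme $\fsch{U}\subset\fsch{Y}$ over which $\widetilde{f}$ becomes finite \'etale. This is false in general. The basic counterexample is $\mr{Spf}(R\langle x\rangle)\rightarrow\mr{Spf}(R\langle y\rangle)$ with $y\mapsto x^p$: the special fiber is the Frobenius $k[x]\leftarrow k[y]$, and the relative differential module is $R\langle x\rangle/(px^{p-1})$, which remains non-zero on every open formal subscheme since $p$ is not a unit in $R$. Thus $\widetilde{f}$ is nowhere \'etale as a morphism of formal schemes. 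You may have been led astray by the sentence in the proof of the unnumbered Lemma \ref{Lemmaofadjtraet} invoking \'etale shrinking; but there the morphism $f$ is already assumed affine \emph{\'etale} (cf.\ the opening of \ref{Lemmaofadjtraet}), and shrinking is used only to make it \emph{finite} \'etale. Quasi-finite flat does not shrink to \'etale in this $p$-adic formal setting.

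The rest of your scheme (propagate the identity from a dense open via the injectivity $\mc{O}_{\fsch{X},\mb{Q}}\hookrightarrow j'_+\mc{O}_{\fsch{U}',\mb{Q}}$ and exactness of $\widetilde{f}_+$) is fine once the identity is established on the open, so the issue is genuinely the non-existence of that open. The paper sidesteps \'etale reduction entirely: it first factors $\widetilde{f}$ through a base change to handle a constant-field extension, then uses Kedlaya's full faithfulness to extend the $\mc{O}$-module adjunction $\phi_{\widetilde{f}}$ to a map $\phi\colon K^{\omega}_Y\rightarrow f_+K^{\omega}_X$, observes that the relevant Hom group is isomorphic to a field $L$ by \cite[Theorem 5.5]{A}, so $\mb{D}(\mr{Tr}_f)=c\cdot\phi$ for a unique scalar $c\in L$, and pins down $c=1$ by comparing two trace computations of the multiplication-by-$\deg(f)$ endomorphism (the dual of (Var 4-I) on one side, Virrion's trace property on the other). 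That scalar argument works without any separability or \'etale hypothesis, which is essential because the lemma is genuinely needed in purely inseparable situations.
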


\begin{proof}
 We may assume $\fsch{X}$ and $\fsch{Y}$ to be connected. Let
 $L_{\fsch{X}}$, $L_{\fsch{X}}$ be the largest finite extension of $K$
 in $\fsch{X}$, $\fsch{Y}$. If $L_{\fsch{X}}\neq L_{\fsch{Y}}$, then
 $\widetilde{f}$ factors as
 \begin{equation*}
  \fsch{X}\xrightarrow{\alpha}\fsch{Y}
   \otimes_{R_{\fsch{Y}}}R_{\fsch{X}}
   \xrightarrow{\beta}\fsch{Y},
 \end{equation*}
 where $R_\star$ denotes the ring of integers of $L_\star$, and it
 suffices to show the lemma for $\alpha$ and $\beta$ separately. The
 verification for $\beta$ is easy. Let us show that for $\alpha$. In
 this case, we let $L_{\fsch{X}}=L_{\fsch{Y}}=:L$.

 Since the formal schemes are curves, by Kedlaya's fully faithfulness
 theorem \cite{Ke}, the homomorphism
 $\mc{O}_{\fsch{Y},\mb{Q}}\rightarrow
 \widetilde{f}_+\mc{O}_{\fsch{X},\mb{Q}}$ induced by
 $\phi_{\widetilde{f}}$ extends uniquely to the homomorphism
 $\phi\colon K^{\omega}_Y\rightarrow f_+K^{\omega}_X$. We have
 \begin{equation*}
  \mr{Hom}(K^{\omega}_Y,f_+f^!K^{\omega}_Y)\cong
   \mr{Hom}(f^+K^{\omega}_Y,f^!K^{\omega}_Y)\sim
   \mr{Hom}(K^{\omega}_X,K^{\omega}_X)\cong L
 \end{equation*}
 where $\sim$ is the isomorphism induced by \cite[Theorem 5.5]{A} since
 $\fsch{X}$ and $\fsch{Y}$ are smooth. Thus, there exists
 $c\in L$ such that $c\cdot\phi=\mb{D}(\mr{Tr}_f)$. It remains to show
 that $c=1$. By definition, the composition $K_Y\rightarrow
 f_!f^+K_Y\xrightarrow{\mr{Tr}_f}K_Y$ is the multiplication by
 $n:=\deg(f)$. Take the dual of this homomorphism, and we get
 \begin{equation*}
  n\colon
  K^{\omega}_Y\xrightarrow{\mb{D}(\mr{Tr}_f)} f_+f^!K^{\omega}_Y
   \xrightarrow{\mr{Tr}^{\mr{Vir}}_f}K^{\omega}_Y,
 \end{equation*}
 where the second homomorphism is the trace map of \cite{Vir}.
 On the other hand, by property of $\mr{Tr}^{\mr{Vir}}_f$
 (cf.\ \cite[III.5.4]{Vir}), we get
 $\mr{Tr}^{\mr{Vir}}_f\circ\phi=n$. Thus $c=1$ since
 $\mr{Hom}(K^{\omega}_Y,K^{\omega}_Y)\cong L$.
\end{proof}

\subsubsection{}
\label{deftrforsmoothformprocu}
Let $f\colon\fsch{X}\rightarrow\fsch{Y}$ be a proper smooth morphism of
relative dimension $1$ between smooth proper formal schemes.
The homomorphism of rings $\mc{O}_{\fsch{Y},\mb{Q}}\rightarrow
f_*\mc{O}_{\fsch{X},\mb{Q}}$ induces the homomorphism
\begin{equation}
 \label{defofderhamhom}
 \mc{O}_{\fsch{Y},\mb{Q}}\rightarrow
  \mb{R}f_*\bigl[
  0\rightarrow\mc{O}_{\fsch{X},\mb{Q}}\rightarrow
  \Omega^1_{\fsch{X}/\fsch{Y},\mb{Q}}\rightarrow0
  \bigr]
\end{equation}
in $D(\DdagQ{\fsch{Y}})$.
Since the target of the homomorphism
is canonically isomorphic to $f_+\mc{O}_{\fsch{X},\mb{Q}}[-1]$,
we have a homomorphism $\alpha_f\colon\mc{O}_{\fsch{Y},\mb{Q}}(1)[2]
\rightarrow f_+f^!\mc{O}_{\fsch{Y},\mb{Q}}$. By \cite[3.14]{A}
and Remark \ref{fundproprealsch} (iii), this
homomorphism is compatible with Frobenius structure when $\base=F$.
This trace map only depends on the special fibers because the unit
element is sent to the unit element by ring homomorphism. Thus, we have
a homomorphism $K^{\omega}_Y(1)[2]\rightarrow f_+f^!K^{\omega}_Y$. By
construction, this homomorphism is compatible with base change, namely,
given a morphism of proper smooth formal schemes
$g\colon\fsch{Y}'\rightarrow\fsch{Y}$ such that
$d:=\dim(\fsch{Y}')-\dim(\fsch{Y})$, let
$f'\colon\fsch{X}':=\fsch{X}\times_{\fsch{Y}}\fsch{Y}'$. Then the
following diagram is commutative:
\begin{equation*}
 \xymatrix{
  \mc{O}_{\fsch{Y}',\mb{Q}}(1)[2]
  \ar[r]^-{\sim}\ar[d]_{\alpha_{f'}}&
  g^!\mc{O}_{\fsch{Y},\mb{Q}}(1)[2-d]
  \ar[d]^{g^!\alpha_f}\\
 f'_+f'^!\mc{O}_{\fsch{Y}',\mb{Q}}
  \ar[r]_-{\sim}&
 g^!f_+f^!\mc{O}_{\fsch{Y},\mb{Q}}[-d]
  }
\end{equation*}
where the horizontal homomorphisms are canonical homomorphisms.
By taking the dual, we get the {\em trace map} $\mr{Tr}_f\colon
f_!f^+K_Y(1)[2]\rightarrow K_Y$. This trace map is compatible with
pull-back $g^+$ where $g$ is a morphism between liftable proper smooth
schemes.

Let us consider the case where $\fsch{Y}$ is a point. Consider a
commutative diagram:
\begin{equation*}
 \xymatrix@R=15pt@C=20pt{
  \fsch{U}\ar@{^{(}->}[rr]\ar[rd]_-{g}&&\fsch{X}\ar[ld]^-{f}\\
  &\mr{Spf}(R)&
  }
\end{equation*}
where $\fsch{U}$ is dense open in $\fsch{X}$. Put $Z:=X\setminus U$
where $X$, $U$ are the special fibers of $\fsch{X}$, $\fsch{U}$, and
assume $Z$ is a divisor of $X$. Then we have an
injection
\begin{equation}
 \label{injrembound}
 \H^{-2}g_+g^!K\cong\H^{-1}f_+\mc{O}_{\fsch{X},\mb{Q}}(^\dag Z)
 \hookrightarrow\H^{-1}g_+\mc{O}_{\fsch{U},\mb{Q}}.
\end{equation}

\subsubsection{Proof of Theorem \ref{traceexitstate}}
\mbox{}\\
Now, we construct the trace map. We need several steps for the
construction.
\medskip

\noindent
{\bf (i) Absolute curve case:}
Let $f\colon X\rightarrow\mr{Spec}(k)$ be a realizable variety. We put
$H_{\mr{c}}^i(X):=\mr{Hom}\bigl(K, f_!(K_X)[i]\bigr)$.
Now, assume $X$ to be a curve, and let us construct the trace map.
First, let us construct the trace map when $X_{\mr{red}}$ is smooth and
irreducible. Let $\iota\colon X_{\mr{red}}\hookrightarrow X$ be the
closed immersion, and
$f'\colon\overline{X}_{\mr{red}}\rightarrow\mr{Spec}(k)$
be the smooth compactification of $X_{\mr{red}}$.
We have already defined $\mr{Tr}_{f'}$ in
\ref{deftrforsmoothformprocu}.
We define
\begin{equation*}
 \mr{Tr}_f\colon
  H_{\mr{c}}^2(X)(1)
  \xrightarrow{\mr{lg}(\mc{O}_{X,\eta})\cdot\iota^*}
  H_{\mr{c}}^2(X_{\mr{red}})(1)
  \xrightarrow{\sim}
  H_{\mr{c}}^2(\overline{X}_{\mr{red}})(1)
  \xrightarrow{\mr{Tr}_{f'}}
  K.
\end{equation*}
In general, we may take an open dense subscheme $U\subset X$ such that
$U_{\mr{red}}$ is smooth. Then we have the canonical isomorphism
$H_{\mr{c}}^2(U)\xrightarrow{\sim}H_{\mr{c}}^2(X)$ by Lemma
\ref{cohdimlem}.
Let $U=\coprod_{i\in I}U_i$ be the decomposition into connected
components. Then we define
\begin{equation*}
 \mr{Tr}_f\colon H_{\mr{c}}^2(X)(1)
  \xrightarrow{\sim}
  H_{\mr{c}}^2(U)(1)
  \cong
  \bigoplus_{i\in I}H_{\mr{c}}^2(U_i)(1)
  \xrightarrow{\sum\mr{Tr}_{f|_{U_i}}}
  K.
\end{equation*}

\begin{lem*}
 Let $X\xrightarrow{f}Y\xrightarrow{g}\mr{Spec}(k)$ be a morphism of
 realizable schemes such that $f$ is quasi-finite flat morphism and
 $g$ is of relative dimension $1$. Then we have
 \begin{equation*}
  \mr{Tr}_{g\circ f}=\mr{Tr}_g\circ g_!(\mr{Tr}_f)\colon
   (g\circ f)_!K_X(1)[2]\rightarrow K.
 \end{equation*}
\end{lem*}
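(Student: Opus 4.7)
The plan is to reduce the statement to an explicit comparison of traces on smooth proper formal curves, where both sides admit simple descriptions.

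First, I would reduce to the case where $X$ and $Y$ are smooth irreducible curves, $f$ is finite flat, and the morphism $\overline{f}\colon\overline{X}\to\overline{Y}$ between their smooth proper compactifications lifts to a finite flat morphism of smooth proper formal schemes over $\mr{Spf}(R)$. The passage to reductions uses (Var-4-I) applied to the universal homeomorphism $\iota$, as in the definition (i) of the absolute-curve trace. Shrinking $Y$ to an open dense subscheme over which $f$ is finite flat (and, after a further shrinking, $X$ is smooth) is harmless, since $\mr{Tr}_{g\circ f}$ and $\mr{Tr}_{g}$ are both defined by a shrinking-and-compactifying procedure, while $\mr{Tr}_f$ restricts compatibly by base change (Var-2). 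Once $X, Y$ are smooth irreducible curves and $f$ is finite flat, the normalization of a smooth proper formal lift of $\overline{Y}$ in the function field of $X$ provides the required $\overline{X}$ and $\overline{f}$; the existence of the lift is standard since the obstruction vanishes in $H^2$ of a curve.

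Second, under these hypotheses the three trace maps admit explicit descriptions: $\mr{Tr}_{\overline{g}}$ and $\mr{Tr}_{\overline{g}\circ\overline{f}}$ are both given by \ref{deftrforsmoothformprocu} via the de Rham complex (the composite being smooth proper of relative dimension $1$), and $\mr{Tr}_{\overline{f}}$ is, by Lemma \ref{quafintracalc}, dual to the map induced by the adjunction $\mc{O}_{\overline{Y},\mb{Q}}\to\overline{f}_*\mc{O}_{\overline{X},\mb{Q}}$ followed by the canonical arrow $\overline{f}_*\to\overline{f}_+$. Dualizing the claimed identity, the assertion reduces to the commutativity of
\begin{equation*}
 \xymatrix@C=40pt{
  \mc{O}_{\mr{Spf}(R),\mb{Q}}\ar[r]\ar[rd]&
  \overline{g}_+\mc{O}_{\overline{Y},\mb{Q}}[-1]\ar[d]\\
  &(\overline{g}\circ\overline{f})_+\mc{O}_{\overline{X},\mb{Q}}[-1]
 }
\end{equation*}
where the horizontal and diagonal arrows are the de Rham maps \ref{defofderhamhom} sending $1\mapsto 1$, and the vertical arrow is induced by the unit-preserving adjunction $\mc{O}_{\overline{Y}}\to\overline{f}_*\mc{O}_{\overline{X}}$. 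The commutativity is then immediate by construction, since both paths send the unit section to the unit section in $(\overline{g}\circ\overline{f})_*\mc{O}_{\overline{X},\mb{Q}}$; Frobenius compatibility when $\base=0$ follows from \cite[3.14]{A} as invoked in \ref{deftrforsmoothformprocu}.

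The principal obstacle is step one, specifically the coherence of the various shrinkings used to define the two sides. Shrinking $Y$ by an open immersion $V\hookrightarrow Y$ corresponds to replacing $X$ by $f^{-1}(V)$; for $\mr{Tr}_f$ this is handled by (Var-2), and for $\mr{Tr}_{g\circ f}$ by the fact that $f^{-1}(V)\hookrightarrow X$ is open dense, which is exactly the kind of shrinking allowed in the definition of the absolute-curve trace (i). One further subtlety is that the non-smooth locus of $X$ over a smooth $Y$ may {\em a priori} dominate $Y$, but this is ruled out by passing to the reduced scheme and then shrinking $Y$ further; once this coherence is established, the remainder of the argument is routine.
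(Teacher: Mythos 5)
The central gap is your reduction to a finite flat morphism of \emph{smooth proper} formal schemes. You assert that the lift of $\overline{f}\colon\overline{X}\to\overline{Y}$ is ``standard since the obstruction vanishes in $H^2$ of a curve,'' but this misidentifies the obstruction. Deforming $\overline{X}$ as a flat $\widetilde{\overline{Y}}$-scheme is governed by $\mr{Ext}^2_{\mc{O}_{\overline{X}}}(L_{\overline{X}/\overline{Y}}, -)$, not by $H^2$ of $\overline{X}$; checking its vanishing already requires analyzing $\ms{E}xt^1$ of the cotangent complex and the local-to-global spectral sequence, and the outcome differs between the generically \'{e}tale case (where $\ms{E}xt^1$ is torsion) and the case where $f$ has purely inseparable factors (where $\ms{E}xt^1$ can have generic rank $\geq1$ and $H^1$ may be nonzero). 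Moreover, your concrete construction --- ``the normalization of a smooth proper formal lift of $\overline{Y}$ in the function field of $X$'' --- does not type-check: $k(X)$ is a field in characteristic $p$, not an extension of the function field of the generic fiber of the lift, so you would first have to \emph{choose} a lift of the field extension, which is the very problem at issue, and then separately verify that the normalization is smooth with special fiber $\overline{X}$. Nothing in your proposal addresses these points, and lifting morphisms of proper curves to mixed characteristic is a genuinely delicate subject (it is exactly the ambient territory of the Oort-type lifting problems).

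The paper's proof sidesteps all of this. Following [SGA 4, XVIII, 1.1.5], one reduces to $X$, $Y$ connected, smooth, \emph{affine}, and then takes smooth affine formal lifts $\fsch{X}\to\fsch{Y}\to\mr{Spf}(R)$ with $\widetilde{f}$ finite flat --- a reduction that is unobstructed precisely because everything is affine. The two absolute-curve trace maps are still defined via compactifications, but the injection (\ref{injrembound}), namely
\begin{equation*}
 \H^{-2}g_+g^!K \hookrightarrow \H^{-1}g_+\mc{O}_{\fsch{U},\mb{Q}},
\end{equation*}
shows that the (dualized) trace maps are determined by their images in the de Rham cohomology of the \emph{open} formal scheme $\fsch{U}$. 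So it suffices to verify the identity after ``removing the boundary,'' where both absolute traces are given by the de Rham map (\ref{defofderhamhom}) and $\mr{Tr}_f$ is the adjunction on $\mc{O}$-modules by Lemma \ref{quafintracalc}, and the diagram commutes because everything sends $1\mapsto1$. Your step two is essentially this final computation, but without the reduction via (\ref{injrembound}) you do not have access to it unless you have built a proper smooth formal lift --- which is exactly the unjustified step. To repair your argument, replace the proper lift by an affine lift and invoke the injectivity of (\ref{injrembound}) to restrict attention to the affine part; this is precisely what the paper does.
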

\begin{proof}
 Arguing as [SGA 4, XVIII, 1.1.5], we may assume
 that $X$ and $Y$ are connected smooth affine, and $f$ factors as
 $X\xrightarrow{F}X'\xrightarrow{f'}Y$ where $F$ is
 an iterated relative Frobenius and $f'$ is a finite \'{e}tale
 morphism. It suffices to check the equality for $F$ and $f'$
 individually. For $F$, the claim follows since $\mr{Tr}_{g\circ f}$ is
 compatible with Frobenius structure. It remains to check the lemma when
 $f$ is finite and \'{e}tale. Since $X$ and $Y$ are assumed to be smooth
 and affine, there exist smooth liftings
 $\fsch{X}\xrightarrow{\widetilde{f}}\fsch{Y}\rightarrow\mr{Spf}(R)$
 such that $\widetilde{f}$ is finite flat. In this case, it suffices to
 check the transitivity after removing the boundary by the injectivity
 of (\ref{injrembound}), and the lemma follows by Lemma
 \ref{quafintracalc} and the definition of (\ref{defofderhamhom}).
\end{proof}
\medskip

\noindent
{\bf (ii) Relative affine space case:}
Let $Y$ be a realizable scheme, and consider the projection
$f\colon X:=\mb{P}^1_Y\rightarrow Y$. There exists a proper smooth
formal scheme $\fsch{P}$ such that $Y\hookrightarrow\fsch{P}$. Then $f$
can be lifted to the following cartesian diagram:
\begin{equation*}
 \xymatrix{
  \mb{P}^1_Y\ar@{^{(}->}[r]\ar[d]_{f}\ar@{}[rd]|\square&
  \widehat{\mb{P}}^1_{\fsch{P}}\ar[d]^{\widetilde{f}}\\
 Y\ar@{^{(}->}[r]_i&\fsch{P}.
  }
\end{equation*}
We define the trace map
\begin{equation*}
 \mr{Tr}_f\colon f_!f^+K_Y(1)[2]\cong i^+\widetilde{f}_!
  \widetilde{f}^+K_{\fsch{P}}(1)[2]\xrightarrow{i^+\mr{Tr}_{\widetilde{f}}}
  i^+K_{\fsch{P}}\cong K_Y,
\end{equation*}
where $\mr{Tr}_{\widetilde{f}}$ is the one defined
in \ref{deftrforsmoothformprocu}. This map does not depend on the choice
of $\fsch{P}$ by the base change property of $\mr{Tr}_{\widetilde{f}}$.

When $f\colon \mb{A}^1_X\rightarrow X$ is the projection, then we have
the factorization $\mb{A}^1_X\xrightarrow{j}\mb{P}^1_X\xrightarrow{p}X$,
and the trace map $\mr{Tr}_f$ is defined to be the composition
$\mr{Tr}_p\circ p_!(\mr{Tr}_j)$. The base change property can be checked
by the base change property for $j$ and $p$.
Now, let $f\colon X:=\mb{A}^d_Y\rightarrow Y$. In this case,
we define by iteration as in [SGA 4, XVIII, 2.8].
\medskip

\noindent
{\bf (iii) Factorization case:}
Let $f\colon X\rightarrow Y$ be a morphism which possesses a
factorization $X\xrightarrow{u}\mb{A}^d_Y\xrightarrow{a^d}Y$ such that
$u$ is a quasi-finite flat morphism. Then we define
$t(f,u):=\mr{Tr}_{a^d}\circ a^d_!(\mr{Tr}_u)$. We need to check that
$t(f,u)$ does not depend on the choice of the factorization. By using
the lemma in (i), the verification is the same as [SGA 4, XVIII, 2.9 b)].
\medskip

\noindent
{\bf (iv) General case:}
The construction is the same as [{\it ibid.}, 2.9 c), d), e)]. We sketch
the construction. When $f$ is a Cohen-Macaulay morphism, then there
exists a finite covering of $\{U_i\}$ of $X$ such that the compositions
$U_i\rightarrow X\rightarrow Y$ possess factorizations considered in
case (iii). By gluing lemma \ref{glueindimd}, we have the trace map in
this case. In general, we shrink $X$ suitably, so that $f$ is
Cohen-Macaulay.
Thus the trace map is constructed, and we conclude the proof of Theorem
\ref{traceexitstate}.\qed

\begin{thm}[Poincar\'{e} duality]
 \label{Poindual}
 Let $X\rightarrow Y$ be a smooth morphism of relative dimension $d$
 between realizable schemes. Then for $\ms{F}\in
 D^{\mr{b}}_{\mr{hol}}(Y)$, the adjoint of the trace map
 $\phi_{\ms{F}}\colon f^+\ms{F}(d)[2d]\rightarrow f^!\ms{F}$ is an
 isomorphism.
\end{thm}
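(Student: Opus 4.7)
The proof follows the classical pattern of [SGA 4, XVIII 3]. The formal properties (Var 1)--(Var 5) of the trace map proved in Theorem \ref{traceexitstate} allow us to reduce to a computation on a proper smooth formal curve, where Virrion's duality and \cite[5.5]{A} apply.

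\textbf{Step 1 (reduction to $\ms{F}=K_Y$).} By (Var 5), for any $\ms{F}\in D^{\mr{b}}_{\mr{hol}}(Y)$ the trace $\mr{Tr}_f$ on $\ms{F}$ is obtained from $\mr{Tr}_f$ on $K_Y$ by tensoring with $\mr{id}_{\ms{F}}$ via the projection formula of \ref{fundproprealsch}.\ref{projformulas}. Taking adjoints and using $f^{!}\shom(\ms{F},\ms{G})\cong\shom(f^{+}\ms{F},f^{!}\ms{G})$ from \textit{loc.~cit.}, one checks that $\phi_{\ms{F}}$ factors as
\[
f^{+}\ms{F}(d)[2d]\cong f^{+}K_{Y}(d)[2d]\otimes f^{+}\ms{F}
\xrightarrow{\phi_{K_{Y}}\otimes\mr{id}}
f^{!}K_{Y}\otimes f^{+}\ms{F}\longrightarrow f^{!}\ms{F},
\]
where the last arrow is the canonical map induced by the projection formula and the counit $f_{!}f^{!}\to\mr{id}$. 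By a parallel argument, it suffices to treat $\ms{F}=K_{Y}$.

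\textbf{Step 2 (locality and reduction to a projection).} Smooth base change (Corollary \ref{smbcforopenimm}) together with the base change property of the trace (Var 2), rephrased as the second diagram of (\ref{commuequivBC}), show that the assertion is Zariski local on both $X$ and $Y$. We may thus assume $f$ factors as an étale map $u\colon X\to\mb{A}^{d}_{Y}$ followed by the projection $p\colon\mb{A}^{d}_{Y}\to Y$. Applying the adjoint of (Var 3), the map $\phi_{f}$ is the composition $\phi_{p}\circ u^{+}(\phi_{u})$ up to shifts and twists; hence it suffices to check $\phi_{u}$ and $\phi_{p}$ separately. The étale case is covered by Lemma \ref{stapoduet}. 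A further application of (Var 3) to $p = p_{1}\circ\dots\circ p_{1}$ reduces the remaining case to $p\colon\mb{A}^{1}_{Y}\to Y$.

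\textbf{Step 3 (the key case $p\colon\mb{A}^{1}_{Y}\to Y$).} Choose an immersion $i\colon Y\hookrightarrow\fsch{P}$ into a proper smooth formal scheme and consider the cartesian square
\[
\xymatrix{
\mb{A}^{1}_{Y}\ar@{^{(}->}[r]\ar[d]_{p}\ar@{}[rd]|\square&
\widehat{\mb{A}}^{1}_{\fsch{P}}\ar@{^{(}->}[r]^{j}\ar[d]&
\widehat{\mb{P}}^{1}_{\fsch{P}}\ar[d]^{\widetilde{p}}\\
Y\ar@{^{(}->}[r]_-{i}&\fsch{P}\ar@{=}[r]&\fsch{P}
}
\]
used in part (ii) of \ref{traceexitstate}'s proof. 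By (Var 2), $\phi_{p}^{K_{Y}}$ is the restriction along $i$ of $j^{+}(\phi_{\widetilde{p}}^{K_{\fsch{P}}})$ composed with $\phi_{j}$. Since $j$ is an open immersion (hence étale), $\phi_{j}$ is an isomorphism by Lemma \ref{stapoduet}, and we are reduced to proving that $\phi_{\widetilde{p}}^{K_{\fsch{P}}}\colon\widetilde{p}^{+}K_{\fsch{P}}(1)[2]\to\widetilde{p}^{!}K_{\fsch{P}}$ is an isomorphism. By construction \ref{deftrforsmoothformprocu}, the dual of $\phi_{\widetilde{p}}^{K_{\fsch{P}}}$ is the adjoint of the relative de Rham map $\alpha_{\widetilde{p}}\colon\mc{O}_{\fsch{P},\mb{Q}}(1)[2]\to\widetilde{p}_{+}\widetilde{p}^{!}\mc{O}_{\fsch{P},\mb{Q}}$ sending $1$ to $1$; this is the classical relative Poincar\'{e} duality morphism for $\DdagQ{}$-modules, hence an isomorphism by \cite[5.5]{A} applied to $\widetilde{p}$.

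\textbf{Main obstacle.} The only non-formal input is Step 3, specifically identifying the adjoint of $\alpha_{\widetilde{p}}$ with the existing Poincar\'{e} duality isomorphism of \cite[5.5]{A}. Once this identification is made, the remaining reductions are purely formal consequences of the axioms (Var 1)--(Var 5), precisely the motivation for the axiomatic construction of the trace in this subsection.
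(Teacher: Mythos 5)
Your reduction scheme has two genuine gaps, both of which the paper is careful to avoid.

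\textbf{Step 1 is circular.} You assert that $\phi_{\ms{F}}$ factors through $f^{!}K_{Y}\otimes f^{+}\ms{F}\to f^{!}\ms{F}$ and that ``by a parallel argument'' one may assume $\ms{F}=K_{Y}$. But for this reduction to work, the last arrow would have to be an isomorphism, and once you substitute $f^{!}K_{Y}\cong f^{+}K_{Y}(d)[2d]$ (which is what you hope to prove), that arrow becomes exactly $\phi_{\ms{F}}$ again. In the $\ell$-adic setting the projection-formula map $f^{!}\Lambda\otimes f^{*}\ms{F}\to f^{!}\ms{F}$ being an isomorphism for smooth $f$ \emph{is} Poincar\'{e} duality; it is not available as an independent input here. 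The paper circumvents this entirely: it never reduces to $K_{Y}$, but instead reduces to $\ms{F}$ irreducible (and even geometrically irreducible after a base extension), then shows $f^{+}\ms{F}$ is irreducible. Since $\mr{Hom}(f^{+}\ms{F}(d)[2d],f^{!}\ms{F})\cong\mr{End}(f^{+}\ms{F})$ is a division algebra (using the abstract isomorphism of \cite[Theorem 5.5]{A}), one only needs that $\phi_{\ms{F}}$ is nonzero, which follows by (Var 2) upon restriction to a point.

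\textbf{Step 3 identifies two morphisms without justification.} You claim the adjoint of $\alpha_{\widetilde{p}}$ coincides with the duality isomorphism of \cite[5.5]{A}, calling this the ``only non-formal input''. But that identification is exactly the non-trivial point that the paper flags: the proof explicitly remarks that \cite[5.5]{A} gives \emph{some} isomorphism $f^{+}\ms{F}(d)[2d]\sim f^{!}\ms{F}$ ``which may not be the same as the one defined by the trace map''. Checking that the two coincide would require unwinding the $\ms{D}$-module duality construction, which is not done in this subsection and is not a purely formal consequence of the axioms. The irreducibility argument is precisely the device that lets the paper conclude while bypassing this comparison: since the $\mr{Hom}$-space is a division ring, $\phi_{\ms{F}}$ is an isomorphism once it is nonzero, regardless of whether it equals the abstract isomorphism. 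Your Step 2 (locality, reduction to \'{e}tale and $\mb{A}^{1}_{Y}\to Y$) agrees with the paper's first reduction; the divergence, and the gaps, begin from there.
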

\begin{proof}
 Since the verification is local on $X$, it suffices to treat the case
 where $f$ is \'{e}tale and is the projection $\mb{A}^1_Y\rightarrow Y$
 separately. The \'{e}tale case has already been treated in Lemma
 \ref{stapoduet}.

 Let us treat the projection case. We may shrink $Y$. Then we can embed
 $Y$ into a proper smooth formal scheme. By using \cite[Theorem 5.5]{A},
 we have an isomorphism $f^+\ms{F}(d)[2d]\sim f^!\ms{F}$, where $\sim$
 is the isomorphism induced by {\it ibid.}\ and may
 not be the same as the one defined by the trace map.
 It suffices to show the theorem for $\ms{F}\in\mr{Hol}(Y)$. For
 this, we may assume $\ms{F}$ to be irreducible. Let $k'$ be a finite
 extension of $k$. It suffices to show that $\phi_{\ms{F}}$ is an
 isomorphism after pulling-back to $X\otimes_kk'$. Thus, we may assume
 moreover that $\ms{F}$ is irreducible also on $X\otimes_kk'$ for any
 extension $k'$ of $k$. For a closed point $a\in\mb{A}^1$, we denote
 $i_a\colon Y\otimes_kk(a)\rightarrow\mb{A}^1_Y$ the closed immersion
 defined by $a$. We claim that $f^+(\ms{F})$ is irreducible. Indeed,
 first, let us assume $Y$ is smooth and $\ms{F}$ is smooth. Assume
 $f^+(\ms{F})$ were not irreducible. Then there would exist a smooth
 object $\ms{N}\subset f^+(\ms{F})$ and a closed point $a$ of
 $\mb{A}^1(\overline{k})$ such that $i_a^+\ms{N}$ and
 $i_a^+\bigl(f^+(\ms{F})/\ms{N}\bigr)$ are non-zero. This is a
 contradiction. In general, $\ms{F}$ can be written as $j_{!+}$ of a
 smooth irreducible object by \cite[1.4.9]{AC} where $j$ is an open
 immersion of $Y$. Since $f$ is smooth, $f^+$ and $j_{!+}$ commute, and
 $f^+(\ms{F})$ is irreducible.

 Now, we know that
 \begin{equation*}
  \mr{Hom}(f^+\ms{F}(d)[2d],f^!\ms{F})\sim
   \mr{Hom}(f^+\ms{F}(d)[2d],f^+\ms{F}(d)[2d]).
 \end{equation*}
 Since $f^+\ms{F}$ is irreducible, the Hom group is a division algebra,
 and it remains to show that $\phi_{\ms{F}}$ is not $0$. For this, it
 suffices to check that the trace map
 $f_!f^+\ms{F}(d)[2d]\rightarrow\ms{F}$ is non-zero. By base change
 property of trace map, we may assume $Y$ to be a point, in which case,
 the trace map is non-zero by construction.
\end{proof}

\begin{cor*}
 Let $f\colon X\rightarrow Y$ be a flat morphism of relative dimension
 $d$ between smooth realizable schemes. Then the adjoint of trace map
 $f^+K_Y(d)[2d]\rightarrow f^!K_Y$ is an isomorphism.
\end{cor*}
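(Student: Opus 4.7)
The idea is to reduce to cases already handled: smooth morphisms (Theorem \ref{Poindual}), \'{e}tale morphisms (Lemma \ref{stapoduet}), and universal homeomorphisms (\ref{firstproptraceexis}.\ref{univhomeodet}). Since the question is Zariski-local on $X$, I will factor $f = a\circ u$ with $a\colon\mb{A}^d_Y\to Y$ the smooth projection and $u\colon X\to\mb{A}^d_Y$ quasi-finite flat; this factorization exists since $f$ is Cohen--Macaulay (being flat between smooth schemes), as in step (iii) of the construction of the trace. By the transitivity (Var 3), after taking adjoints, $\phi_f$ decomposes, via the canonical identifications $f^+\cong u^+a^+$ and $f^!\cong u^!a^!$, as the composite $u^+a^+K_Y(d)[2d]\xrightarrow{u^+(\phi_a)}u^+a^!K_Y=u^+K_{\mb{A}^d_Y}\xrightarrow{\phi_u}u^!K_{\mb{A}^d_Y}\cong f^!K_Y$. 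Since $a$ is smooth, $\phi_a$ is an isomorphism by Theorem \ref{Poindual}, and since $\mb{A}^d_Y$ is smooth, the corollary reduces to the case $d=0$: for every quasi-finite flat morphism $u\colon X\to Z$ between smooth schemes, $\phi_u\colon u^+K_Z\to u^!K_Z$ is an isomorphism.

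For this reduced case, I will shrink $X$ further so that $u$ is finite locally free, and assume $X$ irreducible. The extension of function fields $k(Z)\hookrightarrow k(X)$ factors through its separable closure, giving on a Zariski dense open $V\subset X$ a factorization $u|_V=w\circ v$ with $v$ a universal homeomorphism (purely inseparable part) and $w$ \'{e}tale. Over $V$, transitivity reduces $\phi_u|_V$ to an iso via Lemma \ref{stapoduet} for $w$ and via \ref{firstproptraceexis}.\ref{univhomeodet} for $v$, the latter because the degree of $v$, a power of $p$, is invertible in the $K$-linear category. To extend from $V$ to all of $X$, I will apply Lemma \ref{consttstrpro}(i): it suffices to check that $i_x^+(\phi_u)$ is an isomorphism for every closed $x\in X$. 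Using the base change (Var 2), the commuting square (\ref{commuequivBC}), and the fact that $u$ is proper, so that base change for $u^!$ along the closed immersion $i_{u(x)}$ is an iso (obtained via adjunction from proper base change \ref{fundproprealsch}.\ref{basechangeprop}), $i_x^+(\phi_u)$ is identified with the adjoint of the trace for the finite morphism $u_{u(x)}\colon X_{u(x)}\to\{u(x)\}$ over a point, and the latter is handled by the same \'{e}tale/radicial decomposition as above.

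The principal obstacle is the bookkeeping of base change at the non-smooth immersion $i_{u(x)}$: one must carefully chase through diagram (\ref{commuequivBC}) to verify that $i_x^+(\phi_u)$ genuinely corresponds to the adjoint of the trace on the fiber. Once this identification is in place, the resulting problem over a point is elementary, and the corollary follows by the generic-to-global propagation described above, paralleling the end of the proof of Theorem \ref{Poindual}.
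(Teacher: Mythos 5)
Your proposal takes a genuinely different route from the paper's, and it contains a gap at the key step you yourself flag as the principal obstacle.

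The paper's proof is a one-liner, and the idea behind it is considerably more economical than your plan. Since $X$ and $Y$ are both smooth over $\mr{Spec}(k)$, the structural morphisms $p_X$ and $p_Y$ are smooth, so Theorem~\ref{Poindual} already tells us that $\phi_{p_X}\colon p_X^+K(n)[2n]\to p_X^!K$ and $\phi_{p_Y}\colon p_Y^+K(m)[2m]\to p_Y^!K$ are isomorphisms (with $n=\dim X$, $m=\dim Y$). By (Var~3), $\mr{Tr}_{p_X}$ factors through $\mr{Tr}_f$ and $\mr{Tr}_{p_Y}$, and after passing to adjoints this writes $\phi_{p_X}$, via the canonical identifications $p_X^+ = f^+p_Y^+$ and $p_X^! = f^!p_Y^!$, as the composite of $f^+(\phi_{p_Y})$ and $\phi_f(p_Y^!K)$. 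Hence $\phi_f$ evaluated on $K_Y^\omega = p_Y^!K$ is an isomorphism; since $K_Y^\omega\cong K_Y(m)[2m]$ (Poincar\'e duality again) and $\phi_f$ is compatible with tensoring by (Var~5), $\phi_f(K_Y)$ is also an isomorphism. No local factorization of $f$, no restriction to fibres, no appeal to the \'etale/radicial decomposition is needed.

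The gap in your proposal is the base-change step. After reducing to a quasi-finite flat $u\colon X\to Z$, you want to check that $i_x^+(\phi_u)$ is an isomorphism for each closed $x$, and for that you assert that the base change $i'^+u^!\to u_{u(x)}^!\,i_{u(x)}^+$ along the closed immersion $i_{u(x)}$ is an isomorphism "obtained via adjunction from proper base change." This does not follow. Proper base change gives $i_{u(x)}^+u_!\cong u_{u(x)!}\,i'^+$, and the adjoint of that isomorphism is $u^!i_{u(x)+}\cong i'_+u_{u(x)}^!$ — not the isomorphism you need. The paper does carry out the analogous reduction to fibres in the proof of Lemma~\ref{stapoduet}, but there it is crucial that $f$ is \'etale: the required isomorphism $i_s'^+f^!\cong f_s^!i_s^+$ is deduced by applying duality to the \emph{smooth} base change isomorphism of Lemma~\ref{smbcforopenimm}, which only applies when the morphism in the role of $f$ is smooth. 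For a finite flat but non-\'etale $u$, neither smooth base change nor a formal adjunction of proper base change gives you what you want — in fact the content of the required base-change isomorphism is close to being equivalent to the corollary itself, so one has to be careful not to argue in a circle. (As a secondary point, Lemma~\ref{consttstrpro}(i) is a statement about objects of $\mr{Con}(X)$; to conclude that a morphism in $D^{\mr{b}}_{\mr{hol}}$ is an isomorphism by checking $i_x^+$ at closed points, the correct reference is \cite[1.3.11]{AC}, which is what the paper itself invokes at the end of the proof of Lemma~\ref{stapoduet}.)
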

\begin{proof}
 This follows by the transitivity of trace map and the Poincar\'{e}
 duality for both $X$ and $Y$.
\end{proof}

\subsubsection{}
\label{purityforrealsch}
Let $i\colon Z\hookrightarrow X$ be a closed immersion of codimension
$c$ between smooth realizable schemes. By using the Poincar\'{e}
duality, we have a canonical isomorphism
$i^+K^{\omega}_X(-c)[-2c]\xrightarrow{\sim}i^!K^{\omega}_X$.
Let us denote by $(-)\widetilde{\otimes}(-):=\mb{D}
\bigl(\mb{D}(-)\otimes\mb{D}(-)\bigr)$. The projection formula yields
the homomorphism
$i^+\bigl(\ms{N}\widetilde{\otimes}\ms{M}\bigr)\rightarrow
i^+(\ms{N})\widetilde{\otimes}i^!(\ms{M})$ for $\ms{M}$, $\ms{N}$ in
$D^{\mr{b}}_{\mr{hol}}(X/K)$. Using this homomorphism, we get a
homomorphism
\begin{align}
 \notag
 i^+(\ms{M})(-c)[-2c]&\cong
  i^+\bigl(K_X^{\omega}\widetilde{\otimes}\ms{M}\bigr)(-c)[-2c]
  \rightarrow i^+(K_X^{\omega})(-c)[-2c]\widetilde{\otimes}i^!(\ms{M})\\
 \label{purityhomdef}
  &\xrightarrow{\sim} i^!K_X^{\omega}\widetilde{\otimes} i^!(\ms{M})
  \cong i^!(\ms{M}).
\end{align}

\begin{thm*}
 If $\ms{M}$ is an smooth, then the canonical homomorphism
 (\ref{purityhomdef}) is an isomorphism.
\end{thm*}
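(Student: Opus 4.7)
The homomorphism (\ref{purityhomdef}) is a composition of three maps, of which the first and third are tautological identifications via $K_X^\omega\widetilde\otimes(-)\cong\mr{id}$, and the middle isomorphism $i^+(K_X^\omega)(-c)[-2c]\xrightarrow{\sim}i^!(K_X^\omega)$ is the purity isomorphism provided by the Corollary to Theorem \ref{Poindual} applied to the structural morphisms $X\to\mr{Spec}(k)$ and $Z\to\mr{Spec}(k)$ together with $i^!K_X^\omega\cong K_Z^\omega$. The only map in the composition whose isomorphy is not formal is the dualized projection-formula morphism
\[
\psi_{\ms{M}}\colon i^+\bigl(K_X^\omega\widetilde\otimes\ms{M}\bigr)\longrightarrow i^+(K_X^\omega)\widetilde\otimes i^!(\ms{M}),
\]
obtained from the projection formula of \ref{fundproprealsch}.\ref{projformulas}, so the theorem reduces to showing that $\psi_{\ms{M}}$ is an isomorphism whenever $\ms{M}$ is smooth.

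The statement is Zariski local on $X$, so I may assume that $X$ is affine and admits a lift to a smooth affine formal scheme $\fsch{X}$ over $\mr{Spf}(R)$, that $Z$ lifts compatibly to a smooth closed formal subscheme $\fsch{Z}\subset\fsch{X}$ of codimension $c$, and (after possibly shrinking $X$ further) that $\ms{M}=\mr{sp}_+(M)[-\dim X]$ for an overconvergent isocrystal $M$ whose underlying $\mc{O}_{\fsch{X},\mb{Q}}$-module is finite locally free (cf.\ \ref{fundproprealsch}.\ref{carodagdagcat}). In this local setting, $\ms{M}$ is $\mc{O}$-flat, so tensoring with $\ms{M}$ is exact and commutes, up to canonical isomorphism, with both $i^+$ and $i^!$. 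Unwinding the construction of $\psi_{\ms{M}}$ then shows that $\psi_{\ms{M}}$ is identified with $\psi_{K_X}\otimes\mr{id}_{\ms{M}}$, and the case $\ms{M}=K_X$ is exactly the Corollary to Theorem \ref{Poindual}, where $\psi_{K_X}$ is the identity.

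The main technical obstacle lies in the second paragraph: verifying that tensor product with an $\mc{O}$-coherent module with integrable connection commutes with $i^+$ and $i^!$ up to the projection-formula morphism, and that this compatibility identifies $\psi_{\ms{M}}$ with $\psi_{K_X}\otimes\mr{id}_{\ms{M}}$. This is a standard compatibility in classical $\ms{D}$-module theory, but its arithmetic analogue requires unpacking the definitions at the level of $\DdagQ{\fsch{X}}$-modules (using, for instance, \cite[4.3.12]{BeDmod1}) and carefully checking that the natural morphisms arising from the six-functor formalism match those produced by the explicit isocrystal description via $\mr{sp}_+$. Once these compatibilities are in place, the proof is reduced to the previously established purity isomorphism for $K_X^\omega$ and the theorem follows.
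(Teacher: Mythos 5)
Your reduction of the theorem to showing that the projection-formula map $\psi_{\ms{M}}\colon i^+(K_X^\omega\widetilde\otimes\ms{M})\rightarrow i^+(K_X^\omega)\widetilde\otimes i^!(\ms{M})$ is an isomorphism, and the plan to work Zariski-locally with smooth formal lifts, are sound and match the opening move of the paper's proof. But the proposal then asserts the essential content without establishing it: the claim that tensoring with an $\mc{O}$-flat smooth $\ms{M}$ "commutes, up to canonical isomorphism, with both $i^+$ and $i^!$" is trivial for $i^+$ but \emph{is} precisely the theorem for $i^!$, and your closing paragraph concedes that this "requires unpacking the definitions at the level of $\DdagQ{\fsch{X}}$-modules" and "carefully checking" the compatibility — checks you do not carry out. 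Likewise the identification $\psi_{\ms{M}}\cong\psi_{K_X}\otimes\mr{id}_{\ms{M}}$ is stated, not verified, and rests on exactly the commutation you have not proven. As written this is an outline, not a proof.

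The paper closes the gap by a genuinely different route. After the same reduction (proved, moreover, for an arbitrary $\ms{N}$ in place of $K_X^\omega$), it observes that since $i_+$ is conservative it suffices to show that $\rho\colon i_+i^+(\ms{N}\widetilde\otimes\ms{M})\rightarrow i_+i^+(\ms{N})\widetilde\otimes\ms{M}$ is an isomorphism, characterizes $\rho$ as the unique map fitting into a triangle through $\ms{N}\widetilde\otimes\ms{M}$ via $\alpha=\mr{adj}_i$ and $\beta=\mr{adj}_i\otimes\mr{id}$, and then \emph{dualizes}: the $\mc{O}$-coherence of $\ms{M}$ enters only through the local isomorphism $\mb{D}(\ms{N}\otimes_{\mc{O}_{\fsch{X}}}\ms{M})\cong\mb{D}(\ms{N})\otimes_{\mc{O}_{\fsch{X}}}\ms{M}^\vee$, proved at the level of $\mb{R}\shom_{\ms{D}_{\fsch{X}}}(-,\ms{D}_{\fsch{X}}\otimes\omega_{\fsch{X}}^{-1})$, which is far more tractable than a direct commutation statement for $i^!$. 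Combined with a commutativity check of the duality square (\cite[2.2.7]{Casur}), the dual of $\rho$ is exhibited as an isomorphism. To make your direct approach rigorous you would in effect have to re-derive this local duality statement; the detour through $\mb{D}$ and $i_+$-conservativity is what converts the hard commutation into a formal one, and that is the step your proposal is missing.
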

\begin{proof}
 It suffices to show that when $\ms{M}$ is a smooth holonomic module,
 the canonical homomorphism
 $i^+\bigl(\ms{N}\widetilde{\otimes}\ms{M}\bigr)\rightarrow
 i^+(\ms{N})\widetilde{\otimes}i^!(\ms{M})$ is an isomorphism for any
 $\ms{N}\in D^{\mr{b}}_{\mr{hol}}(X/K)$. Since
 $i_+$ is conservative, it suffices to show that the homomorphism
 \begin{equation*}
  \rho\colon
   i_+i^+\bigl(\ms{N}\widetilde{\otimes}\ms{M}\bigr)\rightarrow
   i_+\bigl(i^+(\ms{N})\widetilde{\otimes}i^!(\ms{M})\bigr)\cong
   i_+i^+(\ms{N})\widetilde{\otimes}\ms{M}
 \end{equation*}
 is an isomorphism. By definition, this is the unique homomorphism which
 makes the following diagram commutative:
 \begin{equation*}
  \xymatrix@R=3pt@C=50pt{
   &i_+i^+\bigl(\ms{N}\widetilde{\otimes}\ms{M}\bigr)\ar[dd]^{\rho}\\
  \ms{N}\widetilde{\otimes}\ms{M}
   \ar[rd]_-{\beta}
   \ar[ur]^-{\alpha}&\\
  &i_+i^+(\ms{N})\widetilde{\otimes}\ms{M}
   }
 \end{equation*}
 where $\alpha:=\mr{adj}_i$ and $\beta:=\mr{adj}_{i}\otimes\mr{id}$.
 Now, since the verification is local, we may assume that $Z$ and $X$
 can be lifted to smooth formal schemes $\fsch{Z}$ and $\fsch{X}$.
 It suffices to show the claim after removing the boundaries by
 \cite[4.3.10]{BeDmod1}. In this situation, recall that
 $(-)\widetilde{\otimes}(-)\cong(-)\otimes^\dag_{\mc{O}_{\fsch{X},\mb{Q}}}
 (-)[-\dim(\fsch{X})]$ (cf.\ \cite[1.1.6]{AC}). Since $\ms{M}$ is a
 coherent $\mc{O}_{\fsch{X},\mb{Q}}$-module, we have a canonical
 isomorphism
 \begin{equation*}
  \mb{R}\shom_{\ms{D}_{\fsch{X}}}
   \bigl(\ms{N}\otimes_{\mc{O}_{\fsch{X}}}
   \ms{M},\ms{D}_{\fsch{X}}\otimes\omega_{\fsch{X}}^{-1}\bigr)
   \cong
  \mb{R}\shom_{\ms{D}_{\fsch{X}}}
  (\ms{N},\ms{D}_{\fsch{X}}\otimes\omega_{\fsch{X}}^{-1})
  \otimes_{\mc{O}_{\fsch{X}}}\ms{M}^\vee,
 \end{equation*}
 where $\ms{D}_{\fsch{X}}$ denotes $\DdagQ{\fsch{X}}$.
 This yields an isomorphism
 $\gamma\colon\mb{D}(\ms{N}\otimes_{\mc{O}_{\fsch{X}}}
 \ms{M})\cong\mb{D}(\ms{N})\otimes_{\mc{O}_{\fsch{X}}}\ms{M}^\vee$.
 Consider the following diagram:
 \begin{equation*}
  \xymatrix@R=15pt@C=30pt{
   \mb{D}\bigl(i_+i^+(\ms{N}\otimes_{\mc{O}_{\fsch{X}}}\ms{M})\bigr)
   \ar@{-}[r]^-{\sim}\ar@/^15pt/[rr]^-{\mb{D}\alpha}&
   i_+i^!\bigl(\mb{D}(\ms{N}\otimes_{\mc{O}_{\fsch{X}}}\ms{M})\bigr)
   \ar[r]\ar@{-}[d]^{\sim}_{\gamma}\ar[r]_-{\mr{adj}_i}&
   \mb{D}(\ms{N}\otimes_{\mc{O}_{\fsch{X}}}\ms{M})
   \ar@{-}[d]^{\sim}_{\gamma}\\
  &i_+i^!\bigl(\mb{D}(\ms{N})\otimes_{\mc{O}_{\fsch{X}}}
   \ms{M}^\vee\bigr)\ar@{-}[d]_{\sim}\ar[r]\ar[r]^-{\mr{adj}_i}
   \ar@{}[rd]|(.4){\heartsuit}&
   \mb{D}(\ms{N})\otimes_{\mc{O}_{\fsch{X}}}\ms{M}^\vee\\
  \mb{D}\bigl(i_+i^+(\ms{N})\otimes_{\mc{O}_{\fsch{X}}}\ms{M}\bigr)
   \ar@{-}[r]_-{\sim}^-{\gamma}
   \ar@{.>}[uu]^{\star}\ar[d]_-{\mb{D}\beta}&
   i_+i^!\mb{D}(\ms{N})\otimes_{\mc{O}_{\fsch{X}}}\ms{M}^\vee
   \ar@/_10pt/[ur]_(.5){\mr{adj}_i}\ar[d]^{\mr{adj}_i}&\\
  \mb{D}(\ms{N}\otimes_{\mc{O}_{\fsch{X}}}\ms{M})
   \ar@{-}[r]_-{\sim}^{\gamma}&
   \mb{D}(\ms{N})\otimes_{\mc{O}_{\fsch{X}}}\ms{M}^\vee
   \ar@{=}@/_25pt/[uur]&
   }
 \end{equation*}
 where $\mr{adj}_i$ is the homomorphism induced by the adjunction
 homomorphisms of $i$. The diagram $\heartsuit$ is commutative by
 \cite[2.2.7]{Casur}, and the other diagrams formed by solid arrows
 are commutative as well.
 We define the isomorphism $\star$ so that the diagram is commutative.
 By the characterization of $\rho$, the isomorphism $\star$ is the dual
 of $\rho$, which implies that $\rho$ is an isomorphism.
\end{proof}

\section{Arithmetic $\ms{D}$-modules for algebraic stacks}
\label{arithDstacksec}
This section is devoted to construct a $p$-adic cohomology theory for
algebraic stacks. Even though we do not try to axiomatize, the
ideas of this section work also for any rational cohomology
theories with standard six functors formalism without essential
changes.
({\it e.g.}\ algebraic $\ms{D}$-module theory, \'{e}tale cohomology
theory with perverse t-structures over a separably closed field with
rational coefficients, {\it etc.})

\subsubsection{}
\label{baseicgeneralsetup}
In this section, $\base\in\{\emptyset,F\}$ is fixed.
Throughout this section, we also fix a base tuple either
$\mf{T}_{\emptyset}:=(k,R,K,L)$ or
$\mf{T}_F:=(k,R,K,L,s,\sigma)$ (cf.\ \ref{defofcatwithcoeff})
depending on which $\base$ we take.
We often denote $D(X/\mf{T}_{\base})$ by $D(X/L_\base)$ or even $D(X/L)$
or $D(X)$ if no confusion may arise.
When $\base=\emptyset$, Tate twists $(n)$ are understood to be
the identity functors as usual. All algebraic stacks are understood to
be over $k$ unless otherwise specified.

\subsubsection{}
\label{fixterminolspace}
To construct a theory for general algebraic stacks, we will first need
to construct a theory for algebraic spaces.
Then using the theory for algebraic spaces, we shall construct a theory
for all algebraic stacks. Since the process for generalizing the
construction from the case of realizable schemes to algebraic
spaces is the same as generalizing from algebraic spaces to algebraic
stacks, we shall present the processes at the same time.
In order to obtain the theory for general algebraic spaces,
the reader should read sections \S\ref{Dmodforstack},
\S\ref{cohfunctorstack} as follows.
First, read \S\ref{Dmodforstack}, \S\ref{cohfunctorstack}
by replacing the word ``space'' (resp.\ ``good stack'', resp.\ ``good
presentation'') by ``quasi-projective scheme'' (resp.\ \dots),
the corresponding terminology indicated in the first
block of the table below.
The reader should then reread \S\ref{Dmodforstack},
\S\ref{cohfunctorstack}. This time using the second block of the table
below instead of the first block.
\begin{center}
 {\renewcommand{\arraystretch}{1.4}
 \begin{tabular}[t]{lll}
  \fbox{1st read}& space:&quasi-projective scheme\\
  & \stack:
      &\parbox[t]{20em}{algebraic stack of finite type whose
	  diagonal morphism is quasi-projective}\\
  & \presentation:
      &\parbox[t]{20em}{smooth surjective morphism from a
	  quasi-projective scheme}\\
  \fbox{2nd read}&space:
      &\parbox[t]{20em}{separated algebraic space of finite type}\\
  &\stack:
      &\parbox[t]{20em}{algebraic stack of finite type}\\
  & \presentation:
      &\parbox[t]{20em}{smooth surjective morphism from a separated
	  quasi-compact algebraic space}
 \end{tabular}
 }
\end{center}
\medskip
Note that since \stacks are of finite type, they are in particular
quasi-compact. See paragraph
\ref{secondreadsub}--\ref{remarkendofsecondpar} for additional
explanation.
Finally, we remark that admissible stacks we define in
\S\ref{sixfuncadmstsec} are \stack in the sense of first read, and
second read is not really necessary if the reader is only interested
in the six functor formalism for schemes or Deligne-Mumford stacks.

\subsection{Definition of the derived category of $\ms{D}^\dag$-modules
  for stacks}
\label{Dmodforstack}

\subsubsection{}
\label{prepcohoprealsch}
We first need basic cohomological operations for spaces. Let $X$ be a
space over $k$. In the ``first read case'', we have already
defined $M(X/L)$ and $D(X/L)$ in \ref{defofcatwithcoeff}, and in the
``second read case'', see \ref{secondreadsub}.
\medskip

\noindent
{\bf Smooth morphism:}
Let $f\colon X\rightarrow Y$ be a smooth
morphism between spaces over $k$. The exact functor $f^*\colon
\mr{Hol}(Y/K)\rightarrow\mr{Hol}(X/K)$ (cf.\ \ref{smoothpushpulsch} or
\ref{smoothpullback}) can
be extended canonically to $M(Y/L)\rightarrow M(X/L)$ by
\ref{indcatrecall} and \ref{Frobsetupp}, which remains to be exact.
The derived functor is also denoted by $f^*$. Similarly, we have the
left exact functor $f_*\colon M(X/L)\rightarrow M(Y/L)$, and we can take
the derived functor $\mb{R}f_*\colon D^+(X/L)\rightarrow D^+(Y/L)$. By
\ref{deradjlemm}, $(f^*,\mb{R}f_*)$ is an adjoint pair.
Since $\mr{for}_L$ and $\mr{for}_F$ commute with $f^*$ and $\mb{R}f_*$
by \ref{extenscasetup} and \ref{dfnoffrobstru}, $f^*$ and $\mb{R}f_*$
preserve holonomicity, and induce functors between
$D^{+}_{\mr{hol}}(X/L)$ and $D^{+}_{\mr{hol}}(Y/L)$.
\medskip

\noindent
{\bf Finite morphism:}
Let $f\colon X\rightarrow Y$ be a finite morphism
between spaces over $k$. Just as in the smooth morphism case, we
can define functors
\begin{equation*}
 f_+\colon D^{\star}(X/L)\rightleftarrows D^{\star}(Y/L)\colon f^!
\end{equation*}
where $\star=\emptyset$ for $f_+$ and $+$ for $f^!$.
The pair $(f_+,f^!)$ is an adjoint pair. These functors commute with
$\mr{for}_L$ and $\mr{for}_F$, and preserve boundedness
and holonomicity.
\medskip

\noindent
{\bf External tensor product:}
Let $X$, $Y$ be spaces over $k$.
Extending the scalar of the external tensor product functor, we have the
bifunctor $\boxtimes\colon M(X/L)\times
M(Y/L)\rightarrow M(X\times Y/L)$, which is exact.
Thus, we can take the derived functor. This derived functor preserves
boundedness and holonomicity as well.
\medskip

\noindent
{\bf Dual functor:}
Let $X$ be a space over $k$.
The dual functor extends canonically to
$\mb{D}_X\colon\mr{Hol}(X/L)^{\circ}\rightarrow\mr{Hol}(X/L)$. This
induces the functor
\begin{equation*}
 \mb{D}_X\colon
  D^{\mr{b}}(\mr{Hol}(X/L))^{\circ}\rightarrow
  D^{\mr{b}}(\mr{Hol}(X/L)).
\end{equation*}

\begin{lem*}
 (i) Consider the cartesian diagram (\ref{cartediagforbc}) of spaces
 such that $f$ is smooth and $g$ is finite. Then the base change
 homomorphisms $f'^*\circ \H^0(g^!)\rightarrow \H^0(g'^!)\circ f^*\colon
 M(Y/L)\rightarrow M(X'/L)$ and $f^*\circ g_+\rightarrow g'_+\circ
 f'^*\colon M(Y'/L)\rightarrow M(X/L)$ are isomorphisms.
 Moreover, we have
 $\H^0(g^!)\circ f_*\cong f'_*\circ\H^0(g')^!\colon M(X/L)\rightarrow
 M(Y'/L)$.

 (ii) For a smooth morphism $f\colon X\rightarrow Y$ of spaces of
 relative dimension $d$, we have the canonical isomorphism
 \begin{equation*}
  \bigl(f^*\circ\mb{D}_Y\bigr)(d)\cong\mb{D}_X\circ f^*\colon
   \mr{Hol}(Y)^\circ\rightarrow\mr{Hol}(X).
\end{equation*}
\end{lem*}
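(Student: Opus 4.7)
The plan is to reduce both assertions to the case of realizable schemes, where the formalism recalled in \S\ref{revDmodtheory} applies directly, and then to promote them to the setting of spaces (second read) by smooth-local gluing along a presentation and to the Ind-categories via the universal extension property of Lemma \ref{extenuniqcomfil}.

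For (i), I would start from the two derived-category base change isomorphisms $f^+g_+\cong g'_+f'^+$ (property \ref{fundproprealsch}.\ref{basechangeprop}) and $f'^+g^!\cong g'^!f^+$ (one of the diagrams in \eqref{commuequivBC}), both valid in $D^{\mr{b}}_{\mr{hol}}$ over realizable schemes. Shifting by $[d]$ on the smooth side and applying $\H^0$ converts these into the asserted identities $f^*g_+\cong g'_+f'^*$ and $f'^*\,\H^0(g^!)\cong \H^0(g'^!)\,f^*$ on $\mr{Hol}$: the relevant exactness statements are those of $g_+,g'_+$ provided by \ref{finiteadjpropsch} and those of $f^+[d],f'^+[d]$ on $\mr{Hol}$ from \ref{smoothpushpulsch}. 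For the third identity on $M$ I would use a clean adjunction argument: the isomorphism $f^*g_+\cong g'_+f'^*$ just established extends uniquely to an isomorphism of colimit-preserving functors $M(Y'/L)\to M(X/L)$ by Lemma \ref{extenuniqcomfil}; the Ind-categorical adjoint pairs $(f^*,f_*)$ and $(g_+,\H^0(g^!))$ arise from the exactness of $f^*$ and $g_+$ together with the Ind-extension lemmas; composing them yields adjoint pairs $(f^*g_+,\H^0(g^!)f_*)$ and $(g'_+f'^*,f'_*\H^0(g'^!))$ whose left adjoints are now canonically isomorphic, which forces the right adjoints to be canonically isomorphic, giving the third assertion.

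For (ii), the argument is a direct computation in the derived category. The projection formula \ref{fundproprealsch}.\ref{projformulas} applied with $\ms{G}=K_Y^\omega$ yields $f^!\mb{D}_Y\cong\mb{D}_X f^+$, and Poincar\'e duality (Theorem \ref{Poindual}) identifies $f^!$ with $f^+(d)[2d]$, giving $f^+\mb{D}_Y(d)[2d]\cong\mb{D}_X f^+$. Restricting to $\mr{Hol}(Y)$, on which $f^+$ coincides with $f^*[-d]$ by the exactness of $f^+[d]$ (\ref{smoothpushpulsch}), the shifts $[d]$ on the two sides cancel and what remains is precisely the isomorphism $(f^*\mb{D}_Y)(d)\cong \mb{D}_X f^*$.

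The principal obstacle I anticipate lies in the passage to the second read: the cohomological operations for algebraic spaces are themselves built by descent from realizable quasi-projective charts, so one must verify that the base change and duality isomorphisms above descend compatibly with the cocycle data of a presentation, which is routine but notationally heavy. A secondary bookkeeping point is the shift-versus-Tate-twist matching in (ii), where the implicit relation $K_X^\omega\cong K_X(d)[2d]$ for smooth $X$ of relative dimension $d$ is exactly what accounts for the appearance of the twist in the final formula.
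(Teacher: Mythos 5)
Your argument is correct and follows essentially the same route as the paper: reduce to the first-read (realizable-scheme) case, derive the isomorphisms on $\mr{Hol}$ from proper base change, smooth base change, and Poincaré duality, promote to $M$ via the Ind-extension, and read off the third identity as an adjunction uniqueness statement. The only small imprecision is the citation of \eqref{commuequivBC} for $f'^+g^!\cong g'^!f^+$ — those diagrams record compatibilities of trace maps, not the base-change isomorphism itself, which instead follows from Theorem \ref{Poindual} ($f^!\cong f^+(d)[2d]$) together with the dual of the proper/smooth base change already cited.
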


\begin{rem*}
 These equalities also hold on the level of
 $D^{\mr{b}}_{\mr{hol}}(-/L)$, which we show later.
\end{rem*}

\begin{proof}
 In the second read case, we can easily reduce to the first read case,
 so we assume we are in the first read case.
 When $\base=\emptyset$ and $L=K$, we only need to show the equality for
 $\mr{Hol}(-/K)$, and these are consequences of Corollary
 \ref{smbcforopenimm}, Theorem \ref{Poindual},
 \ref{fundproprealsch} \eqref{basechangeprop}.
 Since these equalities are on the level of modules, the results can
 be extended automatically to $\base=F$ and general $L$.
\end{proof}

\subsubsection{}
\label{simplinotfix}
Now, let us fix some terminologies on simplicial spaces.
\begin{dfn*}
 For an integer $i\geq0$, let $[i]=\{0,\dots,n\}$ be the ordered set,
 and put $[-1]:=\emptyset$. Let
 $\Delta^+$ be the category of those objects, and morphisms are
 increasing {\em injective} maps.

 (i) An {\em admissible simplicial space} is a contravariant functor
 $(\Delta^+)^\circ\rightarrow\mr{Sp}^{\mr{sm}}(k)$ where
 $\mr{Sp}^{\mr{sm}}(k)$ denotes the category of spaces
 over $k$ whose morphisms are
 smooth. Let $X_\bullet$ be an admissible simplicial space. For
 $i\geq0$, we often denote $X_\bullet([i])$ by $X_i$. This can be
 described as follows:
 \begin{equation*}
  X_\bullet:
   \left[
    \xymatrix{X_0&X_1\ar@<0.5ex>[l]\ar@<-0.5ex>[l]&
    X_2\ar@<0.7ex>[l]\ar[l]\ar@<-0.7ex>[l]&
    \dots\ar@<1.2ex>[l]\ar@<0.4ex>[l]
    \ar@<-0.4ex>[l]\ar@<-1.2ex>[l]
    }
   \right].
 \end{equation*}
 For a space $S$,
 let $S_\bullet$ be the constant admissible simplicial space. A morphism
 of an admissible simplicial space to a space $X_\bullet\rightarrow S$
 is a morphism $X_\bullet\rightarrow S_\bullet$. The morphism is said to
 be {\em smooth} if $X_0\rightarrow S$ is.

 (ii) A morphism of simplicial spaces $f\colon X_\bullet\rightarrow
 Y_\bullet$ is said to be {\em cartesian} if for any
 $\phi\colon[i]\rightarrow[j]$, the following diagram is cartesian:
 \begin{equation*}
  \xymatrix{
   X_j\ar[r]^{f_j}\ar[d]_{X(\phi)}\ar@{}[rd]|\square&
   Y_j\ar[d]^{Y(\phi)}\\
  X_i\ar[r]_{f_i}&Y_i.
   }
 \end{equation*}

 (iii) An {\em admissible double simplicial space} $X_{\bullet\bullet}$
 is a functor
 $((\Delta^+)^2)^\circ\rightarrow\mr{Sp}^{\mr{sm}}(k)$. An
 admissible simplicial space $S_{\bullet}$ yields the constant
 admissible double simplicial space $S_{\bullet\bullet}$ by setting
 $S_{\bullet i}:=S_\bullet$. A morphism from an admissible double
 simplicial space to an admissible
 simplicial space $X_{\bullet\bullet}\rightarrow S_{\bullet}$ is defined
 to be $X_{\bullet\bullet}\rightarrow S_{\bullet\bullet}$. This is a
 collection of morphisms $X_{n\bullet}\rightarrow S_n$ satisfying
 the compatibility conditions.
\end{dfn*}

\begin{rem*}
 Note that we do not consider ``degeneracy maps'', and only ``face
 maps''. This type of objects are sometimes called strictly simplicial
 schemes ({\it e.g.}\ \cite{OL}).
\end{rem*}

\begin{dfn}
 Let $\st{X}$ be an algebraic stack over $k$, and
 $\algsp{X}\rightarrow\st{X}$ be a presentation (cf.\
 \ref{smpresentnotasur}).
 Put $\algsp{X}_\bullet:=\mr{cosk}_0(\algsp{X}\rightarrow\st{X})$
 ({\it i.e.}\ the simplicial space such that
 $\algsp{X}_n:=\underbrace{\algsp{X}\times_{\st{X}}\times\dots
 \times_{\st{X}}\algsp{X}}_{n}$ and the face morphisms are
 projections).  We say that $\algsp{X}_{\bullet}$ is a
 {\em simplicial algebraic space presentation of $\st{X}$}.
 Let $\mathbf{P}$ be a set of algebraic spaces. A {\em simplicial
 $\mathbf{P}$ presentation} is a simplicial algebraic space presentation
 $X_\bullet$ consisting of algebraic spaces belonging to $\mathbf{P}$
 ({\it e.g.}\ simplicial realizable schemes presentation,...).

 Now, let $\st{X}$ be a \stack, and $X\rightarrow\st{X}$ be a
 \presentation. Since $\st{X}$ is a \stack,
 $\mr{cosk}_0(X\rightarrow\st{X})$ is an admissible simplicial space. In
 particular, for any \stack, we may take a simplicial space
 presentation.
\end{dfn}

\begin{dfn}
 \label{dfncofibcatrel}
 Let $X_\bullet$ be an admissible simplicial space.
 For a morphism $\phi\colon[i]\rightarrow[j]$, since $X(\phi)$ is
 smooth, the pull-back
 \begin{equation*}
  X(\phi)^*\colon M(X_i/L)\rightarrow M(X_j/L),
 \end{equation*}
 which is exact, is defined (cf.\ \ref{prepcohoprealsch}). This defines
 a cofibered category $M(X_\bullet/L)_\bullet$ over $\Delta^+$.

 (i) We put $M(X_\bullet/L):=\mr{sec}_+(M(X_\bullet/L)_\bullet)$ (see
 \S\ref{BDgluingcat} for the notation). We
 often denote $M(X_\bullet/L)$ by $M(X_\bullet)$.
 For $\ms{M}_\bullet$ or $\ms{M}$ in $M(X_\bullet/L)$, the fiber over
 $[i]$ is denoted by $\ms{M}_i$. For $\phi\colon[i]\rightarrow[j]$, the
 homomorphism $X(\phi)^*\ms{M}_i\rightarrow\ms{M}_j$ is called the {\em
 gluing homomorphism}.

 (ii) We denote by $\mr{Hol}(X_\bullet/L)$ or simply by
 $\mr{Hol}(X_\bullet)$ the full subcategory of
 $M(X_\bullet/L)_{\bullet,\mr{tot}}$ consisting of $\ms{M}_{\bullet}$
 such that $\ms{M}_i\in\mr{Hol}(X_i/L)$ for any $i\geq0$.

 (iii) We denote by $D^{\star}_{\mr{hol}}(X_{\bullet}/L)$ or
 $D^{\star}_{\mr{hol}}(X_\bullet)$ the full subcategory of
 $D^{\star}(M(X_\bullet/L))$ whose cohomology objects are in
 $\mr{Hol}(X_\bullet/L)$. We denote
 $D^\star_{\mr{tot}}(M(X_\bullet/L)_{\bullet})$ by
 $D^\star_{\mr{tot}}(X_\bullet/L)$ or $D^\star_{\mr{tot}}(X_\bullet)$.
\end{dfn}

\subsubsection{}
\label{sepacseqBDandmis}
Let $\ms{M}_\bullet$ and $\ms{N}_\bullet$ be in
$D^{\mr{b}}_{\mr{tot}}(X_\bullet/L)$.
Then by (\ref{BDssingen}), we have the following spectral sequence:
\begin{equation}
 \label{BDspecseq}
 E^{p,q}_1:=\mr{Ext}^{q}_{D(X_{p})}(\ms{M}_p,\ms{N}_p)\Rightarrow
  \mr{Ext}^{p+q}_{D(X_\bullet)}(\ms{M}_\bullet,\ms{N}_\bullet).
\end{equation}
We have kernels, cokernels, and inductive limits in $M(X_\bullet)$, and
they can be calculated termwise, namely these functors commute with the
functor sending $\ms{M}_{\bullet}$ to $\ms{M}_i$ for any $i\geq0$.
This is because, for any $\phi\colon[i]\rightarrow[j]$, the functors
$\mr{Ker}$, $\mr{Coker}$,
$\indlim$ commute with $X(\phi)^*$. In particular, $M(X_\bullet)$ is an
abelian category. Moreover, projective limits are representable in
$M(X_\bullet)$ and they can be calculated termwise as well, by using the
canonical homomorphism
$X(\phi)^*\circ\invlim\rightarrow\invlim\circ X(\phi)^*$.

\subsubsection{}
\label{functorrhodef}
Let us define three basic functors.
Take $i\geq0$. We define a functor $\rho^*_{i}\colon
M(X_\bullet)\rightarrow M(X_i)$ by sending $\ms{M}_\bullet\in
M(X_\bullet)$ to $\ms{M}_i$. Obviously, this is an exact functor.
Now, take $\ms{N}\in M(X_i)$. We define
\begin{equation*}
 \rho_{i*}(\ms{N}):=\left\{\prod_{\phi\colon[k]\rightarrow[i]}
  X(\phi)_*(\ms{N})\right\}_{k},\qquad
  \rho_{i!}(\ms{N}):=\left\{\bigoplus_{\phi\colon[i]\rightarrow[k]}
		      X(\phi)^*(\ms{N})\right\}_{k},
\end{equation*}
and the gluing homomorphisms are defined as follows: for
$\psi\colon[k]\rightarrow[k']$, the map $X(\psi)^*\rho_{i*}(\ms{N})_k
\rightarrow\rho_{i*}(\ms{N})_{k'}$ (resp.\ $X(\psi)^*\rho_{i!}(\ms{N})_k
\rightarrow\rho_{i!}(\ms{N})_{k'}$) is the product (resp.\ direct sum)
of the adjunction (resp.\ canonical) homomorphisms
\begin{equation*}
 X(\psi)^*X(\phi)_*(\ms{N})\rightarrow X(\phi')_*(\ms{N}),\qquad
  \bigl(\mbox{resp.\ }
  X(\psi)^*X(\phi)^*(\ms{N})\rightarrow X(\phi')^*(\ms{N})
  \bigr)
\end{equation*}
where $\phi\colon[k]\xrightarrow{\psi}[k']\xrightarrow{\phi'}[i]$
and $0$ if $\phi$ cannot be factored through $\psi$ (resp.\
$\phi'\colon[i]\xrightarrow{\phi}[k]\xrightarrow{\psi}[k']$).
These data define functors
$\rho_{i*},\rho_{i!}\colon M(X_i)\rightarrow M(X_\bullet)$.

\begin{lem*}
 (i) We have adjoint pairs $(\rho^*_i,\rho_{i*})$ and
 $(\rho_{i!},\rho^*_i)$.

 (ii) The functors $\rho^*_i$ and $\rho_{i!}$ are exact. In particular
 $\rho^*_i$ and $\rho_{i*}$ preserve injective objects.

 (iii) The category $M(X_\bullet)$ is a Grothendieck category.
\end{lem*}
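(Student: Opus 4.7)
My plan is to reduce each assertion to properties already established at the level of the individual categories $M(X_k/L)$ and to the basic adjointness $(X(\phi)^*, X(\phi)_*)$ of smooth pullback/pushforward (cf.\ \ref{prepcohoprealsch} and \ref{smoothpushpulsch}).

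For (i), I will verify the two adjunctions directly from the defining formulas. Given $\ms{M}_\bullet\in M(X_\bullet)$ and $\ms{N}\in M(X_i)$, a morphism $f\colon\ms{M}_\bullet\to\rho_{i*}\ms{N}$ consists of termwise morphisms $f_k\colon\ms{M}_k\to\prod_{\phi\colon[k]\to[i]}X(\phi)_*\ms{N}$; by the universal property of products together with the adjunction $(X(\phi)^*,X(\phi)_*)$, this is the same datum as a family $\{g_\phi\colon X(\phi)^*\ms{M}_k\to\ms{N}\}_\phi$. Unpacking the gluing compatibility with the homomorphisms defined in \ref{functorrhodef}, all the $g_\phi$ are forced to be the composition of the gluing map $X(\phi)^*\ms{M}_k\to\ms{M}_i$ with a single morphism $g_{\mr{id}}\colon\ms{M}_i=\rho_i^*\ms{M}_\bullet\to\ms{N}$, giving the bijection. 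The adjunction $(\rho_{i!},\rho_i^*)$ is verified in a symmetric way, replacing products by direct sums and $X(\phi)_*$ by $X(\phi)^*$.

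For (ii), exactness of $\rho_i^*$ is immediate since kernels, cokernels, and all (co)limits in $M(X_\bullet)$ are computed termwise (cf.\ \ref{sepacseqBDandmis}). For $\rho_{i!}$, the $k$-th component $\bigoplus_{\phi\colon[i]\to[k]}X(\phi)^*\ms{N}$ is an exact functor of $\ms{N}$: each $X(\phi)^*$ is exact, and arbitrary small direct sums are exact in the Grothendieck category $M(X_k/L)$ (cf.\ \ref{grocatindfrob} and \ref{smallnoeth}). The preservation of injectives then follows from the standard principle that a right adjoint to an exact functor preserves injective objects, applied to the adjunctions of (i).

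For (iii), the category $M(X_\bullet)$ is abelian by \ref{sepacseqBDandmis}. Since filtered inductive limits are computed termwise and each $M(X_k/L)$ satisfies AB5, so does $M(X_\bullet)$. It remains to exhibit a generator: let $G_k$ be a generator of $M(X_k/L)$ (which exists as each $M(X_k/L)$ is Grothendieck), and put $G:=\bigoplus_{k\geq0}\rho_{k!}(G_k)$. For any nonzero morphism $f\colon\ms{M}_\bullet\to\ms{N}_\bullet$, some $\rho_k^*(f)=f_k$ is nonzero; choosing $g\colon G_k\to\ms{M}_k$ with $f_k\circ g\neq0$ and using the adjunction $(\rho_{k!},\rho_k^*)$ produces $\widetilde{g}\colon\rho_{k!}(G_k)\to\ms{M}_\bullet$ with $\rho_k^*(f\circ\widetilde{g})=f_k\circ g\neq0$, so $G$ is indeed a generator. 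The main (and only real) obstacle is the careful bookkeeping in (i) of the gluing compatibility — once that is in hand, (ii) and (iii) are essentially formal consequences.
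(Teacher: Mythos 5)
Your proof is correct and follows essentially the same route as the paper's: (i) is a direct unpacking of the defining formulas and the termwise adjunction $(X(\phi)^*,X(\phi)_*)$, (ii) is termwise exactness plus the standard "exact left adjoint $\Rightarrow$ right adjoint preserves injectives" principle, and (iii) reduces to AB5 being termwise and $\{\rho_{k!}(G_k)\}$ being a generating family via the adjunction. One small slip worth noting: in (iii) you write $\rho_k^*(f\circ\widetilde{g})=f_k\circ g$, but the adjoint $\widetilde{g}$ satisfies $g=\rho_k^*(\widetilde{g})\circ\eta_{G_k}$ (with $\eta$ the unit), so the correct statement is $\rho_k^*(f\circ\widetilde{g})\circ\eta_{G_k}=f_k\circ g\neq 0$; the conclusion $f\circ\widetilde{g}\neq 0$ still follows. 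Incidentally, your argument verifies faithfulness of $\mr{Hom}(G,-)$ directly, which is the precise generating condition, whereas the paper's brief argument only records the consequence "$\mr{Hom}(\rho_{i!}G_i,\ms{M})=0$ for all $i$ $\Rightarrow\ms{M}=0$"; the stronger statement you prove is what is actually wanted, so your version is the cleaner one.
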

\begin{proof}
 Since the verification of (i) is standard, we leave the details to the
 reader.
 The first claim of (ii) follows from the exactness of $X(\phi)^*$. Let
 us check (iii). We have arbitrary inductive limit as observed in
 \ref{sepacseqBDandmis}, and filtrant inductive limits are exact.
 We need to show that it has a generator. Let $\ms{G}_i$
 be a generator of $M(X_i)$. Then
 $\bigl\{\rho_{i!}(\ms{G}_i)\bigr\}_{i\geq0}$ is a set of
 generators. Indeed, let $\ms{M}\in M(X_\bullet)$, and assume
 $\mr{Hom}(\rho_{i!}(\ms{G}_i),\ms{M})=0$ for any $i\geq0$. Then by (i),
 we have $\mr{Hom}(\ms{G}_i,\rho^*_i(\ms{M}))=0$, and thus
 $\ms{M}_i=0$. Thus by definition, $\ms{M}=0$.
\end{proof}

\subsubsection{}
Let $X_\bullet$ be an admissible simplicial space. Let $k$ be a
non-negative integer. Given $\ms{M}_\bullet\in M(\mr{sk}_k(X_\bullet))$,
we get an object in $M(\mr{sk}_{k-1}(X_\bullet))$ denoted by
$\sigma_{k}^*(\ms{M}_\bullet)\in M(X_\bullet)$ by putting
$\bigl(\sigma^*_k(\ms{M}_\bullet)\bigr)_i\cong\ms{M}_i$ for $i<k$. Let
us construct the left adjoint functor of $\sigma_{k}^*$.

Let $D_k$ be the category of homomorphisms $\phi\colon[i]\rightarrow[k]$
such that the morphism from $\phi$ to $\psi\colon[j]\rightarrow[k]$ is
a morphism $\alpha\colon[i]\rightarrow[j]$ such that
$\psi\circ\alpha=\phi$. Now, given $\ms{M}_\bullet$ in
$M(\mr{sk}_{k-1}(X_\bullet))$, we can construct an object in
$M(\mr{sk}_{k}(X_\bullet))$ as follows: Put
\begin{equation*}
 \ms{M}_{k}:=\indlim_{\phi\in D_k}X(\phi)^*(\ms{M}_i).
\end{equation*}
Then $\bigl\{\ms{M}_i\bigr\}_{i\leq k}$ with obvious gluing
homomorphism defines the desired object.
The functor is denoted by $\sigma_{k!}$. We can check easily
that $(\sigma_{k!},\sigma^*_k)$ is an adjoint pair.

\begin{rem}
 The functors defined in the last two paragraphs have natural
 interpretation in terms of the language of topos.
 Let $X_\bullet$ be a (strictly) simplicial topos (cf.\ [SGA 4,
 $\text{V}^{\text{bis}}$]).
 Since sheaves $\ms{F}$ on $X_\bullet$ can be described as data
 $\{\ms{F}_i,\phi_{ij}\}$ where $\ms{F}_i$ is a sheaf on $X_i$ and
 $\phi_{ij}$ is a gluing homomorphism.
 We have a functor sending a sheaf $\ms{F}$ on $X_\bullet$ to the
 sheaf $\ms{F}_i$ on $X_i$. This functor defines a morphism of topos
 $e_i\colon X_i\rightarrow X_\bullet$.
 The functors $\rho_{i!}$, $\rho_i^*$, $\rho_{i*}$ are nothing but
 analogues of the functors $e_{i!}$, $e_i^*$, $e_{i*}$  (cf.\ [{\it
 ibid.}, 1.2.8--1.2.12]). The interpretation of $\sigma_{i!}$,
 $\sigma_i^*$ is similar.
\end{rem}

\begin{lem}
 \label{catequforhol}
 Let us denote by $\mr{Hol}(X_\bullet)_\bullet$ the cofibered category
 over $\Delta^+$ such that the fiber over $[i]$ is $\mr{Hol}(X_i)$.
 Let $D^{\mr{b}}_{\mr{tot}}(\mr{Hol}(X_\bullet)_\bullet)$ be the derived
 category defined in \S\ref{BDgluingcat}. The canonical functor
 $D^{\mr{b}}_{\mr{tot}}(\mr{Hol}(X_\bullet)_\bullet)\rightarrow
 D^{\mr{b}}_{\mr{hol}}(X_\bullet)$ is an equivalence of categories.
\end{lem}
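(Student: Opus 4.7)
The plan is to reduce to the termwise Beilinson equivalence \ref{Beilequivcat} by matching spectral sequences on the two sides. Both categories carry natural t-structures induced termwise from the standard t-structures on $\mr{Hol}(X_i)$, and the canonical functor is visibly t-exact. Moreover, by definition of $\mr{Hol}(X_\bullet)$ as the full subcategory of $M(X_\bullet)_{\bullet,\mr{tot}}$ with termwise holonomic cohomology sheaves, both hearts are canonically identified with $\mr{Hol}(X_\bullet)$. By standard devissage with the truncation triangles and the five-lemma, the equivalence then follows from fully faithfulness on the heart.

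For fully faithfulness, I would take $\ms{M},\ms{N}\in\mr{Hol}(X_\bullet)$ and show that the induced map on $\mr{Ext}^n_{D^{\mr{b}}_{\mr{tot}}(\mr{Hol}(X_\bullet)_\bullet)}(\ms{M},\ms{N})\rightarrow\mr{Ext}^n_{D^{\mr{b}}_{\mr{hol}}(X_\bullet)}(\ms{M},\ms{N})$ is an isomorphism. The source is governed by the Beilinson--Drinfeld spectral sequence (\ref{BDspecseq}) with
\[
E^{p,q}_1=\mr{Ext}^q_{D^{\mr{b}}(\mr{Hol}(X_p))}(\ms{M}_p,\ms{N}_p).
\]
For the target, I would build an analogous spectral sequence using that $M(X_\bullet)$ is a Grothendieck category (Lemma in \ref{functorrhodef}), and hence has enough injectives. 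Concretely, the filtration of $M(X_\bullet)$ by the skeleta via the adjoint pairs $(\sigma_{k!},\sigma_k^*)$ together with the termwise exact adjoint pairs $(\rho_{i!},\rho_i^*)$ and $(\rho_i^*,\rho_{i*})$ produces a spectral sequence with $E^{p,q}_1=\mr{Ext}^q_{D(M(X_p))}(\ms{M}_p,\ms{N}_p)$ abutting to $\mr{Ext}^{p+q}_{D(X_\bullet)}(\ms{M},\ms{N})$. The termwise Beilinson equivalence \ref{Beilequivcat} (valid since $M(X_p)=\mr{Ind}(\mr{Hol}(X_p))$ and $D^{\mr{b}}(\mr{Hol}(X_p))\xrightarrow{\sim}D^{\mr{b}}_{\mr{Hol}(X_p)}(M(X_p))$ by \ref{noethcatindcat}) identifies the two $E_1$-pages, so the abutments agree compatibly with the comparison map.

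For essential surjectivity, given $\ms{N}\in D^{\mr{b}}_{\mr{hol}}(X_\bullet)$, the termwise pieces $\ms{N}_p\in D^{\mr{b}}_{\mr{hol}}(X_p)$ admit lifts to $D^{\mr{b}}(\mr{Hol}(X_p))$ by \ref{Beilequivcat}, compatibly with the pullbacks $X(\phi)^*$ since these are exact on $\mr{Hol}$. The simplicial gluing data on $\ms{N}$, i.e.\ the coherent system of isomorphisms parametrised by $\Delta^+$, lifts to a coherent system on the lifted objects by applying the fully faithfulness established in the previous step to each of the coherence data.

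The main obstacle I anticipate is constructing the spectral sequence on the target side and verifying that it is compatible with (\ref{BDspecseq}) under the comparison functor. The technical core is to produce injective resolutions in $M(X_\bullet)$ whose termwise restrictions compute $\mb{R}\mr{Hom}$ in $D(M(X_p))$; this requires care because $\rho_p^*$, while exact, need not send injectives to injectives. One circumvents this by using Cartan--Eilenberg style resolutions adapted to the simplicial filtration $\sigma_{k!}\sigma_k^*$, making essential use of the fact that $\rho_{p*}$ preserves injectives (Lemma in \ref{functorrhodef}) to split off the contributions degree by degree.
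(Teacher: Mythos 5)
The paper proves this lemma by a quite different route: it reduces, following \cite[15.3.1]{KSc}, to showing that the functor is fully faithful, and then verifies the abelian criterion of \cite[13.2.8]{KSc}, namely that every surjection $\ms{A}\twoheadrightarrow\ms{M}$ from an object $\ms{A}\in M(X_\bullet)$ onto a holonomic $\ms{M}$ admits a holonomic $\ms{N}\rightarrow\ms{A}$ whose composite with the surjection is still surjective. That subobject is built by induction on the skeleta using $\sigma_{k!}$, $\sigma_k^*$. Your spectral-sequence comparison is a genuinely different and more roundabout strategy, and while it could in principle be pushed through, as written it has several real gaps.

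First, your stated ``main obstacle'' rests on a misreading of Lemma \ref{functorrhodef}: the lemma's part (ii) says that \emph{both} $\rho_i^*$ and $\rho_{i*}$ preserve injective objects ($\rho_i^*$ preserves them because its left adjoint $\rho_{i!}$ is exact). So the worry that ``$\rho_p^*$, while exact, need not send injectives to injectives'' is unfounded, and the Cartan--Eilenberg workaround you sketch is unnecessary. This actually undercuts the hard part of your plan: once you notice $\rho_p^*$ preserves injectives, the construction of the target spectral sequence is no harder than (\ref{BDspecseq}) itself. But the remaining and more serious issue you do not address is that agreement of $E_1$-pages does not by itself yield an isomorphism on abutments; you need a \emph{morphism} of spectral sequences, which requires verifying that the comparison functor is compatible with the simplicial filtration used in the Beilinson--Drinfeld machinery (and with the Ind-category workaround from the appendix when $\mr{Hol}(X_p)$ lacks injectives). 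None of this is spelled out, and it is exactly where the technical content of the lemma lives.

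Second, the essential-surjectivity paragraph does not work as stated. An object of $D^{\mr{b}}_{\mr{hol}}(X_\bullet)$ is an object of a genuine derived category $D^{\mr{b}}(M(X_\bullet))$ with a total-complex condition, not merely a system of objects $\ms{N}_p\in D^{\mr{b}}_{\mr{hol}}(X_p)$ glued by homotopy-coherent isomorphisms. Lifting termwise pieces and then ``lifting the gluing data by fully faithfulness'' does not recover an honest object in the source derived category; that is precisely the nontrivial gluing problem that Theorem \ref{BBDgluingthm} and the $c_\pm$ machinery address. The correct way to get essential surjectivity from fully faithfulness is the d\'evissage you mention in passing: induct on the number of nonzero cohomology groups of $\ms{N}$ using the truncation triangles, lift the connecting maps by fully faithfulness, and take cones. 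You should state and carry out that argument rather than the termwise-lift one.

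Comparing the two approaches overall: the paper's verification of the criterion of \cite[13.2.8]{KSc} avoids spectral sequences entirely and handles the simplicial coherence via the explicit modification $(\ms{N}_k)_k:=\ms{N}'\oplus(\sigma_{k!}(\ms{N}_{k-1}))_k$; it isolates the key homological-algebra input (finding a holonomic subobject dominating a surjection) and resolves the simplicial direction by one induction. Your approach buys you nothing extra here and carries substantially more overhead.
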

\begin{proof}
 We can argue as \cite[15.3.1]{KSc}. It suffices to show that the
 functor
 \begin{equation*}
  D^{\mr{b}}(\mr{Hol}(X_\bullet)_\bullet)\rightarrow
   D^{\mr{b}}(X_\bullet)
 \end{equation*}
 is fully faithful. By \cite[13.2.8]{KSc}, it suffices to
 show the following: given a surjection $\ms{A}\rightarrow\ms{M}$ such
 that $\ms{M}\in\mr{sec}_+\mr{Hol}(X_\bullet)_{\bullet}$
 and $\ms{A}\in M(X_\bullet)$,
 there exists a homomorphism $\ms{N}\rightarrow\ms{A}$ such that
 $\ms{N}\in\mr{sec}_+\mr{Hol}(X_\bullet)_{\bullet}$ and the composition
 $\ms{N}\rightarrow\ms{M}$ is surjective. To check this, it suffices to
 construct, for each $k\geq0$, the following $\ms{N}_{(k)}$ in
 $M(\mr{sk}_k(X_\bullet))$: 1.\
 $\sigma_{k}^*(\ms{N}_{(k)})\cong\ms{N}_{(k-1)}$;
 2.\ we have a homomorphism $\ms{N}_{(k)}\rightarrow\sigma_k^*(\ms{A})$
 such that the composition $\ms{N}_{(k)}\rightarrow\sigma^*_k
 (\ms{A})\rightarrow\sigma^*_k(\ms{M})$ is surjective.

 We use the induction on $k$. For $\ms{N}_{(0)}$, take one as in [{\it
 ibid.}, 15.3.1]. Assume we have constructed $\ms{N}_{(k-1)}$. Take
 $\ms{N}'\rightarrow\ms{A}_k$ such that $\ms{N}'\in\mr{Hol}(X_k)$ and
 the composition with $\ms{A}_k\rightarrow\ms{M}_k$ is surjective, as in
 [{\it ibid.}, 15.3.1]. Given $\phi\colon[i]\rightarrow[k]$, we have the
 following diagram:
 \begin{equation*}
  \xymatrix{
   X(\phi)^*\ms{N}_{(k-1),i}\ar[r]\ar@{.>}[d]_{?}&
   X(\phi)^*\ms{A}_i\ar[r]\ar[d]&
   X(\phi)^*\ms{M}_i\ar[d]\\
  \ms{N}'\ar[r]&\ms{A}_k\ar[r]&\ms{M}_k.
  }
 \end{equation*}
 We need to construct the dotted homomorphism making the diagram
 commutative, for which we modify $\ms{N}'$. We put
 \begin{equation*}
  (\ms{N}_{(k)})_i:=
   \begin{cases}
    (\ms{N}_{(k-1)})_i&\mbox{for $i<k$}\\
    \ms{N}'\oplus(\sigma_{k!}(\ms{N}_{(k-1)}))_k&\mbox{for $i=k$}.
   \end{cases}
 \end{equation*}
 With the obvious gluing homomorphisms, these data define an object
 $\ms{N}_{(k)}$ in $M(\mr{sk}_k(X_\bullet))$, which is what we are
 looking for.
\end{proof}

\subsubsection{}
\label{defofpushfinitesimpl}
Let $f_\bullet\colon X_\bullet\rightarrow Y_\bullet$ be a {\em
cartesian} morphism of admissible simplicial spaces such that $f_i$ is
{\em finite} for any $i\geq0$. We call such a morphism {\em cartesian
finite morphism} for short.
For a morphism $\phi\colon[j]\rightarrow[k]$, we have the following
cartesian diagram
\begin{equation*}
 \xymatrix{
  X_k\ar[r]^{f_k}\ar[d]_{X(\phi)}\ar@{}[rd]|\square&
  Y_k\ar[d]^{Y(\phi)}\\
 X_j\ar[r]_{f_j}&Y_j
  }
\end{equation*}
where $f_j$ and $f_k$ are finite. Let $\ms{M}_{\bullet}$ be an object in
$M(Y_\bullet)$. For a finite morphism $g$, we denote $\H^0g^!$ by
$g^{\circ}$, which is left exact by \ref{finiteadjpropsch}. We have a
canonical homomorphism
\begin{equation*}
 X(\phi)^*f_j^{\circ}(\ms{M}_j)\cong
  f_k^{\circ}Y(\phi)^*(\ms{M}_j)
  \rightarrow f_k^{\circ}(\ms{M}_k)
\end{equation*}
by Lemma \ref{prepcohoprealsch}.
Using this homomorphism,
$\bigl\{f^{\circ}_k(\ms{M}_k)\bigr\}$ defines an object in
$M(X_\bullet)$, and defines a functor $f^{\circ}\colon
M(Y_\bullet)\rightarrow M(X_\bullet)$. Since $f^{\circ}_k$ is left
exact, $f^{\circ}$ is left exact as well.
We can take the associated derive functor to get
\begin{equation*}
 f^!:=\mb{R}f^{\circ}\colon D^+(M(Y_\bullet))
  \rightarrow
  D^+(M(X_\bullet)).
\end{equation*}

On the other hand, the functor $f_{k+}$ is exact. For $\ms{N}\in
M(X_\bullet)$, using Lemma \ref{prepcohoprealsch}, we have a
homomorphism
\begin{equation*}
 Y(\phi)^*f_{j+}(\ms{N}_j)\cong
  f_{k+}X(\phi)^*(\ms{N}_j)
  \rightarrow f_{k+}(\ms{N}_k),
\end{equation*}
which defines an object $\bigl\{f_{k+}(\ms{N}_k)\bigr\}$ in
$M(Y_\bullet)$. The functor is denoted by $f_+$. Since this functor is
exact, we can take the derive functor
\begin{equation*}
 f_+\colon D(M(X_\bullet))\rightarrow D(M(Y_\bullet)).
\end{equation*}

\begin{lem*}
 Let $X_\bullet\xrightarrow{f}Y_\bullet\xrightarrow{g}Z_\bullet$ be
 cartesian finite morphisms of admissible simplicial spaces.

 (i) We have a canonical isomorphism $\rho^*_k\circ f^!\cong
 f_k^!\circ\rho^*_k$. In particular, $f^!$ sends
 $D^{\star}_{\mr{hol}}(Y_\bullet)$ into
 $D^{\star}_{\mr{hol}}(X_\bullet)$ for
 $\star\in\bigl\{+,\mr{b}\bigr\}$.

 (ii) We have an adjoint pair $(f_+,f^!)$, and $f_+$ is exact. In
 particular, $f^{\circ}$ preserves injective objects.

 (iii) We have a canonical isomorphism $f^!\circ g^!\cong(g\circ
 f)^!$.
\end{lem*}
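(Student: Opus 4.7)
My plan is to first verify all three statements at the level of the abelian categories and then lift them to the derived categories by checking the relevant acyclicity/adjointness properties.

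For (i), the construction of $f^\circ$ in \ref{defofpushfinitesimpl} makes $\rho_k^* \circ f^\circ \cong f_k^\circ \circ \rho_k^*$ tautological. To promote this to an isomorphism $\rho_k^* \circ f^! \cong f_k^! \circ \rho_k^*$, I observe that $\rho_k^*$ is exact (\ref{functorrhodef}), so no derivation is needed on it, and furthermore $\rho_k^*$ preserves injectives, since its left adjoint $\rho_{k!}$ is exact by the lemma in \ref{functorrhodef}. Thus for any injective $\ms{I}\in M(Y_\bullet)$, $\rho_k^*\ms{I}$ is injective in $M(Y_k)$ and hence $f_k^\circ$-acyclic, and computing $\mathbb{R}f^\circ$ on an injective resolution $\ms{M}\to\ms{I}^\bullet$ shows $\rho_k^*\circ f^! = \mathbb{R}(\rho_k^* f^\circ) = \mathbb{R}(f_k^\circ\rho_k^*) = f_k^!\circ\rho_k^*$. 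Preservation of $D^+_{\mr{hol}}$ and $D^{\mr{b}}_{\mr{hol}}$ then follows fiberwise from the corresponding property of $f_k^!$ recalled in \ref{finiteadjpropsch}.

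For (ii), exactness of $f_+$ is checked termwise via $\rho_k^*\circ f_+ \cong f_{k+}\circ \rho_k^*$, which is immediate from the construction, together with the exactness of each $f_{k+}$ from \ref{finiteadjpropsch}. The adjoint pair $(f_+,f^\circ)$ at the abelian level is a termwise assembly: given $\ms{N}_\bullet\in M(X_\bullet)$ and $\ms{M}_\bullet\in M(Y_\bullet)$, a morphism $f_+\ms{N}_\bullet\to\ms{M}_\bullet$ is a compatible family $f_{k+}\ms{N}_k\to\ms{M}_k$, which by the termwise adjunctions corresponds to a compatible family $\ms{N}_k\to f_k^\circ\ms{M}_k$, i.e.\ a morphism $\ms{N}_\bullet\to f^\circ\ms{M}_\bullet$; compatibility with the gluing homomorphisms is verified by the base change isomorphisms of Lemma \ref{prepcohoprealsch} used to define $f^\circ$ and $f_+$. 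Since $f_+$ is exact and $f^! = \mathbb{R}f^\circ$, Lemma \ref{deradjlemm} upgrades this to the adjoint pair $(f_+, f^!)$ on derived categories, and injective preservation by $f^\circ$ is the standard consequence of exactness of its left adjoint.

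For (iii), at the abelian level the definitions give a canonical isomorphism $(g\circ f)^\circ \cong f^\circ\circ g^\circ$, since both $f^\circ$ and $g^\circ$ are built from the termwise functors $f_k^\circ$, $g_k^\circ$ whose composition is $(g_k\circ f_k)^\circ$. By (ii), $g^\circ$ sends injectives to injectives, so the Grothendieck-type spectral sequence/universal $\delta$-functor argument (using that $f^\circ$ is left exact) gives $\mathbb{R}(f^\circ\circ g^\circ)\cong \mathbb{R}f^\circ\circ\mathbb{R}g^\circ$, whence $(g\circ f)^!\cong f^!\circ g^!$. The main subtlety, and the only nontrivial step, is the injective-preservation input in (ii), which is really where the finiteness hypothesis is used; everything else is formal bookkeeping with the simplicial gluing data.
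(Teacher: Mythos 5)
Your proof takes essentially the same route as the paper's, and your derived-category bookkeeping in (i) and (iii) is spelled out more carefully than the original. There is, however, one genuine slip in part (i): you assert that preservation of $D^{\star}_{\mr{hol}}$ "follows fiberwise from the corresponding property of $f_k^!$." But $\mr{Hol}(X_\bullet)$ is defined as the full subcategory of $M(X_\bullet/L)_{\bullet,\mr{tot}}$ whose fibers lie in $\mr{Hol}(X_k)$, so membership in $D^{\star}_{\mr{hol}}(X_\bullet)$ requires the cohomology objects to be both fiberwise holonomic \emph{and} total, i.e.\ the gluing homomorphisms $X(\phi)^*\H^n(-)_i\to\H^n(-)_j$ must be isomorphisms. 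The commutation $\rho^*_k\circ f^!\cong f_k^!\circ\rho^*_k$ delivers only the first; the second is a compatibility across fibers and requires the base-change isomorphism $X(\phi)^*\circ f_i^!\cong f_j^!\circ Y(\phi)^*$ of Lemma \ref{prepcohoprealsch}~(i) (at the derived level, as in the remark following it), which one then chains with the fact that $Y(\phi)^*\ms{M}_i\to\ms{M}_j$ is already a quasi-isomorphism. You invoke these base-change isomorphisms later in (ii), but the place they are indispensable is precisely this step, and the paper accordingly separates the two checks ("preserves total complexes" and "preserves holonomicity"). Everything else in your write-up — injective preservation via the exact left adjoints $\rho_{k!}$ and $f_+$, the adjunction upgrade via Lemma \ref{deradjlemm}, and the composition isomorphism in (iii) — is correct and matches the paper's intent.
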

\begin{proof}
 Let us show (i). Since $\rho^*_k$ is exact and preserves injective
 objects by Lemma \ref{functorrhodef}, the first isomorphism follows by
 definition. Let us check the second one. The
 functor $f^!$ preserves total complexes by Lemma \ref{prepcohoprealsch}
 (i). It remains to show that it preserves holonomicity and boundedness.
 These are immediate consequences of the isomorphism $\rho_k^*\circ
 f^!\cong f_k^!\circ\rho_k^*$.

 The verification of (ii) is easy. To show (iii), by (ii), it suffices
 to show that $f^{\circ}\circ g^{\circ}\cong(g^{\circ}\circ
 f^{\circ})$. This follows by definition, and the corresponding
 statement for spaces.
\end{proof}

\subsubsection{}
\label{constofpairfunct}
Now, we use the notations of \ref{prepcohoprealsch}.
Let $f\colon X_\bullet\rightarrow S$ be a smooth morphism
(cf.\ \ref{simplinotfix})
from an admissible simplicial space to a space. In this
situation, let us define an adjoint pair of functors
$(f^*,\mb{R}f_*)$. Let $f_i\colon X_i\rightarrow S$ be the induced
morphism. The pull-back is easy to define: Let $\ms{N}\in M(S)$. We put
$\ms{N}_i:=f_i^*(\ms{N})$ which is defined in $M(X_i)$. Let
$\phi\colon[i]\rightarrow[j]$ be a map. Then we define a homomorphism,
which is in fact an isomorphism, $X(\phi)^*\ms{N}_i\rightarrow\ms{N}_j$
to be the gluing homomorphism. The object we constructed in
$M(X_\bullet)$ is denoted by $f^*(\ms{N})$. Thus, we have a functor
\begin{equation*}
 f^*\colon M(S)\rightarrow M(X_\bullet).
\end{equation*}
The functor $f^*$ is exact since each $f_i^*$ is.

Let us define its right adjoint. Take $\ms{M}_\bullet$ in
$M(X_\bullet)$. For $\phi\colon[i]\rightarrow[j]$, we have the
homomorphism
\begin{equation*}
 \alpha_\phi\colon f_{i*}(\ms{M}_i)\rightarrow f_{j*}(\ms{M}_j).
\end{equation*}
We put
\begin{equation*}
 f_*(\ms{M}_\bullet):=\mr{Ker}\bigl(f_{0*}(\ms{M}_0)\rightrightarrows
  f_{1*}(\ms{M}_1)\bigr).
\end{equation*}
Since $f_{0*}$ and $f_{1*}$ are left exact, the functor $f_*$ is left
exact as well. Thus we may take the associated derived functor to get
\begin{equation*}
 \mb{R}f_*\colon D^{+}(X_\bullet)\rightarrow D^+(S).
\end{equation*}

These constructions can be generalized to a smooth morphism
$f\colon X_{\bullet\bullet}\rightarrow S_\bullet$ from a double
simplicial space to a simplicial space: Pull-back is defined in an
obvious manner. Let $\ms{M}_{\bullet\bullet}$ in
$M(X_{\bullet\bullet})$. Recall that $f$ is a collection of
$f_{n\bullet}\colon X_{n\bullet}\rightarrow S_n$ satisfying
the compatibility conditions. We get $f_{n\bullet*}(\ms{M}_{n\bullet})$
in $M(S_n)$. The transition homomorphism is defined by adjunction, and
we have $f_{\bullet\bullet*}\colon M(X_{\bullet\bullet})\rightarrow
M(S_\bullet)$. This is a left exact functor, and we get the derived
functor $\mb{R}f_*\colon D^+(X_{\bullet\bullet})\rightarrow
D^+(S_\bullet)$.

\begin{lem*}
 We have an adjoint pair $(f^*,f_*)$. Thus, so is the pair
 $(f^*,\mb{R}f_*)$, and the obvious analogue holds for the double
 simplicial case.
\end{lem*}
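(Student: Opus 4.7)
The plan is to verify the adjunction at the abelian level by unwinding the definition of morphisms in $M(X_\bullet)$ and reducing to the termwise adjunctions $(f_i^*,f_{i*})$, which exist by the discussion in \ref{prepcohoprealsch}. Concretely, a morphism $\phi\colon f^*\ms{N}\to\ms{M}$ in $M(X_\bullet)$ is a family $\phi_i\colon f_i^*\ms{N}\to\ms{M}_i$ such that for every $\psi\colon[i]\to[j]$, the square formed by $X(\psi)^*\phi_i$, the gluing map $X(\psi)^*\ms{M}_i\to\ms{M}_j$, the canonical isomorphism $X(\psi)^*f_i^*\ms{N}\cong f_j^*\ms{N}$ (coming from $f_j=f_i\circ X(\psi)$), and $\phi_j$ commutes. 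Passing through the termwise adjunction, each $\phi_i$ corresponds to $\widetilde{\phi}_i\colon\ms{N}\to f_{i*}\ms{M}_i$, and the compatibility above translates into $\alpha_\psi\circ\widetilde{\phi}_i=\widetilde{\phi}_j$ with $\alpha_\psi$ the map built exactly as in the construction of $f_*$.

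Next I would show that this system of constraints on $(\widetilde{\phi}_i)_i$ is equivalent to a single morphism $\widetilde{\phi}_0\colon\ms{N}\to f_{0*}\ms{M}_0$ equalizing the pair $\alpha_{\partial^0},\alpha_{\partial^1}\colon f_{0*}\ms{M}_0\rightrightarrows f_{1*}\ms{M}_1$ arising from the two face maps $[0]\to[1]$, i.e., to a morphism $\ms{N}\to f_*\ms{M}$ via the universal property of the kernel. One direction is immediate: given compatible $(\widetilde{\phi}_i)$, restricting to $[0]\rightrightarrows[1]$ gives the equalizer condition on $\widetilde{\phi}_0$. For the converse, given $\widetilde{\phi}_0$ factoring through $f_*\ms{M}$, define $\widetilde{\phi}_i$ by choosing any morphism $[0]\to[i]$ in $\Delta^+$ and applying the corresponding $\alpha$; the equalizer condition together with the cocycle law for the gluing maps on $\ms{M}_\bullet$ ensures independence of the chosen path and the full compatibility for every $\psi\colon[i]\to[j]$.

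The main obstacle is this well-definedness check: one must verify that two different chains of face maps from $[0]$ to $[i]$ produce the same $\widetilde{\phi}_i$, and that for an arbitrary $\psi\colon[i]\to[j]$ the resulting square commutes. This is the usual cocycle/descent argument, and it reduces, by decomposing any morphism in $\Delta^+$ into elementary face maps and using transitivity of pullbacks, to the single relation coming from the two maps $[0]\rightrightarrows[1]$. The fact that we work in $\Delta^+$ (no degeneracies) causes no trouble since we only need injective face maps for this verification.

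Once the adjunction $(f^*,f_*)$ is established, the derived version $(f^*,\mb{R}f_*)$ follows immediately from Lemma \ref{deradjlemm}: $f^*$ is exact (being termwise exact), and $M(X_\bullet)$ has enough injectives, being a Grothendieck category by Lemma \ref{functorrhodef}(iii). The double simplicial case $X_{\bullet\bullet}\to S_\bullet$ is handled by the same argument with $[0]\rightrightarrows[1]$ replaced by the appropriate pair of face maps in each simplicial direction, the kernel in the definition of $f_*$ being computed levelwise in $S_\bullet$.
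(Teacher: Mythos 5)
Your proof is correct and takes essentially the same approach as the paper: both reduce to termwise adjunctions $(f_i^*,f_{i*})$ plus the cosimplicial observation that a compatible system $(\widetilde{\phi}_i)$ is determined by $\widetilde{\phi}_0$ equalizing the two face maps, together with the well-definedness cocycle check. The paper organizes this by factoring $f$ as $X_\bullet\to S_\bullet\to S$ (constant simplicial space) and invoking the abstract adjunction $(\rho^*,\rho_*)$ on $\Delta^+\mc{A}$ for the second leg, whereas you carry out the same calculation inline; this is a difference of packaging, not substance.
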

\begin{proof}
 The adjointness of $(f^*,\mb{R}f_*)$ follows by the first one using
 Lemma \ref{deradjlemm}.
 Let $S_\bullet$ be the constant simplicial space. We have the morphism
 $f_\bullet\colon X_\bullet\rightarrow S_\bullet$, and the adjoint pair
 $(f_i^*,f_{i*})$ defines a pair of functors
 $(f_{\bullet}^*,f_{\bullet*})$ between $M(X_\bullet)$ and
 $M(S_\bullet)$. It is straightforward to check that this is an adjoint
 pair. Thus, it suffices to check the lemma for the morphism
 $S_\bullet\rightarrow S$. This follows from the following general fact:
 Let $\mc{A}$ be an abelian category, and $\Delta^+\mc{A}$ be the
 category of cosimplicial objects, namely the abelian category of
 functors $\Delta^+\rightarrow\mc{A}$. Let
 $\rho^*\colon\mc{A}\rightarrow\Delta^+\mc{A}$ be functor assigning the
 constant object, and $\rho_*\colon\Delta^+\mc{A}\rightarrow\mc{A}$ to
 be the functor associating $\mr{Ker}(M_0\rightrightarrows M_1)$ to
 $\{M_i\}$. Then $(\rho^*,\rho_*)$ is an adjoint pair. The verification
 is straightforward. The double simplicial case is similar.
\end{proof}

\subsubsection{}
The following spectral sequence is one of the keys to show the
cohomological descent.

\begin{lem*}
 \label{fundssforsimpldes}
 Let $\ms{M}\in D^+(X_\bullet)$. Then we have the following spectral
 sequence:
 \begin{equation*}
  E_1^{p,q}:=\mb{R}^qf_{p*}(\ms{M}_p)\Rightarrow
   \mb{R}^{p+q}f_*(\ms{M}).
 \end{equation*}
\end{lem*}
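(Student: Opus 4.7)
The strategy is to compute $\mb{R}f_*\ms{M}$ via a double complex whose first filtration has the asserted $E_1$-page; I treat the case $\ms{M}\in M(X_\bullet)$, the general case being analogous via a Cartan--Eilenberg resolution.

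Choose an injective resolution $\ms{M}\to\ms{I}^{\bullet}$ in the Grothendieck abelian category $M(X_\bullet)$ (Lemma \ref{functorrhodef}). The functor $\rho_p^*$ is exact (Lemma \ref{functorrhodef}(ii)) and preserves injectives via the adjoint pair $(\rho_{p!},\rho_p^*)$ with $\rho_{p!}$ exact; hence each $\ms{I}_p^{\bullet}:=\rho_p^*\ms{I}^{\bullet}$ is an injective resolution of $\ms{M}_p$ in $M(X_p)$, so $\mb{R}^qf_{p*}(\ms{M}_p)\cong\H^q\bigl(f_{p*}\ms{I}_p^{\bullet}\bigr)$. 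Form the double complex $C^{p,q}:=f_{p*}(\ms{I}_p^{q})$ in $M(S)$, with vertical differential from the resolution and horizontal differential $\delta=\sum_{i=0}^{p+1}(-1)^i d^i$ assembled, by adjunction, from the gluing maps $X(\phi)^*\ms{I}_i^{q}\to\ms{I}_j^{q}$ together with the natural isomorphism $f_{i*}X(\phi)_*\cong f_{j*}$ of Lemma \ref{prepcohoprealsch}(i). Filtering $\mr{Tot}(C^{\bullet,\bullet})$ by columns yields at once
\begin{equation*}
E_1^{p,q}=\H^q_{\mr{vert}}(C^{p,\bullet})=\mb{R}^qf_{p*}(\ms{M}_p).
\end{equation*}

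It remains to identify $\mr{Tot}(C^{\bullet,\bullet})$ with $f_*\ms{I}^{\bullet}\simeq\mb{R}f_*\ms{M}$. Factor $f$ as $X_\bullet\xrightarrow{g}S_\bullet\xrightarrow{\rho}S$ through the constant admissible simplicial space, where $g$ is termwise $f_p$ and $\rho_*=\lim_{\Delta^+}$; then $f_*=\rho_*\circ g_*$, and by the argument of Lemma \ref{constofpairfunct} the pair $(g^*,g_*)$ is adjoint with $g^*$ exact, so $g_*$ preserves injectives. The row filtration of $\mr{Tot}(C^{\bullet,\bullet})$ produces the spectral sequence whose $E_1^{p,q}$ is the cosimplicial cohomology of $g_*\ms{I}^{q}=\{f_{p*}\ms{I}_p^{q}\}_p$ in $M(S_\bullet)\cong\mr{Fun}(\Delta^+,M(S))$. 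A standard cobar-style computation --- based on the right adjoint $R_p\colon M(S)\to M(S_\bullet)$ of evaluation at $[p]$, which yields enough injectives in $M(S_\bullet)$ whose cosimplicial cochain complexes are manifestly acyclic in positive degrees --- identifies $\mb{R}^p\rho_*$ with this cosimplicial cohomology. Since $g_*\ms{I}^{q}$ is injective in $M(S_\bullet)$, it is $\rho_*$-acyclic, so this $E_1$ collapses to $f_*\ms{I}^{q}$ in row $p=0$ and vanishes for $p>0$; hence $\mr{Tot}(C^{\bullet,\bullet})\simeq f_*\ms{I}^{\bullet}$, and the first filtration converges as asserted.

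The principal technical obstacle is the identification of $\mb{R}^\bullet\rho_*$ with cosimplicial cohomology, which requires some care since $\Delta^+$ admits only face maps; this is classical but not entirely formal. Everything else is a direct application of the double-complex formalism and of the adjunctions already established in \ref{prepcohoprealsch}--\ref{constofpairfunct}.
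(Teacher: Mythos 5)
Your strategy --- a double complex $C^{p,q}=f_{p*}(\ms{I}_p^q)$ whose column filtration produces $E_1^{p,q}=\mb{R}^qf_{p*}\ms{M}_p$ and whose row filtration identifies $\mr{Tot}(C^{\bullet,\bullet})$ with $\mb{R}f_*\ms{M}$ --- agrees with the paper's in broad outline but is organized differently in its second half. The paper sidesteps the question of whether an \emph{arbitrary} injective resolution has the needed acyclicity property: it resolves $\ms{M}$ by injectives of the specific form $\prod_i\rho_{i*}(\ms{I}_{(i)})$, for which the \v{C}ech complex $\{f_{p*}\rho_p^*\ms{I}\}_p$ is shown directly to be exact away from degree $0$. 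You instead take an arbitrary injective resolution, factor $f$ as $\rho\circ g$ through the constant simplicial object, and appeal to the principle that injectives in $M(S_\bullet)\cong\mr{Fun}(\Delta^+,M(S))$ have cosimplicial cochain complexes concentrated in degree $0$. That is conceptually tidier and amounts to the same mathematics, since any injective in $M(S_\bullet)$ is a direct factor of a product $\prod_p R_p(I_{(p)})$ --- precisely the objects the paper manipulates.

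The one detail you gloss over, and which the paper is explicit about, is the passage from the single objects $R_p(\ms{N})$ to their (generally infinite) products. Products in a Grothendieck category need not be exact, so knowing that the cosimplicial complex of $R_p(\ms{N})$ is quasi-isomorphic to $\ms{N}$ placed in degree $0$ does not by itself give the result after taking products; one needs a chain \emph{homotopy} equivalence, which does survive products. The paper supplies this via the identification $C^\bullet_p(\ms{N})\cong\widetilde{T}\otimes_L\ms{N}$ with $\widetilde{T}$ the (contractible) combinatorial cochain complex of the $p$-simplex, making the retraction $C^\bullet_p(\ms{N})\to\ms{N}$ a genuine homotopy equivalence. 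Your ``manifestly acyclic in positive degrees'' should be sharpened to this. With that detail supplied, your argument is correct.
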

\begin{proof}
 The construction is essentially the same as \cite[Corollary
 2.7]{OlSA}. Since we use the similar argument again later, we sketch
 the proof. Let $\ms{N}\in M(X_i)$. We have a homomorphism
 $f_{0*}\bigl(\rho_{i*}(\ms{N})_0\bigr)\cong\prod_{[0]\rightarrow[i]}
 f_{i*}(\ms{N})\rightarrow f_{i*}(\ms{N})$ where the second homomorphism
 is the projection to the component of the map
 $\alpha\colon[0]\rightarrow[i]$ such that the image is $0\in[i]$. This
 induces a homomorphism from the \v{C}ech type complex:
 \begin{equation*}
  C^{\bullet}_i(\ms{N}):=\left[
  0\rightarrow f_{0*}\bigl(
  \rho_0^*\rho_{i*}(\ms{N})\bigr)\rightarrow
   f_{1*}\bigl(\rho_1^*\rho_{i*}(\ms{N})\bigr)\rightarrow
   f_{2*}\bigl(\rho_2^*\rho_{i*}(\ms{N})\bigr)\rightarrow\dots
   \right]
 \end{equation*}
 to $f_{i*}(\ms{N})$. This homomorphism is in fact a homotopy equivalence.
 Indeed, the cohomologies of the complex $\widetilde{T}:=\left[
 0\rightarrow\prod_{[0]\rightarrow[i]}L
 \rightarrow\prod_{[1]\rightarrow[i]}L\rightarrow\dots\right]$,
 which is isomorphic to that of the $i$-simplex, vanish except
 for degree $0$, and since $K(\mr{Vec}_L)\cong D(\mr{Vec}_L)$, the
 homomorphism $\widetilde{T}\rightarrow L$ of the projection to
 $\alpha$-component is a homotopy equivalence. Since
 $C_i^{\bullet}(\ms{N})\cong\widetilde{T}\otimes_Lf_{i*}(\ms{N})$, we
 get the claim.

 Now, for any $\ms{N}\in M(X_\bullet)$, there exists an embedding
 $\ms{N}\hookrightarrow\ms{I}$ into an injective object in
 $M(X_\bullet)$ such that the complex
 \begin{equation}
  \label{Cechcomplforsipu}
   \tag{$\star$}
  0\rightarrow f_{0*}(\rho_0^*(\ms{I}))
  \rightarrow f_{1*}(\rho_1^*(\ms{I}))
   \rightarrow f_{2*}(\rho_2^*(\ms{I}))
   \rightarrow\cdots
 \end{equation}
 is exact away from degree $0$ part. For this, take an embedding
 $\rho_i^*\ms{N}\hookrightarrow\ms{I}_{(i)}$ into an injective object in
 $M(X_i)$, and put $\ms{I}:=\prod_i\rho_{i*}(\ms{I}_{(i)})$. Note that
 products can be calculated termwise by
 \ref{sepacseqBDandmis}. Moreover, small products and $f_{i*}$ commute
 by \cite[2.1.10]{KSc} since $f_{i*}$ admits a left adjoint $f^*_i$.
 This implies that (\ref{Cechcomplforsipu}) is isomorphic to
 $\prod_iC_i^{\bullet}(\ms{I}_{(i)})$. Now, each
 $C^{\bullet}_i(\ms{I}_{(i)})$ is homotopic to $f_{i*}(\ms{I}_{(i)})$ by
 the observation above. Since homotopy equivalence is preserved even
 after taking product, we get that (\ref{Cechcomplforsipu}) is homotopic
 to $\prod_{i} f_{i*}\ms{I}_{(i)}$, in particular, the complex is exact
 away from $0$. Since $\rho_{i*}$ preserves injective objects by Lemma
 \ref{functorrhodef} and the product of injective objects remains to be
 injective, $\ms{I}$ is a desired object.

 Finally, let $\ms{M}\rightarrow\ms{I}^{\bullet}$ be an resolution of
 $\ms{M}$ consisting of complex as above. We consider the double complex
 $\bigl\{f_{p*}\rho_p^*(\ms{I}^q)\bigr\}_{p,q\geq0}$. The acyclicity of
 (\ref{Cechcomplforsipu}) except for degree $0$ shows that the total
 complex is $\mb{R}f_*(\ms{M})$, and thus the associated spectral
 sequence is the one we want.
\end{proof}

Recall that by Lemma \ref{smoothpushpulsch}, or by \ref{smoothpullback}
in the second read case, $\mb{R}f_{i*}$ preserves holonomicity. Thus, we
have the following corollary:

\begin{cor*}
 The functor $\mb{R}f_*$ preserves holonomicity, and induces a functor
 $D^+_{\mr{hol}}(X_\bullet)\rightarrow D^+_{\mr{hol}}(S)$.
\end{cor*}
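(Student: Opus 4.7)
The plan is to exploit the spectral sequence of the preceding lemma directly, since holonomicity is a fiberwise property on the simplicial space and is preserved by smooth push-forward at each level. Let $\ms{M}\in D^+_{\mr{hol}}(X_\bullet)$ with $\H^i(\ms{M})=0$ for $i<N$. First, I would reduce to the termwise picture: since $\rho_p^*$ is exact (Lemma \ref{functorrhodef}(ii)), one has $\H^i(\ms{M}_p)\cong\rho_p^*\H^i(\ms{M})=(\H^i\ms{M})_p$, which lies in $\mr{Hol}(X_p)$ by the very definition of $\mr{Hol}(X_\bullet)$. Hence each $\ms{M}_p$ belongs to $D^{\geq N}_{\mr{hol}}(X_p)$.

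Next, since $f\colon X_\bullet\to S$ is smooth, each component $f_p\colon X_p\to S$ is a smooth morphism between spaces, and I would invoke the preservation of holonomicity under $\mb{R}f_{p*}$ noted at the end of \ref{prepcohoprealsch} (valid in both reads). Consequently every term on the $E_1$-page of
\begin{equation*}
 E_1^{p,q}=\mb{R}^q f_{p*}(\ms{M}_p)\Rightarrow\mb{R}^{p+q}f_*(\ms{M})
\end{equation*}
lies in $\mr{Hol}(S)$, and moreover vanishes whenever $p<0$ or $q<N$. Because $\mr{Hol}(S)$ is a thick (Serre) subcategory of $M(S)$, every subsequent page $E_r^{p,q}$, being a subquotient of $E_1^{p,q}$, remains holonomic, and so does each $E_\infty^{p,q}$.

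To conclude I would verify convergence in the strong sense: the support of $E_1$ lies in the quadrant $\{p\geq0,\ q\geq N\}$, so for each fixed total degree $n$ the diagonal $p+q=n$ contains only finitely many nonzero entries ($0\leq p\leq n-N$). The induced filtration on $\mb{R}^n f_*(\ms{M})$ is therefore finite with holonomic graded pieces, and closedness of $\mr{Hol}(S)$ under extensions yields $\mb{R}^n f_*(\ms{M})\in\mr{Hol}(S)$, giving the desired functor $D^+_{\mr{hol}}(X_\bullet)\to D^+_{\mr{hol}}(S)$. The argument is essentially formal once the spectral sequence is in hand; there is no substantive obstacle beyond the bookkeeping of the quadrant of convergence and the observation that holonomicity on $S$ is closed under subquotients and extensions.
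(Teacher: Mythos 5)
Your argument is correct and is essentially the one the paper has in mind: the paper states the corollary as an immediate consequence of the spectral sequence of Lemma \ref{fundssforsimpldes} together with the level-wise fact that $\mb{R}f_{p*}$ preserves holonomicity, and what you have done is simply written out the bookkeeping (vanishing quadrant, finiteness of the filtration, thickness of $\mr{Hol}(S)$ in $M(S)$) that the paper leaves implicit.
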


\begin{prop}
 \label{smoothdesre}
 Let $X\rightarrow S$ be a smooth surjective morphism between spaces,
 and put $f\colon X_\bullet:=\mr{cosk}_0(X/S)\rightarrow S$.
 The adjoint pair of functors $(f^*,\mb{R}f_*)$ induces an equivalence
 between $D^{\star}_{\mr{tot}}(X_\bullet)$ and $D^{\star}(S)$
 for $\star\in\{+,\mr{b}\}$. Moreover, it induces an equivalence between
 $D^{\star}_{\mr{hol}}(X_\bullet)$ and $D^{\star}_{\mr{hol}}(S)$.
\end{prop}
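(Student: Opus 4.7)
The plan is to verify that both the unit $\eta\colon \mr{id} \to \mb{R}f_* \circ f^*$ and the counit $\varepsilon\colon f^* \circ \mb{R}f_* \to \mr{id}$ of the adjunction from \ref{constofpairfunct} are isomorphisms on the indicated categories. Preservation of holonomicity, and hence the corresponding equivalence of holonomic subcategories, will then follow from the exactness of $f^*$ (Lemma \ref{prepcohoprealsch}) together with the corollary to Lemma \ref{fundssforsimpldes}. Throughout, I use tacitly that the smooth morphism $X\to S$ is surjective (otherwise the statement fails; in all applications the map $X\to\st{X}$ used in \ref{functorrhodef} is a presentation).

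First I would establish the cohomological descent: for $\ms{N}\in D^{+}(S)$, the unit $\ms{N}\to\mb{R}f_{*}(f^{*}\ms{N})$ is an isomorphism. By Lemma \ref{fundssforsimpldes}, the target is computed by the spectral sequence with $E_{1}^{p,q}=\mb{R}^{q}f_{p*}(f_{p}^{*}\ms{N})$, so the problem reduces to showing that the augmented Čech-type complex
\[
\ms{N}\longrightarrow\mb{R}f_{0*}(f_{0}^{*}\ms{N})\longrightarrow\mb{R}f_{1*}(f_{1}^{*}\ms{N})\longrightarrow\cdots
\]
is a quasi-isomorphism in $D^{+}(M(S))$. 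I would prove this by pulling back along the smooth surjection $g:=f_{0}\colon X\to S$: smooth base change (Lemma \ref{prepcohoprealsch}(i)) identifies $g^{*}$ of the augmented complex with the analogous complex associated with the base-changed smooth surjection $g'\colon X\times_{S}X\to X$, which admits the diagonal $\Delta$ as a canonical section. The section yields a simplicial extra degeneracy, hence a contracting homotopy for the augmented cosimplicial object, so the pulled-back complex is acyclic in positive degrees with the prescribed augmentation term. Since $g$ is smooth surjective, $g^{*}$ is exact and faithful on $\mr{Hol}$, and the argument of \ref{indcatrecall} extends this to conservativity on $M$ and hence on its derived category; this propagates the quasi-isomorphism back from $X$ to $S$.

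For the counit, I would show that $\varepsilon\colon f^{*}\mb{R}f_{*}\ms{M}_{\bullet}\to\ms{M}_{\bullet}$ is an isomorphism for $\ms{M}_{\bullet}\in D^{\star}_{\mr{tot}}(X_{\bullet})$ by testing component-wise via the exact conservative family $\{\rho_{n}^{*}\}$ of \ref{functorrhodef}. Since $\rho_{n}^{*}\circ f^{*}=f_{n}^{*}$, it suffices to establish $f_{n}^{*}\mb{R}f_{*}\ms{M}_{\bullet}\xrightarrow{\sim}\ms{M}_{n}$. Applying smooth base change to $f_{n}\colon X_{n}\to S$ replaces $X_{\bullet}\to S$ by the base-changed augmented simplicial space $X_{\bullet}\times_{S}X_{n}\to X_{n}$, which again admits a canonical section (one of the projection factors from $X_{\bullet}\times_{S}X_{n}$ to $X_{n}$). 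The same extra-degeneracy/contracting-homotopy argument as in Step 1, applied together with the spectral sequence of Lemma \ref{fundssforsimpldes}, then reconstructs $\ms{M}_{n}$ from the pushforward. For the bounded case, observe that once essential surjectivity is proved in $D^{+}$, for any $\ms{M}_{\bullet}\in D^{\mr{b}}_{\mr{tot}}(X_{\bullet})$ the object $\ms{N}:=\mb{R}f_{*}\ms{M}_{\bullet}\in D^{+}(S)$ satisfies $f^{*}\ms{N}\cong\ms{M}_{\bullet}$; since $f^{*}$ is exact and conservative (by the already-proved descent), $\ms{N}$ must itself be bounded, and similarly holonomicity of $\ms{M}_{\bullet}$ implies holonomicity of $\ms{N}$.

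The main obstacle is the contracting-homotopy step in the cohomological descent: implementing an extra degeneracy on the derived augmented Čech complex in the Ind-holonomic setting must be done compatibly with the various adjunctions and with injective resolutions chosen simplicially, so that the smooth base change of Lemma \ref{prepcohoprealsch} can be applied uniformly. Once this descent is in hand, the counit step is a parallel application of the same homotopy on a base-changed simplicial space, and the passage to bounded and to holonomic subcategories is formal from the machinery of \S\ref{Dmodforstack}.
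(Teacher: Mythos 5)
Your proof is correct, and it takes a genuinely different route from the paper's at the reduction step. Both proofs ultimately hinge on the split case (a section gives a contracting homotopy for the augmented Čech complex; the paper cites Conrad and Giraud for exactly this). The difference lies in how one gets to the split case. The paper reduces to the case where $S$ is a point: it tests the unit by applying $i_s^!$ for closed points $s\hookrightarrow S$ and using the $!$-base change of Lemma \ref{prepcohoprealsch}(i), then invokes a section of $X_0\to s$; for the counit it first proves the non-derived statement $f^*f_*\cong\mr{id}$, deducing $\mb{R}^if_*=0$ for $i\neq0$ on total objects, and handles effective descent again by reducing to a point. You instead base-change along the smooth surjection $g=f_0\colon X\to S$ for the unit (using Lemma \ref{smoothbasechforsimplspa} and the diagonal section of $X\times_S X\to X$), and for the counit test component-wise via the exact conservative family $\{\rho_n^*\}$ and base-change along $f_n$, where one of the projections $X_n\to X\times_S X_n$ provides the section. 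Your version avoids the "restrict to a fiber" step entirely and packages the boundedness argument more formally (conservativity of $f^*$ once the unit is an isomorphism), whereas the paper's version gives the extra information that $\mb{R}f_*$ is $t$-exact on total objects, which is occasionally useful on its own.

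Two small points worth tightening. First, the conservativity of $g^*$ on $D^+(M(S))$ that you invoke does not follow merely from the Ind-category formalism in \ref{indcatrecall}: what is needed is faithfulness of $g^*$ on $\mr{Hol}(S)$ itself, which one gets from the support compatibility of smooth pullback (cf.\ \cite[1.3.11]{AC}) plus the observation that a filtered colimit of monomorphisms in $\mr{Ind}(\mr{Hol}(S))$ vanishes only if the terms eventually vanish. Second, you are right that the statement silently requires $X\to S$ surjective (the paper's proof also uses this tacitly when producing the section $S\to X_0$ over a point); making that hypothesis explicit, as you do, is a genuine improvement to the statement.
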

\begin{proof}
 The second claim follows by the first one since $f^*$ and $\mb{R}f_*$
 preserve the holonomicity by Corollary \ref{fundssforsimpldes}. Thus,
 it suffices to show that the canonical homomorphisms
 $\mr{id}\rightarrow\mb{R}f_*f^*$ and $f^*\mb{R}f_*\rightarrow\mr{id}$
 are isomorphisms. For the first one, it suffices to show the equalities
 after taking $i^!_s$ where $s$ is a closed point of $S$ and
 $i_s\colon\{s\}\hookrightarrow S$. By taking the fiber product, it
 induces a morphism of simplicial spaces $i_{X,s}\colon X_{s
 \bullet}\hookrightarrow X_\bullet$. Using Lemma \ref{prepcohoprealsch},
 we have $\mb{R}f_{s*}\circ i^!_{X,s}\cong i^!_s\circ\mb{R}f_*$ where
 $f_s\colon X_{s\bullet}\rightarrow s$.
 Thus, we can reduce to the situation where we have a section
 $s\colon S\rightarrow X_0$ of $f_0$.
 In this case, the verification for the first homomorphism is the same
 as \cite[Thm 7.2]{Con}. Let us recall the argument briefly.
 By using Lemma \ref{fundssforsimpldes}, we have the spectral sequence
 $E_1^{p,q}=\mb{R}^qf_{p*}(\ms{M}_p)\Rightarrow\mb{R}^{p+q}f_*(\ms{M})$.
 Put $E_1^{-1,q}:=\mb{R}^q\mr{id}_*(\ms{M})$, which is $\ms{M}$ if $q=0$
 and $0$ otherwise.
 We claim that the complex defined by adjunction $0\rightarrow
 E_1^{-1,q}\rightarrow E_1^{\bullet,q}\rightarrow0$ where $E_1^{-1,q}$
 is placed at degree $-1$ is acyclic. To check this, we construct a
 concrete homotopy $E_1^{p,q}\rightarrow E_1^{p-1,q}$ using the section
 $s$. For example $E_1^{0,q}\rightarrow E_1^{-1,q}$ is constructed as
 follows: We have an isomorphism
 $s^*f^*_{0}\ms{M}\xrightarrow{\sim}\ms{M}$, which induces
 $f^*_{0}(\ms{M})\rightarrow s_*(\ms{M})$ by adjunction. By taking
 $f_{0*}$ and using the isomorphism $f_{0*}s_*\cong\mr{id}$, we obtain
 the desired homotopy. See {\it ibid.}\ for the details.

 Let us show the second one. It suffices to show that the homomorphism
 of functors, before taking the derived functors,
 $f^*f_*\rightarrow\mr{id}$ is an isomorphism. Indeed, if this is shown,
 we get that for $\ms{M}\in M(X_\bullet)$, we have
 \begin{equation*}
  \mb{R}f_*(\ms{M})\xleftarrow{\sim}\mb{R}f_*
   \bigl(f^*f_*(\ms{M})\bigr)\xleftarrow{\sim}f_*(\ms{M})
 \end{equation*}
 where the second quasi-isomorphism follows by the the first part of the
 proof.
 Thus we have $\mb{R}^if_*(\ms{M})=0$ for $i\neq0$. Finally, let
 us show $f^*f_*(\ms{M})\rightarrow\ms{M}$ is an isomorphism. As the
 proof of the first isomorphism,
 it suffices to show the claim when $S$ is a point,
 and in particular there is a section $S\rightarrow X$.
 In this case, it suffices to show that there exists
 $\ms{N}\in M(S)$ such that $f^*(\ms{N})\cong\ms{M}$, namely,
 $\ms{M}$ is ``effective descent''.
 Because of the existence of the section, this is automatic
 (for example, see \cite[right after 6.15]{Gi}).
\end{proof}

\subsubsection{}
\label{smoothbasechforsimplspa}
Let $f\colon X_\bullet\rightarrow S$ be a smooth morphism from an
admissible simplicial space to a space, and $g\colon S'\rightarrow S$ be
a smooth morphism between spaces. Consider the following cartesian
diagram:
\begin{equation*}
 \xymatrix{
  X'_\bullet\ar[r]^-{g'}\ar[d]_{f'}\ar@{}[rd]|\square&
  X_\bullet\ar[d]^{f}\\
 S'\ar[r]_-{g}&
  S
  }
\end{equation*}
For $\ms{M}\in M(X_\bullet)$, the system
$\bigl\{g'^*_i(\ms{M}_i)\bigr\}_{i}$ defines an object of
$M(X'_\bullet)$, denoted by $g'^*(\ms{M})$. This functor $g'^*$ is
exact, and preserves holonomicity.
We can take the derived functor $g'^*\colon
D^{\mr{b}}_{\mr{hol}}(X_\bullet)\rightarrow
D^{\mr{b}}_{\mr{hol}}(X'_\bullet)$. Similarly, for $\ms{N}\in
M(X'_\bullet)$, the system $\bigl\{g'_{i*}(\ms{N}_i)\bigr\}$ defines an
object of $M(X_\bullet)$ by Corollary \ref{smbcforopenimm}, or by the
following lemma in the second read case, and the assumption that $g'$ is
cartesian. This functor is left exact, and we have the right derived
functor $\mb{R}g'_*$. The couple $(g'^*,\mb{R}g'_*)$ is an adjoint
pair.

\begin{lem*}
 The canonical homomorphism $g^*\circ\mb{R}f_*\rightarrow
 \mb{R}f'_*\circ g'^*$ is an isomorphism.
\end{lem*}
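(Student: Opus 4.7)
The plan is to compare, on $E_1$-pages, the spectral sequence of Lemma \ref{fundssforsimpldes} for $f$ pushed forward by $g^*$ with the corresponding spectral sequence for $f'$ applied to $g'^*(\ms{M})$. Concretely, for $\ms{M}\in D^+(X_\bullet)$ we have
\begin{equation*}
 E_1^{p,q}(f,\ms{M})=\mb{R}^qf_{p*}(\ms{M}_p)\Rightarrow
  \mb{R}^{p+q}f_*(\ms{M}),\qquad
 E_1^{p,q}(f',g'^*\ms{M})=\mb{R}^qf'_{p*}(g'^*_p\ms{M}_p)\Rightarrow
  \mb{R}^{p+q}f'_*(g'^*\ms{M}).
\end{equation*}
Since $g$ is smooth, $g^*$ is exact (cf.\ \ref{prepcohoprealsch}), so applying it to the first spectral sequence yields a spectral sequence with $E_1^{p,q}=g^*\mb{R}^qf_{p*}(\ms{M}_p)$ converging to $g^*\mb{R}^{p+q}f_*(\ms{M})$. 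The smooth base change for spaces (Corollary \ref{smbcforopenimm}, or the analogous input used in the second read) applied level by level to the cartesian squares
\begin{equation*}
 \xymatrix{
  X'_p\ar[r]^-{g'_p}\ar[d]_{f'_p}\ar@{}[rd]|\square&X_p\ar[d]^{f_p}\\
  S'\ar[r]_-{g}&S
 }
\end{equation*}
provides canonical isomorphisms $g^*\circ\mb{R}f_{p*}\xrightarrow{\sim}\mb{R}f'_{p*}\circ g'^*_p$, identifying the $E_1$-terms.

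The main step is then to check that these level-wise base change isomorphisms are compatible with the differentials of the two spectral sequences, so that one obtains a morphism of spectral sequences which is an isomorphism on $E_1$, hence on the abutment. I would do this by revisiting the proof of Lemma \ref{fundssforsimpldes}: both spectral sequences come from the \v{C}ech-type resolution $\ms{M}\to\ms{I}^{\bullet}$ with $\ms{I}^q=\prod_{i}\rho_{i*}(\ms{I}_{(i)}^q)$. The base change isomorphism $g^*\rho_{i*}(-)\cong\rho_{i*}(g'^*_i(-))$ follows termwise from $g^*f_{i*}\cong f'_{i*}g'^*_i$ and the commutation of $g^*$ with small products (use that $g^*$ has a right adjoint $\mb{R}g_*$, cf.\ \ref{smoothbasechforsimplspa}, and apply \cite[2.1.10]{KSc}). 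This produces an injection of $g'^*\ms{M}$ into a resolution whose associated \v{C}ech double complex is obtained from that for $\ms{M}$ by applying $g^*$ termwise, and the induced morphism of spectral sequences is precisely the base change morphism at each $E_1$-term.

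The main obstacle, and essentially the only one, is this bookkeeping of compatibility between the base change morphisms and the \v{C}ech differentials; the cohomological content is already contained in the level-wise smooth base change for spaces and the exactness of $g^*$. Once compatibility is established, comparison of spectral sequences gives the isomorphism $g^*\mb{R}f_*(\ms{M})\xrightarrow{\sim}\mb{R}f'_*g'^*(\ms{M})$ for any $\ms{M}\in D^+(X_\bullet)$.
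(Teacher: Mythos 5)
Your overall strategy — reduce to level‑wise smooth base change via the spectral sequence of Lemma \ref{fundssforsimpldes} — is exactly the route the paper takes (the paper's proof is a one‑liner saying just that), so the plan is sound. However, there is an error in the specific implementation.

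The claim that $g^*$ commutes with small products because it ``has a right adjoint $\mb{R}g_*$, \ldots apply \cite[2.1.10]{KSc}'' is wrong in direction: a functor with a \emph{right} adjoint preserves \emph{colimits}; to preserve products one needs a \emph{left} adjoint. (Compare the paper's own use of this in the proof of Lemma \ref{fundssforsimpldes}, where $f_{i*}$ commutes with products because it has the \emph{left} adjoint $f^*_i$.) So as written you have not justified the identity $g^*\!\prod_i\rho_{i*}(\ms{I}_{(i)})\cong\prod_i\rho'_{i*}(g'^*\ms{I}_{(i)})$; the product over $i$ is infinite, and the exactness of $g^*$ alone does not give this.

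Fortunately this step is not needed. You do not have to show that $g^*\ms{I}$ again has the special product form, only that $g'^*\ms{I}^{\bullet}$ is an $f'_*$‑acyclic resolution, so that $\mb{R}f'_*(g'^*\ms{M})$ can be computed from the \v{C}ech double complex $\{f'_{p*}\rho_p^*(g'^*\ms{I}^q)\}$. And this acyclicity is an immediate consequence of the level‑wise smooth base change you already cite: $\mb{R}^qf'_{p*}(g'^*_p\ms{I}_p)\cong g^*\mb{R}^qf_{p*}(\ms{I}_p)=0$ for $q>0$ and $\ms{I}$ injective. With that, the base change morphism (defined via adjunction) manifestly respects the two \v{C}ech filtrations, inducing the desired morphism of spectral sequences which is an isomorphism on $E_1$, hence on the abutment. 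Replacing the product‑commutation step by this acyclicity argument repairs the proof.
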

\begin{proof}
 By using the spectral sequence of Lemma \ref{fundssforsimpldes}, the
 verification is reduced to Corollary \ref{smbcforopenimm}, or to the
 present lemma of the first read case in the second read.
\end{proof}

\begin{prop}
 \label{equivcatstackdfn}
 Let $\st{X}$ be a \stack, and $X_\bullet\rightarrow\st{X}$ be a
 simplicial space presentation. Then for $\star\in\{\mr{b},+\}$, the
 categories $D^{\star}_{\mr{tot}}(X_\bullet)$ and
 $D^{\star}_{\mr{hol}}(X_\bullet)$ do not depend on the choice of the
 presentation up to canonical equivalence, and the t-structure as well.
\end{prop}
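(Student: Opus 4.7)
The plan is to compare any two simplicial presentation spaces $X_\bullet\to\st{X}$ and $Y_\bullet\to\st{X}$ via a common bisimplicial refinement together with the smooth cohomological descent of Proposition \ref{smoothdesre}. Since $\st{X}$ is a \stack, its diagonal is representable by spaces of the class fixed in \ref{fixterminolspace}; consequently the fiber product $Z_{ij}:=X_i\times_{\st{X}}Y_j$ is again such a space, and the collection $Z_{\bullet\bullet}$ assembles into an admissible double simplicial space equipped with smooth cartesian projections $p\colon Z_{\bullet\bullet}\to X_\bullet$ and $q\colon Z_{\bullet\bullet}\to Y_\bullet$.

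First, fix an index $i\geq0$ and observe that the row $Z_{i,\bullet}$ is canonically isomorphic to $\mr{cosk}_0(Z_{i,0}\to X_i)$ where $Z_{i,0}=X_i\times_{\st{X}}Y\to X_i$ is the smooth surjective base change of the presentation $Y\to\st{X}$. Applying Proposition \ref{smoothdesre} to this situation yields an adjoint equivalence
\[
p_i^*\colon D^{\star}(X_i)\xrightarrow{\sim}D^{\star}_{\mr{tot}}(Z_{i,\bullet})
\]
for $\star\in\{\mr{b},+\}$, restricting to the holonomic subcategories by Corollary \ref{fundssforsimpldes}. By Lemma \ref{smoothbasechforsimplspa}, these row-wise equivalences are compatible with the face morphisms of $X_\bullet$, so they assemble into an equivalence
\[
p^*\colon D^{\star}_{\mr{tot}}(X_\bullet)\xrightarrow{\sim}D^{\star}_{\mr{tot}}(Z_{\bullet\bullet}),
\]
where the right hand side denotes the section category of the bi-cofibered category $M(Z_{\bullet\bullet})_{\bullet\bullet}$, defined verbatim as for simplicial spaces with indexing shape $(\Delta^+)^2$. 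The symmetric argument in the other index produces $q^*\colon D^{\star}_{\mr{tot}}(Y_\bullet)\xrightarrow{\sim}D^{\star}_{\mr{tot}}(Z_{\bullet\bullet})$, and the composition $(q^*)^{-1}\circ p^*$ is the sought canonical equivalence.

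Preservation of the holonomic subcategory and the t-structure is then automatic. Indeed, each row-wise constituent of $p^*$ and $q^*$ is a smooth pullback, hence exact on $M(-/L)$ and holonomicity-preserving by \ref{prepcohoprealsch}; consequently both equivalences are term-wise exact for the standard t-structure, and an adjoint equivalence of triangulated categories in which one functor is exact is automatically t-exact on both sides. The equivalence therefore restricts to the holonomic full subcategory $D^{\star}_{\mr{hol}}$, and Lemma \ref{catequforhol} shows that the identification $D^{\mr{b}}_{\mr{hol}}(X_\bullet)\cong D^{\mr{b}}_{\mr{tot}}(\mr{Hol}(X_\bullet)_\bullet)$ together with its natural t-structure are preserved under the canonical comparison.

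The main obstacle will be the bisimplicial bookkeeping required to promote the row-wise equivalences of Proposition \ref{smoothdesre} to a single equivalence of bisimplicial section categories. Concretely, one must check that the units and counits of the row-wise adjunctions are compatible with the face maps in the column direction of $Z_{\bullet\bullet}$; this reduces to applying the smooth base change Lemma \ref{smoothbasechforsimplspa} to each face square. Once this compatibility is in place, the canonicity of the comparison, its independence from the auxiliary choice of bisimplicial refinement, and its coherence when a third presentation is introduced follow formally from the uniqueness of adjoint functors.
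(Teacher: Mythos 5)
Your proposal takes essentially the same route as the paper: form the bisimplicial refinement $Z_{\bullet\bullet}$ from the two presentations, observe that each row $Z_{i,\bullet}$ is $\mr{cosk}_0(Z_{i,0}\to X_i)$ so Proposition \ref{smoothdesre} applies, and use the smooth base change Lemma \ref{smoothbasechforsimplspa} to reduce the global comparison to the row-wise one. The only cosmetic difference is the order of operations — the paper defines the global adjoint pair $(f^*,\mb{R}f_*)$ on the double simplicial space and checks the unit and counit row-wise, whereas you construct row-wise equivalences and then assemble — but these are two phrasings of the same argument.
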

\begin{proof}
 Let $X_\bullet\rightarrow\st{X}$ and $X'_\bullet\rightarrow\st{X}$ be
 two presentations. Let $Z_{n,n'}:=X_n\times_{\st{X}}X'_{n'}$. This
 defines a double simplicial space $Z_{\bullet\bullet}$ with projections
 $Z_{\bullet\bullet}\rightarrow X_\bullet$, and
 $Z_{\bullet\bullet}\rightarrow X'_\bullet$.
 Thus, it suffices to show the following: given a smooth morphism
 $f\colon Z_{\bullet\bullet}\rightarrow X_\bullet$ such that
 $(Z_{i\bullet}\rightarrow X_i)=\mr{cosk}_0(Z_{i0}\rightarrow X_i)$, the
 functors $\mb{R}f_*$ and $f^*$ induce an equivalence of
 categories. First, the functors $\mb{R}f_*$ and $f^*$ preserve
 holonomicity. Indeed, the preservation of holonomicity for $f^*$ is
 easy, and for $\mb{R}f_*$, use (double simplicial analogue of) Lemma
 \ref{functorrhodef} (ii), Lemma \ref{smoothbasechforsimplspa} and
 Corollary \ref{fundssforsimpldes}. If $f^*$, $\mb{R}f_*$ yield an
 equivalence between $D^{\star}(X_\bullet)$ and
 $D^{\star}(Z_{\bullet\bullet})$, since they preserve holonomicity,
 they induce the equivalence of $D^{\star}_{\mr{hol}}(X_\bullet)$ and
 $D^{\star}_{\mr{hol}}(Z_{\bullet\bullet})$ and the proposition
 follows.

 Now, it remains to show that for $\ms{N}\in M(X_\bullet)$
 and $\ms{M}\in M(Z_{\bullet\bullet})$, the homomorphisms
 \begin{equation*}
  f^*\mb{R}f_*(\ms{M})\rightarrow\ms{M},\qquad
  \ms{N}\rightarrow\mb{R}f_*f^*(\ms{N})
 \end{equation*}
 are isomorphisms. Since it suffices to show the isomorphism for each
 $X_i$, this follows from Proposition \ref{smoothdesre}.
\end{proof}

\begin{dfn}
 \label{dfnofcat}
 Let $\st{X}$ be a \stack. Take a simplicial space presentation
 $X_\bullet\rightarrow\st{X}$. By the proposition above, for
 $\star\in\{\mr{b},+\}$, the categories
 $D^{\star}_{\mr{tot}}(X_\bullet/L_{\base})$ and
 $D^{\star}_{\mr{hol}}(X_\bullet/L_{\base})$
 do not depend on the choice of the presentation. We denote these
 categories by $D^{\star}(\st{X}/L_{\base})$ and
 $D^{\star}_{\mr{hol}}(\st{X}/L_{\base})$, or more precisely
 $D^{\star}(\st{X}/\mf{T}_{\base})$ and
 $D^{\star}_{\mr{hol}}(\st{X}/\mf{T}_{\base})$.
 We often omit $(\cdot)_\base$ as usual.
 These categories are endowed with t-structure, and their hearts are
 denoted by $M(\st{X}/L)$ and
 $\mr{Hol}(\st{X}/L)$ respectively. Objects of $\mr{Hol}(\st{X}/L)$ are
 called {\em holonomic modules on $\st{X}$}. As usual, we often even
 omit ``$/L$'' from the notation of categories.
\end{dfn}

\begin{rem*}
 (i) When $\st{X}$ is a realizable scheme
 $D^{\mr{b}}_{\mr{hol}}(\st{X})$ is equivalent to the one defined in
 \ref{defofrealcat} by Proposition \ref{smoothdesre}.

 (ii) Let $X_{\bullet}\rightarrow\st{X}$ be a simplicial space
 presentation. Then $\rho_0^*$ is conservative, namely,
 $\rho_0^*(\ms{M})=0$
 for $\ms{M}\in D^{+}_{\mr{hol}}(\st{X})$ if and only if
 $\ms{M}=0$. Indeed, $\rho_0^*(\ms{M})=0$ implies $\rho_i^*(\ms{M})=0$,
 so the only if part holds.
\end{rem*}

\begin{dfn}
 \label{isocsuppdef}
 (i) Let $\st{X}$ be a \stack. Assume further that the associated
 reduced algebraic stack $\st{X}_{\mr{red}}$ is smooth. Let
 $X_\bullet\rightarrow\st{X}$ be a simplicial space presentation.
 The category of {\em smooth objects} denoted by $\mr{Sm}(\st{X}/L)$ is
 the full subcategory of $D^{\mr{b}}_{\mr{hol}}(\st{X}/L)$ consisting of
 $\ms{M}$ such that, for any $i$, $\mr{for}_L(\rho^*_i\ms{M})\in
 D^{\mr{b}}_{\mr{hol}}(X_i/K)$ is in $\mr{Sm}(X_i/K)[d_i]$ where $d_i$
 denotes the relative dimension function (cf.\ \ref{reldimfunc}) of
 $X\rightarrow\st{X}$, and see \ref{fundproprealsch}
 \eqref{carodagdagcat} for the notation of $\mr{Sm}(X/K)$. It is
 straightforward to check that the category does not depend on the
 choice of the presentation.

 (ii) Let $\st{X}$ be a \stack, and $\ms{M}\in\mr{Hol}(\st{X})$. The
 {\em support of $\ms{M}$} is the minimum closed subset $Z\subset\st{X}$
 such that the restriction of $\ms{M}$ to $\st{X}\setminus Z$ is $0$.
 The support is denoted by $\mr{Supp}(\ms{M})$. For $\ms{M}\in
 D^{\mr{b}}_{\mr{hol}}(\st{X}/L)$, we put
 $\mr{Supp}(\ms{M}):=\bigcup_i\mr{Supp}(\H^i\ms{M})$.
\end{dfn}

\subsubsection{}
\label{homdefindcat}
Let $X_\bullet$ be an admissible simplicial space. Since $M(X_\bullet)$
is enough injectives, we have the bifunctor
$\mb{R}\mr{Hom}_{D(X_{\bullet})}(-,-)\colon D(X_\bullet)^{\circ}\times
D^+(X_\bullet)\rightarrow D(\mr{Vec}_L)$ (cf.\ \cite[I, \S6]{RD}). This
induces the bifunctor
\begin{equation*}
 \mb{R}\mr{Hom}_{D(\st{X})}(-,-)\colon
  D^{+}(\st{X})^{\circ}\times
  D^{+}(\st{X})\rightarrow
  D^+(\mr{Vec}_L).
\end{equation*}
Indeed, $\mb{R}\mr{Hom}_{D(X_\bullet)}$ does not depend on the choice of
simplicial space presentation $X_\bullet\rightarrow\st{X}$. To check
this, let $f\colon Z_{\bullet\bullet}\rightarrow X_\bullet$ be as in the
proof of Proposition \ref{equivcatstackdfn}. Then we have a canonical
homomorphism $\mb{R}\mr{Hom}_{D(X_{\bullet})}(\ms{M},\ms{N})\rightarrow
\mb{R}\mr{Hom}_{D(Z_{\bullet\bullet})}\bigl(f^*(\ms{M}),
f^*(\ms{N})\bigr)$ for $\ms{M}\in D(X_\bullet)$, $\ms{N}\in
D^+(X_{\bullet})$. It suffices to show that this homomorphism is a
quasi-isomorphism when $\ms{M},\ms{N}\in D^{+}_{\mr{tot}}(X_\bullet)$.
This follows since the pair $(f^*,\mb{R}f_*)$ is an equivalence of
categories.

\begin{quote}
 In the following, for simplicity, we use particularly
 $D^{\star}_{\mr{hol}}(\st{X}/L)$ even when we
 can generalize statements or constructions to $D^{\star}(\st{X}/L)$
 easily.
\end{quote}

\subsubsection{}
\label{doublesimplicspalemm}
We conclude this subsection with the following lemma we use later, whose
proof is similarly to the proof of Proposition \ref{equivcatstackdfn}.

\begin{lem*}
 \label{doublecompleeq}
 Let $\st{X}$ and $\st{Y}$ be \stacks, and $X_\bullet$ and
 $Y_\bullet$ be simplicial space presentations. Let $(X\times
 Y)_{n,n'}:=X_n\times Y_{n'}$, which forms a double simplicial spaces
 denoted by $(X\times Y)_{\bullet\bullet}$. Then we have a canonical
 equivalence
 \begin{equation*}
  D^+_{\mr{hol}}(\st{X}\times\st{Y})\xrightarrow{\sim}
   D^+_{\mr{hol}}((X\times Y)_{\bullet\bullet}).
 \end{equation*}
\end{lem*}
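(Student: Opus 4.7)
The strategy is to adapt the proof of Proposition \ref{equivcatstackdfn} to the bisimplicial setting by iterating smooth descent in each of the two simplicial directions.

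First, I would observe that the ``diagonal'' simplicial space $(X\times Y)^{\Delta}_{\bullet}$, defined by $(X\times Y)^{\Delta}_{n}:=X_{n}\times Y_{n}$ with face maps $X(\phi)\times Y(\phi)$, is itself a simplicial presentation space of $\st{X}\times\st{Y}$: it is the $0$-coskeleton of the smooth surjection $X_{0}\times Y_{0}\to\st{X}\times\st{Y}$, since $(X_{0}\times Y_{0})^{n+1}_{/\st{X}\times\st{Y}}\cong X_{n}\times Y_{n}$. Hence, by Definition \ref{dfnofcat}, $D^{+}_{\mr{hol}}(\st{X}\times\st{Y})\cong D^{+}_{\mr{hol}}((X\times Y)^{\Delta}_{\bullet})$, and it suffices to produce a canonical equivalence between this latter category and $D^{+}_{\mr{hol}}((X\times Y)_{\bullet\bullet})$.

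Next, I would run the argument of Proposition \ref{equivcatstackdfn} ``twice''. For each fixed $n$, the column $(X\times Y)_{n,\bullet}=X_{n}\times Y_{\bullet}$, equipped with the smooth projection to $X_{n}\times\st{Y}$, is a simplicial presentation space of the good stack $X_{n}\times\st{Y}$; hence Proposition \ref{equivcatstackdfn} yields an equivalence $D^{+}_{\mr{hol}}(X_{n}\times Y_{\bullet})\cong D^{+}_{\mr{hol}}(X_{n}\times\st{Y})$. By the smooth base change of \ref{smoothbasechforsimplspa}, these equivalences are compatible with the horizontal face maps $X(\phi)\times\mr{id}$, so an object of $D^{+}_{\mr{hol}}((X\times Y)_{\bullet\bullet})$ is equivalent data to a cartesian section $(\ms{M}_{n})_{n}$ of the cofibered category $n\mapsto D^{+}_{\mr{hol}}(X_{n}\times\st{Y})$ over $\Delta^{+}$. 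A second descent, now along $X_{\bullet}\to\st{X}$, then identifies the category of such cartesian sections with $D^{+}_{\mr{hol}}(\st{X}\times\st{Y})$, which is essentially Proposition \ref{equivcatstackdfn}/\ref{smoothdesre} applied to the simplicial presentation $X_{\bullet}\times\st{Y}\to\st{X}\times\st{Y}$.

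The main obstacle is that the ``intermediate'' step uses derived categories indexed by a simplicial object in good stacks, a setting not directly treated earlier. I would circumvent this by replicating the concrete constructions of \S\ref{Dmodforstack} in the bisimplicial setting: namely, define the adjoint pair $(f^{*},\mb{R}f_{*})$ associated to the smooth projection from $(X\times Y)_{\bullet\bullet}$ (viewed as a row of simplicial spaces) onto the diagonal $(X\times Y)^{\Delta}_{\bullet}$, and then verify that the two adjunction morphisms are isomorphisms termwise. This termwise check reduces, via the spectral sequence of Lemma \ref{fundssforsimpldes} applied row-by-row and then column-by-column, to the smooth descent of Proposition \ref{smoothdesre}. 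Compatibility of the resulting equivalence with t-structures, and its independence of the auxiliary presentations, follow from the analogous assertions in Proposition \ref{equivcatstackdfn}.
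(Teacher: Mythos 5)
Your first paragraph is correct and a good observation: the diagonal $(X\times Y)^{\Delta}_{\bullet}$ is indeed $\mr{cosk}_{0}(X_{0}\times Y_{0}\rightarrow\st{X}\times\st{Y})$, so the problem reduces to comparing the bisimplicial derived category with the one attached to this diagonal simplicial presentation.

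The third paragraph, however, relies on a morphism that does not exist. There is no ``smooth projection from $(X\times Y)_{\bullet\bullet}$ onto the diagonal $(X\times Y)^{\Delta}_{\bullet}$.'' Concretely, a morphism in the sense used throughout \S\ref{Dmodforstack} (as in the map $Z_{\bullet\bullet}\to X_{\bullet}$ of Proposition \ref{equivcatstackdfn}, which is pull-back along a projection $(\Delta^{+})^{2}\to\Delta^{+}$) would have to give compatible maps $X_{m}\times Y_{n}\to X_{m}\times Y_{m}$ (or $\to X_{n}\times Y_{n}$), and no such maps are available: the only comparison between the bisimplicial object and its diagonal is restriction along the diagonal functor $\Delta^{+}\to(\Delta^{+})^{2}$, which goes the other way and is not a pull-back along a smooth morphism of (bi)simplicial spaces. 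Since the adjoint pair you want to derive is anchored to this nonexistent morphism, the ``termwise check via the spectral sequence of Lemma \ref{fundssforsimpldes}'' never gets off the ground. Your second paragraph already flags the real difficulty: gluing the equivalences $D^{+}_{\mr{hol}}(X_{n}\times Y_{\bullet})\cong D^{+}_{\mr{hol}}(X_{n}\times\st{Y})$ over $n$ naively amounts to taking cartesian sections of a $\Delta^{+}$-indexed family of triangulated categories, which is exactly the situation the Beilinson--Drinfeld machinery of \S\ref{BDgluingcat} was introduced to handle, and which cannot be bypassed by inventing a morphism.

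The argument that actually stays parallel to Proposition \ref{equivcatstackdfn} (which is what the paper claims to do) is to introduce an auxiliary comparison object mapping to both $(X\times Y)_{\bullet\bullet}$ and $(X\times Y)^{\Delta}_{\bullet}$ — mimicking the construction $Z_{n,n'}:=X_{n}\times_{\st{X}}X'_{n'}$ in that proof, e.g.\ a trisimplicial space $T_{m,n,n'}:=(X_{m}\times Y_{m})\times_{\st{X}\times\st{Y}}(X_{n}\times Y_{n'})$ with the two obvious projections — and then to show that each projection induces an equivalence, reducing row-by-row (or slice-by-slice) to Proposition \ref{smoothdesre} exactly as is done there. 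Alternatively, one can prove directly that the diagonal restriction $\delta^{*}\colon D^{+}_{\mr{tot}}((X\times Y)_{\bullet\bullet})\to D^{+}_{\mr{tot}}((X\times Y)^{\Delta}_{\bullet})$ is an equivalence by a cosimplicial Eilenberg--Zilber type argument, but either way the datum one works with is a restriction or a fibre-product refinement, not a smooth projection onto the diagonal.
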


\subsection{Cohomological functors}
\label{cohfunctorstack}
In this subsection, we define some cohomological functors for
algebraic stacks. Even though the six functor formalism is expected for
algebraic stacks, unfortunately, at this moment, we can obtain full
formalism only for ``admissible stacks'' (cf.\ Definition
 \ref{admissdfn}), which is enough for our purpose. In
this subsection, we define functors that we can define for general
algebraic stacks.

\subsubsection*{Finite morphism case}
\subsubsection{}
\label{finitemorphfibercat}
First, we will define the adjoint pair $(f_+,f^!)$ when $f$ is a finite
morphism between \stacks. To do this, we only need to translate the
functor constructed in \ref{defofpushfinitesimpl} in the language of
algebraic stacks.

Let $f\colon\st{X}\rightarrow\st{Y}$ be a finite morphism
between \stacks. Let us define $f^!$ and $f_+$. Take a simplicial
space presentation
$Y_\bullet\rightarrow\st{Y}$. By pulling-back, we get a simplicial
space presentation
$X_\bullet\rightarrow\st{X}$. Let $f_{\bullet}\colon
X_\bullet\rightarrow Y_\bullet$ be the finite cartesian morphism. Let
$\star\in\{\mr{b},+\}$. We define
\begin{equation*}
 f_+\colon D^{\star}_{\mr{hol}}(\st{X})\cong
  D^{\star}_{\mr{hol}}(X_\bullet)\rightleftarrows
  D^{\star}_{\mr{hol}}(Y_\bullet)\cong D^{\star}_{\mr{hol}}(\st{Y})
  \colon f^!.
\end{equation*}
We need to check the well-definedness, namely independence of the
presentation. By the adjointness property, it suffices to show the
independence for $f_+$. As in the proof of Proposition
\ref{equivcatstackdfn}, it suffices to show the following: Consider the
cartesian diagram
\begin{equation*}
 \xymatrix{
  Z_{\bullet\bullet}\ar[r]^{g_{\bullet}}\ar[d]_{p}
  \ar@{}[rd]|\square&
  W_{\bullet\bullet}\ar[d]^q\\
 X_\bullet\ar[r]_{f_\bullet}&Y_\bullet.
  }
\end{equation*}
Then $q^*\circ f_{\bullet+}\cong g_{\bullet+}\circ p^*\colon
M(X_\bullet)\rightarrow M(W_{\bullet\bullet})$. The verification is
straightforward and left to the reader. We have the pair $(f_+,f^!)$ of
adjoint functors between $D^{\star}_{\mr{hol}}(\st{X})$ and
$D^{\star}_{\mr{hol}}(\st{Y})$.

Now, let $\st{X}\xrightarrow{f}\st{Y}\xrightarrow{g}\st{Z}$ be finite
morphisms of \stacks. We have canonical isomorphisms
\begin{equation*}
 c_{(g,f)}\colon f^!\circ g^!\xrightarrow{\sim}(g\circ f)^!,\qquad
  c^{(g,f)}\colon (g\circ f)_+\xrightarrow{\sim} g_+\circ f_+.
\end{equation*}
These isomorphisms are subject to the following two conditions: 1.\ we
have $c_{(f,\mr{id})}=c_{(\mr{id},f)}=\mr{id}$,
$c^{(f,\mr{id})}=c^{(\mr{id},f)}=\mr{id}$; 2.\ given homomorphisms
$\st{X}\xrightarrow{f}\st{Y}\xrightarrow{g}\st{Z}\xrightarrow{h}\st{W}$,
we have
\begin{equation*}
 c_{(h,g\circ f)}\circ c_{(g,f)}(h^!)=
  c_{(h\circ g,f)}\circ f^!(c_{(h,g)}),\qquad
 h_+c^{(g,f)}\circ c^{(h,g\circ f)}=
  c^{(h,g)}(f_+)\circ c^{(h\circ g,f)}.
\end{equation*}

These results can be rephrased by using the language of (co)fibered
categories (cf.\ [SGA 1, Exp.\ VII, end of 7]) as follows.
Let $\mr{St}^{\mr{fin}}(k)$ the be the category of \stacks (we
do not consider the 2-morphisms) over $k$ such that the
morphisms are {\em finite morphisms} between
\stacks. To a \stack $\st{X}$, we associate the triangulated category
$D^{\star}_{\mr{hol}}(\st{X})$. For a finite morphism
$\st{X}\rightarrow\st{Y}$, we consider the functor $f^!$, and
$c_{(f,g)}$. Then these data form a fibered category
$\ms{F}^!\rightarrow\mr{St}^{\mr{fin}}(k)$. Considering $f_+$
and $c^{(f,g)}$, we get a cofibered category
$\ms{F}_{\oplus}\rightarrow\mr{St}^{\mr{fin}}(k)$.

\subsubsection*{Dual functors}
\subsubsection{}
Let $f\colon X\rightarrow Y$ be a smooth morphism of relative dimension
$d$ between spaces. Then we have a canonical isomorphism
$\bigl(f^*\circ\mb{D}_Y\bigr)(d)\xrightarrow{\sim}\mb{D}_X\circ f^*$ by
Lemma \ref{prepcohoprealsch}.
Now, let $X_\bullet$ be an admissible simplicial space, and assume given
a smooth morphism $X_\bullet\rightarrow\st{X}$ to an algebraic stack.
We have the dual auto-functor
$\mb{D}_{X_i}\colon \mr{Hol}(X_i)^{\circ}\xrightarrow{\sim}
\mr{Hol}(X_i)$. We modify this functor by putting
$\widetilde{\mb{D}}_i:=(d_{X_i/\st{X}})\circ\mb{D}_{X_i}$ where
$d_{X_i/\st{X}}$ denotes the relative dimension function (cf.\
\ref{reldimfunc}).

Now, we use the notation of Lemma \ref{catequforhol} and
\S\ref{BDgluingcat}.
Let $\ms{M}_\bullet\in\mr{sec}_+(\mr{Hol}(X_\bullet)_\bullet)$.
For a morphism $\phi\colon[i]\rightarrow[j]$, let $\alpha_\phi\colon
X(\phi)^*\ms{M}_i\rightarrow\ms{M}_j$ be the gluing homomorphism. Let
\begin{equation*}
 \beta_\phi\colon\widetilde{\mb{D}}_{j}(\ms{M}_j)\rightarrow
  \widetilde{\mb{D}}_{j}\bigl(X(\phi)^*(\ms{M}_i)\bigr)
  \xleftarrow{\sim} X(\phi)^*
  \widetilde{\mb{D}}_{i}(\ms{M}_i).
\end{equation*}
The data $\bigl\{\widetilde{\mb{D}}_i(\ms{M}_i),\beta_\phi\bigr\}$
defines an object in $\mr{sec}_-(\mr{Hol}(X_\bullet)_\bullet)$ and
defines a functor
\begin{equation*}
 \mb{D}_{X_\bullet/\st{X}-}\colon
  \mr{sec}_+(\mr{Hol}(X_\bullet)_\bullet)^{\circ}\rightarrow
  \mr{sec}_-(\mr{Hol}(X_\bullet)_\bullet).
\end{equation*}
Similarly, we can define the functor $\mb{D}_{X_\bullet/\st{X}+}
\colon\mr{sec}_-(\mr{Hol}(X_\bullet)_\bullet)^{\circ}\rightarrow
\mr{sec}_+(\mr{Hol}(X_\bullet)_\bullet)$, and we have canonical
isomorphisms $c_\mp\circ\mb{D}_{X_\bullet/\st{X}\pm}\cong
\mb{D}_{X_\bullet/\st{X}\mp}\circ c_\pm$ by definition of $c_{\pm}$
(cf.\ \cite[7.4.2]{BD}).
These functors are exact since $\widetilde{\mb{D}}_{i}$ are. Then we
have
\begin{equation*}
 \xymatrix{
  D_{\mr{tot}}(\mr{sec}_\pm(\mr{Hol}(X_\bullet)_\bullet))^{\circ}
  \ar[d]_{\mb{D}_{X\bullet/\st{X}}\mp}\ar@{-}[r]^-{\sim}&
  D_{\mr{tot}}(\mr{Hol}(X_\bullet)_\bullet)^{\circ}
  \ar[r]^-{\sim}&
  D_{\mr{hol}}(X_\bullet)^{\circ}
  \ar@{.>}[d]^{\mb{D}'_{X_\bullet}}\\
 D_{\mr{tot}}(\mr{sec}_\mp(\mr{Hol}(X_\bullet)_\bullet))
  \ar@{-}[r]_-{\sim}&
  D_{\mr{tot}}(\mr{Hol}(X_\bullet)_\bullet)
  \ar[r]_-{\sim}&
  D_{\mr{hol}}(X_\bullet)
  }
\end{equation*}
where the left horizontal isomorphism follows by \ref{BDgluingcat}, and
the right horizontal isomorphisms follow by Lemma
\ref{catequforhol}. We define the dotted functor so that the square is
commutative. The dotted functor is called the {\em dual functor}
on $D(X_\bullet)$. By construction, the functor is exact. Moreover, we
have a canonical isomorphism
$\mb{D}'_{X_\bullet}\circ\mb{D}'_{X_\bullet}\cong\mr{id}$.

Let $\st{X}$ be a \stack. Take a simplicial space presentation
$X_\bullet\rightarrow\st{X}$. We can check that
$\mb{D}_{X_\bullet/\st{X}}$ does not depend on the choice of
presentation. Thus, we get a functor
\begin{equation*}
 \mb{D}'_{\st{X}}\colon D^{\mr{b}}_{\mr{hol}}(\st{X})^{\circ}
  \rightarrow
  D^{\mr{b}}_{\mr{hol}}(\st{X}).
\end{equation*}
We have a canonical isomorphism of functors
$\mb{D}'_{\st{X}}\circ\mb{D}'_{\st{X}}\cong\mr{id}$.

\begin{rem*}
 Later, in \ref{bidualityofstack}, we define another dual functor
 $\mb{D}$. This is because $\mb{D}'$ is not suited to show some
 fundamental properties of dual functors. The reason why we introduced
 $\mb{D}'$ at this point is to show the existence of the left adjoint of
 $f_+$ for finite morphism $f$.
\end{rem*}

\begin{lem}
 \label{commudualprimpushfin}
 Let $f\colon\st{X}\rightarrow\st{Y}$ be a finite morphism between
 \stacks. Then there exists a canonical isomorphism
 \begin{equation*}
  \mb{D}'_{\st{Y}}\circ f_+\xrightarrow{\sim}f_+\circ\mb{D}'_{\st{X}}
   \colon
   D^{\mr{b}}_{\mr{hol}}(\st{X})^{\circ}\rightarrow
   D^{\mr{b}}_{\mr{hol}}(\st{Y}).
 \end{equation*}
\end{lem}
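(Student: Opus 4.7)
The plan is to construct the isomorphism levelwise on a simplicial presentation and then verify it descends. Fix a simplicial presentation $Y_\bullet\to\st{Y}$, and pull back along $f$ to obtain a simplicial presentation $X_\bullet\to\st{X}$ together with a cartesian morphism $f_\bullet\colon X_\bullet\to Y_\bullet$ in which each $f_i\colon X_i\to Y_i$ is finite. By the construction of $f_+$ in \ref{finitemorphfibercat}, the push-forward is computed termwise by the exact functors $f_{i+}\colon M(X_i)\to M(Y_i)$, and the gluing on the $Y_\bullet$-side is the image under $f_{i+}$ of the gluing on the $X_\bullet$-side via the base change isomorphism $Y(\phi)^*\circ f_{i+}\cong f_{j+}\circ X(\phi)^*$ of Lemma \ref{prepcohoprealsch}~(i). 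Since $f$ is representable and finite, the relative dimension functions satisfy $d_{X_i/\st{X}}=d_{Y_i/\st{Y}}\circ f_i$, so the Tate twists entering the definition of $\widetilde{\mb{D}}_{X_i/\st{X}}$ and $\widetilde{\mb{D}}_{Y_i/\st{Y}}$ match under $f_{i+}$ (the shift on $X_i$ is supported on the image of $f_i$).

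Next, for each $i$ I will construct an isomorphism $\widetilde{\mb{D}}_{Y_i/\st{Y}}\circ f_{i+}\cong f_{i+}\circ\widetilde{\mb{D}}_{X_i/\st{X}}$ on $D^{\mr{b}}_{\mr{hol}}(X_i)^{\circ}$. The underlying duality identity, absent the twists, is the space-level fact $\mb{D}_{Y_i}\circ f_{i+}\cong f_{i+}\circ\mb{D}_{X_i}$. This follows from the six functor formalism of \ref{fundproprealsch}: the projection formula
\begin{equation*}
\shom(f_{i!}\ms{F},\ms{G})\cong f_{i+}\shom(\ms{F},f_i^!\ms{G})
\end{equation*}
evaluated at $\ms{G}=K^\omega_{Y_i}$ together with the isomorphism $f_{i!}\cong f_{i+}$ for a proper morphism and $f_i^!K^\omega_{Y_i}\cong K^\omega_{X_i}$ yields the identity. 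Multiplying by the Tate twist by $d_{Y_i/\st{Y}}$ (which pulls back to $d_{X_i/\st{X}}$ on the image) gives the required isomorphism at level $i$.

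What remains is the verification that these termwise isomorphisms are compatible with the gluing data, so that they assemble into an isomorphism in $\mr{sec}_{\mp}(\mr{Hol}(Y_\bullet)_\bullet)$. For a morphism $\phi\colon[i]\to[j]$ in $\Delta^+$, this amounts to the commutativity of the square obtained by combining (a) the base change $Y(\phi)^*f_{i+}\cong f_{j+}X(\phi)^*$, (b) the compatibility of $\mb{D}_\star$ with smooth pull-back of Lemma \ref{prepcohoprealsch}~(ii), and (c) the duality isomorphism $\mb{D}\circ f_{i+}\cong f_{i+}\circ\mb{D}$ constructed above. All three originate from the adjunctions $(f_{i+},f_i^!)$ and $(X(\phi)^*,X(\phi)_*)$ and the projection formulas, so commutativity reduces to a routine bookkeeping of adjunction units and counits. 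This gives the desired isomorphism on the $D_{\mr{tot}}$-level, hence via Lemma \ref{catequforhol} and the definition of $\mb{D}'$ an isomorphism $\mb{D}'_{\st{Y}}\circ f_+\xrightarrow{\sim}f_+\circ\mb{D}'_{\st{X}}$. Independence of the simplicial presentation follows as in Proposition \ref{equivcatstackdfn} by comparing two choices through their fiber product.

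The main obstacle I expect is step three: making the bookkeeping of the compatibility square truly canonical (functorial in $\phi$ and in the modules), and in particular tracking how the Tate twist by the relative dimension function interacts with the base change along $X(\phi)$ in a way that is simultaneously compatible with $f_\bullet$. This reduces, after diagram chasing, to the fact that the base change isomorphism for $f^!$ (Lemma \ref{prepcohoprealsch}~(i)) is compatible with the absolute duality isomorphism $f^!K^\omega\cong K^\omega$ for finite morphisms, which is standard but requires some care.
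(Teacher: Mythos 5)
Your proposal is correct and takes essentially the same approach as the paper: pull back a simplicial presentation along $f$, build the isomorphism termwise on each $f_i\colon X_i\to Y_i$ via the projection formula and $f_i^!K^\omega_{Y_i}\cong K^\omega_{X_i}$, and assemble it in the $\mr{sec}_\pm$ categories before descending through Lemma~\ref{catequforhol}. The paper compresses the gluing compatibility you flag as the main remaining work by one observation you leave implicit --- every functor in sight ($\widetilde{\mb{D}}_i$, $f_{i+}$, $X(\phi)^*$) is exact, so the check happens entirely at the abelian-category level and amounts to the compatibilities of Lemma~\ref{prepcohoprealsch} you already cite.
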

\begin{proof}
 Take a simplicial space presentation $Y_\bullet\rightarrow\st{Y}$, and
 let $X_\bullet\rightarrow\st{X}$ be the pull-back. We denote by
 $f_k\colon X_k\rightarrow Y_k$ the
 finite morphism induced by $f$. Since $f_{k+}(\ms{M})$ is in
 $\mr{Hol}(Y_k)$ when $\ms{M}\in\mr{Hol}(X_k)$ and $f_{k+}$ and
 $X(\phi)^*$ commute canonically, we can define the push-forward
 functors $f_{\pm*}\colon\mr{sec}_{\pm}
 (\mr{Hol}(X_\bullet)_\bullet)\rightarrow
 \mr{sec}_{\pm}(\mr{Hol}(Y_\bullet)_\bullet)$
 by sending $\{\ms{M}_i\}$ to $\{f_{i+}\ms{M}_i\}$ with obvious gluing
 homomorphism. By the definition of the functors $c_{\pm}$, the
 following diagrams are commutative:
 \begin{equation*}
  \xymatrix@C=25pt{
   \mr{sec}_{+}(\mr{Hol}(X_\bullet)_\bullet)\ar[r]^-{f_{+*}}\ar[d]&
   \mr{sec}_{+}(\mr{Hol}(Y_\bullet)_\bullet)\ar[d]\\
  M(X_\bullet)\ar[r]_-{f_+}&
   M(Y_\bullet),
   }\qquad
   \xymatrix@C=25pt{
   \mr{sec}_{-}(\mr{Hol}(X_\bullet)_\bullet)
   \ar[r]^-{f_{-*}}\ar[d]_{c_+}&
   \mr{sec}_{-}(\mr{Hol}(Y_\bullet)_\bullet)
   \ar[d]^{c_+}\\
  C(M(X_\bullet))\ar[r]_-{f_+}&
   C(M(Y_\bullet)).
   }
 \end{equation*}
 Thus, it is reduced to constructing an isomorphism
 $\mb{D}_{Y_{\bullet}/\st{Y}}\circ f_{+*}\cong
 f_{-*}\circ\mb{D}_{X_{\bullet}/\st{X}}$. Since all the functors
 we used are exact, the verification is easy.
\end{proof}

\begin{dfn*}
 The lemma shows that $f_+$ has a left adjoint functor
 \begin{equation*}
  \mb{D}'_{\st{X}}\circ f^!\circ\mb{D}'_{\st{Y}}\colon
   D^{\mr{b}}_{\mr{hol}}(\st{Y})\rightarrow
   D^{\mr{b}}_{\mr{hol}}(\st{X}).
 \end{equation*}
 This right adjoint functor is denoted by $f^+$. Since $f^!$ is left
 exact, $f^+$ is right exact. Summing up, when $f$ is finite, we have
 two pairs of adjoint functors $(f^+,f_+)$ and $(f_+,f^!)$. By taking
 the dual to \ref{finitemorphfibercat}, $f^+$ yields a fibered category
 $\ms{F}^{\oplus}\rightarrow\mr{St}^{\mr{fin}}(k)$.
\end{dfn*}

\begin{lem}
 \label{directfactorlemmaeasy}
 Let $f\colon\st{C}\rightarrow\st{C}'$ be a finite morphism in
 $\mr{St}^{\mr{fin}}(k)$.

 (i) Assume $f$ is surjective radicial morphism. Then $f_+$ and
 $f^{+}$ define an equivalence of categories between
 $D^{\mr{b}}_{\mr{hol}}(\st{C}')$ and $D^{\mr{b}}_{\mr{hol}}(\st{C})$,
 and $\mr{Hol}(\st{C}')$ and $\mr{Hol}(\st{C})$.

 (ii) Assume $f$ is an \'{e}tale morphism. Then for any
 $\ms{M}\in\mr{Hol}(\st{C})$, it is a direct factor of
 $f^{+}f_+(\ms{M})$.

 (iii) If $f$ is a flat morphism, then for any
 $\ms{M}\in D^{\mr{b}}_{\mr{hol}}(\st{C}')$, $\ms{M}$ is a direct factor
 of $f_+f^+(\ms{M})$.
\end{lem}
\begin{proof}
 For (i), it suffices to show the claim when $\st{C}$ and $\st{C}'$ are
 schemes. This is nothing but Lemma \ref{fundproprealsch} in the first
 read case, and the second read case can be reduced to the first one
 immediately. For (iii), we can define homomorphisms
 $f_+f^+(\ms{M})\rightarrow\ms{M}$ using the trace map of realizable
 schemes, and the claim follows easily.

 For (ii), let $C'_\bullet\rightarrow\st{C}'$ be a simplicial
 space presentation. Put $\ms{M}_i:=\rho_i^*(\ms{M})$, and
 $f_i\colon\st{C}\times_{\st{C}'}C'_i\rightarrow C'_i$. We have the
 morphisms $\ms{M}_i\rightarrow
 f^+_if_{i+}(\ms{M}_i)\rightarrow\ms{M}_i$.
 Here the first morphism is defined by the trace map in the first
 read case and the homomorphism defined in the first read case in the
 second read case. The composition is an isomorphism.
 Indeed, by \cite[1.3.11]{AC}, it is reduced to checking the claim when
 $f_i$ is $\coprod_{j\in J}\mr{Spec}(k')\rightarrow\mr{Spec}(k')$ where
 $k'$ is a finite extension of $k$ and $J$ is a finite set. In this
 case, the verification is easy.
 Moreover, these homomorphisms are compatible with gluing
 homomorphisms, so they define homomorphisms $\ms{M}\rightarrow
 f^{+}f_+\ms{M}\rightarrow\ms{M}$ whose composition is an isomorphism,
 thus the claim follows.
\end{proof}

\subsubsection*{Exterior tensor product}
\subsubsection{}
Let us define the exterior tensor product. Let $X_\bullet$ and
$Y_\bullet$ be admissible simplicial spaces. Given $\ms{M}_\bullet$ and
$\ms{N}_\bullet$ in $M(X_\bullet)$ and $M(Y_\bullet)$ respectively, the
collection $\bigl\{\ms{M}_i\boxtimes\ms{N}_i\bigr\}_i$ defines an object
in $M(X_\bullet\times Y_\bullet)$. This is denoted by
$\ms{M}\boxtimes\ms{N}$. The functor is exact, and we can take the
derived functor to get
\begin{equation*}
 (-)\boxtimes(-)\colon D(M(X_\bullet))\times D(M(Y_\bullet))\rightarrow
  D(M(X_\bullet\times Y_\bullet)).
\end{equation*}
We can check easily that this preserves holonomicity and boundedness.

Let $\st{X}$ and $\st{Y}$ be \stacks, and take simplicial space
presentations $X_\bullet\rightarrow\st{X}$ and
$Y_\bullet\rightarrow\st{Y}$. Then $X_\bullet\times Y_\bullet$ is a
simplicial space presentation of the \stack $\st{X}\times\st{Y}$. Since
$\boxtimes$ does not depend on the choice of presentation, we get the
exterior tensor product for \stacks.

\begin{lem}
 \label{compexttensoth}
 Let $f\colon\st{X}\rightarrow\st{Y}$,
 $g\colon\st{X}'\rightarrow\st{Y}'$ be finite morphisms between
 \stacks. Then we have canonical isomorphisms $f_+(-)\boxtimes
 g_+(-)\cong(f\times g)_+\bigl((-)\boxtimes(-)\bigr)$,
 $f^{\star}(-)\boxtimes g^{\star}(-)\cong(f\times
 g)^{\star}\bigl((-)\boxtimes(-)\bigr)$ where
 $\star\in\bigl\{+,!\bigr\}$, and
 $\mb{D}'\bigl((-)\boxtimes(-)\bigr)\cong
 \mb{D}'(-)\boxtimes\mb{D}'(-)$.
\end{lem}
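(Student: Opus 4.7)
The plan is to reduce each of the three statements to the corresponding scheme-level assertion via simplicial presentations, then invoke the Künneth formula for realizable schemes (Proposition in \ref{Kunnethsch}) in the first read, and iterate the simplicial argument in the second read to descend from separated algebraic spaces to realizable schemes.

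First I would fix simplicial presentation spaces $X_\bullet\to\st{X}$ and $X'_\bullet\to\st{X}'$. Pulling back along $f$ and $g$ gives simplicial presentation spaces $Y_\bullet\to\st{Y}$ and $Y'_\bullet\to\st{Y}'$, together with cartesian finite morphisms $f_\bullet\colon X_\bullet\to Y_\bullet$ and $g_\bullet\colon X'_\bullet\to Y'_\bullet$. By Lemma \ref{doublecompleeq} the double simplicial space $(X\times X')_{\bullet\bullet}$ with $(X\times X')_{n,m}:=X_n\times X'_m$ computes $D^+_{\mr{hol}}(\st{X}\times\st{X}')$, and similarly on the $Y$ side. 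All functors involved ($f_+$, $f^+$, $f^!$, $\boxtimes$, $\mb{D}'$) are defined termwise on such presentations and are compatible with gluing homomorphisms by their very construction.

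For the push-forward identity $f_+(-)\boxtimes g_+(-)\cong(f\times g)_+\bigl((-)\boxtimes(-)\bigr)$, the termwise check reduces to the isomorphism $f_{n+}(-)\boxtimes g_{m+}(-)\cong(f_n\times g_m)_+\bigl((-)\boxtimes(-)\bigr)$ between (realizable) spaces, which is Proposition \ref{Kunnethsch}. Compatibility with the gluing homomorphisms is automatic, since each gluing morphism on either side arises from the scheme-level base change isomorphism (Lemma \ref{prepcohoprealsch}(i)) applied to the cartesian square defining the face maps. For $\star\in\{+,!\}$, the identity $f^{\star}(-)\boxtimes g^{\star}(-)\cong(f\times g)^{\star}\bigl((-)\boxtimes(-)\bigr)$ then follows by adjunction: since $(f^+,f_+)$ and $(f_+,f^!)$ are adjoint pairs (\ref{finitemorphfibercat} and the definition of $f^+$), transposing the isomorphism just established for $f_+\times g_+$ yields the ones for $f^+\boxtimes g^+$ and $f^!\boxtimes g^!$.

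For the compatibility with duality, recall from the construction of $\mb{D}'$ that on a simplicial presentation the dual is built from the shifted scheme-level dualities $\widetilde{\mb{D}}_i=(d_{X_i/\st{X}})\circ\mb{D}_{X_i}$. The key ingredient is the additivity $d_{(X_n\times X'_m)/(\st{X}\times\st{X}')}=d_{X_n/\st{X}}+d_{X'_m/\st{X}'}$ of the relative dimension function, which, combined with the scheme-level isomorphism $\mb{D}\bigl((-)\boxtimes(-)\bigr)\cong\mb{D}(-)\boxtimes\mb{D}(-)$ (which follows on realizable schemes from \cite[Appendix]{AC} and the projection formula), gives $\widetilde{\mb{D}}_{(n,m)}\bigl((-)\boxtimes(-)\bigr)\cong\widetilde{\mb{D}}_n(-)\boxtimes\widetilde{\mb{D}}_m(-)$. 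One then checks that the transition homomorphisms $\beta_\phi$ match on both sides. The main obstacle is less conceptual than bookkeeping: one must carefully match the transition morphisms of $\mr{sec}_\pm(\mr{Hol}(-)_\bullet)$ on the product, in particular reversing directions for the dual, and verify that the equivalences $c_\pm$ of Lemma \ref{catequforhol} intertwine these identifications; this is routine once the additivity of relative dimension is in hand.
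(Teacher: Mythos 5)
The overall strategy — reduce to the simplicial-presentation level, then to realizable schemes, and invoke the Künneth formula (\ref{Kunnethsch}) for the push-forward and the scheme-level $\mb{D}\boxtimes$-commutativity for the dual — matches the paper's intent, and your treatment of the push-forward and $\mb{D}'$ claims is sound (including the observation about additivity of the relative dimension function). The gap is in the sentence claiming that the $f^{\star}$-compatibilities for $\star\in\{+,!\}$ ``follow by adjunction'' from the $f_+$-compatibility. Adjunction does not transfer a natural isomorphism between bifunctors of the shape $F(-)\boxtimes G(-)\cong H\bigl((-)\boxtimes(-)\bigr)$ to one between the corresponding right (or left) adjoints: adjunction constructs a natural transformation $f^!M\boxtimes g^!N\to(f\times g)^!(M\boxtimes N)$ (by applying $(f\times g)_+$, using the Künneth isomorphism, the counits of $(f_+,f^!)$ and $(g_+,g^!)$, and then $(f\times g)_+\dashv(f\times g)^!$), but there is no formal reason this comparison map should be invertible. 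The situation is exactly analogous to $\mr{Hom}$ failing to commute with $\otimes$ even though $(-)\otimes S$ does, and indeed in the paper's own Proposition \ref{Kunnethsch} the analogous adjunction-constructed map is \emph{proved} to be an isomorphism by a nontrivial computation rather than declared one.

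The correct derivation of the $f^{\star}$-compatibilities uses the relation $f^!=\mb{D}\circ f^+\circ\mb{D}$ and the monoidality of $f^+$ at the level of realizable schemes (items \ref{fundproprealsch}(2) and the biduality of \cite{Vi2}), together with the scheme-level $\mb{D}\boxtimes$-commutativity, to get $f^!(-)\boxtimes g^!(-)\cong(f\times g)^!\bigl((-)\boxtimes(-)\bigr)$ for realizable schemes; this then lifts to simplicial presentations termwise because $f^!$ commutes with $\rho_k^*$ (Lemma \ref{defofpushfinitesimpl}(i)), with the gluing checks you already indicated. Once the $f^!$-compatibility and the $\mb{D}'$-compatibility are in hand, the $f^+$-compatibility for finite morphisms of good stacks is purely formal from the definition $f^+:=\mb{D}'_{\st{X}}\circ f^!\circ\mb{D}'_{\st{Y}}$ (given after Lemma \ref{commudualprimpushfin}). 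So your proof would be repaired by replacing the adjunction claim with this chain; the rest of the write-up stands.
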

\begin{proof}
 For the commutation of external tensor product and dual functors, see
 \cite[1.3.3]{AC}, and for the push-forward, use Proposition
 \ref{Kunnethsch}. Since the proofs are straightforward, we leave the
 details.
\end{proof}

\subsubsection*{Smooth morphism case}
\subsubsection{}
\label{smoothpullback}
Let $f\colon\st{X}\rightarrow\st{Y}$ be a smooth morphism between
\stacks. Take simplicial space presentations
$Y_\bullet\rightarrow\st{Y}$ and $X_\bullet\rightarrow\st{X}$. Let
$X_{n,m}:=X_n\times_{\st{Y}}Y_m$, which defines a double simplicial
space $X_{\bullet\bullet}$ with obvious face morphisms, and morphisms
$\tilde{f}\colon X_{\bullet\bullet}\rightarrow Y_{\bullet}$ and
$g\colon X_{\bullet\bullet}\rightarrow X_{\bullet}$. By (double
simplicial analogue of) Proposition \ref{smoothdesre}, $g^*$
and $\mb{R}g_*$ induce an equivalence between
$D^{+}(X_{\bullet\bullet})$ and $D^{+}(X_{\bullet})\cong
D^{+}(\st{X})$. Thus, we have functors
\begin{equation*}
 \mb{R}f_*\colon D^+(\st{X})\cong
  D^{+}(X_{\bullet\bullet})\rightleftarrows
  D^+(Y_{\bullet})\cong D^+(\st{Y})\colon f^*,
\end{equation*}
where the middle functors are induced by $\mb{R}\tilde{f}_*$ and
$\tilde{f}^*$.
These functors preserve holonomicity by Corollary
\ref{fundssforsimpldes}.
We need to check that these functors do not depend on the choice of the
presentations. By adjointness property, it suffices to check it for
$f^*$. The verification is easy and left to the reader. It is also
straightforward to check that $f^*$ is an exact functor, and satisfies
the transitivity, namely given smooth morphisms
$\st{X}\xrightarrow{f}\st{Y}\xrightarrow{g}\st{Z}$ between \stacks, we
have a canonical isomorphism $(g\circ f)^*\cong f^*\circ g^*$.

\begin{lem*}
 Assume $f$ is of relative dimension $d$. We have a canonical
 isomorphism $\mb{D}'_{\st{X}}\circ f^*\cong (d)\circ
 f^*\circ\mb{D}'_{\st{Y}}$.
\end{lem*}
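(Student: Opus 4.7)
The plan is to reduce the statement to the scheme-level version, Lemma \ref{prepcohoprealsch}(ii), by unwinding the definitions of $f^{*}$ and $\mb{D}'$ in terms of simplicial presentations; this is in the spirit of the proof of Lemma \ref{commudualprimpushfin}, with the extra feature that one has to descend via a double simplicial space because $f$ is not in general representable.

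First I would fix simplicial presentations $Y_{\bullet}\to\st{Y}$ and $X_{\bullet}\to\st{X}$, and form the double simplicial space $X_{\bullet\bullet}:=X_{\bullet}\times_{\st{Y}}Y_{\bullet}$; its terms $X_{n,m}=X_{n}\times_{\st{Y}}Y_{m}$ are spaces because the diagonal of the \stack $\st{Y}$ is representable of the required type (quasi-projective or quasi-compact, depending on the read). Let $u\colon X_{\bullet\bullet}\to Y_{\bullet}$ and $v\colon X_{\bullet\bullet}\to X_{\bullet}$ be the two smooth projections; by the construction of $f^{*}$ in \ref{smoothpullback} one has $f^{*}\cong\mb{R}v_{*}\circ u^{*}$ under the equivalences $D^{+}_{\mr{hol}}(\st{X})\cong D^{+}_{\mr{hol}}(X_{\bullet})\cong D^{+}_{\mr{hol}}(X_{\bullet\bullet})$. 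I would then extend $\mb{D}'$ to $X_{\bullet\bullet}$ by $\widetilde{\mb{D}}_{n,m}:=(d_{X_{n,m}/\st{X}})\circ\mb{D}_{X_{n,m}}$, observing that $d_{X_{n,m}/\st{X}}=d_{X_{n}/\st{X}}+d_{Y_{m}/\st{Y}}$ by smooth base change.

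For each $(n,m)$ the morphism $u_{n,m}\colon X_{n,m}\to Y_{m}$ is the base change of $X_{n}\to\st{X}\to\st{Y}$, hence smooth of relative dimension $d+d_{X_{n}/\st{X}}$, and Lemma \ref{prepcohoprealsch}(ii) supplies
\[
\mb{D}_{X_{n,m}}\circ u_{n,m}^{*}\cong u_{n,m}^{*}\circ\mb{D}_{Y_{m}}\bigl(d+d_{X_{n}/\st{X}}\bigr).
\]
Taking into account the twists $(d_{X_{n,m}/\st{X}})$ and $(d_{Y_{m}/\st{Y}})$ built into $\widetilde{\mb{D}}_{n,m}$ and $\widetilde{\mb{D}}_{Y_{m}/\st{Y}}$, and using the decomposition above, the residual twist collapses to $(d)$, yielding a termwise isomorphism $\widetilde{\mb{D}}_{X_{\bullet\bullet}/\st{X}}\circ u^{*}\cong(d)\circ u^{*}\circ\widetilde{\mb{D}}_{Y_{\bullet}/\st{Y}}$. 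Its compatibility with the bi-simplicial gluing data follows from the naturality of Lemma \ref{prepcohoprealsch}(ii) under smooth base change.

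To descend from $X_{\bullet\bullet}$ to $X_{\bullet}$, one needs $\mb{D}'_{\st{X}}\circ\mb{R}v_{*}\cong\mb{R}v_{*}\circ\widetilde{\mb{D}}_{X_{\bullet\bullet}/\st{X}}$; termwise, the adjoint form of Lemma \ref{prepcohoprealsch}(ii), a consequence of Poincar\'{e} duality (Theorem \ref{Poindual}), yields $\mb{R}v_{n,m*}\circ\mb{D}_{X_{n,m}}\cong(d_{Y_{m}/\st{Y}})\circ\mb{D}_{X_{n}}\circ\mb{R}v_{n,m*}$, and the $(d_{Y_{m}/\st{Y}})$-twist is exactly absorbed by the difference between the dimension shifts in $\widetilde{\mb{D}}_{n,m}$ and $\widetilde{\mb{D}}_{X_{n}/\st{X}}$. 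Combining the two displays gives the claimed isomorphism, and independence of the choice of simplicial presentations is handled by the fibered-product comparison used in the proof of Proposition \ref{equivcatstackdfn}. The main obstacle is the careful bookkeeping of three sources of Tate twists --- the relative dimension of $f$, those of the two presentation maps, and that of the descent map $v$ --- to confirm that they combine precisely to the single $(d)$ asserted in the lemma.
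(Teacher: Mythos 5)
Your proposal follows the same route the paper takes: the paper's proof is the single sentence that it is ``similar to Lemma \ref{commudualprimpushfin}, using Lemma \ref{prepcohoprealsch}\,(ii)'', i.e.\ express $f^{*}$ via the double simplicial space $X_{\bullet\bullet}$, apply the scheme-level duality termwise, and descend; your write-up is simply a fully unwound version of that. One caution on the Tate-twist bookkeeping you rightly flag as the delicate point: if $\widetilde{\mb{D}}_{i}$ is read literally as $\mb{D}_{X_{i}}(-)(d_{X_{i}/\st{X}})$ (twist \emph{after} dualizing), the termwise twists do not collapse to $(d)$ and indeed the $\beta_{\phi}$ gluing of \S\ref{cohfunctorstack} already fails; the definition must be read as $\mb{D}_{X_{i}}\circ(d_{X_{i}/\st{X}})$, equivalently $(-d_{X_{i}/\st{X}})\circ\mb{D}_{X_{i}}$, and with that normalization your computation $d+d_{X_{n}/\st{X}}-d_{X_{n,m}/\st{X}}=d-d_{Y_{m}/\st{Y}}$ gives exactly the $(d)$-twist claimed.
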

\begin{proof}
 The proof is similar to Lemma \ref{commudualprimpushfin}, using Lemma
 \ref{prepcohoprealsch} (ii).
\end{proof}

\subsubsection{}
\label{openimmdefshri}
Now, assume that $f$ is an {\em open immersion} of \stacks. Then
$X_\bullet:=Y_\bullet\times_{\st{Y}}\st{X}$ is a simplicial space
presentation of $\st{X}$. The canonical morphism
$X_{\bullet}\rightarrow Y_{\bullet}$ is denoted by $h$.
Then we have the commutative diagram of double simplicial spaces
\begin{equation*}
 \xymatrix@C=30pt@R=5pt{
  X_{\bullet\bullet}\ar[rr]^-{\tilde{f}}\ar[dr]_{g}&&Y_{\bullet}\\
 &X_{\bullet}\ar[ur]_{h}.&
  }
\end{equation*}
This implies that the composition $D^+(\st{X})\cong D^+(X_{\bullet})
\xrightarrow{\mb{R}h_*}D^+(Y_\bullet)\cong D^+(\st{Y})$ is canonically
isomorphic to $\mb{R}f_*$, and similarly for $f^*$. Now we have:
\begin{lem*}
 The functor $\mb{R}f_*$ sends $D^{\mr{b}}_{\mr{hol}}(\st{X})$ to
 $D^{\mr{b}}_{\mr{hol}}(\st{Y})$.
\end{lem*}
\begin{proof}
 It suffices to show that $\mb{R}h_*$ preserves the boundedness. This
 can be seen similarly to Lemma \ref{defofpushfinitesimpl} (i).
\end{proof}

Changing the notation, we put $j:=f$ and $j^+:=j^*\colon
D^{\mr{b}}_{\mr{hol}}(\st{Y})\rightarrow
D^{\mr{b}}_{\mr{hol}}(\st{X})$, $j_+:=\mb{R}j_*\colon
D^{\mr{b}}_{\mr{hol}}(\st{X})\rightarrow
D^{\mr{b}}_{\mr{hol}}(\st{Y})$. We define the functor $j_!$ so that
$(j_!,j^+)$ is an adjoint pair. Such a functor exists since by the
above lemma and Lemma \ref{smoothpullback},
$\mb{D}'_{\st{Y}}\circ j_+\circ\mb{D}'_{\st{X}}$ is left adjoint to
$j^+$. Thus, we have pairs of adjoint functors $(j^+,j_+)$ and $(j_!,j^+)$.
Now, since $j^+j_+\cong\mr{id}$, we have a canonical homomorphism of
functors $j_!\rightarrow j_+$. In particular, this induces a functor
\begin{equation*}
 j_{!+}:=\mr{Im}\bigl(\H^0j_!\rightarrow\H^0j_+\bigr)\colon
  \mr{Hol}(\st{X})\rightarrow\mr{Hol}(\st{Y}),
\end{equation*}
called the {\em intermediate extension functor}.

\begin{lem}
 \label{localtriangs}
 Let $j\colon\st{U}\hookrightarrow\st{X}$ be an open immersion between
 \stacks, and let $i\colon\st{Z}\hookrightarrow\st{X}$ be its
 complement. Then, we have the following exact triangles:
 \begin{equation*}
  i_+i^!\rightarrow\mr{id}\rightarrow j_+j^+\xrightarrow{+1},
   \qquad
   j_!j^+\rightarrow\mr{id}\rightarrow i_+i^+\xrightarrow{+1},
 \end{equation*}
 where the homomorphisms are defined by adjunctions.
\end{lem}
\begin{proof}
 Let us check the left one. Let $X_\bullet\rightarrow\st{X}$ be a
 simplicial realizable scheme presentation,
 and let $\ms{I}^{\bullet}$ be a complex of injective objects in
 $C(X_\bullet)$. By abuse of notation, we denote the open and
 closed immersions $\st{U}\times_{\st{X}}X_k\hookrightarrow X_k$ and
 $\st{Z}\times_{\st{X}}X_k\hookrightarrow X_k$ by $j$ and $i$
 respectively. By construction and Lemma \ref{defofpushfinitesimpl},
 $i_+$ and $i^!$ commute with
 $\rho_k^*$. By definition, $j^+$ commutes also with
 $\rho_k^*$. Moreover, $j^+$ commutes with $\rho_{k!}$ as well,
 which implies that their right adjoint functors $j_+$ and $\rho^*_k$
 commute. Thus, it suffices to show that the sequence
 \begin{equation*}
  0\rightarrow
   i_+i^!(\rho_k^*(\ms{I}^l))\rightarrow
   \rho^*_k(\ms{I}^j)\rightarrow
   j_+j^+(\rho_k^*(\ms{I}^l))
   \rightarrow0
 \end{equation*}
 is exact. This follows by Lemma \ref{locextrsch}. The right triangle is
 exact by duality.
\end{proof}

\subsubsection*{Projection case}
\subsubsection{}
We define the push-forward functor for a projection
$\st{X}\times\st{Y}\rightarrow\st{Y}$. The method here is close to the
definition of $Rf^!$ in [SGA 4, XVIII, 3.1].
We start with the following lemma:

\begin{lem*}
 Let $X_\bullet$ and $Y_\bullet$ be admissible simplicial spaces, and
 $\ms{A}$ be an object in $M(X_\bullet)$. Let
 \begin{equation*}
  p^*_{\ms{A}}:=\ms{A}\boxtimes(-)\colon
   M(Y_\bullet)\rightarrow M((X\times Y)_{\bullet\bullet}),
 \end{equation*}
 where we use the notation of Lemma \ref{doublecompleeq}.
 Then there exists a right adjoint denoted by $p_{\ms{A}*}$.
\end{lem*}
\begin{proof}
 Since the functor $p^*_{\ms{A}}$ is exact and commutes with direct sums
 by definition, it commutes with arbitrary inductive limits by
 \cite[2.2.9]{KSc}. Since $M(Y_\bullet)$ is a Grothendieck category and
 $p^*_{\ms{A}}$ commutes with inductive limits, the existence follows
 from the adjoint functor theorem (cf.\ \cite[8.3.27 (iii)]{KSc}).
\end{proof}

Given a homomorphism $\ms{A}\rightarrow\ms{B}$
in $M(X_\bullet)$, we have a natural homomorphisms of functors
\begin{equation*}
 p^*_{\ms{A}}p_{\ms{B}*}\rightarrow
   p^*_{\ms{B}}p_{\ms{B}*}\rightarrow
   \mr{id}
\end{equation*}
where the last homomorphism is the adjunction. Taking the adjoint, we get
a homomorphism
$p_{\ms{B}*}\rightarrow p_{\ms{A}*}$. Obviously, if the homomorphism
$\ms{A}\rightarrow\ms{B}$ is $0$, the induced morphism of functors is
$0$ as well. Thus, for a complex $\ms{A}^{\bullet}\in C(M(X_\bullet))$,
we have a complex of functors
\begin{equation*}
 p_{\ms{A}^{\bullet}*}:\left[
  \cdots\rightarrow p_{\ms{A}^{i+1}*}\rightarrow
  p_{\ms{A}^i*}\rightarrow\cdots\right],
\end{equation*}
where $p_{\ms{A}^i*}$ is placed at the degree $-i$ term.

\begin{lem}
 Let $\ms{I}$ be an {\em injective object} in $M((X\times
 Y)_{\bullet\bullet})$. Then the contravariant functor
 \begin{equation*}
  p_{-*}(\ms{I})\colon M(X_\bullet)^{\circ}\rightarrow
   M(Y_\bullet)
 \end{equation*}
 sending $\ms{A}$ to $p_{\ms{A}*}(\ms{I})$ is exact.
\end{lem}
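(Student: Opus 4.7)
The plan is to verify exactness by testing against every Hom group out of $M(Y_\bullet)$. Suppose $0\to\ms{A}\to\ms{B}\to\ms{C}\to0$ is a short exact sequence in $M(X_\bullet)$; I must show that
\begin{equation*}
0\to p_{\ms{C}*}(\ms{I})\to p_{\ms{B}*}(\ms{I})\to p_{\ms{A}*}(\ms{I})\to 0
\end{equation*}
is exact in $M(Y_\bullet)$. The functor $p_{-*}(\ms{I})$ is contravariant, so exactness means the sequence above is short exact.

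For an arbitrary $\ms{N}\in M(Y_\bullet)$, the adjunction $(p^*_{(-)},p_{(-)*})$ supplied by the previous lemma gives natural isomorphisms
\begin{equation*}
\mr{Hom}_{M(Y_\bullet)}\bigl(\ms{N},p_{\ms{A}*}(\ms{I})\bigr)\cong\mr{Hom}(\ms{A}\boxtimes\ms{N},\ms{I}),
\end{equation*}
and similarly for $\ms{B}$ and $\ms{C}$. The external tensor product $(-)\boxtimes\ms{N}$ on admissible simplicial spaces was shown to be exact, so $0\to\ms{A}\boxtimes\ms{N}\to\ms{B}\boxtimes\ms{N}\to\ms{C}\boxtimes\ms{N}\to0$ is exact in $M((X\times Y)_{\bullet\bullet})$. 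Since $\ms{I}$ is injective, applying $\mr{Hom}(-,\ms{I})$ yields a short exact sequence, which via the adjunction becomes
\begin{equation*}
0\to\mr{Hom}\bigl(\ms{N},p_{\ms{C}*}\ms{I}\bigr)\to\mr{Hom}\bigl(\ms{N},p_{\ms{B}*}\ms{I}\bigr)\to\mr{Hom}\bigl(\ms{N},p_{\ms{A}*}\ms{I}\bigr)\to0,
\end{equation*}
exact for every $\ms{N}$.

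Finally, I would deduce exactness of the sequence of $p_{-*}(\ms{I})$'s by a standard Yoneda argument. The monomorphism property of $p_{\ms{C}*}\ms{I}\to p_{\ms{B}*}\ms{I}$ and exactness at the middle follow by feeding in arbitrary $\ms{N}$ and $\ms{N}=\mr{Ker}(p_{\ms{B}*}\ms{I}\to p_{\ms{A}*}\ms{I})$ respectively; surjectivity at the right is obtained by taking $\ms{N}=p_{\ms{A}*}\ms{I}$ and lifting $\mr{id}_{p_{\ms{A}*}\ms{I}}$ through the surjection $\mr{Hom}(p_{\ms{A}*}\ms{I},p_{\ms{B}*}\ms{I})\twoheadrightarrow\mr{Hom}(p_{\ms{A}*}\ms{I},p_{\ms{A}*}\ms{I})$, producing a splitting of the rightmost map. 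There is no substantial obstacle: the argument is a purely formal consequence of the adjunction of the preceding lemma, the bi-exactness of $\boxtimes$, and the injectivity of $\ms{I}$.
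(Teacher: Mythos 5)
Your argument coincides with the paper's proof: both test exactness against $\mr{Hom}(\ms{N},-)$ for arbitrary $\ms{N}\in M(Y_\bullet)$, transport through the adjunction $(p^*_{(-)},p_{(-)*})$ to reduce to exactness of $(-)\boxtimes\ms{N}$ plus injectivity of $\ms{I}$, and then observe that universal Hom-exactness yields (indeed split) exactness of the underlying sequence. Your final paragraph merely spells out the Yoneda/splitting step that the paper dispatches with the phrase "which implies in fact the sequence ... to be split exact. This follows by definition."
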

\begin{proof}
 Let $0\rightarrow\ms{A}'\rightarrow\ms{A}\rightarrow\ms{A}''
 \rightarrow0$ be a short exact sequence.
 It suffices to show that, for any $\ms{N}$ in $M(Y_\bullet)$, the
 complex
 \begin{equation*}
  0\rightarrow\mr{Hom}(\ms{N},p_{\ms{A}''*}(\ms{I}))\rightarrow
   \mr{Hom}(\ms{N},p_{\ms{A}*}(\ms{I}))\rightarrow
   \mr{Hom}(\ms{N},p_{\ms{A}'*}(\ms{I}))\rightarrow0
 \end{equation*}
 is exact, which implies in fact that the sequence $0\rightarrow
 p_{\ms{A}''*}(\ms{I})\rightarrow p_{\ms{A}*}(\ms{I})\rightarrow
 p_{\ms{A}'*}(\ms{I})\rightarrow 0$ is split exact. This follows by
 definition.
\end{proof}

\begin{cor}
 \label{sheafdevsseq}
 Let $\ms{C}$ be in $C(M(X_\bullet))$, and $\ms{M}\in C((X\times
 Y)_{\bullet\bullet})$. We have the spectral
 sequence
 \begin{equation*}
  E_2^{p,q}:=\mb{R}^{p}p_{\H^{-q}(\ms{C})*}(\ms{M})
   \Rightarrow
   \mb{R}^{p+q}p_{\ms{C}*}(\ms{M}).
 \end{equation*}
\end{cor}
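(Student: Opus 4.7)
The plan is to produce the spectral sequence from a double complex associated to an injective resolution of $\ms{M}$, using the previous lemma (exactness of $p_{-*}(\ms{I})$ for $\ms{I}$ injective) as the key input.

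Concretely, first I would choose an injective resolution $\ms{M}\rightarrow\ms{I}^\bullet$ in $M((X\times Y)_{\bullet\bullet})$; this exists because $M((X\times Y)_{\bullet\bullet})$ is a Grothendieck category, as was established earlier. Then I form the double complex
\begin{equation*}
 K^{p,q}:=p_{\ms{C}^{-q}*}(\ms{I}^p),
\end{equation*}
with the horizontal differentials $K^{p,q}\rightarrow K^{p+1,q}$ induced by the differential of $\ms{I}^\bullet$ and the vertical differentials $K^{p,q}\rightarrow K^{p,q+1}$ induced by the functorial morphisms $p_{\ms{C}^{-q-1}*}\rightarrow p_{\ms{C}^{-q}*}$ coming from the differential of $\ms{C}^\bullet$ as constructed in the paragraph preceding the previous lemma. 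By definition of $\mb{R}p_{\ms{C}*}$ on a complex of functors applied to a single object (take the injective resolution and total-complex the result), $\mr{Tot}(K^{\bullet,\bullet})$ computes $\mb{R}p_{\ms{C}*}(\ms{M})$.

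Next I would run the standard spectral sequence of the double complex $K^{\bullet,\bullet}$ obtained by filtering by columns (i.e.\ taking vertical cohomology first). On the $E_1$-page this gives
\begin{equation*}
 E_1^{p,q}=\H^q\bigl(p_{\ms{C}^\bullet *}(\ms{I}^p)\bigr).
\end{equation*}
Since $\ms{I}^p$ is injective, the previous lemma asserts that the contravariant functor $p_{-*}(\ms{I}^p)\colon M(X_\bullet)^{\circ}\rightarrow M(Y_\bullet)$ is exact, so it commutes with taking cohomology of a complex in $M(X_\bullet)$. Hence
\begin{equation*}
 E_1^{p,q}\cong p_{\H^{-q}(\ms{C})*}(\ms{I}^p),
\end{equation*}
and then passing to $E_2$ by computing horizontal cohomology gives
\begin{equation*}
 E_2^{p,q}\cong\H^p\bigl(p_{\H^{-q}(\ms{C})*}(\ms{I}^\bullet)\bigr)
 =\mb{R}^p p_{\H^{-q}(\ms{C})*}(\ms{M}),
\end{equation*}
which is exactly the asserted $E_2$-term. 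Convergence to $\mb{R}^{p+q}p_{\ms{C}*}(\ms{M})$ is automatic from the standard double-complex formalism together with the identification of $\mr{Tot}(K^{\bullet,\bullet})$ above.

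Since this is a straightforward application of the double-complex spectral sequence, there is no serious conceptual obstacle; the only point that requires invoking a nontrivial fact is the identification of $E_1^{p,q}$, which relies crucially on the exactness of $p_{-*}(\ms{I}^p)$ established in the previous lemma. One minor technical item to be careful about is the sign/indexing convention for the complex of functors $p_{\ms{C}^\bullet *}$ (the functors are placed in negative degree since $p_{\ms{A}*}$ is contravariant in $\ms{A}$), but that is purely bookkeeping.
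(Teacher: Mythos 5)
Your proof is correct and follows essentially the same route as the paper's: take an injective resolution $\ms{I}^\bullet$ of $\ms{M}$, form the double complex $p_{\ms{C}^\bullet *}(\ms{I}^\bullet)$, use the exactness of $p_{-*}(\ms{I})$ from the preceding lemma to identify the vertical cohomology as $p_{\H^{-q}(\ms{C})*}(\ms{I}^p)$, and then read off the spectral sequence of the double complex. The paper states this more tersely, but the content and the key input are identical.
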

\begin{proof}
 The lemma shows that if $\ms{I}$ is an injective object in $M((X\times
 Y)_{\bullet\bullet})$, we have
 \begin{equation*}
  \H^{-i}\bigl([\cdots\rightarrow p_{\ms{A}^{i+1}*}(\ms{I})
   \rightarrow
   p_{\ms{A}^{i}*}(\ms{I})\rightarrow p_{\ms{A}^{i-1}*}(\ms{I})
   \rightarrow\cdots]\bigr)\cong
  p_{\H^i(\ms{A}^{\bullet})*}(\ms{I}).
 \end{equation*}
 Let $\ms{I}^{\bullet}$ be an injective resolution of $\ms{M}$. Then the
 spectral sequence associated to the double complex
 $p_{\ms{A}^{\bullet}*}(\ms{I}^{\bullet})$ is the desired one.
\end{proof}

\begin{cor}
 \label{uptotwpushquasi}
 If a homomorphism of complexes
 $\ms{A}^\bullet\rightarrow\ms{B}^\bullet$ in $C(M(X_\bullet))$ is a
 quasi-isomorphism, the induced homomorphism of derived functors
 $\mb{R}p_{\ms{B}^\bullet*}\rightarrow\mb{R}p_{\ms{A}^\bullet*}$ is a
 quasi-isomorphism as well.
\end{cor}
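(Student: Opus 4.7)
The plan is to deduce this directly from the spectral sequence of Corollary~\ref{sheafdevsseq}. Let $\phi\colon\ms{A}^\bullet\to\ms{B}^\bullet$ be the given quasi-isomorphism in $C(M(X_\bullet))$, and fix $\ms{M}\in M((X\times Y)_{\bullet\bullet})$. Choose an injective resolution $\ms{M}\to\ms{I}^\bullet$. Contravariant functoriality of $\ms{A}\mapsto p_{\ms{A}*}$ (the construction right after Lemma in \S\ref{cohfunctorstack} giving $p_{\ms{B}*}\to p_{\ms{A}*}$ from $\ms{A}\to\ms{B}$) converts $\phi$ into a morphism of double complexes $p_{\ms{B}^\bullet*}(\ms{I}^\bullet)\to p_{\ms{A}^\bullet*}(\ms{I}^\bullet)$, whose totalization represents the induced map $\mb{R}p_{\ms{B}^\bullet*}(\ms{M})\to \mb{R}p_{\ms{A}^\bullet*}(\ms{M})$.

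Since the construction of the spectral sequence in Corollary~\ref{sheafdevsseq} is functorial in $\ms{C}^\bullet$, the morphism of double complexes above induces a morphism of spectral sequences. On the $E_2$-page this morphism reads
\[
\mb{R}^{p}p_{\H^{-q}(\ms{B}^\bullet)*}(\ms{M})\longrightarrow \mb{R}^{p}p_{\H^{-q}(\ms{A}^\bullet)*}(\ms{M}),
\]
and is nothing but the map obtained by applying the (contravariant) functor $\mb{R}^{p}p_{(-)*}(\ms{M})$ to $\H^{-q}(\ms{A}^\bullet)\to \H^{-q}(\ms{B}^\bullet)$. Because $\phi$ is a quasi-isomorphism, each of these cohomology maps is an isomorphism in $M(X_\bullet)$, and hence the morphism on every $E_2$-term is an isomorphism.

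By the standard comparison theorem for convergent spectral sequences, a morphism inducing isomorphisms on $E_2$-pages induces isomorphisms on the abutment. Thus $\mb{R}^n p_{\ms{B}^\bullet*}(\ms{M})\xrightarrow{\sim} \mb{R}^n p_{\ms{A}^\bullet*}(\ms{M})$ for every $n$ and every $\ms{M}$, which is exactly the required quasi-isomorphism of derived functors. The only delicate point is convergence of the two spectral sequences, which is automatic in the bounded setting tacitly assumed in Corollary~\ref{sheafdevsseq} (and in which this corollary will be applied); for the unbounded case one reduces by a standard truncation argument to bounded $\ms{A}^\bullet$ and $\ms{B}^\bullet$. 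Apart from this bookkeeping, the argument is formal from the $E_2$-page description.
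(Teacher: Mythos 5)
Your argument is exactly the paper's: compare the two spectral sequences of Corollary \ref{sheafdevsseq} via the morphism induced by $\phi$, observe that a quasi-isomorphism gives isomorphisms on the $E_2$-terms $\mb{R}^pp_{\H^{-q}(-)*}(\ms{M})$, and conclude on the abutment. The extra remarks on functoriality of the double-complex construction and on convergence are harmless elaborations of what the paper leaves implicit.
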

\begin{proof}
 The homomorphism of functors
 $\mb{R}p_{\ms{B}^\bullet*}\rightarrow\mb{R}p_{\ms{A}^\bullet*}$ induces
 the homomorphism of spectral sequences
 \begin{equation*}
  \xymatrix@R=15pt{
   {}^{\mr{I}}E_2^{p,q}=\mb{R}^pp_{\H^{-q}(\ms{B})*}\ar@{=>}[r]\ar[d]&
   \mb{R}^{p+q}p_{\ms{B}*}\ar[d]\\
  {^{\mr{II}}}E_2^{p,q}=\mb{R}^pp_{\H^{-q}(\ms{A})*}\ar@{=>}[r]&
   \mb{R}^{p+q}p_{\ms{A}*}.
   }
 \end{equation*}
 Since the left vertical homomorphism is an isomorphism, so is the right.
\end{proof}

\begin{dfn}
 \label{dfnofplusprodpr}
 Let $\star\in\{\emptyset,\mr{b},+,-\}$, and $\ms{C}\in
 C^{\star}(M(X_\bullet))$. We can take the derived functor of
 $p_{\ms{C}*}$ to get
 \begin{equation*}
  p_{\ms{C}+}:=\mb{R}p_{\ms{C}*}\colon
   D^{+}((X\times Y)_{\bullet\bullet})
   \rightarrow D(Y_\bullet),\qquad
   p_{\ms{C}}^+:=p_{\ms{C}}^*\colon
   D^{\star}(Y_\bullet)\rightarrow D^{\star}((X\times
   Y)_{\bullet\bullet}).
 \end{equation*}
 By Corollary \ref{uptotwpushquasi}, we may even take $\ms{C}$ in
 $D(X_\bullet)$. By definition, the pair
 $(p^+_{\ms{C}},p_{\ms{C}+})$ is an adjoint pair.
\end{dfn}

\begin{rem*}
 Assume $\ms{C}\in M(X_\bullet)$. Then by definition, for $\ms{M}\in
 M((X\times Y)_{\bullet\bullet})$, $\H^ip_{\ms{C}+}(\ms{M})=0$ for
 $i<0$, and in particular, it sends $D^+$ to $D^+$. Now, let
 $\ms{C}\in D(X_\bullet)$ such that there exists an integer $a$ with
 $\H^i\ms{C}=0$ for $a<i$. Then by Corollary \ref{uptotwpushquasi}, for
 $\ms{M}\in M((X\times Y)_{\bullet\bullet})$, we have
 $\H^ip_{\ms{C}+}(\ms{M})=0$ for $i<-a$. In particular, the functor
 sends $D^+$ to $D^+$ as well.
\end{rem*}

\subsubsection{}
\label{compuprojcon}
Let us compute $p_{\ms{A}*}$ more concretely when
$\ms{A}_{\bullet}\in\mr{Hol}(X_\bullet)$.
Let $X_\bullet$ be an admissible simplicial space, and $Y$ be a
space. Let $p\colon X_\bullet\times Y\rightarrow Y$ be the
projection. Let $\ms{A}_\bullet\in\mr{Hol}(X_\bullet)$.
Take $\ms{M}_\bullet\in M(X_\bullet\times Y)$. For
$\phi\colon[i]\rightarrow[j]$, we have the commutative diagram
\begin{equation*}
 \xymatrix@C=50pt@R=3pt{
  X_j\times Y\ar[rd]^-{p_j}\ar[dd]_{(X\times Y)(\phi)}&\\
 &Y\\
 X_i\times Y\ar[ru]_-{p_i}&
  }
\end{equation*}
Recall \ref{projectcasepullpush}. Since
$\bigl((X\times Y)(\phi)^*\circ p_{i\ms{A}_i}^*, p_{i\ms{A}_i*}\circ
(X\times Y)(\phi)_*\bigr)$ is an adjoint pair and we have the canonical
isomorphism $(X\times Y)(\phi)^*\circ p_{i\ms{A}_i}^*\cong
p_{j\ms{A}_j}^*$, we have
\begin{equation*}
 p_{i\ms{A}_i*}\circ(X\times Y)(\phi)_*\cong p_{j\ms{A}_j*}.
\end{equation*}
This isomorphism together with the gluing homomorphism of
$\ms{M}_\bullet$ for $\phi$ induces a
homomorphism $\alpha_\phi\colon p_{i\ms{A}_i*}(\ms{M}_i)
\rightarrow p_{j\ms{A}_j*}(\ms{M}_j)$. With this homomorphism, we define
\begin{equation*}
 p_{\ms{A}\bullet\times}(\ms{M}_\bullet):=\mr{Ker}
  \bigl(p_{0\ms{A}_0*}(\ms{M}_0)\rightrightarrows
  p_{1\ms{A}_1*}(\ms{{M}_1})\bigr).
\end{equation*}

Now, recall the notation of \ref{doublesimplicspalemm}, and let
$Y_\bullet$ be an admissible simplicial space, and
$\ms{M}_{\bullet\bullet}$ be in $M((X\times Y)_{\bullet\bullet})$. Given
a morphism $\psi\colon[k]\rightarrow[l]$, consider the following
diagram:
\begin{equation}
 \label{doulbsimpconndia}
 \xymatrix{
  X_\bullet\times Y_l
  \ar[r]^-{p^l}\ar[d]_{(X\times Y)(\psi)}\ar@{}[rd]|\square&
  Y_l\ar[d]^{Y(\psi)}\\
 X_\bullet\times Y_k\ar[r]_-{p^k}&Y_k.
  }
\end{equation}
Then we have canonical homomorphisms in $M(Y_l)$
\begin{equation*}
 Y(\psi)^*p^k_{\ms{A}_\bullet\times}(\ms{M}_{\bullet k})
  \xrightarrow{\sim}
  p^l_{\ms{A}_\bullet\times}\bigl((X\times
  Y)(\psi)^*(\ms{M}_{\bullet k})\bigr)\rightarrow
  p^l_{\ms{A}_\bullet\times}(\ms{M}_{\bullet l}).
\end{equation*}
With this gluing homomorphism, we get a left exact functor
\begin{equation*}
 p_{\ms{A}_\bullet\times}:=
 p^{\bullet}_{\ms{A}_\bullet\times}\colon
  M((X\times Y)_{\bullet\bullet})\rightarrow
  M(Y_\bullet).
\end{equation*}

\begin{lem*}
 \label{compladjpushpull}
 The pair $(p^*_{\ms{A}_\bullet},p_{\ms{A}_\bullet\times})$ is an
 adjoint pair. Thus, we have an isomorphism
 $p_{\ms{A}_\bullet\times}\cong p_{\ms{A}_\bullet*}$
\end{lem*}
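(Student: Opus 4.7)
The plan is to verify the adjunction directly from the definitions by unwinding a Hom set on each side into a limit of Homs computed fiberwise, and then to invoke the fiberwise adjunction $(p_{i\ms{A}_i}^{*}, p_{i\ms{A}_i *})$ supplied by Lemma~\ref{projectcasepullpush} (or rather its $M$-categorical extension obtained in \ref{prepcohoprealsch} via \ref{indcatrecall}). The final assertion $p_{\ms{A}_\bullet\times}\cong p_{\ms{A}_\bullet *}$ is then a formal consequence of the uniqueness of right adjoints.

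First, I would describe morphisms in both categories in terms of their components. A morphism $\ms{N}_\bullet\to p_{\ms{A}_\bullet\times}(\ms{M}_{\bullet\bullet})$ in $M(Y_\bullet)$ is, by the termwise description of morphisms in the section category, the same as a family of morphisms $\ms{N}_l\to p^{l}_{\ms{A}_\bullet\times}(\ms{M}_{\bullet l})$ in $M(Y_l)$ compatible with the transition maps along each $\psi\colon[k]\to[l]$. By the kernel definition of $p^{l}_{\ms{A}_\bullet\times}$, giving such a family is the same as giving, for every $l$ and every $i$, a morphism $\ms{N}_l\to p_{i\ms{A}_i *}(\ms{M}_{il})$ in $M(Y_l)$ that is compatible with all the connecting maps $\alpha_\phi$ along $\phi\colon[i]\to[j]$ in the $X$-direction, and with the transition maps of $\ms{M}$ and $\ms{N}$ along $\psi\colon[k]\to[l]$ in the $Y$-direction (the latter being read off from diagram \eqref{doulbsimpconndia}).

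Next, I would apply the fiberwise adjunction $(p_{i\ms{A}_i}^{*},p_{i\ms{A}_i *})$ for each pair $(i,l)$. This converts the data above into a family of morphisms $\ms{A}_i\boxtimes\ms{N}_l\to\ms{M}_{il}$ in $M(X_i\times Y_l)$. The compatibility with $\alpha_\phi$ in the $X$-direction translates, under the adjunction together with the canonical isomorphism $(X\times Y)(\phi)^{*}\circ p_{i\ms{A}_i}^{*}\cong p_{j\ms{A}_j}^{*}$ used to define $\alpha_\phi$, into compatibility of these morphisms with the gluing maps of $\ms{A}_\bullet\boxtimes\ms{N}_\bullet$ and $\ms{M}_{\bullet\bullet}$ along $\phi$; similarly, compatibility with $\psi\colon[k]\to[l]$ translates to compatibility with gluing along $\psi$. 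Altogether this is exactly the datum of a morphism $p^{*}_{\ms{A}_\bullet}(\ms{N}_\bullet)=\ms{A}_\bullet\boxtimes\ms{N}_\bullet\to\ms{M}_{\bullet\bullet}$ in $M((X\times Y)_{\bullet\bullet})$. The bijection is functorial in $\ms{N}_\bullet$ and $\ms{M}_{\bullet\bullet}$ by construction, yielding the adjunction.

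The main obstacle is the bookkeeping of two simultaneously varying simplicial directions: I have to check carefully that the kernel defining $p^{l}_{\ms{A}_\bullet\times}$ interacts with the connecting maps $\alpha_\phi$ in a way that exactly matches, under adjunction, the gluing data of $\ms{A}_\bullet\boxtimes\ms{N}_\bullet$; in particular one must verify that the transition $Y(\psi)^{*}p^{k}_{\ms{A}_\bullet\times}\to p^{l}_{\ms{A}_\bullet\times}(X\times Y)(\psi)^{*}$ defined in \ref{compuprojcon} is compatible with the adjunction isomorphism of \ref{projectcasepullpush}, but this reduces to the naturality of the latter, which is routine. Once the adjoint pair $(p^{*}_{\ms{A}_\bullet},p_{\ms{A}_\bullet\times})$ is established, the isomorphism $p_{\ms{A}_\bullet\times}\cong p_{\ms{A}_\bullet *}$ follows since $p_{\ms{A}_\bullet *}$ was defined in Definition~\ref{dfnofplusprodpr} as the (necessarily unique up to canonical isomorphism) right adjoint of $p^{*}_{\ms{A}_\bullet}$.
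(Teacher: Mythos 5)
Your proof is correct and matches the paper's in substance. The paper simply notes that the argument is ``essentially the same as that of Lemma \ref{constofpairfunct}'', whose proof factorizes the adjunction through a constant (co)simplicial object and then appeals to the fiberwise adjunctions of \ref{projectcasepullpush} together with the abstract constant-diagram-versus-equalizer adjunction; your direct unwinding of Hom-sets carries out the same bookkeeping in one pass, resting on exactly those two ingredients, so the approaches are essentially identical.
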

\begin{proof}
 The proof is essentially the same as that of Lemma
 \ref{constofpairfunct}, and we do not repeat here.
\end{proof}

\begin{lem}
 \label{specseqprojcase}
 Assume $\ms{A}_\bullet\in\mr{Hol}(X_\bullet)$. Then, there exists the
 following spectral sequence
 \begin{equation*}
  E^{i,j}_1=\mb{R}^jp_{i\ms{A}_i*}(\ms{M}_i)\Rightarrow
   \H^{i+j}p_{\ms{A}+}(\ms{M})
 \end{equation*}
 where $p_i\colon X_i\times Y_\bullet\rightarrow Y_\bullet$ is the
 projection.
\end{lem}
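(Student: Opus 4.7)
The strategy is to mimic the proof of Lemma~\ref{fundssforsimpldes}, replacing ``smooth push-forward along $f_i$'' by ``twisted push-forward $p_{p\ms{A}_p*}$ along the $X$-simplicial direction of the double simplicial space $(X\times Y)_{\bullet\bullet}$''. By Lemma~\ref{compladjpushpull}, the underived functor $p_{\ms{A}_\bullet*}$ is the same as $p_{\ms{A}_\bullet\times}$, which at each $Y$-level $k$ is an equalizer/kernel taken over the $X$-simplicial index. So the spectral sequence should come from a Čech-type resolution along the $X$-direction.

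First I would set up the functorial apparatus. Write $\rho^X_p\colon(X\times Y)_{p\bullet}=X_p\times Y_\bullet\to (X\times Y)_{\bullet\bullet}$ for the natural ``$p$-th row'' inclusion functor, with exact pull-back $\rho^{X*}_p$ and left/right adjoints $\rho^{X}_{p!}$, $\rho^X_{p*}$ as in~\ref{functorrhodef}; in particular $\rho^{X*}_p\ms{M}=\ms{M}_p$ in the notation of the lemma. For an injective object $\ms{J}\in M(X_p\times Y_\bullet)$, using the adjointness $(\rho^{X*}_p,\rho^X_{p*})$ and the compatibility between $p_{p\ms{A}_p*}$ and $(X\times Y)(\phi)_*$ established in \ref{compuprojcon}, one checks exactly as in the proof of Lemma~\ref{fundssforsimpldes} that the Čech-type complex
\begin{equation*}
 C_p^\bullet(\ms{J}):\;\;
  \bigl[\,0\to p_{0\ms{A}_0*}\rho^{X*}_0\rho^X_{p*}(\ms{J})
  \to p_{1\ms{A}_1*}\rho^{X*}_1\rho^X_{p*}(\ms{J})
  \to \cdots\,\bigr]
\end{equation*}
is canonically homotopy equivalent to $p_{p\ms{A}_p*}(\ms{J})$ placed in degree zero. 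The argument is the same tensoring-with-the-$p$-simplex trick as in \ref{fundssforsimpldes}, once we observe that $\rho^{X*}_q\rho^X_{p*}(\ms{J})=\prod_{[q]\to[p]}(X\times Y)(\phi)_*(\ms{J})$ and that $p_{q\ms{A}_q*}$ commutes with products because it is a right adjoint (to $\ms{A}_q\boxtimes(-)$, cf.~Lemma~\ref{compladjpushpull}).

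Next I would promote this to arbitrary injectives. For any $\ms{N}\in M((X\times Y)_{\bullet\bullet})$, choose for each $p$ an embedding $\rho^{X*}_p\ms{N}\hookrightarrow\ms{J}_{(p)}$ into an injective object of $M(X_p\times Y_\bullet)$; set $\ms{I}:=\prod_p\rho^X_{p*}\ms{J}_{(p)}$. Since $\rho^X_{p*}$ preserves injectivity (Lemma~\ref{functorrhodef}) and products can be formed termwise (\ref{sepacseqBDandmis}), $\ms{I}$ is injective. Using again that $p_{q\ms{A}_q*}$ commutes with small products,
\begin{equation*}
 \bigl[\,p_{0\ms{A}_0*}\rho^{X*}_0(\ms{I})\to p_{1\ms{A}_1*}\rho^{X*}_1(\ms{I})\to\cdots\,\bigr]
  \;\cong\;\prod_p C_p^\bullet(\ms{J}_{(p)}),
\end{equation*}
so this complex is acyclic outside degree $0$, where it recovers $p_{\ms{A}_\bullet\times}(\ms{I})$ by the very definition in \ref{compuprojcon}. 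Thus, for every injective $\ms{I}$ of the described shape (and every $\ms{N}$ embeds into such), the Čech complex resolves $p_{\ms{A}_\bullet\times}(\ms{I})$.

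Finally, choose an injective resolution $\ms{M}\to\ms{I}^\bullet$ in $M((X\times Y)_{\bullet\bullet})$ consisting of injectives of the above type, and form the double complex $\bigl\{p_{p\ms{A}_p*}\rho^{X*}_p(\ms{I}^\bullet)\bigr\}_{p,\bullet\geq 0}$. The acyclicity established in the previous step implies that the associated total complex computes $\mb{R}p_{\ms{A}_\bullet\times}(\ms{M})=p_{\ms{A}+}(\ms{M})$, while the other spectral sequence of the double complex has
\begin{equation*}
 E_1^{p,q}=\mb{R}^q p_{p\ms{A}_p*}\bigl(\rho^{X*}_p\ms{M}\bigr)
  =\mb{R}^q p_{p\ms{A}_p*}(\ms{M}_p),
\end{equation*}
which is the stated spectral sequence. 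The main obstacle is the first step: verifying the homotopy equivalence for $C_p^\bullet(\ms{J})$ via the $p$-simplex comparison together with the compatibilities for $p_{q\ms{A}_q*}$ along face maps. Everything after that is a formal Čech-to-derived functor double-complex argument.
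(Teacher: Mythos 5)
Your proposal is correct and takes the same route as the paper: the paper's proof simply reduces to $p_{\ms{A}_\bullet\times}$ via Lemma~\ref{compladjpushpull} and then says "the construction is the same as that of Lemma~\ref{fundssforsimpldes}"; you have carried out that \v{C}ech/double-complex construction explicitly, with the key new ingredients correctly identified as (a) the isomorphism $p_{q\ms{A}_q*}\circ(X\times Y)(\phi)_*\cong p_{p\ms{A}_p*}$ from \ref{compuprojcon} and (b) the fact that $p_{q\ms{A}_q*}$ preserves products because it is a right adjoint.
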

\begin{proof}
 By Lemma \ref{compladjpushpull}, it suffices to construct the spectral
 sequence for $p_{\ms{A}_\bullet\times}$.
 The construction is the same as that of Lemma
 \ref{fundssforsimpldes} which is analogous to
 \cite[Corollary 2.7]{OlSA}.
\end{proof}

\subsubsection{}
Now, we use these computations to show finiteness results of the
functor defined in Definition \ref{dfnofplusprodpr}:
\begin{prop*}
 Let $\ms{A}$ be an object of $D^{\mr{b}}_{\mr{hol}}(X_\bullet)$.

 (i) The functors $p_{\ms{A}+}$ and $p_{\ms{A}}^+$ induce functors
 between
 $D^{+}_{\mr{hol}}((X\times Y)_{\bullet\bullet})$ and
 $D^+_{\mr{hol}}(Y_\bullet)$.

 (ii) We have an adjoint pair $(p^+_{\ms{A}},p_{\ms{A}+})$ between
 $D^+_{\mr{hol}}(Y_\bullet)$ and $D^+_{\mr{hol}}((X\times Y)
 _{\bullet\bullet})$.
\end{prop*}
\begin{proof}
 Let us show (i). First, let us check that for $\ms{M}\in
 D^{\mr{b}}_{\mr{hol}}((X\times Y)_{\bullet\bullet})$ and integers
 $i\geq0$ and $n$,
 $\bigl(\H^np_{\ms{A}+}(\ms{M})\bigr)_i$ is in
 $\mr{Hol}(Y_i)$. By combining the spectral sequences of Corollary
 \ref{sheafdevsseq} and Lemma \ref{specseqprojcase}, it suffices
 to check the holonomicity of $\mb{R}^qp_{i\H^p(\ms{A})*}(\ms{M}_i)$ for
 any integers $p$, $q$, $i$. This follows from
 \ref{prepcohoprealsch} or \ref{smoothpullback}.
 It remains to show that $\H^np_{\ms{A}+}(\ms{M})$ is a total complex,
 namely for $\psi\colon[k]\rightarrow[l]$, the homomorphism
 \begin{equation*}
  Y(\psi)^*p^k_{\ms{A}+}(\ms{M}_{\bullet k})\rightarrow
   p^l_{\ms{A}+}(\ms{M}_{\bullet l})
 \end{equation*}
 is a quasi-isomorphism. We may assume
 $\ms{A}\in\mr{Hol}(X_\bullet)$. In this case, this follows by smooth
 base change (cf.\ Corollary \ref{smbcforopenimm}) and Lemma
 \ref{specseqprojcase}. Using (i), (ii) follows immediately by
 construction.
\end{proof}

\begin{rem*}
 Unfortunately, the boundedness is not preserved as we can see from the
 standard example \cite[18.3.3]{LM}.
 Thus, to get a six functor formalism for algebraic stacks, dealing with
 unbounded derived category is essential as in \cite{OL}. However, we
 only construct a complete formalism for ``admissible stacks'' (cf.\
 Definition \ref{admissdfn}), and for morphism between admissible
 stacks, the boundedness is preserved, so we do not use unbounded
 category.
\end{rem*}

\subsubsection{}
As one can expect, these functors define functors for \stacks. Let
$\st{X}$ and $\st{Y}$ be \stacks, and $\ms{A}\in
D^{\mr{b}}_{\mr{hol}}(\st{X})$. Take simplicial space presentations
$X_\bullet\rightarrow\st{X}$ and $Y_\bullet\rightarrow\st{Y}$. Then we
have a functor
\begin{equation*}
 D^+_{\mr{hol}}(\st{X}\times\st{Y})\cong D^+_{\mr{hol}}((X\times
  Y)_{\bullet\bullet})
  \xrightarrow{p_{\ms{A}+}} D^+_{\mr{hol}}(Y_\bullet)\cong
  D^+_{\mr{hol}}(\st{Y}),
\end{equation*}
and the same for $p^+_{\ms{A}}\colon
D^{\star}_{\mr{hol}}(\st{Y})\rightarrow
D^{\star}_{\mr{hol}}(\st{X}\times\st{Y})$. The pair
$(p_{\ms{A}}^+,p_{\ms{A}+})$ is an adjoint pair. Now, we have:

\begin{lem*}
 The functors do not depend on the choice of presentation.
\end{lem*}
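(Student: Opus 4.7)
The plan is to reduce the independence of both $p_{\ms{A}+}$ and $p^+_{\ms{A}}$ to the independence of the exterior tensor product, and then use adjointness. By the preceding proposition, $(p^+_{\ms{A}},p_{\ms{A}+})$ is an adjoint pair between $D^+_{\mr{hol}}(\st{Y})$ and $D^+_{\mr{hol}}(\st{X}\times\st{Y})$. Since right adjoints are unique up to canonical isomorphism, once independence is established for $p^+_{\ms{A}}$, the canonical isomorphism between two incarnations of $p_{\ms{A}+}$ follows formally from the one for $p^+_{\ms{A}}$. So the essential task is to treat $p^+_{\ms{A}}$.

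By definition, $p^+_{\ms{A}}(\ms{N})=\ms{A}\boxtimes\ms{N}$, so the statement reduces to showing that the exterior tensor product on \stacks, regarded as a bifunctor
\begin{equation*}
 \boxtimes\colon D^{\mr{b}}_{\mr{hol}}(\st{X})\times
 D^+_{\mr{hol}}(\st{Y})\rightarrow D^+_{\mr{hol}}(\st{X}\times\st{Y}),
\end{equation*}
is independent of the choice of simplicial presentation spaces up to canonical isomorphism. Following the strategy of Proposition \ref{equivcatstackdfn}, I would compare two presentations $X_\bullet, X'_\bullet\rightarrow\st{X}$ by passing to the common refinement $Z_{mn}:=X_m\times_{\st{X}}X'_n$ (and similarly for $\st{Y}$), which reduces the problem to checking that, for the smooth morphism $f\colon Z_{\bullet\bullet}\rightarrow X_\bullet$ over $\st{X}$ and the analogous $g$ for $\st{Y}$, the natural identifications on each bisimplex yield a canonical isomorphism
\begin{equation*}
 (f\times g)^*(\ms{A}\boxtimes\ms{N})
 \xrightarrow{\sim}f^*\ms{A}\boxtimes g^*\ms{N}
\end{equation*}
compatible with the equivalences of Proposition \ref{equivcatstackdfn} and Lemma \ref{doublecompleeq}. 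Termwise, this is immediate from the usual compatibility of $\boxtimes$ with smooth pull-back on spaces, contained in Lemma \ref{prepcohoprealsch} and Lemma \ref{compexttensoth}.

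The main (but mild) technical point will be bookkeeping: one must verify that the termwise canonical isomorphisms on the double simplicial space $(X\times Y)_{\bullet\bullet}$ respect the gluing homomorphisms defining objects of $M(X_\bullet)$, $M(Y_\bullet)$ and $M((X\times Y)_{\bullet\bullet})$, and that they intertwine the descent equivalences of Proposition \ref{equivcatstackdfn} on both sides simultaneously. Since this is exactly the argument of Proposition \ref{equivcatstackdfn} transported through the bifunctor $\boxtimes$, and the underlying compatibility statements for spaces are already at our disposal, no new difficulty should arise. Finally, passing from independence under a common refinement to independence for arbitrary pairs of presentations is the standard 2-out-of-3 cocycle argument for the canonical isomorphisms obtained above.
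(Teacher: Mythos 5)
Your proof is correct and follows the same approach as the paper: reduce via uniqueness of right adjoints to the presentation-independence of $p^+_{\ms{A}}$, which the paper also does before declaring the pull-back verification "easy" (you have simply spelled out that easy step by reducing it to presentation-independence of $\boxtimes$ and comparing via a common bisimplicial refinement).
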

\begin{proof}
 By the adjointness property, it suffices to show the lemma for
 $p^+_{\ms{A}}$, in which case the verification is easy.
\end{proof}

\begin{rem*}
 When $\st{X}$ and $\st{Y}$ are realizable schemes, then $p_{\ms{A}+}$
 coincides with the functor defined in \ref{projectcasepullpush}, which
 justifies the notation. This follows since both functors are right
 adjoint to $p_{\ms{A}}^+$.
\end{rem*}

\begin{prop}
 \label{pushforcohrel}
 Let $p\colon\st{X}\rightarrow\mr{Spec}(k)$ be the structural
 morphism of a \stack. Let $\ms{A}$ be an object in
 $D^{\mr{b}}_{\mr{hol}}(\st{X})$. For any $\ms{M}$ in
 $D^+_{\mr{hol}}(\st{X})$, we have a canonical isomorphism (recall
 \ref{globalsectionfunct} and \ref{homdefindcat} for the notation)
 \begin{equation*}
  \mb{R}\Gamma\circ p_{\ms{A}+}(\ms{M})\cong
   \mb{R}\mr{Hom}_{D(\st{X})}(\ms{A},\ms{M}).
 \end{equation*}
\end{prop}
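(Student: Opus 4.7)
The plan is to identify the statement as a special case of the adjunction $(p^+_{\ms{A}},p_{\ms{A}+})$ from Definition \ref{dfnofplusprodpr}, applied with $\st{Y}=\mr{Spec}(k_{\base})$. Let $L\in M(\mr{Spec}(k_{\base}))$ denote the unit object (cf.\ \ref{globalsectionfunct}).

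First I would establish the key identification $p^+_{\ms{A}}(L)\cong\ms{A}$. When $\st{Y}=\mr{Spec}(k_{\base})$, a simplicial presentation space of $\st{Y}$ is trivial, so the double simplicial space $(X\times Y)_{\bullet\bullet}$ of Lemma \ref{doublecompleeq} reduces canonically to $X_\bullet$, a simplicial presentation space of $\st{X}$. Under this identification, the functor $p^+_{\ms{A}}=\ms{A}\boxtimes(-)$ becomes the functor $\ms{A}\otimes_L(-)$, and in particular $p^+_{\ms{A}}(L)\cong\ms{A}$. This should be checked termwise on the simplicial presentation, where it reduces to the corresponding fact on each $X_i$ (which is evident from the definition of $\boxtimes$ over a point).

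Once this is in hand, the adjunction of Definition \ref{dfnofplusprodpr} yields on the level of Hom-sets
\begin{equation*}
 \mr{Hom}_{D(\st{X})}(\ms{A},\ms{M})=
  \mr{Hom}_{D(\st{X})}(p^+_{\ms{A}}(L),\ms{M})\cong
  \mr{Hom}_{D(\mr{Spec}(k_{\base}))}(L,p_{\ms{A}+}(\ms{M})).
\end{equation*}
To upgrade this to an isomorphism of $\mb{R}\mr{Hom}$'s, I would invoke the derived adjunction: the functor $p^+_{\ms{A}}$ is exact (being $\ms{A}\boxtimes(-)$ on the underlying simplicial presentation, with each $\ms{A}_i\boxtimes(-)$ exact by \ref{projectcasepullpush} or the componentwise exactness of $\boxtimes$), while $p_{\ms{A}+}=\mb{R}p_{\ms{A}*}$ is by construction the right derived functor of its left exact right adjoint. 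Thus the remark following Lemma \ref{deradjlemm} gives
\begin{equation*}
 \mb{R}\mr{Hom}_{D(\st{X})}(p^+_{\ms{A}}(L),\ms{M})\cong
  \mb{R}\mr{Hom}_{D(\mr{Spec}(k_{\base}))}(L,p_{\ms{A}+}(\ms{M})).
\end{equation*}
Since $\Gamma(-)=\mr{Hom}(L,-)$ and $\mb{R}\Gamma$ is its right derived functor (cf.\ \ref{globalsectionfunct}), the right hand side equals $\mb{R}\Gamma\circ p_{\ms{A}+}(\ms{M})$, yielding the desired isomorphism.

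The main obstacle is the first step: a clean verification that $p^+_{\ms{A}}(L)\cong\ms{A}$ and, more substantively, that the adjunction upgrades to one of complexes of Hom's (rather than merely Hom-sets). For the latter, the cleanest route is to reduce to the assumption of Lemma \ref{deradjlemm} by noting that $p^+_{\ms{A}}$ is exact as a functor $M(\st{Y})\to M(\st{X})$ with right adjoint $p_{\ms{A}*}$, and therefore $p_{\ms{A}*}$ preserves injectives; consequently for an injective resolution $\ms{M}\to\ms{I}^{\bullet}$ in $M(\st{X})$, the object $p_{\ms{A}*}(\ms{I}^{\bullet})$ computes $p_{\ms{A}+}(\ms{M})$ and consists of $\Gamma$-acyclic terms, giving the required derived identity. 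Once that routine bookkeeping is handled, everything else is formal.
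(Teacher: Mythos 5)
Your proof is correct and follows essentially the same route as the paper: both arguments rest on the identification $p^{+}_{\ms{A}}(L)\cong\ms{A}$ (made implicit in the paper's computation $\Gamma\circ p_{\ms{A}*}(\ms{M})\cong\mr{Hom}_{M(X_\bullet)}(\ms{A},\ms{M})$), and both then use the fact that the right adjoint $p_{\ms{A}^i*}$ preserves injective objects (since the left adjoint is exact) to pass from the underived adjunction to the statement about $\mb{R}\Gamma\circ p_{\ms{A}+}$ and $\mb{R}\mr{Hom}$. The only small caveat is that for a complex $\ms{A}$ one must apply the injectivity-preservation argument termwise (as the paper does explicitly), rather than quoting Lemma \ref{deradjlemm} verbatim; but you already flag and handle this in your last paragraph.
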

\begin{proof}
 Take a simplicial space presentation $X_\bullet\rightarrow\st{X}$.
 For $\ms{A}\in C^{\mr{b}}(X_\bullet)$ and $\ms{M}\in M(X_\bullet)$, we
 have
 \begin{equation*}
  \Gamma\circ p_{\ms{A}*}(\ms{M})
   \cong
   \mr{Hom}_{M(\mr{Spec}(k))}\bigl(L,p_{\ms{A}*}(\ms{M})\bigr)
   \cong
   \mr{Hom}_{M(X_\bullet)}(\ms{A},\ms{M}).
 \end{equation*}
 Now, $p_{\ms{A}^i*}$ preserves injective objects since the left adjoint
 functor $p_{\ms{A}^i}^*$ is exact. This shows that
 $\mb{R}\bigl(\Gamma\circ p_{\ms{A}*}\bigr)\cong
 \mb{R}\Gamma\circ\mb{R}p_{\ms{A}*}$.
 Thus by the definition of $p_{\ms{A}+}$, the proposition follows.
\end{proof}

\subsubsection{}
\label{BBDgluingthm}
We have defined a pair of adjoint functors
$(p^+_{\ms{A}},p_{\ms{A}+})$, which depends on the choice of the complex
$\ms{A}$. For the construction of normal push-forward and pull-back, we
need a ``canonical choice'' of $\ms{A}$, which is nothing but the
unit object $L_{\st{X}}$ when $\st{X}$ is a smooth realizable
scheme. To construct this complex for \stacks, we need the following
theorem of \cite{BBD}, as in the construction of \cite{OL}.

\begin{thm*}[{\cite[3.2.4]{BBD}}]
 Let $X_\bullet$ be an admissible simplicial space.
 Assume given data
 $\bigl\{\ms{C}_i,\alpha_{\phi}\bigr\}$ where
 $\ms{C}_i\in
 D^\mr{b}(X_\bullet)$ and for $\phi\colon[i]\rightarrow[j]$,
 $\alpha_\phi\colon X(\phi)^*\ms{C}_i\xrightarrow{\sim}\ms{C}_j$
 satisfying the cocycle condition. Assume moreover that
 \begin{equation*}
  \mb{R}^i\mr{Hom}_{D(X_i)}(\ms{C}_i,\ms{C}_i)=0
 \end{equation*}
 for any $i<0$. Then there exists a unique $\ms{C}\in
 D^{\mr{b}}_{\mr{tot}}(X_\bullet)$ such that
 $\rho^*_i(\ms{C})\cong\ms{C}_i$ (cf.\ \ref{functorrhodef}) and the
 gluing isomorphism is equal to $\alpha_\phi$ via this isomorphism.
\end{thm*}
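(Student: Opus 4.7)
The statement is the simplicial analogue of [BBD, 3.2.4], and the plan is to adapt the latter to our setting. Two ingredients are central: Lemma \ref{catequforhol}, which identifies $D^{\mr{b}}_{\mr{hol}}(X_\bullet)$ with $D^{\mr{b}}_{\mr{tot}}(\mr{Hol}(X_\bullet)_\bullet)$ and thus allows one to work with complexes of cofibered data, and the spectral sequence \eqref{BDspecseq}
\[
 E_1^{p,q} = \mr{Ext}^{q}_{D(X_p)}(\rho_p^*\ms{M}, \rho_p^*\ms{N}) \Longrightarrow \mr{Ext}^{p+q}_{D(X_\bullet)}(\ms{M},\ms{N}),
\]
which converts all questions about morphisms in $D(X_\bullet)$ into cohomological questions on the individual $X_i$. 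The hypothesis $\mb{R}^i\mr{Hom}_{D(X_i)}(\ms{C}_i,\ms{C}_i)=0$ for $i<0$ is designed precisely to annihilate the obstruction groups that appear below.

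For \textbf{uniqueness}, suppose $\ms{C}, \ms{C}' \in D^{\mr{b}}_{\mr{tot}}(X_\bullet)$ are two solutions with the chosen identifications $\rho_i^*\ms{C} \cong \ms{C}_i \cong \rho_i^*\ms{C}'$ compatible with the $\alpha_\phi$. Applying \eqref{BDspecseq} to the pair $(\ms{C},\ms{C}')$, the family of identities $\mr{id}_{\ms{C}_i}$ assembles into a cocycle class in $E_1^{*,0}$. The Ext-vanishing hypothesis forces $E_2^{p,q}=0$ for all $p+q<0$, so every differential with source or target the relevant $E_r^{0,0}$-entry lands in a zero group. Consequently the cocycle lifts to an element of $\mr{Hom}_{D(X_\bullet)}(\ms{C},\ms{C}')$, and the lift is unique because the kernel of $\mr{Hom}_{D(X_\bullet)}(\ms{C},\ms{C}') \to \prod_i \mr{End}(\ms{C}_i)$ lies in $E_\infty$-pieces of strictly negative total degree, which vanish. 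Running the same argument for $(\ms{C}',\ms{C})$ and for the two compositions gives a canonical isomorphism $\ms{C}\cong\ms{C}'$ compatible with the identifications.

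For \textbf{existence}, I would inductively build compatible objects $\ms{C}^{(n)} \in D^{\mr{b}}_{\mr{tot}}(\mr{sk}_n(X_\bullet))$ whose restriction to the $i$-simplex is $\ms{C}_i$ for $i\leq n$ and whose gluing data agrees with $\alpha_\phi$. The base case $n=0$ is the datum $\ms{C}_0$ itself. Given $\ms{C}^{(n-1)}$, Lemma \ref{catequforhol} and the truncated version of \eqref{BDspecseq} identify the obstruction to extending across one more simplicial degree with a class in an Ext-group on $X_n$ pairing $\ms{C}_n$ with itself in a strictly negative degree; this class vanishes by hypothesis, giving an extension $\ms{C}^{(n)}$, while the uniqueness established above makes the extension canonical and thus compatible across varying $n$. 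Since $D^{\mr{b}}_{\mr{tot}}(X_\bullet)$ is computed termwise on the simplicial degree, the system $\{\ms{C}^{(n)}\}$ stabilizes in each fixed degree and assembles into the required $\ms{C} \in D^{\mr{b}}_{\mr{tot}}(X_\bullet)$.

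The main obstacle is the explicit identification of the obstruction class controlling the inductive step and the verification that the spectral sequence differentials behave as expected; this is precisely the technical content of [BBD, 3.2.4], and once \eqref{BDspecseq} and Lemma \ref{catequforhol} are in hand, the BBD argument transports to our setting without essential change.
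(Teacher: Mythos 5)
Your uniqueness argument is the same as the paper's: both simply apply the spectral sequence (\ref{BDspecseq}) and use the Ext-vanishing hypothesis to kill the filtration steps of negative total degree.

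For existence, however, the paper takes a genuinely different route, and this is where your sketch has a gap. The paper does not do skeleton-by-skeleton obstruction theory in $D^{\mr{b}}_{\mr{tot}}(\mr{sk}_n X_\bullet)$. Instead, it invokes the Beilinson--Drinfeld abelian category $\mathit{hot}_+(M(X_\bullet))$ (which is $\mr{tot}(\mc{A}^+)$ in the notation of [BBD, 3.2.7], with $\mc{A}(n):=M(X_n)$), uses the equivalence $D\mr{sec}_+(M(X_\bullet))\simeq D\mathit{hot}_+(M(X_\bullet))$ from [BD, 7.4.10] together with the characterization of $D_{\mr{tot}}(X_\bullet)$ inside $D\mathit{hot}_+$, and then constructs the desired object at the chain level in $K(\mathit{hot}_+(M(X_\bullet)))$ by copying [BBD, 3.2.9]. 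The point of passing to $\mathit{hot}_+$ is that one glues inside an honest abelian category, choosing complexes representing each $\ms{C}_n$ and coherent chain homotopies; the negative-Ext vanishing is what lets one lift the derived gluing isomorphisms $\alpha_\phi$ to chain maps and then make all the higher homotopies cohere.

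Your inductive extension step, by contrast, claims that extending $\ms{C}^{(n-1)}$ across one more simplicial degree is obstructed by ``a class in an Ext-group on $X_n$ pairing $\ms{C}_n$ with itself in a strictly negative degree.'' This is not justified as stated, and the degree is not even the right one: the primary obstruction to extending a derived-category gluing datum lives in a nonnegative Ext group, which the hypothesis does not kill. What the hypothesis $\mr{Ext}^{i}(\ms{C}_i,\ms{C}_i)=0$ for $i<0$ actually controls is the \emph{ambiguity} in the successive choices of homotopies (those live in negative degree), which is exactly why BBD work at the level of complexes rather than in $D^{\mr{b}}_{\mr{tot}}(\mr{sk}_n X_\bullet)$. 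Derived categories are not sheaves, and ``glue one simplicial degree at a time'' is not a single Ext-class problem; making it precise forces you back to the chain-level bookkeeping, at which point you are reconstructing [BBD, 3.2.9] and would do best to go through the $\mathit{hot}_+$ device as the paper does.
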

\begin{proof}
 For the uniqueness, use the spectral sequence (\ref{BDspecseq}).
 The existence is more difficult.
 We use a construction of Beilinson and Drinfeld. In \cite[7.4.10]{BD},
 they define an abelian category
 $\mathit{hot}_+(M(X_\bullet))$. This category is nothing but
 $\mr{tot}(\mc{A}^+)$ in the notation of \cite[3.2.7]{BBD} by taking
 $\mc{A}(n)$ to be $M(X_n)$. In \cite{BD}, they construct an equivalence
 of categories $s_+\colon D\mr{sec}_+(M(X_\bullet))\xrightarrow{\sim}
 D\mathit{hot}_+(M(X_\bullet))$ and characterize
 $D_{\mr{tot}}(X_\bullet)$ in terms of $D\mathit{hot}$. Even though the
 appearance is slightly different, this is the statement corresponding
 to \cite[3.2.17]{BBD}. Our task is, thus, to construct an object in
 $K(\mathit{hot}_+(M(X_\bullet)))$. For this, we can copy the argument
 of \cite[3.2.9]{BBD}.
\end{proof}

\begin{lem}
 \label{dualautozero}
 Let $p\colon X\rightarrow\mr{Spec}(k)$ be a morphism of spaces. Let
 $L_X:=p^+(L)$. Then we have $\mb{R}^i\mr{Hom}_{D(X)}(L_{X},L_{X})=0$
 for $i<0$.
\end{lem}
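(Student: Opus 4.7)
The plan is to combine the adjunction $(p^+,p_+)$ with the constructible t-structure of \S\ref{constt-struct} to reduce the statement to a cohomology-vanishing over a point. First I would use Theorem \ref{Beilequivcat} together with the equivalence $D^{\mr{b}}(\mr{Hol}(X/L))\xrightarrow{\sim}D^{\mr{b}}_{\mr{Hol}(X/L)}(M(X/L))$ recalled in \S\ref{noethcatindcat} to identify $\mb{R}^i\mr{Hom}_{D(X)}(L_X,L_X)$ with $\mr{Hom}_{D^{\mr{b}}_{\mr{hol}}(X/L)}(L_X,L_X[i])$. This places the computation inside the ``geometric'' derived category where the six-functor formalism of \S\ref{revDmodtheory} is at hand.

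Next, the adjointness of $(p^+,p_+)$ recorded in \S\ref{fundproprealsch} identifies this group with $\mr{Hom}(L,p_+L_X[i])$ computed on $\mr{Spec}(k)$. Under the canonical equivalence $D^{\mr{b}}_{\mr{hol}}(\mr{Spec}(k)/L)\simeq D^{\mr{b}}_{\mr{fin}}(\mr{Vec}_L)$ (\S\ref{fundproprealsch}) the unit $L$ becomes the one-dimensional projective generator, so the Hom group simply equals $\H^i(p_+L_X)$. It therefore suffices to show $\H^i(p_+L_X)=0$ for $i<0$.

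This vanishing I would extract from the constructible t-structure. By Lemma \ref{constproplem}(i), $p^+$ is c-t-exact, so $L_X=p^+(L)$ lies in the heart $\mr{Con}(X/L)\subset{}^{\mr{c}}D^{\geq0}(X/L)$; the same lemma says $p_+$ is left c-t-exact, and hence $p_+L_X\in{}^{\mr{c}}D^{\geq0}(\mr{Spec}(k)/L)$. Over a zero-dimensional base the defining condition $\dim\mr{Supp}(\H^n)\leq n$ from \S\ref{dfnofsupportandoth} forces $\H^n=0$ whenever $n<0$, which completes the argument.

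The main subtlety, I expect, is the passage from the ``first read'' (quasi-projective schemes) to the ``second read'' (separated algebraic spaces of finite type), since the constructible t-structure has been developed only for realizable schemes. In that case I would choose an \'{e}tale presentation $U_\bullet\to X$ by a realizable scheme and invoke the spectral sequence (\ref{BDspecseq}),
\begin{equation*}
 E^{p,q}_1=\mr{Ext}^q_{D(U_p)}(L_{U_p},L_{U_p})\Rightarrow
  \mr{Ext}^{p+q}_{D(X)}(L_X,L_X),
\end{equation*}
whose terms vanish for $q<0$ by the first-read case already established; since the simplicial index satisfies $p\geq0$, every abutment in degree $p+q<0$ forces $q<0$ and hence vanishes, giving the lemma in the algebraic-space case as well.
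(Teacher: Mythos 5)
Your proof is correct and follows essentially the same route as the paper's. Both proofs reduce $\mr{Ext}^i(L_X,L_X)$ to the (non-)existence of negative-degree cohomology of $p_+L_X$ (the paper via Proposition \ref{pushforcohrel} and the observation that $\shom(L_X,L_X)\cong p^+(L)$, you via the adjunction $(p^+,p_+)$ directly — these two routes produce the same object $p_+p^+(L)$), and both then conclude via the left c-t-exactness of $p_+$ and the coincidence of the c-t-structure with the standard t-structure over a point. Your second-read reduction via the spectral sequence (\ref{BDspecseq}) is exactly what the paper invokes as well.

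One small imprecision worth noting: your step asserting that $\mr{Hom}_{D(\mr{Spec}(k))}(L, p_+L_X[i])$ ``simply equals $\H^i(p_+L_X)$'' relies on the equivalence of \ref{fundproprealsch}.6, which is stated for $K$-coefficients without Frobenius structure ($\base=\emptyset$). When $\base=0$, the category $M(\mr{Spec}(k)_0/L)$ carries a Frobenius structure, and $\mr{Hom}(L,-)$ computes Frobenius invariants (together with a coinvariants contribution in one degree lower, as in Corollary \ref{hochildserrespecseq}), so the \emph{equality} you state does not hold verbatim. What does hold — and is all you need — is that if $p_+L_X\in {}^{\mr{c}}D^{\geq0}$, then $\mr{Hom}(L, p_+L_X[i])=0$ for $i<0$ by general t-structure theory (since $L$ is in the heart and $p_+L_X[i]\in D^{\geq1}$). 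The paper sidesteps this by phrasing the conclusion as the left c-t-exactness of $\mb{R}\Gamma$, which is insensitive to the choice of $\base$.
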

\begin{proof}
 Consider the first read case, namely the case where $X$ is a
 realizable scheme. Since $\mr{for}_L$ is conservative, we may assume
 that $L=K$. We have isomorphisms
 \begin{equation*}
  \mb{R}\mr{Hom}_{D(X)}(L_{X},L_{X})\cong
   \mb{R}\Gamma\,p_{+}\shom(L_{X},L_{X})\cong
   \mb{R}\Gamma\,p_{+}p^+(L)
 \end{equation*}
 where we used Proposition \ref{pushforcohrel} for the first
 isomorphism, and the second one follows since $p^+$ is monoidal and
 $L_X$ is the unit object.
 Now the lemma follows by the left c-t-exactness of $p_+$ (cf.\ Lemma
 \ref{constproplem}) and $\mb{R}\Gamma$.
 For the second read case, we can reduce
 to the first read case by using (\ref{BDspecseq}).
\end{proof}

\subsubsection{}
\label{defofconstanddual}
Let $\st{X}$ be a \stack, and $X_\bullet\rightarrow\st{X}$ be a
simplicial space presentation. Let us construct the
unit complex on $X_\bullet$. The unit complex $L_{X_i}$ in
$D^{\mr{b}}_{\mr{hol}}(X_i/L)$ has already been defined. Let
$\phi\colon[i]\rightarrow[j]$. Recall the notation \ref{reldimfunc} and
\ref{tateshiftconv}. We have a canonical isomorphism
\begin{equation*}
 X(\phi)^*\bigl(L_{X_i}[d_{X_i/\st{X}}]\bigr)
  \cong L_{X_j}[d_{X_j/\st{X}}].
\end{equation*}
By Lemma \ref{dualautozero}, the conditions in Theorem
\ref{BBDgluingthm} are satisfied. Thus the data
$\bigl\{L_{X_i}[d_{X_i/\st{X}}]\bigr\}_i$ yield an object
$L_{X_\bullet/\st{X}}$ in $D^{\mr{b}}_{\mr{hol}}(X_\bullet)$.

\begin{lem*}
 The object $L_{X_\bullet/\st{X}}$ does not depend on the choice of
 simplicial presentation up to canonical isomorphism.
\end{lem*}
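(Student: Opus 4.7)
The plan is to compare two simplicial presentations via their fiber product. Given two simplicial presentation spaces $X_\bullet\to\st{X}$ and $X'_\bullet\to\st{X}$, form the admissible double simplicial space $Z_{\bullet\bullet}:=X_\bullet\times_{\st{X}}X'_\bullet$ with smooth cartesian projections $f\colon Z_{\bullet\bullet}\to X_\bullet$ and $g\colon Z_{\bullet\bullet}\to X'_\bullet$. By (the double simplicial version of) Proposition~\ref{equivcatstackdfn}, the pullback $f^*$ realizes an equivalence $D^{\mr{b}}_{\mr{hol}}(X_\bullet)\xrightarrow{\sim} D^{\mr{b}}_{\mr{hol}}(Z_{\bullet\bullet})$, and similarly for $g^*$. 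It therefore suffices to produce a canonical object $L_{Z_{\bullet\bullet}/\st{X}}\in D^{\mr{b}}_{\mr{hol}}(Z_{\bullet\bullet})$ and canonical isomorphisms $f^*L_{X_\bullet/\st{X}}\cong L_{Z_{\bullet\bullet}/\st{X}}\cong g^*L_{X'_\bullet/\st{X}}$; composing with the quasi-inverses of $f^*$ and $g^*$ yields the asserted canonical isomorphism.

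First, I would extend the gluing construction of \ref{defofconstanddual} from admissible simplicial spaces to admissible double simplicial spaces over $\st{X}$. For each $(i,j)$ the object $L_{Z_{ij}}[d_{Z_{ij}/\st{X}}]$ is available, and for any pair of maps $\phi\colon[i]\to[i']$, $\psi\colon[j]\to[j']$ the transition morphism $Z(\phi,\psi)^*L_{Z_{ij}}\cong L_{Z_{i'j'}}[d_{Z(\phi,\psi)}]$ (coming from the monoidality of $(\cdot)^+$ combined with the identity $X(\phi)^*=X(\phi)^+[d_{X(\phi)}]$ of \ref{smoothpushpulsch}) yields an isomorphism $Z(\phi,\psi)^*(L_{Z_{ij}}[d_{Z_{ij}/\st{X}}])\xrightarrow{\sim}L_{Z_{i'j'}}[d_{Z_{i'j'}/\st{X}}]$ since $d_{Z_{i'j'}/\st{X}}=d_{Z(\phi,\psi)}+d_{Z_{ij}/\st{X}}$. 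The vanishing hypothesis of Theorem~\ref{BBDgluingthm} is supplied by Lemma~\ref{dualautozero}, giving both existence and uniqueness of a glued object $L_{Z_{\bullet\bullet}/\st{X}}$.

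Second, I would compute $f^*L_{X_\bullet/\st{X}}$ termwise. The smooth morphism $f_{ij}\colon Z_{ij}\to X_i$ has relative dimension $d_{Z_{ij}/X_i}$, and because $f_{ij}^+$ is monoidal and preserves the unit object (cf.~\ref{fundproprealsch}), we have $f_{ij}^*L_{X_i}=f_{ij}^+L_{X_i}[d_{Z_{ij}/X_i}]\cong L_{Z_{ij}}[d_{Z_{ij}/X_i}]$. Hence
\begin{equation*}
 \rho_{ij}^*(f^*L_{X_\bullet/\st{X}})
  \cong L_{Z_{ij}}[d_{Z_{ij}/X_i}+d_{X_i/\st{X}}]
  \cong L_{Z_{ij}}[d_{Z_{ij}/\st{X}}],
\end{equation*}
and the transition isomorphisms pull back to the transition isomorphisms used in the construction of $L_{Z_{\bullet\bullet}/\st{X}}$. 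By the uniqueness in Theorem~\ref{BBDgluingthm}, this identifies $f^*L_{X_\bullet/\st{X}}$ with $L_{Z_{\bullet\bullet}/\st{X}}$ canonically, and symmetrically for $g$. Composing gives the sought canonical isomorphism between the two candidates for $L_{(-)/\st{X}}$.

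The main obstacle is verifying the compatibility of the pulled-back gluing data with the gluing data used to build $L_{Z_{\bullet\bullet}/\st{X}}$, i.e.~checking that the identifications $X(\phi)^*L_{X_i}[d_{X_i/\st{X}}]\cong L_{X_j}[d_{X_j/\st{X}}]$ really pull back, via $f$, to the corresponding identifications on $Z$. This is a cocycle check for smooth pullbacks of the unit object, and it reduces to the naturality of the monoidal structure transitions $(f\circ X(\phi))^+\cong X(\phi)^+\circ f^+$ from \ref{fundproprealsch}; once this is in place, Lemma~\ref{dualautozero} guarantees via Theorem~\ref{BBDgluingthm} that no higher coherence obstructions arise.
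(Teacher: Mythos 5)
Your proof is correct and takes what is clearly the intended route — the paper itself disposes of this lemma with "The proof is straightforward," and your argument is the natural fleshing‑out: pass to the double simplicial fiber product $Z_{\bullet\bullet}=X_\bullet\times_{\st{X}}X'_\bullet$, use the equivalence from (the proof of) Proposition~\ref{equivcatstackdfn}, glue $L_{Z_{\bullet\bullet}/\st{X}}$ via the double‑simplicial analogue of Theorem~\ref{BBDgluingthm} (with the required $\mr{Ext}^{<0}$‑vanishing supplied by Lemma~\ref{dualautozero}), and identify both pullbacks with it by termwise computation using $f_{ij}^*=f_{ij}^+[d_{Z_{ij}/X_i}]$, monoidality of $f_{ij}^+$, and additivity of relative dimensions. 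The one place where you are appropriately cautious — the extension of the gluing theorem and the cocycle compatibility of the pulled‑back transition data — is indeed the content hiding behind "straightforward"; your observation that the transitions are all the canonical monoidal isomorphisms between (shifted) unit objects, so coherence is a pseudofunctor‑axiom check, is the right reason it works. One minor economy you could make: rather than independently constructing $L_{Z_{\bullet\bullet}/\st{X}}$, you may simply note that $f^*L_{X_\bullet/\st{X}}$ and $g^*L_{X'_\bullet/\st{X}}$ both restrict termwise to $L_{Z_{ij}}[d_{Z_{ij}/\st{X}}]$ with the canonical transitions, and invoke only the uniqueness clause of the (double‑simplicial) gluing theorem to compare them directly.
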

\begin{proof}
 The proof is straightforward.
\end{proof}

\begin{dfn*}
 (i) We define the {\em unit complex} $L_{\st{X}}$ to be the
 object in $D^{\mr{b}}_{\mr{hol}}(\st{X})$ defined by
 $L_{X_\bullet/\st{X}}$ thanks to the lemma above.

 (ii) We define the {\em dualizing complex} $L^{\omega}_{\st{X}}$ to be
 $\mb{D}'_{\st{X}}(L_{\st{X}})$.
\end{dfn*}

\begin{rem*}
 We can construct $L^{\omega}_{\st{X}}$ similarly to $L_{\st{X}}$,
 without using the functor $\mb{D}'_{\st{X}}$.
\end{rem*}

\begin{lem}
 \label{externaltensstsh}
 (i) Let $\st{X}$ and $\st{Y}$ be \stacks. Then we have
 $L_{\st{X}\times\st{Y}}\cong L_{\st{X}}\boxtimes L_{\st{Y}}$, and
 $L^{\omega}_{\st{X}\times\st{Y}}\cong L^{\omega}_{\st{X}}\boxtimes
 L^{\omega}_{\st{Y}}$.

 (ii) Let $f\colon\st{X}\rightarrow\st{Y}$ be a
 finite morphism between \stacks. Then we have an isomorphism
 $\iota_f\colon f^+(L_{\st{Y}})\cong L_{\st{X}}$ such that given another
 finite morphism $g\colon\st{Y}\rightarrow\st{Z}$, the isomorphism is
 compatible with composition: the composition $(g\circ
 f)^+(L_{\st{Z}})\cong g^+(f^+(L_{\st{Z}}))\xrightarrow[\sim]{\iota_f}
 g^+L_{\st{Y}}\xrightarrow[\sim]{\iota_g}L_{\st{X}}$ is equal to
 $\iota_{g\circ f}$.
\end{lem}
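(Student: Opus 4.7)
For (i) we will choose simplicial presentations $X_\bullet\to\st{X}$ and $Y_\bullet\to\st{Y}$, which by Lemma~\ref{doublecompleeq} give a double simplicial presentation $(X\times Y)_{\bullet\bullet}$ of $\st{X}\times\st{Y}$. The key observation is that on the level of spaces, $L_{X\times Y}\cong L_X\boxtimes L_Y$ (since $\boxtimes=p_1^+(-)\otimes p_2^+(-)$ and each $p_i^+$ preserves the unit), and the relative dimension function is additive: $d_{X_n\times Y_m/\st{X}\times\st{Y}}=d_{X_n/\st{X}}+d_{Y_m/\st{Y}}$. Hence on each piece $X_n\times Y_m$ the shifted constant complex used to build $L_{\st{X}\times\st{Y}}$ is canonically isomorphic to $L_{X_n}[d_{X_n/\st{X}}]\boxtimes L_{Y_m}[d_{Y_m/\st{Y}}]$, with matching transition isomorphisms by functoriality of $\boxtimes$ under smooth pull-back (Lemma~\ref{compexttensoth}). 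Uniqueness in the double simplicial version of Theorem~\ref{BBDgluingthm} yields the first isomorphism; the statement for dualizing complexes then follows by applying $\mb{D}'$ and invoking $\mb{D}'((-)\boxtimes(-))\cong\mb{D}'(-)\boxtimes\mb{D}'(-)$ from Lemma~\ref{compexttensoth}.

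For (ii) we will fix a simplicial presentation $Y_\bullet\to\st{Y}$ and form the cartesian pull-back $X_\bullet:=X\times_{\st{Y}}Y_\bullet\to\st{X}$, so that $f_\bullet\colon X_\bullet\to Y_\bullet$ is a cartesian finite morphism with each $f_i\colon X_i\to Y_i$ finite between spaces; smooth base change gives $d_{X_i/\st{X}}=d_{Y_i/\st{Y}}\circ f_i$. The strategy is to compute $\rho_i^*(f^+L_{\st{Y}})$ directly from the definition $f^+=\mb{D}'_{\st{X}}\circ f^!\circ\mb{D}'_{\st{Y}}$ (cf.\ \S\ref{finitemorphfibercat}), using that $\rho_i^*$ commutes with $f^!$ (Lemma in \S\ref{defofpushfinitesimpl}) and that $\rho_i^*\circ\mb{D}'_{\st{X}}$ equals $(d_{X_i/\st{X}})\circ\mb{D}_{X_i}\circ\rho_i^*$ by the construction of $\mb{D}'$ in \S\ref{defofconstanddual}. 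Once the Tate twists and shifts are collected and cancelled thanks to the base change identity for relative dimensions, the computation reduces to the space-level statement $f_i^+L_{Y_i}\cong L_{X_i}$, which holds because $f_i^+$ is left adjoint to $f_{i+}$, hence monoidal, hence preserves the unit (equivalently $\mb{D}_{X_i}\circ f_i^!\circ\mb{D}_{Y_i}(L_{Y_i})\cong L_{X_i}$). This gives canonical isomorphisms $\rho_i^*(f^+L_{\st{Y}})\cong L_{X_i}[d_{X_i/\st{X}}]=\rho_i^*(L_{\st{X}})$ for each $i$, compatible with the gluing maps by functoriality of all operations in $\phi\colon[i]\to[j]$, and Theorem~\ref{BBDgluingthm} produces the desired $\iota_f$.

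Compatibility with composition will be handled by the same uniqueness principle: both $\iota_{g\circ f}$ and the asserted composite $g^+(\iota_f)\circ(c_{(g,f)}\text{-style transition})\circ\iota_g$ restrict on each $X_i$ to the canonical space-level isomorphism $(g_i\circ f_i)^+L_{Z_i}\cong L_{X_i}$ (which is itself compatible with composition on the scheme/space level because $f_i^+$ is determined as the monoidal left adjoint of $f_{i+}$), so they coincide by BBD uniqueness. The main obstacle is the bookkeeping for Tate twists and shifts in the duality-based definition of $f^+$ on stacks, where the two relative dimension functions $d_{X_i/\st{X}}$ and $d_{Y_i/\st{Y}}$ enter at several places; once one exploits $d_{X_i/\st{X}}=d_{Y_i/\st{Y}}\circ f_i$ the Tate twists cancel and only the straightforward space-level computations remain.
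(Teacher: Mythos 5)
Your proposal is correct and fleshes out exactly what the paper intends: the paper's own proof of this lemma is essentially a one-line pointer ("follows from the corresponding statement for spaces" for (i), with (ii) "left to the reader"), and your argument is the natural way to fill in those gaps — reduce termwise via simplicial presentations, check that the relative dimension functions cancel, and invoke the uniqueness part of Theorem~\ref{BBDgluingthm} (for the identification of objects) together with the spectral sequence (\ref{BDspecseq}) and Lemma~\ref{dualautozero} (to compare morphisms for the composition compatibility). The termwise computation of $\rho_i^*(f^+L_{\st{Y}})$ through $f^+=\mb{D}'_{\st{X}}\circ f^!\circ\mb{D}'_{\st{Y}}$, using $\rho_i^*\circ f^!\cong f_i^!\circ\rho_i^*$ from Lemma~\ref{defofpushfinitesimpl} and the termwise description of $\mb{D}'$ via $\widetilde{\mb{D}}_i=(d_{X_i/\st{X}})\circ\mb{D}_{X_i}$, is exactly right, and your observation $d_{X_i/\st{X}}=d_{Y_i/\st{Y}}\circ f_i$ is what makes the shifts and twists cancel.

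One small caveat: the parenthetical "$f_i^+$ is left adjoint to $f_{i+}$, hence monoidal, hence preserves the unit" is not a valid general implication — being a left adjoint does not imply being monoidal. For realizable schemes, $f^+$ being monoidal (hence unit-preserving) is a separately asserted property (Property~\ref{fundproprealsch}(2)); alternatively, and this is what your computation actually uses, one can prove $f_i^+L_{Y_i}\cong L_{X_i}$ directly from $\mb{D}_{X_i}\circ f_i^!\circ\mb{D}_{Y_i}$ via $f_i^!L^\omega_{Y_i}\cong L^\omega_{X_i}$ and biduality. Since you give both, this does not affect the validity of your proof, but the dependence should be attributed to the monoidality axiom (or the duality computation) rather than derived from adjointness. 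Also, you invoke the double simplicial presentation $(X\times Y)_{\bullet\bullet}$ via Lemma~\ref{doublecompleeq}, whereas the paper's $\boxtimes$ is defined directly on the diagonal simplicial presentation $X_\bullet\times Y_\bullet$; these are equivalent, so this is a harmless deviation.
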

\begin{proof}
 For (i), the first claim follows from the corresponding statement for
 spaces, and the second by Lemma \ref{compexttensoth}. Verification of
 (ii) is left to the reader.
\end{proof}

\begin{dfn}
 \label{dfnofpushpullstackproj}
 Let $\st{X}$ and $\st{Y}$ be \stacks.
 We put
 \begin{equation*}
  p_+:=p_{L_{\st{X}}+}\colon
   D^+_{\mr{hol}}(\st{X}\times\st{Y})\rightarrow
   D^+_{\mr{hol}}(\st{Y}),\qquad
   p^+:=p_{L_\st{X}}^+\colon D_{\mr{hol}}^{\mr{b}}(\st{Y})
   \rightarrow D_{\mr{hol}}^{\mr{b}}(\st{X}\times\st{Y}).
 \end{equation*}
\end{dfn}

Let $\st{X}\times\st{Y}\times\st{Z}\xrightarrow{f}\st{Y}\times
\st{Z}\xrightarrow{g}\st{Z}$ be projections. By using the canonical
isomorphism in Lemma \ref{externaltensstsh}, we have a canonical
isomorphisms
\begin{equation}
 \label{projcanisomtrans}
 f^+\circ g^+\cong L_{\st{X}}\boxtimes\bigl(L_{\st{Y}}
  \boxtimes(-)\bigr)\cong L_{\st{X}\times\st{Y}}\boxtimes(-)
  \cong (g\circ f)^+.
\end{equation}
By taking the adjoint, we also get a canonical isomorphism $(g\circ
f)_+\cong g_+\circ f_+$.

\subsubsection{}
\label{kunnethforprojstack}
Let $\st{X}^{(\prime)}$, $\st{Y}^{(\prime)}$ be \stacks, and take
$\ms{A}^{(\prime)}\in D^{\mr{b}}_{\mr{hol}}(\st{X}^{(\prime)})$. Let
$p^{(\prime)}\colon\st{X}^{(\prime)}\times\st{Y}^{(\prime)}
\rightarrow\st{Y}^{(\prime)}$. Let $q\colon(\st{X}\times\st{X}')
\times(\st{Y}\times\st{Y}')\rightarrow\st{Y}\times\st{Y}'$ be the
projection. By definition, we have a canonical isomorphism
\begin{equation}
 \label{commpullprojcase}
  p_{\ms{A}}^+(-)\boxtimes p'^+_{\ms{A}'}(-)\cong
  q_{\ms{A}\boxtimes\ms{A}'}^+\bigl(-\boxtimes -\bigr).
\end{equation}
Now, we have
\begin{equation}
 \label{Kunnethisomproj}
 q^+_{\ms{A}\boxtimes{\ms{A}'}}
  \bigl(p_{\ms{A}+}(-)\boxtimes p'_{\ms{A}'+}(-)\bigr)\cong
  p^+_{\ms{A}}p_{\ms{A}+}(-)\boxtimes
  p'^+_{\ms{A}'}p'_{\ms{A}'+}(-)\rightarrow
  (-)\boxtimes(-)
\end{equation}
where we used (\ref{commpullprojcase}) at the first isomorphism.

\begin{lem*}
 The homomorphism $p_{\ms{A}+}(-)\boxtimes p'_{\ms{A}'+}(-)
 \rightarrow q_{\ms{A}\boxtimes\ms{A}'+}\bigl((-)\boxtimes(-)\bigr)$
 defined by taking adjoint to (\ref{Kunnethisomproj}) is an
 isomorphism.
\end{lem*}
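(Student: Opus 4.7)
The plan is to reduce the assertion to the analogous statement for realizable schemes, where it follows from Proposition \ref{Kunnethsch} together with the projection formula. First I would take simplicial presentation spaces $X_\bullet\to\st{X}$, $X'_\bullet\to\st{X}'$, $Y_\bullet\to\st{Y}$, $Y'_\bullet\to\st{Y}'$; invoking Lemma \ref{doublecompleeq} and Proposition \ref{equivcatstackdfn}, the statement becomes one about projections of admissible (double) simplicial spaces. Since the comparison map is defined purely from the monoidal data $\boxtimes$, from (\ref{commpullprojcase}), and from the counits of the adjunctions $(p^+_{\ms{A}},p_{\ms{A}+})$, its construction descends faithfully to the simplicial presentations and can be tested there.

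Next, I apply Lemma \ref{compladjpushpull} to rewrite $p_{\ms{A}_\bullet+}$ as $\mb{R}p_{\ms{A}_\bullet\times}$ and deploy the spectral sequence of Lemma \ref{specseqprojcase} on each side. Because $\boxtimes$ is exact on holonomic modules (cf.\ \S\ref{prepcohoprealsch}), the spectral sequence for the left-hand side is obtained by combining the two individual spectral sequences, and has $E_1$-terms of the form $\mb{R}^a p_{i\ms{A}_i*}(\ms{M}_i)\boxtimes\mb{R}^b p'_{j\ms{A}'_j*}(\ms{M}'_j)$; the right-hand spectral sequence has $E_1$-terms $\mb{R}^c q_{(i,j),(\ms{A}_i\boxtimes\ms{A}'_j)*}(\ms{M}_i\boxtimes\ms{M}'_j)$. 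The comparison morphism, being compatible with the simplicial structure by construction, induces a map between these $E_1$-pages, so by a standard convergence argument it is enough to prove the corresponding Künneth statement at the level of a single (realizable) space $X^{(\prime)}$.

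For such a space, writing $p_{\ms{A}+}(-)=p_+\shom(\pi^+\ms{A},-)$ with $\pi$ the projection to $X$ as in \ref{projectcasepullpush}, the assertion becomes that
\begin{equation*}
 p_+\shom(\pi^+\ms{A},\ms{M})\boxtimes p'_+\shom(\pi'^+\ms{A}',\ms{M}')
 \longrightarrow
 q_+\shom\bigl(\pi_{XX'}^+(\ms{A}\boxtimes\ms{A}'),\ms{M}\boxtimes\ms{M}'\bigr)
\end{equation*}
is an isomorphism, where $\pi_{XX'}$ denotes the projection onto $X\times X'$. This follows by combining Proposition \ref{Kunnethsch} (applied to $p$ and $p'$) with the elementary identity $\shom(\pi^+\ms{A},\ms{M})\boxtimes\shom(\pi'^+\ms{A}',\ms{M}')\cong\shom(\pi_{XX'}^+(\ms{A}\boxtimes\ms{A}'),\ms{M}\boxtimes\ms{M}')$, which is a consequence of the projection formula \ref{fundproprealsch}.\ref{projformulas}. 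The main obstacle is to verify that the abstractly defined map obtained by taking the adjoint of (\ref{Kunnethisomproj}) is actually identified, under all of these reductions, with the concrete Künneth map of Proposition \ref{Kunnethsch}; this is a diagram chase of adjunctions, but it must be carried out carefully to ensure that the two constructions really coincide.
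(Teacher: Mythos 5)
Your proof follows essentially the same approach as the paper's: reduce to realizable schemes via the spectral sequences (Corollary \ref{sheafdevsseq} and Lemma \ref{specseqprojcase}), then combine the K\"unneth formula for realizable schemes \ref{Kunnethsch} with a K\"unneth identity for internal hom. The one imprecision is the last step, where you invoke "the projection formula \ref{fundproprealsch}.\ref{projformulas}" for the identity $\shom(\pi^+\ms{A},\ms{M})\boxtimes\shom(\pi'^+\ms{A}',\ms{M}')\cong\shom(\pi_{XX'}^+(\ms{A}\boxtimes\ms{A}'),\ms{M}\boxtimes\ms{M}')$. That list of projection-formula isomorphisms does not yield this directly; the paper instead derives the $\shom$-K\"unneth from $\shom(\ms{M},\ms{N})\cong\mb{D}\bigl(\ms{M}\otimes\mb{D}(\ms{N})\bigr)$ together with the commutativity of $\mb{D}$ with $\boxtimes$, which is a cleaner and more immediate route. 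You should replace your citation by that argument (or spell out in detail how the projection formulas imply it); otherwise the structure of your reduction matches the paper and the plan is sound, modulo the diagram chase you rightly flag as remaining.
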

\begin{proof}
 We can easily reduce to the case where $\st{X}^{(\prime)}$ and
 $\st{Y}^{(\prime)}$ are spaces using the spectral sequences of
 Corollary \ref{sheafdevsseq} and Lemma \ref{specseqprojcase}. Using the
 same spectral sequences, we may assume further that $\st{X}^{(\prime)}$
 and $\st{Y}^{(\prime)}$ are realizable schemes in the second read
 case. Now, for realizable schemes $X$ and $X'$ and $\ms{M}^{(\prime)}$,
 $\ms{N}^{(\prime)}$ in $D^{\mr{b}}_{\mr{hol}}(X^{(\prime)})$, we have
 \begin{equation*}
  \shom\bigl(\ms{M}\boxtimes\ms{M}',\ms{N}\boxtimes\ms{N}'\bigr)
   \cong
   \shom(\ms{M},\ms{N})\boxtimes\shom(\ms{M}',\ms{N}').
 \end{equation*}
 This follows by the isomorphism
 $\shom(\ms{M},\ms{N})\cong\mb{D}\bigl(\ms{M}\otimes\mb{D}(\ms{N})\bigr)$
 (cf.\ \ref{bidualrealcase}) and the commutativity of $\mb{D}$ and
 $\boxtimes$ (cf.\ \cite[1.3.3 (i)]{AC}). Using the K\"{u}nneth formula
 for realizable schemes \ref{Kunnethsch}, the lemma follows.
\end{proof}

\subsubsection*{Before the second read}
\subsubsection{}
\label{secondreadsub}
Before moving on to the ``second read'' (cf.\ \ref{fixterminolspace}) to
establish the theory for more general algebraic stacks, we need the
following proposition, which is an analogue of Beilinson's equivalence
\cite{AC2} for Deligne-Mumford stacks.

\begin{prop*}
 Let $\st{X}$ be a Deligne-Mumford stack of finite type.
 Moreover, assume that $\st{X}$ is {\em separated}. Then the canonical
 functor $D^{\mr{b}}(\mr{Hol}(\st{X}))\rightarrow
 D^{\mr{b}}_{\mr{hol}}(\st{X})$ is an equivalence.
\end{prop*}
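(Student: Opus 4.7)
The plan is to establish the equivalence by \'{e}tale descent from the scheme case (Theorem \ref{Beilequivcat}), following Beilinson's realization paradigm. Since $\st{X}$ is Deligne-Mumford and separated of finite type, its diagonal $\st{X}\to\st{X}\times\st{X}$ is finite, hence so is every iterated diagonal $\st{X}\to\st{X}^{n+1}$. Choose an \'{e}tale surjection $U\to\st{X}$ with $U$ a disjoint union of quasi-projective schemes, and form $U_\bullet:=\mr{cosk}_0(U/\st{X})$. Each $U_n\to U^{n+1}$ is then finite, so each $U_n$ is quasi-projective and thus realizable, and every face map $X(\phi)\colon U_n\to U_m$ is \'{e}tale. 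By Lemma \ref{catequforhol}, $D^{\mr{b}}_{\mr{hol}}(\st{X})\cong D^{\mr{b}}_{\mr{tot}}(\mr{Hol}(U_\bullet)_\bullet)$, and $\mr{Hol}(\st{X})$ identifies with the heart consisting of cartesian systems of holonomic modules on $U_\bullet$. The canonical realization functor $\Phi\colon D^{\mr{b}}(\mr{Hol}(\st{X}))\to D^{\mr{b}}_{\mr{hol}}(\st{X})$ is the identity on this common heart.

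For full faithfulness, fix $\ms{M},\ms{N}\in\mr{Hol}(\st{X})$. The spectral sequence (\ref{BDspecseq}) gives
\begin{equation*}
 E_1^{p,q}=\mr{Ext}^q_{D(U_p)}(\ms{M}_p,\ms{N}_p)\Rightarrow
  \mr{Ext}^{p+q}_{D^{\mr{b}}_{\mr{hol}}(\st{X})}(\ms{M},\ms{N}).
\end{equation*}
On the source side, I construct a parallel spectral sequence from the \v{C}ech-type resolution $\ms{N}\to\rho_{0*}\rho_0^*\ms{N}\to\rho_{1*}\rho_1^*\ms{N}\to\cdots$ in $M(\st{X})$, built from the adjoint pair $(\rho_p^*,\rho_{p*})$ of Lemma \ref{functorrhodef}. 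The exactness of each $\rho_p^*$, together with \'{e}tale descent for the face maps, makes this resolution acyclic. Applying $\mr{Hom}(\ms{M},-)$ termwise and using the adjunction produces a convergent spectral sequence with $E_1^{p,q}=\mr{Ext}^q_{D^{\mr{b}}(\mr{Hol}(U_p))}(\ms{M}_p,\ms{N}_p)$ abutting to $\mr{Ext}^{p+q}_{D^{\mr{b}}(\mr{Hol}(\st{X}))}(\ms{M},\ms{N})$. The termwise Beilinson equivalence of Theorem \ref{Beilequivcat}, applied to each realizable $U_p$, identifies the two $E_1$-pages, yielding the isomorphism on all Ext groups.

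Essential surjectivity then follows by the standard d\'{e}vissage: for $\ms{F}\in D^{\mr{b}}_{\mr{hol}}(\st{X})$ of amplitude $[a,b]$, induct on $b-a$ via the truncation triangle $\tau_{\leq b-1}\ms{F}\to\ms{F}\to\H^b(\ms{F})[-b]\xrightarrow{+1}$; the connecting class lies in $\mr{Ext}^{b+1}(\H^b(\ms{F}),\tau_{\leq b-1}\ms{F})$ and lifts uniquely to $D^{\mr{b}}(\mr{Hol}(\st{X}))$ by the full faithfulness just established, allowing us to form the analogous triangle there. The main obstacle is the construction of the source-side spectral sequence, since $\mr{Hol}(\st{X})$ itself need not have enough injectives; this is resolved by carrying out all resolutions inside the Grothendieck category $M(\st{X})=\mr{Ind}(\mr{Hol}(\st{X}))$ and using that the \'{e}tale-ness of face maps makes $\rho_p^*$ exact on $M(\st{X})$, so the \v{C}ech resolution behaves formally like the injective Godement resolution in the \'{e}tale topos and the $E_1$-identification with (\ref{BDspecseq}) is transparent.
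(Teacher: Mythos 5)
Your proposal breaks down at the source-side spectral sequence, which you flag as the ``main obstacle'' but do not actually resolve. The assertion $M(\st{X})=\mr{Ind}(\mr{Hol}(\st{X}))$ cannot be taken as given: the paper defines $M(\st{X})$ as the heart of the t-structure on $D^{+}_{\mr{tot}}(U_\bullet)$, i.e.\ the category of \emph{cartesian} systems $\{\ms{F}_p\in\mr{Ind}(\mr{Hol}(U_p))\}_p$, and whether the fully faithful functor $\mr{Ind}(\mr{Hol}(\st{X}))\to M(\st{X})$ is a derived equivalence is essentially the content of the proposition, not a prior fact. Worse, the \v{C}ech complex $\ms{N}\to\rho_{0*}\rho_0^*\ms{N}\to\rho_{1*}\rho_1^*\ms{N}\to\cdots$ does not even live in $\mr{Ind}(\mr{Hol}(\st{X}))$: by the construction of Lemma \ref{functorrhodef}, $\rho_{p*}(\ms{G})_k$ is a product $\prod_{\phi\colon[k]\to[p]}X(\phi)_*\ms{G}$ and is not a cartesian system, so your proposed complex sits in $M(U_\bullet)$ rather than in the abelian category whose derived category you need to compute. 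Thus the abutment to $\mr{Ext}^{p+q}_{D^{\mr{b}}(\mr{Hol}(\st{X}))}(\ms{M},\ms{N})$ that you assert is unjustified, as is the $E_1$-identification. (The remark that \'{e}tale-ness of the face maps makes $\rho_p^*$ exact is also beside the point: $\rho_p^*$ is exact for \emph{any} simplicial presentation, smooth or \'{e}tale, and this exactness by itself gives neither acyclicity of the \v{C}ech complex nor the needed adjunction on derived Hom.)

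The missing engine is the interplay between separatedness, affine \'{e}tale base change, and preservation of injectives, none of which appear in your argument. In the paper, separatedness is used not merely for finiteness of the diagonal (which you do use, to get quasi-projective coskeleton terms) but, decisively, to ensure that for an affine scheme $X\to\st{X}$ smooth and an affine scheme $Y\to\st{X}$ \'{e}tale, the morphism $g'\colon Y'=X\times_{\st{X}}Y\to X$ is \emph{affine} \'{e}tale, so that $g'^*$ preserves injective objects of the Ind-category (since $(g'_!,g'^*)$ is adjoint and $g'_!$ is exact). This is exactly what makes the derived base change $g^*\mb{R}f_*\xrightarrow{\sim}\mb{R}f'_*g'^*$ hold, which in turn reduces the statement to the scheme case of Proposition \ref{smoothdesre} and yields the descent equivalence $D^+(\mr{Ind}(\mr{Hol}(\st{X})))\xrightarrow{\sim}D^+(\st{X})$; full faithfulness of $D^{\mr{b}}(\mr{Hol}(\st{X}))\to D^{\mr{b}}_{\mr{hol}}(\st{X})$ then follows from the chain $D^{\mr{b}}(\mr{Hol}(\st{X}))\hookrightarrow D^+(\mr{Ind}(\mr{Hol}(\st{X})))\xrightarrow{\sim}D^+(\st{X})$. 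Any repair of your spectral-sequence comparison would need to establish the derived adjunction $\mb{R}\mr{Hom}(\ms{M},\mb{R}\rho_{p*}\rho_p^*\ms{N})\cong\mb{R}\mr{Hom}_{D(U_p)}(\ms{M}_p,\ms{N}_p)$ together with the identification of Ind-categories, and at that point you have rederived the paper's proof rather than replaced it.
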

\begin{proof}
 First, we note that since the Deligne-Mumford stack is separated and of
 finite type, the diagonal morphism is finite, and in particular,
 schematic.
 Fully faithfulness is the only problem.
 Thus, since the canonical functors
 $D^{\mr{b}}(\mr{Hol}(\st{X}))\rightarrow
 D^{\mr{b}}_{\mr{hol}}(\mr{Ind}(\mr{Hol}(\st{X})))
 \rightarrow D^+_{\mr{hol}}(\mr{Ind}(\mr{Hol}(\st{X})))$ are fully
 faithful, it suffices to show that
 the canonical functor $D^{+}(\mr{Ind}(\mr{Hol}(\st{X})))\rightarrow
 D^{+}(\st{X})$ is fully faithful.
 Let $f\colon X\rightarrow Y$ be an affine \'{e}tale morphism of
 realizable schemes. Then the pair $(f_!,f^*)$ of functors between
 $M(X/L)$ and $M(Y/L)$ is an adjoint pair, and $f_!$ is exact by
 \cite[1.3.13]{AC}. In particular, $f^*$ sends injective objects to
 injective objects. Now, let $f\colon X\rightarrow\st{X}$ be a smooth
 morphism from an affine scheme. The functors in \ref{smoothpullback}
 define a pair of adjoint functors $(f^*,f_*)$ between $\mr{Hol}(X/L)$
 and $\mr{Hol}(\st{X}/L)$. By passing to the Ind categories, these
 functors induce a pair of adjoint functors $(f^{\odot},f_{\odot})$
 between $M(X/L)$ and $\mr{Ind}(\mr{Hol}(\st{X}/L))$.
 Then we can define functors
 \begin{equation*}
  \mb{R}f_{\odot}\colon D^{+}(X/L)\rightleftarrows
   D^{+}(\mr{Ind}(\mr{Hol}(\st{X}/L)))
   \colon f^{\odot}
 \end{equation*}
 as in the scheme case. Consider the following cartesian diagram:
 \begin{equation*}
  \xymatrix{
   Y'\ar[d]_{g'}\ar[r]^{f'}\ar@{}[rd]|\square&Y\ar[d]^g\\
  X\ar[r]_-{f}&\st{X}
   }
 \end{equation*}
 where $X$, $Y$ are affine schemes, and $f$, $g$ are \'{e}tale.
 We can easily check that the canonical
 homomorphism $g^*\circ f_*\rightarrow f'_*\circ g'^*$ is an
 isomorphism. This extends to an isomorphism $g^{\odot}\circ
 f_{\odot}\xrightarrow{\sim}f'_*\circ g'^*$.
 Since $\st{X}$ is assumed separated, $g'$
 is an affine \'{e}tale morphism, and thus $g'^*$ preserves injective
 objects. This implies that the canonical homomorphism
 \begin{equation}
  \label{basechanisomforalgsp}
   \tag{$\star$}
   g^{\odot}\circ\mb{R}f_{\odot}\rightarrow
   \mb{R}f'_*\circ g'^*\colon
   D^{+}(X/L)\rightarrow D^+(Y/L).
 \end{equation} 
 is an isomorphism.

 Now, let $f\colon X_{\bullet}\rightarrow\st{X}$ be a simplicial
 realizable scheme presentation such that $X_0\rightarrow\st{X}$ is
 \'{e}tale. Then we can define a pair of adjoint functors
 \begin{equation*}
  \mb{R}f_{\odot}\colon D^+(X_\bullet/L)\rightleftarrows
   D^+(\mr{Ind}(\mr{Hol}(\st{X}/L)))\colon f^{\odot}
 \end{equation*}
 in the similar way to \ref{constofpairfunct}.
 To conclude the proof, we need to show an analogue of
 Proposition \ref{smoothdesre} in this context.
 For this, take an \'{e}tale presentation $Y\rightarrow\st{X}$, use the
 base change (\ref{basechanisomforalgsp}) above, and we are reduced to
 the realizable scheme situation we have already treated.
\end{proof}

\begin{rem*}
 The proposition is false for general algebraic stacks. For example, if
 $G$ is a geometrically connected algebraic group over $k$, the category
 $\mr{Hol}(BG)$ is equivalent to the category of finite dimensional $L$-vector
 spaces (cf.\ Lemma \ref{connectinvcllsp}). However, extension computed
 in the category $D^{\mr{b}}_{\mr{hol}}(BG)$ is not trivial. For
 example, we can compute as in the classical case (cf.\
 \cite[18.3.3]{LM}) that $H^i(B\mb{G}_m,L)\cong H^i(\mb{P}^i,L)$ for
 $i\geq0$.
\end{rem*}

\subsubsection{}
\label{secondreadexpl}
For an algebraic stack of finite type over $k$, we may take a
presentation $X\rightarrow\st{X}$ such that $X$ is a separated algebraic
space of finite type (or even affine scheme of finite type). Then
$\mr{cosk}_0(X\rightarrow\st{X})$ consists of {\em separated} algebraic
spaces of finite type, since $\st{X}$ is assumed quasi-separated.
In the ``first read'', the starting point of the construction was
Theorem \ref{Beilequivcat}. In the ``second read'', Proposition
\ref{secondreadsub} plays the role of the theorem. Replacing the
definitions of the terminologies accordingly to the table of
\ref{fixterminolspace}, we can construct the theory for quasi-separated
algebraic stacks of finite type over $k$.

\begin{rem}
 \label{remarkendofsecondpar}
 (i) As one can see from the construction of the cohomology theory
 explained in \ref{secondreadexpl}, quasi-separatedness is
 important. Otherwise, non-separated algebraic spaces appear in the
 simplicial space $\mr{cosk}_0(X\rightarrow\st{X})$, and we are not able
 to apply Proposition \ref{secondreadsub}. Non quasi-separated stacks
 naturally appear in the work of \cite{Olog}.

 (ii) We may treat the non quasi-compact case without much
 effort. For a separated scheme $X$ locally of finite type over
 $k$, we take an affine covering $\{U_i\}$, and define $M(X/L)$
 by gluing $M(U_i/L)$. Note that this category is not equivalent to
 $\mr{Ind}(\mr{Hol}(X/L))$ in general. Even though we need to care
 about finiteness and so on, the constructions in
 \S\ref{Dmodforstack} and \S\ref{cohfunctorstack} can be carried out
 similarly.
\end{rem}

\begin{prob}
 Let $X$ be a separated scheme of finite type over $k$.
 We may construct a functor $D^{\mr{b}}(\mr{Con}(X/L))\rightarrow
 D^{\mr{b}}_{\mr{hol}}(X/L)$ as in \cite{Bei}. We ask if this is an
 equivalence of categories.
\end{prob}

\begin{rem*}
 (i) If we have a positive answer to this problem, we can define the
 six functor formalism exactly as in \cite{OL}, or more precisely, we
 may define the push-forward to be the derived functor of $\cH^0f_+$. If
 the proof of the question is ``motivic'', then suitable six functor
 formalisms for schemes can be extended to that for algebraic stacks
 automatically.

 (ii) The problem is solved by Nori \cite{No} when $k=\mb{C}$ and
 $\mr{Hol}(X/L)$ is replaced by the category of perverse sheaves.
\end{rem*}

\subsubsection*{Theory of weights}
\subsubsection{}
\label{theoryofweightsetup}
Let $k$ be a finite field with $q=p^{s}$ elements, and we consider the
arithmetic situation where the base tuple (cf.\ \ref{5tuplesover}) is
$\mf{T}_F:=(k,R,K,L,s,\sigma=\mr{id})$.
We fix an isomorphism $\iota\colon\overline{\mb{Q}}_p\cong\mb{C}$.
Let $X$ be a realizable scheme over $k$. We say that $\ms{M}\in
D^{\mr{b}}_{\mr{hol}}(X/L_F)$ is {\em $\iota$-mixed} (resp.\
{\em $\iota$-mixed of weight $\leq w$}, $\iota$-mixed of weight $\geq
w$) if $\mr{for}_L(\ms{M})\in D^{\mr{b}}_{\mr{hol}}(X/K_F)$ is.
The results \cite[4.1.3, 4.2.3]{AC} are automatically true also for
$D^{\mr{b}}_{\mr{hol}}(-/L_F)$ since cohomological operators
commute with $\mr{for}_L$ by \ref{Frobsetupp}. The same for
\cite[4.3]{AC}.

\begin{rem*}
 For $(V,\varphi)\in F\text{-}\mr{Vec}_L$, if $f_{\varphi}(x)$ is the
 characteristic polynomial of $\varphi$ and $\{\alpha_i\}$ is the set of
 eigenvalues, then the set of eigenvalues for
 $\mr{for}_L((V,\varphi))\in F\text{-}\mr{Vec}_K$ is
 \begin{equation*}
  \bigl\{x\in\overline{\mb{Q}}_p\mid
   \sigma(f_{\varphi})(x)=0\bigr\}_{\sigma\in
   \mr{Hom}_K(L,\overline{\mb{Q}}_p)}
   =
   \{\sigma(\alpha_i)\}_{\sigma\in\mr{Hom}_K(L,\overline{\mb{Q}}_p)}.
 \end{equation*} 
\end{rem*}

\subsubsection{}
Let $\st{X}$ be an algebraic stack over $k$. We say that
$\ms{M}\in\mr{Hol}(\st{X}/L_F)$ is {\em $\iota$-pure of weight $w$}
(resp.\ {\em $\iota$-mixed}, {\em $\iota$-mixed
of weight $\leq w$}, {\em $\iota$-mixed of weight $\geq w$}) if for any
$f\colon X\rightarrow\st{X}$ in $\st{X}_{\mr{sm}}$ (cf.\
\ref{smpresentnotasur}) of relative dimension
$d$, $f^*(\ms{M})$ is $\iota$-pure of weight $w+d$ (resp.\
$\iota$-mixed, $\iota$-mixed of weight $\leq w+d$, $\iota$-mixed of
weight $\geq w+d$). By the existence of weight filtration
\cite[4.3.4]{AC}, if $\ms{M}$ is $\iota$-mixed, then there exists an
increasing filtration $W$ such that $\mr{gr}^W_i(\ms{M})$ is
$\iota$-pure of weight $i$. A complex $\ms{C}\in
D^{\mr{b}}_{\mr{hol}}(\st{X}/L_F)$ is said to be 
{\em $\iota$-mixed complex of weight $\star\in\{\leq w,\geq
w,\emptyset\}$}, if $\H^i\ms{C}$ is $\iota$-mixed of weight
$\star+i$. We say that the complex $\ms{C}$ is {\em $\iota$-pure of
weight $w$} if it is $\iota$-mixed of weight both $\leq w$ and $\geq
w$. We can check that $\iota$-mixedness or other relevant notions
defined here are compatible with those for realizable scheme case in
\ref{theoryofweightsetup}.

\subsubsection{}
\label{theoweiarst}
Let $f\colon\st{X}\rightarrow\st{Y}$ be a morphism of algebraic
stacks of finite type over $k$. We have the following properties:
\begin{enumerate}
 \item For any algebraic stack $\st{X}$, the functor
       $\mb{D}'_{\st{X}}$ preserves $\iota$-mixed complexes, and
       exchanges $\iota$-mixed of weight $\leq w$ and $\geq -w$.

 \item Assume $f$ is finite. Then $f_+$, $f^!$ preserve
       $\iota$-mixedness. Moreover, $f_+$ preserves weights, and $f^!$
       preserves complexes of weight $\geq w$.

 \item Let $\ms{M}$ and $\ms{N}$ be $\iota$-mixed complexes in
       $D^{\mr{b}}_{\mr{hol}}(\st{X}/L_F)$ and
       $D^{\mr{b}}_{\mr{hol}}(\st{Y}/L_F)$ respectively. Then
       $\ms{M}\boxtimes\ms{N}$ is $\iota$-mixed as well. Moreover if
       $\ms{M}$ and $\ms{N}$ are of weight $\geq w$ and $\geq w'$
       (resp.\ $\leq w$ and $\leq w'$), then $\ms{M}\boxtimes\ms{N}$ is
       of weight $\geq w+w'$ (resp.\ $\leq w+w'$).

 \item The complex $L^\omega_{\st{X}}$ (resp.\ $L_{\st{X}}$) is
       $\iota$-mixed of weight $\geq0$ (resp.\ $\leq0$).

 \item Assume $f=:j$ is an open immersion, and
       $\ms{M}\in\mr{Hol}(\st{X}/L_F)$ be $\iota$-pure. Then
       $j_{!+}(\ms{M})$ (cf.\ \ref{openimmdefshri}) is $\iota$-pure with
       the same weight. (cf.\ \cite[4.2.4]{AC})

 \item Assume $f$ is a projection
       $\st{X}\times\st{Y}\rightarrow\st{Y}$, and $\ms{A}$ be an
       $\iota$-mixed complex of weight $\leq w'$ on $\st{X}$. Then
       $f_{\ms{A}+}$ sends $\iota$-mixed complexes of weight $\geq w$ to
       that of weight $\geq w-w'$.
\end{enumerate}
We think only the last property needs a proof. Let
$\ms{M}\in\mr{Hol}(\st{X}\times\st{Y})$ be $\iota$-mixed of weight
$\geq w$. We may assume $\ms{A}\in\mr{Hol}(\st{X})$.
Let us use the notation of \ref{compuprojcon}. We denote by
$d_i$ (resp.\ $d'_j$) the relative dimension of $X_i\rightarrow\st{X}$
(resp.\ $Y_j\rightarrow\st{Y}$). By definition, $\ms{M}_{i,j}$ is
$\iota$-mixed of weight $\geq w+d_i+d'_j$, and $\ms{A}_i$
is of weight $\leq w'+d_i$. Recall that
\begin{equation*}
 \mb{R}p_{i\ms{A}_i*}(\ms{M}_{i,j})\cong p_{i+}
  \shom(q_i^+\ms{A}_i,\ms{M}_{i,j}).
\end{equation*}
where $q_i\colon X_i\times Y_j\rightarrow Y_j$ is the projection. Using
\cite[4.1.3]{AC}, $\mb{R}p_{i\ms{A}_i*}(\ms{M}_{i,j})$ is of weight
$\geq (w+d_i+d'_j)-(d_i+w')=w-w'+d'_j$. Now, by the spectral sequence of
Lemma \ref{specseqprojcase}, the claim follows.

\subsection{Six functor formalism for admissible stacks}
\label{sixfuncadmstsec}
In this subsection, we construct the six functor formalism for
admissible stacks, namely algebraic stacks of finite type with finite
diagonal morphism.

\begin{dfn}
 \label{admissdfn}
 A morphism $f\colon\st{X}\rightarrow\st{Y}$ between algebraic stacks is
 said to be {\em admissible} if it is of finite type, and the diagonal
 morphism $\Delta_f\colon\st{X}\rightarrow\st{X}\times_{\st{Y}}\st{X}$
 is finite. An {\em admissible stack} ({\em over $k$}) is an
 algebraic stack over $k$ whose structural morphism is
 admissible.
\end{dfn}

\begin{rem*}
 (i) An admissible morphism is quasi-compact and separated by
 definition.

 (ii) For an admissible stack $\st{X}$, there exists a finite covering
 $\{\st{U}_i\}$ such that $\st{U}_i$ possesses a quasi-finite flat
 morphism $V_i\rightarrow\st{U}_i$ from a scheme. This is possible by
 [SGA 3, V, 7.2]\footnote{See also \cite[2.1]{Co2}.}. In particular,
 there is a dense open substack $\st{U}$ of $\st{X}$ such that there
 exists a finite locally free morphism $V\rightarrow\st{U}$ from a
 scheme.
\end{rem*}

\begin{lem}
 Let $\st{X}\xrightarrow{f}\st{Y}\xrightarrow{g}\st{Z}$ be morphisms
 between algebraic stacks.

 (i) If $f$ and $g$ are admissible, so is $g\circ f$.

 (ii) Let $\st{Y}'\rightarrow\st{Y}$ be a morphism between algebraic
 stacks. If $f$ is admissible, then the base change
 $\st{X}\times_{\st{Y}}\st{Y}'\rightarrow\st{Y'}$ is admissible as
 well.

 (iii) If $g\circ f$ is admissible, so is $f$.

 (iv) Separated representable morphisms of finite type between
 algebraic stacks are admissible. In particular, immersions are
 admissible.

 (v) Any morphism $X\rightarrow\st{Y}$ from a scheme to an admissible
 stack is schematic.
\end{lem}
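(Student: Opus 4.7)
My plan is to deduce all five assertions from standard permanence properties of finite morphisms (stability under composition and base change, plus the two-out-of-three principle along monomorphisms) applied to the canonical factorization of the diagonal of a composition. For $\st{X}\xrightarrow{f}\st{Y}\xrightarrow{g}\st{Z}$, one has
\begin{equation*}
 \Delta_{g\circ f}\colon \st{X}\xrightarrow{\Delta_f}\st{X}\times_{\st{Y}}\st{X}
 \xrightarrow{h}\st{X}\times_{\st{Z}}\st{X},
\end{equation*}
where $h$ is the base change of $\Delta_g$ along $\st{X}\times_{\st{Z}}\st{X}\to\st{Y}\times_{\st{Z}}\st{Y}$. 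Parts (i) and (ii) then follow at once: (i) because $\Delta_f$ and $h$ are both finite, so their composition is; (ii) because both ``of finite type'' and ``finite'' are preserved by base change, and $\Delta_{f'}$ is the base change of $\Delta_f$.

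The main subtlety lies in (iii). Since every diagonal is a monomorphism, $\Delta_g$ is a monomorphism, hence so is $h$. For any scheme $T\to \st{X}\times_{\st{Y}}\st{X}$, the monomorphism property of $h$ yields a canonical identification
\begin{equation*}
 T\times_{\st{X}\times_{\st{Y}}\st{X}}\st{X}\cong
 T\times_{\st{X}\times_{\st{Z}}\st{X}}\st{X},
\end{equation*}
and the right-hand side is a scheme by the assumed representability of $\Delta_{g\circ f}$; thus $\Delta_f$ is representable. Finiteness of $\Delta_f$ then follows from the usual two-out-of-three: $h\circ\Delta_f$ is proper and $h$ is separated (a monomorphism being separated), so $\Delta_f$ is proper, and its quasi-finiteness is inherited from that of $h\circ\Delta_f$. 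That $f$ itself is of finite type follows from the same property for $g\circ f$ under the paper's standing quasi-separatedness convention.

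For (iv), a separated representable morphism of finite type has diagonal a closed immersion which is furthermore quasi-compact (as $f$ is of finite type), hence finite; immersions are an a fortiori case, their diagonal being an isomorphism. For (v), given $X\to\st{Y}$ from a scheme and any scheme $T\to\st{Y}$, the fiber product sits in a cartesian square
\begin{equation*}
 \xymatrix{
  X\times_{\st{Y}} T\ar[r]\ar[d]\ar@{}[rd]|\square &
  \st{Y}\ar[d]^-{\Delta_{\st{Y}/k}}\\
  X\times_k T\ar[r] & \st{Y}\times_k\st{Y},
 }
\end{equation*}
exhibiting $X\times_{\st{Y}}T$ as a base change of the finite morphism $\Delta_{\st{Y}/k}$; being finite (in particular affine) over the scheme $X\times_kT$, it is itself a scheme, so $X\to\st{Y}$ is schematic. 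I expect the representability step for $\Delta_f$ in (iii) to be the only nontrivial point; everything else is formal manipulation with diagonals and base changes.
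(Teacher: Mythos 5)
Your proposal is correct and follows essentially the same strategy as the paper: the same factorization $\Delta_{g\circ f}=h\circ\Delta_f$ for (i)--(iii), separatedness of $\Delta_g$ (hence of $h$) as the key input for (iii), and finiteness of $\Delta_{\st{Y}}$ for (v). The only real divergence is in (iii), where the paper simply invokes the cancellation property for finite morphisms from [EGA II, 6.1.5 (v)] applied to the factorization, whereas you unpack this into representability (via the monomorphism $h$), then properness (two-out-of-three along the separated $h$), then quasi-finiteness; this is logically the same content but has the minor virtue of making explicit the representability of $\Delta_f$, which the paper asserts without comment. In (v) the paper factors $X\to X\times\st{Y}\to\st{Y}$ while you apply the magic square directly fiber by fiber; these are two phrasings of the same argument.
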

\begin{proof}
 The proof for (i) and (ii) are the same as [EGA I, 5.5.1]. Let us
 show (iii). Consider the factorization of $\Delta_{g\circ f}$ into
 morphisms
 \begin{equation*}
  \st{X}
   \xrightarrow{\Delta_f}
   \st{X}\times_{\st{Y}}\st{X}
   \xrightarrow{p}
   \st{X}\times_{\st{Z}}\st{X}.
 \end{equation*}
 These morphisms are representable and separated by \cite[7.7]{LM}.
 We need to show that $\Delta_f$ is finite. By definition of algebraic
 stack, $\st{Y}\rightarrow\st{Y}\times\st{Y}$ is
 representable separated.
 Thus, by [EGA I, 5.5.1 (v)], $\Delta_g$ is separated as
 well. This implies that $p$ is separated since it is the base change of
 $\Delta_{g}$. Since the composition $p\circ\Delta_f$ is assumed finite
 and the morphisms are representable, we conclude that
 $\Delta_f$ is finite by [EGA II, 6.1.5 (v)].

 For (iv), assume $f$ is representable and separated. Then
 \cite[8.1.2]{LM} shows that the diagonal
 $\st{X}\rightarrow\st{X}\times_{\st{Y}}\st{X}$ is a monomorphism, and
 since $f$ is assumed separated, it is a closed immersion. For the
 latter assertion, use [EGA I, 5.5.1 (i)].

 For (v), factorize the morphism into $X\rightarrow
 X\times\st{Y}\rightarrow\st{Y}$. The first morphism is finite since
 $\st{Y}$ is admissible, so it is schematic. The second one is schematic
 as well since $X$ is a scheme.
\end{proof}

\subsubsection{}
The following variant of Chow's lemma for admissible stack is important
for showing fundamental properties of cohomological operations:

\begin{prop*}
 Let $\st{X}$ be an admissible stack. Then there exists a morphism
 $p\colon X\rightarrow\st{X}$ such that $X$ is a scheme and $p$ is a
 surjective generically finite proper morphism.
\end{prop*}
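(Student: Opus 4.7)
The plan is to argue by Noetherian induction on $\st{X}$; the empty case is trivial, so we may assume the statement holds for every closed admissible substack of strictly smaller dimension. For the inductive step, the remark following Definition \ref{admissdfn} furnishes a dense open substack $\st{U}\subset\st{X}$ together with a finite locally free morphism $V\to\st{U}$ from a scheme $V$. Setting $\st{Z}:=(\st{X}\setminus\st{U})_{\mr{red}}$, this is a closed admissible substack of strictly smaller dimension, and the induction hypothesis yields a proper surjective generically finite morphism $q\colon Z\to\st{Z}$ with $Z$ a scheme.

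The heart of the argument is then to produce a proper, generically finite cover of $\st{X}$ by a scheme that extends $V\to\st{U}$. The composition $\phi\colon V\to\st{U}\hookrightarrow\st{X}$ is representable by part (v) of the lemma following Definition \ref{admissdfn}, and is manifestly quasi-finite and separated. I would apply the relative Zariski's Main Theorem over algebraic stacks (cf.\ \cite{LM}) to get a factorization $V\hookrightarrow\widetilde{V}\to\st{X}$ in which the first map is an open immersion (we may assume dense by replacing $\widetilde{V}$ with the scheme-theoretic closure of $V$) and the second is finite. Since in this paper ``finite'' includes representability, $\widetilde{V}$ is an algebraic space; moreover it is separated and of finite type over $k$, being finite over the admissible (hence separated) stack $\st{X}$.

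To promote $\widetilde{V}$ from an algebraic space to a scheme, I would invoke Chow's lemma for algebraic spaces (Knutson, or equivalently Raynaud--Gruson), which produces a proper birational morphism $r\colon\overline{V}\to\widetilde{V}$ with $\overline{V}$ a quasi-projective scheme. The composite $p_1\colon\overline{V}\xrightarrow{r}\widetilde{V}\to\st{X}$ is then proper, and it is generically finite because $r$ is an isomorphism over a dense open of $\widetilde{V}$ meeting the dense open subscheme $V$, over which $\widetilde{V}\to\st{X}$ is just the finite locally free morphism $V\to\st{U}$.

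Finally, I would take $X:=\overline{V}\sqcup Z$ equipped with the morphism $p:=p_1\sqcup q\colon X\to\st{X}$. It is proper (disjoint union of two proper morphisms), surjective (the image of $\overline{V}$ contains $\st{U}$ and that of $Z$ equals $\st{Z}$), and generically finite (finite over a dense open of $\st{U}$, hence over a dense open of $\st{X}$). The main obstacle will be the careful invocation of the relative Zariski's Main Theorem and of Chow's lemma in the algebraic-stack and algebraic-space settings: both are classical, but verifying that separatedness, finite type and the compactification hypotheses all follow cleanly from admissibility of $\st{X}$ is where the technical work is concentrated.
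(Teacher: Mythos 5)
Your argument has a genuine gap at the step where you declare $\widetilde{V}$ to be an algebraic space. Representability of the finite morphism $\widetilde{V}\to\st{X}$ is a property of the morphism (it says the fibre products $\widetilde{V}\times_{\st{X}}T$ are algebraic spaces for schemes $T\to\st{X}$) and does \emph{not} imply that $\widetilde{V}$ itself is an algebraic space; the intermediate object in Zariski's Main Theorem over a stack is in general a genuine algebraic stack. For a concrete counterexample take $\st{X}=[\mb{A}^1/(\mb{Z}/2)]$ with $\mb{Z}/2$ acting by $x\mapsto -x$ over a field of characteristic $\neq2$. This is an admissible stack, the dense open substack $\st{U}:=\st{X}\setminus\{0\}$ is a scheme, and the identity $V:=\st{U}\to\st{U}$ is a finite locally free morphism from a scheme, so it is a legitimate output of the remark after Definition \ref{admissdfn}. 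But the scheme-theoretic closure of $V$ in $\st{X}$ is $\st{X}$ itself (the identity of $\st{X}$ is already a finite morphism extending the open immersion $V\hookrightarrow\st{X}$, so ZMT gives nothing more), and $\st{X}$ has a $B(\mb{Z}/2)$ at the origin and is therefore not an algebraic space. Chow's lemma for algebraic spaces consequently does not apply to your $\widetilde{V}$, and the remainder of the argument is unavailable.

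The problem is not easily patched: after ZMT you face an admissible stack (now one with a dense open subscheme), which is the same kind of object you started with, so the reduction is circular. This is why the paper does not take this route; it instead modifies Olsson's proof of Chow's lemma for Deligne--Mumford stacks \cite{O}, embedding a dense open of $\st{X}$ into $\st{X}\times\mb{P}(V)$ and passing to closures, normalizations and blow-ups of an auxiliary quasi-projective scheme $P'$ of dimension $\dim(\st{X})$, with local sections of the composite to $P$ supplied by the quasi-finite flat covers from Remark \ref{admissdfn} (ii). That construction produces the covering scheme directly and never invokes Chow's lemma for an intermediate object that might fail to be an algebraic space. As a smaller remark, even granting your claim, the Noetherian induction and the summand $Z$ would be superfluous: the image of the finite morphism $\widetilde{V}\to\st{X}$ is closed and contains the dense open $\st{U}$, so it is already all of $\st{X}$, and $\overline{V}\to\st{X}$ is surjective on its own.
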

\begin{proof}
 We modify slightly the proof of \cite[(1.1)]{O}. From [{\it ibid.}, 2.1
 to 2.4], the argument is the same.
 In [{\it ibid.}, 2.5], he replaces $\st{X}$ by the closure of
 $\st{U}\hookrightarrow\st{X}\times\mb{P}(V)$. This replacement is not
 finite, but birational over $\st{U}$, so this replacement is harmless
 in our situation. In [{\it ibid.}, 2.6], it suffices to take $P'$ such
 that $\dim(P')=\dim(\st{X})$ in addition to the conditions there. If we
 have a surjective morphism $a\colon P'\rightarrow\st{X}$ then this is
 generically finite. Indeed, let $q\colon Q\rightarrow\st{X}$ be a
 smooth presentation, and $P'_Q:=P'\times_{\st{X}}Q$. By generically
 finiteness and some standard limit argument, there exists an open dense
 subscheme $V\subset Q$ such that $P'_Q\times_{Q}V\rightarrow V$ is
 finite. By fpqc descent, $a$ is finite over $q(V)\subset\st{X}$.

 We do not need [{\it ibid.}, 2.7, 2.8]. Take a quasi-finite flat
 covering $\bigl\{V_i\rightarrow\st{X}\bigr\}$ as in Remark
 \ref{admissdfn} (ii).
 Put $P'=\mb{P}^{r}_P$ where $r:=\dim(\st{X})-\dim(P)$. By copying
 the argument of [{\it ibid.}, 2.9] (taking $P_1$ to be $P'$),
 we can shrink $V_i$ and may assume that there exist morphisms
 $V_i\rightarrow P'$ factoring $V_i\rightarrow\st{X}\rightarrow
 P$ and the morphism $\coprod V_i\rightarrow P'$ is surjective.
 Indeed, in [{\it ibid.}, 2.9], he uses only the fact that
 $V_i\rightarrow P$ has equidimensional fibers. In our case, since
 $V_i\rightarrow\st{X}$ and $\st{X}\rightarrow P$ are flat, the
 equidimensionality holds. For [{\it ibid.}, 2.10 to 2.13], we just copy
 word by word. Since he only takes normalizations and blow-ups of $P'$,
 the dimension does not change, and we get the desired morphism.
\end{proof}

\begin{rem*}
 We are not able to take $p$ to be generically \'{e}tale in
 general. Indeed, let $k$ be an algebraically closed field of
 characteristic $p$, and let $G$ be a connected finite flat group scheme
 of dimension $0$ over $k$ which is not \'{e}tale ({\it e.g.}\
 $\alpha_p:=\mr{Spec}(k[T])/(T^p)$). Consider the admissible stack
 $\st{X}:=BG:=\left[\mr{Spec}(k)/G\right]$. Assume there exists a
 generically finite \'{e}tale proper surjective morphism $p\colon
 X\rightarrow BG$. Then since $\dim(BG)=0$, the dimension of $X$ would
 be $0$ as well.
 Since $BG$ is smooth over $\mr{Spec}(k)$ by \cite[5.1.2]{Behr}, $X$ is
 \'{e}tale over $\mr{Spec}(k)$. Thus by taking a connected component, we
 may assume that $X=\mr{Spec}(k)$, since $k$
 is assumed algebraically closed. Since any $G$-torsor on
 $\mr{Spec}(k)$ splits, the category $BG(\mr{Spec}(k))$ is a singleton,
 and $p$ would be nothing but the universal torsor. 
 The morphism $p$ cannot be \'{e}tale, since if it were, $G$ would be
 \'{e}tale.
\end{rem*}

\begin{cor}
 \label{alterationforadmisst}
 Let $\st{X}$ be an admissible stack. Then there exists a generically
 finite proper surjective morphism $X\rightarrow\st{X}$ such
 that $X$ is a smooth quasi-projective scheme.
\end{cor}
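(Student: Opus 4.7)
The plan is to bootstrap from the preceding proposition (which gives a proper generically finite surjective cover by a \emph{scheme}) using de Jong's alteration theorem to replace that scheme by a smooth quasi-projective one.

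First I would apply Proposition (the unnumbered one just above the corollary) to obtain a proper surjective generically finite morphism $p\colon X'\to\st{X}$ from a scheme $X'$. Since $\st{X}$ is admissible, its diagonal is finite and in particular proper, so $\st{X}$ is separated over $k$; composing $X'\to\st{X}\to\mathrm{Spec}(k)$ with $p$ proper and $\st{X}\to\mathrm{Spec}(k)$ separated of finite type shows that $X'$ is a separated scheme of finite type over $k$ (and quasi-compact because $p$ is). Replacing $X'$ by $X'_{\mathrm{red}}$, which does not affect being proper, surjective, or generically finite, I may assume $X'$ is reduced.

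Next, let $X'_1,\dots,X'_n$ be the irreducible components of $X'$, each endowed with the reduced structure. For each integral $X'_i$ of finite type over the perfect field $k$, I invoke de Jong's alteration theorem (in its refined form that produces smooth quasi-projective sources over the base field) to obtain a proper surjective generically finite morphism $Y_i\to X'_i$ with $Y_i$ smooth and quasi-projective over $k$. Setting $X:=\coprod_i Y_i$, the scheme $X$ is smooth quasi-projective over $k$ (a finite disjoint union of quasi-projective $k$-schemes is quasi-projective, by embedding each summand into a common projective space via a shift), and the natural morphism $X\to X'$ is proper, surjective, and generically finite.

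Finally, the composition $f\colon X\to X'\xrightarrow{p}\st{X}$ is proper (composition of proper morphisms), surjective (composition of surjections), and generically finite: over a dense open substack $\st{U}\subset\st{X}$ on which $p$ is finite, the fiber of $f$ is a finite disjoint union of the finite fibers of the alterations $Y_i\to X'_i$ restricted to $p^{-1}(\st{U})$, hence finite. There is no real obstacle here; the content of the corollary is entirely supplied by the preceding proposition together with de Jong, and the only minor bookkeeping is the reduction to the integral case and the verification that a finite disjoint union of quasi-projective schemes remains quasi-projective.
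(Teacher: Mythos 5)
Your proof is correct and follows exactly the route the paper takes: apply the Chow's-lemma proposition to reduce to a scheme, then invoke de Jong's alteration theorem. The only small point worth tightening is the verification of generic finiteness at the end: the alterations $Y_i\to X'_i$ are finite only over dense opens $V_i\subset X'_i$, not over all of $p^{-1}(\st{U})$, so you should shrink $\st{U}$ further (removing the closed nowhere-dense sets $p(X'_i\setminus V_i)$) before the asserted finiteness of the fiber holds.
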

\begin{proof}
 Use Chow's lemma above, and then de Jong's alteration theorem.
\end{proof}

\subsubsection{}
Let $f\colon\st{X}\rightarrow\st{Y}$ be a morphism of admissible
stacks. When $f$ is a finite morphism (resp.\ projection), we denote by
$f_{\oplus}$ and $f^{\oplus}$ the functors $f_+$ and $f^+$ defined in
\ref{finitemorphfibercat} (resp.\ \ref{dfnofpushpullstackproj}) for
clarification.

Consider the canonical factorization $\st{X}\xrightarrow{i}
\st{X}\times\st{Y}\xrightarrow{p}\st{Y}$. Since $\st{Y}$ is admissible,
$i$ is finite, and the following definitions make sense:
\begin{equation*}
 f_+:=p_{\oplus}\circ i_{\oplus}\colon
  D^{+}_{\mr{hol}}(\st{X})\rightarrow
  D^+_{\mr{hol}}(\st{Y}),\qquad
  f^+:=i^{\oplus}\circ p^{\oplus}\colon
  D_{\mr{hol}}^{\mr{b}}(\st{Y})\rightarrow
  D^{\mr{b}}_{\mr{hol}}(\st{X}).
\end{equation*}
We have the adjoint pair $(f^+,f_+)$.

\begin{lem}
 \label{justofnotpu}
 Let $f\colon\st{X}\rightarrow\st{Y}$ be a finite morphism between
 admissible stacks. Then there are canonical isomorphisms
 $f_{\oplus}\cong f_+$, and $f^{\oplus}\cong f^+$.
\end{lem}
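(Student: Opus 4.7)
The plan is to pass to left adjoints and reduce to the space level via a simplicial presentation. By uniqueness of adjoints it suffices to exhibit a canonical isomorphism $f^+\cong f^\oplus$; recall that by definition $f^+ = i^\oplus\circ p^\oplus$, where $i := (\mr{id},f)\colon \st{X}\to\st{X}\times\st{Y}$ is the graph of $f$, which is finite because $\st{Y}$ is admissible, and $p\colon \st{X}\times\st{Y}\to\st{Y}$ is the second projection, so that $p\circ i = f$.

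Choose a simplicial presentation $Y_\bullet\to\st{Y}$. Representability of $f$ yields by base change a simplicial presentation $X_\bullet := Y_\bullet\times_{\st{Y}}\st{X}\to\st{X}$ and a cartesian finite morphism $f_\bullet\colon X_\bullet\to Y_\bullet$, in terms of which $f^\oplus$ is computed termwise by the space-level functors $f_n^\oplus$. The termwise product $Z_\bullet$ with $Z_n := X_n\times Y_n$ is a simplicial presentation of $\st{X}\times\st{Y}$, under which the graph $i$ and projection $p$ correspond to the termwise graph $i_n\colon X_n\to X_n\times Y_n$ and termwise projection $p_n\colon X_n\times Y_n\to Y_n$. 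In this presentation the functor $p^\oplus = L_\st{X}\boxtimes(-)$ restricts to $L_{X_n}\boxtimes(-)$ on the $(n,n)$-diagonal, and $i^\oplus$ restricts to the closed-immersion pullback $i_n^+$.

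At the space level, writing $q_n$ for the first projection on $Z_n$, one has
\begin{equation*}
 L_{X_n}\boxtimes\ms{N} \;=\; q_n^+(L_{X_n})\otimes p_n^+(\ms{N}) \;\cong\; L_{X_n\times Y_n}\otimes p_n^+(\ms{N}) \;\cong\; p_n^+(\ms{N}),
\end{equation*}
and hence
\begin{equation*}
 i_n^+\bigl(L_{X_n}\boxtimes\ms{N}\bigr) \;\cong\; i_n^+p_n^+(\ms{N}) \;=\; (p_n\circ i_n)^+\ms{N} \;=\; f_n^+\ms{N};
\end{equation*}
moreover $f_n^+\cong f_n^\oplus$ canonically, both being left adjoint to $f_{n+}$. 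Thus $i^\oplus\circ p^\oplus$ and $f^\oplus$ are computed termwise by the same functor up to canonical isomorphism.

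The termwise isomorphism is natural with respect to the smooth face maps because pullbacks and the exterior tensor product commute with smooth base change (Lemma \ref{prepcohoprealsch} and \ref{smoothbasechforsimplspa}), so it descends via the equivalences of \S\ref{Dmodforstack} to the claimed natural isomorphism $f^+\cong f^\oplus$; taking right adjoints then gives $f_+\cong f_\oplus$. The main obstacle I anticipate is verifying that the simplicial presentation $Z_\bullet$ of $\st{X}\times\st{Y}$ used above yields the same $p^\oplus$ as the double simplicial construction of Definition \ref{dfnofpushpullstackproj}, together with correctly matching the relative dimension shifts built into $L_\st{X}$ from \ref{defofconstanddual} across the cartesian finite morphism $f_\bullet$.
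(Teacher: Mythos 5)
Your overall strategy — reduce to the pull-back side by uniqueness of adjoints, then compare $f^\oplus$ with $i^\oplus\circ p^\oplus$ termwise on a simplicial presentation — is essentially the paper's own strategy (the paper dualizes and compares $f^!$ with $i^!\circ p^!$, but that is a cosmetic difference). However, the step you flag as an ``obstacle'' is not a matter of bookkeeping: it is the heart of the proof, and as written your argument skips it.

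The concrete gap is the claim that ``$i^\oplus$ restricts to the closed-immersion pullback $i_n^+$'' on the diagonal simplicial presentation $Z_n = X_n\times Y_n$. By the construction in \ref{finitemorphfibercat}, the termwise model of $i^\oplus$ is obtained by pulling back $Z_\bullet$ \emph{cartesianly} along $i\colon\st{X}\to\st{X}\times\st{Y}$, and that cartesian pullback is not $X_n$: it is $W_n := \st{X}\times_{\st{X}\times\st{Y}}Z_n \cong X_n\times_{\st{Y}}Y_n$ (for instance $W_0\cong X_1$). The graph square $X_n\to X_n\times Y_n$ over $\st{X}\to\st{X}\times\st{Y}$ is \emph{not} cartesian; $X_n$ sits inside $W_n$ as a closed immersion of codimension $d_{Y_n/\st{Y}}$. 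Consequently your space-level computation $i_n^+(L_{X_n}\boxtimes\ms{N}_n)\cong f_n^+(\ms{N}_n)$ is not the $n$-th term of $i^\oplus\circ p^\oplus$, and the discrepancy is exactly measured by the relative dimension shift built into $L_{\st{X}}$ (and into $\mb{D}'$). The paper handles this by \emph{not} claiming a naive termwise identification: it applies $g^*$ to both functors for each smooth $g\colon X_i\to\st{X}$ and proves the chain of isomorphisms $g^*f^!(\ms{M})\cong\dots\cong g^*i^!(L^\omega_{X_\bullet}\boxtimes\ms{M})$, using a non-cartesian base change for $i^!$ that explicitly carries the twist and shift $(d)[d]$ with $d=d_{g'}-d_{g''}$. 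This shift is precisely what cancels the codimension of $X_n\hookrightarrow W_n$ — it is not separable from the argument.

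There is a second gap: even after one has the correct termwise isomorphism satisfying the cocycle conditions, descending it to a natural isomorphism of triangulated functors on $D^{\mr{b}}_{\mr{hol}}(\st{X})$ is not automatic. The paper first produces the isomorphism only on $\H^0$, then proves the vanishing statement $(\star)$ for injective objects, and finally passes through an injective resolution to compare the two derived functors. Your last paragraph replaces this with ``so it descends via the equivalences of \S\ref{Dmodforstack},'' which does not supply either of these ingredients.
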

\begin{proof}
 Let $\st{X}\xrightarrow{i}\st{X}\times\st{Y}\xrightarrow{p}\st{Y}$ be
 the standard factorization. By adjointness property, it
 suffices to construct the isomorphism for the pull-back. We construct
 the isomorphism for their duals, namely $f^!$ and $i^!\circ p^!$ where
 $p^!:=L_{\st{X}}^\omega\boxtimes(-)$ (cf.\ Definition
 \ref{defofconstanddual}). Let
 $Y_\bullet\rightarrow\st{Y}$ be a simplicial realizable scheme
 presentation, and $X_\bullet\rightarrow\st{X}$ be its pull-back. We may
 assume that $X_0\rightarrow\st{X}$ and $Y_0\rightarrow\st{Y}$ are
 equidimensional. For $\ms{M}\in M(Y_\bullet)$, let us construct an
 isomorphism $\alpha\colon\H^0f^!(\ms{M})\xrightarrow{\sim}
 \H^0\bigl(i^!(L^\omega_{X_{\bullet}}\boxtimes\ms{M})\bigr)$.
 Put $Y=Y_n$ for $n\geq0$, and $X:=\st{X}\times_{\st{Y}}Y$. We have the
 following commutative diagram
 \begin{equation*}
  \xymatrix@C=40pt{
   X\ar[d]_{g}\ar[r]^-{i'}&
   X\times Y\ar[d]^{g'}\ar[r]^-{p'}&Y\ar[d]^{g''}\\
  \st{X}\ar[r]_-{i}&\st{X}\times\st{Y}\ar[r]_-{p}&\st{Y}.
   }
 \end{equation*}
 Let $d:=d_{g'}-d_{g''}$ where $d_{\star}$ denotes the relative
 dimension of $\star$. We have canonical isomorphisms
 \begin{align*}
  \label{schcaisocomp}
  g^*f^!(\ms{M})
  &\cong
  (p'\circ i')^!g''^*(\ms{M})\cong
  i'^!(L^\omega_{X}\boxtimes g''^*\ms{M})
  \\
  \notag
  &\cong
  i'^!g'^*(L^\omega_{X_{\bullet}}\boxtimes\ms{M})(d)[d]
  \cong
  g^*i^!(L^\omega_{X_{\bullet}}\boxtimes\ms{M}).
 \end{align*}
 Apply $\H^0$ to this isomorphism, and since it satisfies the cocycle
 condition, we have the desired isomorphism $\alpha$.
 This isomorphism moreover implies that
 \begin{equation}
  \label{vanishcohcommextpul}\tag{$\star$}
  \H^n\bigl(i^!(L^\omega_{X_{\bullet}}\boxtimes\ms{I})
 \bigr)=0\quad
 \mbox{for $n\neq0$}
 \end{equation}
 if $\ms{I}$ is an injective object in
 $M(Y_\bullet)$. Now, for $\ms{M}\in D^{+}(X_\bullet)$, take an
 injective resolution
 $\ms{M}\rightarrow\ms{I}^{\bullet}$. We denote by
 $\H^0\bigl(i^!(L^\omega_{X_\bullet}\boxtimes\ms{I}^{\bullet})\bigr)$
 the complex whose term in degree $n$ is
 $\H^0\bigl(i^!(L^\omega_{X_\bullet}\boxtimes\ms{I}^n)\bigr)$.
 Recall the notation $f^{\circ}:=\H^0f^!$ in
 \ref{defofpushfinitesimpl}. We have quasi-isomorphisms
 \begin{equation*}
  f^!(\ms{M})\cong f^{\circ}(\ms{I}^\bullet)\xrightarrow{\sim}
   \H^0\bigl(i^!(L^\omega_{X_\bullet}\boxtimes\ms{I}^{\bullet})\bigr)
   \xleftarrow{\sim}i^!(L^\omega_{X_\bullet}\boxtimes\ms{M})
 \end{equation*}
 where the first isomorphism holds since $f^!:=\mb{R}f^{\circ}$, the
 second one is induced by $\alpha$, and the third one follows by
 the vanishing (\ref{vanishcohcommextpul}). Thus, the lemma follows.
\end{proof}

\begin{lem}
 Let $\st{X}\xrightarrow{f}\st{Y}\xrightarrow{g}\st{Z}$ be morphisms of
 admissible stacks. Then we have canonical isomorphisms of functors
 \begin{alignat*}{2}
  \alpha&\colon\mr{id}^+\xrightarrow{\sim}\mr{id},&\qquad
  \beta&\colon\mr{id}\xrightarrow{\sim}\mr{id}_+\\
  c_{g,f}&\colon f^+\circ g^+\xrightarrow{\sim}(g\circ f)^+,&\qquad
   d^{g,f}&\colon (g\circ f)_+\xrightarrow{\sim}g_+\circ f_+.
 \end{alignat*}
 These homomorphisms are subject to the following conditions:
 1.\ We have identities $c_{f,\mr{id}}=\alpha(f^+)$,
 $c_{\mr{id},f}=f^+\alpha$. 2.\ Assume
 given another morphism of admissible stacks
 $h\colon\st{Z}\rightarrow\st{W}$. Then we have the equality
 \begin{equation*}
  c_{h,g\circ f}\circ c_{g,f}(h^+)=
   c_{h\circ g,f}\circ f^+(c_{h,g}).
 \end{equation*}
 We have the similar equalities for $\beta$ and $d^{g,f}$.
\end{lem}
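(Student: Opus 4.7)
The plan is to reduce all the asserted isomorphisms to composition laws already established: for sequences of finite morphisms (\S\ref{finitemorphfibercat}), for sequences of projections (\ref{projcanisomtrans}), and for their K\"unneth-type interaction (Lemmas \ref{compexttensoth} and \ref{kunnethforprojstack}). Recall that for $f\colon\st{X}\to\st{Y}$ admissible, $f_+=p_{f\oplus}\circ i_{f\oplus}$ and $f^+=i_f^{\oplus}\circ p_f^{\oplus}$, where $i_f\colon\st{X}\to\st{X}\times\st{Y}$ is the graph (finite by admissibility of $\st{Y}$) and $p_f\colon\st{X}\times\st{Y}\to\st{Y}$ is the projection. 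Throughout, the natural transformations will be constructed on a simplicial presentation $X_\bullet\to\st{X}$ (and compatible presentations of the other stacks obtained by fibered product), where the required isomorphisms reduce to realizable-scheme statements available in \S\ref{revDmodtheory}; the simplicial gluing is then controlled by smooth base change (Lemma \ref{smoothbasechforsimplspa}).

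For the identity isomorphisms $\alpha$ and $\beta$, the relevant factorization is $\st{X}\xrightarrow{\Delta}\st{X}\times\st{X}\xrightarrow{p_2}\st{X}$, whose composite is the identity. On each level of a simplicial presentation the scheme-level composition law gives $p_{2,+}\circ\Delta_+\cong\mr{id}_+\cong\mr{id}$, and these isomorphisms assemble into $\beta\colon\mr{id}\xrightarrow{\sim}\mr{id}_+$; the isomorphism $\alpha$ is then obtained by adjunction.

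For the composition, given $\st{X}\xrightarrow{f}\st{Y}\xrightarrow{g}\st{Z}$, the key ingredient is the base-change identity
\begin{equation*}
i_{g\oplus}\circ p_{f\oplus}\cong p'_{f\oplus}\circ i'_{g\oplus},
\end{equation*}
where $i'_g:=\mr{id}_\st{X}\times i_g\colon\st{X}\times\st{Y}\to\st{X}\times\st{Y}\times\st{Z}$ is finite, $p'_f:=p_f\times\mr{id}_\st{Z}\colon\st{X}\times\st{Y}\times\st{Z}\to\st{Y}\times\st{Z}$ is a projection, and the resulting square is cartesian; this identity follows from simplicial reduction combined with the K\"unneth-type compatibilities of external tensor product with finite push-forward and with projection push-forward. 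Substituting it into $g_+\circ f_+=p_{g\oplus}\circ i_{g\oplus}\circ p_{f\oplus}\circ i_{f\oplus}$ and then combining consecutive finites via (\S\ref{finitemorphfibercat}) and consecutive projections via (\ref{projcanisomtrans}) through the threefold product $\st{X}\times\st{Y}\times\st{Z}$ produces a factorization of $g_+\circ f_+$ as a single finite push followed by a single projection push. A second application of the base-change principle, this time through the projection $p_{13}\colon\st{X}\times\st{Y}\times\st{Z}\to\st{X}\times\st{Z}$, collapses the result to $p_{gf\oplus}\circ i_{gf\oplus}=(g\circ f)_+$. This defines $d^{g,f}$, and its adjoint yields $c_{g,f}$.

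The unit identities $c_{f,\mr{id}}=\alpha(f^+)$ and $c_{\mr{id},f}=f^+\alpha$ fall out by specializing the construction when one of the morphisms is the identity, in which case the threefold product collapses to a twofold. The three-term cocycle condition amounts to comparing the two ways of associating $h_+\circ g_+\circ f_+$ through the fourfold product $\st{X}\times\st{Y}\times\st{Z}\times\st{W}$. I expect this to be the main obstacle: not conceptually, since all the pieces are in place, but organizationally, as it requires checking that the base-change identity above is coherent under iterated K\"unneth expansion (Lemma \ref{externaltensstsh}) and fits together with the already-established cocycle conditions for finite morphisms and for projections without introducing spurious twists. Once this coherence is verified by a careful diagram chase, the associativity of $d^{\bullet,\bullet}$ (and dually $c_{\bullet,\bullet}$) follows by assembling the various compatibilities.
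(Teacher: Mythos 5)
Your approach is essentially the same as the paper's: both arguments organize the comparison of $g_+\circ f_+$ with $(g\circ f)_+$ through the threefold product $\st{X}\times\st{Y}\times\st{Z}$, using the cocycle data for finite morphisms from \S\ref{finitemorphfibercat}, the cocycle data for projections (\ref{projcanisomtrans}), and the K\"unneth/constant-complex compatibilities from Lemmas \ref{compexttensoth} and \ref{externaltensstsh}. The paper argues on the pull-back side ($c_{g,f}$) and packages the four needed transition isomorphisms into a single commutative diagram; you argue on the push-forward side ($d^{g,f}$), which is the adjoint version of the same thing.

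One inaccuracy worth flagging: what you call the ``second application of the base-change principle, this time through the projection $p_{13}$'' is not a base change. After combining consecutive finites and consecutive projections, what remains is to show
\begin{equation*}
 \bigl(\mr{pr}_{\st{X}\times\st{Y}\times\st{Z}\to\st{Z}}\bigr)_{\oplus}
 \circ d_{\oplus}\cong
 \bigl(\mr{pr}_{\st{X}\times\st{Z}\to\st{Z}}\bigr)_{\oplus},
\end{equation*}
where $d\colon\st{X}\times\st{Z}\to\st{X}\times\st{Y}\times\st{Z}$ is the finite morphism $(\mr{pr}_1,f\circ\mr{pr}_1,\mr{pr}_2)$. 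There is no cartesian square here with $p_{13}$ as a leg; this is not a base-change isomorphism. Passing to adjoints, the content is precisely the isomorphism $\Gamma^+(L_{\st{X}\times\st{Y}})\cong L_{\st{X}}$ for $\Gamma$ the graph of $f$, which is a compatibility of the constant complex under finite pull-back and K\"unneth (Lemma \ref{externaltensstsh}) — exactly what the paper invokes for region $\ccirc{2}$ of its diagram. Once you replace the ``base change'' phrasing with this constant-complex argument, your outline matches the paper's proof, including leaving the cocycle verification as a (genuinely tedious but routine) diagram chase.
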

\begin{proof}
 By the adjointness property, it suffices to show the lemma for the
 pull-back. First, we define $\alpha$ to be the isomorphism of Lemma
 \ref{justofnotpu}. Consider the following diagram:
 \begin{equation*}
  \xymatrix{
   \st{X}\ar[rr]^-{c}\ar[d]_-{a}\ar@{}[rd]|{\ccirc{1}}&&
   \st{X}\times\st{Z}
   \ar[dd]\ar[ld]^-{d}\ar@{}[ddl]|(.8){\ccirc{4}}\\
  \st{X}\times\st{Y}\ar[r]_-{b}\ar[d]\ar@{}[rd]|{\ccirc{3}}&
   \st{X}\times\st{Y}\times\st{Z}
   \ar[rd]\ar[d]\ar@{}[r]|-{\ccirc{2}}&\\
  \st{Y}\ar[r]&\st{Y}\times\st{Z}\ar[r]&\st{Z}.
   }
 \end{equation*}
 For a morphism $a$, we denote by $\Gamma_a$ the graph morphism.
 The ``transitivity isomorphism''
 for $\ccirc{1}$, namely the isomorphism
 $a^{\oplus}\circ b^{\oplus}\cong c^{\oplus}\circ d^{\oplus}$, is
 defined by \ref{finitemorphfibercat}.
 That for $\ccirc{4}$, use Lemma \ref{externaltensstsh}, and for
 $\ccirc{3}$, use Lemma \ref{compexttensoth} for finite morphisms
 $\mr{id}\colon\st{X}\rightarrow\st{X}$ and
 $\Gamma_g\colon\st{Y}\rightarrow\st{Y}\times\st{Z}$.
 Finally, for the transitivity isomorphism for $\ccirc{2}$,
 it suffices to construct
 $\Gamma_f^+(L_{\st{X}\times\st{Y}})\cong L_{\st{X}}$.
 This follows by Lemma \ref{externaltensstsh}.
 The verification of the compatibility conditions is straightforward, so
 we leave it as an exercise.
\end{proof}

Let $\mr{St}^{\mr{adm}}(k)$ be the full subcategory of the
category of algebraic stacks (we do not consider the 2-morphisms)
consisting of admissible stacks.
For an admissible stack $\st{X}$, we associate the triangulated
category $D^{+}_{\mr{hol}}(\st{X})$ (resp.\
$D^{\mr{b}}_{\mr{hol}}(\st{X})$). By the data of the lemma above, we
have a cofibered category $\ms{F}_+\rightarrow\mr{St}^{\mr{adm}}$ by
considering $f_+$ (resp.\ a fibered category
$\ms{F}^+\rightarrow\mr{St}^{\mr{adm}}$ by considering $f^+$).
Recall the notation of \ref{finitemorphfibercat} and Definition
\ref{commudualprimpushfin}.
The isomorphisms of Lemma \ref{justofnotpu} yield isomorphisms of
fibered and cofibered categories
$\ms{F}^{\oplus}\cong\ms{F}^+$ and $\ms{F}_{\oplus}\cong\ms{F}_+$ over
the category of admissible stack with finite morphism
$\mr{St}^{\mr{adm,fin}}$.

\begin{lem}
 \label{starplusequal}
 Let $f\colon \st{X}\rightarrow\st{Y}$ be a smooth morphism of
 admissible stacks, and let $d$ be the relative dimension of $f$. Then
 $f^*\cong f^+[d]$, and $\mb{R}f_*\cong f_+[-d]$ (cf.\ \ref{smoothpullback}).
\end{lem}
\begin{proof}
 By adjointness, it suffices to prove $f^*\cong f^+[d]$.
 Since the proof is similar to that of Lemma \ref{justofnotpu}, we only
 sketch. As the proof of the lemma, we also show the dual claim:
 $f^*(d)\cong f^![-d]$ (cf.\ Lemma \ref{smoothpullback}).
 We may take $X_{\bullet\bullet}$, $Y_{\bullet}$ as in
 \ref{smoothpullback}.
 Let $Y:=Y_i$ and $X:=X_{i,j}$.
 Consider the following cartesian diagram:
 \begin{equation*}
  \xymatrix{
   X\times_{\st{Y}}Y\ar[r]\ar[d]_{\alpha}\ar@{}[rd]|\square&
   X\times Y\ar[r]\ar[d]&
   Y\ar[d]\\
  \st{X}\ar[r]&
   \st{X}\times\st{Y}\ar[r]&
   \st{Y}.
   }
 \end{equation*}
 For $\ms{M}\in M(Y_\bullet)$, we have
 $\alpha^*f^*(\ms{M})(d)\cong\alpha^*f^![-d](\ms{M})$. This
 follows by using the fact that if $g$ is a smooth morphism of relative
 dimension $d_g$ between realizable schemes,
 then $g^*(d)\cong g^![-d_g]$ by definition.
 We finish the proof by descent argument.
\end{proof}

For a smooth morphism $\rho\colon X\rightarrow\st{X}$ from a realizable
scheme to an admissible scheme and $\ms{M}\in
D^{\mr{b}}_{\mr{hol}}(\st{X})$, we often denote $\rho^*(\ms{M})$ by
$\ms{M}_X$.
When $f$ is an open immersion, this lemma justifies the notation of
\ref{openimmdefshri}.
Recall that in such a case, we have an adjoint pair
$(f_!,f^+)$ as well as $(f^+,f_+)$.

\begin{prop}
 [Smooth base change]
 \label{smoothbcthmst}
 Consider the following cartesian diagram of admissible stacks:
 \begin{equation*}
  \xymatrix{
   \st{X}'\ar[r]^{g'}\ar[d]_{f'}\ar@{}[rd]|\square&
   \st{X}\ar[d]^{f}\\
  \st{Y}'\ar[r]_g&\st{Y}
   }
 \end{equation*}
 where $g$ is smooth. Then the base change homomorphism of functors
 $g^+f_+\rightarrow f'_+g'^+\colon
 D^{+}_{\mr{hol}}(\st{X})\rightarrow
 D^{+}_{\mr{hol}}(\st{Y}')$ is an isomorphism.
\end{prop}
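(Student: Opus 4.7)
The plan is to reduce to the factorization $f \colon \st{X} \xrightarrow{i} \st{X} \times \st{Y} \xrightarrow{p} \st{Y}$, where $i$ is finite (since $\st{Y}$ is admissible, so the graph morphism is a base change of the diagonal) and $p$ is a projection. Since $f_+ \cong p_+ \circ i_+$ via $d^{p,i}$, and by pulling back the square by $g$ we get an analogous factorization of $f'$ through $g^\dag\colon \st{X} \times \st{Y}' \to \st{X} \times \st{Y}$ (the pullback of $g$ by $\mathrm{id}_{\st{X}}$, which is smooth), it suffices to treat the two cases: (a) $f$ is finite, and (b) $f$ is a projection.

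For case (a), I would choose a simplicial presentation space $Y_\bullet \to \st{Y}$, and let $Y'_\bullet := Y_\bullet \times_{\st{Y}} \st{Y}'$, $X_\bullet := Y_\bullet \times_{\st{Y}} \st{X}$, $X'_\bullet := Y_\bullet \times_{\st{Y}} \st{X}'$. These are simplicial presentation spaces (of $\st{Y}$, $\st{Y}'$, $\st{X}$, $\st{X}'$ respectively), and the induced morphisms $f_\bullet\colon X_\bullet \to Y_\bullet$, $f'_\bullet\colon X'_\bullet \to Y'_\bullet$ are cartesian finite morphisms with cartesian smooth base change $g_\bullet\colon Y'_\bullet \to Y_\bullet$ and $g'_\bullet\colon X'_\bullet \to X_\bullet$. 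Using Lemma~\ref{prepcohoprealsch}(i) (smooth base change for finite morphisms between spaces, which reduces via a further descent to Corollary~\ref{smbcforopenimm}), the base change homomorphism is termwise an isomorphism, and since smooth pull-back and finite push-forward commute with the $\rho_i^*$ functors (cf.\ \ref{defofpushfinitesimpl}), this glues to the desired isomorphism on $D^+_{\mathrm{hol}}$.

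For case (b), where $f = p\colon \st{X}\times\st{Y}\to\st{Y}$ and $f_+ = p_{L_{\st{X}}+}$, I would first verify that $g'^+(L_{\st{X}\times\st{Y}}) \cong L_{\st{X}\times\st{Y}'}$ using Lemma~\ref{externaltensstsh} and the fact that $g'$ is the base change $\mathrm{id}_{\st{X}}\times g$ of a smooth morphism. Then, computing via the spectral sequences of Corollary~\ref{sheafdevsseq} and Lemma~\ref{specseqprojcase} applied to simplicial presentations of $\st{X}$ and $\st{Y}$, the base change morphism reduces termwise to smooth base change $g^*\circ \mathbb{R}p_{i\,\ms{A}_i*} \to \mathbb{R}p'_{i\,\ms{A}_i*}\circ g'^*$ for projections of realizable schemes. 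That termwise statement is the scheme-level smooth base change, which follows from the adjunction $(g'^*, \mathbb{R}g'_*)$ and Corollary~\ref{smbcforopenimm} (applied after writing $p_{\ms{A}*} = p_*\shom(q^+\ms{A},-)$ as in \ref{projectcasepullpush}). The compatibility of $g^+$ with the spectral sequences is automatic from exactness of $g^+$.

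The main obstacle, as I see it, is bookkeeping in case (b): one must check that the smooth pull-back functor $g'^+$ commutes not only with each $p_{p\ms{A}_p*}$ individually but also with the Čech-type comparison map used to assemble $p_{\ms{A}_\bullet\times}$ in \ref{compuprojcon}, and that the resulting comparison is the adjoint of the canonical isomorphism $g'^+\circ p^+_{L_{\st{X}}}\cong p'^+_{L_{\st{X}}}\circ g^+$. Once this naturality is pinned down (which essentially follows formally from the adjoint pair structure of $(p^+_{\ms{A}},p_{\ms{A}+})$ being compatible with smooth pull-backs of the base, via Lemma~\ref{smoothbasechforsimplspa}), both cases combine through the factorization $f = p\circ i$ to give the isomorphism $g^+f_+ \xrightarrow{\sim} f'_+ g'^+$.
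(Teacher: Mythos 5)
Your proposal is correct and follows exactly the route the paper intends: the paper's own proof is just the one-sentence sketch "replace $g^+,g'^+$ by $g^*,g'^*$ via Lemma~\ref{starplusequal}, factor $f$ into a finite morphism followed by a projection, and verify each case from the definition," and your write-up is a reasonable expansion of that sketch. One small point worth making explicit, which the paper states but you leave implicit: the reason it is legitimate to work at the $*$-level (which is what your termwise computations and descent arguments really do) is precisely that $g^+\cong g^*[-d]$ for a smooth $g$ of relative dimension $d$, so that after a global shift the base change morphism for $g^+$ is the one for $g^*$. Also, in case (b) you don't need to say anything about $g'^+(L_{\st{X}\times\st{Y}})$: the functor $p_+$ is $p_{L_{\st{X}}+}$, whose "twisting" object $L_{\st{X}}$ lives on the fixed factor $\st{X}$ and is literally the same for $p$ and $p'$, so there is nothing to match up there.
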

\begin{proof}
 In the verification, it suffices to replace $g^+$ and $g'^+$ by $g^*$
 and $g'^*$ by Lemma \ref{starplusequal}.
 We use the standard factorization of $f$ into a finite morphism and a
 projection, and the verification is reduced to these cases
 separately. In both cases, the verification is straightforward from the
 definition, so we leave the details to the reader.
\end{proof}

\begin{prop}
 Let $f\colon\st{X}\rightarrow\st{Y}$ be a morphism between admissible
 stacks. Then $f_+$ preserves boundedness, namely
 induces a functor $D^{\mr{b}}_{\mr{hol}}(\st{X})
 \rightarrow D^{\mr{b}}_{\mr{hol}}(\st{Y})$.
\end{prop}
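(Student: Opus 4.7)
The plan is to factor $f$ as $\st{X}\xrightarrow{i}\st{X}\times\st{Y}\xrightarrow{p}\st{Y}$, where $i$ is the graph morphism and $p$ the projection. Because $\st{Y}$ is admissible, the diagonal $\Delta_{\st{Y}}$ is finite, and $i$ is its base change along $(\mr{id},f)$, hence $i$ is finite. The finite push-forward $i_+=i_{\oplus}$ is exact by \ref{finitemorphfibercat} combined with Lemma \ref{justofnotpu}, so it preserves $D^{\mr{b}}_{\mr{hol}}$ trivially. Thus the entire problem reduces to proving that $p_+$ sends $D^{\mr{b}}_{\mr{hol}}(\st{X}\times\st{Y})$ into $D^{\mr{b}}_{\mr{hol}}(\st{Y})$ for any projection $p$ from an admissible stack over $\st{Y}$.

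Next I would reduce $\st{Y}$ to a quasi-projective scheme. Boundedness can be checked after pulling back along any smooth surjective morphism $Y\rightarrow\st{Y}$ with $Y$ a scheme, because by Lemma \ref{starplusequal} smooth pull-back differs from $(-)^+$ only by a shift, hence detects bounded complexes. By the smooth base change Proposition \ref{smoothbcthmst}, the pull-back of $p_+\ms{M}$ agrees with the push-forward for the base change $p'\colon\st{X}\times Y\rightarrow Y$. Since boundedness is local on $Y$, we may further assume $Y$ is affine and in particular realizable. So the problem becomes: for $\ms{M}\in D^{\mr{b}}_{\mr{hol}}(\st{X}\times Y)$ with $Y$ a realizable scheme, show that $p_+(\ms{M})\in D^{\mr{b}}_{\mr{hol}}(Y)$.

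The core of the argument is a noetherian induction on $\dim\st{X}$ using the alteration result for admissible stacks (Corollary \ref{alterationforadmisst}). Choose a proper surjective generically finite morphism $\pi\colon X\rightarrow\st{X}$ with $X$ smooth quasi-projective. By generic flatness, there is an open dense substack $\st{U}\subset\st{X}$ such that $\pi_U\colon X_U:=\pi^{-1}(\st{U})\rightarrow\st{U}$ is finite flat, and the complement $\st{Z}:=\st{X}\setminus\st{U}$ is an admissible stack of strictly smaller dimension. Writing $j_Y$ and $i_Y$ for the base changes of these immersions along $\st{X}\times Y$, the localization triangle of \ref{localtriangs}
\begin{equation*}
  i_{Y+}i_Y^!\ms{M}\rightarrow\ms{M}\rightarrow j_{Y+}j_Y^+\ms{M}\xrightarrow{+1}
\end{equation*}
reduces the problem (upon applying $p_+$) to treating its two outer terms. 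For the first, $i_Y^!\ms{M}$ lies in $D^{\mr{b}}_{\mr{hol}}(\st{Z}\times Y)$ since closed immersions have finite cohomological amplitude, and $p_+\circ i_{Y+}$ equals the push-forward along the projection $\st{Z}\times Y\rightarrow Y$, which is bounded by the inductive hypothesis.

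For the second term, set $\ms{N}:=j_Y^+\ms{M}\in D^{\mr{b}}_{\mr{hol}}(\st{U}\times Y)$ and let $\pi_{U,Y}:=\pi_U\times\mr{id}_Y\colon X_U\times Y\rightarrow\st{U}\times Y$, which is finite flat. Lemma \ref{directfactorlemmaeasy}(iii) exhibits $\ms{N}$ as a direct factor of $\pi_{U,Y+}\pi_{U,Y}^+\ms{N}$, so the push-forward to $Y$ is a direct factor of the push-forward along $X_U\times Y\rightarrow Y$, a projection between realizable schemes (since $X_U$ is quasi-projective and so is $Y$). In the realizable scheme case the push-forward preserves boundedness by \ref{fundproprealsch}, and $\pi_{U,Y}^+\ms{N}$ remains in $D^{\mr{b}}_{\mr{hol}}$. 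Thus $p_+(j_{Y+}j_Y^+\ms{M})$ is bounded, and combined with the previous step we conclude. The base case $\dim\st{X}=0$ is covered by the same argument, since then $\st{Z}$ is empty and $\pi_U$ is finite flat on all of $\st{X}$. The main obstacle is handling the projection case---overcome precisely by the alteration plus the finite flat direct-factor argument, which replaces the missing boundedness result for general smooth push-forward from a stack.
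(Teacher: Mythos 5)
Your argument follows essentially the same scheme as the paper's: reduce $\st{Y}$ to a (realizable) scheme via a conservative smooth pull-back and smooth base change, then induct on dimension with the localization triangle of \ref{localtriangs}, handling the closed part by induction and the open dense part by exhibiting the complex as a direct factor of a push-pull along a finite flat cover and appealing to the known boundedness for realizable schemes from \ref{fundproprealsch}. The only genuine route difference is that you produce the finite flat cover via Corollary \ref{alterationforadmisst} (alteration) followed by Zariski's main theorem and generic flatness, whereas the paper invokes Remark \ref{admissdfn} (ii) directly, which already supplies a finite locally free cover by a scheme over a dense open from [SGA 3, V, 7.2]; your route is heavier machinery but equally valid. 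The explicit reduction of $\st{Y}$ to an affine, hence quasi-projective and realizable, scheme is also a slightly cleaner way to land in the domain of \ref{fundproprealsch} than the paper's implicit appeal to the ``scheme case''.

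There is, however, one genuine gap: you never reduce to the case where $\st{X}$ is reduced, and without this the argument fails. If $\st{X}$ is non-reduced (consider $\mr{Spec}(k[\epsilon]/\epsilon^2)$), then a smooth quasi-projective $X$ mapping to $\st{X}$ necessarily factors through $\st{X}_{\mr{red}}$, and no morphism from $X$ to $\st{X}$ can be flat over any non-empty open: the open dense $\st{U}$ with $\pi_U$ finite flat simply does not exist, and Lemma \ref{directfactorlemmaeasy} (iii), which requires finite \emph{flatness}, is not available. The paper handles this at the outset (``We may assume $\st{X}$ to be reduced.'') and later reminds the reader that reducedness is what guarantees the cover can be taken finite locally free after shrinking. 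The fix is standard but needs to be said: the closed immersion $\st{X}_{\mr{red}}\hookrightarrow\st{X}$ is finite surjective radicial, so by Lemma \ref{directfactorlemmaeasy} (i) it induces an equivalence $D^{\mr{b}}_{\mr{hol}}(\st{X}_{\mr{red}})\simeq D^{\mr{b}}_{\mr{hol}}(\st{X})$ compatible with push-forward to $\st{Y}$, and one may therefore assume $\st{X}$ reduced before invoking the alteration.
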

\begin{proof}
 It suffices to show that $f_+(\ms{M})$ is bounded for
 $\ms{M}\in\mr{Hol}(\st{X})$.
 Let us show  by the induction on the dimension of the support of
 $\ms{M}$. We may assume that the support of $\ms{M}$ is equal to
 $\st{X}$. Now, we may shrink $\st{X}$. Indeed, consider the
 localization triangle $i_+i^!\rightarrow\mr{id}\rightarrow
 j_+j^+\xrightarrow{+1}$ (cf.\ Lemma \ref{localtriangs}). We know that
 $i_+i^!$ and $j_+j^+$ preserve boundedness.
 By induction hypothesis, we are reduced to showing the
 proposition for $f_+j_+\cong(f\circ j)_+$, and the claim follows. 
 By shrinking $\st{X}$, we may assume that there exists a finite locally
 free morphism $X\rightarrow\st{X}$ from an affine scheme $X$ by Remark
 \ref{admissdfn} (ii).
 In this situation, since $\ms{M}$ is a direct factor of
 $h_+h^+(\ms{M})$ by Lemma \ref{directfactorlemmaeasy}, we may assume
 that $\st{X}$ is an affine scheme.
 Finally, let $g\colon Y\rightarrow\st{Y}$ be a smooth presentation
 from an affine scheme. Since the functor $g^*$ is conservative by
 Remark \ref{dfnofcat} (ii), it suffices to show the claim for the
 morphism $\st{X}\times_{\st{Y}}Y\rightarrow Y$. Since $\st{X}$ is
 assumed to be an affine scheme, this is a morphism of realizable
 schemes, and the boundedness is already known.
\end{proof}

\subsubsection{}
\label{dfninthomstack}
Let $\st{X}$ be an admissible stack. Then the diagonal morphism
$\Delta\colon\st{X}\rightarrow\st{X}\times\st{X}$ is finite.
For $i=1,2$, let $p_i\colon\st{X}\times\st{X}\rightarrow\st{X}$ be the
$i$-th projection. We define the {\em internal Hom functor} by
\begin{equation*}
 \shom(\ms{M},\ms{N}):=
  p_{1\ms{M}+}\bigl(\Delta_+(\ms{N})\bigr)
  \colon D^{\mr{b}}_{\mr{hol}}(\st{X})^{\circ}\times
  D^{\mr{b}}_{\mr{hol}}(\st{X})\rightarrow
  D^{\mr{b}}_{\mr{hol}}(\st{X}).
\end{equation*}
Let $\ms{L}\in D^{\mr{b}}_{\mr{hol}}(\st{X})$.
Since $(p_{1\ms{M}}^+,p_{1\ms{M}+})$ is an adjoint pair, with Remark
\ref{deradjlemm}, we have
\begin{equation*}
 \mb{R}\mr{Hom}_{D(\st{X})}
  \bigl(\ms{L},\shom(\ms{M},\ms{N})\bigr)
  \cong
  \mb{R}\mr{Hom}_{D(\st{X}\times\st{X})}
  \bigl(\ms{L}\boxtimes\ms{M},\Delta_+\ms{N}\bigr).
\end{equation*}
We can check easily that when $\st{X}$ is a realizable scheme, $\shom$
coincides with that in \ref{fundproprealsch}.
Now, let $f\colon\st{X}\rightarrow\st{Y}$,
$g\colon\st{X}'\rightarrow\st{Y}'$ be morphisms of admissible stacks.
Then, we have an isomorphism $(f\times g)^+\bigl((-)
\boxtimes(-)\bigr)\cong f^+(-)\boxtimes g^+(-)$. This follows by
combining Lemma \ref{compexttensoth} and (\ref{commpullprojcase}).
Using this, we have
\begin{align*}
 \mr{Hom}\bigl(\ms{L},f_+\shom&(f^+\ms{M},\ms{N})\bigr)
 \cong
 \mr{Hom}\bigl(f^+(\ms{L})\boxtimes f^+(\ms{M}),
 \Delta_{\st{X}*}(\ms{N})\bigr)\\
 &\cong
 \mr{Hom}\bigl((f\times f)^+(\ms{L}\boxtimes\ms{M}),
 \Delta_{\st{X}*}(\ms{N})\bigr)\\
 &\cong
 \mr{Hom}\bigl(\ms{L},\shom(\ms{M},f_+\ms{N})\bigr).
\end{align*}
Thus, we get an isomorphism
\begin{equation*}
 f_+\shom(f^+\ms{M},\ms{N})\cong\shom(\ms{M},f_+\ms{N}).
\end{equation*}

\begin{lem}
 \label{locdeschom}
 Let $\ms{M}$ and $\ms{N}$ be objects of
 $D^{\mr{b}}_{\mr{hol}}(\st{X})$.
 For a presentation $\rho\colon X\rightarrow\st{X}$ from a realizable
 scheme, there is a canonical isomorphism
 \begin{equation*}
  \rho^*\shom(\ms{M},\ms{N})\cong
   \shom\bigl(\rho^*(\ms{M}),\rho^*(\ms{N})\bigr)[d],
 \end{equation*}
 and $d$ denotes the relative dimension function of $\rho$.
\end{lem}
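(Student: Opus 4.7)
By Lemma \ref{starplusequal}, we have $\rho^*\cong\rho^+[d]$, and since the shifts $[d]$ in the two arguments of $\shom$ cancel, the statement reduces to showing
\[
\rho^+\shom(\ms{M},\ms{N})\cong\shom(\rho^+\ms{M},\rho^+\ms{N}),
\]
where the right-hand side is the internal Hom on the realizable scheme $X$ (which coincides with the construction of \S\ref{dfninthomstack} applied to $X$, by the remark there). I will verify this by a chain of base-change and adjunction isomorphisms. The key geometric input is the cartesian square
\[
\xymatrix{
X\ar[r]^-{\Gamma_\rho}\ar[d]_-\rho\ar@{}[rd]|\square & X\times\st{X}\ar[d]^-{\rho\times\mr{id}}\\
\st{X}\ar[r]_-\Delta & \st{X}\times\st{X},
}
\]
where $\Gamma_\rho(u)=(u,\rho(u))$ is the graph of $\rho$. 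A direct $2$-fiber product computation identifies the pullback with $X$, and since $\Delta$ is finite by admissibility, so is $\Gamma_\rho$. Moreover, $\Gamma_\rho$ factors as $X\xrightarrow{\Delta_X}X\times X\xrightarrow{\mr{id}\times\rho}X\times\st{X}$, so $\Gamma_{\rho,+}=(\mr{id}\times\rho)_+\Delta_{X,+}$; and smooth base change (Proposition \ref{smoothbcthmst}) along the smooth map $\rho\times\mr{id}$ yields $(\rho\times\mr{id})^+\Delta_+\cong\Gamma_{\rho,+}\rho^+$.

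Next, let $q\colon X\times\st{X}\to X$ and $q_X\colon X\times X\to X$ denote the first projections. I would establish two further isomorphisms by the uniqueness of right adjoints. First, a twisted version of smooth base change for $p_1\colon\st{X}\times\st{X}\to\st{X}$ along $\rho$: on left adjoints one has $(\rho\times\mr{id})^+(-\boxtimes\ms{M})=\rho^+(-)\boxtimes\ms{M}$, so adapting the proof of Proposition \ref{smoothbcthmst} to the twisted push-forward $p_{1,\ms{M}+}$ (e.g.\ by reducing to the realizable scheme case via the formula $p_{\ms{M}+}\cong p_+\shom(p_2^+\ms{M},-)$ and combining untwisted smooth base change with commutation of $\shom$ and smooth pull-back) yields
\[
\rho^+p_{1,\ms{M}+}\cong q_{\ms{M}+}(\rho\times\mr{id})^+.
\]
Second, the identity $(\mr{id}\times\rho)^+(-\boxtimes\ms{M})=-\boxtimes\rho^+\ms{M}$ on left adjoints gives, by uniqueness of right adjoints, $q_{\ms{M}+}(\mr{id}\times\rho)_+\cong q_{X,\rho^+\ms{M}+}$.

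Chaining everything together,
\begin{align*}
\rho^+\shom(\ms{M},\ms{N})&=\rho^+p_{1,\ms{M}+}\Delta_+\ms{N}\cong q_{\ms{M}+}(\rho\times\mr{id})^+\Delta_+\ms{N}\\
&\cong q_{\ms{M}+}\Gamma_{\rho,+}\rho^+\ms{N}=q_{\ms{M}+}(\mr{id}\times\rho)_+\Delta_{X,+}\rho^+\ms{N}\\
&\cong q_{X,\rho^+\ms{M}+}\Delta_{X,+}\rho^+\ms{N}=\shom(\rho^+\ms{M},\rho^+\ms{N}).
\end{align*}
The main technical obstacle will be the twisted smooth base-change isomorphism, which is not literally Proposition \ref{smoothbcthmst} and requires adapting that proof to twisted projections; once this is in hand, the remainder is routine manipulation of adjoints and diagrams.
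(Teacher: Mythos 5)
Your argument is essentially the same as the paper's, just organized differently: where you chain from $\rho^{+}\shom(\ms{M},\ms{N})$ directly to $\shom(\rho^{+}\ms{M},\rho^{+}\ms{N})$, the paper computes both sides against the common intermediate $q_{\ms{M}+}(\Delta'_{+}\rho^{*}\ms{N})$ (your $q_{\ms{M}+}\Gamma_{\rho,+}\rho^{+}\ms{N}$ up to a factor swap and the shift bookkeeping). The two key ingredients are identical: the cartesian square featuring the finite graph $\Gamma_\rho=\Delta'$ of the presentation, and a base change for the twisted push-forward $p_{1\ms{M}+}$ along $\rho$. For the latter step, the paper simply asserts it ``by the definition of the functor $p_{\ms{A}+}$'' --- i.e.\ it is read off from the descent construction of $p_{\ms{A}+}$ via \ref{compuprojcon} applied to $\mr{cosk}_0(X\to\st{X})$ --- whereas you single it out as the main obstacle and propose to reduce it to the realizable-scheme identities $p_{\ms{M}+}\cong p_{+}\shom(p_2^{+}\ms{M},-)$ and $f^{+}\shom\cong\shom(f^{+},f^{+})$ (the latter obtainable from \ref{fundproprealsch}.\ref{projformulas} and Poincar\'e duality). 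Both routes are at roughly the same level of rigor; the paper's is shorter because it leans on the descent definition, yours is more self-contained in spelling out what has to be checked. Your final step $q_{\ms{M}+}(\mr{id}\times\rho)_{+}\cong q_{X,\rho^{+}\ms{M}+}$ via uniqueness of right adjoints replaces the paper's manipulation of $\shom$ through $(\rho\times\mr{id})_{+}$ using $f_{+}\shom(f^{+}A,B)\cong\shom(A,f_{+}B)$; these are interchangeable, and your version is arguably cleaner.
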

\begin{proof}
 Consider the following commutative diagram
 \begin{equation*}
  \xymatrix@C=40pt{
   &X\ar[dl]_{\delta}\ar[d]_{\Delta'}\ar@{=}[dr]&&\\
   X\times X\ar[r]_-{\rho\times\mr{id}}&
   \st{X}\times X\ar[r]_-q&
   X\ar[r]_{\rho}&\st{X},
   }
 \end{equation*}
 where $\delta$ is the diagonal morphism, $\Delta':=(\rho,\mr{id})$, and
 $q$ is the second projection. Put $\ms{N}':=\rho^*(\ms{N})$.
 By the definition of the
 functor $q_{\ms{M}+}$, we have
 \begin{equation}
  \label{firsisohomloc}
   \tag{$\star$}
  \rho^*\shom(\ms{M},\ms{N})\cong q_{\ms{M}+}(\Delta'_+(\ms{N}')).
 \end{equation}
 Let $\ms{L}$ be an object in $D^{\mr{b}}_{\mr{hol}}(X)$. For an
 algebraic stack $\st{Y}$, we denote $\mr{Hom}_{D(\st{Y})}$ by
 $\mr{Hom}_{\st{Y}}$.
 We have
 \begin{align*}
  \mr{Hom}_X&\bigl(\ms{L},q_{\ms{M}+}(\Delta'_+(\ms{N}'))\bigr)
  \cong
  \mr{Hom}_{\st{X}\times X}
  \bigl(\ms{M}\boxtimes\ms{L},\Delta'_+(\ms{N}')\bigr)\\
  &\cong
  \mr{Hom}_X\bigl(\delta^+(\rho\times\mr{id})^+(\ms{M}\boxtimes\ms{L}),
  \ms{N}'\bigr)
  \cong
  \mr{Hom}_X\bigl(\rho^+(\ms{M})\otimes\ms{L},\ms{N}'\bigr)
  \\
  &\cong
  \mr{Hom}_X\bigl(\ms{L},\shom(\rho^+(\ms{M}),\ms{N}')\bigr),
 \end{align*}
 where the first isomorphism follows by the adjunction
 $(q^+_{\ms{M}},q_{\ms{M}+})$, the second by the adjunction
 $(\Delta'^+,\Delta'_+)$, the third by Lemma \ref{compexttensoth}, and
 the last by the adjointness property of $\otimes$ and $\shom$ for
 realizable schemes.
 Thus, we have a canonical isomorphism
 $\shom\bigl(\rho^+(\ms{M}),\ms{N}'\bigr)\cong
 q_{\ms{M}+}\bigl(\Delta'_+(\ms{N}')\bigr)$.
 Combining with the isomorphism $\rho^+\cong\rho^*[-d]$ and
 (\ref{firsisohomloc}), the lemma follows.
\end{proof}

\subsubsection{}
\label{bidualityofstack}
Recalling Definition \ref{defofconstanddual}, we define the {\em dual
functor} to be
\begin{equation*}
 \mb{D}_{\st{X}}(\ms{M}):=\shom(\ms{M},L^\omega_{\st{X}})\colon
  D^{\mr{b}}_{\mr{hol}}(\st{X})^{\circ}\rightarrow
  D^{\mr{b}}_{\mr{hol}}(\st{X}).
\end{equation*}
If no confusion may arise, we often omit the subscript
$\st{X}$ from $\mb{D}_{\st{X}}$.

\begin{prop*}[Biduality]
 There exists a canonical isomorphism of functors
 \begin{equation*}
  \gamma\colon
   \mr{id}\xrightarrow{\sim}\mb{D}_{\st{X}}\circ\mb{D}_{\st{X}}
   \colon
   D^{\mr{b}}_{\mr{hol}}(\st{X})\rightarrow
   D^{\mr{b}}_{\mr{hol}}(\st{X}).   
 \end{equation*}
\end{prop*}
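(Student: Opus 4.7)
The plan is to construct a natural transformation $\eta\colon\mr{id}\to\mb{D}_{\st{X}}\circ\mb{D}_{\st{X}}$ using the internal-hom adjunction, and then to verify that $\eta$ is an isomorphism by smooth descent, reducing to the biduality already recorded in \S\ref{fundproprealsch} for realizable schemes.

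For the construction of $\eta$, I will use the adjunction computed in \ref{dfninthomstack}: $\mr{Hom}(\ms{L},\shom(\ms{M},\ms{N}))\cong\mr{Hom}(\ms{L}\boxtimes\ms{M},\Delta_+\ms{N})$. Taking $\ms{L}=\shom(\ms{M},\ms{N})$ and starting from the identity, I obtain an evaluation pairing $\shom(\ms{M},\ms{N})\boxtimes\ms{M}\to\Delta_+\ms{N}$. Using the swap $\tau\colon\st{X}\times\st{X}\to\st{X}\times\st{X}$, which fixes $\Delta_+$ and interchanges the two factors of $\boxtimes$, I rewrite this as $\ms{M}\boxtimes\shom(\ms{M},\ms{N})\to\Delta_+\ms{N}$, and re-adjoining yields a functorial morphism $\ms{M}\to\shom(\shom(\ms{M},\ms{N}),\ms{N})$. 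Specializing to $\ms{N}=L^{\omega}_{\st{X}}$ produces $\eta_{\ms{M}}\colon\ms{M}\to\mb{D}_{\st{X}}\mb{D}_{\st{X}}\ms{M}$, natural in $\ms{M}$.

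To verify that $\eta_{\ms{M}}$ is an isomorphism, I will argue smooth-locally. Let $\rho\colon X\to\st{X}$ be a smooth morphism from a realizable scheme with relative dimension function $d$; such a family of $\rho$'s is jointly conservative, so it suffices to show that $\rho^{*}(\eta_{\ms{M}})$ is an isomorphism. Lemma \ref{locdeschom} gives
\begin{equation*}
\rho^{*}\mb{D}_{\st{X}}(\ms{M})\cong\shom_{X}(\rho^{*}\ms{M},\rho^{*}L^{\omega}_{\st{X}})[d].
\end{equation*}
By the construction of $L_{\st{X}}$ in Definition \ref{defofconstanddual} (a shift by $d_{X/\st{X}}$ of $L_{X}$) and of $L^{\omega}_{\st{X}}=\mb{D}'_{\st{X}}(L_{\st{X}})$ (with $\mb{D}'$ incorporating a compensating relative-dimension twist), one identifies $\rho^{*}L^{\omega}_{\st{X}}$ with $L^{\omega}_{X}$ up to a shift and Tate twist depending only on $d$; these cancel with the $[d]$ from Lemma \ref{locdeschom} after two applications, giving a functorial isomorphism $\rho^{*}(\mb{D}_{\st{X}}\mb{D}_{\st{X}}\ms{M})\cong\mb{D}_{X}\mb{D}_{X}(\rho^{*}\ms{M})$. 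Under this identification, $\rho^{*}\eta_{\ms{M}}$ is the scheme-level biduality map for $\rho^{*}\ms{M}$, which is an isomorphism by the result stated in \ref{fundproprealsch} (cf.\ \cite[II.3.5]{Vi2}). Glueing over a simplicial presentation (using Lemma \ref{catequforhol}) concludes.

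The principal technical obstacle is Step~2's bookkeeping of shifts and Tate twists: one must verify that the shift $[d]$ produced by Lemma \ref{locdeschom} is exactly compensated by the relative-dimension shift hidden in $L_{\st{X}}$ and the corresponding twist hidden in $\mb{D}'_{\st{X}}$, and that this compensation is natural enough to identify $\rho^{*}\eta_{\ms{M}}$ with the biduality transformation on $X$. A secondary issue is confirming that the swap-commutativity used in constructing $\eta$ is compatible with smooth pullback, which follows because $\boxtimes$ and $\Delta_{+}$ are defined termwise on simplicial presentations and commute with the pullback $\rho^{*}$ along $\rho\colon X\to\st{X}$.
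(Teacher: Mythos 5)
Your construction of the natural transformation $\eta$ (evaluation pairing, swap the two factors, re-adjoin, then set $\ms{N}=L^{\omega}_{\st{X}}$) is exactly the chain of isomorphisms used in the paper's proof, and your verification step is the same: invoke Lemma \ref{locdeschom} to transport the question through a smooth presentation $\rho\colon X\to\st{X}$ and appeal to the known biduality for realizable schemes. The only difference is that you flesh out the shift/twist bookkeeping that the paper leaves implicit; the mathematics is identical.
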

\begin{proof}
 We have
 isomorphisms
 \begin{align}
  \label{constdualhom}
  \mr{Hom}\bigl(\mb{D}(\ms{M}),\mb{D}(\ms{M})\bigr)&\cong
  \mr{Hom}\bigl(\mb{D}(\ms{M})\boxtimes\ms{M},\Delta_+
  (L^\omega_{\st{X}})\bigr)\\
  \notag
 &\cong\mr{Hom}\bigl(\ms{M}\boxtimes\mb{D}(\ms{M}),
 \Delta_+(L^\omega_{\st{X}})\bigr)\cong
  \mr{Hom}(\ms{M},\mb{D}\mb{D}(\ms{M})),
 \end{align}
 where the second isomorphism is induced by the morphism
 $\st{X}\times\st{X}\xrightarrow{\sim}\st{X}\times\st{X}$ exchanging the
 first and second factor.
 The image of the identity homomorphism induces $\gamma$ in the claim.
 Let $\rho\colon X\rightarrow\st{X}$ be a presentation from a
 realizable scheme. By Remark \ref{dfnofcat} (ii), it suffices to show
 that $\rho^*(\gamma)$ is an isomorphism.
 Since the dual functor is compatible with $\rho^*$ by
 Lemma \ref{locdeschom}, we are reduced to checking the biduality in the
 realizable scheme case, which is Lemma \ref{bidualrealcase}.
\end{proof}

\begin{rem*}
 (i) We have a canonical isomorphism
 $\H^i\mb{D}_{\st{X}}\cong\H^i\mb{D}'_{\st{X}}$ for any $i$. Indeed, for
 any presentation $\rho\colon X\rightarrow\st{X}$, we
 can check that $\rho^*\mb{D}_{\st{X}}$ and $\rho^*\mb{D}'_{\st{X}}$ are
 canonically isomorphic to $\mb{D}_X\rho^*$, thus we get the claim by
 gluing. However, even though there is no doubt to believe that
 $\mb{D}_{\st{X}}$ and $\mb{D}'_{\st{X}}$ coincide, we do not know how
 to construct a morphism $\mb{D}_{\st{X}}\rightarrow\mb{D}'_{\st{X}}$ in
 $D^{\mr{b}}_{\mr{hol}}(\st{X})$ compatible with the isomorphisms of
 $i$-th cohomologies.

 (ii) We have a canonical isomorphism
 $L^{\omega}_{\st{X}}\cong\mb{D}_{\st{X}}(L_{\st{X}})$. This follows
 since for any $X\in\st{X}_{\mr{sm}}$, we have
 $(L^{\omega})_X\cong\mb{D}(L)_X$, and we have the uniqueness of
 Theorem \ref{BBDgluingthm}.

 (iii) When $j$ is an open immersion, we have $j^+\circ\mb{D}\cong
 \mb{D}\circ j^+$. Thus, $j_!\cong\mb{D}\circ j_+\circ\mb{D}$.

 (iv) When $f$ is a finite morphism, we prove in Lemma
 \ref{uppershriekproj} that we have an isomorphism $f^+\cong\mb{D}\circ
 f^!\circ\mb{D}$.

\end{rem*}

\subsubsection{}
\label{basicprophomanddfn}
We define the {\em tensor product} by
\begin{equation*}
 (-)\otimes(-):=\Delta^+\bigl((-)
  \boxtimes(-)\bigr)\colon D^{\mr{b}}_{\mr{hol}}(\st{X})\times
  D^{\mr{b}}_{\mr{hol}}(\st{X})\rightarrow
  D^{\mr{b}}_{\mr{hol}}(\st{X}).
\end{equation*}

Let $\ms{M}$, $\ms{N}$, $\ms{L}$ be objects in
$D^{\mr{b}}_{\mr{hol}}(\st{X})$. We have
\begin{align}
 \notag
 \mb{R}\mr{Hom}\bigl(\ms{M},\shom(\ms{N},\ms{L})\bigr)
 &:=
 \mb{R}\mr{Hom}\bigl(\ms{M},p_{1\ms{N}+}(\Delta_+\ms{L})\bigr)
 \cong
 \mb{R}\mr{Hom}\bigl(\ms{M}\boxtimes\ms{N},\Delta_+\ms{L}\bigr)\\
 \label{adjointhomtensshom}
 &\cong
 \mb{R}\mr{Hom}\bigl(\Delta^+(\ms{M}\boxtimes\ms{N}),\ms{L}\bigr)
 =:
 \mb{R}\mr{Hom}\bigl(\ms{M}\otimes\ms{N},\ms{L}\bigr).
\end{align}
The identity homomorphism of $\shom(\ms{M},\ms{N})$ induces the {\em
evaluation homomorphism}
\begin{equation*}
 \shom(\ms{M},\ms{N})\otimes\ms{M}\rightarrow\ms{N}.
\end{equation*}

Now, let $p\colon\st{X}\rightarrow\mr{Spec}(k)$ denote the
structural morphism. Since $p_2\circ\Delta\cong\mr{id}$, we have
\begin{equation*}
 L_{\st{X}}\otimes\ms{M}=
  \Delta^+\bigl(L_{\st{X}}\boxtimes\ms{M}\bigr)\cong
  \Delta^+(p_2^+(\ms{M}))\cong(p_2\circ \Delta)^+(\ms{M})
  \cong\ms{M}
\end{equation*}
where the isomorphisms hold by the definition and results in
\ref{dfnofpushpullstackproj}.
Using these, we have
\begin{equation*}
 \mb{R}\Gamma\circ p_{+}\shom(\ms{M},\ms{N})\cong
  \mb{R}\mr{Hom}_{D(\st{X})}\bigl(L_{\st{X}},
  \shom(\ms{M},\ms{N})\bigr)\cong
  \mb{R}\mr{Hom}_{D(\st{X})}(\ms{M},\ms{N})
\end{equation*}
where the first isomorphism holds by Proposition \ref{pushforcohrel},
and the second by what we have just proven.

\begin{prop}
 \label{easypropfunc}
 Let $f\colon\st{X}\rightarrow\st{X}'$,
 $g\colon\st{Y}\rightarrow\st{Y}'$ be morphisms of admissible
 stacks.

\begin{enumerate}
 \item We have an isomorphism $(f\times g)^+\bigl((-)
 \boxtimes(-)\bigr)\cong f^+(-)\boxtimes g^+(-)$.

 \item We have $\mb{D}\bigl((-)\boxtimes(-)\bigr)\cong
 \mb{D}(-)\boxtimes\mb{D}(-)$.

 \item\label{unitcommute}
      We have $f^+\bigl((-)\otimes(-)\bigr)\cong f^+(-)\otimes
      f^+(-)$.

 \item\label{adjunchomtensshom}
      We have $\shom(\ms{M}\otimes\ms{N},\ms{L})\cong
      \shom\bigl(\ms{M},\shom(\ms{N},\ms{L})\bigr)$.

 \item\label{inthombidual}
      We have $\shom\bigl(\mb{D}(\ms{M}),\mb{D}(\ms{N})\bigr)\cong
      \shom(\ms{N},\ms{M})$.

 \item\label{calcofhomdual}
      We have $\shom(\ms{M},\ms{N})\cong\mb{D}\bigl(\ms{M}
      \otimes\mb{D}(\ms{N})\bigr)$.
\end{enumerate}
\end{prop}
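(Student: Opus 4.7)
My plan is to prove (1)--(6) in cascade, with a K\"unneth-type formula for $\shom$ as the main technical ingredient. Part (1) is essentially already established in \ref{dfninthomstack}: combining Lemma \ref{compexttensoth} with the isomorphism (\ref{commpullprojcase}) immediately yields $(f\times g)^+\bigl((-)\boxtimes(-)\bigr)\cong f^+(-)\boxtimes g^+(-)$. For (3), I use the cartesian identity $(f\times f)\circ\Delta_{\st{X}}=\Delta_{\st{X}'}\circ f$, which gives a canonical isomorphism $f^+\circ\Delta^+_{\st{X}'}\cong\Delta^+_{\st{X}}\circ(f\times f)^+$; combined with (1) this yields $f^+(\ms{M}\otimes\ms{N})\cong f^+\ms{M}\otimes f^+\ms{N}$.

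The main technical step is a K\"unneth formula
\begin{equation*}
 \shom(\ms{M}\boxtimes\ms{N},\ms{L}\boxtimes\ms{K})\cong
 \shom(\ms{M},\ms{L})\boxtimes\shom(\ms{N},\ms{K})
\end{equation*}
for $\ms{M},\ms{L}\in D^{\mr{b}}_{\mr{hol}}(\st{X})$ and $\ms{N},\ms{K}\in D^{\mr{b}}_{\mr{hol}}(\st{Y})$. Unfolding the definition of $\shom$ from \ref{dfninthomstack}, the left-hand side equals $p_{1,\ms{M}\boxtimes\ms{N},+}\bigl(\Delta_{\st{X}\times\st{Y},+}(\ms{L}\boxtimes\ms{K})\bigr)$, where $p_1$ and $\Delta$ are the first projection and the diagonal of $\st{X}\times\st{Y}$. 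Under the canonical permutation $(\st{X}\times\st{Y})\times(\st{X}\times\st{Y})\cong(\st{X}\times\st{X})\times(\st{Y}\times\st{Y})$, the diagonal $\Delta_{\st{X}\times\st{Y}}$ is identified with $\Delta_{\st{X}}\times\Delta_{\st{Y}}$, and the projection $p_1$ and the auxiliary projection $p_2$ used in the definition of $p_{1,\ms{M}\boxtimes\ms{N},+}$ factor as products of the corresponding projections on $\st{X}\times\st{X}$ and $\st{Y}\times\st{Y}$. Lemma \ref{compexttensoth} then rewrites $(\Delta_{\st{X}}\times\Delta_{\st{Y}})_+(\ms{L}\boxtimes\ms{K})$ as $\Delta_{\st{X},+}\ms{L}\boxtimes\Delta_{\st{Y},+}\ms{K}$, and Lemma \ref{kunnethforprojstack} splits the weighted projection push-forward across the two factors, yielding the formula. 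Specializing to $\ms{L}=L^\omega_{\st{X}}$, $\ms{K}=L^\omega_{\st{Y}}$ and invoking $L^\omega_{\st{X}\times\st{Y}}\cong L^\omega_{\st{X}}\boxtimes L^\omega_{\st{Y}}$ from Lemma \ref{externaltensstsh} gives (2).

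For (4), I apply Yoneda to the adjunction (\ref{adjointhomtensshom}) that is already displayed before the statement: associativity of $\otimes$ (which follows from (1) and associativity of $\boxtimes$) gives, for any $\ms{K}\in D^{\mr{b}}_{\mr{hol}}(\st{X})$,
\begin{equation*}
 \mr{Hom}\bigl(\ms{K},\shom(\ms{M}\otimes\ms{N},\ms{L})\bigr)
 \cong\mr{Hom}\bigl((\ms{K}\otimes\ms{M})\otimes\ms{N},\ms{L}\bigr)
 \cong\mr{Hom}\bigl(\ms{K},\shom(\ms{M},\shom(\ms{N},\ms{L}))\bigr),
\end{equation*}
and Yoneda produces the canonical isomorphism. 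Finally, (5) and (6) are formal consequences of (4) together with the biduality $\mr{id}\xrightarrow{\sim}\mb{D}\circ\mb{D}$ of Proposition \ref{bidualityofstack} and the symmetry of $\otimes$ (inherited from the symmetry of $\boxtimes$): for (5),
\begin{equation*}
 \shom(\mb{D}\ms{M},\mb{D}\ms{N})\cong
 \shom(\mb{D}\ms{M}\otimes\ms{N},L^\omega_{\st{X}})\cong
 \shom(\ms{N}\otimes\mb{D}\ms{M},L^\omega_{\st{X}})\cong
 \shom(\ms{N},\mb{D}\mb{D}\ms{M})\cong\shom(\ms{N},\ms{M}),
\end{equation*}
and similarly $\shom(\ms{M},\ms{N})\cong\shom(\ms{M},\mb{D}\mb{D}\ms{N})\cong\mb{D}(\ms{M}\otimes\mb{D}\ms{N})$ for (6).

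The step most likely to require care is the K\"unneth formula for $\shom$ above: one must consistently track the factor-swap $(\st{X}\times\st{Y})\times(\st{X}\times\st{Y})\cong(\st{X}\times\st{X})\times(\st{Y}\times\st{Y})$ through both $\Delta_{\st{X}\times\st{Y},+}$ and the weighted projection $p_{1,\ms{M}\boxtimes\ms{N},+}$ (which, as recalled in \ref{projectcasepullpush}, involves the ``other'' projection together with $\shom$) so as to apply Lemmas \ref{compexttensoth} and \ref{kunnethforprojstack} in the correct form. Once this bookkeeping is carried out, the remaining steps are formal.
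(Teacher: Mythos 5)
Your proposal is correct and follows essentially the same route as the paper: part (1) is cited from \ref{dfninthomstack}, (2) is derived from Lemmas \ref{compexttensoth}, \ref{kunnethforprojstack}, and \ref{externaltensstsh}(i), (3) from the commutation of the diagonal with $f\times f$, and (4)--(6) by Yoneda from (\ref{adjointhomtensshom}) and biduality. The only cosmetic differences are that you prove (2) by first establishing the more general K\"unneth formula for $\shom$ (the paper specializes to $\mb{D}$ directly, using the same three lemmas), and that you deduce (5) formally from (4), the symmetry of $\otimes$, and biduality, whereas the paper instead sheafifies the chain of isomorphisms in (\ref{constdualhom}) — both are valid and both rest on the same ingredients.
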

\begin{proof}
 The first one is just a reproduction from \ref{dfninthomstack}. The
 second claim follows by combining Lemma \ref{compexttensoth} (the
 commutativity of $f_+$ and $\boxtimes$), Lemma
 \ref{kunnethforprojstack}, and Lemma \ref{externaltensstsh} (i).
 Let us show \eqref{unitcommute}. Let $\ms{M}'$, $\ms{N}'$ be objects in
 $D^{\mr{b}}_{\mr{hol}}(\st{X}')$. Consider the following commutative
 diagram:
 \begin{equation*}
  \xymatrix@C=50pt{
   \st{X}\ar[r]^-{\Delta_{\st{X}}}\ar[d]_f&
   \st{X}\times\st{X}\ar[d]^{f\times f}\\
  \st{X}'\ar[r]_-{\Delta_{\st{X}'}}&\st{X}'\times\st{X}'.
   }
 \end{equation*}
 Using this diagram, we have
 \begin{equation*}
  f^+\bigl(\ms{M}'\otimes\ms{N}'\bigr)\cong f^+\Delta^+_{\st{X}'}
   \bigl(\ms{M}'\boxtimes\ms{N}'\bigr)\cong
   \Delta^+_{\st{X}}(f\times f)^+\bigl(\ms{M}'\boxtimes\ms{N}'\bigr)
   \cong f^+(\ms{M}')\otimes f^+(\ms{N}').
 \end{equation*}
 Let us show \eqref{adjunchomtensshom}. For any $\ms{Q}\in
 D^{\mr{b}}(\st{X})$, we have a canonical isomorphism
 \begin{equation*}
  \mr{Hom}\bigl(\ms{Q},\shom(\ms{M}\otimes\ms{N},\ms{L})\bigr)
   \cong
  \mr{Hom}\bigl(\ms{Q},\shom(\ms{M},\shom(\ms{N},\ms{L}))\bigr)
 \end{equation*}
 by using (\ref{adjointhomtensshom}) twice, thus the claim follows.
 To show \eqref{inthombidual}, the isomorphisms (\ref{constdualhom})
 remain to hold even if we replace $\mr{Hom}$ by $\shom$.
 For the last claim \eqref{calcofhomdual}, it suffices to construct a
 canonical isomorphism $\mr{Hom}\bigl(\ms{Q},\shom(\ms{M},\ms{N})\bigr)
 \cong\mr{Hom}\bigl(\ms{Q},\mb{D}(\ms{M}\otimes\mb{D}(\ms{N}))\bigr)$.
 This can be shown by using \eqref{adjunchomtensshom} and
 \eqref{inthombidual}.
\end{proof}

\subsubsection{}
Let $f\colon\st{X}\rightarrow\st{Y}$ be a morphism of admissible
stacks, and let $\ms{M}$, $\ms{N}$ be objects in
$D^{\mr{b}}_{\mr{hol}}(\st{X})$. The adjunction homomorphism
$f^+f_+\rightarrow\mr{id}$ induces a homomorphism
\begin{equation*}
 f^+\bigl(f_+(\ms{M})\otimes f_+(\ms{N})\bigr)\cong
 f^+f_+(\ms{M})\otimes f^+f_+(\ms{N})\rightarrow
  \ms{M}\otimes\ms{N},
\end{equation*}
where the first isomorphism follows by Proposition \ref{easypropfunc}.
This induces the homomorphism
\begin{equation}
 \label{pushtenscomhom}
 f_+(\ms{M})\otimes f_+(\ms{N})\rightarrow
  f_+\bigl(\ms{M}\otimes\ms{N}\bigr).
\end{equation}
Using this, we have a homomorphism
\begin{equation*}
 f_+\shom(\ms{M},\ms{N})\otimes f_+(\ms{M})
  \rightarrow
  f_+\bigl(\shom(\ms{M},\ms{N})\otimes\ms{M}\bigr)
  \rightarrow f_+(\ms{N})
\end{equation*}
where the second homomorphism is induced by the evaluation
homomorphism. Taking the adjunction (\ref{adjointhomtensshom}), we get a
canonical homomorphism
\begin{equation}
 \label{canhomhompush}
 f_+\shom\bigl(\ms{M},\ms{N}\bigr)\rightarrow
  \shom(f_+\ms{M},f_+\ms{N}).
\end{equation}

\subsubsection*{Duality results}
\subsubsection{}
\label{constdualmapproprep}

First, let us construct the trace map for projective morphisms.
Let $f\colon\st{X}\rightarrow\st{Y}$ be a {\em projective} morphism
between admissible stacks. Let $Y_\bullet\rightarrow\st{Y}$ be a
simplicial quasi-projective scheme presentation, and since $f$ is
assumed projective, the cartesian product
$X_\bullet:=\st{X}\times_{\st{Y}}Y_\bullet$ is an admissible
simplicial quasi-projective scheme as well. Since
$f_+L^\omega_{X_\bullet},L^\omega_{Y_\bullet}$ are
in $D^{\mr{b}}_{\mr{hol}}(\st{Y}_\bullet)$, we have a spectral
sequence
\begin{equation*}
 E_1^{p,q}=\mr{Ext}^q_{D(Y_p)}(f_{p+}L^\omega_{X_p},
  L^\omega_{Y_p})\Rightarrow
  \mr{Ext}^{p+q}_{D(Y_\bullet)}(f_+L^\omega
  _{X_\bullet},L^\omega_{Y_\bullet})
\end{equation*}
by (\ref{BDspecseq}). The usual trace map
defines $\mr{Tr}_{f_p}\in\mr{Hom}_{D(X_p)}
(f_{p+}L^\omega_{X_p},L^\omega_{Y_p})$ for each $p$. By this spectral sequence, the
trace map for $p=1$ yields the desired trace map $\mr{Tr}_{f}$. We note
that $(\mr{Tr}_f)_p$ is nothing but $\mr{Tr}_{f_p}$ by the compatibility
of the trace map.

Now, using this trace map, we define
\begin{align*}
 f_+\circ\mb{D}_{\st{X}}\cong f_+\shom(-,L^\omega_{\st{X}})
  &\xrightarrow{\star}\shom(f_+(-),f_+L^\omega_{\st{X}})\\
  &\xrightarrow{\mr{Tr}_f}
  \shom(f_+(-),L^\omega_{\st{Y}})\cong\mb{D}_{\st{Y}}\circ f_+,
\end{align*}
where $\star$ is induced by (\ref{canhomhompush}).
This homomorphism is in fact an isomorphism. To check this, since the
verification is local, we may easily reduce to the realizable scheme
case, and this case follows by Lemma \ref{dualfunccommuform}.
Summing up, we have the following:

\begin{lem*}
 \label{uppershriekproj}
 Let $f\colon\st{X}\rightarrow\st{Y}$ be a projective morphism between
 admissible stacks. Then we have the canonical isomorphism
 $\mb{D}_{\st{Y}}\circ f_+\xleftarrow{\sim} f_+\circ\mb{D}_{\st{X}}$. In
 particular, putting $f^!:=\mb{D}_{\st{X}}\circ
 f^+\circ\mb{D}_{\st{Y}}$, the pair $(f_+,f^!)$ is an adjoint pair.
 Moreover, consider the following cartesian diagram of admissible
 stacks:
 \begin{equation}
  \label{cartdiagbch}
  \xymatrix{
   \st{X}'\ar[r]^{g'}\ar[d]_{f'}\ar@{}[rd]|\square&
   \st{X}\ar[d]^{f}\\
  \st{Y}'\ar[r]_g&\st{Y}.
   }
 \end{equation}
 Under the assumption that $f$ is projective, the canonical homomorphism
 $g^+f_+\rightarrow f'_+g'^+$ is an isomorphism. If, moreover, $g$ is an
 open immersion, we have the canonical isomorphism $g_!\circ f'_+\cong
 f_+\circ g'_!$.
\end{lem*}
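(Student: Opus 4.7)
\bigskip

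\noindent
\textbf{Proposal.}
The first assertion, that $f_+\circ\mb{D}_{\st{X}}\xrightarrow{\sim}\mb{D}_{\st{Y}}\circ f_+$ via the trace map, is essentially constructed in the paragraph preceding the lemma; the remaining task is to verify it is an isomorphism. My plan is to reduce to the realizable scheme case by choosing a simplicial presentation quasi-projective scheme $Y_\bullet\rightarrow\st{Y}$ and setting $X_\bullet:=\st{X}\times_{\st{Y}}Y_\bullet$, which is a simplicial presentation scheme of $\st{X}$ since $f$ is projective. Using Lemma~\ref{commudualprimpushfin} and the compatibility of $\mb{D}$ with $\rho_k^*$, the map in question is computed termwise as the classical isomorphism $f_{p+}\circ\mb{D}\cong\mb{D}\circ f_{p+}$ for the projective morphism of realizable schemes $f_p\colon X_p\rightarrow Y_p$. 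The scheme-level statement is well-known by \cite{Vir} together with the base change formula for trace maps on realizable schemes. Biduality (Proposition~\ref{bidualityofstack}) is then used to pass freely between $\mb{D}\circ f_+\circ\mb{D}$ and $f_+$.

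For the adjointness of $(f_+,f^!)$ with $f^!:=\mb{D}_{\st{X}}\circ f^+\circ\mb{D}_{\st{Y}}$, I use the duality isomorphism just established together with biduality and the adjoint pair $(f^+,f_+)$:
\begin{align*}
 \mr{Hom}(f_+\ms{M},\ms{N})&\cong\mr{Hom}(\mb{D}\ms{N},\mb{D}f_+\ms{M})
  \cong\mr{Hom}(\mb{D}\ms{N},f_+\mb{D}\ms{M})\\
 &\cong\mr{Hom}(f^+\mb{D}\ms{N},\mb{D}\ms{M})
  \cong\mr{Hom}(\ms{M},\mb{D}f^+\mb{D}\ms{N})=\mr{Hom}(\ms{M},f^!\ms{N}),
\end{align*}
functorially in $\ms{M}$ and $\ms{N}$.

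For base change along (\ref{cartdiagbch}) with $f$ projective, the plan is a two-step descent. Choose a smooth presentation $\algsp{Y}'\rightarrow\st{Y}'$ from a scheme (which, for an admissible stack, may be taken with finite diagonal), and likewise choose a simplicial presentation scheme $Y_\bullet\rightarrow\st{Y}$. By the smooth base change theorem (Proposition~\ref{smoothbcthmst}) applied to the outer smooth squares, the verification of $g^+f_+\xrightarrow{\sim}f'_+g'^+$ reduces to the case where both $\st{Y}$ and $\st{Y}'$ are realizable schemes, and $\st{X}$, $\st{X}'$ become simplicial realizable schemes via the projective morphism $f$. In that setting, the base change isomorphism follows termwise from the proper base change theorem for realizable schemes (\ref{fundproprealsch}.\ref{basechangeprop}), combined with (\ref{BDspecseq}) to assemble the termwise isomorphisms into a simplicial isomorphism. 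The main nuisance, rather than the main obstacle, will be tracking that the base change map constructed via adjointness of $(f^+,f_+)$ and the termwise base change maps agree; this is a diagram chase using \ref{compatleftadj} and the construction of $f_+$ through $\oplus$-factorization.

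For the final claim, assume $g$ is an open immersion. By the base change isomorphism $g^+f_+\cong f'_+g'^+$ already established and the adjoint pairs $(g_!,g^+)$ and $(g'_!,g'^+)$ of \ref{localtriangs}, taking adjoints gives the isomorphism $g_!\circ f'_+\cong f_+\circ g'_!$ in the standard way: for $\ms{M}\in D^{\mr{b}}_{\mr{hol}}(\st{X}')$ and $\ms{N}\in D^{\mr{b}}_{\mr{hol}}(\st{Y})$,
\begin{equation*}
 \mr{Hom}(g_!f'_+\ms{M},\ms{N})\cong\mr{Hom}(f'_+\ms{M},g^+\ms{N})
  \cong\mr{Hom}(\ms{M},f'^!g^+\ms{N})\cong\mr{Hom}(\ms{M},g'^+f^!\ms{N})
  \cong\mr{Hom}(f_+g'_!\ms{M},\ms{N}),
\end{equation*}
where the penultimate isomorphism is the adjoint base change homomorphism for the $(-)^!$ pair obtained by dualizing the one for $(-)^+$, which is an isomorphism since its $(-)^+$ counterpart is. The Yoneda lemma finishes. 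I expect the one technical point to verify carefully is the identification of adjoint base change maps with their dualized counterparts, which is the standard commutativity recorded in \ref{compatleftadj} transposed to the stacky setting.
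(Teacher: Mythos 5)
Your overall strategy -- construct the duality morphism via the trace, verify it is an isomorphism termwise on a simplicial presentation by reducing to the realizable scheme case, then deduce adjointness formally -- matches the paper's approach (the paper gives the construction in the paragraph preceding the lemma and states that the isomorphism ``is local, we may easily reduce to the realizable scheme case''). However, two points in your write-up need attention.

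First, your invocation of Lemma~\ref{commudualprimpushfin} is a misfire: that lemma concerns the dual functor $\mb{D}'$ (built from the simplicial descent data), not the internal-Hom dual $\mb{D}$ used in the present statement, and it applies to finite morphisms, not projective ones. The paper is careful to treat $\mb{D}$ and $\mb{D}'$ as potentially different objects (Remark after \ref{bidualityofstack}: ``even though there is no doubt to believe that $\mb{D}_{\st{X}}$ and $\mb{D}'_{\st{X}}$ coincide, we do not know the proof''). The correct tool for checking termwise is Lemma~\ref{locdeschom} (compatibility of $\shom$, hence $\mb{D}$, with the smooth pull-back $\rho^*$), together with the spectral sequence~(\ref{BDspecseq}) of the Beilinson--Drinfeld gluing -- which is precisely what the paper uses to assemble the scheme-level trace maps $\mr{Tr}_{f_p}$ into $\mr{Tr}_f$.

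Second, the base change reduction is gappy as stated. After replacing $\st{Y}'$ by a smooth presentation $\algsp{Y}'$, the induced morphism $\algsp{Y}'\rightarrow\st{Y}$ is not smooth, so one cannot further ``apply smooth base change'' to reduce $\st{Y}$ to $Y_\bullet$. A cleaner route, consistent with the paper's conventions, is to first reduce $\st{Y}'$ to a scheme $\algsp{Y}'$ by Proposition~\ref{smoothbcthmst}, and then factor the projective morphism $f$ as $\st{X}\hookrightarrow\mb{P}^n\times\st{Y}\rightarrow\st{Y}$; the base change for the closed immersion piece reduces to the finite case from \ref{finitemorphfibercat}, and the base change for the projection piece is essentially the K\"{u}nneth formula of Proposition~\ref{Kunnethextprod} applied with one factor equal to the constant complex. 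Your adjointness argument and the open-immersion claim at the end are fine.
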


\begin{prop}[Proper base change]
 \label{propbasechansta}
 Consider the cartesian diagram of admissible stacks
 (\ref{cartdiagbch}). Assume that $f$ is proper (where we do {\em not}
 assume $f$ to be projective). Then the canonical
 homomorphism $g^+f_+\rightarrow f'_+g'^+$ is an isomorphism.
\end{prop}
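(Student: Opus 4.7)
My plan is to reduce to the case where $\st{Y}$ and $\st{Y}'$ are realizable schemes via smooth base change, and then use Chow's lemma together with Noetherian induction on the support of $\ms{M}$ to reduce to the projective case already handled by Lemma \ref{uppershriekproj}.

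First, I would choose smooth presentations $h\colon Y\to\st{Y}$ and $h'\colon Y'\to\st{Y}'$ by realizable schemes, and base-change the entire cartesian square along the smooth cover $Y'\times_{\st{Y}}Y\to\st{Y}'$. Two applications of Proposition \ref{smoothbcthmst} identify the pullback of the base change morphism $g^+f_+\ms{M}\to f'_+g'^+\ms{M}$ with the base change morphism for the new cartesian square, whose base and target of the bottom row are realizable schemes. Since pull-back along a smooth presentation is conservative on $D^{\mr{b}}_{\mr{hol}}$, the original map is an isomorphism iff the base-changed one is, so I may henceforth assume $\st{Y}$ and $\st{Y}'$ are realizable schemes.

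Next, I proceed by Noetherian induction on $\mr{Supp}(\ms{M})\subseteq\st{X}$. Closed immersions are projective, so Lemma \ref{uppershriekproj} yields base change for $i_+$; replacing $\st{X}$ by a reduced closed substack I may therefore assume $\mr{Supp}(\ms{M})=\st{X}$ and $\st{X}$ reduced. By Corollary \ref{alterationforadmisst} there is a generically finite proper surjective $\pi\colon X''\to\st{X}$ with $X''$ smooth quasi-projective, and (by shrinking) an open dense immersion $j\colon\st{U}\hookrightarrow\st{X}$ such that $\pi_{\st{U}}\colon X''_{\st{U}}\to\st{U}$ is finite locally free. Since $\st{Y}$ is now a scheme, $f\circ\pi\colon X''\to\st{Y}$ is proper with quasi-projective source, hence projective. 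The localization triangle $j_!j^+\ms{M}\to\ms{M}\to i_+i^+\ms{M}\xrightarrow{+1}$ of \ref{localtriangs}, together with the inductive hypothesis applied to $i^+\ms{M}$ (whose support lies in the lower-dimensional complement of $\st{U}$), reduces the claim to proving base change for $j_!\ms{N}$ with $\ms{N}:=j^+\ms{M}$. By Lemma \ref{directfactorlemmaeasy}(iii), $\ms{N}$ is naturally a direct summand of $\pi_{\st{U}+}\pi_{\st{U}}^+\ms{N}$, and since the base change homomorphism is a natural transformation it respects this direct-summand decomposition. Using the commutative square $j\circ\pi_{\st{U}}=\pi\circ j''$ where $j''\colon X''_{\st{U}}\hookrightarrow X''$ and base change for the finite morphism $\pi$ together with the open immersion $j''$ (both cases handled by Lemma \ref{uppershriekproj}), one rewrites
\begin{equation*}
 f_+\bigl(j_!\pi_{\st{U}+}\pi_{\st{U}}^+\ms{N}\bigr)\cong
  (f\circ\pi)_+\bigl(j''_!\pi_{\st{U}}^+\ms{N}\bigr),
\end{equation*}
and base change for $(f\circ\pi)_+$ is granted by Lemma \ref{uppershriekproj} since $f\circ\pi$ is projective, concluding the argument.

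The main obstacle will be checking that the direct-summand decomposition supplied by Lemma \ref{directfactorlemmaeasy}(iii) is sufficiently natural to be preserved by the formation of the base change morphism. Concretely, this requires that the splitting $\mr{id}\to\pi_{\st{U}+}\pi_{\st{U}}^+\to\mr{id}$ is a natural transformation built from the trace morphism, which is itself compatible with base change via property (Var 2) of Theorem \ref{traceexitstate}; once this naturality is in place, the reduction to the projective case proceeds formally and the proof is complete.
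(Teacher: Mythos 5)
Your proposal matches the paper's proof almost step for step: smooth base change to reduce the base to realizable schemes, Noetherian induction on $\mr{Supp}(\ms{M})$, Corollary \ref{alterationforadmisst} to produce a smooth quasi-projective alteration that is finite locally free over a dense open substack, the direct-summand reduction from Lemma \ref{directfactorlemmaeasy}(iii), and Lemma \ref{uppershriekproj} applied to the resulting projective morphism $f\circ\pi$. The ``main obstacle'' you flag at the end is actually a non-issue: a retract of an isomorphism is an isomorphism, and since the base change homomorphism $g^+f_+\to f'_+g'^+$ is a natural transformation, the splitting of $j_!\ms{N}$ inside $j_!\pi_{\st{U}+}\pi_{\st{U}}^+\ms{N}$ (which lives entirely in $D^{\mr{b}}_{\mr{hol}}(\st{X})$) is automatically respected by functoriality alone, without invoking (Var~2) of Theorem \ref{traceexitstate}; the paper dispatches this with a short commutative diagram.
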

\begin{proof}
 It suffices to show that $g^+f_+(\ms{M})\xrightarrow{\sim}
 f'_+g'^+(\ms{M})$ for $\ms{M}\in\mr{Hol}(\st{X})$. We may assume
 $\st{X}$ to be reduced by Lemma \ref{directfactorlemmaeasy} and
 Lemma \ref{uppershriekproj}.
 By smooth base change theorem \ref{smoothbcthmst}, we may replace
 $\st{Y}$ by its smooth presentation. In particular, we may assume
 $\st{Y}=:Y$ to be a realizable scheme. Now, we use the induction on the
 dimension of the support of $\ms{M}$. We may assume
 $\mr{Supp}(\ms{M})=\st{X}$. By induction hypothesis, it
 suffices to show the equality for $\ms{M}=j_!(\ms{N})$ where
 $j\colon\st{U}\hookrightarrow\st{X}$ which is open dense.
 By Corollary \ref{alterationforadmisst}, there is a smooth
 quasi-projective scheme $X$ projective and generically finite over
 $\st{X}$, and projective over $Y$. Since
 $h\colon X\rightarrow \st{X}$ is projective, we already know the base
 change by Lemma \ref{uppershriekproj}.
 We may shrink $\st{U}$ so that $h$ is finite flat over $\st{U}$ since
 $\st{X}$ is assumed reduced.
 We denote $h^{-1}(\st{U})\rightarrow\st{U}$ by $h$ abusing the
 notation.
 Then we have $h_+j'_!h^+(\ms{N})\cong j_!h_+h^+(\ms{N})$, where
 $j'\colon h^{-1}(\st{U})\rightarrow X$, and this contains $j_!(\ms{N})$
 as a direct factor by Lemma \ref{directfactorlemmaeasy}. Thus, the
 verification is reduced to the case $\ms{M}=j'_!(h^+(\ms{N}))$.
 Indeed, let $\ms{F}\in D^{\mr{b}}_{\mr{hol}}(\st{X})$, and $\ms{E}$ be
 a direct factor of $\ms{F}$. For any integer $i$, we have the following
 commutative diagram:
 \begin{equation*}
  \xymatrix{
   \H^ig^+f_+\ms{E}\ar[r]\ar[d]&
   \H^ig^+f_+\ms{F}\ar[r]\ar[d]^{\star}&
   \H^ig^+f_+\ms{E}\ar[d]\\
  \H^if'_+g'^+\ms{E}\ar[r]&
   \H^if'_+g'^+\ms{F}\ar[r]&
   \H^if'_+g'^+\ms{E}
   }
 \end{equation*}
 where the compositions of the horizontal homomorphisms are the
 identities. If the homomorphism $\star$ is an injection (resp.\
 surjection), so is the left (resp.\ right) vertical homomorphism, so it
 suffices to show that $\star$ is an isomorphism.

 Thus we may replace $\st{X}$ by $X$. In this case,
 the verification is local with respect to $Y$, and may assume it to be
 an affine scheme. In this situation, $X$ is realizable as well since
 $X$ is projective over $Y$, and the proper base change theorem has
 already been known (cf.\ \ref{fundproprealsch} \eqref{basechangeprop}).
\end{proof}

\begin{dfn}
 \label{defcompactcadmmorph}
 A morphism of admissible stacks $f\colon\st{X}\rightarrow\st{Y}$ is
 said to be {\em compactifiable} if it can be factorized as
 \begin{equation*}
  \st{X}\xrightarrow{j}\overline{\st{X}}
   \xrightarrow{f'}\st{Y}
 \end{equation*}
 where $\overline{\st{X}}$ is admissible, $j$ is an open immersion, and
 $f'$ is proper. We say that
 $\st{X}\rightarrow\overline{\st{X}}$ is a {\em compactification of
 $f$}. An admissible stack $\st{X}$ is said to be {\em compactifiable}
 if the structural morphism is compactifiable. We abbreviate
 compactifiable admissible stack as {\em c-admissible stack}.
\end{dfn}

\subsubsection{}
\label{defofcadmcat}
In this subsection, we fix a subcategory $\cadm$ of the category of
admissible stacks satisfying the following conditions: 1.\ open
immersions and proper morphisms are morphisms in $\cadm$,
2.\ any morphism $f\colon\st{X}\rightarrow\st{Y}$ in $\cadm$ is
compactifiable by an object in $\cadm$, and 3.\ for a proper morphism
$\st{X}\rightarrow\st{Y}$ and any morphism $\st{Y}'\rightarrow\st{Y}$ in
$\cadm$, the fiber product
$\st{X}\times_{\st{Y}}\st{Y}'\rightarrow\st{Y}'$ is in $\cadm$. An
example of such category is the following:

\begin{lem*}
 The full subcategory of c-admissible stacks satisfies the conditions.
\end{lem*}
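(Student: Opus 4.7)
The plan is to verify the three axioms for $\cadm$ taken to be the category whose objects are the c-admissible stacks, with morphisms being all morphisms between such stacks. The entire verification reduces to the following key statement $(\star)$: if $f\colon\st{X}\to\st{Y}$ is a morphism of admissible stacks with $\st{Y}$ c-admissible, then $f$ admits a compactification $\st{X}\hookrightarrow\overline{\st{X}}\to\st{Y}$ in which $\overline{\st{X}}$ is c-admissible.

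Granting $(\star)$, condition~1 is verified as follows. For an open immersion $j\colon\st{X}\hookrightarrow\st{Y}$ between c-admissible stacks, the scheme-theoretic closure of $\st{X}$ inside a fixed compactification $\st{Y}\hookrightarrow\overline{\st{Y}}$ is proper admissible, so $\st{X}$ is automatically c-admissible, and $j$ is compactified by the scheme-theoretic closure of $\st{X}$ in $\st{Y}$ (also c-admissible, by the same embedding into $\overline{\st{Y}}$). A proper morphism $f\colon\st{X}\to\st{Y}$ between c-admissible stacks is its own compactification. Condition~2 is exactly the assertion $(\star)$ applied to an arbitrary morphism in $\cadm$. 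For condition~3, the base change $\st{X}\times_{\st{Y}}\st{Y}'\to\st{Y}'$ of a proper morphism along a morphism in $\cadm$ is again proper, its target $\st{Y}'$ is c-admissible, and $(\star)$ then exhibits its source as c-admissible, placing the morphism in $\cadm$.

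To prove $(\star)$, fix a compactification $\st{Y}\hookrightarrow\overline{\st{Y}}$ of $\st{Y}$ and view $f$ as a morphism $\st{X}\to\overline{\st{Y}}$ of admissible stacks, the target now being proper over $k$. A Nagata-type compactification theorem for admissible stacks---which have finite diagonal, hence finite inertia, and thus fall within the scope of the Conrad--Lieblich--Olsson compactification theorem and its extensions to stacks with finite inertia---produces an open immersion $\st{X}\hookrightarrow\widetilde{\st{X}}$ into an admissible stack $\widetilde{\st{X}}$ together with a proper morphism $\widetilde{\st{X}}\to\overline{\st{Y}}$ extending $f$. Since $\overline{\st{Y}}$ is proper over $k$, so is $\widetilde{\st{X}}$. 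Defining $\overline{\st{X}}$ to be the preimage of $\st{Y}$ inside $\widetilde{\st{X}}$ then yields a compactification of $f$ over $\st{Y}$, and $\overline{\st{X}}$ is c-admissible as an open substack of the proper admissible $\widetilde{\st{X}}$.

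The principal obstacle is precisely this relative Nagata compactification for admissible stacks. If one wishes to avoid invoking general stack-level Nagata, an alternative route is to apply Chow's lemma for admissible stacks (the Proposition stated just before Corollary~\ref{alterationforadmisst}) to reduce to a proper surjective generically finite cover by a quasi-projective scheme over $\overline{\st{Y}}$, apply the classical Nagata theorem at the scheme level, and then descend the resulting compactification back to $\st{X}$ using the finite flat covering structure guaranteed by Remark~\ref{admissdfn}~(ii).
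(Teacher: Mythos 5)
Your conclusion is correct, but the route you take is genuinely heavier than the paper's, and the key step you black-box is exactly what the paper circumvents. You reduce everything to a relative Nagata compactification theorem for algebraic stacks with finite inertia. This is a real but deep and relatively recent result (the Conrad--Lieblich--Olsson theorem you cite is actually only for algebraic spaces; the extension to stacks with finite inertia is due to Rydh), and the alternative descent route you sketch in the last paragraph — compactify a quasi-projective cover and ``descend the resulting compactification back'' — is not a routine step and would need real work. The paper, by contrast, avoids Nagata entirely by an elementary trick: given $f\colon\st{X}\to\st{Y}$ with $\st{X}$ c-admissible, pick a compactification $\st{X}\hookrightarrow\overline{\st{X}}$ and factor $f$ through the graph $\Gamma=(j,f)\colon\st{X}\to\overline{\st{X}}\times\st{Y}$ followed by the projection to $\st{Y}$. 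The projection is proper since $\overline{\st{X}}$ is proper, and $\Gamma$ is quasi-finite and separated because it is a composite of the graph morphism over $\st{Y}$ (a base change of the finite diagonal of the admissible stack $\st{Y}$) with the open immersion $\st{X}\times\st{Y}\hookrightarrow\overline{\st{X}}\times\st{Y}$. Then the only input needed is Zariski's Main Theorem for algebraic stacks, \cite[16.5]{LM}, already the paper's standing reference, which compactifies the quasi-finite morphism $\Gamma$ by a finite morphism. This is both lighter and keeps the development self-contained; you should check whether the graph trick occurred to you before reaching for stack-level Nagata. As a minor point, your verification of condition~1 conflates membership of open immersions and proper morphisms in $\cadm$ (which is automatic once $\cadm$ is the full subcategory of c-admissible stacks) with compactifiability; condition~1 on its own asks for nothing beyond the former.
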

\begin{proof}
 We need to show that any morphism between c-admissible stacks is
 compactifiable. Let $f\colon \st{X}\rightarrow\st{Y}$ be a morphism
 between c-admissible stacks, and let $\overline{\st{X}}$ be a
 compactification of the structural morphism of $\st{X}$. Then $f$ is
 factorized as $\st{X}\xrightarrow{\Gamma}\overline{\st{X}}\times
 \st{Y}\xrightarrow{p}\st{Y}$ where $\Gamma$ is the graph morphism and
 $p$ is the projection. Since
 $\overline{\st{X}}$ is assumed proper, $p$ is proper. Thus, it
 suffices to show that $\Gamma$ is compactifiable. Since $\st{Y}$ is
 admissible, $\Gamma$ is a quasi-finite morphism. By \cite[16.5]{LM},
 any quasi-finite morphism between admissible stacks is compactifiable,
 and the claim follows.
\end{proof}

\begin{rem*}
 Any {\em algebraic space} separated of finite type over $k$ is known to
 be c-admissible by \cite{Co3}.
\end{rem*}

\subsubsection{}
An advantage of considering the category $\cadm$ is that it satisfies
the conditions of [SGA 4, XVII, 3.2.4] if we take $(S)$ to be $\cadm$,
$(S,i)$ to be the subcategory consisting of open immersions, and $(S,p)$
to be the subcategory consisting of proper morphisms.

Now, for $\st{X}\in\cadm$, we associate the category
$D^{\mr{b}}_{\mr{hol}}(\st{X})$. We shall further endow with data which
satisfy the conditions of [SGA 4, XVII, 3.3.1].
For a proper morphism $p$, we consider the push-forward $p_+$ and the
canonical isomorphism $(q\circ p)_+\cong q_+\circ p_+$ for composable
morphisms $p$, $q$.
For an open immersion $j$, we consider $j_!$ with canonical isomorphisms
for compositions. These are the data of [{\it ibid.}, (i), (i'), (ii),
(ii')]. These functors are subject to the conditions [{\it ibid.}, (a),
(a'), (b), (b')]. Finally, for [{\it ibid.}, (iii)], we use the proper
base change \ref{propbasechansta} and localization exact triangle
\ref{localtriangs}. This isomorphism is subject to the
conditions [{\it ibid.}, (c), (c')]. Thus, we may apply [{\it ibid.},
Proposition 3.3.2]. Summing up, we come to get the following
definition.

\begin{dfn*}
 Let $f\colon\st{X}\rightarrow\st{Y}$ be a morphism in $\cadm$. Take a
 compactification $j\colon\st{X}\hookrightarrow\overline{\st{X}}$, and
 $g\colon\overline{\st{X}}\rightarrow\st{Y}$ be the proper
 morphism. Then the functor $g_+\circ j_!$ does not depend on the choice
 of the factorization up to canonical equivalence. This functor is
 denoted by $f_!$. Given composable morphisms $f$ and $g$ in $\cadm$, we
 have a canonical equivalence $(f\circ g)_!\cong f_!\circ g_!$.
\end{dfn*}

\begin{prop}
 \label{basechforshrik}
 Consider the cartesian diagram (\ref{cartdiagbch}) (where we do {\em
 not} assume $f$ to be projective).
 We assume that the diagram is in $\cadm$. Then there exists a canonical
 isomorphism $g^+\circ f_!\cong f'_!\circ g'^+$.
\end{prop}
\begin{proof}
 By definition of $f_!$, it suffices to treat the case where $f$ is
 proper and an open immersion separately. When $f$ is proper, this is
 nothing but proper base change theorem \ref{propbasechansta}. When
 $f=:j$ is an open immersion, we have the canonical homomorphism
 $j'_!\circ g'^+\rightarrow g^+\circ j_!$. By definition, this
 homomorphism is an isomorphism if we take $j'^+$. Thus by the
 localization triangle (cf.\ Lemma \ref{localtriangs}), we get the
 isomorphism. Finally, we need to show that the resulting isomorphism
 does not depend on the choice of the factorization. Since the
 verification is standard, we leave it to the reader.
\end{proof}

\subsubsection{}
Let us construct a trace map, namely a map
$f_!L^{\omega}_{\st{X}}\rightarrow L^{\omega}_{\st{Y}}$ for any morphism
$f\colon\st{X}\rightarrow\st{Y}$ in $\cadm$. This will be achieved in
Theorem \ref{constoftracemap}. For this, we need to introduce a new
t-structure.

\begin{dfn*}
 Let $X$ be a realizable scheme over $k$.
 For $\star\in\{\geq0,\leq0\}$, let ${}^{\mr{c}}D^{\star}$ be the full
 subcategory of $D^{\mr{b}}_{\mr{hol}}(X/L_{\emptyset})$ consisting of
 $\ms{C}$ such that $\mr{for}_L(\ms{C})\in
 {}^{\mr{c}}D^{\star}_{\mr{hol}}(X/K_{\emptyset})$ using
 \ref{dfnofctstruc}. The
 pair $({}^{\mr{c}}D^{\leq},{}^{\mr{c}}D^{\geq})$ defines a t-structure
 on $D^{\mr{b}}_{\mr{hol}}(X/L_{\emptyset})$ called the
 {\em constructible t-structure}. We define
 ${}^{\mr{dc}}D^{\leq}:=\mb{D}({}^{\mr{c}}D^{\geq})$ and
 ${}^{\mr{dc}}D^{\geq}:=\mb{D}({}^{\mr{c}}D^{\leq})$. Then
 $({}^{\mr{dc}}D^{\leq},{}^{\mr{dc}}D^{\geq})$ defines a t-structure on
 $D^{\mr{b}}_{\mr{hol}}$. This is called the {\em dual constructible
 t-structure}.
 We also define t-structures
 $({}^{\mr{c}}D^{\leq},{}^{\mr{c}}D^{\geq})$ and
 $({}^{\mr{dc}}D^{\leq},{}^{\mr{dc}}D^{\geq})$ on
 $D^{\mr{b}}_{\mr{hol}}(X/L_F)$ such that $\ms{C}$ is in one of the full
 subcategories if and only if $\mr{for}_F(\ms{C})$ is in the
 corresponding one of $D^{\mr{b}}_{\mr{hol}}(X/L_{\emptyset})$.
\end{dfn*}

\begin{dfn}
 \label{Artinstackcdctstr}
 Let $\st{X}$ be an algebraic stack. Let
 $X_\bullet\rightarrow\st{X}$ be a simplicial realizable scheme
 presentation, and let $\ms{M}\in D^{\mr{b}}_{\mr{hol}}(X_\bullet)$. Put
 $d_i:=d_{X_i/\st{X}}$ (cf.\ \ref{reldimfunc}).
 \begin{enumerate}
  \item A complex $\ms{M}\in D^{\mr{b}}_{\mr{hol}}(\st{X})$ is in
	${}^{\mr{c}}D^{\star}$ ($\star\in{\leq0,\geq0}$) if and only if
	$\rho_i^*(\ms{M})\in{}^{\mr{c}}D^{\star-d_i}$.

  \item A complex $\ms{N}\in D^{\mr{b}}_{\mr{hol}}(\st{X})$ is in
	${}^{\mr{dc}}D^{\star}$ ($\star\in{\leq0,\geq0}$) if and only if
	$\rho_i^*(\ms{N})\in{}^{\mr{dc}}D^{\star+d_i}$.
 \end{enumerate}
 We leave the reader to check that
 $({}^{\mr{c}}D^{\leq0},{}^{\mr{c}}D^{\geq0})$ and
 $({}^{\mr{dc}}D^{\leq0},{}^{\mr{dc}}D^{\geq0})$ define t-structures,
 and do not depend on the choice of the simplicial schemes. These
 t-structures are called the {\em constructible
 t-structure} and {\em dual constructible t-structure}, and abbreviate
 as {\em c-t-structure} and {\em dc-t-structure} respectively. We denote
 the cohomology functor for the c-t-structure (resp.\ dc-t-structure) by
 $\cH^*$ (resp.\ $\dcH^*$), and objects in the heart
 are called {\em c-modules} (resp.\ {\em dc-modules}).
\end{dfn}

\begin{lem}
 \label{exactnessconsbasicfunc}
 (i) We have $\dcH^i\cong\mb{D}\circ
 \cH^{-i}\circ\mb{D}$. In particular, $\ms{M}\in
 D^{\mr{b}}_{\mr{hol}}(\st{X})$ is a c-module if and only if
 $\mb{D}(\ms{M})$ is a dc-module, and a homomorphism
 $f\colon\ms{M}\rightarrow\ms{N}$ of c-modules is c-injective (resp.\
 c-surjective) if and only if $\mb{D}(f)$ is dc-surjective (resp.\
 dc-injective).

 (ii) Let $f\colon\st{X}\rightarrow\st{Y}$ be a smooth morphism. Then
 the functor $f^+$ is c-t-exact.
\end{lem}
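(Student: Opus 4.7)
The plan is to reduce everything to the realizable scheme case, where (i) is essentially the definition of the dc-t-structure and (ii) is Lemma~\ref{constproplem}(i).

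For (i), I would first observe that at the level of a realizable scheme $X$, the identity ${}^{\mr{dc}}\H^i\cong\mb{D}_X\circ{}^{\mr{c}}\H^{-i}\circ\mb{D}_X$ is formal: by the very definition of the dc-t-structure, $\mb{D}_X$ is an involutive anti-equivalence (biduality on realizable schemes is recalled in \ref{fundproprealsch}) interchanging $({}^{\mr{c}}D^{\leq 0},{}^{\mr{c}}D^{\geq 0})$ with $({}^{\mr{dc}}D^{\geq 0},{}^{\mr{dc}}D^{\leq 0})$, so truncation functors in one t-structure correspond to truncation functors in the other under $\mb{D}_X$. To descend to an admissible stack $\st{X}$, the key point is to check that, for a smooth morphism $\rho\colon X\to\st{X}$ of relative dimension $d$, there is a canonical isomorphism
\begin{equation*}
 \rho^*\mb{D}_{\st{X}}(\ms{M})\cong\mb{D}_{X}(\rho^*\ms{M})(-d).
\end{equation*}
This follows by combining Lemma~\ref{locdeschom} (commutation of $\rho^*$ with $\shom$ up to a shift by $d$) with the computation $\rho^*L^\omega_{\st{X}}\cong L^\omega_X[-d](-d)$, which in turn comes from $\rho^+L_{\st{X}}\cong L_X$ and Lemma~\ref{dualityforsmoothpull}. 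Since Tate twists are t-exact, the shift $-d_i$ in the definition of ${}^{\mr{c}}D^{\leq 0}(\st{X})$ and the shift $+d_i$ in the definition of ${}^{\mr{dc}}D^{\geq 0}(\st{X})$ are exchanged under $\mb{D}_{\st{X}}$, so $\mb{D}_{\st{X}}$ sends the c-t-structure on $\st{X}$ to the dc-t-structure and the formula ${}^{\mr{dc}}\H^i\cong\mb{D}\circ{}^{\mr{c}}\H^{-i}\circ\mb{D}$ transfers to $\st{X}$ by biduality (Proposition~\ref{bidualityofstack}). The consequence for maps of c-modules follows because $\mb{D}$ is exact as a contravariant equivalence, hence sends kernels to cokernels.

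For (ii), the claim is local on $\st{X}$ and $\st{Y}$ for the smooth topology in the sense of Definition~\ref{Artinstackcdctstr}. Given a smooth presentation $\rho_Y\colon Y\to\st{Y}$ of relative dimension $d_Y$ by a realizable scheme, the base change $X:=\st{X}\times_{\st{Y}}Y\to\st{X}$ is again a smooth presentation, of the same relative dimension $d_Y$, and sits in a cartesian square with a smooth morphism $f_X\colon X\to Y$ of relative dimension $d$ (the relative dimension of $f$). The identity of pull-back functors $\rho_X^+\circ f^+\cong f_X^+\circ\rho_Y^+$, together with Lemma~\ref{starplusequal} relating $\rho^*$ and $\rho^+$ by a shift, allows me to compute $\rho_X^*(f^+\ms{M})$ as a shift of $f_X^+(\rho_Y^*\ms{M})$. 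Since $f_X^+$ is c-t-exact by Lemma~\ref{constproplem}(i), the shifts work out so that $\rho_X^*(f^+\ms{M})\in{}^{\mr{c}}D^{\leq -d_X}(X)$ (resp.\ ${}^{\mr{c}}D^{\geq -d_X}$) whenever $\rho_Y^*\ms{M}\in{}^{\mr{c}}D^{\leq -d_Y}(Y)$ (resp.\ ${}^{\mr{c}}D^{\geq -d_Y}$), yielding the c-t-exactness of $f^+$.

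The main obstacle, as often in stack-theoretic setups, is purely bookkeeping: matching the Tate twists and shifts by the various relative dimensions so that the comparisons between $\mb{D}$ and $\mb{D}'$ (which agree on $\H^i$) and the passage between $\rho^*$ and $\rho^+$ do not disturb the desired t-structural inclusions. Once one records the identity $\rho^*\mb{D}_{\st{X}}\cong\mb{D}_X\rho^*\,(-d)$ for (i) and the transitivity of $(\cdot)^+$ under smooth base change for (ii), both assertions reduce mechanically to the realizable scheme case.
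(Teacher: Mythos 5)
Your proof is correct and matches the paper's (very terse) intent: the paper's entire proof is ``For (ii), see Lemma \ref{constproplem}. The detailed proof is left to the reader,'' and your argument supplies precisely the details it leaves implicit, in both parts. One small slip in (ii) that is worth flagging: the fiber product $X:=\st{X}\times_{\st{Y}}Y$ is an admissible stack, not necessarily a realizable scheme, so Lemma~\ref{constproplem}(i) cannot be applied to $f_X\colon X\to Y$ directly. The fix is the standard one — take a further smooth presentation $X'\to X$ by a realizable scheme (the composite $X'\to\st{X}$ is then a presentation of $\st{X}$ by a realizable scheme, and $X'\to Y$ is smooth), and run the shift computation at the level of $X'$. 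Alternatively, one can avoid invoking $f_X$ at all: writing $\rho_Z^X\colon Z\to\st{X}$ and $\rho_Z^Y\colon Z\to\st{Y}$ for the two induced maps from $Z:=X_0\times_{\st{Y}}Y_0$ (a realizable scheme since $\st{Y}$ is admissible), transitivity of $(\cdot)^+$ plus Lemma~\ref{starplusequal} give $(\rho_Z^X)^*f^+(\ms{M})\cong(\rho_Z^Y)^*(\ms{M})[-d]$, which yields the c-t-exactness directly by comparing relative dimensions. Your computation $\rho^*\mb{D}_{\st{X}}\cong\mb{D}_X\rho^*(-d)$ in (i), obtained from Lemma~\ref{locdeschom} and $\rho^*L^\omega_{\st{X}}\cong L^\omega_X[-d](-d)$, is correct and is the right compatibility to use; the twists and shifts indeed cancel exactly as you claim.
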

\begin{proof}
 For (ii), see Lemma \ref{constproplem}.
 The details are left to the reader.
\end{proof}

\begin{lem}
 \label{dctexactness}
 (i) For an admissible stack $\st{X}$, $L^\omega_{\st{X}}$ is a
 dc-module.

 (ii) Let $f\colon\st{X}\rightarrow\st{Y}$ be a morphism in
 $\cadm$. Then $f_!$ is right dc-t-exact.
\end{lem}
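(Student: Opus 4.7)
The plan is to prove both parts by reducing from the stack $\st{X}$ (or the morphism $f$) to the realizable scheme setting, where the constructible $t$-exactness properties are already known from Lemma \ref{constproplem}(i). The key auxiliary tool is the duality identity $f_!\cong\mb{D}\circ f_+\circ\mb{D}$, which converts the statement about dc-$t$-exactness of $f_!$ into a statement about c-$t$-exactness of $f_+$.

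For part (i), by the preceding Lemma (i), $L^\omega_{\st{X}}$ is a dc-module if and only if $\mb{D}(L^\omega_{\st{X}})\cong L_{\st{X}}$ (cf.\ Remark \ref{bidualityofstack}(ii) and biduality) is a c-module. To check the latter, take a simplicial presentation realizable scheme $\rho_\bullet\colon X_\bullet\to\st{X}$. By the construction in \ref{defofconstanddual}, $\rho_i^*(L_{\st{X}})\cong L_{X_i}[d_i]$ with $d_i:=d_{X_i/\st{X}}$, so by Definition \ref{Artinstackcdctstr} the c-t-degree of $L_{\st{X}}$ at $X_i$ is governed by that of $L_{X_i}$. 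Since $L_{X_i}\cong p_{X_i}^+(L)$ with $p_{X_i}\colon X_i\to\mr{Spec}(k)$ the structural morphism, and $p_{X_i}^+$ is c-t-exact by Lemma \ref{constproplem}(i), while $L$ is tautologically in c-degree $0$ on a point, we conclude that $L_{\st{X}}$ is a c-module.

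For part (ii), factor $f=g\circ j$ with $g$ proper and $j$ an open immersion (using that $f$ is in $\mf{S}_{\mr{adm}}$). By Lemma \ref{uppershriekproj} we have $g_+\circ\mb{D}\cong\mb{D}\circ g_+$, and by Remark \ref{bidualityofstack}(iii) we have $j_!\circ\mb{D}\cong\mb{D}\circ j_+$. Combining these with $f_+\cong g_+\circ j_+$ (functoriality of push-forward for the cofibered category structure) gives $f_!\circ\mb{D}\cong\mb{D}\circ f_+$, so
\[
f_!({}^{\mr{dc}}D^{\leq 0})=f_!\mb{D}({}^{\mr{c}}D^{\geq 0})=\mb{D}\,f_+({}^{\mr{c}}D^{\geq 0}),
\]
which reduces the claim to the left c-t-exactness of $f_+$. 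For this, choose a smooth presentation $\tau\colon Y\to\st{Y}$ with $Y$ realizable and apply smooth base change (Proposition \ref{smoothbcthmst}) together with the characterization of the c-t-structure via smooth pullback; this reduces the question to the case of a morphism $f'\colon\st{X}'\to Y$ to a realizable scheme. Choosing a simplicial presentation $X'_\bullet\to\st{X}'$ with each $X'_i$ realizable, and using the spectral sequence of Lemma \ref{specseqprojcase} applied through the standard factorization of $f'_+$ into a finite morphism followed by a projection, we compute $f'_+$ in terms of push-forwards $f_{i+}$ for morphisms of realizable schemes. Each of these is left c-t-exact by Lemma \ref{constproplem}(i), whence the spectral sequence yields left c-t-exactness of $f'_+$, and thus of $f_+$.

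The main obstacle will be the descent step in part (ii): checking that the convergence of the simplicial spectral sequence respects the constructible $t$-structure with the correct shifts, i.e.\ that left c-t-exactness of each $f_{i+}$ indeed propagates to $f'_+$ after accounting for the shifts $d_i$ in Definition \ref{Artinstackcdctstr}. The advantage of expressing the c-t-structure through $\rho^+$ rather than $\rho^*$ (so that it descends without shift) should streamline this bookkeeping.
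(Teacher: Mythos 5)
Part (i) is essentially the paper's argument, just spelled out in more detail, and is fine.

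Part (ii) has a genuine gap. Your step 2 uses $g_+\circ\mb{D}\cong\mb{D}\circ g_+$ for a \emph{proper} morphism $g$ coming from a compactification. But at this point in the paper, Lemma \ref{uppershriekproj} only provides this duality isomorphism for \emph{projective} morphisms, and the compactification of an arbitrary morphism in $\mf{S}_{\mr{adm}}$ is only proper, not necessarily projective. The full duality for proper morphisms is Theorem \ref{relativedualprop}, whose proof goes through the trace map of Theorem \ref{constoftracemap}, which in turn invokes the very lemma you are trying to prove; using it here would be circular. Moreover, even if you could reduce to left c-t-exactness of $f_+$ for the stack morphism $f$, that fact is only established in the paper for morphisms of realizable schemes (Lemma \ref{constproplem}), and your proposed simplicial spectral-sequence argument to propagate it to stacks is exactly what you flag as the unresolved obstacle.

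The paper sidesteps both issues. Instead of trying to express $f_!$ via duality at the stack level, it first reduces to the case $\st{Y}$ a realizable scheme (smooth base change), then performs noetherian induction on $\mr{Supp}(\ms{M})$: using the localization four-term exact sequence, one may freely shrink $\st{X}$ by a dense open. After shrinking, there is a finite flat cover $h\colon X\to\st{X}$ with $X$ a realizable scheme (Remark \ref{admissdfn}(ii)). Since $h$ is finite — hence projective — Lemma \ref{uppershriekproj} and Lemma \ref{directfactorlemmaeasy} apply, showing $\ms{M}$ is a direct factor of $h_!h^!\ms{M}$. As $h^!$ is dc-t-exact, one is reduced to right dc-t-exactness of $(f\circ h)_!\colon D^{\mr{b}}_{\mr{hol}}(X)\to D^{\mr{b}}_{\mr{hol}}(Y)$ for a morphism of realizable schemes, where $\mb{D}\circ(f\circ h)_!\circ\mb{D}\cong(f\circ h)_+$ \emph{is} available from \ref{fundproprealsch} and Lemma \ref{constproplem} finishes the argument. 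To repair your proof you would either need to establish duality for all proper (not just projective) morphisms independently of the trace construction, or follow the paper's route of descending to schemes before invoking duality.
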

\begin{proof}
 To check (i), it suffices to show that $\mb{D}(L^\omega_{\st{X}})\cong
 L_{\st{X}}$ is a c-module. This follows from Lemma \ref{constproplem}
 (i). Let us check (ii). First we may assume $\st{Y}=:Y$ to be a
 realizable scheme. For a dc-module $\ms{M}$ on $\st{X}$, we need to
 show that $\dcH^if_!(\ms{M})=0$ for $i>0$.
 We use the induction on the support of $\ms{M}$. We may assume
 $\mr{Supp}(\ms{M})=\st{X}$. For an open dense substack
 $j\colon\st{U}\hookrightarrow\st{X}$, it suffices to check that
 $\dcH^if_!(j_!j^+\ms{M})=0$ for $i>0$. Indeed, consider the
 localization triangle \ref{localtriangs}:
 \begin{equation*}
  j_!j^+\ms{M}\rightarrow\ms{M}
   \rightarrow i_+i^+\ms{M}\xrightarrow{+1},
 \end{equation*}
 where $i$ is the closed immersion of the complement of $\st{U}$.
 Since $i_+i^+$ is right dc-t-exact by Lemma \ref{constproplem} and
 $i_+i^+\ms{M}$ is supported on the complement of $\st{U}$,
 we know that $\dcH^if_!(i_+i^+\ms{M})=0$ for
 $i>0$ by the induction hypothesis.
 Thus, if we know the vanishing for $j_!j^+(\ms{M})$, so do we for
 $\ms{M}$. By shrinking $\st{X}$, we can take a finite flat morphism
 $h\colon X\rightarrow\st{X}$ from a realizable scheme. By
 Lemma \ref{directfactorlemmaeasy}, $\ms{M}$ is a direct factor of
 $\mb{D}h_+h^+\mb{D}(\ms{M})\cong h_!h^!(\ms{M})$ (cf.\ Lemma
 \ref{uppershriekproj}). Since $h^!$ is dc-t-exact, it
 remains to prove the right dc-t-exactness of $f\circ h\colon
 X\rightarrow Y$. Since $\mb{D}\circ f_!\circ \mb{D}\cong f_+$ is left
 c-t-exact by Lemma \ref{constproplem}, we get the result.
\end{proof}

\begin{lem}
 \label{glusingtstrumod}
 Let $X_\bullet\rightarrow\st{X}$ be an admissible simplicial
 scheme. Assume given data $\bigl\{\ms{M}_i,\alpha_\phi\bigr\}$ where
 $\ms{M}_i$ is a dc-module on $X_i$, and $\alpha_\phi\colon
 X(\phi)^*(\ms{M}_i)\cong\ms{M}_j$ for $\phi\colon[i]\rightarrow[j]$
 satisfying the cocycle condition. Then there exists a unique dc-module
 $\ms{M}$ on $\st{X}$, the {\em descent}, such that
 $\rho_i^*(\ms{M})\cong\ms{M}_i$. Moreover, given other data
 $\bigl\{\ms{N}_i,\beta_\phi\bigr\}$ and its descent $\ms{N}$ on
 $\st{X}$, homomorphisms $\ms{M}\rightarrow\ms{N}$
 correspond bijectively to systems of homomorphisms
 $\ms{M}_i\rightarrow\ms{N}_i$ compatible in the obvious sense.
 We also have the similar results for c-modules.
\end{lem}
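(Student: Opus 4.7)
The plan is to apply Theorem \ref{BBDgluingthm} to the simplicial system $\{\ms{M}_i,\alpha_\phi\}$ (appropriately shifted by the relative dimension function $d_i$ of $X_i\to\st{X}$), obtain a unique total complex $\ms{M}\in D^{\mr{b}}_{\mr{tot}}(X_\bullet)\cong D^{\mr{b}}_{\mr{hol}}(\st{X})$ with $\rho_i^*(\ms{M})\cong\ms{M}_i$, and then read off from Definition \ref{Artinstackcdctstr} that $\ms{M}$ is a dc-module on $\st{X}$.

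To invoke the gluing theorem I must verify $\mb{R}^k\mr{Hom}_{D(X_i)}(\ms{M}_i,\ms{M}_i)=0$ for $k<0$. This is immediate from the fact that $\ms{M}_i$ lies (up to the uniform shift by $d_i$) in the heart of the dc-t-structure on $D^{\mr{b}}_{\mr{hol}}(X_i)$, since in any triangulated category equipped with a t-structure one has $\mr{Hom}(A,B[k])=0$ for $k<0$ whenever $A,B$ lie in the heart. Thus Theorem \ref{BBDgluingthm} produces a unique $\ms{M}\in D^{\mr{b}}_{\mr{tot}}(X_\bullet)$ with $\rho_i^*(\ms{M})\cong\ms{M}_i$ compatibly with the $\alpha_\phi$. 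That $\ms{M}$ is a dc-module on $\st{X}$ is then just Definition \ref{Artinstackcdctstr}(2): the required membership $\rho_i^*(\ms{M})\in{}^{\mr{dc}}D^{\leq d_i}\cap{}^{\mr{dc}}D^{\geq d_i}$ holds on each term by hypothesis on $\ms{M}_i$.

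For the Hom description, given $\ms{M},\ms{N}$ glued from $\{\ms{M}_i\}$ and $\{\ms{N}_i\}$, I apply the spectral sequence (\ref{BDspecseq})
\[
E_1^{p,q}=\mr{Ext}^q_{D(X_p)}(\ms{M}_p,\ms{N}_p)\Rightarrow\mr{Ext}^{p+q}_{D(\st{X})}(\ms{M},\ms{N}).
\]
Because $\ms{M}_p$ and $\ms{N}_p$ both lie in the dc-heart on $X_p$, $E_1^{p,q}=0$ for $q<0$, so in total degree $0$ only the row $q=0$ contributes and
\[
\mr{Hom}(\ms{M},\ms{N})=\mr{Ker}\bigl(\mr{Hom}(\ms{M}_0,\ms{N}_0)\rightrightarrows\mr{Hom}(\ms{M}_1,\ms{N}_1)\bigr),
\]
which is exactly the set of systems $\{\ms{M}_i\to\ms{N}_i\}$ compatible with the gluing isomorphisms $\alpha_\phi,\beta_\phi$. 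The c-module case follows by dualizing via $\mb{D}_{\st{X}}$ (using Lemma \ref{dualityforsmoothpull} so that $\mb{D}$ exchanges ${}^{\mr{c}}D^{\star}$ and ${}^{\mr{dc}}D^{-\star}$ compatibly with the smooth pullbacks $\rho_i^*\cong\rho_i^+[d_i]$), or by repeating the argument verbatim with the c-t-structure.

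The only real bookkeeping issue, which I expect to be the main source of care rather than a genuine obstacle, is tracking the shifts by the relative dimension function $d_i$ so that the hypothesis ``$\ms{M}_i$ is a dc-module on $X_i$'' matches the definition of the dc-t-structure on $\st{X}$ via $\rho_i^*$; the gluing step itself is formal once the vanishing of negative Ext's has been noted.
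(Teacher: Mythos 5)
Your proof is correct and follows the same overall strategy as the paper's: both verify the negative-Ext vanishing $\mb{R}^k\mr{Hom}_{D(X_i)}(\ms{M}_i,\ms{M}_i)=0$ for $k<0$ and then invoke Theorem~\ref{BBDgluingthm}. The only difference is in how that vanishing is justified: the paper first dualizes ($\ms{M}_i\mapsto\mb{D}(\ms{M}_i)$, using the biduality of Proposition~\ref{bidualityofstack} to identify the two $\mb{R}\mr{Hom}$ groups) so as to work with c-modules and then cites left c-t-exactness of $\mb{R}\mr{Hom}$, whereas you simply observe that $\ms{M}_i$ lies in the (shifted) heart of the dc-t-structure and apply the general axiom $\mr{Hom}(D^{\leq0},D^{\geq1})=0$. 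Your route is slightly more economical since the biduality detour is unnecessary—the standard t-structure fact applies equally well to the dc-t-structure. You also spell out the Hom-bijection explicitly via the spectral sequence (\ref{BDspecseq}), which the paper leaves implicit in the reference to Theorem~\ref{BBDgluingthm} (the theorem as stated only asserts existence and uniqueness); this is a worthwhile addition since that theorem by itself does not literally give the Hom description. The shift-bookkeeping remark at the end is appropriate, though note that for applying Theorem~\ref{BBDgluingthm} itself no shift is needed—the data $\alpha_\phi\colon X(\phi)^*\ms{M}_i\cong\ms{M}_j$ is already in the form the theorem requires; the shift by $d_i$ only enters when translating ``$\ms{M}_i$ is a dc-module on $X_i$'' into membership in ${}^{\mr{dc}}D^{\leq d_i}\cap{}^{\mr{dc}}D^{\geq d_i}$ as in Definition~\ref{Artinstackcdctstr}.
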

\begin{proof}
 To check this, it suffices to show that
 $\mb{R}^k\mr{Hom}(\ms{M}_i,\ms{M}_i)=0$ for $k<0$ by Theorem
 \ref{BBDgluingthm}. By the definition of dc-t-structure and biduality
 (cf.\ Proposition \ref{bidualityofstack}),
 we may assume that $\ms{M}_i$ is a c-module. In this case, since
 $\mb{R}\mr{Hom}(-,-)$ is left c-t-exact, the claim follows.
\end{proof}

\begin{lem}
 \label{genbasechange}
 Let $f\colon X\rightarrow Y$ be a smooth morphism of realizable
 schemes. Let $\ms{M}'$ be a complex in $D^{\mr{b}}_{\mr{hol}}(Y)$, and
 put $\ms{M}:=f^+(\ms{M}')$. Then there exists an open dense subscheme
 $V\subset Y$ such that $\mr{Supp}(\ms{M}')\cap V$ is dense in
 $\mr{Supp}(\ms{M}')$,
 and for any closed immersion from a point $g\colon\{y\}\rightarrow
 V\hookrightarrow Y$, the base change homomorphism
 $g^+f_+(\ms{M})\rightarrow f'_+g'^+(\ms{M})$ is an isomorphism, where
 $f'\colon X':=X\times_Y\{y\}\rightarrow\{y\}$ and $g'\colon
 X'\rightarrow X$ are the base change of $f$ and $g$.
\end{lem}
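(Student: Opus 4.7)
My plan is to compactify $f$ and reduce generic base change to proper base change together with a purity argument on the boundary. First I compactify $f$ as $X\hookrightarrow\overline{X}\xrightarrow{\overline{f}}Y$ with $\overline{f}$ proper, $\overline{X}$ realizable, and the first map an open immersion $j$ (possible by Nagata, using realizability of $Y$); denote by $i\colon Z\hookrightarrow\overline{X}$ the complementary closed immersion and $\overline{h}:=\overline{f}\circ i$. Pushing $\overline{f}_+$ through the localization triangle $i_+i^!\to\mr{id}\to j_+j^+$ (cf.\ \ref{localtriangs}) evaluated at $\overline{f}^+\ms{M}'$ gives a distinguished triangle
\[
\overline{h}_+\bigl(i^!\,\overline{f}^+\ms{M}'\bigr)\longrightarrow\overline{f}_+\overline{f}^+\ms{M}'\longrightarrow f_+f^+\ms{M}'\xrightarrow{+1}
\]
in $D^{\mr{b}}_{\mr{hol}}(Y)$, reducing generic base change for $f_+f^+\ms{M}'$ to the analogous statement for the two other terms.

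Next I choose $V$. Using Lemma \ref{genrankconstmod} on each cohomology module of $\ms{M}'$, I pick an open dense $V\subset Y$ meeting every irreducible component of $\mr{Supp}(\ms{M}')$ densely such that each $\H^i(\ms{M}')|_V$ is smooth; shrinking $V$ further by standard generic smoothness and generic flatness applied to $\overline{f}$ and $\overline{h}$ (after stratifying $Z$ into its irreducible components), I arrange that $V$ is smooth, $\overline{f}^{-1}(V)\to V$ is smooth, and each component of $Z\cap\overline{f}^{-1}(V)$ is smooth of some constant relative codimension $c$ inside $\overline{f}^{-1}(V)$. For a closed point $y\in V$ with $g\colon\{y\}\hookrightarrow Y$, I apply $g^+$ to the triangle above and compare it termwise with the corresponding triangle on the fibre. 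Proper base change \ref{fundproprealsch}.\ref{basechangeprop} handles $\overline{f}_+\overline{f}^+\ms{M}'$ and the outer $\overline{h}_+$, so the core remaining task is an isomorphism
\[
g_Z^+\,i^!\,\overline{f}^+\ms{M}'\xrightarrow{\sim}i'^!\,\overline{f}'^+\,g^+\ms{M}',
\]
where $g_Z\colon Z_y\to Z$, $i'\colon Z_y\to\overline{X}_y$ and $\overline{f}'\colon\overline{X}_y\to\{y\}$ are the base changes of $g$, $i$ and $\overline{f}$ respectively.

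Because $\ms{M}'|_V$ is smooth and $\overline{f}$ is smooth on $\overline{f}^{-1}(V)$, the complex $\overline{f}^+\ms{M}'$ restricted to $\overline{f}^{-1}(V)$ is smooth; purity \ref{purityforrealsch} then supplies canonical isomorphisms $i^!\,\overline{f}^+\ms{M}'\cong i^+\,\overline{f}^+\ms{M}'(-c)[-2c]$ on $Z\cap\overline{f}^{-1}(V)$ and likewise on $Z_y$. Since $i^+$ commutes with arbitrary pullback by transitivity of $(-)^+$ and the purity isomorphism is assembled from the trace map and the projection formula, both of which commute with base change (Theorem \ref{traceexitstate} (Var 2) and \ref{fundproprealsch}.\ref{projformulas}), the required comparison follows. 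Then two out of three vertices of the resulting morphism of triangles being isomorphisms forces the third, completing the argument.

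The hard part will be the purity step: I must choose $V$ so that $\overline{X}$ and $Z$ are simultaneously smooth over $V$ while $\ms{M}'|_V$ is smooth, and then carefully verify that the purity isomorphism is natural under the non-smooth base change to the fibre at $y$. The geometric half is routine generic smoothness for morphisms of realizable schemes; the naturality claim boils down to tracing through the construction of \ref{purityforrealsch} from the trace map and the projection formula.
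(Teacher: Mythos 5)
There is a genuine gap, and it lies exactly in the step you flagged as the ``hard part.''

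Your plan requires, after shrinking $V\subset Y$, that $\overline{f}^{-1}(V)$ and $Z\cap\overline{f}^{-1}(V)$ are both smooth over $k$ (so that purity \ref{purityforrealsch} is applicable to the closed immersion $i$), and that the same holds for their fibres over a closed point $y\in V$. You invoke ``standard generic smoothness'' to arrange this. But we are in characteristic $p$, where generic smoothness of morphisms fails: the smooth locus of $\overline{f}$ need not be open dense in a way that can be cut out by restricting over an open $V\subset Y$. Concretely, the compactifying boundary $Z=\overline{X}\setminus X$ (and the singular locus of $\overline{X}$ contained in $Z$) may dominate $Y$. Whenever that happens, $\overline{f}^{-1}(V)$ contains singular points of $\overline{X}$ for \emph{every} dense open $V$, and no shrinking of $V$ produces a closed immersion of smooth realizable schemes on which \ref{purityforrealsch} could be applied. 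Resolution of singularities, which would repair the compactification, is not available in characteristic $p$; alterations change the open immersion and hence destroy the localization triangle you need. So the boundary comparison $g_Z^+\,i^!\,\overline{f}^+\ms{M}'\xrightarrow{\sim}i'^!\,\overline{f}'^+\,g^+\ms{M}'$ is not established by your argument. A secondary issue is that realizability of the Nagata compactification $\overline{X}$ (embeddability in a proper \emph{smooth} formal scheme) is not automatic from realizability of $X$ and $Y$; this would also need to be addressed.

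The paper's proof takes a different and much shorter route that never compactifies $f$, thereby sidestepping the singular-boundary problem entirely. It first reduces to $\ms{M}'\in\mr{Hol}(Y/L)$ and shrinks $Y$ so that $Y$ is smooth and $\ms{M}'$ is smooth; then it chooses $V\subset Y$ so that every cohomology of $f_+(\ms{M})$ is smooth on $V$. Since $f$ is smooth and $\ms{M}'$ is smooth, $\ms{M}=f^+\ms{M}'$ is also smooth on $f^{-1}(V)$. Purity \ref{purityforrealsch} is then applied not on any boundary but directly to the two closed immersions from points: $\{y\}\hookrightarrow V$ against the smooth complex $f_+(\ms{M})$, and $X'\hookrightarrow f^{-1}(V)$ against the smooth complex $\ms{M}$. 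This converts the $+$-pullbacks on both sides into shifted/twisted $!$-pullbacks, and the claim then collapses to the $!$/$+$ base change \ref{fundproprealsch}.\ref{basechangeprop}, which holds unconditionally. The only genericity needed is that the relevant complexes are smooth over $V$, which is always arrangeable; no smoothness of a compactification is ever invoked. If you want to pursue a compactification/localization strategy \`a la SGA~$4\tfrac{1}{2}$ ``Th.\ finitude,'' the boundary term must be handled by Noetherian induction on the dimension of the base and of the support, not by purity.
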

\begin{proof}
 We may assume $\ms{M}\in\mr{Hol}(X/L)$.
 By replacing $Y$ by the support of $\ms{M}$, we may
 assume that the support of $\ms{M}$ is equal to $Y$.
 We may assume $Y$ to be reduced, and by shrinking $Y$, we may moreover
 assume that $Y$ is smooth and $\ms{M}'$ is smooth on $Y$. We may
 take $V$ such that each cohomology of $f_+(\ms{M})$ is smooth. Let $c$
 be the codimension of $\{y\}$ in $Y$. In this case, we have
 $g^+f_+(\ms{M})\cong g^!f_+(\ms{M})(c)[2c]$ and $f'_+g'^+(\ms{M})\cong
 f'_+g'^!(\ms{M})(c)[2c]$ by Theorem \ref{purityforrealsch}, and the
 claim follows by \ref{fundproprealsch} \eqref{basechangeprop}.
\end{proof}

\begin{lem}
 \label{exactsequoftstr}
 Let $X_\bullet\rightarrow\st{X}$ be a simplicial realizable scheme
 presentation of a c-admissible stack $\st{X}$. Let $p_i\colon
 X_i\rightarrow\st{X}$ be the induced morphism.
 We put $p^0_{i+}:=\cH^0p_{i+}$ (resp.\
 $p^0_{i+}:=\dcH^0p_{i+}$), and similarly for $p^0_{i!}$.
 For a c-module $\ms{M}$ (resp.\ dc-module $\ms{N}$), we denote by
 $\ms{M}_i$ (resp.\ $\ms{N}_i$) the object $\rho^*_i\ms{M}$ (resp.\
 $\rho^*_i\ms{N}$) on $X_i$. We have the
 following exact sequences of c-modules (resp.\ dc-modules):
 \begin{align*}
  &{
  0\rightarrow\ms{M}\rightarrow p^0_{0+}(\ms{M}_0[-d_0])\rightarrow
  p^0_{1+}(\ms{M}_1[-d_1]),}
  \\
  &\qquad\qquad{
   (\text{resp.\ }
  0\rightarrow\ms{N}\rightarrow p^0_{0+}(\ms{N}_0[-d_0])\rightarrow
  p^0_{1+}(\ms{N}_1[-d_1])\quad),
  }\\
  &{
  p^0_{1!}(\ms{M}_1[d_1](d_1))\rightarrow
  p^0_{0!}(\ms{M}_0[d_0](d_0))\rightarrow\ms{M}\rightarrow0,
  }\\
  &\qquad\qquad{
  (\text{resp.\ }
  p^0_{1!}(\ms{N}_1[d_1](d_1))\rightarrow
  p^0_{0!}(\ms{N}_0[d_0](d_0))\rightarrow\ms{N}\rightarrow0
  ).}
 \end{align*}
\end{lem}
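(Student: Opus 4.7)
First, I would use duality to reduce the statement: by Lemma \ref{dualityforsmoothpull} applied to the smooth $p_i\colon X_i\to\st{X}$, there is a canonical isomorphism $\mb{D}\circ p_i^+\cong p_i^!\circ\mb{D}$, and by uniqueness of adjoints also $\mb{D}\circ p_{i+}\cong p_{i!}\circ\mb{D}$. Combined with ${}^{\mr{dc}}\H^0\cong\mb{D}\circ{}^{\mr{c}}\H^0\circ\mb{D}$, applying $\mb{D}_{\st{X}}$ converts the first c-module sequence (applied to $\mb{D}(\ms{N})$) into the second dc-module sequence (for $\ms{N}$), and similarly the second c-module sequence into the first dc-module sequence. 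It therefore suffices to prove the two c-module sequences.

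For the first c-module sequence, the plan is to use a \v{C}ech resolution associated to the smooth covering $p_0\colon X_0\to\st{X}$ whose \v{C}ech nerve is $X_\bullet$. The adjunction unit gives $\ms{M}\to p_{0+}p_0^+(\ms{M})=p_{0+}(\ms{M}_0[-d_0])$, and the simplicial face maps extend this to an augmented cosimplicial complex
\begin{equation*}
 \ms{M}\to p_{0+}(\ms{M}_0[-d_0])\to p_{1+}(\ms{M}_1[-d_1])\to p_{2+}(\ms{M}_2[-d_2])\to\cdots
\end{equation*}
in $D^+_{\mr{hol}}(\st{X})$, whose totalization recovers $\ms{M}$ (cohomological descent). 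The exactness can be checked after pulling back along the conservative smooth cover $p_0$: the base-changed augmented simplicial scheme $X_0\times_{\st{X}}X_\bullet$ admits a contracting homotopy provided by its diagonal section, in the spirit of the proof of Lemma \ref{fundssforsimpldes}. To extract the desired sequence, note that each $p_i$ factors as the graph $X_i\to X_i\times\st{X}$ (finite, since $\st{X}$ is admissible) followed by the projection $X_i\times\st{X}\to\st{X}$, and both push-forwards are left c-t-exact by Lemma \ref{cexactclosedpull} and Lemma \ref{constproplem} (lifted to stacks via the local definition of the c-t-structure). Hence $p_{i+}(\ms{M}_i[-d_i])\in{}^{\mr{c}}D^{\geq 0}(\st{X})$, and the \v{C}ech spectral sequence
\begin{equation*}
 E_1^{p,q}={}^{\mr{c}}\H^q p_{p+}(\ms{M}_p[-d_p])\Rightarrow{}^{\mr{c}}\H^{p+q}(\ms{M})
\end{equation*}
has $E_1^{p,q}=0$ for $q<0$; its abutment is concentrated in total degree $0$, where it equals $\ms{M}$, and the low-degree exact sequence yields the first c-module sequence.

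The second c-module sequence is proved dually: from the counit $p_{0!}p_0^!(\ms{M})=p_{0!}(\ms{M}_0[d_0](d_0))\to\ms{M}$, one builds a coaugmented co-resolution
\begin{equation*}
 \cdots\to p_{2!}(\ms{M}_2[d_2](d_2))\to p_{1!}(\ms{M}_1[d_1](d_1))\to p_{0!}(\ms{M}_0[d_0](d_0))\to\ms{M}\to 0
\end{equation*}
whose exactness follows from the same base-changed descent, and the right c-t-exactness of $p_{i!}$ applied to $p_i^!(\ms{M})$ (obtained by duality via $\mb{D}$ from the left c-t-exactness used in the first sequence) together with the dual spectral sequence produces the desired exact sequence.

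The main obstacle will be making the cohomological descent explicit as the \v{C}ech resolution displayed above and verifying the required left and right c-t-exactness of $p_{i+}$ and $p_{i!}$; the descent is formally encoded in the equivalence $D(\st{X})\cong D(X_\bullet)$ built into Definition \ref{dfnofcat} but requires a concrete incarnation, while the t-exactness statements use the admissibility hypothesis essentially through the graph factorization of $p_i$ to reduce to the realizable scheme case.
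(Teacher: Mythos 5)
Your strategy is genuinely different from the paper's, and while the overall plan is plausible, it has a gap at the crucial step. The paper does \emph{not} go via a \v{C}ech spectral sequence for the c-t-structure: instead it reduces to the case $\st{X}=X$ a realizable scheme, picks (using Lemma \ref{genbasechange}) a dense open $U$ of $\mr{Supp}(\ms{M})$ over which the maps $p_0$, $p_1$ satisfy pointwise base change, reduces over such $U$ to the case where $X$ is a point (where the c-t-structure is the standard t-structure, and the exactness is a consequence of Proposition \ref{smoothdesre} and the explicit \v{C}ech description of $f_*$ in \ref{constofpairfunct}), and then propagates the exactness to all of $X$ by noetherian induction on $\dim\mr{Supp}(\ms{M})$ and a diagram chase. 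The second sequence is treated similarly (taking stalks $i_x^+$, using the c-t-exactness of $i_x^+$), and the remaining two sequences follow by duality, as you observed.

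The gap in your argument is the spectral sequence
\begin{equation*}
 E_1^{p,q}={}^{\mr{c}}\H^q p_{p+}(\ms{M}_p[-d_p])\Rightarrow{}^{\mr{c}}\H^{p+q}(\ms{M}).
\end{equation*}
Lemma \ref{fundssforsimpldes} yields a \v{C}ech spectral sequence for the \emph{standard} t-structure on $M(\st{X})$ only; it is obtained from a double complex of injective resolutions, which has no built-in compatibility with the c-t-structure. To get the c-t-structure version you would need to exhibit a concrete, exhaustive Postnikov filtration of $\ms{M}$ in $D^{\mr{b}}_{\mr{hol}}(\st{X})$ with graded pieces $p_{p+}(\ms{M}_p[-d_p])[-p]$, check that these graded pieces lie in ${}^{\mr{c}}D^{\geq p}$ (so that only finitely many contribute in each total degree, guaranteeing convergence), and verify that the edge map $\ms{M}\to E_1^{0,0}$ is the adjunction unit. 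None of this is established in the paper, and you flag it only as requiring ``a concrete incarnation'' --- but this is exactly the step that carries the weight of the argument. The paper avoids the issue entirely by reducing to a point, where the \v{C}ech description of $f_*$ immediately gives the first two nonzero terms of the resolution.

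A secondary but genuine issue: you cite Lemma \ref{cexactclosedpull} to handle the graph $X_i\to X_i\times\st{X}$, but that lemma concerns \emph{closed immersions}. Since $\st{X}$ is a genuine stack, the graph is only finite (the pull-back of the finite diagonal of $\st{X}$), never a closed immersion. The intended c-t-exactness of finite push-forward is true, but needs a separate argument; more directly, the left c-t-exactness of $p_{i+}$ that you need is supplied by the dual of Lemma \ref{dctexactness} (which asserts that $f_!$ is right dc-t-exact for $f\in\mf{S}_{\mr{adm}}$), since $p_i\colon X_i\rightarrow\st{X}$ is a morphism of c-admissible stacks. Citing that result would both repair the logic and match the paper's own dependency structure.
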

\begin{proof}
 Let us prove the first sequence for $\ms{M}$.
 The sequence is defined by the adjunction. We only need to
 show that it is exact. Thus, we may assume $\st{X}=:X$ to be a
 scheme. First, let us show that there exists an open subscheme
 $j\colon U\hookrightarrow X$ which is dense in the support of $\ms{M}$
 such that $j_+\ms{M}|_U$ satisfies the exactness property.
 We may take $U$ such that $p_0$ and $p_1$ possess the base change
 property by Lemma \ref{genbasechange}. Indeed, it suffices to show the
 exactness after restricting to $U$ since $j_+$
 is left c-t-exact by Lemma \ref{constproplem}. Then, we are
 reduced to the case where $X$ is a point by the base change.
 Then c-t-structure coincides with the usual t-structure, and the
 exactness follows by Proposition \ref{smoothdesre}.

 We show the exactness by the induction on the dimension of the support
 of $\ms{M}$. Take an open dense subscheme $U$ of the support of
 $\ms{M}$ such that the sequence is exact for $j_+\ms{M}|_U$ where
 $j\colon U\hookrightarrow X$. Consider the following
 diagram of c-modules where we omit shifts and twists:
 \begin{equation*}
  \xymatrix@R=15pt{
   &0\ar[d]&0\ar[d]&0\ar[d]&0\ar[d]&\\
   0\ar[r]&\ms{C}\ar[r]\ar[d]&\ms{M}\ar[r]\ar[d]&
    j_+\ms{M}\ar[r]\ar[d]&\ms{C}'\ar[d]\ar[r]&0\\
   0\ar[r]&p^0_{0+}\ms{C}_0\ar[r]\ar[d]&p^0_{0+}\ms{M}_0\ar[r]\ar[d]&
    p^0_{0+}j_+\ms{M}_0\ar[d]\ar@{.>}[r]&p^0_{0+}\ms{C}'_0\ar[d]&\\
   0\ar[r]&p^0_{1+}\ms{C}_1\ar[r]&p^0_{1+}\ms{M}_1\ar[r]&
    p^0_{1+}j_+\ms{M}_1\ar@{.>}[r]&p^0_{1+}\ms{C}'_1.&
   }
 \end{equation*}
 The horizontal sequences are complexes and the ones with solid arrows
 are exact. By induction hypothesis, the vertical sequences are known to
 be exact except for the one starting from $\ms{M}$.
 Then by diagram chasing, we get that the vertical sequence starting
 from $\ms{M}$ is exact as well, and we get the lemma.

 Now, for the exactness of the second sequence for $\ms{N}$, we just
 argue dually. For the second sequence for $\ms{M}$, the argument is
 similar, and even simpler: We may assume $\st{X}$ to be a scheme. We
 can check the exactness by taking the ``stalk'', and reduce to the case
 where $X$ is a point immediately, in which case we get the exactness by
 Proposition \ref{smoothdesre}.
 We can show dually for the first sequence for
 $\ms{N}$, and may finish the proof.
\end{proof}

\begin{thm}
 \label{constoftracemap}
 Let $f\colon\st{X}\rightarrow\st{Y}$ be a morphism in
 $\cadm$. Then there exists a unique homomorphism
 $\mr{Tr}^{\mr{p}}_f\colon f_!L^\omega_{\st{X}}\rightarrow
 L^\omega_{\st{Y}}$ satisfying the following conditions:

 (I) Transitivity: given
 $\st{X}\xrightarrow{f}\st{Y}\xrightarrow{g}\st{Z}$ in
 $\cadm$, the composition of the following homomorphisms is
 equal to $\mr{Tr}^{\mr{p}}_{g\circ f}$:
 \begin{equation*}
  (g\circ f)_!L^\omega_{\st{X}}\cong g_!(f_!L^\omega_{\st{X}})
   \xrightarrow{g_!\mr{Tr}^{\mr{p}}_f}g_!L^\omega_{\st{Y}}
   \xrightarrow{\mr{Tr}^{\mr{p}}_g}K^\omega_{\st{Z}}.
 \end{equation*}

 (II) When $\st{X}=:X$ and $\st{Y}=:Y$ are realizable schemes and $L=K$,
 then $\mr{Tr}^{\mr{p}}_f$ is the adjunction homomorphism
 $f_!K^\omega_{X}\cong f_!f^!K^\omega_Y\rightarrow
 K^\omega_Y$. Moreover, $\mr{Tr}^{\mr{p}}_f$ commutes with
 $\mr{for}_L$.

 (III) The trace map is compatible with smooth pull-back on
 $\st{Y}$. Namely, consider (\ref{cartdiagbch}) in
 $\cadm$ such that $g$ is smooth of relative dimension
 $d$. Then the composition
 \begin{equation*}
  f'_!L_{\st{X}'}^\omega
   \cong
   f'_!g'^*L_{\st{X}}^\omega(d)[d]
   \rightarrow
   g^*f_!L_{\st{X}}^\omega(d)[d]
   \xrightarrow{g^*\mr{Tr}_f^{\mr{p}}}
   g^*L_{\st{Y}}^\omega(d)[d]
   \cong
   L_{\st{Y}'}^\omega,
 \end{equation*}
 where the second map is the base change homomorphism,
 coincides with $\mr{Tr}^{\mr{p}}_{f'}$.
\end{thm}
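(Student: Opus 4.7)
The strategy is to reduce the construction of $\mr{Tr}^{\mr{p}}_f$ to the scheme-level trace map of Theorem \ref{traceexitstate} via simplicial descent, exploiting the dc-t-structure of Definition \ref{Artinstackcdctstr}. The crucial observation is that $L^{\omega}_{\st{X}}$ is a dc-module by Lemma \ref{dctexactness}(i) and $f_!$ is right dc-t-exact by Lemma \ref{dctexactness}(ii), so $f_! L^{\omega}_{\st{X}}$ lies in ${}^{\mr{dc}}D^{\leq 0}$ and every morphism to the dc-module $L^{\omega}_{\st{Y}}$ factors uniquely through the canonical projection $f_! L^{\omega}_{\st{X}} \twoheadrightarrow {}^{\mr{dc}}\H^0 f_! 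L^{\omega}_{\st{X}}$. Constructing (and verifying uniqueness of) the trace map thus reduces to a problem about morphisms of dc-modules, for which Lemma \ref{glusingtstrumod} supplies descent from simplicial presentations.

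First I would reduce to the case where $\st{Y} = Y$ is a realizable scheme. Given a simplicial presentation $Y_\bullet \to \st{Y}$ by realizable schemes, the smooth base change (Proposition \ref{smoothbcthmst} and Lemma \ref{basechforshrik}) ensures that a trace map on $\st{Y}$ pulls back compatibly to trace maps on each $Y_i$, and conversely such compatible data descends via Lemma \ref{glusingtstrumod}; thus condition (III) pins down the morphism once its values over realizable schemes are fixed. Next I would factor $f \colon \st{X} \to Y$ via Definition \ref{defcompactcadmmorph} as an open immersion $j \colon \st{X} \hookrightarrow \overline{\st{X}}$ followed by a proper morphism $g \colon \overline{\st{X}} \to Y$. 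For the open immersion one has $j^+ L^{\omega}_{\overline{\st{X}}} \cong L^{\omega}_{\st{X}}$ (since $j^+$ commutes with $\mb{D}$ for open immersions), so the trace map is the counit $j_! j^+ L^{\omega}_{\overline{\st{X}}} \to L^{\omega}_{\overline{\st{X}}}$ of the adjoint pair $(j_!, j^+)$ from \ref{localtriangs}.

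For the proper case, take a simplicial presentation $p_\bullet \colon X_\bullet \to \overline{\st{X}}$ by realizable schemes with induced projective morphisms $g_i := g \circ p_i \colon X_i \to Y$. The scheme-level trace maps $\mr{Tr}_{g_i} \colon g_{i!} L^{\omega}_{X_i} \to L^{\omega}_Y$ from Theorem \ref{traceexitstate} are compatible with the simplicial face morphisms by base change (Var 2) and transitivity (Var 3) of that theorem, the face maps being smooth. Applying $g_!$ to the resolution of $L^{\omega}_{\overline{\st{X}}}$ furnished by Lemma \ref{exactsequoftstr} and passing to ${}^{\mr{dc}}\H^0$ presents ${}^{\mr{dc}}\H^0 g_! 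L^{\omega}_{\overline{\st{X}}}$ as a cokernel built from the $g_{i!} L^{\omega}_{X_i}$, into which the scheme-level traces assemble to produce the desired morphism of dc-modules. The general case for $f$ is then obtained by composing the traces for $j$ and $g$ and checking independence of the chosen compactification via the standard [SGA 4, XVII] type argument supported by the framework at the end of \S\ref{sixfuncadmstsec}.

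Conditions (II) and (III) are essentially built into the construction. The main obstacle is transitivity (I): given composable morphisms $\st{X} \xrightarrow{f} \st{Y} \xrightarrow{h} \st{Z}$, one must show that the composition of the constructed trace maps equals $\mr{Tr}^{\mr{p}}_{h \circ f}$. By the uniqueness half of the theorem, this is equivalent to checking that the composition satisfies conditions (II) and (III), which ultimately reduces to the transitivity of the scheme-level trace map (Theorem \ref{traceexitstate}(Var 3)) via simultaneously compatible simplicial presentations and compactifications of $\st{X}$, $\st{Y}$, $\st{Z}$. The nontrivial work lies in choosing these compatibly and in tracking the Tate twists and shifts arising from the dc-shifts in Lemma \ref{exactsequoftstr}, but no genuinely new idea beyond a careful extension of the scheme-level formalism is required.
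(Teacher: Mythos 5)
Your proposal correctly identifies the central mechanism — the dc-t-structure reduction, together with descent via Lemmas \ref{exactsequoftstr} and \ref{glusingtstrumod} — but two issues keep it from closing.

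First, you cite Theorem \ref{traceexitstate} as the source of the scheme-level trace maps $\mr{Tr}_{g_i}\colon g_{i!}L^\omega_{X_i}\rightarrow L^\omega_Y$. That theorem produces the ``smooth'' trace $\mr{Tr}^{\mr{sm}}_f\colon f_!f^+(d)[2d]\rightarrow\mr{id}$ only for morphisms in $\mf{M}_d$ (generically flat of controlled fiber dimension). After your compactification, $g_i=g\circ p_i\colon X_i\to Y$ is projective but has no reason to be flat, so Theorem \ref{traceexitstate} does not apply. The correct scheme-level input is the adjunction counit of $(f_!,f^!)$: for any morphism $f$ between realizable schemes one has $K^\omega_X\cong f^!K^\omega_Y$, hence the counit gives $f_!K^\omega_X\cong f_!f^!K^\omega_Y\rightarrow K^\omega_Y$ with no twist or shift. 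This is literally what condition (II) of the statement says, and the compatibility of these counits with the face maps $p_0,p_1$ is the ``property of adjunction homomorphism'' (cf.\ \ref{compatleftadj}), not the base-change axioms (Var 2)--(Var 3) of the smooth trace. Your bookkeeping worry about Tate twists and dc-shifts is a symptom of this mix-up: the proper trace is twist- and shift-free, so nothing needs to be tracked.

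Second, the detour through a compactification $j\colon\st{X}\hookrightarrow\overline{\st{X}}\xrightarrow{g}Y$ and a simplicial presentation of $\overline{\st{X}}$ is unnecessary and introduces the burden of proving independence of the compactification. The paper's proof works directly with a presentation $X_0\to\st{X}$, $X_1:=X_0\times_{\st{X}}X_0$, and the induced $f_i\colon X_i\to Y$ which are already morphisms of realizable schemes; the adjunction traces $\mr{Tr}^{\mr{p}}_{f_0}$, $\mr{Tr}^{\mr{p}}_{f_1}$ then assemble through the lower-right exact sequence of Lemma \ref{exactsequoftstr} applied to $\st{X}$ itself. Once the trace is given for $\st{Y}$ a realizable scheme, the general case is glued by Lemma \ref{glusingtstrumod} exactly as you propose. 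With the scheme-level trace corrected and the compactification removed, your argument essentially coincides with the paper's; as it stands, the appeal to Theorem \ref{traceexitstate} for non-flat $g_i$ is a genuine gap.
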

\begin{proof}
 We put the dc-t-structure on $D^{\mr{b}}_{\mr{hol}}(\st{X})$ and
 $D^{\mr{b}}_{\mr{hol}}(\st{Y})$. Since $f_!$ is right dc-t-exact by
 Lemma \ref{dctexactness}, it suffices to construct a morphism of
 dc-t-modules $f^0_!L^\omega_{\st{X}}\rightarrow L^\omega_{\st{Y}}$
 where $f^0_!:=\dcH^0f_!$.
 When $\st{X}$ and $\st{Y}$ are realizable schemes, the trace map for
 $L=K$ extends uniquely to general $L$ by (II).
 Let us construct in the case where $\st{Y}=:Y$ is a realizable
 scheme. Let $X_0\rightarrow\st{X}$ be a presentation from a
 quasi-projective scheme, $X_1:=X_0\times_{\st{X}}X_0$, $p_0,p_1\colon
 X_1\rightarrow X_0$ be the first and second projection, and put
 $f_i\colon X_i\rightarrow\st{X}\xrightarrow{f}\st{Y}$. By the property
 of adjunction homomorphism, we have the following commutative diagram:
 \begin{equation*}
  \xymatrix@C=60pt@R=3pt{
   f^0_{1!}L^\omega_{X_1}
   \ar@<0.5ex>[dd]^{p_0^*}\ar@<-0.5ex>[dd]_{p_1^*}
   \ar[dr]^-{\mr{Tr}^{\mr{p}}_{f_1}}&\\
   &L^\omega_Y\\
  f^0_{0!}L^\omega_{X_0}\ar[ru]_-{\mr{Tr}^{\mr{p}}_{f_0}}&
   }
 \end{equation*}
 Thus by the second exact sequence for $\ms{N}$ in Lemma
 \ref{exactsequoftstr},
 we have a homomorphism $\mr{Tr}^{\mr{p}}_f\colon
 f_!^0L^\omega_{\st{X}}\rightarrow L^\omega_Y$ as required. By condition
 (I), this map is uniquely determined.
 It is straightforward to check that this map does not depend on the
 choice of the smooth presentation and satisfies (II).

 Finally consider the case where $\st{Y}$ is not a realizable
 scheme. Take an simplicial realizable scheme presentation
 $Y_\bullet\rightarrow\st{Y}$. By Lemma \ref{glusingtstrumod}, it
 suffices to construct a homomorphism
 $(f^0_!L^\omega_{\st{X}})_{Y_i}\rightarrow L^\omega_{Y_i}$
 with compatibility conditions. By condition (III), this map should be
 the one we have already constructed, and we conclude the proof.
\end{proof}

\subsubsection{}
 Let $f\colon\st{X}\rightarrow\st{Y}$ be a {\em proper} morphism between
 admissible stacks. Then we have the homomorphism
 $f_+L^\omega_{\st{X}}\xleftarrow{\sim}f_!L^\omega_{\st{X}}
 \xrightarrow{\mr{Tr}^{\mr{p}}_f}L^\omega_{\st{Y}}$.
 This homomorphism induces $f_+\circ\mb{D}_{\st{X}}\rightarrow
 \mb{D}_{\st{Y}}\circ f_+$ as in \ref{constdualmapproprep}.
 Now, let $f$ be a morphism in $\cadm$. Let
 $\st{X}\xrightarrow{j}\overline{\st{X}}\xrightarrow{\overline{f}}\st{Y}$
 be a compactification of $f$ in $\cadm$. We have the
 homomorphism
 \begin{equation}
  \label{poincardualhom}\tag{$\star$}
  f_+\circ\mb{D}_{\st{X}}\cong \overline{f}_+\circ
  j_+\circ\mb{D}_{\st{X}}
  \xleftarrow{\sim}
  \overline{f}_+\circ\mb{D}_{\st{X}'}\circ j_!
  \rightarrow
  \mb{D}_{\st{Y}}\circ\overline{f}_+\circ j_!\cong
  \mb{D}_{\st{Y}}\circ f_!
 \end{equation}
 where the second isomorphism follows by Remark \ref{bidualityofstack}
 (iii). We may check that this homomorphism does not depend on the
 choice of the factorization up to canonical equivalence.

\begin{thm*}[Duality]
 \label{relativedualprop}
 For any morphism $f$ in $\cadm$, the homomorphism
 (\ref{poincardualhom}) is, in fact, an isomorphism.
\end{thm*}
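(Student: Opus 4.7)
The plan is to reduce stepwise to the case of realizable schemes, where the duality map is already an isomorphism thanks to the construction of $\mr{Tr}^{\mr{p}}_f$ in Theorem \ref{constoftracemap} and the scheme-level duality recalled via Lemma \ref{uppershriekproj} (projective case) and Poincar\'{e} duality \ref{Poindual} already in place for realizable schemes. First, factor $f=\overline{f}\circ j$ with $j$ an open immersion and $\overline{f}$ proper (Definition \ref{defcompactcadmmorph}). For the open part, Remark \ref{bidualityofstack} (iii) gives $j_+\circ\mb{D}\cong\mb{D}\circ j_!$ tautologically, so (\ref{poincardualhom}) is an isomorphism whenever the corresponding map for the proper piece is. Hence we may assume $f$ is proper. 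Next, by Proposition \ref{smoothbcthmst} (smooth base change for $f_+$), Lemma \ref{dualityforsmoothpull} (compatibility of $\mb{D}'$ with smooth pull-back, which implies the analogous statement for $\mb{D}$ by Remark \ref{bidualityofstack}), and uniqueness in Theorem \ref{BBDgluingthm}, the formation of the duality map commutes with smooth pull-back on $\st{Y}$, so we may replace $\st{Y}$ by a smooth presentation and assume $\st{Y}=Y$ is a realizable scheme.

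We now induct on $d:=\dim\mr{Supp}(\ms{M})$ for $\ms{M}\in D^{\mr{b}}_{\mr{hol}}(\st{X})$. We may assume $\st{X}$ is reduced and $\mr{Supp}(\ms{M})=\st{X}$. For any open dense $j\colon\st{U}\hookrightarrow\st{X}$ with closed complement $i\colon\st{Z}\hookrightarrow\st{X}$, the localization triangle $j_!j^+\ms{M}\to\ms{M}\to i_+i^+\ms{M}\xrightarrow{+1}$ (see \ref{localtriangs}) lets us kill the $i_+i^+$ piece by the induction hypothesis (applied to the proper morphism $f|_{\st{Z}}\colon\st{Z}\to Y$), so it suffices to treat $\ms{M}=j_!\ms{N}$ for $\st{U}$ as small as convenient. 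By Corollary \ref{alterationforadmisst}, choose a proper generically finite surjection $h\colon X\to\st{X}$ with $X$ smooth quasi-projective; shrinking $\st{U}$ we may assume the restriction $h'\colon X_{\st{U}}:=h^{-1}(\st{U})\to\st{U}$ is finite flat. Lemma \ref{directfactorlemmaeasy} (iii) then exhibits $\ms{N}$ as a direct factor of $h'_+h'^+\ms{N}$. Writing $j'\colon X_{\st{U}}\hookrightarrow X$ and using the isomorphism $j_!\circ h'_!\cong h_!\circ j'_!$ that comes from the functoriality of $(-)_!$ on $\mf{S}_{\mr{adm}}$ together with $h'_+\cong h'_!$ (since $h'$ is finite), we find that $\ms{M}$ is a direct factor of $h_+\ms{P}$ for $\ms{P}:=j'_!h'^+\ms{N}\in D^{\mr{b}}_{\mr{hol}}(X)$.

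It therefore suffices to prove that the duality map $\epsilon_f(h_+\ms{P})\colon f_+\mb{D}(h_+\ms{P})\to\mb{D}f_+(h_+\ms{P})$ is an isomorphism. The transitivity of $\mr{Tr}^{\mr{p}}$ (Theorem \ref{constoftracemap} (I)) gives a factorization
\begin{equation*}
 \epsilon_{fh}(\ms{P})\colon\quad
 f_+h_+\mb{D}_X(\ms{P})
 \xrightarrow{f_+(\epsilon_h(\ms{P}))}
 f_+\mb{D}_{\st{X}}(h_+\ms{P})
 \xrightarrow{\epsilon_f(h_+\ms{P})}
 \mb{D}_Y(fh)_+(\ms{P}).
\end{equation*}
Since $X$ is smooth quasi-projective over $k$ and $Y$ is separated and realizable, the proper morphism $fh\colon X\to Y$ is in fact projective (factor the graph $X\to X\times Y$, which is a closed immersion as $Y/k$ is separated, through $X\times Y\hookrightarrow\mb{P}^N\times Y$), and Lemma \ref{uppershriekproj} shows $\epsilon_{fh}(\ms{P})$ is an isomorphism. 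Consequently the claim reduces, by $2$-out-of-$3$, to showing that $\epsilon_h(\ms{P})$ is an isomorphism, i.e.\ to duality for the proper morphism $h\colon X\to\st{X}$ from a scheme to the admissible stack $\st{X}$. For this last point we iterate the first reduction step: by smooth base change on $\st{X}$ via a smooth presentation by a realizable scheme (respectively by a separated algebraic space, in the ``second read''), we are reduced to duality for a proper morphism between realizable schemes (respectively separated algebraic spaces of finite type), which is already known.

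The main obstacle will be the bookkeeping in the direct-factor argument: one must check that the various commutations $j_!\circ h'_!\cong h_!\circ j'_!$, $h'_+\cong h'_!$, and the transitivity of duality maps are mutually compatible, and that all base changes genuinely commute with $\mb{D}$, so that ``$\ms{M}$ is a direct factor of $h_+\ms{P}$'' really does imply ``$\epsilon_f(\ms{M})$ is a direct factor of $\epsilon_f(h_+\ms{P})$''. This is routine once one invokes transitivity of $\mr{Tr}^{\mr{p}}_f$, the base change isomorphisms of Lemma \ref{basechforshrik} and Proposition \ref{propbasechansta}, and the naturality of the construction (\ref{poincardualhom}).
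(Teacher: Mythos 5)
Your overall strategy matches the paper's: reduce to $\st{Y}$ a realizable scheme via the smooth-base-change property of the trace map, induct on $\dim\mr{Supp}(\ms{M})$, shrink $\st{X}$ using the localization triangle and duality for open immersions, produce a finite flat cover by a realizable scheme, and finish via the direct-factor argument. Your spelled-out $2$-out-of-$3$ factorization of $\epsilon_{fh}$ through $\epsilon_f$ and $\epsilon_h$ is exactly what is compressed into the paper's terse final line ``thus, we are reduced to the realizable scheme case.''

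Two differences are worth flagging, though neither is a gap. First, the paper obtains a finite flat morphism $g\colon X\to\st{X}$ from a realizable scheme directly from the structure theory of admissible stacks (Remark \ref{admissdfn}\,(ii)), and then the chain $\ms{M}\mid g_+g^+\ms{M}$, $\epsilon_g$ iso since $g$ is finite hence projective (Lemma \ref{uppershriekproj}), and $fg$ a morphism of realizable schemes closes the argument immediately; you instead reach the cover through Chow's lemma and de Jong (Corollary \ref{alterationforadmisst}), which gives a \emph{proper} $h\colon X\to\st{X}$ that is only finite flat after shrinking, and this forces you into the extra final reduction for $\epsilon_h$. That last step is actually unnecessary: since $X$ is quasi-projective over $k$ and $\st{X}$ is admissible (so separated, the diagonal being finite), the graph argument you used for $fh$ applies verbatim to $h$ itself, showing $h$ is projective; then Lemma \ref{uppershriekproj} gives $\epsilon_h$ an isomorphism directly, with no further descent needed. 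Second, for the reduction to $\st{Y}$ a scheme, the paper cites condition (III) of Theorem \ref{constoftracemap} (smooth base change of $\mr{Tr}^{\mr{p}}_f$) which determines the base-change compatibility of the duality map immediately; your route through Lemma \ref{dualityforsmoothpull} concerns $\mb{D}'$, and Remark \ref{bidualityofstack}\,(i) explicitly leaves the comparison $\mb{D}\cong\mb{D}'$ open, so you would still need the cohomological comparison $\H^i\mb{D}\cong\H^i\mb{D}'$ together with the uniqueness of Theorem \ref{BBDgluingthm} to make this clean — the paper's citation of (III) is the cleaner path.
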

\begin{proof}
 First, we may assume $\st{Y}$ is a scheme by (III) of Theorem
 \ref{constoftracemap}.
 Let us show the isomorphism for $\ms{M}\in\mr{Hol}(\st{X})$.
 We use the induction on $\dim\mr{Supp}(\ms{M})$. Assume the theorem
 holds for $\dim\mr{Supp}(\ms{M})<k$. Let $\st{Z}$ be the support of
 $\ms{M}$. We may shrink $\st{X}$ so that $\st{Z}$ is shrunk by its open
 dense subscheme. Indeed, let $j\colon \st{U}\hookrightarrow\st{X}$ be
 an open immersion such that $\st{Z}\cap\st{U}$ is dense in $\st{Z}$,
 and $i\colon\st{W}\hookrightarrow\st{X}$ is its complement. The
 proposition holds for $i_+i^+(\ms{M})$ by the induction
 hypothesis. Thus, it suffices to show for $\ms{M}=j_!j^+(\ms{M})$.
 Since the theorem holds for $f=j$, we may replace $\st{X}$ by
 $\st{U}$.

 Shrinking $\st{X}$ by its open dense substack, we may assume that there
 exists a finite flat morphism $g\colon X\rightarrow\st{X}$ from a
 realizable scheme. Since $\ms{M}$ is a direct factor of
 $g_+g^+\ms{M}$, by arguing as in the proof of Proposition
 \ref{propbasechansta}, it suffices to show that the homomorphism
 $f_+\circ\mb{D}_{\st{X}}(g_+g^+\ms{M})\rightarrow \mb{D}_{\st{Y}}\circ
 f_!(g_+g^+\ms{M})$ is an isomorphism. By Lemma \ref{uppershriekproj},
 we are reduced to the realizable scheme case.
\end{proof}

\begin{dfn}
 \label{dfnoflowshrista}
 Let $f\colon\st{X}\rightarrow\st{Y}$ be a morphism in $\cadm$. We
 define $f^!:=\mb{D}_{\st{X}}\circ f^+\circ\mb{D}_{\st{Y}}$. The couple
 $(f_!,f^!)$ is an adjoint pair. Transitivity holds since it holds for
 $f^+$.
\end{dfn}

\begin{lem}
 \label{cohdimcalc}
 Let $f\colon\st{X}\rightarrow\mr{Spec}(k)$ be the structural
 morphism of a c-admissible stack of dimension $d$. Then for
 $\ms{M}\in\mr{Con}(\st{X})$, we have $\H^if_!(\ms{M})=0$ for $i>2d$.
\end{lem}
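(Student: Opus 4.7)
The plan is to reduce to the realizable scheme case, Lemma \ref{cohdimlem}, by alteration and devissage. First I argue by noetherian induction on $\dim\mr{Supp}(\ms{M})$, which allows me to assume $\mr{Supp}(\ms{M})=\st{X}$ and that $\st{X}$ is reduced. Indeed, given any open dense substack $j\colon\st{U}\hookrightarrow\st{X}$ with closed complement $i\colon\st{Z}\hookrightarrow\st{X}$, the localization triangle of \ref{localtriangs} reduces the problem for $\ms{M}$ to that for $j_!j^+\ms{M}$, since $i_+i^+\ms{M}$ is supported in dimension $<d$ and the inductive hypothesis applies to the c-admissible stack $\st{Z}$ (the composition $f\circ i$ is in $\mf{S}_{\mr{adm}}$).

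By Corollary \ref{alterationforadmisst}, there exists a proper surjective generically finite morphism $g\colon X\to\st{X}$ from a smooth quasi-projective scheme $X$ of dimension $d$. After shrinking $\st{X}$ by a dense open substack (permitted by the previous paragraph, and using that $\st{X}$ is reduced to ensure generic flatness), I may assume $g$ is finite flat. Lemma \ref{directfactorlemmaeasy}\,(iii) then exhibits $\ms{M}$ as a direct factor of $g_+g^+\ms{M}\cong g_!g^+\ms{M}$ (finiteness of $g$ gives $g_+\cong g_!$), so $f_!\ms{M}$ is a direct factor of $(f\circ g)_!(g^+\ms{M})$, and it suffices to prove the vanishing for the latter. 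Since $f\circ g\colon X\to\mr{Spec}(k)_{\base}$ is a morphism of realizable schemes with fiber $X$ of dimension $\leq d$, once I know that $g^+\ms{M}$ is a constructible module on $X$, Lemma \ref{cohdimlem} yields ${}^{\mr{c}}\H^i(f\circ g)_!(g^+\ms{M})=0$ for $i>2d$, and over $\mr{Spec}(k)_{\base}$ the constructible t-structure coincides with the standard one, giving the desired vanishing.

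The main obstacle is verifying the c-t-exactness of $g^+$ for the finite flat morphism $g$ from a realizable scheme to an admissible stack, so that $g^+\ms{M}\in\mr{Con}(X)$. The analogous statement for morphisms between realizable schemes is Lemma \ref{constproplem}\,(i); I plan to bootstrap to the present case by choosing a smooth presentation $\rho\colon Y\to\st{X}$, applying base change for $g^+$ with respect to $\rho$ (which, by adjunction, follows from smooth base change \ref{smoothbcthmst} for $g_+$), and matching this against the definition of the c-t-structure on $\st{X}$ via smooth pull-back (Definition \ref{Artinstackcdctstr}). The other minor point, that the induction can always be carried out within $\mf{S}_{\mr{adm}}$, is immediate from the fact that open substacks and closed substacks of a c-admissible stack are c-admissible.
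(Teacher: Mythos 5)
Your proof is correct and follows essentially the same route as the paper: noetherian induction on $\dim\mr{Supp}(\ms{M})$ with the localization triangle to shrink $\st{X}$, reduction via a finite flat cover $g\colon X\to\st{X}$ from a realizable scheme (the paper invokes the observation in the remark after Definition \ref{admissdfn}(ii) directly, rather than passing through Corollary \ref{alterationforadmisst}, but the effect is the same after shrinking), Lemma \ref{directfactorlemmaeasy}(iii) to exhibit $\ms{M}$ as a direct factor of $g_+g^+\ms{M}$, and then Lemma \ref{cohdimlem}. Two remarks: the paper's proof is much terser and elides the c-t-exactness of $g^+$, which you correctly single out as the point needing justification; and your parenthetical "by adjunction" is not quite the right mechanism for the base change $\rho'^*g^+\cong g'^+\rho^*$ — adjunction from the $g_+$ base change gives you only a comparison morphism, not its invertibility. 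The clean way to get the isomorphism is to write $g^+\cong\mb{D}\circ g^!\circ\mb{D}$ (Lemma \ref{uppershriekproj}, $g$ finite hence projective), use the smooth base change for $g^!$ (Lemma \ref{prepcohoprealsch}(i) and its remark), and invoke the compatibility of $\mb{D}$ with smooth pull-back up to twist (Lemma \ref{dualityforsmoothpull}).
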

\begin{proof}
 We may use the induction on the dimension of $\ms{M}$. By standard {\it
 d\'{e}vissage} using the induction hypothesis, we may shrink $\st{X}$
 by its open dense substack. By shrinking $\st{X}$ and taking a finite
 flat morphism from a realizable scheme, we may assume that $X$ is a
 realizable scheme. Then the proposition is reduced to Lemma
 \ref{cohdimlem}.
\end{proof}

\begin{thm}[Relative Poincar\'{e} duality]
 \label{poincaredualrela}
 Let $\mf{M}^{\mr{st}}_d$ be the set of morphisms
 $f\colon\st{X}\rightarrow\st{Y}$
 of $\cadm$ such that there exists an open substack
 $\st{U}\subset\st{Y}$ such that
 $\st{X}\times_{\st{Y}}\st{U}\rightarrow\st{U}$ is flat of relative
 dimension $d$, and the dimension of any fiber of
 $\st{Y}\setminus\st{U}$ is $<d$.
 Then for $f\in\mf{M}^{\mr{st}}_d$ there is a unique trace map
 $\mr{Tr}^{\mr{sm}}_f\colon f_!f^+(d)[2d]\rightarrow\mr{id}$ satisfying
 the following properties:

 (I) When $\st{X}$ and $\st{Y}$ are realizable schemes and $L=K$, then
 it coincides with the trace map in Theorem
 \ref{traceexitstate}. Moreover, it commutes with $\mr{for}_L$.

 (II) It commutes with base change in the sense of (Var 2) of
 \ref{traceexitstate} if we replace the diagram of realizable schemes by
 that in $\cadm$ and $f\in\mf{M}^{\mr{st}}_d$.

 (III) It is transitive with respect to the composition of morphisms in
 $\mf{M}^{\mr{st}}_d$ in the sense of (Var 3) of \ref{traceexitstate}.

 Taking the adjoint, we have a homomorphism $f^+(d)[2d]\rightarrow f^!$,
 which is an isomorphism when $f$ is smooth.
\end{thm}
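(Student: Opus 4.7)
The approach follows the template of the construction of $\mr{Tr}^{\mr{p}}_f$ in Theorem \ref{constoftracemap}, with the c-t-structure of Definition \ref{Artinstackcdctstr} replacing the dc-t-structure and $L_{\st{Y}}$ replacing $L_{\st{Y}}^\omega$. First, by an analogue for admissible stacks of the projection formula (Var 5) of Theorem \ref{traceexitstate}---which follows from the adjunction $(f^+,f_+)$ and Proposition \ref{easypropfunc}---the natural transformation $f_!f^+(d)[2d]\to\mr{id}$ is equivalent to a single morphism $\tau_f\colon f_!L_{\st{X}}(d)[2d]\to L_{\st{Y}}$ via the identification $f_!(f^+\ms{F})\cong f_!L_{\st{X}}\otimes\ms{F}$. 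Using Lemma \ref{cohdimcalc} together with the c-t-exactness of smooth pullback, one checks that $f_!L_{\st{X}}(d)[2d]$ lives in c-degrees $\leq 0$, and $L_{\st{Y}}$ is a c-module; hence the morphism is determined by the induced map ${}^{\mr{c}}\H^0 f_!L_{\st{X}}(d)[2d]\to L_{\st{Y}}$, which yields the uniqueness part of the theorem in conjunction with (I)--(III).

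For the construction when $\st{Y}=:Y$ is a realizable scheme, take a simplicial presentation $X_\bullet\to\st{X}$ by quasi-projective schemes, and set $f_i:=f\circ\rho_i\colon X_i\to Y$, $d_i:=d_{X_i/\st{X}}$. Each $f_i$ lies in $\mf{M}_{d+d_i}$ in the sense of Theorem \ref{traceexitstate}, which supplies scheme-level trace maps $\mr{Tr}_{f_i}\colon f_{i!}L_{X_i}(d+d_i)[2(d+d_i)]\to L_Y$. The key point is the cosimplicial compatibility: for each $\phi\colon[i]\to[j]$, the trace maps $\mr{Tr}_{f_i}$ and $\mr{Tr}_{f_j}$ are related via the factorization $f_j=f_i\circ X(\phi)$ together with the trace $\mr{Tr}_{X(\phi)}\colon X(\phi)_!X(\phi)^+\to\mr{id}$ of the smooth face morphism $X(\phi)$. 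This is exactly the transitivity axiom (Var 3), combined with the Poincar\'{e} duality isomorphism of Theorem \ref{Poindual} applied to $X(\phi)$ to identify $X(\phi)^+(d_\phi)[2d_\phi]\cong X(\phi)^!$. Once this compatibility is in hand, the dual of the lower-left exact sequence of Lemma \ref{exactsequoftstr}, together with the equalizer-of-adjunctions diagram employed in the proof of Theorem \ref{constoftracemap}, descends the scheme-level trace maps to the required $\tau_f$ on $\st{X}$.

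For general $\st{Y}$, take a simplicial presentation $Y_\bullet\to\st{Y}$ by realizable schemes and apply the preceding construction over each $Y_j$, using property (II) for smooth base change by open immersions and face maps to impose the compatibility. Lemma \ref{glusingtstrumod} applied to c-modules then glues the resulting ${}^{\mr{c}}\H^0$-level morphisms into the sought $\tau_f\colon f_!L_{\st{X}}(d)[2d]\to L_{\st{Y}}$ over all of $\st{Y}$. Properties (I), (II), (III) are then verified directly from the corresponding scheme-level properties (Var 1)--(Var 5) of Theorem \ref{traceexitstate} together with the transitivity and smooth base-change compatibilities (Proposition \ref{smoothbcthmst}, Lemma \ref{basechforshrik}) built into the simplicial construction.

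Finally, for the assertion that the adjoint $f^+(d)[2d]\to f^!$ is an isomorphism when $f$ is smooth, the question is local on both $\st{X}$ and $\st{Y}$ in the smooth topology, and both sides commute with smooth base change by Proposition \ref{smoothbcthmst} and Lemma \ref{basechforshrik} (via $f^!=\mb{D}\circ f^+\circ\mb{D}$ in the projective case, and by construction in general using (II)). Choosing smooth presentations $Y\to\st{Y}$ and $X\to Y\times_{\st{Y}}\st{X}$ by realizable schemes, the claim reduces to the classical Poincar\'{e} duality of Theorem \ref{Poindual}. The main obstacle in the construction will be the bookkeeping of the cosimplicial compatibility of the scheme-level trace maps, which requires combining (Var 2), (Var 3), and (Var 4-I) with the Poincar\'{e} duality isomorphism of Theorem \ref{Poindual}; but no genuinely new input beyond what is already established for realizable schemes is needed.
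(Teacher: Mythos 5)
Your proposal follows essentially the same route as the paper's proof: reduce via the constructible t-structure (Lemma \ref{cohdimcalc}) to a map of c-modules at ${}^{\mr{c}}\H^0$, construct the trace map over a scheme base by descending the scheme-level trace maps of Theorem \ref{traceexitstate} along a simplicial presentation $X_\bullet\to\st{X}$ using the exact sequences of Lemma \ref{exactsequoftstr}, glue over a general $\st{Y}$ via Lemma \ref{glusingtstrumod}, and reduce the isomorphism for smooth $f$ to the scheme case (Theorem \ref{Poindual}) by smooth base change. The only small imprecision is the phrase ``the dual of the lower-left exact sequence of Lemma \ref{exactsequoftstr}'': the lower-left sequence is already stated for c-modules and is the one to use directly; otherwise the cosimplicial compatibility check via (Var 2), (Var 3) and Poincar\'{e} duality for the face maps is a correct alternative to the adjunction-based commutativity verified in the proof of Theorem \ref{constoftracemap}.
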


\begin{rem*}
 The superscript of $\mr{Tr}^{\mr{sm}}$ stands for ``smooth''. This is
 because the trace map is used to show that $f^+(d)[2d]\cong f^!$ for
 {\em smooth} morphisms. On the other hand, that for the trace map
 $\mr{Tr}^{\mr{p}}$ in Theorem \ref{constoftracemap} stands for
 ``proper'', since this trace map is related to the isomorphism
 $f_!\xrightarrow{\sim}f_+$ when $f$ is proper.
 These two trace maps have {\it a priori} no relation.
 Finally, the superscript of $\mf{M}^{\mr{st}}_d$ stands for ``stack''.
\end{rem*}

\begin{proof}
 First, we need to construct the trace map $\mr{Tr}^{\mr{sm}}_f\colon
 f_!f^+L_{\st{Y}}(d)[2d]\rightarrow L_{\st{Y}}$. Put c-t-structure on
 $D^{\mr{b}}_{\mr{hol}}(\st{X})$. By Lemma \ref{cohdimcalc}, it suffices
 to construct a homomorphism
 $\cH^{2d}f_!f^+L_{\st{Y}}(d)\rightarrow L_{\st{Y}}$ of c-modules. When
 $\st{X}$ and $\st{Y}=:Y$ are schemes, this is the trace map of Theorem
 \ref{traceexitstate} when $L=K$, and in general is defined by extending
 the scalar. For careful reader, we remark that, when $\base=F$,
 in {\it ibid.}, we used the category
 $F\text{-}D^{\mr{b}}_{\mr{hol}}(Y/K)$ to define the trace map.
 However, the isomorphism defining the Frobenius structure in
 $D^{\mr{b}}_{\mr{hol}}(Y/K)$ induces an isomorphism in
 $\mr{Con}(Y/K)$, which defines an object in $\mr{Con}(Y/\mf{T}_F)$, so
 {\it ibid.}\ is enough to get a trace map in $D^{\mr{b}}(Y/\mf{T}_F)$.

 For the construction of trace map in the general case, let
 $Y_\bullet\rightarrow\st{Y}$ be an admissible simplicial scheme. By
 Lemma \ref{glusingtstrumod}, it suffices to construct the trace map for
 $\st{X}\times_{\st{Y}}Y_i\rightarrow Y_i$ for each $i$ compatible with
 each other. The construction is similar to that of Theorem
 \ref{constoftracemap} using Lemma \ref{exactsequoftstr}, so we leave
 the details to the reader.

 The trace map defines a morphism $f^+(d)[2d]\rightarrow f^!$. Let
 us show that this is an isomorphism when $f$ is smooth. By base change
 property, we may assume $\st{Y}$ to be a scheme. Moreover, it suffices
 to show the identity after pulling back to schemes which are smooth
 over $\st{X}$. Then we are reduced to the scheme case we have already
 treated in Theorem \ref{Poindual}.
\end{proof}

\subsubsection{}
Finally, we have the projection formula, whose proof is similar to the
proper base change theorem, and left to the reader:

\begin{prop*}
 Let $f\colon\st{X}\rightarrow\st{Y}$ be a morphism in
 $\cadm$. Then for $\ms{M}\in
 D^{\mr{b}}_{\mr{hol}}(\st{X})$ and $\ms{N}\in
 D^{\mr{b}}_{\mr{hol}}(\st{Y})$, we have a canonical isomorphism:
 \begin{equation*}
  f_!\ms{M}\otimes\ms{N}\cong
   f_!\bigl(\ms{M}\otimes f^+\ms{N}\bigr).
 \end{equation*}
\end{prop*}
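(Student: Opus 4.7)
The plan is to mimic the proof of the proper base change theorem \ref{propbasechansta}, together with the compactifiable-morphism factorization machinery already set up.

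First I would construct the canonical homomorphism $\lambda_{\ms{M},\ms{N}}\colon f_!\ms{M}\otimes\ms{N}\to f_!\bigl(\ms{M}\otimes f^+\ms{N}\bigr)$. Choose a compactification $\st{X}\xrightarrow{j}\overline{\st{X}}\xrightarrow{p}\st{Y}$ in $\cadm$, so that $f_!\cong p_+\circ j_!$. Since $f^+$ is monoidal by Proposition \ref{easypropfunc}\ref{unitcommute} and $j^+j_!\cong\mr{id}$, we get a natural isomorphism $j^+\bigl(j_!\ms{M}\otimes j^+\ms{L}\bigr)\cong\ms{M}\otimes j^+\ms{L}$ whose adjoint gives the open-immersion projection map $j_!\ms{M}\otimes\ms{L}\to j_!(\ms{M}\otimes j^+\ms{L})$ via the $(j_!,j^+)$ adjunction. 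For the proper morphism $p$, one uses the unit $\ms{N}\to p_+p^+\ms{N}$, the K\"{u}nneth isomorphism of \ref{kunnethforprojstack} combined with proper base change \ref{propbasechansta} along the diagonal of $\st{Y}$ to produce a lax-monoidality map $p_+\ms{M}'\otimes p_+\ms{P}\to p_+(\ms{M}'\otimes\ms{P})$, and then the composite $p_+\ms{M}'\otimes\ms{N}\to p_+\ms{M}'\otimes p_+p^+\ms{N}\to p_+(\ms{M}'\otimes p^+\ms{N})$. A straightforward diagram chase shows the resulting composite is independent of the compactification.

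Next I would prove $\lambda$ is an isomorphism, treating the two pieces of the factorization separately. For $j$ open the claim is immediate from the localization triangle $j_!j^+\to\mr{id}\to i_+i^+\xrightarrow{+1}$ of \ref{localtriangs}: applying $j^+$ to both sides of $\lambda_{\ms{M},\ms{N}}$ (with $j$ instead of $f$) gives the identity $\ms{M}\otimes j^+\ms{N}=\ms{M}\otimes j^+\ms{N}$, while applying $i^+$ yields $0$ on both sides because $i^+\circ j_!=0$. For $p$ proper, I would run exactly the dévissage of Proposition \ref{propbasechansta}: reduce to $\ms{M}\in\mr{Hol}(\st{X})$ and $\st{X}$ reduced, use smooth base change \ref{smoothbcthmst} to assume $\st{Y}=Y$ is a realizable scheme, use noetherian induction on $\mr{Supp}(\ms{M})$ together with the localization triangle to shrink $\st{X}$ by an open dense substack, invoke Corollary \ref{alterationforadmisst} and Remark \ref{admissdfn} (ii) to find a finite flat surjection $h\colon X\to\st{X}$ from a realizable scheme after shrinking, and exploit Lemma \ref{directfactorlemmaeasy} to realize $\ms{M}$ as a direct summand of $h_+h^+\ms{M}$. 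The final verification in the case $\st{X}=X$ and $\st{Y}=Y$ are realizable schemes is the projection formula \ref{fundproprealsch}.\ref{projformulas} already recorded from \cite{AC}, extended to $L$-coefficients and, when $\base=0$, Frobenius structures by the usual scalar-extension/forgetful arguments of \S\ref{extofsclarsub}.

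The main obstacle will be verifying that the construction of $\lambda$ is well-defined (independent of the chosen compactification) and behaves functorially enough for the compatibility with the direct-summand decomposition $\ms{M}\hookrightarrow h_+h^+\ms{M}$ in the alteration step; this is where one must be careful to check that the natural transformation $\lambda$ is compatible with base change along $h$ and with the localization triangles, since otherwise the reduction to realizable schemes fails. Once these compatibilities are established, the argument reduces cleanly to the already-known scheme-level projection formula.
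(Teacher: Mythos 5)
Your plan does follow the paper's one-line hint ("similar to the proper base change theorem"), and the overall architecture — build $\lambda$ via an open/proper factorization, check independence of the compactification, handle the open immersion with the localization triangle, and run the d\'evissage of \ref{propbasechansta} for the proper part — is correct. But there is a concrete gap in the d\'evissage that you flag only obliquely and should close explicitly. After reducing to $\st{Y}=Y$ realizable, shrinking $\st{X}$, and producing a finite flat $h\colon X\to\st{X}$ from a realizable scheme so that $\ms{M}$ is a direct factor of $h_+h^+\ms{M}$, you want to conclude from the scheme-level projection formula for $f\circ h\colon X\to Y$. This conclusion needs not only functoriality of $\lambda$ and its compatibility with composition, but also the projection formula for $h$ itself, and $h$ is not a morphism of schemes — its target is $\st{X}$. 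This is the exact analogue of the paper's invocation, inside the proof of \ref{propbasechansta}, of "we already know the base change by Lemma \ref{uppershriekproj}" for the auxiliary projective morphism; you must supply the corresponding input here. Fortunately the finite case is easy and uses only results already in place: for $h$ finite, $h_+\ms{A}\otimes\ms{B}=\Delta_{\st{X}}^+(h_+\ms{A}\boxtimes\ms{B})\cong\Delta_{\st{X}}^+(h\times\mr{id})_+(\ms{A}\boxtimes\ms{B})$ by Lemma \ref{compexttensoth}, and the cartesian square with vertices $X$, $X\times\st{X}$, $\st{X}$, $\st{X}\times\st{X}$, top arrow the graph $\gamma=(\mr{id},h)$ and bottom arrow $\Delta_{\st{X}}$, together with proper base change \ref{propbasechansta}, gives $\Delta_{\st{X}}^+(h\times\mr{id})_+\cong h_+\gamma^+$, and $\gamma^+(\ms{A}\boxtimes\ms{B})\cong\ms{A}\otimes h^+\ms{B}$; one still needs a short diagram chase identifying this with $\lambda_h$, but that is routine.

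It is also worth observing that the whole d\'evissage can be avoided: the proposition is the $g=\mr{id}_{\st{Y}}$ specialization of the Corollary in \ref{Kunnethformulastform}. Taking $\st{X}'=\st{Y}$ and $g=\mr{id}$ there, the fibre product is $\st{X}$, the morphism $h$ in that corollary becomes $f$, the morphism $i$ is the graph $(\mr{id},f)\colon\st{X}\to\st{X}\times\st{Y}$, and $i^+(\ms{M}\boxtimes\ms{N})\cong\ms{M}\otimes f^+\ms{N}$, so the corollary reads $f_!\ms{M}\otimes\ms{N}\cong f_!(\ms{M}\otimes f^+\ms{N})$. That corollary only invokes K\"{u}nneth \ref{Kunnethextprod} and base change for $f_!$ \ref{basechforshrik}, neither of which relies on the projection formula, so there is no circularity; the paper happens to place those results after the proposition, but the dependency is purely presentational, and this route is cleaner than the d\'evissage.
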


\subsubsection*{K\"{u}nneth formula}
\begin{prop}
 \label{Kunnethextprod}
 Consider morphisms of admissible stacks
 $f\colon\st{X}\rightarrow\st{X}'$ and
 $g\colon\st{Y}\rightarrow\st{Y}'$. Let $\ms{M}\in
 D^{\mr{b}}_{\mr{hol}}(\st{X})$ and $\ms{N}\in
 D^{\mr{b}}_{\mr{hol}}(\st{Y})$. Then there exists a canonical
 isomorphism
 \begin{equation*}
  f_+(\ms{M})\boxtimes g_+(\ms{N})
   \xrightarrow{\sim}
   (f\times g)_+\bigl(\ms{M}\boxtimes\ms{N}\bigr).
 \end{equation*}
 Moreover, if $f$ and $g$ are in $\cadm$, we get an
 isomorphism
 \begin{equation*}
  f_!(\ms{M})\boxtimes g_!(\ms{N})
   \xrightarrow{\sim}
   (f\times g)_!\bigl(\ms{M}\boxtimes\ms{N}\bigr).  
 \end{equation*}
\end{prop}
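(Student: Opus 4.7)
\smallskip
\noindent\textit{Proof proposal.}
The plan is to construct the comparison map by adjunction, reduce the problem to the two ``building blocks'' of six functor formalism on admissible stacks (finite morphisms and projections), and then deduce the $f_!$ version from the $f_+$ version by duality and compactification.

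First I would construct the map. By Proposition \ref{easypropfunc}(i) we have a canonical isomorphism $(f\times g)^+(f_+\ms{M}\boxtimes g_+\ms{N})\cong f^+f_+(\ms{M})\boxtimes g^+g_+(\ms{N})$, which composed with the counits $f^+f_+\to\mr{id}$ and $g^+g_+\to\mr{id}$ yields a morphism $(f\times g)^+(f_+\ms{M}\boxtimes g_+\ms{N})\to\ms{M}\boxtimes\ms{N}$; taking the adjoint of this morphism gives the candidate $f_+\ms{M}\boxtimes g_+\ms{N}\to (f\times g)_+(\ms{M}\boxtimes\ms{N})$. To check this is an isomorphism, I would use the canonical factorization used in the definition of $f_+$ for admissible stacks: write $f=p\circ i$ where $i\colon\st{X}\hookrightarrow\st{X}\times\st{X}'$ is the graph (which is finite since $\st{X}'$ is admissible) and $p\colon\st{X}\times\st{X}'\to\st{X}'$ is the projection, and similarly $g=q\circ j$. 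One checks that the naturally defined K\"{u}nneth maps are compatible with composition in the source and in the target, so it suffices to verify the isomorphism separately in the two cases (i) $f$ and $g$ both finite and (ii) $f$ and $g$ both projections.

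Case (i) is handled directly by Lemma \ref{compexttensoth}. Case (ii) is handled by Lemma \ref{kunnethforprojstack} applied with $\ms{A}=L_{\st{X}}$ and $\ms{A}'=L_{\st{Y}}$: the external tensor product of constant complexes is the constant complex on the product by Lemma \ref{externaltensstsh}(i), so $q_{\ms{A}\boxtimes\ms{A}'+}$ reduces to the usual $(p\times p')_+$ and the lemma yields precisely the desired isomorphism. The main bookkeeping obstacle is the compatibility of the K\"{u}nneth map with the transition isomorphisms $(f\circ f')_+\cong f_+\circ f'_+$ on both sides of $\boxtimes$, so that the two-step factorization can be used; this is a straightforward (if tedious) check starting from the adjunctions and the commutation isomorphism $(f\times g)^+\circ\boxtimes\cong\boxtimes\circ(f^+\times g^+)$ of Proposition \ref{easypropfunc}(i).

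For the $f_!$ statement, assume $f,g\in\mf{S}_{\mr{adm}}$ and choose compactifications $f=\overline{f}\circ j_{\st{X}}$, $g=\overline{g}\circ j_{\st{Y}}$ in $\mf{S}_{\mr{adm}}$ as in Definition \ref{defcompactcadmmorph}. Since $f\times g$ factors as $(\overline{f}\times\overline{g})\circ(j_{\st{X}}\times j_{\st{Y}})$ and $\overline{f}\times\overline{g}$ is proper, the $f_+$ case already established handles the proper factor. It remains to show the K\"{u}nneth isomorphism for open immersions $j_{\st{X}!}\ms{M}\boxtimes j_{\st{Y}!}\ms{N}\cong(j_{\st{X}}\times j_{\st{Y}})_!(\ms{M}\boxtimes\ms{N})$. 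For this I would pass to duals: using $j_!\cong\mb{D}\circ j_+\circ\mb{D}$ (Remark after Proposition \ref{bidualityofstack}) together with the commutation $\mb{D}(\ms{A}\boxtimes\ms{B})\cong\mb{D}(\ms{A})\boxtimes\mb{D}(\ms{B})$ of Proposition \ref{easypropfunc}(ii), the open immersion case reduces to the already-proven K\"{u}nneth isomorphism for $(j_{\st{X}}\times j_{\st{Y}})_+$. Combining the two factorizations gives the result. The main obstacle here is keeping the canonicality: one must verify that the isomorphism does not depend on the chosen compactifications, which follows by the usual cofinality argument among pairs of compactifications in $\mf{S}_{\mr{adm}}$.
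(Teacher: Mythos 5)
Your construction of the comparison map and the reduction to the finite and projection cases via the graph factorization coincide exactly with the paper's argument, down to citing Lemma \ref{compexttensoth} for the finite case and Lemma \ref{kunnethforprojstack} together with Lemma \ref{externaltensstsh}(i) for the projection case. For the $f_!$ statement, however, your route is more roundabout than the paper's. You choose compactifications $f=\overline{f}\circ j_{\st{X}}$, apply the $f_+$ case to the proper factors, and then treat the open immersions separately via $j_!\cong\mb{D}\circ j_+\circ\mb{D}$; this forces you to raise (and only gesture at resolving) the independence of the construction from the chosen compactification. The paper instead invokes Theorem \ref{relativedualprop} (Poincar\'{e} duality), which already gives $f_!\cong\mb{D}_{\st{X}'}\circ f_+\circ\mb{D}_{\st{X}}$ for \emph{every} $f\in\mf{S}_{\mr{adm}}$, not just open immersions; combining this with $\mb{D}((-)\boxtimes(-))\cong\mb{D}(-)\boxtimes\mb{D}(-)$ from Proposition \ref{easypropfunc}(ii) dualizes the first K\"{u}nneth isomorphism in one stroke, with no compactification to choose and hence no canonicality question to answer. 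Both derivations rest on the same two ingredients (duality and its commutation with $\boxtimes$), so yours is correct in substance, but applying the duality globally to $f$ and $g$ directly is cleaner and sidesteps the cofinality bookkeeping you flag at the end.
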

\begin{proof}
 Let us construct the first homomorphism. We have the following
 homomorphism
 \begin{equation*}
  (f\times g)^+\bigl(f_+(\ms{M})\boxtimes g_+(\ms{N})\bigr)\cong
   f^+f_+(\ms{M})\boxtimes g^+g_+(\ms{N})\rightarrow
   \ms{M}\boxtimes\ms{N}
 \end{equation*}
 where the first isomorphism follows by Proposition \ref{easypropfunc},
 and the second homomorphism is by adjunction. By taking the adjunction,
 we get the homomorphism we are looking for. To check that this
 homomorphism is an isomorphism, it suffices to treat the finite
 morphism case and projection case separately.
 The finite morphism case follows by Proposition
 \ref{compexttensoth}, and projection case follows by Lemma
 \ref{kunnethforprojstack} and Lemma \ref{externaltensstsh} (i).
 By Theorem \ref{relativedualprop}, we get
 that $f_!\cong\mb{D}_{\st{X}'}\circ f_+\circ\mb{D}_{\st{X}}$. Thus, the
 second isomorphism holds by the first one and the commutativity of
 $\boxtimes$ and $\mb{D}$ by Proposition \ref{easypropfunc}.
\end{proof}

\subsubsection{}
\label{Kunnethformulastform}
Let $f\colon\st{X}\rightarrow\st{Y}$ and
$g\colon\st{X}'\rightarrow\st{Y}$ be morphisms between admissible
stacks. Consider the following cartesian diagram:
\begin{equation*}
 \xymatrix@C=40pt{
  \st{X}\times_{\st{Y}}\st{X}'\ar[r]^{i}\ar[d]_{h}\ar@{}[rd]|\square&
  \st{X}\times\st{X}'\ar[d]^{f\times g}\\
 \st{Y}\ar[r]_-{\Delta_{\st{Y}}}&\st{Y}\times\st{Y}
  }
\end{equation*}
For $\ms{M}\in D^{\mr{b}}_{\mr{hol}}(\st{X})$, $\ms{N}\in
D^{\mr{b}}_{\mr{hol}}(\st{X}')$, we put
\begin{equation*}
 \ms{M}\boxtimes_{\st{Y}}\ms{N}:=
  i^+\bigl(\ms{M}\boxtimes\ms{N}\bigr).
\end{equation*}
When $f$ and $g$ are the identities, $(-)\boxtimes_{\st{Y}}(-)$ is
nothing but $(-)\otimes(-)$.

\begin{cor*}
 Assume $f$ and $g$ are $\cadm$. Then, we have a canonical
 isomorphism
 \begin{equation*}
   f_!(\ms{M})\otimes g_!(\ms{N})\cong
    h_!\bigl(\ms{M}\boxtimes_{\st{Y}}\ms{N}\bigr).
 \end{equation*}
\end{cor*}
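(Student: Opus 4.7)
The plan is to unwind the definitions and stitch together three previously established results: the definition $\ms{M}\otimes\ms{N}=\Delta^+_{\st{Y}}(\ms{M}\boxtimes\ms{N})$ from \S\ref{sixfuncadmstsec}, the external K\"unneth formula for $f_!,g_!$ from Proposition \ref{Kunnethextprod}, and the base change isomorphism for lower-shriek along arbitrary morphisms in $\mf{S}_{\mr{adm}}$ from Lemma \ref{basechforshrik}.

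Concretely, I would proceed as follows. Starting from the left-hand side, I first apply the definition of $\otimes$ on $\st{Y}$ to rewrite
\begin{equation*}
 f_!(\ms{M})\otimes g_!(\ms{N})\cong
 \Delta_{\st{Y}}^{+}\bigl(f_!(\ms{M})\boxtimes g_!(\ms{N})\bigr).
\end{equation*}
Then, by Proposition \ref{Kunnethextprod} applied to the pair of morphisms $f,g$ in $\mf{S}_{\mr{adm}}$, the bracket is canonically isomorphic to $(f\times g)_!\bigl(\ms{M}\boxtimes\ms{N}\bigr)$. It remains to commute $\Delta^+_{\st{Y}}$ past $(f\times g)_!$, and for this I invoke Lemma \ref{basechforshrik} applied to the cartesian square displayed in \ref{Kunnethformulastform}, which yields
\begin{equation*}
 \Delta^+_{\st{Y}}\circ(f\times g)_!\xrightarrow{\sim} h_!\circ i^+.
\end{equation*}
Composing these three isomorphisms and using the definition $\ms{M}\boxtimes_{\st{Y}}\ms{N}=i^+(\ms{M}\boxtimes\ms{N})$ produces the desired canonical isomorphism.

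There is essentially no obstacle: each of the three ingredients has already been established in full generality (Proposition \ref{Kunnethextprod} covers admissible stacks and $\mf{S}_{\mr{adm}}$-morphisms, and Lemma \ref{basechforshrik} covers arbitrary morphisms in $\mf{S}_{\mr{adm}}$, without any smoothness or properness hypothesis on the morphism being pulled back). The only item worth double-checking is that the isomorphism so constructed is indeed \emph{canonical}, i.e.\ independent of the compactifications implicit in the definitions of $f_!$, $g_!$, $(f\times g)_!$, and $h_!$; this follows from the functoriality clauses in the definitions of these functors in Lemma \ref{basechforshrik} and Proposition \ref{Kunnethextprod}, together with the compatibility of external tensor product with the gluing construction of $f_!$ recorded in Lemma \ref{compexttensoth}.
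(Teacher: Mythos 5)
Your proof is correct and is exactly the argument the paper intends: the paper's proof reads simply ``Use Lemma \ref{basechforshrik},'' taking for granted the K\"unneth isomorphism of Proposition \ref{Kunnethextprod} and the definition of $\otimes$ via $\Delta^+_{\st{Y}}$, which you spell out explicitly.
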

\begin{proof}
 Use Proposition \ref{basechforshrik}.
\end{proof}

\subsubsection*{Theory of weights revisited}
\subsubsection{}
\label{theoryofweightadmiss}
The theory of six functors for c-admissible stacks fits perfectly
with the theory of weights. Consider the situation as in
\ref{theoryofweightsetup}. The following is a direct consequence of
\ref{theoweiarst}:

\begin{thm*}
 Let $f\colon\st{X}\rightarrow\st{Y}$ be a morphism between
 admissible stacks.

 (i) Then the functors $f_+$, $f^+$, $\mb{D}$, $\otimes$ preserve
 $\iota$-mixed complexes. Moreover, $f_+$ (resp.\ $f^+$) preserves
 complexes of weight $\geq w$ (resp.\ $\leq w$), $\mb{D}$ exchanges
 complexes of weight $\leq w$ and $\geq-w$, and $\otimes$ sends
 complexes of weight $(\leq w,\leq w')$ to $\leq w+w'$.

 (ii) Assume $f=:j$ is an immersion, and $\ms{M}$ is $\iota$-pure of
 weight $w$ in $\mr{Hol}(\st{X}/L_F)$. Then $j_{!+}(\ms{M})$ is
 $\iota$-pure of weight $w$.
 
\end{thm*}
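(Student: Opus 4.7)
The plan is to deduce each claim from the primitive list of properties \ref{theoweiarst}.1--5 by exploiting the decompositions available once $\st{X}$ and $\st{Y}$ are admissible. Since the diagonal of any admissible stack is finite, the graph $\Gamma_f\colon\st{X}_0\to\st{X}_0\times\st{Y}_0$ of $f$ is finite, and we have the canonical factorization $f=p\circ\Gamma_f$ with $p$ the projection, giving $f_+=p_+\circ\Gamma_{f+}$ and $f^+=\Gamma_f^+\circ p^+$ (see \ref{finitemorphfibercat} and \ref{dfnofpushpullstackproj}). Similarly, by \ref{dfninthomstack}, $\otimes=\Delta_{\st{X}}^+\circ\boxtimes$ with $\Delta_{\st{X}}$ finite. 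Thus every functor in the theorem reduces to the four primitives already controlled in \ref{theoweiarst}: finite push-forward/pull-back, external tensor, and projection push-forward/pull-back.

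First I would handle $\mb{D}$: the analogous statement for $\mb{D}'$ is \ref{theoweiarst}.1, and by Remark \ref{bidualityofstack}(i) we have $\H^i\mb{D}\cong\H^i\mb{D}'$ canonically. Since $\iota$-mixedness and the weight bounds are cohomological conditions tested after smooth pull-back to realizable schemes, this identification is enough to transport the result. For $f_+$, the finite factor $\Gamma_{f+}$ preserves mixedness and weights by \ref{theoweiarst}.2, while the projection factor $p_+$ is by definition $p_{L_{\st{X}}+}$ in \ref{dfnofpushpullstackproj}; since $L_{\st{X}}$ has weight $\leq 0$ by \ref{theoweiarst}.4, the bound $\geq w$ follows from \ref{theoweiarst}.5 applied with $\ms{A}=L_{\st{X}}$ and $w'=0$, and mixedness is preserved by the same item. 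For $f^+$, the projection pull-back is $p^+(-)=L_{\st{X}}\boxtimes(-)$ and so preserves the bound $\leq w$ by combining \ref{theoweiarst}.3 with $L_{\st{X}}$ of weight $\leq 0$. The finite pull-back $\Gamma_f^+$ I would treat via the identity $\Gamma_f^+\cong\mb{D}\circ\Gamma_f^!\circ\mb{D}$ of Remark \ref{bidualityofstack}(iv) (or Lemma \ref{uppershriekproj}): the weight-swap of $\mb{D}$ just established together with preservation of weight $\geq w$ by $\Gamma_f^!$ from \ref{theoweiarst}.2 yield preservation of weight $\leq w$ by $\Gamma_f^+$. Finally, $\otimes$ is controlled by combining this last point (applied to $\Delta_{\st{X}}^+$) with \ref{theoweiarst}.3 for $\boxtimes$, which already gives mixedness and the additive bound $\leq w+w'$.

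The main obstacle is bookkeeping rather than substance: one must confirm that the identity $\H^i\mb{D}\cong\H^i\mb{D}'$ really transports weight-exchange between the two duals, and that the projection $p_+$ defined in \ref{dfnofpushpullstackproj} for admissible stacks agrees on the nose with the $p_{L_{\st{X}}+}$ appearing in \ref{theoweiarst}.5. Both verifications are already built into the construction of the six-functor formalism in \S\ref{sixfuncadmstsec}, so once the factorization $f=p\circ\Gamma_f$ and the description $\otimes=\Delta_{\st{X}}^+\circ\boxtimes$ are in place, the argument is routine.
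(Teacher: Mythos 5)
Your argument is correct and coincides with what the paper intends when it declares the theorem ``a direct consequence of \ref{theoweiarst}'': you factorize $f=p\circ\Gamma_f$ and $\otimes=\Delta^+\circ\boxtimes$ as in \ref{dfnofpushpullstackproj} and the paragraph defining the tensor product, and feed the pieces to the five primitives. Your handling of the $\mb{D}$ versus $\mb{D}'$ point via the cohomological identification of Remark \ref{bidualityofstack}(i) is exactly the observation that makes the reduction legitimate, since \ref{theoweiarst}.1 is stated for $\mb{D}'$ while the theorem concerns $\mb{D}$ and the paper does not claim these two duals agree.
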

In particular, by using the duality \ref{relativedualprop}, if $f$ is
proper, $f_+$ sends a complex pure of weight $w$ to a pure complex of
weight $w$.

\subsection{Miscellaneous on the cohomology theory}
\label{miscstsubsec}
Before proceeding to the next section, we pose a little and collect some
miscellaneous results which are used in the proof of the Langlands
correspondence. So far, we have established the theory for admissible
stacks. However, in the proof of the Langlands correspondence, we
sometimes need to deal with non-admissible stacks. For this, we employ
{\it ad hoc} constructions of cohomological operations and prove some
basic properties.
The next theme of this subsection is to show smoothness results using,
again, {\it ad hoc} construction of the nearby cycle functor.
Finally, we collect some properties of Tannakian fundamental groups of
isocrystals.

\subsubsection*{Cohomology theory for algebraic stacks}
\subsubsection{}
\label{descentconstcat}
We denote by $\mr{St}^{\mr{lft}}(k)$ the category of algebraic
stacks locally of finite type over $k$.
Let $\st{X}$ be in $\mr{St}^{\mr{lft}}(k)$. To
$X\in\st{X}_{\mr{sm}}$ (cf.\ \ref{smpresentnotasur}), we associate the
category $\mr{Con}(X/L)$.
For $f\colon X\rightarrow Y$ in
$\st{X}_{\mr{sm}}$, we have the pull-back
functor $f^+\colon\mr{Con}(Y/L)\rightarrow\mr{Con}(X/L)$. This functor
is exact by Lemma \ref{exactnessconsbasicfunc} (ii).
With the isomorphism $(f\circ g)^+\cong g^+\circ f^+$, these data form a
fibered category
$\mr{Con}_{\st{X}/L}\rightarrow\st{X}_{\mr{sm}}$. Now, we have

\begin{lem*}
 When $\st{X}$ is {\em quasi-compact}, the category of c-modules
 $\mr{Con}(\st{X}/L)$ (cf.\ \ref{Artinstackcdctstr}) we have
 defined so far is equivalent to the category of cartesian sections of
 the fibered category $\mr{Con}_{\st{X}/L}$ over $\st{X}_{\mr{sm}}$.
\end{lem*}
\begin{proof}
 We denote by $\Gamma\mr{Con}_{\st{X}/L}$ the category of cartesian
 sections of the fibered category $\mr{Con}_{\st{X}/L}$.
 We construct a functor
 $F\colon\mr{Con}(\st{X}/L)\rightarrow\Gamma\mr{Con}_{\st{X}/L}$.
 Let $\ms{M}\in\mr{Con}(\st{X}/L)$, and $X\in\st{X}_{\mr{sm}}$.
 Let $\rho\colon X\rightarrow\st{X}$ be the smooth
 morphism, then since $X$ and $\st{X}$ are both \stack in the sense of
 2nd read case, the functor $\rho^+$ (which is isomorphic to
 $\rho^*[-d_{X/\st{X}}]$ by Lemma \ref{starplusequal}) is defined, and
 put $F(\ms{M})_X:=\rho^+(\ms{M})\in\mr{Con}(X/L)$.
 Defining the gluing isomorphism by the transitivity of
 pull-backs, we have $F(\ms{M})\in\Gamma\mr{Con}_{\st{X}/L}$.

 Let $\algsp{X}_\bullet\rightarrow\st{X}$ be a simplicial algebraic
 space presentation.
 By associating the category $\mr{Con}(\algsp{X}_i/L)$ to $\algsp{X}_i$
 and considering the pull-back, we have the cofibered category
 $\mr{Con}(\algsp{X}_\bullet/L)_{\bullet}$ over $\Delta^+$ similarly to
 Definition \ref{dfncofibcatrel}.
 By the construction of $F$, there exists the canonical functor
 $\mr{Con}(\st{X}/L)\rightarrow
 (\mr{Con}(\algsp{X}_\bullet/L)_{\bullet})_{\mr{tot}}$. This induces an
 equivalence of categories: we can check the fully faithfulness by using
 (\ref{BDspecseq}), and it is essentially surjective by Theorem
 \ref{BBDgluingthm}.

 Let us construct the functor $G\colon\Gamma\mr{Con}_{\st{X}/L}
 \rightarrow\mr{Con}(\st{X}/L)$.
 First, let $\st{X}=:\algsp{X}$ be an algebraic space. Take
 $X\in\algsp{X}_{\mr{sm}}$, and
 $X_\bullet:=\mr{cosk}_0(X\rightarrow\algsp{X})$. Then since
 $X_n\in\algsp{X}_{\mr{sm}}$, we have the ``restriction'' functor
 $G\colon\Gamma\mr{Con}_{\algsp{X}/L}\rightarrow
 (\mr{Con}(X_\bullet/L)_{\bullet})_{\mr{tot}}\cong
 \mr{Con}(\algsp{X}/L)$. This does not depend on the auxiliary
 choices, and it is straightforward to check that this is quasi-inverse
 to $F$, thus the lemma is shown when $\st{X}$ is an algebraic space.
 Now, for a smooth morphism $f\colon\st{X}\rightarrow \st{Y}$ of
 algebraic stacks, we have the faithful functor
 $\st{X}_{\mr{sm}}\rightarrow\st{Y}_{\mr{sm}}$ sending
 $X\rightarrow\st{X}$ to the composition
 $X\rightarrow\st{X}\xrightarrow{f}\st{Y}$. This induces a functor
 $f^+\colon\Gamma\mr{Con}_{\st{Y}/L}\rightarrow
 \Gamma\mr{Con}_{\st{X}/L}$. Let $\rho\colon\algsp{X}\rightarrow\st{X}$
 be a smooth morphism. Then we have the functor $\Gamma
 \mr{Con}_{\st{X}/L}\xrightarrow{\rho^+}\Gamma\mr{Con}_{\algsp{X}/L}
 \cong\mr{Con}(\algsp{X}/L)$.
 Take a simplicial presentation
 $\algsp{X}_\bullet$ of $\st{X}$, then $\algsp{X}_n$ is an algebraic
 space so we argue as the case where $\st{X}$ is an algebraic space to
 construct the quasi-inverse $G$ of $F$.
\end{proof}

This lemma enables us to define $\mr{Con}(\st{X}/L)$, for $\st{X}$ in
$\mr{St}^{\mr{lft}}(k)$ not necessarily quasi-compact, to
be the category of cartesian sections of $\mr{Con}_{\st{X}/L}$.
We often denote $\mr{Con}(\st{X}/L)$ by $\mr{Con}(\st{X})$ for
simplicity. For a smooth morphism
$f\colon\st{X}\rightarrow\st{Y}$ in $\mr{St}^{\mr{lft}}(k)$, we
have a faithful functor $\st{X}_{\mr{sm}}\rightarrow\st{Y}_{\mr{sm}}$,
which induces the pull-back functor
$f^+\colon\mr{Con}(\st{Y}/L)\rightarrow\mr{Con}(\st{X}/L)$. Assume
$\st{X}$ and $\st{Y}$ are of finite type, and let $d$ be the relative
dimension of $f$. Then $f^+$ is canonically equivalent to $f^*[-d]$
using the functor in \ref{smoothpullback}. For
$\ms{M}\in\mr{Con}(\st{Y}/L)$, we sometimes denote
$f^+\ms{M}\in\mr{Con}(\st{X}/L)$ by $\ms{M}_{\st{X}}$.

\subsubsection{}
\label{representpushfor}
Let $f\colon\st{X}\rightarrow\st{Y}$ be a {\em representable morphism of
finite type} in $\mr{St}^{\mr{lft}}(k)$. Let us
construct $f^i_+$ and $f^i_!$. To construct these, let
$Y\in\st{Y}_{\mr{sm}}$. This defines the morphisms
$f_Y\colon\st{X}_Y:=\st{X}\times_{\st{Y}}Y\rightarrow Y$, and
$\rho\colon\st{X}_Y\rightarrow\st{X}$. Note that
$\st{X}_Y$ is an algebraic space by assumption, thus $f_Y$ is
compactifiable by Remark \ref{defofcadmcat}.
Now, for $\ms{M}\in\mr{Con}(\st{X})$, put
$\bigl(f^i_+\ms{M}\bigr)_Y:=\cH^if_{Y+}\rho^*(\ms{M})$, and similarly for
$\bigl(f^i_!\ms{M}\bigr)_Y$.
For a smooth morphism $\phi\colon Y'\rightarrow Y$ in
$\st{Y}_{\mr{sm}}$, the base change theorem \ref{smoothbcthmst} and
Proposition \ref{basechforshrik} give us the isomorphisms
\begin{equation*}
  \phi^+\bigl(f^i_+\ms{M}\bigr)_Y\cong
   \bigl(f^i_+\ms{M}\bigr)_{Y'},\qquad
   \phi^+\bigl(f^i_!\ms{M}\bigr)_Y\cong
   \bigl(f^i_!\ms{M}\bigr)_{Y'}.
\end{equation*}
We can check the transitivity of these isomorphisms easily, and define
objects $\bigl\{(f^i_+\ms{M})_Y\bigr\}_{Y\in\st{Y}_{\mr{sm}}}$ and
$\bigl\{(f^i_!\ms{M})_Y\bigr\}_{Y\in\st{Y}_{\mr{sm}}}$ in
$\mr{Con}(\st{Y})$. We denote them by $f^i_+\ms{M}$ and $f^i_!\ms{M}$
respectively. When $f$ is an immersion, $f^0_!$ is c-t-exact and
$f^i_!=0$ for $i\neq0$ by Lemma \ref{constproplem}.
Moreover, $f^0_!$ coincides with that induced by
\ref{openimmdefshri}. In this case, we sometimes denote $f^0_!$ by
$f_!$.

\subsubsection{}
\label{gluinglemmaarti}
Let $\mr{St}^{\mr{lft,sm}}(k)$ be the subcategory of
$\mr{St}^{\mr{lft}}(k)$ such that the objects are the same, and
for morphisms, we only consider smooth morphisms.
By associating $\mr{Con}(\st{X})$ to
$\st{X}\in\mr{St}^{\mr{lft,sm}}(k)$, and considering the
pull-back $f^*$ for smooth morphisms, we have the fibered category
$\mr{Con}\rightarrow\mr{St}^{\mr{lft,sm}}(k)$.
Then we have the following descent result:

\begin{lem*}
 Smooth surjective representable morphisms are universal effective
 descent morphisms in the fibered category
 $\mr{Con}\rightarrow\mr{St}^{\mr{lft,sm}}(k)$.
\end{lem*}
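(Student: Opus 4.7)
The plan is to bootstrap from Proposition \ref{smoothdesre}, which establishes this descent when both source and target are spaces. Let $f\colon\st{X}'\to\st{X}$ be a smooth surjective representable morphism in $\mr{St}^{\mr{ft,sm}}(k_\base)$. I need to show that $f^*$ induces an equivalence between $\mr{Hol}(\st{X})$ and the category of descent data on $\st{X}'$ relative to $f$, and that this persists after base change along any smooth morphism $\st{Z}\to\st{X}$.

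First I would reduce to the case where $\st{X}$ is a scheme. By the definition of $\mr{Hol}(\st{X})$ as cartesian sections over $\st{X}_{\mr{sm}}$ (cf.\ the beginning of \S\ref{miscstsubsec}), the assignment $\st{X}\mapsto\mr{Hol}(\st{X})$ is itself a $2$-sheaf on the smooth topology, so whether $f$ is an effective descent morphism may be checked after base change to a smooth presentation $X\to\st{X}$ by an affine scheme. Replacing $\st{X}$ by $X$ and $\st{X}'$ by $\st{X}'\times_{\st{X}}X$ (both assumptions on $f$ being preserved by smooth base change), we may assume $\st{X}=:X$ is a scheme; then $\st{X}'$ is an algebraic space since $f$ is representable. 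Choose a smooth surjection $X''\to\st{X}'$ from a scheme and form the composite $h\colon X''\to X$, which is a smooth surjective morphism of algebraic spaces.

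Now I would form the simplicial scheme $X''_\bullet:=\mr{cosk}_0(X''\to X)$ and apply Proposition \ref{smoothdesre} to obtain an equivalence $\mr{Hol}(X)\xrightarrow{\sim}\mr{Hol}(X''_\bullet)$ (the t-structure is preserved, so the equivalence holds on the holonomic hearts). Simultaneously, using $\mr{cosk}_0(X''\to\st{X}')$ as a simplicial presentation of $\st{X}'$, Definition \ref{dfnofcat} together with Proposition \ref{equivcatstackdfn} gives $\mr{Hol}(\st{X}')\simeq\mr{Hol}(\mr{cosk}_0(X''\to\st{X}'))$. Combining these and comparing across the bisimplicial object $\mr{cosk}_0(X''\to X)\times_X\mr{cosk}_0(X''\to\st{X}')$ identifies cartesian sections over $X''_\bullet$ with descent data along $f$ in $\mr{Hol}(\st{X}')$; stringing the equivalences together yields the desired $\mr{Hol}(X)\xrightarrow{\sim}\{\text{descent data along }f\}$. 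Universality is immediate because smoothness, surjectivity, and representability are all stable under smooth base change, so the same reductions apply to $\st{X}'\times_{\st{X}}\st{Z}\to\st{Z}$ for any smooth $\st{Z}\to\st{X}$.

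The main technical step, which the sketch above glosses over, is the identification in the previous paragraph of cartesian sections over $X''_\bullet$ with descent data for $f$. Concretely, one must verify that given $\ms{M}\in\mr{Hol}(\st{X}')$ with an isomorphism $p_1^*\ms{M}\cong p_2^*\ms{M}$ on $\st{X}'\times_X\st{X}'$ satisfying the cocycle condition, the pullback to $X''\times_X X''$ defines a cartesian section over $X''_\bullet$, and conversely that a cartesian section over $X''_\bullet$ descends, via Proposition \ref{smoothdesre} applied to the smooth morphism $X''\times_X X''\to\st{X}'\times_X\st{X}'$ and to $X''\to\st{X}'$, to a descent datum on $\st{X}'$. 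This is formal but requires some careful bookkeeping with cocycles; once established, the rest of the argument is automatic.
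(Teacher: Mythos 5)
Your proposal takes a different route from the paper's, and it leaves gaps exactly where the hard part lives. The paper's proof is very short: by Proposition \ref{smoothdesre} and the definition of $\mr{Hol}(\st{X})$ via simplicial presentations, a smooth surjective morphism from an algebraic space to an algebraic stack is automatically a universal effective descent morphism; and since Giraud \cite[6.23]{Gi} shows that universal effective descent morphisms form a topology, the locality axiom of that topology immediately promotes the result to all smooth surjective representable morphisms --- refine $f$ by a smooth presentation $X\to\st{X}$, observe that both $X\to\st{X}$ and $\st{X}'\times_{\st{X}}X\to X$ are already known to be universal effective descent morphisms, and conclude that $f$ is too.

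You instead try to build the equivalence by hand. In principle this can be made to work, but as written the two crucial steps are both unproved, and they are exactly what Giraud's theorem supplies. The first reduction --- that effective descent for $f$ may be checked after base change along a smooth presentation --- is not a formal consequence of $\mr{Hol}$ being given by cartesian sections over $\st{X}_{\mr{sm}}$; it is precisely the locality statement for universal effective descent morphisms, and its verification already needs the bisimplicial comparison you are trying to defer, so you are assuming a form of what you need to show. The second, the ``careful bookkeeping with cocycles'' you postpone in your last paragraph, is that same comparison; one would carry it out along the lines of Proposition \ref{equivcatstackdfn} using the double simplicial space whose $(n,m)$ term is $\bigl(X\times_{\st{X}}\dots\times_{\st{X}}X\bigr)\times_{\st{X}}\bigl(\st{X}'\times_{\st{X}}\dots\times_{\st{X}}\st{X}'\bigr)$, all of which are algebraic spaces because $f$ is representable, so that Proposition \ref{smoothdesre} applies termwise in both simplicial directions. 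Neither step is wrong in spirit, but together they constitute essentially the whole content of the lemma, which is why the paper outsources both to a single citation of Giraud and thereby also gets universality for free.
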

\begin{proof}
 Let $\mr{St}^{\mr{lft,sm,rep}}(k)$ be the subcategory of
 $\mr{St}^{\mr{lft,sm}}(k)$ such that the objects are the same and
 we only consider representable ones for morphisms between algebraic
 stacks. We may consider $\mr{Con}$ as a fibered category over
 $\mr{St}^{\mr{lft,sm,rep}}(k)$.
 We already proved in the proof of Lemma \ref{descentconstcat} that a
 smooth surjective morphism {\em from an algebraic space} is an effective
 descent morphism, thus it is a universal effective descent morphism in
 $\mr{St}^{\mr{lft,sm,rep}}(k)$.
 Let $\st{Y}\rightarrow\st{X}$ be a smooth surjective representable
 morphism. Take a smooth representable morphism
 $\algsp{X}\rightarrow\st{X}$ from an algebraic space, then we know that
 $\algsp{X}\rightarrow\st{X}$ and
 $\st{Y}\times_{\st{X}}\algsp{X}\rightarrow\algsp{X}$ are universal
 effective descent.
 Universal effective descent morphisms form a topology by
 \cite[6.23]{Gi}, $\st{Y}\rightarrow\st{X}$ is universal effective
 descent by {\it caract\`{e}re local} (cf.\ [SGA 4, II, 1.1])
 of Grothendieck topology.
\end{proof}

\subsubsection{}
\label{defpullbackconst}
We may extend the pull-back functor to arbitrary morphism between
algebraic stacks. Let $f\colon\st{X}\rightarrow\st{Y}$ be a morphism
in $\mr{St}^{\mr{lft}}(k)$, and take $Y\in\st{Y}_{\mr{sm}}$. Put
$f'\colon\st{X}':=\st{X}\times_{\st{Y}}Y\rightarrow Y$. Now, let
$X'\in\st{X}'_{\mr{sm}}$. Then we have the morphism $f'_{X'}\colon
X'\rightarrow Y$. We put
\begin{equation*}
 f'^+(\ms{M}_Y)_{X'}:=f'^+_{X'}(\ms{M}_Y).
\end{equation*}
We can check easily that the collection of these modules
satisfies the compatibility condition, and defines a cartesian section
of the fibered category
$\mr{Con}_{\st{X}'}$, which we define to be the module $f'^+(\ms{M}_Y)$
in $\mr{Con}(\st{X}')$.
These modules yield a descent data with respect to the representable
smooth morphism $\st{X}'\rightarrow\st{X}$. By using Lemma
\ref{gluinglemmaarti}, we get $f^{+}(\ms{M})\in\mr{Con}(\st{X})$.
The pull-back is exact by Lemma \ref{exactnessconsbasicfunc} (ii), and
satisfies the transitivity property: for morphisms
$\st{X}\xrightarrow{f}\st{Y}\xrightarrow{g}\st{Z}$ in
$\mr{St}^{\mr{lft}}$, we have a canonical equivalence $(g\circ f)^+\cong
f^+\circ g^+$.

Finally, let $\st{C}^{(\prime)}\rightarrow\st{D}$ be a morphism in
$\mr{St}^{\mr{lft}}(k)$ between smooth stacks. Let $\ms{M}^{(\prime)}$
be in $\mr{Sm}(\st{C}^{(\prime)})$. Let
$\Delta\colon\st{C}\times_{\st{D}}\st{C}'\rightarrow
\st{C}\times\st{C}'$ be the canonical morphism. We define
$\ms{M}\boxtimes_{\st{D}}\ms{M}':=\Delta^{+}(\ms{M}\boxtimes\ms{M}')$
for $\ms{M}^{(\prime)}\in\mr{Con}(\st{C}^{(\prime)})$.

\begin{lem}
 \label{Artinstackpushfor}
 (i) If $f\colon\st{X}\rightarrow\st{Y}$ is a representable morphism in
 $\cadm$, then $\cH^if_!\cong f^i_!$.

 (ii) Consider the following cartesian diagram in
 $\mr{St}^{\mr{lft}}(k)$:
 \begin{equation*}
  \xymatrix{
   \st{X}'\ar[r]^-{g'}\ar[d]_{f'}\ar@{}[rd]|\square&
   \st{X}\ar[d]^{f}\\
  \st{C}'\ar[r]_-{g}&\st{C}.
   }
 \end{equation*}
 Assume $f$ is representable. Then there exists a canonical
 isomorphism $g^+\circ f^i_!\cong f'^i_!\circ g'^+$.
\end{lem}
\begin{proof}
 The first claim follows from the definition, and the verification for
 (ii) is easy from the base change \ref{basechforshrik}.
\end{proof}

\subsubsection{}
\label{invariantclassflrdgr}
A morphism in $\mr{St}^{\mr{lft}}(k)$ is
said to be {\em gerb-like} if locally with respect to fppf-topology the
morphism can be written as the canonical morphism $B(G/\algsp{X})(=:BG)
\rightarrow\algsp{X}$  (cf.\ \cite[9.6]{LM}) for some flat group space
$G$ of finite presentation over $\algsp{X}$. Recall that such morphisms
are smooth by \cite[5.1.3, 5.1.5]{Behr}.

\begin{lem*}
 Let $\algsp{X}$ be a algebraic space of finite type, and $G$
 be a flat algebraic group space finite radicial surjective over
 $\algsp{X}$. Let $\rho\colon BG\rightarrow \algsp{X}$ be the canonical
 morphism. Note that $\rho$ is proper and is in $\cadm$.
 Then $\rho^+$ and $\rho_!\cong\H^0\rho_!$ induce the equivalence of
 categories between $\mr{Con}(\algsp{X})$ and $\mr{Con}(BG)$.
\end{lem*}
\begin{proof}
 Note that $\rho$ is a proper morphism between admissible stacks since
 $G$ is finite, thus $\rho\in\cadm$.
 We have the following commutative diagram
  \begin{equation*}
   \xymatrix{
    \algsp{X}\ar[r]_-{u}\ar@/^3ex/[rr]^{\mr{id}}&
    BG\ar[r]_-{\rho}&\algsp{X}
    }
  \end{equation*}
 where $u$ is the universal $G$-torsor, which is a
 universal homeomorphism by assumption. Thus we can use Lemma
 \ref{directfactorlemmaeasy} (i) to conclude.
\end{proof}

\begin{cor*}
 Let $f\colon\st{X}\rightarrow\st{Y}$ be a gerb-like morphism in
 $\mr{St}^{\mr{lft}}(k)$ whose structural group is flat
 finite and radicial. Then $f^+$ induces an equivalence of categories
 $\mr{Con}(\st{X})\cong\mr{Con}(\st{Y})$. Moreover, when $f\in\cadm$,
 $\cH^if_!=0$ for $i\neq0$, and $\cH^0f_!$ can be taken as a
 quasi-inverse to $f^+$.
\end{cor*}
\begin{proof}
 Since the structural group is flat, there exists a smooth surjective
 morphism from an algebraic space
 $P\colon\algsp{Y}_0\rightarrow\st{Y}$ such that
 $f_0\colon\st{X}_0:=\st{X}\times_\st{Y}\algsp{Y}_0\rightarrow\algsp{Y}_0$
 is a neutral gerb by Lemma \ref{neutralgerbsmpres}. Let
 $\algsp{Y}_1:=\algsp{Y}_0\times_{\st{Y}}\algsp{Y}_0$, 
 $\algsp{Y}_2:=\algsp{Y}_0\times_{\st{Y}}\algsp{Y}_0
 \times_{\st{Y}}\algsp{Y}_0$,
 and $f_i\colon\st{X}_i:=\algsp{Y}_i\times_{\st{Y}}\st{X}
 \rightarrow\algsp{Y}_i$ be the projection.
 We have the following diagram:
 \begin{equation*}
  \xymatrix{
   \mr{Con}(\st{Y})\ar[r]^{P^+}\ar[d]_{f^+}&
   \mr{Con}(\algsp{Y}_0)\ar@<0.5ex>[r]\ar@<-0.5ex>[r]
   \ar[d]^{f_0^+}&
   \mr{Con}(\algsp{Y}_1)\ar[d]^{f_1^+}
   \ar[r]\ar@<0.7ex>[r]\ar@<-0.7ex>[r]&
   \mr{Con}(\algsp{Y}_2)\ar[d]^{f_2^+}\\
  \mr{Con}(\st{X})\ar[r]^{P^+}&
   \mr{Con}(\st{X}_0)\ar@<0.5ex>[r]\ar@<-0.5ex>[r]&
   \mr{Con}(\st{X}_1)\ar[r]\ar@<0.7ex>[r]\ar@<-0.7ex>[r]&
   \mr{Con}(\st{X}_2)
   }
 \end{equation*}
 By the assumption on the structural group, $f_0^+$, $f_1^+$, $f_2^+$
 are equivalence of categories by Lemma
 \ref{invariantclassflrdgr}. Since $P$ is a presentation, we may use
 Lemma \ref{gluinglemmaarti} to conclude.
\end{proof}

\begin{lem}
 \label{connectinvcllsp}
 Let $\algsp{X}$ be an algebraic space of finite type over $k$
 such that $\algsp{X}_{\mr{red}}$ is smooth,
 and $G$ be a smooth fiberwise connected algebraic group over
 $\algsp{X}$. Then the pull-back by the structural morphism induces
 $\mr{Sm}(\algsp{X})\xrightarrow{\sim}\mr{Sm}(BG)$.
\end{lem}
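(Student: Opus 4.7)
The plan is to compare the two categories via the structural morphism $\pi\colon BG\to\algsp{X}$ and its canonical section $s\colon\algsp{X}\to BG$ classifying the trivial $G$-torsor. Under our hypotheses $s$ is smooth surjective of relative dimension $\dim G$, $\pi$ is smooth of (stacky) relative dimension $-\dim G$, and $\pi\circ s=\mr{id}_{\algsp{X}}$. I would first replace $\algsp{X}$ by $\algsp{X}_{\mr{red}}$ via the universal-homeomorphism equivalence of \ref{revDmodtheory}, so that $\algsp{X}$ itself may be assumed smooth.

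By the very definition of $\mr{Sm}(-)$ in Definition \ref{isocsuppdef} together with Lemma \ref{starplusequal}, the suitably shifted pull-back $\pi^+[\dim G]$ sends $\mr{Sm}(\algsp{X})$ into $\mr{Sm}(BG)$, and $s^+[-\dim G]$ sends $\mr{Sm}(BG)$ into $\mr{Sm}(\algsp{X})$; the composition $s^+\circ\pi^+\cong\mr{id}$ (up to this canonical shift) already shows that $\pi^+$ is fully faithful. It remains to establish essential surjectivity. Using the simplicial presentation $\algsp{X}_\bullet:=\mr{cosk}_0(s)$ of $BG$, whose $1$-simplex $\algsp{X}\times_{BG}\algsp{X}$ is canonically identified with $G$ and whose two face maps both coincide with the structural morphism $\pi_G\colon G\to\algsp{X}$, Lemma \ref{gluinglemmaarti} and smooth descent identify an object $\widetilde{\ms{M}}\in\mr{Sm}(BG)$ with a pair $(\ms{M},\varphi)$, where $\ms{M}:=s^+\widetilde{\ms{M}}\in\mr{Sm}(\algsp{X})$ and $\varphi\colon\pi_G^+\ms{M}\xrightarrow{\sim}\pi_G^+\ms{M}$ is an automorphism in $\mr{Sm}(G)$ subject to the usual cocycle condition on $G\times_{\algsp{X}}G$. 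Pulling the cocycle back along the unit section $e\colon\algsp{X}\to G$ forces $e^+\varphi=\mr{id}_{\ms{M}}$, so essential surjectivity reduces to the assertion $\varphi=\mr{id}$.

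The main obstacle is the following rigidity statement: for the smooth morphism $\pi_G\colon G\to\algsp{X}$ with geometrically connected fibers, the pull-back functor $\pi_G^+\colon\mr{Sm}(\algsp{X})\to\mr{Sm}(G)$ is fully faithful. Using the projection formula and Lemma \ref{locdeschom} to commute $\shom$ with smooth pull-back, this reduces to the equality
\begin{equation*}
H^0\bigl(G,\pi_G^+\ms{F}\bigr)\cong H^0\bigl(\algsp{X},\ms{F}\bigr)
\quad\text{for every }\ms{F}\in\mr{Sm}(\algsp{X}),
\end{equation*}
which is the $p$-adic analogue of the statement that a lisse sheaf acquires no new global sections under smooth pull-back with geometrically connected fibers. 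Here the Frobenius-admissibility implicit in the definition of $\mr{Sm}$ (cf.\ \ref{fundproprealsch}.\ref{carodagdagcat}) is crucial: without it, nontrivial characters of $G$ would produce extra global automorphisms, as in the classical example of $B\mathbb{G}_m$. Granted this rigidity, one has $\varphi=\pi_G^+\varphi_0$ for a unique $\varphi_0\in\mr{End}(\ms{M})$, and the identity $e^+\varphi=\mr{id}$ yields $\varphi_0=\mr{id}$, hence $\varphi=\mr{id}$; consequently $\widetilde{\ms{M}}\cong\pi^+\ms{M}$, which completes the proof.
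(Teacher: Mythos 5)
Your argument correctly sets up the descent problem along the universal torsor and correctly observes that the cocycle condition forces $e^+\varphi=\mr{id}$, but it then defers the essential point to an unproved lemma. You identify the ``rigidity statement'' --- that $\pi_G^+\colon\mr{Sm}(\algsp{X})\to\mr{Sm}(G)$ is fully faithful, equivalently that $H^0(G,\pi_G^+\ms{F})\cong H^0(\algsp{X},\ms{F})$ for $\ms{F}\in\mr{Sm}(\algsp{X})$ --- as the main obstacle, and then write ``granted this rigidity\dots'' without supplying a proof. In the overconvergent setting this assertion is nontrivial (it amounts to surjectivity on isocrystal fundamental groups for smooth morphisms with geometrically connected fibers), and the paper does not provide it: Lemma \ref{surjofisocpione} treats only open immersions. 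As a lesser point, the retraction $s^+\circ\pi^+\cong\mr{id}$ proves only that $\pi^+$ is \emph{faithful}; a one-sided inverse does not give fullness, although fullness of the functor ``equip with trivial descent data'' is immediate from the descent description itself.

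The paper avoids the rigidity lemma with a more elementary device. After reducing to the same descent datum $\alpha\colon p^*\mc{E}\xrightarrow{\sim}p^*\mc{E}$, it forms $\mc{K}:=\mr{Ker}(\alpha-\mr{id})$, which is again smooth, and uses the vanishing $e^+(\alpha-\mr{id})=0$ (forced by the cocycle condition) to conclude that $\mc{K}$ has the same rank as $p^*\mc{E}$ along the unit section. Since $G$ is fiberwise connected and a smooth module has locally constant rank, $\mc{K}=p^*\mc{E}$, so $\alpha=\mr{id}$. This uses connectedness only through constancy of rank, rather than through a full-faithfulness or global-sections comparison for $\pi_G^+$. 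To repair your proof you would either need to prove the rigidity statement (which is genuine extra work) or substitute a direct rank argument of the paper's kind.
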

\begin{proof}
 We may replace $\algsp{X}$ by $\algsp{X}_{\mr{red}}$ since the derived
 category do not change, and we may assume that $\algsp{X}$ is smooth.
 The canonical morphism $\algsp{X}\rightarrow BG$ is a smooth
 presentation and $\algsp{X}\times_{BG}\algsp{X}\cong G$ such that the
 $i$-th projection
 $p_i\colon\algsp{X}\times_{BG}\mc{X}\rightarrow BG$ is the
 structural morphism $p\colon G\rightarrow\algsp{X}$ by
 \cite[4.6.1]{LM}. By Lemma \ref{gluinglemmaarti}, taking an object of
 $\mr{Sm}(BG)$ is equivalent to taking
 $\mc{E}\in\mr{Sm}(\algsp{X})$ endowed with an isomorphism
 $\alpha\colon p^+\mc{E}\xrightarrow{\sim}p^+\mc{E}$ satisfying the
 cocycle condition. Let
 $\mc{K}:=\mr{Ker}\bigl(\alpha-\mr{id}\colon p^+\mc{E}\rightarrow
 p^+{\mc{E}}\bigr)$. Since $p^+\mc{E}\in\mr{Hol}(G)$ is smooth, $\mc{K}$
 is smooth as well. Let $e\colon\algsp{X}\rightarrow G$ be the unit
 morphism. Since $p^+\mc{E}$ is smooth, we get the exact sequence
 \begin{equation*}
  0\rightarrow e^+(\mc{K})\rightarrow e^+p^+\mc{E}
   \xrightarrow{e^+(\alpha-\mr{id})}e^+p^+\mc{E}.
 \end{equation*}
 By the cocycle condition, $e^+(\alpha-\mr{id})$ is $0$, and thus the
 rank of $\mc{K}$ is equal to that of $p^+\mc{E}$ since $G$ is
 connected. Thus $\alpha$ is the identity, and we get the lemma.
\end{proof}

\begin{rem*}
 The assumption that $\algsp{X}_{\mr{red}}$ is smooth is made only for
 the simplicity. In fact, with a little more argument on {\it
 d\'{e}vissage}, the lemma remains true even if we replace
 $\mr{Sm}$ by $\mr{Con}$, but we do not need this much.
\end{rem*}

\begin{lem}
 \label{diagconngeblike}
 Let $f\colon\st{X}\rightarrow\st{Y}$ be a diagonally connected
 gerb-like morphism (cf.\ \cite[5.1.3]{Behr}).
 Shrinking $\st{Y}$ by its open dense substack if necessary, the
 functor $f^+\colon\mr{Sm}(\st{Y})\rightarrow\mr{Sm}(\st{X})$
 induces an equivalence.
\end{lem}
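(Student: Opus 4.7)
The strategy is to shrink $\st{Y}$ so that, smooth-locally on $\st{Y}$, the gerb-like morphism $f$ becomes a neutral gerbe $BG$ with $G$ a smooth connected group, and then to combine Lemma \ref{connectinvcllsp} with the smooth descent of Lemma \ref{gluinglemmaarti}, mimicking the proof of the Corollary following Lemma \ref{invariantclassflrdgr}.

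First I would use (the reference mentioned in that corollary's proof) to obtain a smooth surjective morphism $P\colon\algsp{Y}\to\st{Y}$ from an algebraic space such that the base change $f_0\colon\st{X}\times_{\st{Y}}\algsp{Y}\to\algsp{Y}$ is a neutral gerbe; hence $\st{X}\times_{\st{Y}}\algsp{Y}\cong BG$ for some flat group space $G/\algsp{Y}$. The diagonally connected hypothesis means that the inertia stack $\st{X}\to\st{X}\times_{\st{Y}}\st{X}$ has connected fibers, which transfers to the statement that the fibers of $G\to\algsp{Y}$ are connected. By shrinking $\st{Y}$ (and consequently $\algsp{Y}$) by a suitable open dense substack, I may assume that $\algsp{Y}_{\mr{red}}$ is smooth and that $G$ is smooth over $\algsp{Y}$ with geometrically connected fibers; this is possible by standard generic flatness / smoothness arguments and the fact that the generic fiber of $G$ is already smooth in characteristic zero considerations—or, in our setting, by invoking generic smoothness of finite type algebraic spaces combined with the connectedness hypothesis on diagonals.

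Next I would apply Lemma \ref{connectinvcllsp} to conclude that the canonical pull-back $\mr{Sm}(\algsp{Y})\xrightarrow{\sim}\mr{Sm}(BG)$ is an equivalence. To transport this equivalence down to the level of $\st{Y}$ and $\st{X}$, I would form the diagram (exactly as in the proof of the Corollary after Lemma \ref{invariantclassflrdgr})
\begin{equation*}
 \xymatrix{
  \mr{Sm}(\st{Y})\ar[r]\ar[d]_{f^*}&
  \mr{Sm}(\algsp{Y}_0)\ar@<0.5ex>[r]\ar@<-0.5ex>[r]\ar[d]&
  \mr{Sm}(\algsp{Y}_1)\ar[d]\ar@<0.5ex>[r]\ar@<-0.5ex>[r]\ar[r]&
  \mr{Sm}(\algsp{Y}_2)\ar[d]\\
  \mr{Sm}(\st{X})\ar[r]&
  \mr{Sm}(\st{X}_0)\ar@<0.5ex>[r]\ar@<-0.5ex>[r]&
  \mr{Sm}(\st{X}_1)\ar[r]\ar@<0.5ex>[r]\ar@<-0.5ex>[r]&
  \mr{Sm}(\st{X}_2)
 }
\end{equation*}
where $\algsp{Y}_n$ is the $n$-fold fiber product of $\algsp{Y}$ over $\st{Y}$ and $\st{X}_n:=\st{X}\times_{\st{Y}}\algsp{Y}_n$. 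Since each $f_n\colon\st{X}_n\to\algsp{Y}_n$ is again a neutral gerbe by a smooth connected group (after the shrinking already performed, up to a further shrinking on $\st{Y}$ which does not alter density), Lemma \ref{connectinvcllsp} applied termwise shows the three right-hand vertical arrows are equivalences. The smooth descent Lemma \ref{gluinglemmaarti} then forces the left-hand vertical arrow $f^*$ to be an equivalence as well.

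The main obstacle I anticipate is arranging, after shrinking, that $G$ is simultaneously smooth and fiberwise connected over $\algsp{Y}$ in a way that is preserved on the fiber products $\algsp{Y}_1$, $\algsp{Y}_2$ used for descent. In characteristic $p$ one has to be careful with non-reducedness of group schemes (e.g.\ $\alpha_p$-like phenomena), but the diagonal connectedness assumption rules out the purely infinitesimal part generically; once the generic fiber of $G$ is smooth and connected, the same holds on an open dense substack of $\algsp{Y}$, and the fiber product $\algsp{Y}_i$ of the restricted $\algsp{Y}$ will inherit the property since $G\times_{\algsp{Y}}\algsp{Y}_i$ is just the base change. Modulo this technical point, the remainder of the argument is a formal combination of Lemmas \ref{connectinvcllsp}, \ref{gluinglemmaarti}, and the construction used in the proof of the Corollary after \ref{invariantclassflrdgr}.
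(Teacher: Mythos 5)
There is a genuine gap, and you put your finger right on it in your last paragraph — but your proposed resolution does not work. You claim that ``the diagonal connectedness assumption rules out the purely infinitesimal part generically; once the generic fiber of $G$ is smooth and connected, the same holds on an open dense substack.'' This is false in characteristic $p$: a connected finite-type group scheme over a field of characteristic $p$ need not be smooth, and shrinking the base cannot fix this because the failure is already present over the generic point. The group scheme $\alpha_p=\mr{Spec}(k[T]/(T^p))$ is connected and precisely a ``purely infinitesimal'' group; a gerbe banded by $\alpha_p$ is diagonally connected, yet its structure group is nowhere smooth. The paper itself takes pains to discuss such an example in the Remark preceding Corollary \ref{alterationforadmisst}. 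So ``generic smoothness'' is not available, and Lemma \ref{connectinvcllsp} cannot be applied directly even after shrinking.

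The paper's proof closes exactly this gap with a Frobenius-twist reduction (borrowed from \cite[5.1.17]{Behr}). After the same reduction you perform to $\st{X}=BG$ over a scheme $Y$ with $Y_{\mr{red}}$ smooth, one factors the relative Frobenius of $G$ as $G\twoheadrightarrow G'\hookrightarrow G^{(p)}$; the kernel of $G\to G'$ is finite flat radicial, so Corollary \ref{invariantclassflrdgr} gives $\mr{Sm}(BG')\xrightarrow{\sim}\mr{Sm}(BG)$, letting one replace $G$ by $G'$. By [SGA 3, $\text{VII}_{\text{A}}$, 8.3], finitely many iterations of this replacement yield a group that is smooth over a dense open of $Y$; at that point Lemma \ref{connectinvcllsp} applies. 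Your argument needs this Frobenius step, or an equivalent device, to get from ``flat connected'' to ``smooth connected'' — generic shrinking alone cannot do it.
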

\begin{proof}
 Take a presentation $P\colon Y\rightarrow\st{Y}$ from a scheme. There
 exists an open dense subscheme $U\subset Y$ such that $U_{\mr{red}}$ is
 smooth. By replacing $\st{Y}$ by $P(U)\subset\st{Y}$, we may assume
 that $Y_{\mr{red}}$ is smooth. Now, by using smooth descent, we may
 assume that $\st{Y}=:Y$ is a scheme and $\st{X}=BG$ with a
 connected flat algebraic group space $G$ over $Y$. Since the category
 is stable under universal homeomorphism, we may replace $Y$ by
 $Y_{\mr{red}}$, and assume that $BG$ and $Y$ are smooth.

 When $G$ is smooth, the lemma follows from Lemma
 \ref{connectinvcllsp}. In the general case, we use the argument of
 \cite[5.1.17]{Behr}. Take the relative Frobenius $G\twoheadrightarrow
 G'\hookrightarrow G^{(p)}$. Then $\mr{Ker}(G\rightarrow G')$ is of
 height $\leq1$, so this is flat finite radicial by definition (cf.\
 [SGA 3, $\text{VII}_{\text{A}}$, 4.1.3]).
 By Corollary \ref{invariantclassflrdgr},
 $\mr{Sm}(BG')\xrightarrow{\sim}\mr{Sm}(BG)$, so we may
 replace $G$ by $G'$. Repeating this, we come down to the case where $G$
 is smooth over a dense open subscheme of $Y$ by [SGA 3,
 $\text{VII}_{\text{A}}$, 8.3].
 Thus, by shrinking $Y$, we are reduced to the case where $G$ is
 smooth.
\end{proof}

\subsubsection*{A smoothness criterion}
\subsubsection{}
In the proof of the Langlands correspondence, we need smoothness of
certain holonomic modules. For this, we need to use the functors of
Beilinson. Let $k'$ be a finite extension of $k$, and put
$\mb{A}^1_{k'}(=\mb{A}^1):=\mr{Spec}(k'[x])$.
Let $f\colon\st{X}\rightarrow\mb{A}^1_{k'}$ be a morphism from a
c-admissible stack. We put $i_f\colon\st{Z}_f:=
f^{-1}(0)\hookrightarrow\st{X}$, and
$j_f\colon\st{U}_f:=\st{X}\setminus\st{Z}_f\hookrightarrow\st{X}$.
Then for any $\ms{M}\in\mr{Hol}(\st{U}_f)$ and integers $a\leq b$,
holonomic modules
$\Pi^{a,b}_{!+}(\ms{M}),\Phi^{\mr{un}}_f(\ms{M})\in
\mr{Hol}(\st{Z}_f)$ are defined in \cite[\S2]{AC2} using a technique of
Beilinson. To clarify $f$, we denote it by $\Pi^{a,b}_{f}(\ms{M})$.
Put $\Pi^{0,0}_f:=\Psi^{\mr{un}}_f$, the {\em unipotent nearby cycle
functor}. Explicitly, we can compute using the notation of
\cite[2.5]{AC2} that
\begin{equation}
 \label{interpnearbycyc}
 \Psi^{\mr{un}}_f(\ms{M})\cong
  \indlim_s\mr{Ker}\bigl(j_{f!}(\ms{M}^{-s,0})\rightarrow
  j_{f+}(\ms{M}^{-s,0})\bigr)
  \cong
  \indlim_s\H^{-1}i^{+}_fj_{f+}(\ms{M}^{-s,0}).
\end{equation}

\subsubsection{}
Let us recall the local theory very briefly. See \cite[2.1]{AM}
for more detailed review and references of the theory.
Let $\mathbf{l}$ be a complete discrete valuation field over $k$.
Then the Robba ring (with coefficients in
$K\otimes_{W(k)}W(\mr{res}(\mathbf{l}))$) denoted by
$\mc{R}_{\mathbf{l}}$ is defined. When $\mathbf{l}=k\cc{x}$, then
\begin{equation*}
 \mc{R}_{\mathbf{l}}=\Biggl\{f=\sum_{n\in\mb{Z}}a_nx^n
  \in K\dd{x,x^{-1}}
  \,\Bigg\arrowvert\,
  \parbox{15em}{There exists $0\leq\varepsilon<1$ such that $f$
  converges on $\varepsilon<|x|<1$}
  \Biggr\},
\end{equation*}
where $K\dd{x,x^{-1}}$ is the {\em $K$-vector space} of formal series.
The Robba ring is endowed with derivation. A differential module over
$\mathbf{l}$ is a finite free $\mc{R}_{\mathbf{l}}$-module
endowed with connection. We have the notion of {\em solvable
differential module} which is an analogue of overconvergent isocrystal
for differential modules, whose category is denoted by
$\mr{Sol}(\mc{R}_{\mathbf{l}})$.
We have the $s$-th Frobenius endomorphism of
$\mc{R}_{\mathbf{l}}$, and the pull-back defines a functor
$F^*\colon\mr{Sol}(\mc{R}_{\mathbf{l}})\rightarrow
\mr{Sol}(\mc{R}_{\mathbf{l}})$, which is known to be an equivalence of
categories. Thus, we may apply the construction of \ref{extofsclarsub}.
The category $F\mbox{-}\mr{Sol}(\mc{R}_{\mathbf{l}})_L$ is denoted by
$\mr{Hol}(\mathbf{l}/\mf{T}_F)$. As in \ref{revDmodtheory}, we denote by
$\mr{Hol}(\mathbf{l}/\mf{T}_{\emptyset})$ the
thick full subcategory of the category of differential modules over
$\mathbf{l}$ generated by differential modules which can be endowed with
$s'$-th Frobenius structure for some positive integer $s'$ divisible by
$s$ (but we do not consider Frobenius structure).
Since we only use $\mr{Hol}(\mathbf{l}/\mf{T}_{\emptyset})$ in the
following, we denote this simply by $\mr{Hol}(\mathbf{l})$.
We call the objects of
$\mr{Hol}(\mathbf{l})$ {\em holonomic modules on $\mathbf{l}$}.
For a separable finite extension $\mathbf{l}'/\mathbf{l}$,
we are able to define the {\em push-forward functor}
$\mr{Hol}(\mathbf{l}')\rightarrow
\mr{Hol}(\mathbf{l})$ (cf.\ \cite[at the end of 2.1.4]{AM}).

\subsubsection{}
\label{locmonodromytheminter}
For a Galois extension $\mathbf{l}/k'\cc{x}$,
we define $\Psi_{\mathbf{l},f}(\ms{M})$ as
follows: Let us denote by $\mc{L}_{\mathbf{l}}$ the holonomic module on
$k'\cc{x}$ defined by taking the push-forward of the trivial module on
$\mathbf{l}$ along the extension $\mathbf{l}/k'\cc{x}$.
Let $\ms{L}_{\mathbf{l}}$ be the canonical extension of
$\mc{L}_{\mathbf{l}}$ on $\mb{G}_{\mr{m},k'}$ in the sense of Crew and
Matsuda (cf.\ \cite[2.1.9]{AM}). Then put
$\Psi_{\mathbf{l},f}(\ms{M}):=\Psi^{\mr{un}}_{f}(\ms{M}\otimes
f^+\ms{L}_{\mathbf{l}})$. We remark that $\ms{M}\otimes
f^+\ms{L}_{\mathbf{l}}$, which is {\it a priori} defined in
$D^{\mr{b}}_{\mr{hol}}(\st{X})$, is in $\mr{Hol}(\st{X})$.
Indeed, it suffices to check this when $\st{X}=:X$ is a realizable
scheme. In this case, we may take a closed immersion
$i\colon X\hookrightarrow P$ to a smooth scheme. By shrinking $X$, we
may assume that there exists $g\colon P\rightarrow\mb{A}^1$ such that
$g\circ i=f$. Now, by the projection formula $i_+(\ms{M}\otimes
f^+\ms{L}_{\mathbf{l}})\cong i_+(\ms{M})\otimes g^+\ms{L}_{\mathbf{l}}$,
and the latter is in $\mr{Hol}(P)$. Since we have the action of
$\mr{Gal}(\mathbf{l}/k'\cc{x})$ on $\ms{L}_{\mathbf{l}}$, it induces the
{\em Galois action} on $\Psi_{\mathbf{l},f}$.

\begin{lem*}
 Let $f\colon C\rightarrow\mb{A}^1_{k'}$ be an \'{e}tale morphism from a
 curve such that $f^{-1}(0)=\{s\}$ and $k(s)=k'$.
 Let $\ms{M}\in\mr{Hol}(C)$. Assume that $\ms{M}$ is smooth outside of
 $s$. If $\Phi^{\mr{un}}_{f}(\ms{M})=0$
 and the actions of $\mr{Gal}(\mathbf{l}/k'\cc{x})$ and monodromy
 operator on $\Psi_{\mathbf{l},f}(\ms{M})$ are trivial for any
 $\mathbf{l}$, then $\ms{M}$ is smooth.
\end{lem*}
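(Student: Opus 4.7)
The plan is to use the Beilinson--style nearby/vanishing cycle machinery of \cite{AC2}, combined with the $p$-adic local monodromy theorem of Kedlaya (or equivalently Christol--Mebkhout/Andr\'{e}/Mebkhout), to reconstruct $\ms{M}$ from its restriction to $C\setminus\{s\}$ and show that this reconstruction is smooth across $s$.

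First, since smoothness is local on $C$ and $\ms{M}$ is already smooth on $C\setminus\{s\}$ by hypothesis, it suffices to exhibit a smooth extension $\widetilde{\ms{M}}\in\mr{Sm}(C/L)$ of $\ms{M}_0:=j_f^+\ms{M}$ and identify it with $\ms{M}$. The $\Psi_{L,f}$ construction recalled in \ref{locmonodromytheminter} is a $p$-adic avatar of the classical $L$-isotypic component of the nearby cycles: $\Psi_{L,f}(\ms{M})$ sees precisely the part of the local Galois--monodromy representation of $\ms{M}_0$ at $s$ that becomes unipotent after pull--back along the covering $L/k'\cc{x}$. Thus the vanishing of $\mr{Gal}(L/k'\cc{x})$-action and monodromy operator on $\Psi_{L,f}(\ms{M})$ for every finite Galois $L$ says that the local $\ms{D}$-module associated to $\ms{M}_0$ at $s$ has trivial Galois action after every finite base change and trivial monodromy operator. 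By the $p$-adic local monodromy theorem together with Kedlaya's fully faithfulness \cite{Ke}, such an isocrystal is a (successive extension of) constant isocrystals on a punctured formal disk around $s$ and therefore admits a canonical smooth extension $\widetilde{\ms{M}}\in\mr{Sm}(C/L)$ with $j_f^+\widetilde{\ms{M}}\cong\ms{M}_0$.

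It remains to identify $\widetilde{\ms{M}}$ with $\ms{M}$. Since both have the same restriction to $C\setminus\{s\}$, the comparison reduces to showing that the natural adjunction map gives $\ms{M}\xrightarrow{\sim}j_{f+}j_f^+\ms{M}\xleftarrow{\sim}j_{f+}j_f^+\widetilde{\ms{M}}$, i.e.\ that $i_s^!\ms{M}\to i_s^!\widetilde{\ms{M}}$ is an isomorphism (equivalently, that the cone supported at $s$ vanishes). Using the Beilinson triangle relating $\Psi^{\mr{un}}_f$, $\Phi^{\mr{un}}_f$ and $i_s^+$ (cf.\ formula (\ref{interpnearbycyc})), any complex supported at $s$ whose unipotent vanishing cycles vanish is determined by the induced map on $\Psi^{\mr{un}}_f$; the hypothesis $\Phi^{\mr{un}}_f(\ms{M})=0$ combined with the monodromy triviality on $\Psi_{L,f}(\ms{M})$ for $L=k'\cc{x}$ forces the comparison map to be an isomorphism.

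The main obstacle is Step 2, the extraction of a true smooth extension from the hypotheses on $\Psi_{L,f}$ for all $L$. One must carefully dictionary--translate between the $D^{\dagger}$-theoretic functors $\Psi_{L,f}$, $\Phi^{\mr{un}}_f$ and the classical local data of an overconvergent isocrystal (its differential Swan conductor, its $p$-adic local monodromy filtration, and the restriction of its Galois representation after base change by finite covers of the Robba ring). The key input is that $\ms{M}_0$ is already assumed smooth on $C\setminus\{s\}$, so the only issue is the local monodromy at $s$; the combined Galois and monodromy triviality together with Kedlaya's semistable reduction then rule out any non-trivial Swan conductor or unipotent part, yielding the extension. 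Once this is in hand, the final identification with $\ms{M}$ is formal from the Beilinson gluing formalism.
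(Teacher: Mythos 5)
Your overall strategy — construct a smooth extension $\widetilde{\ms{M}}$ of $j_f^+\ms{M}$ from the local monodromy data and then identify it with $\ms{M}$ — is essentially equivalent to the paper's approach, which computes directly that the rank of $\Psi^{\mr{un}}_f(\ms{M})\cong i^+_f(\ms{M})[-1]$ equals the generic rank of $\ms{M}$. Both rest on the hypothesis $\Phi^{\mr{un}}_f(\ms{M})=0$, the $p$-adic local monodromy theorem, and the fact (from \cite[1.5.9 (iii)]{AC} and (\ref{interpnearbycyc})) that $\Psi_{L,f}$ depends only on the restriction of $\ms{M}$ to the Robba ring around $s$; and both, including the paper, leave that local computation as standard.

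However, your Step 3 contains a genuine error. You assert that the comparison reduces to showing the adjunction map $\ms{M}\xrightarrow{\sim}j_{f+}j_f^+\ms{M}$ is an isomorphism. This is never the case for a nonzero $\ms{M}$: the cone of the unit $\ms{M}\to j_{f+}j_f^+\ms{M}$ is $i_{s+}i_s^!\ms{M}[1]$, which does not vanish even when $\ms{M}$ is smooth (indeed $i_s^!$ of a smooth module is nonzero, being computed by purity). There is also no natural map $i_s^!\ms{M}\to i_s^!\widetilde{\ms{M}}$ to compare until one has already produced a map $\ms{M}\to\widetilde{\ms{M}}$, which is what is being sought. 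The correct mechanism is via the intermediate extension: since $\Phi^{\mr{un}}_f(\ms{N})\cong\ms{N}$ for any $\ms{N}$ supported at $s$, the hypothesis $\Phi^{\mr{un}}_f(\ms{M})=0$ implies that $\ms{M}$ has no nonzero subobject or quotient supported at $s$, i.e. $\ms{M}\cong j_{f!+}j_f^+\ms{M}$. Once your local analysis in Step 2 produces a smooth extension $\widetilde{\ms{M}}$ of $j_f^+\ms{M}$, the intermediate extension $j_{f!+}j_f^+\ms{M}$ is forced to coincide with $\widetilde{\ms{M}}$ (a smooth module is already its own intermediate extension from a dense open), and smoothness of $\ms{M}$ follows. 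So the conclusion is salvageable, but the specific reduction you wrote down needs to be replaced by this $j_{!+}$ argument.
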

\begin{proof}
 Since $\Phi^{\mr{un}}_f(\ms{M})=0$, we get that
 $i^+_f(\ms{M})[-1]\cong\Psi^{\mr{un}}_f(\ms{M})$. Since the rank of
 $i^!_f(\ms{M})$ and $i^+_f(\ms{M})$ are the same, it suffices to show
 that the rank of $\Psi^{\mr{un}}_f(\ms{M})$ is equal to that of
 $\ms{M}$ by \cite[4.1.4]{AM}.
 By (\ref{interpnearbycyc}) and \cite[1.5.9
 (iii)]{AC}, $\Psi_{\mathbf{l},f}$ depends only on the differential
 module on the Robba ring around $s$ defined by restricting $\ms{M}$,
 and we can compute $\Psi_{\mathbf{l},f}(\ms{M})$ by using the local
 monodromy theorem. Since the argument is standard, we leave the details
 to the reader.
\end{proof}

\begin{lem}
 \label{propercommupspi}
 Let $f\colon\st{X}\xrightarrow{h}\st{Y}\xrightarrow{g}\mb{A}^1$ be
 morphisms between c-admissible stacks. Assume that $h$ is proper.
 Then we have
 \begin{equation*}
  \Pi^{a,b}_{g}(\H^ih_+\ms{M})\cong
   \H^ih_+\Pi^{a,b}_{f}(\ms{M}).
 \end{equation*}
 The same isomorphism holds if we replace $\Pi^{a,b}_{\star}$ by
 $\Psi_{\mathbf{l},\star}$ or $\Phi^{\mr{un}}_{\star}$.
\end{lem}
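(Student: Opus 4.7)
The plan is to show that $h_+$ commutes with every building block of the Beilinson construction of $\Pi^{a,b}_f$, after which the stated formula follows by taking $\H^i$.

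First I would assemble the base change toolkit. Since $h$ is proper, we have $h_+\cong h_!$, and restricting $h$ to the cartesian squares
\begin{equation*}
 \xymatrix@R=12pt@C=15pt{
  \st{U}_f\ar[r]^-{j_f}\ar[d]_-{h_U}\ar@{}[rd]|\square&
  \st{X}\ar[d]^-h\ar@{}[rd]|\square&
  \st{Z}_f\ar[l]_-{i_f}\ar[d]^-{h_Z}\\
  \st{U}_g\ar[r]_-{j_g}&\st{Y}&\st{Z}_g\ar[l]^-{i_g}
 }
\end{equation*}
(where $h_U$, $h_Z$ are proper by base change), smooth base change (Proposition \ref{smoothbcthmst}) gives $j_g^+h_+\cong h_{U+}j_f^+$, and proper base change (Proposition \ref{propbasechansta}) gives $i_g^+h_+\cong h_{Z+}i_f^+$. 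Passing to right/left adjoints and using $h_+\cong h_!$ I obtain the dual commutations $h_+\circ j_{f+}\cong j_{g+}\circ h_{U+}$, $h_+\circ j_{f!}\cong j_{g!}\circ h_{U+}$, $h_+\circ i_{f+}\cong i_{g+}\circ h_{Z+}$ (the Yoneda argument sketched in \ref{finitemorphfibercat}--\ref{localtriangs} works verbatim here). Combined with the projection formula of \S\ref{sixfuncadmstsec}, applied to the Kummer/Kashiwara-type modules $\ms{L}$ on $\mb{A}^1$ or $\mb{G}_m$, I also get $h_+(\ms{N}\otimes f^+\ms{L})\cong h_+(\ms{N})\otimes g^+\ms{L}$, using $f^+=h^+g^+$.

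Next I would revisit the construction of $\Pi^{a,b}_f$ from \cite{AC2}, in particular the explicit formula (\ref{interpnearbycyc}) for $\Psi^{\mr{un}}_f$. The modules $\ms{M}^{-s,0}$ are formed from $\ms{M}$ by tensoring with a fixed system of pull-backs from $\mb{G}_m$; the transitions in $s$ are induced by maps between these sheaves on $\mb{G}_m$, hence pull back through $g$ before being extended to $\st{U}_f$ through $h$. Step-by-step: the projection formula turns $h_{U+}(\ms{M}^{-s,0})$ into $(h_{U+}\ms{M})^{-s,0}$; the commutation $h_+\circ j_{f+}\cong j_{g+}\circ h_{U+}$ moves $j_{f+}$ past $h_+$; proper base change $i_g^+h_+\cong h_{Z+}i_f^+$ moves $i_f^+$ past $h_{Z+}$; and the functor $\indlim$ commutes with the additive functor $h_{Z+}$ on ind-categories by Lemma \ref{indlimcommderfun}, while the transition maps are respected because all above isomorphisms are natural in $s$. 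Piecing these together produces a canonical isomorphism $h_{Z+}\Pi^{a,b}_f(\ms{M})\cong\Pi^{a,b}_g(h_{U+}\ms{M})$ in the derived category. The same argument handles $\Phi^{\mr{un}}_f$ (which is built from the same open/closed and tensor-with-Kummer ingredients) and $\Psi_{L,f}$ (which by definition equals $\Psi^{\mr{un}}_f(\ms{M}\otimes f^+\ms{L}_L)$, so the projection formula reduces it to the unipotent case).

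Finally I would take cohomology. Since $\Pi^{a,b}_\bullet$, $\Psi_{L,\bullet}$ and $\Phi^{\mr{un}}_\bullet$ are exact functors $\mr{Hol}(\st{U}_\bullet)\to\mr{Hol}(\st{Z}_\bullet)$ (this is part of their construction in \cite[\S2]{AC2}; the formula (\ref{interpnearbycyc}) realises them as a single cohomology sheaf of a bounded complex), they commute termwise with $\H^i$. Writing $h_+(\ms{M})\in D^{\mr{b}}_{\mr{hol}}(\st{Y})$ as a complex, the natural isomorphism of the previous paragraph descends under $\H^i$ to the identity
\begin{equation*}
 \Pi^{a,b}_g(\H^ih_+\ms{M})\cong\H^i\bigl(h_+\Pi^{a,b}_f(\ms{M})\bigr),
\end{equation*}
which is the stated formula. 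The main technical obstacle is the book-keeping in the second paragraph: one must check that the adjunction and base change isomorphisms fit into a single coherent 2-commutative diagram respecting the transition maps in the Beilinson limit, so that passing to $\indlim$ yields an honest natural isomorphism and not just a system of isomorphisms indexed by $s$; everything else is a direct consequence of the six functor formalism of \S\ref{sixfuncadmstsec}.
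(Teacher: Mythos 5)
Your proposal is essentially correct and follows the same route as the paper's own proof: commute $h_+$ with each building block of the Beilinson construction (open and closed push-forward/pull-back along $j_{f\star}$, $i_f$; tensoring by pull-backs from $\mb{G}_m$ via the projection formula), then conclude by the defining formula of $\Pi^{a,b}$, $\Psi_L$ and $\Phi^{\mr{un}}$. The only organizational difference is that you assemble a single derived-level isomorphism $h_{Z+}\Pi^{a,b}_f(\ms{M})\cong\Pi^{a,b}_g(h_{U+}\ms{M})$ and then descend to $\H^i$ by exactness of $\Pi^{a,b}_g$ (which implicitly requires extending $\Pi^{a,b}_g$ termwise to complexes), whereas the paper works directly with the cohomology modules from the start, moving $\H^i$ through $j_{g\star}$ (which is exact since $j_g$ is affine) and through the projection formula, and never needs to make sense of $\Pi^{a,b}_g$ on complexes.

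Two small warnings. First, your citation of Lemma \ref{indlimcommderfun} to commute the Beilinson limit with $h_{Z+}$ is not quite the right tool — that lemma is about $\mb{R}^if_*$ and filtrant inductive limits in ind-categories, while the commutation you need rests on the fact that the inductive system defining $\Pi^{a,b}$ stabilizes, so the limit is eventually constant and commutes trivially. Second, for $\Phi^{\mr{un}}$ your proposal waves the extension as ``the same argument''; the paper instead spells out the kernel/image presentation $K_f(\ms{M})=\mr{Ker}(\Xi_f(\ms{M})\oplus\ms{M}\twoheadrightarrow j_{f+}(\ms{M}))$ and $j_{f!}(\ms{M})\cong\mr{Im}(j_{f!}\to\Xi_f\oplus\mr{id})$, using that $\Xi_f$ and $j_{f\pm}$ already commute with $\H^ih_+$ — this is the one step where the ``same argument'' remark needs to be unpacked, since you must check that the kernel and image commute with the cohomological functor $\H^ih_+$, which is not automatic from the previous isomorphisms alone.
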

\begin{proof}
 Since $h$ is proper, we have $h_+j_{f\star}\cong j_{g\star}h_+$ for
 $\star\in\{!,+\}$. Thus, by projection formula, we have $\twolim\,
 j_{g\star}(\H^ih_+\ms{M})^{\bullet,\bullet}\cong
 \twolim\,\H^ih_+ j_{f\star}(\ms{M}^{\bullet,\bullet})$ where
 $\star\in\{!,+\}$. Thus, by construction, we get the commutativity for
 $\Pi^{a,b}$. Now, let us define $K_f(\ms{M}):=\mr{Ker}\bigl(
 \Xi_f(\ms{M})\oplus\ms{M}\twoheadrightarrow j_{f+}(\ms{M})\bigr)$,
 where $\Xi_f:=\Pi^{0,1}_f$.
 Since $\Xi_f$ and $j_{f+}$ commute with $\H^ih_+$, we get that
 $K_g(\H^ih_+(\ms{M}))\cong\H^ih_+K_f(\ms{M})$. Similarly,
 $I_f(\ms{M}):=\mr{Im}\bigl(j_{f!}(\ms{M})\rightarrow\Xi_f(\ms{M})
 \oplus\ms{M}\bigr)\,(\cong j_{f!}(\ms{M}))$ commutes with $\H^ih_+$ as
 well, and the lemma for $\Phi^{\mr{un}}:=K_f/I_f$
 follows by definition.
\end{proof}

\begin{lem}[{\cite[A.9 (i)]{Laf}}]
 \label{propsmoobcspeci}
 Let $p_{\st{X}}\colon\st{X}\rightarrow S$ be a proper morphism from a
 c-admissible stack to a smooth scheme, and
 $\mr{Res}\colon\st{X}\rightarrow\st{C}$ be a morphism to an algebraic
 stack locally of finite type over $k$.
 Assume that $(p_{\st{X}},\mr{Res})\colon\st{X}\rightarrow
 S\times\st{C}$ is smooth. Then for any $\ms{M}\in\mr{Con}(\st{C})$, the
 complex $p_{\st{X}+}\mr{Res}^+(\ms{M})$ is smooth.
\end{lem}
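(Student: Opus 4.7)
The plan is to reduce the smoothness of $p_{\st{X}+}\mr{Res}^*(\ms{M})$ to the vanishing/nearby cycle criterion of Lemma \ref{locmonodromytheminter}, then exploit the commutation of these functors with proper pushforward (Lemma \ref{propercommupspi}) and with smooth pullback.

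First I would reduce to the case that $S$ is a smooth curve. For any smooth morphism $g\colon C\to S$ from a smooth curve, smooth base change \ref{smoothbcthmst} identifies $g^+p_{\st{X}+}\mr{Res}^*(\ms{M})$ with $p_{\st{X}_C+}\mr{Res}_C^*(\ms{M})$ (where $\st{X}_C:=\st{X}\times_SC$ and $\mr{Res}_C$ is induced), and the hypotheses transfer: $p_{\st{X}_C}$ remains proper, and $(p_{\st{X}_C},\mr{Res}_C)$ remains smooth as a base change of $(p_{\st{X}},\mr{Res})$. Since smoothness of a complex on a smooth base can be tested after smooth pullback to smooth curves, I assume henceforth that $S=C$ is a smooth curve. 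Fix a closed point $s\in C$. Generic smooth base change \ref{genbasechange} shows $p_{\st{X}+}\mr{Res}^*(\ms{M})$ is smooth on a dense open, so only the local behavior at $s$ remains. After passing to an étale neighborhood, I may assume there is an étale map $f\colon C\to\mb{A}^1_{k(s)}$ with $f^{-1}(0)=\{s\}$. By Lemma \ref{locmonodromytheminter}, it then suffices to show $\Phi^{\mr{un}}_f\bigl(p_{\st{X}+}\mr{Res}^*(\ms{M})\bigr)=0$ and that the Galois and monodromy actions on $\Psi_{L,f}\bigl(p_{\st{X}+}\mr{Res}^*(\ms{M})\bigr)$ are trivial for every finite Galois extension $L/k(s)\cc{x}$. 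By Lemma \ref{propercommupspi} applied to the proper morphism $p_{\st{X}}$, each of these quantities is identified with $p_{\st{X}+}$ of the corresponding quantity computed with $f\circ p_{\st{X}}$ on $\mr{Res}^*(\ms{M})$.

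Next I would factor $\mr{Res}=p_{\st{C}}\circ(p_{\st{X}},\mr{Res})$, where $p_{\st{C}}\colon S\times\st{C}\to\st{C}$ is the projection, so that $\mr{Res}^*(\ms{M})\cong(p_{\st{X}},\mr{Res})^*p_{\st{C}}^*\ms{M}$, and similarly $f\circ p_{\st{X}}=(f\circ p_1)\circ(p_{\st{X}},\mr{Res})$ with $p_1$ the projection to $S$. Since $(p_{\st{X}},\mr{Res})$ is smooth by hypothesis, the definition (\ref{interpnearbycyc}) together with smooth base change implies that $\Phi^{\mr{un}}$ and $\Psi_L$ commute with $(p_{\st{X}},\mr{Res})^*$. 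Since $f\circ p_1$ depends only on the $S$-factor while $p_{\st{C}}^*\ms{M}$ is pulled back from the $\st{C}$-factor, a K\"unneth-type decomposition (from Proposition \ref{Kunnethextprod} applied to the constituent $j_!$ and $j_+$ appearing in (\ref{interpnearbycyc})) yields
\begin{equation*}
\Phi^{\mr{un}}_{f\circ p_1}\bigl(p_{\st{C}}^*\ms{M}\bigr)\cong\Phi^{\mr{un}}_f(L_S)\boxtimes\ms{M},\qquad
\Psi_{L,f\circ p_1}\bigl(p_{\st{C}}^*\ms{M}\bigr)\cong\Psi_{L,f}(L_S)\boxtimes\ms{M},
\end{equation*}
with the Galois and monodromy actions acting only on the first factor. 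Since $L_S$ is smooth on the smooth curve $S$ and $f$ is étale, Lemma \ref{locmonodromytheminter} applied in the other direction to $L_S$ gives $\Phi^{\mr{un}}_f(L_S)=0$ and triviality of the Galois/monodromy actions on $\Psi_{L,f}(L_S)$, which finishes the proof.

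The hardest part will be the rigorous verification of the third paragraph: the commutation of $\Phi^{\mr{un}},\Psi_L$ with smooth pullback by $(p_{\st{X}},\mr{Res})$ and the K\"unneth-type decomposition in the stack setting. Both should follow from the formula (\ref{interpnearbycyc}) together with \ref{smoothbcthmst} and \ref{Kunnethextprod}, but careful bookkeeping with the filtered colimit in (\ref{interpnearbycyc}) and with the Galois-equivariant structure carried by $\ms{L}_L$ will be required; everything else is a formal application of the already-established proper and smooth base change theorems.
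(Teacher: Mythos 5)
Your proof of the curve case is essentially the paper's argument, but the reduction to the curve case at the start of your proof is broken, and this is where the real content of the lemma lies.

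You claim to test smoothness by pulling back along smooth morphisms $g\colon C\to S$ from smooth curves. When $\dim S\ge 2$ there are no such morphisms: a smooth (or even \'{e}tale) map from a curve to a higher-dimensional smooth variety cannot exist for dimension reasons. What you actually need is to restrict along \emph{closed immersions} $c\colon C\hookrightarrow S$ of smooth curves. This has two consequences you cannot sidestep. First, the base change you need is not \ref{smoothbcthmst} but proper base change together with the purity isomorphism $c^!(\ms{H}^i_{\st{X}})\cong\H^ip_{\st{X}_C+}\mr{Res}_C^*(\ms{M})[-r]$ — that is how the paper passes information from $S$ to $C$. Second, and more importantly, the statement ``a holonomic module on a smooth higher-dimensional base is smooth if its restriction to every smooth curve is smooth'' is not a formal consequence of the six-functor formalism; it is Shiho's cut-by-curve theorem (cited in the paper as \cite[Thm 0.1]{Sh}, supplemented by \cite[5.2.1]{KeSS1}), which is a deep result about overconvergent isocrystals. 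Your phrasing treats this as a tautological reduction and skips the citation entirely. Even with Shiho's theorem in hand one only gets that $\ms{H}^i_{\st{X}}|_U$ \emph{extends} to a smooth module on $S$; to conclude that $\ms{H}^i_{\st{X}}$ itself is that extension one must also know $j_{f!+}(\ms{H}^i_{\st{X}})\cong\ms{H}^i_{\st{X}}$ for every $f$, which the paper extracts from the vanishing of $\Phi^{\mr{un}}_f$ — a step absent from your write-up.

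Within the curve case itself your strategy (reduce via Lemma \ref{locmonodromytheminter}, commute with $p_{\st{X}+}$ via Lemma \ref{propercommupspi}, then kill the nearby/vanishing cycles because $\mr{Res}^*\ms{M}$ is pulled back from the $\st{C}$-factor) is correct and matches the paper; the K\"unneth bookkeeping you flag as delicate is genuinely the same ingredient the paper uses, just phrased as ``$f$ and $\mr{Res}$ are smooth.'' But the higher-dimensional reduction needs to be replaced by the closed-immersion argument with Shiho's cut-by-curve theorem and the $j_{!+}$ identification.
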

\begin{proof}
 We put $\ms{H}^i_{\st{X}}:=\H^ip_{\st{X}+}\mr{Res}^+(\ms{M})$.
 Assume given a smooth morphism $f\colon S\rightarrow\mb{A}^1$.
 Put $g:=f\circ p_{\st{X}}$. Since $f$ and $(p_{\st{X}},\mr{Res})$ are
 smooth, we get that $\Phi^{\mr{un}}_{g}(\mr{Res}^+(\ms{M}))=0$ and the
 Galois and monodromy action on
 $\Psi_{\mathbf{l},g}(\mr{Res}^+(\ms{M}))$ are trivial. 
 Now, since $p_{\st{X}}$ is assumed proper,
 $\Phi^{\mr{un}}_{f}(\H_{\st{X}}^i)=0$ and the Galois and monodromy
 action on $\Psi_{\mathbf{l},f}(\ms{H}^i_{\st{X}})$ are trivial by Lemma
 \ref{propercommupspi}. This, in particular, implies that
 $j_{f!+}(\ms{H}^i_{\st{X}})\cong\ms{H}^i_{\st{X}}$. Moreover,
 when $S$ is a curve, the lemma holds. Indeed, take an open subscheme
 $U\subset S$ such that $\ms{H}^i_{\st{X}}$ is smooth on $U$. We may
 replace $S$ by $S\otimes_kk'$, and may assume that
 $S\setminus U$ is $k'$-rational. Since the verification is local, we
 may assume that we are given an \'{e}tale morphism $f\colon
 S\rightarrow\mb{A}^1_{k'}$ such that $f^{-1}(0)=S\setminus U$ consists
 of one point, and then apply Lemma \ref{locmonodromytheminter}.

 Let us treat the general case. By {\it d\'{e}vissage}, we may assume
 that $\ms{M}$ has a Frobenius structure. Let $c\colon C\hookrightarrow
 S$ be an immersion from a smooth curve $C$.
 By base change and
 purity, we have $c^!(\ms{H}_{\st{X}}^i)\cong\H^i_{\st{X}\times_SC}[-r]$
 where $r$ is the codimension of $C$ is $S$. By the curve case we have
 already treated, this is smooth.
 Let $U\subset S$ be an open dense subscheme on which
 $p_{\st{X}+}\mr{Res}^+(\ms{M})$ is smooth.
 By Shiho's cut
 by curve theorem \cite[Thm 0.1]{Sh}\footnote{
 In {\it ibid.}, there is an assumption that $k$ is
 uncountable. However, Shiho pointed out to the author that this
 assumption is not needed if $\mc{E}$ in [{\it ibid.}, Thm 0.1] is
 endowed with Frobenius structure. Indeed, let us use the notation of
 the proof of the theorem in [{\it ibid.}, \S2.3]. By using [{\it
 ibid.}, Thm 2.5] instead of Thm 2.10, the slope of $E_{\mc{E},L}$
 is $0$. Since we have a Frobenius structure, the exponents are in
 $\mb{Q}$, and we can use [{\it ibid.}, Prop 1.20] to conclude.
 } and \cite[5.2.1]{KeSS1},
 the smooth module $\ms{H}^i_{\st{X}}|_{U}$ on
 $U$ can be extended to a smooth module on $S$. Since we showed that
 $j_{f!+}(\ms{H}_{\st{X}}^i)\cong\ms{H}_{\st{X}}^i$ for any $f$,
 we get the lemma.
\end{proof}

\subsubsection*{Isocrystals and its Tannakian fundamental group}
\subsubsection{}
\label{algcloscoefftheory}
Let us introduce $\overline{\mb{Q}}_p$-coefficient cohomology
theory.
From now on, by saying a base tuple, we also allow $L$ to be an
algebraic extension of $K$ which may {\em not} be finite, in the
definition in \ref{5tuplesover}.
In the definition of arithmetic tuple, $\sigma\colon L\rightarrow L$
should moreover satisfy the following:
\begin{quote}
 the automorphism $\sigma$ is an extension of a lifting of $s$-th
 Frobenius automorphism on $k$ to $K$,
 and there exists a sequence of finite extensions $M_n$ of $K$
 in $L$ such that $\sigma(M_n)\subset M_n$ and $\bigcup_nM_n=L$.
\end{quote}
We use the 2-inductive limit method of
Deligne \cite[1.1.3]{De} to construct the $L$-theory. For an algebraic
stack $\st{X}$ (resp.\ scheme $X$) of finite type  over $k$, we
define
\begin{gather*}
 D^{\mr{b}}(\st{X}/L_{\base}):=
  2\text{-}\indlim_{M\supset K}D^{\mr{b}}(\st{X}/M_{\base}),\qquad
  \mr{Sm}(\st{X}/L_{\base}):=
  2\text{-}\indlim_{M\supset K}\mr{Sm}(\st{X}/M_{\base}),\\
  \mr{Isoc}^\dag(X/L_{\base}):=
  2\text{-}\indlim_{M\supset K}\mr{Isoc}^\dag(X/M_{\base}),
\end{gather*}
where $M=M_n$ in the case of $\base=F$.
By taking the limits, the results we get in this paper can be
generalized automatically to these categories since cohomological
operators we have defined so far commute with $\iota_L$ by
\ref{commuiotaderiv}.
Further detail is left to the reader.
Let $f$ be the structural morphism of $\st{X}$. We denote by
$L_{\st{X}}:=f^+(L)$ as usual.

\subsubsection{}
\label{relbetourrig}
We have often used the category $\mr{Sm}(X/L)$, but the
category of overconvergent isocrystals $\mr{Isoc}^\dag(X/L)$ is
more standard in the literature. Let us clarify the relation between
these categories.
Consider the situation of \ref{baseicgeneralsetup}, or
\ref{algcloscoefftheory} if $L/K$ is not finite. Let $X$ be a
smooth scheme separated of finite type of dimension $d$ over $k$. Recall
the functor $\mr{sp}_+$ in \ref{fundproprealsch}
\eqref{carodagdagcat}. By extending the scalar and gluing, we have the
following functor
\begin{equation*}
 \widetilde{\mr{sp}}_+:=\mr{sp}_+(-d)[-d]\colon
  \mr{Isoc}^\dag(X/L)
  \xrightarrow{\sim}\mr{Sm}(X/L)
  \subset D^{\mr{b}}_{\mr{hol}}(X/L).
\end{equation*}
For a morphism $f\colon X\rightarrow Y$ between smooth schemes separated
of finite type, let $d:=\dim(X)-\dim(Y)$. Then, we have a canonical
equivalence $\mr{sp}_+\circ f^*\cong(f^![-d])\circ\mr{sp}_+$ compatible
with the composition of morphisms between smooth schemes by
\cite[6.1.9]{Cafai}. Thus, by
Theorem \ref{Poindual} and Theorem \ref{purityforrealsch}, we
have a canonical equivalence $\widetilde{\mr{sp}}_+\circ f^*\cong
f^+\circ\widetilde{\mr{sp}}_+$. Via this equivalence, we identify $f^*$
and $f^+$. We also know that $\widetilde{\mr{sp}}_+$ commutes with
tensor products by \cite[3.3.5]{Catens}. By taking the adjoint, the
commutation of $\shom$ follows as well.
Finally, let $f\colon X\rightarrow\mr{Spec}(k)$ be the structural
morphism of a {\em smooth realizable} scheme, and
$M\in\mr{Isoc}^\dag(X/L_\emptyset)$
(resp.\ $M\in\mr{Isoc}^\dag(X/L_F)$).
When $X$ is a scheme which has a compactification
$\overline{X}$ such that $\overline{X}$ possesses a smooth lifting over
$R$ and that $\overline{X}\setminus X$ is a divisor, then by
\cite[5.9]{A}, we have canonical isomorphisms
\begin{equation*}
 H_{\mr{rig}}^*(X,M)\cong
  \H^*f_+\bigl(\widetilde{\mr{sp}}_+(M)\bigr),
  \qquad
  H_{\mr{rig},\mr{c}}^*(X,M)\cong
  \H^*f_!\bigl(\widetilde{\mr{sp}}_+(M)\bigr)
\end{equation*}
as objects in $\mr{Vec}_L$ (resp.\ $F\text{-}\mr{Vec}_L$).
Here, $H_{\mr{rig}}$ and $H_{\mr{rig},\mr{c}}$ denotes the rigid
cohomology extended to $L$-coefficients in the obvious manner.

\begin{prob}
 \label{comparisonisocdmod}
 Unify the rigid cohomology theory into the framework of arithmetic
 $\ms{D}$-modules. Namely, let $X$ be a separated scheme. Define the
 category of smooth objects $\mr{Sm}(X/L)$ in $\mr{Con}(X/L)$, and
 establish an equivalence of categories
 $\mr{Isoc}^\dag(X/L)\rightarrow\mr{Sm}(X/L)$.
 This equivalence should coincide with the one in
 \ref{relbetourrig} when $X$ is smooth.
 Finally, compare the rigid cohomology and the push-forward in the sense
 of $\ms{D}$-modules in the style of \ref{relbetourrig}.
\end{prob}

\subsubsection{}
In this paragraph, we fix an algebraic closure $\overline{K}$ of $K$. We
denote by $\overline{k}$ the residue field of $\overline{K}$, which is
an algebraic closure of $k$.
Now, let $X$ be a smooth scheme of finite type over $k$, and assume it
to be {\em geometrically connected}.
Take a geometric point $\overline{x}\in X(\overline{k})$. Let $x$ be the
closed point of $X$ defined by $\overline{x}$, and denote by
$i_x\colon x\hookrightarrow X$ the closed immersion.
We denote by $K_x$ be the unramified extension of $K$ induced by the
finite extension $k(x)$ of $k$. Then we have the fiber functor
\begin{equation*}
 \omega_x\colon\mr{Isoc}^\dag(X/K)\xrightarrow{i^+_x}
  \mr{Isoc}^\dag(k(x)/K)\cong\mr{Vec}_{K_x}.
\end{equation*}
Let $L$ be a finite extension of $K_x$.
Since $\mr{End}(K_X)\cong K$, by
\cite[3.10.1]{Mi}, $\omega_x$ induces the fiber functor
\begin{equation*}
 \omega_{x/L}\colon\mr{Isoc}^\dag(X/L)\rightarrow
  \mr{Vec}_{L},
\end{equation*}
by sending $\mc{E}$ to $i^+_x(\mc{E})\otimes_{i^+_xL_X}L$.
This fiber functor $\omega_{x/L}$ is compatible with extension of
scalar, and we may take the 2-inductive limit to define $\omega_{x/L}$
for any algebraic extension $L$ of $K_x$. Now, the geometric point
$\overline{x}$ determines the embedding
$K_x\hookrightarrow\overline{K}$ with which we may regard $\overline{K}$
as an extension of $K_x$, thus, the fiber functor
$\omega_{x/\overline{K}}$ makes sense. This
fiber functor is denoted by $\omega_{\overline{x}}$.

Let $\pi_1^{\mr{isoc}}(X,\overline{x})$ be the
{\em isocrystal fundamental group}
$\mr{Aut}^{\otimes}(\omega_{\overline{x}})$, which is an affine
group scheme over $\overline{K}$. For an algebraic group $G$ over
$\overline{K}$, denote by $\mr{Rep}_{\overline{K}}(G)$ the category of
finite dimensional representation of $G$.
By \cite[3.11]{Mi}, and taking the 2-inductive limit, we have the
following equivalence of tensor categories
\begin{equation*}
 \mr{Isoc}^\dag(X/\overline{K})\xrightarrow{\sim}
  \mr{Rep}_{\overline{K}}(\pi_1^{\mr{isoc}}(X,\overline{x})).
\end{equation*}

\begin{rem*}
 If $X\rightarrow\mr{Spec}(k)$ is not geometrically connected, then
 $\mr{Isoc}^\dag(X/\overline{K})$ is {\em not} a Tannakian
 category over $\overline{K}$. Indeed, $\overline{K}_X$ is an unit
 object of the tensor category $\mr{Isoc}^\dag(X/\overline{K})$ but we
 have $\mr{End}(\overline{K}_{X})\cong\overline{K}^{\times
 c}$ where $c$ is the number of connected components of
 $X\otimes_k\overline{k}$. Compare also with Lemma
 \ref{behaviorofbasech} (i).
\end{rem*}

\subsubsection{}
\label{defofisocweilgroup}
From now on, till the end of this subsection, we consider the case where
$k$ is a finite field with $q=p^s$ elements. We fix an arithmetic base
tuple
$\mf{T}_F:=(k,R:=W(k),K:=\mr{Frac}(R),\overline{\mb{Q}}_p,s,\mr{id})$,
where $\overline{\mb{Q}}_p$ is an algebraic closure of $K$.
Let $\mf{T}_{\emptyset}$ be the associated geometric base tuple.
As in the last paragraph, $\overline{k}$ denotes the residue field of
$\overline{\mb{Q}}_p$, which naturally contains $k$.
To make the notations compatible with \cite{Laf}, we denote
the relative $s$-th Frobenius endomorphism on $X$ by $\mr{Frob}_X$
instead of $F_X$. Let $X$ be a geometrically connected smooth scheme of
finite type over $k$. Take a geometric point $\overline{x}\in
X(\overline{k})$, and $i_x\colon x\hookrightarrow X$ denotes the induced
closed immersion.
Let $\mc{E}\in\mr{Isoc}^\dag(X/\overline{\mb{Q}}_{p,\emptyset})$.
Since $K_x\cong W(k(x))\otimes_{W(k)}K$, the $s$-th Frobenius
automorphism on $W(k(x))$
induces an automorphism $\mr{Frob}_x^*\colon K_x\rightarrow K_x$.
The fiber $i_x^+\mc{E}$ can be seen as an
$i^+_x\overline{\mb{Q}}_{p,X}$-module, where the latter
ring is isomorphic to $K_x\otimes_K\overline{\mb{Q}}_p$
since $X$ is geometrically connected. Thus, we have isomorphisms
\begin{equation*}
 \omega_{\overline{x}}(\mr{Frob}_X^*\mc{E})\cong
  \bigl(K_x\otimes_{\mr{Frob}_x^*\nwarrow K_x}
  i^+_x\mc{E}\bigr)
  \otimes_{K_x\otimes_K\overline{\mb{Q}}_p}\overline{\mb{Q}}_p
  \xleftarrow[\alpha]{\sim}
  i^+_x\mc{E}\otimes_{K_x\otimes_K\overline{\mb{Q}}_p}
  \overline{\mb{Q}}_p
  \cong
  \omega_{\overline{x}}(\mc{E}),
\end{equation*}
where the homomorphism $\alpha$ sends $e\otimes a$ to $1\otimes e\otimes
a$. Thus, we get the following 2-commutative diagram:
\begin{equation}
 \label{TannakiandualityFisoc}
 \xymatrix@C=70pt@R=2pt{
  \mr{Isoc}^\dag(X/\overline{\mb{Q}}_{p,\emptyset})
  \ar[dd]_{\mr{Frob}_X^*}\ar[dr]^-{\omega_{\overline{x}}}&\\
 &\mr{Vec}_{\overline{\mb{Q}}_p}.\\
 \mr{Isoc}^\dag(X/\overline{\mb{Q}}_{p,\emptyset})
  \ar[ur]_-{\omega_{\overline{x}}}&
  }
\end{equation}
This diagram induces a homomorphism
$\mr{Frob}_X^*\colon\pi^{\mr{isoc}}_1(X,\overline{x})
\rightarrow\pi^{\mr{isoc}}_1(X,\overline{x})$. This
homomorphism is in fact an isomorphism, since $\mr{Frob}_X^*$ gives an
equivalence of categories by Remark \ref{fundproprealsch}. We define
$\rho\colon\mb{Z}\rightarrow\mr{Aut}(\pi_1^{\mr{isoc}}(X,\overline{x}))$
to be the homomorphism sending $1$ to $\mr{Frob}_X^*$. Using this
homomorphism, we put
$\WI(X,\overline{x}):=\pi^{\mr{isoc}}_1(X,\overline{x})\rtimes\mb{Z}$,
and call it the {\em isocrystal Weil group of $X$}. By construction,
we have the equivalence of tensor categories
\begin{equation}
 \label{equivcattannfrob}
 \mr{Isoc}^\dag(X/\overline{\mb{Q}}_{p,F})\xrightarrow{\sim}
  \mr{Rep}_{\overline{\mb{Q}}_p}(\WI(X,\overline{x})).
\end{equation}
induced by $\omega_{\overline{x}}$.

In general, let $X\rightarrow\mr{Spec}(k)$ be a smooth connected scheme
of finite type, which may not be geometrically connected, and take a
closed point $x$. The structural morphism factors as
$X\rightarrow\mr{Spec}(k')\rightarrow\mr{Spec}(k)$ where $k'$ is a
finite field extension of $k$ of degree $d$ and $X$ is geometrically
connected over $k'$. Consider the base tuple
$\mf{T}'_F:=(k',R':=W(k')\otimes_{W(k)}R,
K',\overline{\mb{Q}}_p,ds,\mr{id})$. Then
we define $\WI(X,\overline{x})$ to be the isocrystal Weil group of $X$
over $\mf{T}'_F$. Despite the base tuple being changed, by Corollary
\ref{behaviorofbasech},
the equivalence (\ref{equivcattannfrob}) remains true. We
note that, by definition, $\WI(X,\overline{x})$ does not depend on the
choice of the base field $k$.

Assume that $X$ is geometrically connected, and let $k'$ be a Galois
extension of $k$. Take a geometric point $\overline{x}'$ of
$X\otimes_kk'$, and let $\overline{x}$ be the projection to $X$. Then we
have the following exact sequence:
\begin{equation*}
 1\rightarrow\WI(X\otimes_kk',\overline{x}')\rightarrow
  \WI(X,\overline{x})\rightarrow\mr{Gal}(k'/k)\rightarrow1.
\end{equation*}

\begin{rem*}
 Let $X$ be a geometrically connected smooth scheme of finite type over
 $k$. Assume moreover that we have a $k$-rational point
 $i_x\colon\mr{Spec}(k)\rightarrow X$ for simplicity.
 Since $\mr{Isoc}^\dag(X/\overline{\mb{Q}}_{p,F})$ is a neutral
 Tannakian category over $\overline{\mb{Q}}_p$ by using the fiber
 functor $\omega:=i_x^+$, we could have used
 $\mr{Aut}^{\otimes}(\omega)$ as the fundamental group. However, this
 algebraic group is complicated to handle, and we used the simpler
 substitute $\WI$ following Crew \cite{Cr}.
\end{rem*}

\subsubsection{}
\label{surjectwisweilgroup}
Let $X'$, $X''$ be smooth schemes of finite type and geometrically
connected over $k$.
Put $X:=X'\times X''$ which is geometrically connected over $k$ as
well. Let $U\subset X$ be an open subscheme such that
$(\mr{Frob}_{X'}\times\mr{id}_{X''})(U)\subset U$ where
$\mr{Frob}_{X'}\times\mr{id}_{X''}\colon X'\times
X''\rightarrow X'\times X''$. Take a geometric point
$\overline{x}\in U(\overline{k})$.
Arguing as in \ref{defofisocweilgroup}, the pull-back
$(\mr{Frob}_{X'}\times\mr{id})^+$ induces an outer automorphism of
$\WI(U,\overline{x})$, and yields a homomorphism
$\mb{Z}\rightarrow\mr{Out}(\WI(U,\overline{x}))$ sending
$1$ to $(\mr{Frob}_{X'}\times\mr{id})^+$.
We put $\mb{Z}\WI(U,\overline{x}):=\WI(U,\overline{x})\rtimes\mb{Z}$.
Representations of $\mb{Z}\WI(U,\overline{x})$ correspond to pairs
$(\mc{E},\alpha)$ where
$\mc{E}\in\mr{Isoc}^\dag(U/\overline{\mb{Q}}_{p,F})$
and $\alpha\colon(\mr{Frob}_{X'}\times\mr{id})^+(\mc{E})\cong\mc{E}$.

\begin{lem*}
 [{\cite[VI.13]{Laf}}]
 Take geometric points $\overline{x}'$ and $\overline{x}''$ of $X'$
 and $X''$, and put $\overline{x}:=(\overline{x}',\overline{x}'')$.
 Then the canonical homomorphism
 $\mb{Z}\WI(X,\overline{x})\rightarrow\WI(X',\overline{x}')
 \times\WI(X'',\overline{x}'')$ is surjective (or more precisely,
 faithfully flat).
\end{lem*}
\begin{proof}
 Let $x'$ and $x''$ be the closed points of $X'$ and $X''$ induced by
 $\overline{x}'$ and $\overline{x}''$. Let $k'$ be a Galois extension of
 $k$, and put $G:=\mr{Gal}(k'/k)$. Consider the following diagram, where
 we omit the base points of the Weil groups and $\WI$ is abbreviated
 as $W$:
 \begin{equation*}
  \xymatrix{
   1\ar[r]&
   W(X'\otimes{k'})\times W(X''\otimes{k'})\ar[r]\ar[d]&
   W(X')\times W(X'')\ar[r]\ar[d]&
   G\times G\ar[r]\ar@{=}[d]&
   1\\
  1\ar[r]&
   \mb{Z}W(X\otimes{k'})\ar[r]&
   \mb{Z}W(X)\ar[r]&
   G\times G\ar[r]&
   1.
   }
 \end{equation*}
 Thus, we may replace $k$ by $k'$, and may assume that $x'$ and $x''$
 are rational points of $X'$ and $X''$.
 These rational points define morphisms $s'\colon X'\rightarrow X$,
 $s''\colon X''\rightarrow X$. Let us show that the canonical
 homomorphism $\alpha\colon\pi^{\mr{isoc}}_1(X,\overline{x})
 \rightarrow\pi^{\mr{isoc}}_1(X',\overline{x}')\times
 \pi^{\mr{isoc}}_1(X'',\overline{x}'')$ is surjective. 
 To check this, it suffices to show that for any $\overline{K}$-algebra
 $A$, the homomorphism of groups
 $\alpha(A)\colon\pi^{\mr{isoc}}_1(X,\overline{x})(A)\rightarrow
 \pi^{\mr{isoc}}_1(X',\overline{x}')(A)\times
 \pi^{\mr{isoc}}_1(X'',\overline{x}'')(A)$ is surjective.
 Now, the morphism $s'$ induces the homomorphism
 $s'_*\colon\pi^{\mr{isoc}}_1(X',\overline{x}')
 \rightarrow\pi^{\mr{isoc}}_1(X,\overline{x})$, and the image
 of $(\alpha\circ s'_*)(A)$ is
 $\pi^{\mr{isoc}}_1(X',\overline{x}')(A)\times\{1\}$. Using $s''$, the
 image of $\alpha(A)$ contains
 $\{1\}\times\pi^{\mr{isoc}}_1(X'',\overline{x}'')(A)$ as well, and the
 surjectivity of $\alpha(A)$ follows as required.
 Finally, by definition of the Weil groups, the lemma follows.
\end{proof}

\begin{lem}
 \label{surjofisocpione}
 Let $X$ be a smooth connected scheme, and $U\subset X$ be an open
 subscheme. Take a geometric point $x\in U(\overline{k})$.
 Then the homomorphism
 $\WI(U,\overline{x})\rightarrow\WI(X,\overline{x})$ is surjective.
\end{lem}
\begin{proof}
 It suffices to show that the homomorphism
 $\pi^{\mr{isoc}}_1(U,\overline{x})\rightarrow
 \pi^{\mr{isoc}}_1(X,\overline{x})$ induced by the open immersion is
 surjective. Let $j\colon U\hookrightarrow X$ be the open
 immersion. By \cite[2.21]{Mi}, this is equivalent to showing
 that $j^+$ is fully faithful and any subobject of $j^+\mc{E}$ for an
 overconvergent isocrystal $\mc{E}$ on $X$ is in the image of $j^+$.
 The fully faithfulness follows by purity (cf.\ Theorem
 \ref{purityforrealsch}).
 It remains to show that if $\mc{E}$ is an irreducible overconvergent
 isocrystal on $X$, then $\mc{E}|_U$ is irreducible. This follows by
 \cite[1.4.6]{AC}.
\end{proof}

\section{Cycle classes, correspondences, and $\ell$-independence}
\label{lindepsec}
The aim of this section is to prove an $\ell$-independence result. This
is a key tool to compute the trace of the action of Hecke algebra on the
cohomology of certain moduli spaces.
In this section, we fix $\base\in\{\emptyset,F\}$ and a base tuple as
usual. The algebraic extension $L/K$ can be infinite as in
\ref{algcloscoefftheory}. For simplicity, smooth admissible stacks over
$k$ are assumed equidimensional.

\subsection{Generalized cycles and correspondences}
\label{Genecyclcorres}
\subsubsection{}
Let $p\colon\st{X}\rightarrow\mr{Spec}(k)$ be the structural
morphism of a c-admissible stack $\st{X}$ (cf.\ Definition
\ref{defcompactcadmmorph}). If no confusion may arise, we
denote $L_{\st{X},\base}$ by $L$. For $\ms{M}\in
D^{\mr{b}}_{\mr{hol}}(\st{X})$, we put
\begin{align*}
 H^i(\st{X},\ms{M})&:=\mr{Hom}_{D(\mr{Spec}(k)/L_{\base})}
  \bigl(L,p_+(\ms{M})[i]\bigr),\\
  H^i_{\mr{c}}(\st{X},\ms{M})&:=
  \mr{Hom}_{D(\mr{Spec}(k)/L_{\base})}
  \bigl(L,p_!(\ms{M})[i]\bigr).
\end{align*}
Note that when $\base=\emptyset$,
we have $H^*(\st{X},\ms{M})\cong\H^*p_+(\ms{M})$ and
$H_{\mr{c}}^*(\st{X},\ms{M})\cong\H^*p_!(\ms{M})$ as vector spaces over
$L$. For a morphism $i\colon\st{Z}\rightarrow\st{X}$, we define the {\em
local cohomology} to be
\begin{equation*}
 H^i_{\st{Z}}(\st{X},\ms{M}):=\mr{Hom}_{D(\mr{Spec}(k)/L_{\base})}
  \bigl(L,p_+i_+i^!(\ms{M})[i]\bigr).
\end{equation*}
Furthermore, we put
$H^*_{\heartsuit}(\st{X}):=H^*_{\heartsuit}(\st{X},L_{\st{X}})$ where
$\heartsuit\in\{\emptyset,\mr{c},\st{Z}\}$.

Consider the following commutative diagram of c-admissible stacks:
\begin{equation*}
 \xymatrix{
  \st{Z}\ar[r]^-{f'}\ar[d]_{i'}&
  \st{W}\ar[d]^{i}\\
 \st{X}\ar[r]_-{f}&\st{Y}.
  }
\end{equation*}
If this diagram is cartesian, then we have the base change isomorphism
$i^!f_+\cong f'_+i'^!$ by (the dual of) Proposition
\ref{basechforshrik}, which induces the homomorphism
$H^*_{\st{W}}(\st{Y})\rightarrow H^*_{\st{Z}}(\st{X})$. By abuse of
notation, we also denote this homomorphism by $f^*$.
If the diagram is merely commutative, $f$ is the identity, and $f'$ is
proper, the adjunction
$f'_+f'^!\rightarrow\mr{id}$ induces the push-forward homomorphism
$H^*_{\st{Z}}(\st{X})\rightarrow H^*_{\st{W}}(\st{X})$.

Finally, given a {\em proper} morphism of c-admissible stacks
$f\colon\st{X}\rightarrow\st{Y}$, we have a homomorphism
$f^*\colon H_{\mr{c}}^*(\st{Y})\rightarrow H_{\mr{c}}^*(\st{X})$ induced
by the adjunction homomorphism.

\subsubsection{}
\label{cupproddef}
First, let $\st{S}$ be a c-admissible stack. Let $\ms{M}$, $\ms{N}$ be
objects in $D^{\mr{b}}_{\mr{hol}}(\st{S}/L)$. Denote by
$\Delta\colon\st{S}\rightarrow\st{S}\times\st{S}$ the diagonal
morphism, and by $p$ the structural morphism of $\st{S}$.
By identifying $\mr{Spec}(k)\times\st{S}$ and $\st{S}$, we have a
canonical homomorphism
\begin{equation*}
  p_+(\ms{M})\boxtimes\ms{N}\cong
  (p\times \mr{id})_+\bigl(\ms{M}\boxtimes\ms{N}\bigr)
  \xrightarrow{\mr{adj}}
  (p\times\mr{id})_+\Delta_+\Delta^+
  \bigl(\ms{M}\boxtimes\ms{N}\bigr)
  \cong
  \ms{M}\otimes\ms{N},
\end{equation*}
where the first isomorphism is induced by Proposition
\ref{Kunnethextprod}, $\mr{adj}$ is the adjunction homomorphism, and
the last isomorphism follows since
$(p\times\mr{id})\circ\Delta=\mr{id}$.

Now, let $f\colon\st{X}\rightarrow\st{S}$ be a morphism of c-admissible
stacks. We take a factorization
$\st{X}\xrightarrow{j}\overline{\st{X}}\xrightarrow{\overline{f}}\st{S}$
in $\cadm$ such that $j$ is an open immersion and $\overline{f}$ is
proper. We have
\begin{align*}
 f_+(\ms{M})\otimes f_!(\ms{N})&\cong
  \overline{f}_+j_+(\ms{M})\otimes\overline{f}_+j_!(\ms{N})
  \rightarrow\overline{f}_+
  \bigl(j_+(\ms{M})\otimes j_!(\ms{N})\bigr)\\
  &\xleftarrow{\sim}
  \overline{f}_+ j_!(\ms{M}\otimes\ms{N})
  \cong
  f_!(\ms{M}\otimes\ms{N}),
\end{align*}
where the second homomorphism is (\ref{pushtenscomhom}).
This homomorphism does not depend on the choice of compactification.

Finally, let $i\colon\st{Z}\rightarrow\st{X}$ be a morphism of
c-admissible stacks. Then we have the homomorphism
$i_!\bigl(i^!\ms{M}\otimes i^+\ms{N}\bigr)\cong
i_!i^!\ms{M}\otimes\ms{N}\rightarrow\ms{M}\otimes\ms{N}$. Taking the
adjoint, we get a homomorphism
\begin{equation*}
 i^!\ms{M}\otimes i^+\ms{N}\rightarrow i^!(\ms{M}\otimes\ms{N}).
\end{equation*}

\begin{dfn}
 \label{dfngencyccorres}
 (i) Let $\st{X}$ be a c-admissible stack of dimension
 $d$. A {\em generalized cycle of codimension $c$} is a
 proper morphism $g\colon\Gamma\rightarrow\st{X}$
 between c-admissible stacks such that $\dim(\st{X})-\dim(\Gamma)=c$.

 (ii) Let $S$ be a scheme of finite type over $k$, $\varphi$ be
 a {\em proper} endomorphism of $S$, and
 $f^{(\prime)}\colon\st{X}^{(\prime)}\rightarrow S$ be a c-admissible
 $S$-stack. Let
 \begin{equation*}
  c_{\Gamma}\colon\Gamma\rightarrow\st{X}\times_{\varphi,S}\st{X}'
 \end{equation*}
 be a morphism between c-admissible stacks,
 where the fiber product is taken for $\varphi\circ
 f\colon\st{X}\rightarrow S$ and $f'\colon\st{X}'\rightarrow S$. For $i=1,2$, put
 $p_i:=\pi_i\circ c_{\Gamma}$ where $\pi_i$ denotes the $i$-th
 projection. The morphism $c_\Gamma$ is said to be a
 {\em correspondence over $\varphi$} if
 $\Gamma$ is of equidimensional of dimension $\dim(\st{X})$ and
 $p_2$ is proper, or $\Gamma$ is the empty stack.
 We sometimes denote the correspondence by
 $\Gamma\colon\st{X}\rightsquigarrow\st{X}'$. Note that $c_\Gamma$ is a
 generalized cycle of codimension $\dim(\st{X}')$ of
 $\st{X}\times_{\varphi,S}\st{X}'$. From now on in this subsection, we
 fix $S$ and $\varphi$ as above, and we use them freely without
 referring to this paragraph.
\end{dfn}

\subsubsection{}
\label{tracemapmeanscons}
In this section, we denote $\mr{Tr}^{\mr{sm}}_f$ in Theorem
\ref{poincaredualrela} simply by $\mr{Tr}_f$.
Let $\alpha\colon\st{X}\rightarrow\mr{Spec}(k)$ be the
structural morphism of a c-admissible stack $\st{X}$. We often denote
$\mr{Tr}_\alpha$ by $\mr{Tr}_{\st{X}}$. Now, let
$f\colon\st{Y}\rightarrow\st{X}$ be a morphism of c-admissible stacks,
and {\em assume that $\st{X}$ is smooth}.
We construct $\mr{fTr}_f\colon f_!f^+L_{\st{X}}(d)[2d]\rightarrow
L_{\st{X}}$ where $d:=\dim(\st{Y})-\dim(\st{X})$, which is called the
{\em fake trace map of $f$}, as follows.
Let $p\colon\st{Y}\rightarrow\mr{Spec}(k)$ be the structural
morphism. We have the isomorphisms
\begin{align*}
 \mr{Hom}(f_!f^+L_{\st{X}}(d)[2d],L_{\st{X}})&\cong
  \mr{Hom}(f^+L_{\st{X}}(d)[2d],f^!L_{\st{X}})\\
 &\cong
  \mr{Hom}(p^+L(d_Y)[2d_Y],p^!L)\cong
  \mr{Hom}(p_!p^+L(d_Y)[2d_Y],L)
\end{align*}
where $d_Y:=\dim(\st{Y})$, and we used the Poincar\'{e} duality
\ref{poincaredualrela} for the second isomorphism. The trace map
$\mr{Tr}_{\st{Y}}$ is an element on the right hand side of the
isomorphisms. We define $\mr{fTr}_f$ to be the homomorphism defined
by sending this $\mr{Tr}_{\st{Y}}$ to the left hand side of the
isomorphisms. This homomorphism induces a homomorphism
\begin{equation}
 \label{pushhombasesm}
 f_*\colon H^{*+2d}_{\mr{c}}(\st{Y})(d)\rightarrow
  H^*_{\mr{c}}(\st{X}).
\end{equation}

(i) Let $i\colon\st{Z}\rightarrow\st{X}$ be a generalized cycle of
codimension $c$ on a smooth c-admissible stack $\st{X}$. Then by taking
the adjoint, $\mr{fTr}_i$ induces a homomorphism
\begin{equation*}
 c_{\st{Z}}\colon i^+L_X\rightarrow i^!L_X(c)[2c].
\end{equation*}

(ii) Let us construct a similar homomorphism when we are given a
correspondence. We use the notation of Definition \ref{dfngencyccorres}
(ii). We assume further that $\st{X}$ is smooth. When $\Gamma$ is
non-empty, we have
$\mr{fTr}_{p_1}\colon p_{1!}p_1^+L_{\st{X}}\rightarrow L_{\st{X}}$,
where we used the assumption that $\dim(\Gamma)=\dim(\st{X})$. Thus, we
have
\begin{equation*}
 \iota_{\Gamma}\colon
  p_2^+L_{\st{X}'}\cong p_1^+L_{\st{X}}\rightarrow p_1^!L_{\st{X}}
\end{equation*}
where we used the adjoint of $\mr{fTr}_{p_1}$ for the second
homomorphism. When $\Gamma$ is empty, we simply put $\iota_\Gamma:=0$.

\subsubsection{}
\label{sga4halfcharact}
Let us characterize the fake trace map in the style of [SGA
$4\frac{1}{2}$, Cycle]. Let us consider the following diagram of
c-admissible stacks
\begin{equation*}
 \xymatrix@C=40pt@R=20pt{
  \st{Y}
  \ar[rr]^-{i}\ar[rd]_-{f}&&
  \st{X}
  \ar[ld]^-{g}\\
 &\st{S},&
  }
\end{equation*}
where $\st{X}$ is smooth, and $\st{X}$ and $\st{Y}$ are of dimension $N$
and $d$ respectively. Put $c:=N-d$.
Combining homomorphisms in \ref{cupproddef}, we have
\begin{equation*}
 p_{\st{Y}+}i^!L_{\st{X}}\boxtimes f_!i^+L_{\st{X}}
  \rightarrow
  f_+i^!L_{\st{X}}\otimes f_!i^+L_{\st{X}}
  \rightarrow
  f_!\bigl(i^!L_{\st{X}}\otimes i^+L_{\st{X}}\bigr)
  \rightarrow
  f_!i^!L_{\st{X}}
  \xrightarrow{\mr{adj}_i}
  g_!L_{\st{X}},
\end{equation*}
where $p_{\st{Y}}$ is the structural morphism of $\st{Y}$, and
$\mr{adj}_i\colon i_!i^!\rightarrow\mr{id}$ is the adjunction morphism.
Since $\H^kp_{\st{Y}+}i^!L_{\st{X}}=0$ for $k<2c$ (cf.\ Lemma
\ref{cohdimcalc}), this yields a coupling called the {\em cup product}
\begin{equation*}
 \cup\colon
  H_{\st{Y}}^{2c}(\st{X})(c)\otimes f_!L_{\st{Y}}(d)[2d]
  \rightarrow
  g_!L_{\st{X}}(N)[2N].
\end{equation*}

Now, by taking the adjoint, we may regard $\mr{fTr}_i$ as an element in
$H^{2c}_{\st{Y}}(\st{X})(c)$. We put $\st{S}=\mr{Spec}(k)$, and
we have the following characterization of the fake trace map:

\begin{lem*}
 The class $\mr{fTr}_i\in H^{2c}_{\st{Y}}(\st{X})(c)$ is the
 unique element such that
 for any $u\in H^{2d}_{\mr{c}}(\st{Y})(d)$,
 \begin{equation*}
  \mr{Tr}_f(u)=\mr{Tr}_g\bigl(\mr{fTr}_i\cup u\bigr).
 \end{equation*}
\end{lem*}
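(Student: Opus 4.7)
The statement asserts both existence (that $\mr{fTr}_i$ satisfies the stated identity) and uniqueness. My strategy is to reduce both to a single clean fact: the pairing $(\alpha,u) \mapsto \mr{Tr}_g(\alpha \cup u)$ agrees with the canonical adjunction/Poincar\'e duality isomorphism $H^{2c}_{\st{Y}}(\st{X})(c) \xrightarrow{\sim} H^{2d}_c(\st{Y})(d)^{\vee}$, and by construction $\mr{fTr}_i$ is the preimage of $\mr{Tr}_f$ under that isomorphism.

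First I would unwind the cup product. Via the chain of adjunctions
\[
H^{2c}_{\st{Y}}(\st{X})(c) \;\cong\; \mr{Hom}(L_{\st{X}}, i_+i^!L_{\st{X}}(c)[2c]) \;\cong\; \mr{Hom}(i_!L_{\st{Y}}, L_{\st{X}}(c)[2c]),
\]
any $\alpha \in H^{2c}_{\st{Y}}(\st{X})(c)$ corresponds to a morphism $\widetilde{\alpha}\colon i_!L_{\st{Y}} \to L_{\st{X}}(c)[2c]$. The definition of $\cup$ in \S\ref{cupproddef} together with the K\"unneth formula (Proposition \ref{Kunnethextprod}) and the identity $f = g\circ i$ then identifies $\alpha\cup u$ with the composition
\[
L \xrightarrow{u} f_!L_{\st{Y}}(d)[2d] = g_!i_!L_{\st{Y}}(d)[2d] \xrightarrow{g_!(\widetilde{\alpha})} g_!L_{\st{X}}(N)[2N],
\]
so that $\mr{Tr}_g(\alpha \cup u) = (\mr{Tr}_g \circ g_!(\widetilde{\alpha})) \circ u$.

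Next I would exhibit the map $\alpha \mapsto [u\mapsto \mr{Tr}_g(\alpha \cup u)]$ as an isomorphism by factoring it as the composite
\[
\mr{Hom}(i_!L_{\st{Y}}, L_{\st{X}}(c)[2c]) \xrightarrow[\mr{PD}_{\st{X}}]{\sim} \mr{Hom}(i_!L_{\st{Y}}, g^!L(-d)[-2d]) \xrightarrow[(g_!, g^!)]{\sim} \mr{Hom}(f_!L_{\st{Y}}(d)[2d], L),
\]
where the first arrow uses Poincar\'e duality for the smooth $\st{X}$ (Theorem \ref{poincaredualrela}) and the second is the $(g_!, g^!)$-adjunction. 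Under this adjunction $\mr{Tr}_g$ is adjoint to $\mr{PD}_{\st{X}}$, so the composite assigns $\widetilde{\alpha}$ to $\mr{Tr}_g \circ g_!(\widetilde{\alpha})$, which matches $[u \mapsto \mr{Tr}_g(\alpha\cup u)]$ by the previous paragraph. Since the composite is an isomorphism onto $H^{2d}_c(\st{Y})(d)^{\vee}$, uniqueness follows at once.

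For existence, by the very construction of $\mr{fTr}_i$ in \S\ref{tracemapmeanscons}, the element $\mr{fTr}_i$ is the preimage of $\mr{Tr}_{\st{Y}} = \mr{Tr}_f$ under exactly the above chain of isomorphisms (the definition of $\mr{fTr}_i$ uses $f^!L_{\st{X}} \cong p_{\st{Y}}^!L(-N)[-2N]$, i.e.\ $\mr{PD}_{\st{X}}$ followed by $f^!$, together with the adjunction $(f_!,f^!) = (g_!,g^!)\circ(i_!,i^!)$). Therefore $\mr{Tr}_g(\mr{fTr}_i \cup u) = \mr{Tr}_f(u)$, and $\mr{fTr}_i$ is the unique class with this property. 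The main obstacle is the compatibility of the K\"unneth-based cup product with the adjunction description, i.e.\ the careful bookkeeping of Tate twists, shifts, and $(g_!,g^!)$-adjunction units needed to identify $\alpha\cup u$ with $g_!(\widetilde{\alpha})\circ u$; this is the admissible-stack analogue of the classical computation in [SGA $4\frac{1}{2}$, Cycle].
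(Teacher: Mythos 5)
Your argument is the paper's argument: you exhibit the map $\alpha\mapsto\mr{Tr}_g(\alpha\cup-)$ as the composite of the adjunction and Poincar\'{e}-duality isomorphisms under which $\mr{fTr}_i$ is by definition the preimage of $\mr{Tr}_f$, which is exactly the commutative diagram the paper draws from [SGA $4\frac{1}{2}$, Cycle, 2.3]; you merely compose the two isomorphisms in the other order (PD then $(g_!,g^!)$-adjunction instead of $(i_!,i^!)$-adjunction then PD), which changes nothing. Like the paper, you leave the key compatibility --- that $\alpha\cup u$ equals $g_!(\widetilde{\alpha})\circ u$, i.e.\ the commutativity of the left triangle --- as a bookkeeping check, so the two proofs are essentially identical.
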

\begin{proof}
 As in [SGA $4\frac{1}{2}$, Cycle, 2.3], we have the following
 commutative diagram:
 \begin{equation*}
  \xymatrix@C=25pt{
   H^{2c}_{\st{Y}}(\st{X})
   \ar[r]
   \ar[rd]+L+<0pt,4pt>_{\ccirc{2}}&
   \mr{Hom}\bigl(f_!L_{\st{Y}}(d)[2d],f_!i^!L_{\st{X}}(N)[2N]\bigr)
   \ar[r]^-{\ccirc{1}}\ar[d]^{\mr{adj}_i}&
   \mr{Hom}\bigl(f_!L_{\st{Y}}(d)[2d],L_{\st{S}}\bigr)\ar@{=}[d]\\
  &
  \mr{Hom}\bigl(g_!i_!L_{\st{Y}}(d)[2d],g_!L_{\st{X}}(N)[2N]\bigr)
  \ar[r]_-{\mr{Tr}_g}&
  \mr{Hom}\bigl(g_!i_!L_{\st{Y}}(d)[2d],L_{\st{S}}\bigr).
   }
 \end{equation*}
 Here, $\ccirc{1}$ is the homomorphism induced by
 the adjunction using the assumption that $\st{X}$ is smooth. The
 homomorphism $\mr{adj}_i\colon i_!i^!\rightarrow\mr{id}$ is the
 adjunction. The homomorphism $\ccirc{2}$ is induced by $\cup$ defined
 above. By the definition of fake trace map, the upper horizontal
 homomorphism maps $\mr{fTr}_i$ to $\mr{Tr}_f$. Moreover, the
 composition of the upper horizontal maps is an isomorphism. Thus we can
 conclude the proof.
\end{proof}

\begin{rem*}
 The assumption that $\st{S}=\mr{Spec}(k)$ is used only for the
 existence of the fake trace over $\mf{S}$. It is not hard to generalize
 the definition of the fake trace to relative situations as SGA
 $4\frac{1}{2}$, and we may prove the lemma in this generality, although
 we are not sure if it is meaningful for our purpose.
\end{rem*}

\begin{cor*}
 Assume that $f\colon\st{X}\rightarrow\st{Y}$ is a flat morphism of
 c-admissible stacks such that $\st{Y}$ is smooth. Then
 $\mr{Tr}_f=\mr{fTr}_f$.
\end{cor*}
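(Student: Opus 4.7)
The plan is to identify the fake trace with the geometric trace by tracing through the definition of $\mr{fTr}_f$ and using the transitivity (Var 3) of the trace map from Theorem \ref{poincaredualrela} (III). Set $\alpha\colon\st{Y}\to\mr{Spec}(k)_{\base}$ the structural morphism of $\st{Y}$ and $p:=\alpha\circ f$. Write $d_X=\dim\st{X}$, $d_Y=\dim\st{Y}$, and $d=d_X-d_Y$. Since $\alpha$ is smooth of relative dimension $d_Y$ and $f$ is flat of relative dimension $d$, we have $\alpha\in\mf{M}^{\mr{st}}_{d_Y}$ and $p\in\mf{M}^{\mr{st}}_{d_X}$, so the three trace maps $\mr{Tr}_f$, $\mr{Tr}_\alpha$, $\mr{Tr}_p$ are all defined.

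By the transitivity (III) of Theorem \ref{poincaredualrela}, one has the identity
\begin{equation*}
 \mr{Tr}_p=\mr{Tr}_\alpha\circ\alpha_!(\mr{Tr}_f)(d_Y)[2d_Y]
\end{equation*}
as morphisms $p_!p^+L(d_X)[2d_X]\to L$. Next, I would unwind the chain of isomorphisms in the definition of $\mr{fTr}_f$ in \ref{tracemapmeanscons}. The $(f_!,f^!)$-adjunction converts a morphism $\phi\colon f_!f^+L_{\st{Y}}(d)[2d]\to L_{\st{Y}}$ into a morphism $\widetilde{\phi}\colon f^+L_{\st{Y}}(d)[2d]\to f^!L_{\st{Y}}$. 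A Tate twist-shift by $(d_Y)[2d_Y]$ together with $L_{\st{Y}}=\alpha^+L$ and Poincar\'{e} duality $\theta_\alpha\colon L_{\st{Y}}(d_Y)[2d_Y]\xrightarrow{\sim}\alpha^!L$ for the smooth morphism $\alpha$ identifies $\widetilde{\phi}$ with a morphism $p^+L(d_X)[2d_X]\to p^!L$; the $(p_!,p^!)$-adjunction then sends it to an element of $\mr{Hom}(p_!p^+L(d_X)[2d_X],L)$. The fake trace $\mr{fTr}_f$ is by definition the preimage of $\mr{Tr}_p$ under this composite isomorphism.

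A routine diagram chase, using that $\theta_\alpha$ is precisely the $(\alpha_!,\alpha^!)$-adjoint of $\mr{Tr}_\alpha$ and that composition of adjoints gives the natural expected composition, shows that the chain of isomorphisms above carries $\phi$ to
\begin{equation*}
 p_!p^+L(d_X)[2d_X]\xrightarrow{\alpha_!(\phi)(d_Y)[2d_Y]}\alpha_!L_{\st{Y}}(d_Y)[2d_Y]\xrightarrow{\mr{Tr}_\alpha}L.
\end{equation*}
Plugging in $\phi=\mr{Tr}_f$ and applying the transitivity identity, the image is $\mr{Tr}_p$, so $\mr{Tr}_f$ is a preimage of $\mr{Tr}_p$ under the composite isomorphism. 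Since $\mr{fTr}_f$ is the unique such preimage, I conclude $\mr{Tr}_f=\mr{fTr}_f$.

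The main (mildly tedious) obstacle is the diagram chase in the second paragraph, identifying the natural chain of isomorphisms with ``apply $\alpha_!$ and postcompose with $\mr{Tr}_\alpha$''; all ingredients are formal consequences of the $(\alpha_!,\alpha^!)$-adjoint relationship between $\theta_\alpha$ and $\mr{Tr}_\alpha$, but care is needed to keep track of Tate twists, shifts, and of the fact that Poincar\'{e} duality is only available for $\alpha$ (since $\st{X}$ need not be smooth), not for $p$.
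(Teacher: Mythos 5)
Your argument is correct and takes essentially the same approach as the paper's: both reduce to transitivity (III) of the trace map and the adjunction chain defining $\mr{fTr}_f$. The paper encapsulates the diagram chase you carry out in your second paragraph inside the preceding characterization lemma (in \ref{sga4halfcharact}) and simply cites it.
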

\begin{proof}
 This follows readily from the characterization lemma of
 $\mr{fTr}_f$ above.
\end{proof}

\subsubsection{}
(i) Consider the situation as in \ref{tracemapmeanscons}
(i). We have an isomorphism
\begin{equation*}
 \mr{Hom}(i^+L_{\st{X}},i^!L_{\st{X}}(c)[2c])\cong
  \mr{Hom}(L_{\st{X}},i_+i^!L_{\st{X}}(c)[2c])=:
  H_{\st{Z}}^{2c}(\st{X})(c).
\end{equation*}
The image of $c_{\st{Z}}$ is denoted by $\mr{cl}_{\st{X}}(\st{Z})$, and
called the {\em cycle class of $\st{Z}$}. Since the homomorphism
$\st{Z}\rightarrow\st{X}$ is proper, we have the homomorphism
$H^*_{\st{Z}}(\st{X})\rightarrow H^*(\st{X})$. The image of
$\mr{cl}_{\st{X}}(\st{Z})$ in $H^{2c}(\st{X})(c)$ is also called the
cycle class. Note that if the morphism $\st{Z}\rightarrow g(\st{Z})$ is
not generically finite, then $H^{2c}_{g(\st{Z})}(\st{X})=0$, and in
particular the cycle class in $H^{2c}(\st{X})(c)$ is $0$.

(ii) Consider the situation as in \ref{tracemapmeanscons} (ii). Recall
that $\varphi$ is assumed proper. We have the action of
correspondence on the cohomology, which is the composition of the
homomorphisms
\begin{equation*}
 \Gamma^*\colon
 f'_!L_{\st{X}'}\rightarrow
 f'_!p_{2+}p_2^+L_{\st{X}'}
 \xleftarrow{\sim}
  (\varphi\circ f\circ p_1)_!\,p_2^+L_{\st{X}'}
  \xrightarrow{\iota_\Gamma}
  \varphi_+f_!p_{1!}p_1^!L_{\st{X}}\rightarrow
  \varphi_+f_!L_{\st{X}}.
\end{equation*}
When $S$ is a point, this is nothing but the following composition using
(\ref{pushhombasesm}):
\begin{equation*}
 H^*_{\mr{c}}(\st{X}')\xrightarrow{p_2^*}
  H^*_{\mr{c}}(\Gamma)\xrightarrow{p_{1*}}
  H^*_{\mr{c}}(\st{X}).
\end{equation*}

\begin{lem}
 \label{commutabasechancorre}
 Consider the following cartesian diagram of c-admissible stacks on the
 left:
 \begin{equation*}
  \xymatrix{
   \st{Y}'\ar[d]_{f'}\ar[r]^-{g'}\ar@{}[rd]|\square&
   \st{Y}\ar[d]^{f}\\
  \st{X}'\ar[r]_-{g}&
   \st{X},
   }
   \hspace{50pt}
   \xymatrix@C=50pt{
   g'^+f^+L_{\st{X}}(d)[2d]\ar[r]^-{g'^+\mr{fTr}_{f}}\ar[d]_{\sim}&
   g'^+f^!L_{\st{X}}\ar[d]\\
  f'^+L_{\st{X}'}(d)[2d]\ar[r]_-{\mr{fTr}_{f'}}&
   f'^!L_{\st{X}'},
   }
 \end{equation*}
 where $\st{X}$ and $\st{X}'$ are smooth. Assume moreover that there
 exists an open substack $\st{V}\subset\st{Y}$ such that the
 morphism $\st{V}\rightarrow\st{X}$ is flat of relative dimension $d$,
 and $g'^{-1}(\st{V})\subset\st{Y}'$ is dense. Then the above diagram on
 the right is commutative. In particular, if $g$ is proper, we have an
 equality
 \begin{equation*}
  g^*f_*=f'_*g'^*\colon H_{\mr{c}}^*(\st{Y})\rightarrow
   H_{\mr{c}}^{*-2d}(\st{X}')(-d).
 \end{equation*}
\end{lem}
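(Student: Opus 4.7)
The plan is to reduce the commutativity to the dense open substack $\st{V}\subset\st{Y}$ where $f$ is flat of relative dimension $d$, and then pass from the open to the whole of $\st{Y}$ by a dimensional vanishing. First, observe that the composition $h:=f\circ j\colon\st{V}\to\st{X}$, with $j\colon\st{V}\hookrightarrow\st{Y}$ the open immersion, lies in $\mf{M}^{\mr{st}}_d$ (take $\st{U}=\st{X}$ in the definition), and its base change $h'=f'\circ j'\colon\st{V}':=g'^{-1}(\st{V})\to\st{X}'$ is again flat of relative dimension $d$. Theorem \ref{poincaredualrela} (II) gives the base-change compatibility of the genuine trace maps $\mr{Tr}^{\mr{sm}}_h$ and $\mr{Tr}^{\mr{sm}}_{h'}$, and the Corollary of \ref{sga4halfcharact} identifies these with $\mr{fTr}_h$ and $\mr{fTr}_{h'}$ respectively.

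Next I relate $\mr{fTr}_h$ to $\mr{fTr}_f$. Since $\dim(\st{V})=\dim(\st{Y})$, the transitivity of the trace map (Theorem \ref{poincaredualrela} (III)) applied to $p_{\st{Y}}\circ j$, combined with $\mr{Tr}_j=\mr{adj}_j$ for the open immersion, yields $\mr{Tr}_{\st{V}}=\mr{Tr}_{\st{Y}}\circ p_{\st{Y}!}(\mr{adj}_j)$. Unwinding the Poincar\'e-duality identifications on the smooth $\st{X}$ that define the fake trace in \ref{tracemapmeanscons}, this translates into the identity $\mr{fTr}_h=\mr{fTr}_f\circ f_!(\mr{adj}_j)$ of maps $f_!j_!j^+f^+L_{\st{X}}(d)[2d]\to L_{\st{X}}$, and similarly $\mr{fTr}_{h'}=\mr{fTr}_{f'}\circ f'_!(\mr{adj}_{j'})$. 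Combining with the base change from step one, the compatibility $g'^+(\mr{adj}_j)=\mr{adj}_{j'}$, and the base change isomorphism $g^+f_!\cong f'_!g'^+$, I conclude that $g^+(\mr{fTr}_f)$ and $\mr{fTr}_{f'}$, as two elements of $\mr{Hom}(f'_!f'^+L_{\st{X}'}(d)[2d],L_{\st{X}'})$, coincide after precomposition with $f'_!(\mr{adj}_{j'})$.

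To promote this partial agreement to the full identity, let $i'\colon\st{Z}':=\st{Y}'\setminus\st{V}'\hookrightarrow\st{Y}'$ and $g'':=f'\circ i'$. The localization triangle of \ref{localtriangs} applied to $f'^+L_{\st{X}'}(d)[2d]$ and pushed by $f'_!$ has cofiber $g''_!L_{\st{Z}'}(d)[2d]$ (using $i'_+=i'_!$ and $g''^+L_{\st{X}'}\cong L_{\st{Z}'}$), so the difference $g^+(\mr{fTr}_f)-\mr{fTr}_{f'}$ factors through $g''_!L_{\st{Z}'}(d)[2d]\to L_{\st{X}'}$, and it suffices to prove
\begin{equation*}
 \mr{Hom}\bigl(g''_!L_{\st{Z}'}(d)[2d],L_{\st{X}'}\bigr)=0.
\end{equation*}
Applying $(g''_!,g''^!)$-adjunction, the Poincar\'e duality identification $L_{\st{X}'}\cong L^\omega_{\st{X}'}(-d_{\st{X}'})[-2d_{\st{X}'}]$ on the smooth $\st{X}'$, the transitivity $g''^!p_{\st{X}'}^!\cong p_{\st{Z}'}^!$, and the $(p_{\st{Z}'!},p_{\st{Z}'}^!)$-adjunction, this Hom identifies with a Tate twist of the dual of $H^{2d_{\st{Y}'}}_{\mr{c}}(\st{Z}',L_{\st{Z}'})$. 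An inspection of Definition \ref{Artinstackcdctstr} and the gluing \ref{defofconstanddual} shows $L_{\st{Z}'}\in\mr{Con}(\st{Z}')$: for a smooth presentation $\rho_i\colon Z_i\to\st{Z}'$ of relative dimension $d_i$, $\rho_i^*(L_{\st{Z}'})=L_{Z_i}[d_i]$ sits in c-degree $-d_i$, matching the shift required by the c-t-structure on the stack (using that $K_{Z_i}$ is in the c-heart of the smooth scheme $Z_i$, cf.\ the example of \ref{dfnofsupportandoth}). Since $\st{V}'$ is dense in $\st{Y}'$ we have $\dim(\st{Z}')<d_{\st{Y}'}$, and Lemma \ref{cohdimcalc} gives the required vanishing. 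The cohomological consequence $g^*f_*=f'_*g'^*$ then follows by applying $p_{\st{X}'!}$ and $\mr{Hom}(L,-[*])$ to a commutative square built from the first assertion, the naturality of the unit $\mr{id}\to g_+g^+$ (available since $g$ is proper, whence so is $g'$), and the base change $g^+f_!\cong f'_!g'^+$.

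The main technical obstacle is the final vanishing, which hinges on identifying $L_{\st{Z}'}$ correctly as an object of the constructible heart so that Lemma \ref{cohdimcalc} can be applied, and on carefully tracking shifts through Poincar\'e duality and the various adjunctions. A secondary difficulty is the explicit verification of the identity $\mr{fTr}_h=\mr{fTr}_f\circ f_!(\mr{adj}_j)$ relating fake traces of $f$ and its restriction to the dense open substack, which requires unwinding the construction of the fake trace via Poincar\'e duality on $\st{X}$.
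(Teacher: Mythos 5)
Your proposal is correct and follows the same strategy as the paper's proof, just spelled out in considerably more detail. The paper compresses the argument to three sentences: it observes that the commutativity to be shown is an equality of two elements in $H^{2a}_{\mr{c}}(\st{Y}')(a)^\vee$ where $a=\dim\st{Y}'$, invokes Lemma \ref{cohdimcalc} to justify replacing $\st{Y}$ by the dense open $\st{V}$ (since $\dim(\st{Y}'\setminus\st{V}')<a$ makes $H^{2a}_{\mr{c}}(\st{V}')\to H^{2a}_{\mr{c}}(\st{Y}')$ surjective), and then applies the Corollary of \ref{sga4halfcharact} together with the base change property of Theorem \ref{poincaredualrela}~(II) to the flat case. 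Your proof unwinds what the paper's "we may shrink" conceals: you explicitly verify the compatibility $\mr{fTr}_h=\mr{fTr}_f\circ f_!(\mr{adj}_j)$ of the fake trace with open restriction, run the localization triangle to show the discrepancy factors through $g''_!L_{\st{Z}'}(d)[2d]$, check that $L_{\st{Z}'}$ is a c-module so Lemma \ref{cohdimcalc} applies, and identify the relevant Hom group with a twist of $H^{2d_{\st{Y}'}}_{\mr{c}}(\st{Z}')^\vee$. These are exactly the ingredients that make the paper's one-line reduction work, so the two proofs are substantively the same; yours trades concision for explicitness of the diagram chases.
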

\begin{proof}
 Since the commutativity of the diagram on the right can be interpreted
 as coincidence of two elements in $H^{2a}_{\mr{c}}(\st{Y'})(a)^\vee$,
 where $a$ denotes the dimension of $\st{Y}'$, we may shrink $\st{Y'}$
 by its open dense substack by Lemma \ref{cohdimcalc}. Thus we may
 replace $\st{Y}$ by $\st{V}$, and may assume that $f$ is flat of
 relative dimension $d$. Now the lemma follows by Corollary
 \ref{sga4halfcharact} and the base change property of the trace map
 (cf.\ Theorem \ref{poincaredualrela} (II)).
\end{proof}

\begin{cor*}[Projection formula]
 Let $f\colon\st{X}\rightarrow\st{Y}$ be a proper morphism of
 c-admissible stacks and $\st{Y}$ is smooth. Assume that there exists an
 open dense substack $\st{U}\subset\st{X}$ such that the morphism
 $\st{U}\rightarrow\st{Y}$ is flat of relative dimension $d$. Then for
 $\alpha\in H^i_{\mr{c}}(\st{X})$ and $\beta\in
 H^j_{\mr{c}}(\st{Y})$, we have the following equality in
 $H^{i+j-2d}_{\mr{c}}(\st{Y})(-d)$.
 \begin{equation*}
  f_*\bigl(\alpha\cup f^*\beta\bigr)=f_*\alpha\cup\beta.
 \end{equation*}
\end{cor*}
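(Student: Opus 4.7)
The plan is to lift the desired equality of cohomology classes to an equality of morphisms in $D^{\mr{b}}_{\mr{hol}}(\st{Y})$, then reduce to the flat case where it follows from Variance 5 of Theorem \ref{traceexitstate}.

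First, I would reformulate the projection formula at the functor level. Using the functor-level projection formula (the Proposition at the end of \S\ref{sixfuncadmstsec}) together with Proposition \ref{easypropfunc}.\ref{unitcommute}, for $\ms{N}\in D^{\mr{b}}_{\mr{hol}}(\st{Y})$ there is a canonical isomorphism $f_!L_{\st{X}}(d)[2d]\otimes\ms{N}\xrightarrow{\sim}f_!(f^+\ms{N})(d)[2d]$. Unpacking $\alpha\cup f^*\beta$ via the definition of the cup product in \ref{cupproddef} and of $f^*\beta$ as pre-composition with the unit $L_{\st{Y}}\to f_+f^+L_{\st{Y}}=f_!L_{\st{X}}$, the equality $f_*(\alpha\cup f^*\beta)=f_*\alpha\cup\beta$ is reduced to checking that the following diagram commutes:
\begin{equation*}
 \xymatrix@C=30pt{
  f_!L_{\st{X}}(d)[2d]\otimes \ms{N}
  \ar[r]^-{\mr{fTr}_f\otimes\mr{id}}\ar[d]_-{\sim}&
  L_{\st{Y}}\otimes\ms{N}\ar@{=}[d]\\
  f_!(f^+\ms{N})(d)[2d]\ar[r]&\ms{N}
  }
\end{equation*}
where the bottom map is defined so that the diagram commutes. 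Applying $p_{\st{Y},!}$ and composing with $\alpha\otimes\beta$ then recovers the desired identity in $H^{i+j-2d}_{\mr{c}}(\st{Y})(-d)$, by a diagram chase using the associativity of the cup product and the naturality of $\mr{fTr}_f$ with respect to tensoring by $\ms{N}$ on $\st{Y}$.

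Next, I would reduce the commutativity of this diagram to the case where $f$ is flat. Both sides of the cohomology equality give morphisms in $\mr{Hom}\bigl(L,p_{\st{Y},!}L_{\st{Y}}(-d)[i+j-2d]\bigr)$. By Lemma \ref{cohdimcalc} and the construction of the trace map in Theorem \ref{poincaredualrela} via the gluing result Lemma \ref{glusingtstrumod}, the fake trace map $\mr{fTr}_f$ on $\st{Y}$ is uniquely determined by its restrictions over a smooth presentation of $\st{Y}$ and further by its restriction over any open dense substack $\st{V}\subset\st{Y}$ whose complement has dimension $<\dim\st{Y}$. Choosing $\st{V}\subset\st{Y}$ to be the open dense substack over which $f^{-1}(\st{V})\subset\st{U}$ (so that $f|_{f^{-1}(\st{V})}\colon f^{-1}(\st{V})\to\st{V}$ is flat of relative dimension $d$), the base-change property of $\mr{fTr}_f$ in Theorem \ref{poincaredualrela} (III) combined with the argument of Lemma \ref{commutabasechancorre} reduces verification of the diagram to verification over $\st{V}$.

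Over $\st{V}$, the morphism $f_{\st{V}}\colon f^{-1}(\st{V})\to\st{V}$ is flat, so by Corollary \ref{sga4halfcharact} we have $\mr{fTr}_{f_{\st{V}}}=\mr{Tr}_{f_{\st{V}}}$, and the required compatibility is exactly (Var 5) of Theorem \ref{traceexitstate}. Since (Var 5) is formulated for realizable schemes, the final step is to lift it to c-admissible stacks; this is done by taking a simplicial presentation of $\st{V}$ and $f^{-1}(\st{V})$ by realizable schemes, applying (Var 5) term-wise, and gluing via Lemma \ref{glusingtstrumod}, exactly as in the proof of Theorem \ref{poincaredualrela}.

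The main obstacle is the reduction to the flat case in the second step, since shrinking $\st{X}$ to $\st{U}$ would break properness of $f$ and thereby invalidate $f_*$. The correct formulation is to shrink $\st{Y}$ (not $\st{X}$) to a dense open $\st{V}$ so that $f|_{f^{-1}(\st{V})}$ is both proper and flat; establishing that the equality to be proved is detected on such a $\st{V}$ requires a careful application of the dc-t-structure density arguments underlying Lemma \ref{exactsequoftstr}, which show that equality of morphisms into $p_{\st{Y},!}L_{\st{Y}}(-d)[i+j-2d]$ can be tested after restriction to the smooth open $\st{V}$.
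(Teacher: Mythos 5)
Your plan diverges from the paper's proof and, as written, has two genuine gaps. The paper does not reduce the identity to the flat case at all: it writes $f_*\alpha\cup\beta=\Delta^*\bigl((f\times\mr{id})_*(\alpha\boxtimes\beta)\bigr)$ by K\"{u}nneth and applies Lemma \ref{commutabasechancorre} to the cartesian square $(\Delta'\colon\st{X}\to\st{X}\times\st{Y},\;\Delta\colon\st{Y}\to\st{Y}\times\st{Y},\;f\times\mr{id})$ to move $\Delta^*$ past $(f\times\mr{id})_*$, then identifies $\Delta'^*(\alpha\boxtimes\beta)=\alpha\cup f^*\beta$. The ``flat on a dense open'' hypothesis is consumed \emph{inside} Lemma \ref{commutabasechancorre}, where the object being tested generically is the fake trace class $\mr{fTr}_f\in H^{2a}_{\mr{c}}(\st{Y}')(a)^\vee$ (a top-degree group, for which density testing is justified by Lemma \ref{cohdimcalc}), not a pushed-forward cohomology class.

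Concretely, the two gaps in your write-up are these. First, your diagram in Step~1 is vacuous: you define the bottom arrow ``so that the diagram commutes,'' so there is nothing left to check; the content would have to be a comparison of $\mr{fTr}_f\otimes\mr{id}_{\ms{N}}$ with some independently defined trace map $f_!(f^+\ms{N})(d)[2d]\to\ms{N}$, but for a non-flat $f$ with non-smooth $\st{X}$ no such map exists for general $\ms{N}$ (the fake trace is built via Poincar\'{e} duality on the smooth $\st{Y}$ and only produces a morphism on the unit object). Second, the reduction to a dense open $\st{V}\subset\st{Y}$ fails on two counts: (a) there need not exist $\st{V}$ with $f^{-1}(\st{V})\subset\st{U}$, since the complement $\st{X}\setminus\st{U}$ can surject onto $\st{Y}$ under the proper map $f$; and (b) even granting such a $\st{V}$, the equality you want lives in $H^{i+j-2d}_{\mr{c}}(\st{Y})(-d)$, and restriction to a dense open is not injective on compactly supported cohomology, so ``detected on $\st{V}$'' is false at this level. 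The paper's route avoids both problems because the density argument is applied only to the fake trace/cycle class, where it is legitimate; you should replace your reduction with an invocation of Lemma \ref{commutabasechancorre} after the K\"{u}nneth/diagonal rewriting.
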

\begin{proof}
 Consider the following commutative diagram of proper morphisms:
 \begin{equation*}
  \xymatrix@C=40pt{
   \st{X}\ar[r]\ar@/^1.5pc/[rr]^-{\mr{id}}&
   \st{X}\times_{\st{Y}}\st{X}
   \ar[r]\ar[d]\ar@{}[rd]|\square&
   \st{X}\ar[r]^{f}\ar[d]_{\Delta'}\ar@{}[rd]|\square&
   \st{Y}\ar[d]^{\Delta}\\
  &
   \st{X}\times\st{X}\ar[r]&
   \st{X}\times\st{Y}\ar[r]_{f\times\mr{id}}&
   \st{Y}\times\st{Y}.
   }
 \end{equation*}
 By the hypothesis on $f$, we can apply the lemma above to the right
 cartesian diagram (take $f$, $g$ in the lemma to be $\mr{id}\times f$
 and $\Delta$ respectively). We have
 \begin{equation*}
  f_*\alpha\cup\beta
   =\Delta^*\bigl((f\times\mr{id})_*(\alpha\boxtimes\beta)\bigr)
   =f_*\Delta'^*(\alpha\boxtimes\beta)
 \end{equation*}
 where we used the lemma for the second equality.
 The diagram above and the transitivity of the pull-back show that
 $\Delta'^*(\alpha\boxtimes\beta)=\alpha\cup f^*\beta$, and the corollary
 follows.
\end{proof}

\begin{lem}
 \label{fibercorres}
 Let $S$ be a scheme of finite type over $k$, $\st{X}^{(\prime)}$ be a
 c-admissible stack, and
 $f^{(\prime)}\colon\st{X}^{(\prime)}\rightarrow S$ be a morphism.
 Assume that $\varphi=\mr{id}$.
 Let $\Gamma\colon\st{X}\rightsquigarrow\st{X}'$ over $S$.
 Assume there is an open substack $\Gamma'\subset\Gamma$ such that
 the first projection $\Gamma'\subset\Gamma\xrightarrow{p_1}\st{X}$ is
 flat. For a closed point $i_s\colon s\hookrightarrow S$, we denote by
 $\st{X}_s$, $\st{X}'_s$, $\Gamma_s$, $\Gamma'_s$ the fibers over $s$.
 If $\Gamma'_s\subset\Gamma_s$ is {\em dense}, $\Gamma_s$ is a
 correspondence $\st{X}_s\rightsquigarrow\st{X}'_s$, and the
 the following diagram is commutative:
 \begin{equation*}
  \xymatrix@C=40pt{
   i_s^+f'_!L_{\st{X}'}\ar[r]^-{\Gamma^*}\ar[d]&
   i_s^+f_!L_{\st{X}}\ar[d]\\
  f'_{s!}L_{\st{X}'_s}\ar[r]_-{\Gamma_s^*}&
   f_{s!}L_{\st{X}_s}.
   }
 \end{equation*}
 Here the vertical homomorphisms are the base change maps.
\end{lem}
\begin{proof}
 Since $\Gamma'$ is flat over $S$, $\Gamma_s$ is a correspondence for
 any $s\in S$. Now, to show the commutativity, it suffices to show the
 commutativity of the following diagrams:
 \begin{equation*}
  \xymatrix{
   i_s^+p_1^+L_{\st{X}}\ar[r]\ar[d]_{i_s^+(\iota_\Gamma)}&
   p_{1s}^+L_{\st{X}_s}\ar[d]^{\iota_{\Gamma_s}}\\
  i_s^+p_1^!L_{\st{X}}\ar[r]&p_{1s}^!L_{\st{X}_s},
   }
   \hspace{50pt}
   \xymatrix{
   i_s^+L_{\st{X}'}\ar[r]^-{\sim}\ar[d]&
   L_{\st{X}'_s}\ar[d]\\
  i_s^+p_{2+}p_2^+L_{\st{X}'}\ar[r]_-{\sim}&
   p_{2s+}p_{2s}^+L_{\st{X}'_s}.
   }
 \end{equation*}
 The commutativity of the left diagram follows by Lemma
 \ref{commutabasechancorre}. The commutativity of the right one
 follows by the fact that $(g^+,g_+)$ is an adjoint pair when
 $g$ is a morphism of admissible stacks.
\end{proof}

\begin{rem*}
 In general, there exists an open dense subscheme $U\subset S$ such that
 the condition of the lemma holds for any $s\in U$.
\end{rem*}

\begin{lem}
 Let $\rho\colon\st{X}\rightarrow\st{X}'$ be a proper morphism over
 $\varphi$ such that $\st{X}$ is smooth. Let $\Gamma_\rho$ denote the
 graph of $\rho$ and regard it as a correspondence
 $\st{X}\rightsquigarrow\st{X}'$. Then $\rho^*=\Gamma_\rho^*$.
\end{lem}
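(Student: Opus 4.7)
The plan is to compute the action $\Gamma_\rho^*$ directly from its definition and observe that every nontrivial ingredient degenerates to a canonical identification once one notes $p_1=\mathrm{id}_{\st{X}}$ and $p_2=\rho$.

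First I would unwind the data. The graph correspondence is $c_{\Gamma_\rho}=(\mathrm{id}_{\st{X}},\rho)\colon\st{X}\rightarrow\st{X}\times_{\varphi,S}\st{X}'$, so that $p_1=\pi_1\circ c_{\Gamma_\rho}=\mathrm{id}_{\st{X}}$ and $p_2=\pi_2\circ c_{\Gamma_\rho}=\rho$. Properness of $p_2=\rho$ is the standing hypothesis, and smoothness of the ``source'' $\Gamma_\rho=\st{X}$ is exactly the smoothness assumption on $\st{X}$. Hence $p_2=\rho$ is proper (so $p_{2+}=p_{2!}$), and the fake trace $\mathrm{fTr}_{p_1}=\mathrm{fTr}_{\mathrm{id}_{\st{X}}}$ is defined.

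The key computation is that $\mathrm{fTr}_{\mathrm{id}_{\st{X}}}$ is the identity morphism $L_{\st{X}}\to L_{\st{X}}$. By construction (cf.\ \ref{tracemapmeanscons}), $\mathrm{fTr}_{\mathrm{id}}$ is obtained from $\mathrm{Tr}_{\st{X}}\in\mathrm{Hom}(\alpha^+L(d_{\st{X}})[2d_{\st{X}}],\alpha^!L)$ by Poincar\'e duality \ref{poincaredualrela} along the identity (relative dimension~$0$), so it corresponds to $\mathrm{id}_{L_{\st{X}}}$ under $\mathrm{Hom}(\mathrm{id}_!\mathrm{id}^+L_{\st{X}},L_{\st{X}})\cong\mathrm{Hom}(L_{\st{X}},L_{\st{X}})$. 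Taking the adjoint, $\iota_{\Gamma_\rho}\colon p_2^+L_{\st{X}'}\cong p_1^+L_{\st{X}}\to p_1^!L_{\st{X}}$ becomes the canonical isomorphism $\rho^+L_{\st{X}'}\xrightarrow{\sim}L_{\st{X}}$ supplied by Lemma \ref{externaltensstsh}(ii).

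Next I would substitute into the definition of $\Gamma_\rho^*$ and identify each step. Using $p_1=\mathrm{id}$, $f\circ p_1=f$ and $\varphi\circ f\circ p_1=\varphi\circ f=f'\circ\rho$; the isomorphism $f'_!\,p_{2+}p_2^+L_{\st{X}'}\xleftarrow{\sim}(\varphi\circ f\circ p_1)_!\,p_2^+L_{\st{X}'}$ becomes the transitivity $f'_!\rho_!\cong (f'\circ\rho)_!$ (valid since $\rho$ is proper). The map $\iota_{\Gamma_\rho}$ reduces as above to the canonical $\rho^+L_{\st{X}'}\cong L_{\st{X}}$, and the last step $\varphi_+f_!p_{1!}p_2^!L_{\st{X}}\to\varphi_+f_!L_{\st{X}}$ collapses to the identity since $p_{1!}p_2^!=\rho^!$ cancels against $p_1^!=\mathrm{id}$ in our setup (again using that the fake trace for identity is the identity). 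What remains is
\[
f'_!L_{\st{X}'}\xrightarrow{\mathrm{unit}_{(\rho^+,\rho_+)}}f'_!\rho_+\rho^+L_{\st{X}'}\xrightarrow{\sim}f'_!\rho_+L_{\st{X}}\xrightarrow{\sim}\varphi_+f_!L_{\st{X}},
\]
which is by definition $\rho^*$ as described at the end of \ref{tracemapmeanscons}(ii).

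The only delicate point is the verification that $\mathrm{fTr}_{\mathrm{id}}=\mathrm{id}$; this I would justify either directly from the definition above or, equivalently, by invoking the characterization Lemma \ref{sga4halfcharact}: the fake trace is uniquely determined by the identity $\mathrm{Tr}_f(u)=\mathrm{Tr}_g(\mathrm{fTr}_i\cup u)$, and for $i=\mathrm{id}_{\st{X}}=f=g$ the class $\mathrm{id}\in H^0(\st{X})$ obviously satisfies this. Everything else in the argument is formal bookkeeping with transition isomorphisms and adjunctions.
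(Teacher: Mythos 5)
Your proof is correct and takes the same route as the paper's two-sentence proof: identify $\Gamma_\rho\cong\st{X}$ via $p_1$, observe that $\mr{fTr}_{p_1}$ becomes the identity, and unwind the definition of $\Gamma_\rho^*$ to recover $\rho^*$. You spell out the verification that $\mr{fTr}_{\mr{id}}=\mr{id}$ (via the construction and, alternatively, the characterization lemma), which the paper leaves implicit; the only blemish is the garbled clause about ``$p_{1!}p_2^!=\rho^!$ cancels against $p_1^!=\mr{id}$'' near the end — the term appearing there is $p_{1!}p_1^!$, collapsing by counit, and you seem to have inherited a $p_2^!$ misprint from the definition of $\Gamma^*$ — but this does not affect the argument.
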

\begin{proof}
 We have $\st{X}\xleftarrow[\sim]{p_1}\Gamma_{\rho}\xrightarrow{p_2}
 \st{X}'$. Via the identification $p_{1!}K_{\Gamma_\rho}
 \xrightarrow{\sim} K_{\st{X}}$, the fake trace $\mr{fTr}_{p_1}\colon 
 p_{1!}p_1^+K_{\st{X}}\rightarrow K_{\st{X}}$ is the identity. Thus the
 lemma follows.
\end{proof}

\subsubsection{}
\label{tracepropmuln}
Let $\st{X}$, $\st{Y}$ are c-admissible stacks of
dimension $d$. Let $g\colon\st{Y}\rightarrow\mr{Spec}(k)$,
$h\colon\st{X}\rightarrow\st{Y}$ and $f:=g\circ h$. Assume that $h$ is
proper. Then, we have the canonical homomorphism
$(h^*)^\vee\colon(H^{2d}_{\mr{c}}(\st{X})(d))^\vee\rightarrow
(H^{2d}_{\mr{c}}(\st{Y})(d))^\vee$ where $(-)^\vee$ denotes
$\mr{Hom}(-,L)$. Note that $\mr{Tr}_{\st{X}}\in
H^{2d}_{\mr{c}}(\st{X})(d)^\vee$.

The morphism $h$ is said to be {\em generically locally free} if there
exists a dense open substack $\st{V}\subset\st{Y}$ such that the induced
morphism $h^{-1}(\st{V})\rightarrow\st{V}$ is finite and locally free.
Furthermore, $h$ is said to be {\em generically of constant degree} if
all the degrees of $h|_{h^{-1}(\st{V})}$ over irreducible components of
$\st{V}$ are the same.

\begin{lem*}
 Assume that $h$ is a generically locally free morphism
 of constant degree. Then $(h^*)^\vee$ sends the trace map $\mr{Tr}_f$
 to $\deg(h)\cdot\mr{Tr}_{g}$.
\end{lem*}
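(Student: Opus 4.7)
The plan is to reduce the identity $\mr{Tr}_{f}\circ h^{*}=n\cdot\mr{Tr}_{g}$ (where $n:=\deg(h)$) to the case where $h$ is finite locally free of constant degree $n$, and then apply transitivity of the trace map together with a stack analogue of property (Var 4-I) of Theorem \ref{traceexitstate}.

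First I would pick an open dense substack $\st{V}\subset\st{Y}$ over which $h$ becomes finite locally free of degree $n$, with preimage $\st{W}:=h^{-1}(\st{V})\subset\st{X}$. Because $h$ has relative dimension $0$ over $\st{V}$, both $\st{Y}\setminus\st{V}$ and $\st{X}\setminus\st{W}$ have dimension $<d$, so Lemma \ref{cohdimcalc} applied to the localization triangles yields canonical isomorphisms $H^{2d}_{\mr{c}}(\st{V})(d)\xrightarrow{\sim}H^{2d}_{\mr{c}}(\st{Y})(d)$ and $H^{2d}_{\mr{c}}(\st{W})(d)\xrightarrow{\sim}H^{2d}_{\mr{c}}(\st{X})(d)$. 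Under these, $h^{*}$ corresponds to $(h|_{\st{W}})^{*}$ by naturality, and the transitivity of Theorem \ref{poincaredualrela} (III) applied to the factorizations $p_{\st{V}}=p_{\st{Y}}\circ j_{\st{V}}$ and $f|_{\st{W}}=f\circ j_{\st{W}}$ (both open immersions lie in $\mf{M}^{\mr{st}}_{0}$, with associated trace equal to the adjunction counit $j_{!}j^{+}\to\mr{id}$, which is the $n=1$ case of (Var 4-I)) identifies $\mr{Tr}_{f}$ with $\mr{Tr}_{f|_{\st{W}}}$ and $\mr{Tr}_{g}$ with $\mr{Tr}_{g|_{\st{V}}}$. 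I may therefore assume $h\in\mf{M}^{\mr{st}}_{0}$.

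Next, applying Theorem \ref{poincaredualrela} (III) to $f=g\circ h$ gives $\mr{Tr}_{f}=\mr{Tr}_{g}\circ g_{!}(\mr{Tr}_{h})$, while the definition of $h^{*}$ on cohomology presents it as post-composition with $g_{!}(\eta_{h})$, where $\eta_{h}\colon L_{\st{Y}}(d)[2d]\to h_{+}h^{+}L_{\st{Y}}(d)[2d]\cong h_{!}L_{\st{X}}(d)[2d]$ is the adjunction unit (using $h_{+}\cong h_{!}$ for proper $h$). Thus $\mr{Tr}_{f}(h^{*}\beta)=\mr{Tr}_{g}\bigl(g_{!}(\mr{Tr}_{h}\circ\eta_{h})\circ\beta\bigr)$, and the proof reduces to the stack-theoretic multiplicativity statement
\[
\mr{Tr}_{h}\circ\eta_{h}=n\cdot\mr{id}_{L_{\st{Y}}(d)[2d]}.
\]
This I would check by smooth descent: because $h$ is representable (any finite morphism between algebraic stacks is representable by the convention in \S\ref{fixterminolspace}), any smooth presentation $P\colon V\to\st{Y}$ from a scheme produces by base change a finite locally free morphism $h_{V}\colon W_{V}\to V$ of schemes of constant degree $n$. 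By Theorem \ref{poincaredualrela} (II) together with the naturality of the adjunction unit, $P^{*}(\mr{Tr}_{h}\circ\eta_{h})=\mr{Tr}_{h_{V}}\circ\eta_{h_{V}}$, which equals $n\cdot\mr{id}_{L_{V}(d)[2d]}$ by (Var 4-I) of Theorem \ref{traceexitstate} for the realizable scheme $V$; smooth descent (Lemma \ref{gluinglemmaarti}) then transfers the identity back to $\st{Y}$.

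The hard part will be the multiplicativity step: one must verify that the base change isomorphism for the trace map (Theorem \ref{poincaredualrela} (II)) and the naturality of the adjunction unit intertwine correctly, so that pulling back $\mr{Tr}_{h}\circ\eta_{h}$ genuinely produces $\mr{Tr}_{h_{V}}\circ\eta_{h_{V}}$. Once this compatibility is established, (Var 4-I) of Theorem \ref{traceexitstate} handles the scheme case and the smooth descent framework finishes the argument without further substantive input.
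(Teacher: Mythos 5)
Your proof is correct and follows essentially the same route as the paper's: shrink so that $h$ becomes finite flat, apply transitivity of the trace to write $\mr{Tr}_f=\mr{Tr}_g\circ g_!(\mr{Tr}_h)$, and then use the degree-$n$ multiplicativity $\mr{Tr}_h\circ\mr{adj}_h=\deg(h)\cdot\mr{id}$. The paper's proof asserts this last identity directly for stacks in a commutative diagram without comment; your smooth-descent reduction to the realizable scheme case of (Var 4-I) via Theorem \ref{poincaredualrela} (II) is exactly the justification it implicitly relies on.
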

\begin{proof}
 We have the following diagram:
 \begin{equation*}
  \xymatrix{
   H^{2d}_{\mr{c}}(\st{X})(d)^\vee\ar[r]^-{(h^*)^\vee}\ar[d]&
   H^{2d}_{\mr{c}}(\st{Y})(d)^\vee\ar[d]^{\sim}\\
  H^{2d}_{\mr{c}}(h^{-1}(\st{V}))(d)^\vee\ar[r]&
   H^{2d}_{\mr{c}}(\st{V})(d)^\vee,
   }
 \end{equation*}
 where the right vertical homomorphism is an isomorphism by Lemma
 \ref{cohdimcalc}. Thus, we may replace $\st{Y}$ by $\st{V}$, and in
 particular, we may assume that $h$ is locally free. We have the
 following commutative diagram:
 \begin{equation*}
  \xymatrix{
   g_!h_!h^+g^+(L)(d)[2d]\ar[r]^-{\mr{Tr}_h}&
   g_!g^+(L)(d)[2d]\ar[r]^-{\mr{Tr}_g}&L\\
  g_!g^+L(d)[2d]\ar[ru]_-{\deg(h)\cdot}.\ar[u]^{\mr{adj}_h}&&
   }
 \end{equation*}
 The composition of the first row is $\mr{Tr}_f$. By definition,
 $(h^*)^\vee(\mr{Tr}_f)=\mr{adj}_h\circ\mr{Tr}_f$. Thus
 the lemma follows.
\end{proof}

\begin{cor*}
 \label{genconstdegrel}
 Let $\st{X}$ be a smooth c-admissible stack.
 
 (i) Let $\st{Z}$, $\st{Z}'$ be generalized cycles of codimension $c$ of
 $\st{X}$. Assume given a generically locally free morphism of constant
 degree $\rho\colon\st{Z}'\rightarrow\st{Z}$ over $\st{X}$. Then
 by the push-forward homomorphism $\rho_*\colon
 H^{2c}_{\st{Z}'}(\st{X})(c)\rightarrow H^{2c}_{\st{Z}}(\st{X})(c)$,
 $\mr{cl}_{\st{X}}(\st{Z}')$ is sent to
 $\deg(\rho)\cdot\mr{cl}_{\st{X}}(\st{Z})$.

 (ii) Let $\Gamma,\Gamma'\colon\st{X}\rightsquigarrow\st{X}'$ be
 correspondences. Assume that there exists a morphism
 $\rho\colon\Gamma\rightarrow\Gamma'$ such that
 $c_{\Gamma'}\circ\rho=c_{\Gamma}$, and $\rho$ is generically locally
 free of constant degree. Then
 $\Gamma'^*\cong\deg(\rho)^{-1}\cdot\Gamma^*$.
\end{cor*}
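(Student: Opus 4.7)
The strategy is to derive both parts from Lemma \ref{tracepropmuln}, which evaluates the effect of pullback along a proper, generically locally free morphism of constant degree on the absolute trace map.

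For part (i), unwinding the construction in \ref{tracemapmeanscons}, the cycle class $\mr{cl}_{\st{X}}(\st{Z})$ is the image of the fake trace $\mr{fTr}_{i} \in \mr{Hom}(i_{!}i^{+}L_{\st{X}}(-c)[-2c], L_{\st{X}})$ under the $(i^{+}, i_{+})$ adjunction, and $\mr{fTr}_{i}$ is by definition the image of the absolute trace $\mr{Tr}_{\st{Z}}$ under the chain of isomorphisms
\[
 \mr{Hom}(i_{!}i^{+}L_{\st{X}}(-c)[-2c], L_{\st{X}}) \cong \mr{Hom}(i^{+}L_{\st{X}}(-c)[-2c], i^{!}L_{\st{X}}) \cong \mr{Hom}(p_{\st{Z}!}L_{\st{Z}}(d_{\st{Z}})[2d_{\st{Z}}], L)
\]
supplied by Poincaré duality (Theorem \ref{poincaredualrela}) and the smoothness of $\st{X}$. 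This exhibits $H^{2c}_{\st{Z}}(\st{X})(c)$ as the $L$-linear dual of $H^{2d_{\st{Z}}}_{\mr{c}}(\st{Z})(d_{\st{Z}})$, with $\mr{cl}_{\st{X}}(\st{Z})$ corresponding to $\mr{Tr}_{\st{Z}}$, and analogously for $\st{Z}'$. Under these identifications, the push-forward $\rho_{*}$ on local cohomology (induced by the counit $\rho_{!}\rho^{!}\to\mr{id}$) corresponds to the dual pull-back $(\rho^{*})^{\vee}$ on trace functionals---a compatibility that is a formal consequence of the naturality of the $(\rho_{!}, \rho^{!})$ adjunction together with the factorization $i' = i\circ\rho$. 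Granted this translation, Lemma \ref{tracepropmuln} applied to $\rho$ yields $(\rho^{*})^{\vee}\mr{Tr}_{\st{Z}'} = \deg(\rho)\cdot\mr{Tr}_{\st{Z}}$, which reads precisely $\rho_{*}(\mr{cl}_{\st{X}}(\st{Z}')) = \deg(\rho)\cdot\mr{cl}_{\st{X}}(\st{Z})$.

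For part (ii), the hypothesis $c_{\Gamma'}\circ\rho = c_{\Gamma}$ makes $\Gamma$ and $\Gamma'$ into two generalized cycles of the same codimension $\dim(\st{X}')$ in $\st{X}\times_{\varphi,S}\st{X}'$, related by $\rho$. The action $\Gamma^{*}$ of the correspondence on cohomology is constructed from the morphism $\iota_{\Gamma}\colon p_{2}^{+}L_{\st{X}'} \to p_{1}^{!}L_{\st{X}}$, which is the adjoint of the fake trace $\mr{fTr}_{p_{1}}$, and the entire composition defining $\Gamma^{*}$ in \S\ref{Genecyclcorres} is $L$-linear in $\iota_{\Gamma}$. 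Applying part (i) to the pair $c_{\Gamma}, c_{\Gamma'}$---or equivalently applying Lemma \ref{tracepropmuln} directly to $\rho$ to relate $\mr{Tr}_{\Gamma}$ and $\mr{Tr}_{\Gamma'}$ via $\rho^{*}$---one finds that $\iota_{\Gamma}$ equals $\deg(\rho)$ times the composition of $\iota_{\Gamma'}$ with the appropriate counit of $\rho$. Unwinding the definition of the correspondence action then yields $\Gamma^{*} = \deg(\rho)\cdot\Gamma'^{*}$, equivalently $\Gamma'^{*} = \deg(\rho)^{-1}\cdot\Gamma^{*}$.

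The main obstacle will be the duality bookkeeping in (i): verifying that the pushforward $\rho_{*}$ on local cohomology coincides, under the Poincaré duality identification, with the dual pullback $(\rho^{*})^{\vee}$ of trace functionals. This identification is formal but delicate, requiring one to trace through how the $(\rho_!, \rho^!)$ adjunction interacts with $(i_!, i^!)$ and $(p_{\st{Z}!}, p_{\st{Z}}^{!})$. Once it is established, both parts follow directly from Lemma \ref{tracepropmuln}; part (ii) additionally requires the elementary observation that $\Gamma^{*}$ depends $L$-linearly on $\iota_{\Gamma}$ in the sense made above.
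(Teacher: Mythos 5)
Your proof is correct and takes essentially the same approach as the paper, whose stated proof is simply that both parts are straightforward consequences of Lemma~\ref{tracepropmuln}. Your write-up accurately fills in the details implicit in that one-line proof — the identification of cycle classes with traces via the fake-trace construction and Poincaré duality, the compatibility of the local-cohomology pushforward with the dual pullback of trace functionals, and the linear dependence of $\Gamma^*$ on $\iota_\Gamma$ — and correctly pinpoints the main bookkeeping step.
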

\begin{proof}
 They are straightforward from the lemma.
\end{proof}

\begin{dfn}
 \label{corresnotdef}
 We denote by $\mr{Corr}_{\varphi}(\st{X},\st{X}')$ the $\mb{Q}$-vector
 space freely generated by the set
 $\bigl\{\Gamma\colon\st{X}\rightsquigarrow\st{X}'\mid
 \mbox{correspondence over $\varphi$}\bigr\}$. We denote
 $\mr{Corr}_{\mr{id}}(\st{X},\st{X}')$ by $\mr{Corr}_S(\st{X},\st{X}')$,
 and we denote $\mr{Corr}_{\star}(\st{X},\st{X})$ by
 $\mr{Corr}_{\star}(\st{X})$ ($\star=\varphi,S$) for short.
 We have a homomorphism
 \begin{equation*}
  \mr{Corr}_{\varphi}(\st{X},\st{X}')\rightarrow
   \mr{Hom}_S\bigl(\varphi^+f'_!L_{\st{X}'},
   f_!L_{\st{X}}\bigr)
 \end{equation*}
 by sending $\Gamma$ to $\Gamma^*$. Let $I$ be the $\mb{Q}$-vector
 subspace of $\mr{Corr}_{\varphi}(\st{X},\st{X}')$ generated by
 $(\Gamma'-\deg(\rho)^{-1}\cdot\Gamma)$ where $\Gamma$, $\Gamma'$ are
 correspondences and $\rho$ is a generically locally free morphism of
 constant degree $\Gamma\rightarrow\Gamma'$.
 When $\st{X}$ is smooth, by Corollary \ref{genconstdegrel} (ii), the
 homomorphism above factors through
 $\mr{Corr}_{\varphi}(\st{X},\st{X}')/I$.
\end{dfn}

Let $\mr{Corr}^{\star}_{\varphi}(\st{X},\st{X}')$ for
$\star=\mr{et}$ (resp.\ $\mr{fin}$) be the
$\mb{Q}$-vector subspace of $\mr{Corr}_{\varphi}(\st{X},\st{X}')$
generated by integral correspondences $\Gamma$ ({\it i.e.}\ $\Gamma$ is
integral) such that the first projection $\Gamma\rightarrow\st{X}$ is
\'{e}tale (resp.\ finite). There exists the composition map
\begin{equation*}
 \circ\colon\mr{Corr}^{\mr{et}}_{\psi}(\st{X}',\st{X}'')
  \times
  \mr{Corr}^{\mr{et}}_{\varphi}(\st{X},\st{X}')
  \rightarrow\mr{Corr}^{\mr{et}}_{\psi\circ\varphi}
  (\st{X},\st{X}'')
\end{equation*}
defined by sending $(\Gamma',\Gamma)$ to
$\Gamma'\circ\Gamma:=\Gamma\times_{\st{X}'}\Gamma'$.

\begin{lem*}
 Let $\Gamma\colon\st{X}\rightsquigarrow\st{X}'$,
 $\Gamma'\colon\st{X}'\rightsquigarrow\st{X}''$ be correspondences over
 $\varphi$ and $\psi$. Assume further that the second projection of
 $\Gamma$ or the first projection of $\Gamma'$ is smooth.
 When $\st{X}$ and $\st{X}'$ are smooth, we have
 $(\Gamma'\circ\Gamma)^*=\Gamma^*\circ\Gamma'^*$.
\end{lem*}
\begin{proof}
 The verification is standard. See, for example, \cite[A.7]{Laf}.
\end{proof}

\subsubsection{}
\label{complafourcorre}
Given a correspondence, the results in this subsection hold in exactly
the same manner for $\ell$-adic \'{e}tale cohomology. However, in
\cite[\S A]{Laf}, he uses slightly different definition of the actions
of correspondences on the cohomology, and we need to compare these.

For a smooth admissible stack $\st{X}$, we denote by $\st{X}^{\mr{gr}}$
the associated coarse moduli algebraic space of Keel and Mori (cf.\
\cite[A.2]{Laf}). Now, let $\Gamma\colon\st{X}\rightsquigarrow\st{X}'$
be an integral correspondence over $\varphi$ such that the first
projection
$p\colon\Gamma\rightarrow\st{X}$ is {\em generically finite} and
dominant. Then the morphism $\rho\colon\Gamma\rightarrow
\widetilde{\Gamma}^{\mr{gr}}:=\Gamma^{\mr{gr}}{\times}
_{p\searrow\st{X}^\mr{gr}}\st{X}$ is generically finite locally free.
Indeed, let $P\rightarrow\st{X}$ be a presentation. Then
$|\Gamma\times_{\st{X}}P|\rightarrow
|\widetilde{\Gamma}^{\mr{gr}}\times_{\st{X}}P|=
|\Gamma^{\mr{gr}}|\times_{|\st{X}^{\mr{gr}}|}|P|$ is surjective.
The last equality holds since the fiber product is taken in the
category of algebraic spaces. Thus the
morphism $\Gamma\times_{\st{X}}P\rightarrow
\widetilde{\Gamma}^{\mr{gr}}\times_{\st{X}}P$ is surjective, and
$\Gamma\rightarrow\widetilde{\Gamma}^{\mr{gr}}$ is
surjective. This implies that
$\widetilde{\Gamma}^{\mr{gr}}$ is irreducible.
Now, since $\pi_*\mc{O}_{\Gamma}\cong\mc{O}_{\Gamma^{\mr{gr}}}$ (cf.\
\cite[1.1]{Co2}), $\Gamma^{\mr{gr}}$ is reduced. By \cite[5.1.12, 13,
14]{Behr}, the morphism $\st{X}\rightarrow\st{X}^{\mr{gr}}$ is a gerb
generically over $\st{X}^{\mr{gr}}$, thus the morphism is generically
smooth by \cite[5.1.5]{Behr}. This implies that
$\widetilde{\Gamma}^{\mr{gr}}\rightarrow\Gamma^{\mr{gr}}$ is smooth
generically, and thus $\widetilde{\Gamma}^{\mr{gr}}$ is reduced
generically over $\st{X}$. Since $\widetilde{\Gamma}^{\mr{gr}}$ is
irreducible and dominant over $\st{X}$,
we conclude that $\widetilde{\Gamma}^{\mr{gr}}$ is
integral. Thus, $\rho$ is generically flat. Since $p$ is assumed
generically finite, $\rho$ is generically finite as well, and thus
generically finite locally free.

The generic degree of $\rho$ is denoted by $d_{\Gamma}$.
We put $\mr{norm}(\Gamma):=(d_\Gamma)^{-1}\cdot\Gamma$.
We can check easily that for composable correspondences $\Gamma$,
$\Gamma'$, we have
$d_{\Gamma'\circ\Gamma}=d_{\Gamma}\cdot d_{\Gamma'}$.
Then we have a ring homomorphism
\begin{equation*}
 \mr{norm}\colon
 \mr{Corr}^{\mr{fin,et}}_{\varphi}(\st{X})\rightarrow
  \mr{Corr}^{\mr{fin,et}}_{\varphi}(\st{X});\
  \Gamma\mapsto (d_{\Gamma})^{-1}\cdot\Gamma.
\end{equation*}

Now, let $\st{Y}$ be a c-admissible stack, and
$q_{\st{Y}}\colon\st{Y}\rightarrow\st{Y}^{\mr{gr}}$ be the canonical
morphism, which is known to be proper (cf.\ \cite[1.1]{Co2}). By
\cite[A.3]{Laf}, the adjunction homomorphism
$\mb{Q}_\ell\rightarrow\mb{R}q_{\st{Y}*}q_{\st{Y}}^*(\mb{Q}_\ell)$ is an
isomorphism. For
$\Gamma\in\mr{Corr}_{\varphi}(\st{X},\st{X}')$, we define
\begin{equation*}
 \Gamma^{*\mr{Laf}}\colon
  \mc{H}^*_{\mr{c}}(\st{X}')
  \xleftarrow{\sim}
  \mc{H}^*_{\mr{c}}(\st{X}'^{\mr{gr}})
  \xrightarrow{\Gamma^{\mr{gr*}}}
  \mc{H}^*_{\mr{c}}(\st{X}^{\mr{gr}})
  \xrightarrow{\sim}
  \mc{H}^*_{\mr{c}}(\st{X})
\end{equation*}
where, for a c-admissible stack $f\colon\st{Y}\rightarrow S$,
$\mc{H}^*_{\mr{c}}(\st{Y})$ denotes $\H^*f_!\mb{Q}_{\ell}$. This is
nothing but the action of correspondence defined in \cite[A.6]{Laf}.

\begin{lem*}
 Let $\Gamma\in\mr{Corr}_{\varphi}(\st{X},\st{X}')$ which is integral
 and the first projection $\Gamma\rightarrow\st{X}$ is generically
 finite. Then we have
 $\Gamma^{*\mr{Laf}}=(\mr{norm}(\Gamma))^*$.
\end{lem*}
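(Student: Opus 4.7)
The plan is to bridge the two definitions via the intermediate correspondence $\widetilde{\Gamma}^{\mr{gr}}:=\Gamma^{\mr{gr}}\times_{\st{X}'^{\mr{gr}}}\st{X}'$ (or the analogous fiber product for $\st{X}$), and to argue in two steps. First, I would show that the canonical morphism $\rho\colon\Gamma\to\widetilde{\Gamma}^{\mr{gr}}$, which is generically finite locally free of degree $d_\Gamma$ by definition, allows us to apply Corollary~\ref{genconstdegrel}(ii) and conclude that
\[
\widetilde{\Gamma}^{\mr{gr},*}=d_\Gamma^{-1}\cdot\Gamma^*=(\mr{norm}(\Gamma))^*.
\]
Thus the lemma is reduced to the identity $\widetilde{\Gamma}^{\mr{gr},*}=\Gamma^{*\mr{Laf}}$, which is a purely functorial comparison.

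Second, I would verify that $\widetilde{\Gamma}^{\mr{gr}}$, viewed as a correspondence on $\st{X}$ obtained by pulling back $\Gamma^{\mr{gr}}$ along $q_{\st{X}}\colon\st{X}\to\st{X}^{\mr{gr}}$, produces the same endomorphism of $\mc{H}^*_{\mr{c}}(\st{X})$ as $\Gamma^{\mr{gr}}$ does on $\mc{H}^*_{\mr{c}}(\st{X}^{\mr{gr}})$ via the adjunction isomorphism $\mc{H}^*_{\mr{c}}(\st{X}^{\mr{gr}})\xrightarrow{\sim}\mc{H}^*_{\mr{c}}(\st{X})$ coming from \cite[A.3]{Laf}. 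Concretely, writing out the defining composition of $\widetilde{\Gamma}^{\mr{gr},*}$ (pull back along the first projection, apply the transition isomorphism induced by the fake trace $\mr{fTr}_{p_1}$, then push forward along the second projection), I would use the compatibility of fake traces with flat base change from Lemma~\ref{commutabasechancorre} together with the proper base change theorem~\ref{propbasechansta} to identify each arrow with the corresponding one in the construction of $\Gamma^{\mr{gr},*}$ on $\st{X}^{\mr{gr}}$, pulled up via $q$.

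The hard part will be tracking the fake trace map through the gerb-like morphism $q_{\st{X}}$: although $q_{\st{X}}$ is proper and an isomorphism on cohomology, it is not smooth, so one must invoke Lemma~\ref{commutabasechancorre} on an open dense substack where the projections of $\Gamma^{\mr{gr}}$ are flat (so that the hypothesis about $\st{V}$ there is satisfied), and then extend by density using Lemma~\ref{cohdimcalc} and the fact that both sides are determined by their restriction to such an open. Once the two squares (one for $\mr{fTr}_{p_1}$, one for the adjunction map $L_{\st{X}'}\to p_{2+}p_2^+L_{\st{X}'}$) commute with base change along $q$, concatenating them yields the required equality $\widetilde{\Gamma}^{\mr{gr},*}=q_{\st{X}}^*\circ\Gamma^{\mr{gr},*}\circ(q_{\st{X}'}^*)^{-1}=\Gamma^{*\mr{Laf}}$, completing the proof.
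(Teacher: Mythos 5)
Your proposal follows the same route as the paper: insert the intermediate correspondence $\widetilde{\Gamma}^{\mr{gr}}$, peel off the factor $d_\Gamma$ by the degree statement, and identify the remaining action with $\Gamma^{*\mr{Laf}}$ by base-change compatibility of fake traces (Lemma~\ref{commutabasechancorre}). Two small points of imprecision are worth flagging. First, the intermediate object must be $\widetilde{\Gamma}^{\mr{gr}}=\Gamma^{\mr{gr}}\times_{\st{X}^{\mr{gr}}}\st{X}$, i.e.\ the fiber product over the \emph{source} coarse space; this is forced because $d_\Gamma$ is by definition the degree of $\Gamma\to\Gamma^{\mr{gr}}\times_{\st{X}^{\mr{gr}}}\st{X}$, and because the fake trace $\iota_\Gamma$ is constructed from the first projection $p_1$, so the base-change square one needs is the one over $\st{X}\to\st{X}^{\mr{gr}}$ (your hedge toward $\Gamma^{\mr{gr}}\times_{\st{X}'^{\mr{gr}}}\st{X}'$ would put the cartesian square on the wrong side). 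Second, Corollary~\ref{genconstdegrel}(ii) is not literally applicable here: it compares two correspondences $\st{X}\rightsquigarrow\st{X}'$, whereas $\widetilde{\Gamma}^{\mr{gr}}$ is a correspondence $\st{X}\rightsquigarrow\st{X}'^{\mr{gr}}$. The fix is to invoke Lemma~\ref{tracepropmuln} directly on $\rho\colon\Gamma\to\widetilde{\Gamma}^{\mr{gr}}$ over $\st{X}$ (which is exactly what the paper does), since the comparison of fake traces only concerns the first projections and is insensitive to the target. Finally, your worry about $q_{\st{X}}$ not being smooth is unnecessary: Lemma~\ref{commutabasechancorre} places no smoothness or flatness hypothesis on $g$, only the generic-flatness condition on the $f$-side, so the density argument via Lemma~\ref{cohdimcalc} is already built into that lemma and need not be redone.
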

\begin{proof}
 Consider the following commutative diagram on the left:
 \begin{equation*}
  \xymatrix@C=40pt@R=20pt{
   &\Gamma\ar[r]\ar[d]&\st{X}'\ar[d]\\
  \st{X}\ar[d]\ar@{}[rd]|\square&
   \widetilde{\Gamma}^{\mr{gr}}
   \ar[l]_{\widetilde{p}}\ar[r]^{\widetilde{p}'}\ar[d]^q&
   \st{X}'^{\mr{gr}}\ar@{=}[d]\\
  \st{X}^{\mr{gr}}&
   \Gamma^{\mr{gr}}
   \ar[l]^{p^{\mr{gr}}}\ar[r]_{p'^{\mr{gr}}}&
   \st{X}'^{\mr{gr}},
   }\hspace{40pt}
   \xymatrix@C=40pt{
   q^*p'^{\mr{gr}*}\mb{Q}_\ell
   \ar[r]^{\iota_{\Gamma^{\mr{gr}}}}
   \ar[d]_{\sim}&
   q^*p^{\mr{gr}!}\mb{Q}_\ell\ar[d]\\
  \widetilde{p}'^*\mb{Q}_\ell
   \ar[r]_{\iota_{\widetilde{\Gamma}^{\mr{gr}}}}&
   \widetilde{p}^!\mb{Q}_\ell.
   }
 \end{equation*}
 Then the right diagram is commutative by using Lemma
 \ref{commutabasechancorre}. Thus, the lemma follows by Lemma
 \ref{tracepropmuln}. Note that even though $\st{X}^{\mr{gr}}$ is not
 smooth, it is cohomological smooth by \cite[A.5]{Laf}, and arguments in
 \ref{tracepropmuln} works without any change.
\end{proof}

\subsection{Independence of $\ell$}
We show an $\ell$-independence result of the trace of the action of
correspondence on the cohomology of a c-admissible stack.
In the scheme and $\ell$-adic case, the $\ell$-independence result is
the one proven in \cite{KS}.

\subsubsection{}
The main result of this subsection is as follows:

\begin{thm*}
 \label{indepofltrace}
 Let $\st{X}$ be a smooth c-admissible stack over a finite
 field $k$, and $\Gamma\in\mr{Corr}_S(\st{X})$. Then we
 have
 \begin{equation*}
  \mr{Tr}\bigl(\Gamma^*: H^*_{\mr{c}}(\st{X}\otimes
   \overline{k},\mb{Q}_\ell)\bigr)=
   \mr{Tr}\bigl(\Gamma^*: H^*_{\mr{c}}(\st{X},
   L_{\st{X},\emptyset})\bigr).
 \end{equation*}
\end{thm*}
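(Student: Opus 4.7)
The plan is to translate the proof of \cite{KS} into the present framework, making use of the six functor formalism for admissible stacks constructed in \S\ref{arithDstacksec} and the cycle/trace formalism of \S\ref{Genecyclcorres}.

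First, I would establish a Lefschetz--Verdier trace formula for admissible stacks, expressing the global trace as the integral of a local term. Write $\Delta\colon\st{X}\hookrightarrow\st{X}\times_S\st{X}$ for the diagonal and $c_\Gamma\colon\Gamma\rightarrow\st{X}\times_S\st{X}$ for the correspondence, and form the fixed-point stack $\mathrm{Fix}(\Gamma):=\Gamma\times_{\st{X}\times_S\st{X}}\st{X}$. Using the K\"unneth formula (Proposition \ref{Kunnethextprod} and Corollary \ref{Kunnethformulastform}), the characterization of the fake trace map in \S\ref{sga4halfcharact}, and Corollary \ref{sga4halfcharact} together with Lemma \ref{commutabasechancorre}, I expect the formula
\begin{equation*}
 \mr{Tr}\bigl(\Gamma^*,H^*_{\mr{c}}(\st{X},L_{\st{X}})\bigr)
  =\mr{Tr}_{\mathrm{Fix}(\Gamma)}\bigl(\mr{cl}_{\st{X}\times_S\st{X}}(\Delta)\cup
   \mr{cl}_{\st{X}\times_S\st{X}}(\Gamma)\bigr),
\end{equation*}
which should be established purely formally from the six-functor package and the duality theorem \ref{relativedualprop}. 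The analogous formula on the $\ell$-adic side is well-known.

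Second, I would reduce to the case of a proper transversal intersection. When $\Delta$ and $c_\Gamma(\Gamma)$ meet properly in dimension zero in $\st{X}\times_S\st{X}$, each local term at a fixed point is by construction an intersection multiplicity, i.e.\ a purely numerical datum intrinsic to the geometry of $\st{X}$ and $\Gamma$, independent of the coefficient category. Hence both sides of the theorem equal this common naive count and coincide. In the non-transversal case, one reduces to the transversal case by the standard Katz--Saito blowing-up/deformation argument.

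Third, to propagate the equality from schemes to admissible stacks, I would invoke Corollary \ref{alterationforadmisst} to find a generically finite proper surjective morphism $h\colon X\rightarrow\st{X}$ from a smooth quasi-projective scheme, and pull the correspondence $\Gamma$ back to a correspondence $\Gamma_X$ on $X$. Since $h$ is generically finite of some degree $d$, Lemma \ref{tracepropmuln} together with Corollary \ref{genconstdegrel} ensures
\begin{equation*}
 \mr{Tr}\bigl(\Gamma_X^*,H^*_{\mr{c}}(X,L_X)\bigr)
  =d\cdot\mr{Tr}\bigl(\Gamma^*,H^*_{\mr{c}}(\st{X},L_{\st{X}})\bigr),
\end{equation*}
and the same identity holds $\ell$-adically (using \ref{complafourcorre} to reconcile the two conventions for the action of correspondences). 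Dividing by $d$, the theorem for $\st{X}$ follows from its scheme-theoretic analogue, which is Katz--Saito \cite{KS}.

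The main obstacle will be Stage~1: although the ingredients (Poincar\'e duality \ref{poincaredualrela}, K\"unneth \ref{Kunnethextprod}, cycle classes) are all in place, writing down the Lefschetz--Verdier formula with genuinely local terms on a stack, and then matching those local terms with the corresponding $\ell$-adic local terms component by component on $\mathrm{Fix}(\Gamma)$, is the delicate point, especially when $\mathrm{Fix}(\Gamma)$ has positive-dimensional components or when the residual gerbe structure at a fixed point contributes a non-trivial factor. Once this bookkeeping is carried out, the reduction via alterations in Stage~3 is essentially formal.
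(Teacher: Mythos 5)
Your proposal contains the right ingredients, but it organizes them in an order that creates unnecessary work. The paper performs your Stage~3 — reduction from admissible stacks to smooth quasi-projective schemes — \emph{first}, using exactly Corollary \ref{alterationforadmisst}, Lemma \ref{tracepropmuln}, Corollary \ref{genconstdegrel}, and the Gysin-pullback compatibility of cycle classes (Lemma \ref{pullcalccyccl}), together with the identification of $\Gamma^*$ with its cycle class via the K\"unneth/Poincar\'e duality isomorphism $H^{2d}_{!,*}(\st{X}\times\st{X})(d)\cong\prod_i\mr{End}(H^i_{\mr{c}}(\st{X}))$ (Lemma \ref{identifcyclend}, the $p$-adic version of \cite[2.3.2]{KS}). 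Once this reduction is made, only the scheme case of the theorem remains, and that is exactly \cite[Prop.\ 2.3.6]{KS}, which the paper simply invokes. Your Stage~1 — proving a Lefschetz--Verdier localization formula directly for admissible stacks, with genuinely local terms on the fixed-point stack $\mathrm{Fix}(\Gamma)$ — is therefore superfluous; and as you yourself anticipate, it is precisely where the technical difficulties (positive-dimensional fixed components, residual gerbe factors at fixed points) would be concentrated. Reordering so that the alteration reduction comes first removes the obstacle entirely.

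One concrete point of divergence: the paper never localizes the trace at the fixed-point stack. Its entire argument stays at the level of the global cycle class $\mr{cl}(\Gamma)\in H^{2d}_{!,*}(\st{X}\times\st{X})(d)$ and its behaviour under $(f\times f)^*$ and $(f\times f)^!$ for the alteration $f\colon X\rightarrow\st{X}$, producing a new correspondence $\widetilde\Gamma'$ on the smooth quasi-projective scheme $X$ with $\mr{Tr}(\Gamma^*)=\deg(f)^{-1}\mr{Tr}(\widetilde\Gamma'^*)$ on both the $\ell$-adic and $p$-adic sides. The local-term bookkeeping you flag as the main obstacle is thus never encountered; it is absorbed into the already-established scheme case of Kato--Saito.
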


\begin{rem*}
 We assumed $k$ to be finite since some delicate arguments might be
 needed in the $p$-adic situation to reduce to the finite field case as
 in the proof of \cite[2.3.6.1]{KS}.
 However, there is no reason to doubt that the theorem holds without the
 assumption on the base field.
\end{rem*}

The idea of the proof is essentially the same as \cite[2.3.6]{KS}, and
the proof takes the whole subsection.
In the application, we use the following form: Let $k$ be a finite field
with $q=p^s$ elements, $R=W(k)$, $\sigma=\mr{id}$.
Let $X$ be a separated scheme of finite type over $k$, and let
$i_x\colon x\hookrightarrow X$ be a closed point of $X$.
We take an algebraic extension $L$ of $K_x:=\mr{Frac}(W(k(x)))$, and
take $\sigma_L:=\mr{id}$ for an extension of $\sigma$.
Put $s':=[k(x):k]\cdot s$, and let
$\mf{T}_{x,F}:=(k(x),W(k(x)),K_x,L,s',\mr{id})$.
We have the following functor
\begin{equation*}
 \iota^n_x\colon
 D^{\mr{b}}_{\mr{hol}}(X/L_F)
 \xrightarrow{\H^ni_x^+}
  \mr{Hol}(k(x)/L_F)\cong
  \mr{Hol}(k(x)/\mf{T}_{x,F})
\end{equation*}
where the last equivalence follows by Corollary \ref{behaviorofbasech}.
The last category is equivalent to the category of finite dimensional
$L$-vector spaces with automorphism denoted by
$F_x$. Let $\ms{E}\in
D^{\mr{b}}_{\mr{hol}}(X/L_F)$, and assume given an
automorphism $\alpha$. This induces an automorphism on
$\iota^n_x(\ms{E})$ denoted by $\alpha_x$. Note that $\alpha_x$ and
$F_x$ commute. For $n\in\mb{Z}$, we put
\begin{equation*}
 \mr{Tr}\bigl(\alpha\times F_x^n:\ms{E}\bigr):=
  \sum_{i}(-1)^i\cdot\mr{Tr}\bigl(\alpha_x\circ F_x^n:
  \iota^i_x(\ms{E})\bigr).
\end{equation*}

\begin{cor*}
 Let $S$ be a smooth connected scheme of finite type over $k$, and
 $f\colon\st{X}\rightarrow S$ be a smooth morphism between c-admissible
 stacks. Assume given $\Gamma\in\mr{Corr}_S(\st{X})$.
 For a closed point $x\in S$, let $\st{X}_x$ and
 $\Gamma_x$ denote the fibers over $x$. Then
 there exists an open dense subscheme $U\subset S$ such that for any
 closed point $x\in U$ such that $L\supset K_x$ and any integer $n$, we
 have
 \begin{equation*}
  \mr{Tr}\bigl(\Gamma^*\times F^n_x:
   f_!\mb{Q}_\ell\bigr)=
   \mr{Tr}\bigl(\Gamma^*\times F_x^n:
   f_!L_{\st{X}}\bigr).
 \end{equation*}
\end{cor*}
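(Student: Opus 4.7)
The plan is to deduce the corollary from Theorem \ref{indepofltrace} applied fiberwise to $\st{X}_s$ together with a correspondence on it built out of $\Gamma_s$. First I would shrink $S$ to an open dense $U$ as follows. By generic flatness applied to an open substack $\Gamma'\subset\Gamma$ on which the first projection is flat, combined with the hypothesis that $\Gamma$ is nonzero over the generic point of $\st{X}$, we may arrange that for every closed $s\in U$ the fiber $\Gamma'_s\subset\Gamma_s$ is dense and nonempty. Thus $\Gamma_s$ is a correspondence on $\st{X}_s$ in the sense of Definition \ref{dfngencyccorres}, and Lemma \ref{fibercorres} applies both $p$-adically and $\ell$-adically: the base change $i_s^+f_!(-)\cong f_{s!}(i'_s)^+(-)$, which exists here without further shrinking because $f$ is compactifiable and base change holds for proper morphisms and open immersions, intertwines the action of $\Gamma^*$ on the total space with that of $\Gamma_s^*$ on the fiber.

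Unwinding the definition of $\mr{Tr}(\Gamma^*\times\mr{Frob}^n_s:-)$ given just before the corollary, both sides of the desired identity therefore become
\begin{equation*}
\sum_i(-1)^i\mr{Tr}\bigl(\mr{Frob}^n_s\circ\Gamma_s^{*}:
H^i_{\mr{c}}(\st{X}_s,\star)\bigr),
\end{equation*}
with $\star=L_{\st{X}_s}$ on the $p$-adic side and $\star=\mb{Q}_\ell$ on the $\ell$-adic side. I would then interpret $\mr{Frob}^n_s\circ\Gamma_s^{*}$ as the action of a single correspondence on $\st{X}_s$: composing $\Gamma_s\in\mr{Corr}_{\mr{id}_s}(\st{X}_s)$ with the graph of the $n$-th geometric Frobenius of $\st{X}_s$ produces an integral correspondence $\widetilde\Gamma_{s,n}\in\mr{Corr}_{\mr{Frob}_s^n}(\st{X}_s)$, and by the composition formula for correspondences recorded in \S\ref{Genecyclcorres} its action on cohomology equals $\mr{Frob}^n_s\circ\Gamma_s^{*}$ both for $L_{\st{X}_s}$ and for $\mb{Q}_\ell$.

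The fiber $\st{X}_s$ is smooth and admissible as a fiber of the smooth morphism $f$, and it is compactifiable over $k(s)$ by first compactifying $f$ relatively over $S$ and then base changing to $s$. Consequently Theorem \ref{indepofltrace} applies to the pair $(\st{X}_s,\widetilde\Gamma_{s,n})$ and yields the desired equality. The main obstacle, as I see it, is not conceptual but the careful bookkeeping of Frobenius structures: on the $p$-adic side the fiber cohomology must be read through the enlarged base tuple $\mf{T}_s$ of Corollary \ref{behaviorofbasech}, and one must check that the Frobenius twist resulting from replacing $\Gamma_s$ with $\widetilde\Gamma_{s,n}$ matches the Frobenius entering into the definition of $\mr{Tr}(-\times\mr{Frob}^n_s:-)$ under that equivalence, using the comparison between the two descriptions of the action of correspondences in \S\ref{complafourcorre}.
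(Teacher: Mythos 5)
Your proposal is correct and follows the same strategy as the paper's (very terse) proof: shrink $S$ to an open dense $U$ so that the action of $\Gamma$ restricted to a fiber $s\in U$ coincides with the action of $\Gamma_s$ — this is exactly what Lemma \ref{fibercorres} and its remark provide — and then invoke Theorem \ref{indepofltrace} over $\mr{Spec}(k(s))$. The one place where you go beyond what the paper says in so many words is the step of packaging $\mr{Frob}_s^n\circ\Gamma_s^*$ as the action of a single correspondence $\widetilde\Gamma_{s,n}$ obtained by composing $\Gamma_s$ with the graph of Frobenius. The paper simply writes ``Then use our theorem to get the corollary,'' but Theorem \ref{indepofltrace} as stated compares only $\mr{Tr}(\Gamma^*)$, not $\mr{Tr}(\Gamma^*\circ\mr{Frob}^n)$; making the Frobenius twist into part of the correspondence is the right way to read this step, and it is good that you spell it out.

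Two small points worth noting. First, the composition result recorded in \S\ref{Genecyclcorres} (the lemma following Definition \ref{corresnotdef}) is stated for correspondences over $\mr{id}$; when you compose $\Gamma_s$ with the graph of Frobenius you are composing with a correspondence over $\mr{Frob}_s^n$, so you should check that the composition formula (and also Theorem \ref{indepofltrace}, whose proof via the cycle class of $\Gamma$ in $H^{2d}_{!,*}(\st{X}\times\st{X})(d)$ does not actually depend on the underlying $\varphi$) extend to this situation; this is routine but not literally quoted from the text. Second, you should be careful whether it is the geometric or arithmetic Frobenius whose graph you compose with, since the definition of $\mr{Tr}(\alpha\times\mr{Frob}_x^n:\ms{E})$ just before the corollary uses the Frobenius structure on the stalk $\iota^i_x(\ms{E})$ after passing through the base-change equivalence of Corollary \ref{behaviorofbasech}; you already flag this as ``careful bookkeeping,'' which is the correct attitude, though the reference to \S\ref{complafourcorre} (which compares with Lafforgue's convention for the action of correspondences on coarse moduli spaces, not with the Frobenius structure on stalks) is not quite the right one for this check.
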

\begin{proof}
 Let $f_x$ denote the fiber of $f$ on $x$.
 By Lemma \ref{fibercorres} and its remark, we can take $U\subset S$
 such that the action of $\Gamma$ on $f_!\mb{Q}_\ell$ and
 $f_!L_{\st{X}}$ at the fiber $x\in U$ are equal to the action of
 $\Gamma_x$ on $f_{x!}\mb{Q}_\ell$ and $f_{x!}L_{\st{X}_x}$. Then use
 our theorem to get the corollary.
\end{proof}

\subsubsection{}
\label{lemonpullcycle}
We do not assume $k$ to be finite here, and we fix
$\base\in\{\emptyset,F\}$.

\begin{lem*}[{[SGA $4\frac{1}{2}$, Cycle, 2.3.8 (ii)]}]
 Consider the following cartesian diagram of
 c-admissible stacks over $k$
 \begin{equation*}
  \xymatrix@C=40pt{
   \Gamma'\ar[r]^-{g'}\ar[d]_{f'}\ar@{}[rd]|\square&
   \st{X}'\ar[r]^-{h'}\ar[d]^{f}\ar@{}[rd]|\square&
   S'\ar[d]\\
  \Gamma\ar[r]_-{g}&\st{X}\ar[r]_-{h}&S
   }
 \end{equation*}
 where $\st{X}^{(\prime)}$ is smooth (over $k$),
 $S^{(\prime)}$ is a scheme, $g$ is a generalized
 cycle of codimension $c$, and $h$ and $h\circ g$ are flat and
 equidimensional. Then $g'$ is a generalized cycle of codimension $c$ as
 well, and we have $f^*\mr{cl}(\Gamma)=\mr{cl}(\Gamma')\in
 H^{2c}_{\Gamma'}(\st{X}')(c)$.
\end{lem*}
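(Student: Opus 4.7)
My plan splits into the codimension statement and the cycle class equality.

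For the codimension, I would reduce to the case where $S$ is equidimensional by passing to irreducible components. Since $h$ and $h \circ g$ are flat equidimensional of relative dimensions $r := \dim(\st{X}) - \dim(S)$ and $r - c := \dim(\Gamma) - \dim(S)$, their base changes $h': \st{X}' \to S'$ and $h' \circ g': \Gamma' \to S'$ remain flat equidimensional of the same relative dimensions, so $\dim(\st{X}') - \dim(\Gamma') = c$. Combined with the fact that $g'$ is proper as the base change of the proper morphism $g$, this shows $g'$ is a generalized cycle of codimension $c$.

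For the cycle class equality, the plan is to use the characterization of the fake trace map from Lemma \ref{sga4halfcharact}, combined with transitivity and base change compatibility of the Poincar\'e duality trace map. Writing the absolute structural morphisms as $p_\Gamma = p_S \circ h \circ g$ and $p_\st{X} = p_S \circ h$, and using that $h$, $h \circ g$, and $p_S$ all lie in $\mf{M}^{\mr{st}}$ once $S$ is equidimensional, the transitivity of Theorem \ref{poincaredualrela} (III) gives
\[ \mr{Tr}_{p_\Gamma} = \mr{Tr}_{p_S} \circ p_{S!}(\mr{Tr}_{h \circ g}), \qquad \mr{Tr}_{p_\st{X}} = \mr{Tr}_{p_S} \circ p_{S!}(\mr{Tr}_h), \]
with the analogous formulas over $S'$. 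The absolute characterization $\mr{Tr}_{p_\Gamma}(u) = \mr{Tr}_{p_\st{X}}(\mr{fTr}_g \cup u)$ of Lemma \ref{sga4halfcharact} then reduces, after substituting these transitivity formulas and cancelling $\mr{Tr}_{p_S}$, to a purely $S$-relative characterization involving only $\mr{Tr}_{h \circ g}$ and $\mr{Tr}_h$. These relative trace maps are compatible with base change along $S' \to S$ by Theorem \ref{poincaredualrela} (II), and proper base change (Proposition \ref{propbasechansta}) governs the behavior of the cup product under pullback by $f$. Hence $f^*\mr{fTr}_g$ satisfies the characterization property for $\mr{fTr}_{g'}$, and uniqueness yields $f^*\mr{fTr}_g = \mr{fTr}_{g'}$, i.e.\ $f^*\mr{cl}(\Gamma) = \mr{cl}(\Gamma')$.

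The main technical obstacle is the careful bookkeeping of shifts, twists, and the cup product under base change -- in particular verifying that pulling back the cup pairing $\mr{fTr}_g \cup (-)$ along $f$ yields $(f^*\mr{fTr}_g) \cup f^*(-)$ at the level of the $S$-relative characterization. Crucially, $f$, $f'$, and $g'$ are not assumed flat or smooth individually (only the compositions $h$ and $h \circ g$ are flat equidimensional), so one cannot directly invoke smooth or flat base change for $g^!$. Instead, one must route the compatibility through the trace map formalism, where the flatness of $h$ and $h \circ g$ is exactly what is required for Theorem \ref{poincaredualrela} (II) to apply.
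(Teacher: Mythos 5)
Your overall strategy matches the paper's in its key inputs: the whole point is to route the computation through the \emph{relative} trace maps $\mr{Tr}_{h\circ g}$ and $\mr{Tr}_h$, which are available because $h$ and $h\circ g$ are flat equidimensional (so lie in $\mf{M}^{\mr{st}}$), and then invoke the base change compatibility (Var 2 / Theorem \ref{poincaredualrela} (II)) along $S'\to S$. The codimension argument is also fine. However, there is a genuine gap in the middle of your cycle class argument.

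The issue is the step where you say that the absolute characterization $\mr{Tr}_{p_\Gamma}(u) = \mr{Tr}_{p_{\st{X}}}(\mr{fTr}_g\cup u)$ ``reduces, after substituting these transitivity formulas and cancelling $\mr{Tr}_{p_S}$, to a purely $S$-relative characterization.'' After substituting the transitivity formulas, what you have is
\begin{equation*}
 \mr{Tr}_{p_S}\bigl(p_{S!}\mr{Tr}_{h\circ g}(u)\bigr) =
  \mr{Tr}_{p_S}\bigl(p_{S!}\mr{Tr}_h(\mr{fTr}_g\cup u)\bigr)
  \quad\text{for all }u,
\end{equation*}
and one cannot ``cancel'' $\mr{Tr}_{p_S}$ on the outside, since this is a trace map and not a monomorphism of functors. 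The $S$-relative identity $\mr{Tr}_{h\circ g}(u)=\mr{Tr}_h(\mr{fTr}_g\cup u)$ is strictly stronger than the absolute one, and that implication goes the wrong way. (The remark after Lemma \ref{sga4halfcharact} does assert that a relative version of the characterization holds, but by a \emph{direct} generalization of the proof, not by deduction from the absolute case; your plan needs that relative statement as an input, not as a consequence.)

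The paper sidesteps this entirely by never passing through the characterization lemma. It works directly with the morphism $c_\Gamma\in\mr{Hom}\bigl(g^+L_{\st{X}},g^!L_{\st{X}}(c)[2c]\bigr)$ (the adjoint of $\mr{fTr}_g$). Using transitivity of the trace together with the Poincar\'e duality isomorphisms, $c_\Gamma$ is identified with the adjoint of the relative trace $\mr{Tr}_{h\circ g}$, and likewise $c_{\Gamma'}$ with the adjoint of $\mr{Tr}_{h'\circ g'}$. The commutativity of the diagram expressing $f^*(c_\Gamma)=c_{\Gamma'}$ is then exactly the base change compatibility (in the equivalent form (\ref{commuequivBC})) of $\mr{Tr}_{h\circ g}$ along $S'\to S$. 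This way one only invokes transitivity and base change for morphisms that are genuinely in $\mf{M}^{\mr{st}}$ and never has to ``cancel'' anything. If you want to keep your route via Lemma \ref{sga4halfcharact}, you should instead prove (or invoke the remark for) the $S$-relative characterization of $\mr{fTr}_g$ directly, and then argue by base change of $\mr{Tr}_{h\circ g}$ and $\mr{Tr}_h$; but this amounts to redoing the same adjoint-level computation.
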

\begin{proof}
 We have the following commutative diagram:
 \begin{equation*}
  \xymatrix{
   \mr{Hom}(g^+L_{\st{X}},g^!L_{\st{X}})
   \ar@{.>}[d]\ar@{-}[r]^-{\sim}&
   H^{2c}_{\Gamma}(\st{X})(c)\ar[d]^{f^*}\\
   \mr{Hom}(g'^+L_{\st{X}},g'^!L_{\st{X}})\ar@{-}[r]_-{\sim}&
   H^{2c}_{\Gamma'}(\st{X}')(c)
   }
 \end{equation*}
 where the dotted arrow can be described as follows: Let
 $\phi\in\mr{Hom}(g^+L_{\st{X}},g^!L_{\st{X}})$. Then the image of
 $\phi$ is the unique dotted homomorphism, in the following diagram on
 the left, which makes the diagram commutative:
 \begin{equation*}
  \xymatrix@C=40pt{
   g'^+L_{\st{X}'}
   \ar@{.>}[r]&
   g'^!L_{\st{X}'}(c)[2c]\\
  f'^+g^+L_{\st{X}}
   \ar[r]^-{\sim}_-{f'^+(\phi)}\ar[u]^-{\sim}&
   f'^+g^!L_{\st{X}}(c)[2c],\ar[u]
   }\qquad
   \xymatrix@C=40pt{
   g'^+L_{\st{X}'}
   \ar[r]_-{\sim}^-{\mr{Tr}_{\Gamma'}}&
   g'^!L_{\st{X}'}(c)[2c]\\
   f'^+g^+L_{\st{X}}
   \ar[r]^-{\sim}_-{f'^+\mr{Tr}_{\Gamma}}\ar[u]^{\sim}&
   f'^+g^!L_{\st{X}}(c)[2c].\ar[u]
   }
 \end{equation*}
 Here the right vertical homomorphism is the base change homomorphism.
 Thus, by taking $\phi=\mr{Tr}_\Gamma$, the problem is reduced to
 showing the commutativity of the right diagram above.
 Now, $\mr{Tr}_\Gamma$ and $\mr{Tr}_{\Gamma'}$ can be regarded as
 $\mr{Tr}_{h\circ g}$ and $\mr{Tr}_{h'\circ g'}$ by the transitivity of
 trace. By the base change property of trace, we get the lemma.
\end{proof}

\begin{lem}
 \label{chowgroupfacto}
 Let $X$ be a smooth scheme of dimension $d$ over $k$, and let
 $Z\rightarrow X$ be a generalized cycle. Then the cycle class map
 induces a homomorphism $\mr{cl}_X\colon\mr{CH}_i(Z)\rightarrow
 H^{2d-2i}_Z(X)(d-i)$.
\end{lem}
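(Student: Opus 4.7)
The plan is to first define the map on the free abelian group $Z_i(Z)$ of $i$-dimensional cycles, and then check that it annihilates rational equivalence. For an integral closed substack $W\subset Z$ of dimension $i$, the composition $W\hookrightarrow Z\to X$ is a proper morphism making $W$ a generalized cycle of codimension $d-i$ in $X$. By \ref{tracemapmeanscons}(i) we obtain a class $\mr{cl}_X(W)\in H^{2(d-i)}_W(X)(d-i)$, and by pushing forward local cohomology along the proper morphism $W\to Z$ (as recalled in \S\ref{Genecyclcorres}) we get an element of $H^{2d-2i}_Z(X)(d-i)$. Extending $\mb{Z}$-linearly gives a homomorphism $\mr{cl}_X\colon Z_i(Z)\to H^{2d-2i}_Z(X)(d-i)$.

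To show this factors through rational equivalence, it suffices to check that $\mr{cl}_X(\mr{div}_V(f))=0$ whenever $V\subset Z$ is an integral closed substack of dimension $i+1$ and $f\in k(V)^\times$. Using de Jong's alteration, pick a proper surjective generically finite morphism $\pi\colon\widetilde{V}\to V$ with $\widetilde{V}$ smooth quasi-projective such that $f$ extends to a morphism $\bar{f}\colon\widetilde{V}\to\mb{P}^1$. By the standard identity $\pi_*\bigl(\bar{f}^{-1}(0)-\bar{f}^{-1}(\infty)\bigr)=\deg(\pi)\cdot\mr{div}_V(f)$ (interpreting the fibers as divisors) together with functoriality of the cycle class under proper push-forward (an easy consequence of the transitivity in Theorem \ref{constoftracemap}(I)), we are reduced to proving that $\mr{cl}_X(\bar{f}^{-1}(0))=\mr{cl}_X(\bar{f}^{-1}(\infty))$ in $H^{2d-2i}_{\widetilde{V}}(X)(d-i)$, after composing $\widetilde{V}\to X$.

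Consider the morphism $\Phi:=(g\circ\pi,\bar{f})\colon\widetilde{V}\to X\times\mb{P}^1$, a generalized cycle of codimension $d-i$ in the smooth scheme $X\times\mb{P}^1$, giving a class $\mr{cl}_{X\times\mb{P}^1}(\widetilde{V})\in H^{2(d-i)}_{\widetilde{V}}(X\times\mb{P}^1)(d-i)$. For $t\in\{0,\infty\}$, Lemma \ref{lemonpullcycle} applied to the flat projection $X\times\mb{P}^1\to\mb{P}^1$ (shrinking $\widetilde{V}$ or replacing $t$ by a generic point if necessary to ensure flatness of $\widetilde{V}\to\mb{P}^1$ over $t$, and using noetherian induction to reduce the non-flat locus to lower dimension as in [SGA $4\tfrac12$, Cycle]) identifies the pull-back $j_t^*\,\mr{cl}_{X\times\mb{P}^1}(\widetilde{V})$ with $\mr{cl}_X(\bar{f}^{-1}(t))$, where $j_t\colon X\hookrightarrow X\times\mb{P}^1$ is the closed immersion over $t$.

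The remaining step, which is the main obstacle, is to show $j_0^*=j_\infty^*$ on the class $\mr{cl}_{X\times\mb{P}^1}(\widetilde{V})$, or more precisely after further push-forward to $H^{2d-2i}_Z(X)(d-i)$. The point is that both pull-backs can be rewritten, via the projection formula (cf.\ the corollary in \ref{sga4halfcharact} and the projection formula at the end of \S\ref{sixfuncadmstsec}), as the cup product of the class of $\widetilde{V}$ with $p_2^+\mr{cl}_{\mb{P}^1}(t)$, followed by push-forward along the projection $p_1\colon X\times\mb{P}^1\to X$. It therefore suffices to show that $p_{2!}\bigl(\mr{cl}_{\mb{P}^1}(t)\bigr)\in H^2(\mb{P}^1)(1)$ is independent of the $k_\base$-point $t$ — equivalently, that both classes equal the canonical generator of $H^2(\mb{P}^1)(1)\cong L$ — which follows immediately from the construction of the trace map $\mr{Tr}_{\mb{P}^1}$ and the fact that every $k_\base$-rational point has degree $1$. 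Combining this with the preceding reductions yields $\mr{cl}_X(\mr{div}_V(f))=0$, completing the proof.
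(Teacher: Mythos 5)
Your proposal has a genuine gap in the final step, and the earlier reductions, while not wrong, are considerably heavier than what the paper does.

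The paper works with the fibers over $0$ and $1$ of a dominant morphism $W\to\mb{P}^1$ from an integral closed subscheme $W\subset Z$ of dimension $i+1$; such a morphism is automatically flat by [EGA IV, 2.8.2], so no alteration is needed (your appeal to de Jong and the hedging about flatness are superfluous once $W$ is integral). The crucial point is that choosing $0$ and $1$ lets the whole argument take place over $\mb{A}^1$. Since $H^j(\mb{A}^1)=0$ for $j\neq 0$ and $H^0(\mb{A}^1)\cong L$, the pull-backs $0^*,1^*\colon H^{2j}_{Z\times\mb{A}^1}(X\times\mb{A}^1)\to H^{2j}_Z(X)$ are \emph{both canonical isomorphisms}, and a single application of Lemma \ref{lemonpullcycle} to $\mr{cl}_{X\times\mb{A}^1}(W^{\circ})$ finishes the proof with no Gysin maps or cup products needed.

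By choosing to work with $0$ and $\infty$ on $\mb{P}^1$, you give this up, since $H^*(\mb{P}^1)$ is no longer concentrated in degree $0$. You then have to compare $j_0^*$ and $j_\infty^*$ by other means, and you invoke a self-intersection/projection identity $j_t^*\alpha = p_{1!}\bigl(\alpha\cup p_2^+\mr{cl}_{\mb{P}^1}(t)\bigr)$, claiming it follows from the corollary of \ref{sga4halfcharact} and the projection formula at the end of \S\ref{sixfuncadmstsec}. Neither result gives this. The corollary of \ref{sga4halfcharact} only identifies $\mr{Tr}_f$ with $\mr{fTr}_f$ for flat $f$, and the projection formula is stated for $H^*_{\mr{c}}$ (cohomology with compact support of the whole stack), not for local cohomology with shrinking supports. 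What you need is a Gysin-type compatibility between pull-back along a regular closed immersion and cup product against its cycle class, tracking supports precisely; this is a real theorem in the \'etale setting (established in [SGA $4\tfrac{1}{2}$, Cycle]) but is not proven in the present paper, and it is not a formal consequence of the six-functor package established here. Without it, the comparison of $j_0^*$ and $j_\infty^*$ is unjustified, and the claim that it ``suffices to show $p_{2!}(\mr{cl}_{\mb{P}^1}(t))$ is independent of $t$'' does not follow. The fix is simply to use $0$ and $1$ and work over $\mb{A}^1$, as in the paper.
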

\begin{proof}
 Let $W$ be a closed integral subscheme of dimension $i+1$ of $Z$
 and $W\rightarrow\mb{P}^1$ be a dominant morphism (hence flat by
 [EGA, IV.2.8.2]). Let $W_i$ be the fiber of $i\in\mb{P}^1$.
 By \cite[Ch.I, Prop 1.6]{Fu}, it suffices to show that
 $\mr{cl}_X(W_0)=\mr{cl}_X(W_1)$. Let $W^\circ$ be the pull-back of
 $\mb{A}^1\subset\mb{P}^1$ by the morphism
 $W\rightarrow\mb{P}^1$. Note that $W^\circ\rightarrow Z\times\mb{A}^1$
 is a closed immersion.
  Consider the following commutative diagram:
 \begin{equation*}
  \xymatrix@R=15pt{
   W_0\ar[r]\ar[d]\ar@{}[rd]|\square&
   W^\circ\ar[d]\ar@{}[rd]|\square&
   W_1\ar[l]\ar[d]\\
  Z\ar[r]\ar[d]\ar@{}[rd]|\square&
   Z\times\mb{A}^1\ar[d]\ar@{}[rd]|\square&
   Z\ar[l]\ar[d]\\
  \{0\}\ar[r]&
   \mb{A}^1&
   \{1\}.\ar[l]
   }
 \end{equation*}
 This induces the commutative diagram
 \begin{equation*}
     \xymatrix{
   H^{2j}_{W_0}(X)\ar[d]&
   H^{2j}_{W^\circ}(X\times\mb{A}^1)
   \ar[r]^-{1^*}\ar[l]_-{0^*}\ar[d]&
   H^{2j}_{W_1}(X)\ar[d]\\
  H^{2j}_Z(X)&
   H^{2j}_{Z\times\mb{A}^1}(X\times\mb{A}^1)
   \ar[l]_-{\sim}^-{0^*}
   \ar[r]^-{\sim}_-{1^*}&
   H^{2j}_Z(X),
   }
 \end{equation*}
 where $j:=d-i$ and we omit Tate twists.
 The bottom arrows are isomorphisms since $H^i(\mb{A}^1)=0$ for $i\neq0$
 and $H^0(\mb{A}^1)\cong L$. By Lemma \ref{lemonpullcycle} and the
 flatness of $X^{\circ}\rightarrow\mb{A}^1$,
 $\mr{cl}_{X\times\mb{A}^1}(W^\circ)$
 on the top middle is sent to $\mr{cl}_X(W_0)$ and $\mr{cl}_X(W_1)$ via
 $0^*$ and $1^*$, so the lemma follows.
\end{proof}

\begin{rem*}
 When $Z$ is a cycle in $X$, we believe that the method of \cite{Gil}
 can be applied to construct the cycle class map. However, the author
 does not know how we define the Zariski sheaves
 $\underline{\Gamma}^*(i)$ in [{\it ibid.}, 1.1].
\end{rem*}

\begin{lem}[{\cite[2.1.1]{KS}}]
 \label{2.1.1ofKS}
 Let $\st{X}$ be a c-admissible stack\footnote{
 In the corresponding statement of \cite{KS}, they assume $X$ to be
 smooth. We think that this is a typo and, in fact, is too strong for
 their and our purposes.
 Indeed, in the proof of [{\it ibid.}, 2.3.2],
 they apply [{\it ibid.}, 2.1.1] in the situation where $X$ is not
 necessarily smooth.},
 $\st{U}$ be an open substack of $\st{X}$ which is smooth, and $\Gamma$
 be a generalized cycle of codimension $d$ on $\st{U}$.
 Consider the following commutative diagram
 \begin{equation*}
  \xymatrix{
   \Gamma\ar[r]_i\ar@/^3ex/[rr]^{i'}&
  \st{U}\ar@{^{(}->}[r]_j&\st{X},}
 \end{equation*}
 where $j$ is the open immersion, and $i$ is the generalized
 cycle. Assume $i'$ is proper. Recall the homomorphism
 (\ref{pushhombasesm}). Using this homomorphism, the composition
 \begin{equation*}
  c\colon H^*_{\mr{c}}(\st{X},j_+L_{\st{U}})\xrightarrow{i'^*}
   H^*_{\mr{c}}(\Gamma)\xrightarrow{i_*}
   H^{*+2d}_{\mr{c}}(\st{U})(d)
 \end{equation*}
 sends $u$ to $u\cup r(\mr{cl}_{\st{U}}(\Gamma))$
 (cf.\ \ref{cupproddef} for $\cup$). Here, $r$ is the homomorphism
 $H^{2d}_{\Gamma}(\st{U})(d)\cong H^{2d}(\st{X},j_!i_+i^!L_{\st{U}})(d)
 \rightarrow H^{2d}(\st{X},j_!L_{\st{U}}(d))$,
 where the first isomorphism follows since $i'$ is assumed proper
 as well.
\end{lem}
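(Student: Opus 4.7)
Both the composition $c$ in the lemma and the map ``cup product with $\mr{cl}'$'' on compact-support cohomology are induced by morphisms in $D^{\mr{b}}(\st{X})$ from $j_+L_{\st{U}}$ to $j_!L_{\st{U}}(d)[2d]$. The plan is to exhibit these lifts explicitly and check they coincide by a diagram chase that, via the adjunction $(i'^+,i'_+)$, reduces both to the defining identity $c_{\st{Z}}\leftrightarrow \mr{cl}$ of the cycle class.

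\textbf{Lifting both sides to $D^{\mr{b}}(\st{X})$.} For the composition $c=i_*\circ i'^*$, the pullback $i'^*$ arises from the adjunction morphism $j_+L_{\st{U}}\to i'_+i'^+j_+L_{\st{U}}$, where $i'^+j_+L_{\st{U}}\cong i^+j^+j_+L_{\st{U}}\cong L_\Gamma$ because $j^+j_+\cong\mr{id}$ for the open immersion $j$; moreover, since $i'$ is proper, $i'_+\cong i'_!=j_!i_!$. The pushforward $i_*$ is induced by $j_!$ applied to $\mr{fTr}_i\colon i_!i^+L_{\st{U}}\to L_{\st{U}}(d)[2d]$. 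Writing $\mr{fTr}_i=\epsilon\circ i_!(c_{\st{Z}})$ with $\epsilon$ the counit of $(i_!,i^!)$, the composite reads
\[
\phi\colon j_+L_{\st{U}}\xrightarrow{\mr{adj}} j_!i_+i^+L_{\st{U}}\xrightarrow{j_!i_+(c_{\st{Z}})} j_!i_+i^!L_{\st{U}}(d)[2d]\xrightarrow{j_!\epsilon} j_!L_{\st{U}}(d)[2d],
\]
and applying $p_{\st{X}!}$ recovers $c$. On the other hand, by Proposition~\ref{pushforcohrel}, $\mr{cl}'$ corresponds to a morphism $\widetilde{\mr{cl}'}\colon L_{\st{X}}\to j_!L_{\st{U}}(d)[2d]$ in $D(\st{X})$. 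Using the cup product pairing $H^n_{\mr{c}}(\st{X},\ms{F})\otimes H^m(\st{X},\ms{G})\to H^{n+m}_{\mr{c}}(\st{X},\ms{F}\otimes\ms{G})$ (coming from the projection formula $p_!\ms{F}\otimes p_+\ms{G}\to p_!(\ms{F}\otimes\ms{G})$), cup product with $\mr{cl}'$ is induced by
\[
\psi\colon j_+L_{\st{U}}\xrightarrow{\mr{id}\otimes\widetilde{\mr{cl}'}} j_+L_{\st{U}}\otimes j_!L_{\st{U}}(d)[2d]\xrightarrow{\rho} j_!L_{\st{U}}(d)[2d],
\]
with $\rho$ the projection formula isomorphism $j_+L_{\st{U}}\otimes j_!\ms{N}\cong j_!(j^+j_+L_{\st{U}}\otimes \ms{N})\cong j_!\ms{N}$.

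\textbf{Comparison and main obstacle.} The description of $\mr{cl}'$ in the lemma statement shows that $\widetilde{\mr{cl}'}$ factors as $L_{\st{X}}\xrightarrow{\eta_j}j_+L_{\st{U}}\xrightarrow{j_+\mr{cl}}j_+i_+i^!L_{\st{U}}(d)[2d]\xrightarrow{\sim}j_!i_+i^!L_{\st{U}}(d)[2d]\xrightarrow{j_!\epsilon}j_!L_{\st{U}}(d)[2d]$, with the penultimate isomorphism coming from $j_+i_+\cong j_!i_+$ (a consequence of $i'=j\circ i$ being proper). Plugging this into $\psi$ and collapsing tensor factors via projection formula, $\psi$ also factors through $j_!i_+i^!L_{\st{U}}(d)[2d]\xrightarrow{j_!\epsilon}j_!L_{\st{U}}(d)[2d]$, matching the factorization of $\phi$. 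It therefore suffices to show that the two induced maps $j_+L_{\st{U}}\to j_!i_+i^!L_{\st{U}}(d)[2d]=i'_+i^!L_{\st{U}}(d)[2d]$ agree. Under the adjunction $(i'^+,i'_+)$, both correspond to morphisms $i^+L_{\st{U}}\to i^!L_{\st{U}}(d)[2d]$: for $\phi$, this is $c_{\st{Z}}$ by construction and naturality of the unit $\mr{adj}_{i'}$; for $\psi$, the same identification is obtained by further applying $(i^+,i_+)$-adjunction to the decomposition $\eta_j$ followed by $j_+\mr{cl}$, whence one recovers $c_{\st{Z}}$ as the adjoint of $\mr{cl}$. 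The main obstacle is precisely the bookkeeping of these compatibilities between the $(j^+,j_+)$-, $(i^+,i_+)$-, $(i_!,i^!)$- and $(i'^+,i'_+)$-adjunction units/counits, the identification $j_+i_+\cong j_!i_+$ afforded by properness of $i'$, and the projection formula. These are routine within the formalism of \S\ref{arithDstacksec} but require a careful diagram chase.
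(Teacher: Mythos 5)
Your strategy is sound and is essentially a transcription, adapted to the stack formalism of \S\ref{arithDstacksec}, of the Kato--Saito argument that the paper's proof explicitly defers to (``We can copy the proof of [\emph{ibid.}]''), so there is no competing argument in the paper to compare against. The key reduction — lifting both $c$ and cup product with $\mr{cl}'$ to morphisms $j_+L_{\st{U}}\to j_!L_{\st{U}}(d)[2d]$, factoring both through $j_!i_+i^!L_{\st{U}}(d)[2d]\xrightarrow{j_!\epsilon}j_!L_{\st{U}}(d)[2d]$, and then comparing the remaining pieces under the $(i'^+,i'_+)$-adjunction — is exactly what is needed, and your identifications of $\mr{fTr}_i$ as $\epsilon\circ i_!(c_{\st{Z}})$ and of $\widetilde{\mr{cl}'}$ via $\eta_j$, $j_+\mr{cl}$, the properness identification $j_!i_+\cong j_+i_+$, and $j_!\epsilon$ are correct. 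The one step you are careful to flag as ``routine but requiring a careful diagram chase'' — verifying that the $(i'^+,i'_+)$-adjoint of $\psi'$ also equals $c_{\st{Z}}$ by compatibility of the composite unit $\eta_{i'}=j_+(\eta_i)\circ\eta_j$, the projection-formula triangle identity $\rho'\circ(\eta_j\otimes\mr{id})=\mr{id}$, and the coherence of $j_!i_+\cong j_+i_+$ with the pushforward transitivity isomorphism $i'_+\cong j_+i_+$ — is genuinely where all the content sits, but it is a standard check and your outline of it is accurate. In short, the proposal is correct and fills in the argument the paper relegates to a citation.
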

\begin{proof}
 We can copy the proof of {\it ibid.}.
\end{proof}

\subsubsection{}
Let $f\colon X\rightarrow\st{Y}$ be a representable l.c.i.\ morphism
from a scheme\footnote{This assumption is made for simplicity. In fact,
with suitable changes, similar results can be obtained when $X$ is a
c-admissible stack.}
of finite type purely of dimension $n$ to a c-admissible stack purely of
dimension $m$ over $k$. For example, a representable morphism
between smooth c-admissible stacks is l.c.i.. We note that $f$ is
schematic since $\st{Y}$ is admissible.
Let $\Gamma\rightarrow\st{Y}$ be a generalized cycle of codimension $d$
which is a scheme, and put $\Gamma':=\Gamma\times_{\st{Y}}X$, which is a
generalized cycle of $X$ and is a scheme as well since $f$ is
schematic. Let us briefly recall the construction of
$f^!([\Gamma])\in\mr{CH}_{n-d}(\Gamma')$ by Kresch \cite{Kr}.

Let $f'\colon\st{X}'\rightarrow\st{Y}'$ be a representable separated
morphism between algebraic stacks. In \cite[5.1]{Kr}, Kresch
constructs\footnote{
In fact, he constructs over $\mb{P}^1$ instead of $\mb{A}^1$. However,
for convenience, we restrict his construction over $\mb{A}^1$ and, by
abusing the notation, we still denote it by
$M^{\circ}_{\st{X}'}\st{Y}'$.
}
$\rho\colon M^{\circ}_{\st{X}'}\st{Y}'\rightarrow\mb{A}^1$, whose fiber
over $0$ is called the {\em normal cone} denoted by
$C_{\st{X}'}\st{Y}'\rightarrow\st{X}'$, and the general fiber is just
$\st{Y}'$. When $f'\colon X'\hookrightarrow Y'$ is a closed immersion of
schemes, $M^{\circ}_{X'}Y'$ is nothing but the one introduced in
\cite[Ch.5]{Fu}. We remark that by construction, there is a canonical
morphism $\st{X}'\times\mb{A}^1\rightarrow M^{\circ}_{\st{X}'}\st{Y}'$
defined by the strict transform, and $\rho$ is flat by using
\cite[B.6.7]{Fu}. When $f'$ is l.c.i.,
$C_{\st{X}'}\st{Y}'$ is known to be a vector bundle over $\st{X}$, in
which case we denote it by $N_{f'}$.

Now, assume that $X$ and $\st{Y}$ are smooth. In this situation,
$M^\circ_X\st{Y}$ is smooth. We put
$\Gamma':=\Gamma\times_{\st{Y}}X$. By definition, we have the diagram
below on the left:
\begin{equation}
 \label{cartdiagfulton}
  \tag{$\star$}
 \xymatrix{
  M^\circ_{\Gamma'}\Gamma\ar@{^{(}->}[r]_-{\alpha}&
  M_{X}^\circ\st{Y}\times_{\st{Y}}
  \Gamma\ar[r]\ar[d]\ar@{}[rd]|\square&
  M^{\circ}_X\st{Y}\ar[d]\\
 &\Gamma\ar[r]&\st{Y},
  }\hspace{50pt}
  \xymatrix{
  \Gamma'\times\mb{A}^1\ar[r]\ar[d]\ar@{}[rd]|\square&
  X\times\mb{A}^1\ar[d]\\
 M^\circ_{\Gamma'}\Gamma\ar[r]&M^\circ_{X}\st{Y}.
  }
\end{equation}
This diagram induces the cartesian diagram on the right.

Let us define $f^!([\Gamma])\in\mr{CH}_{n-d}(\Gamma')$. For the detail,
see [{\it ibid.}, 3.1, 5.1]. By taking the pull-back by the morphism
$X\rightarrow\st{Y}$ of the diagram above on the left, we have the
following diagram of {\em schemes}:
\begin{equation}
 \label{realvarfulintdef}
 \xymatrix{
  C_{\Gamma'}\Gamma\ar@{^{(}->}[r]&
  N'\ar[r]\ar[d]\ar@{}[rd]|\square&N_f\ar[d]\\
 &\Gamma'\ar[r]&X\ar@/_2ex/[u]_g
  }
\end{equation}
The closed immersion on the upper left is induced by $\alpha$ in
(\ref{cartdiagfulton}).
By definition, $f^!(\Gamma)$ is the image of $[C_{\Gamma'}\Gamma]$ by
the homomorphism
\begin{equation*}
 \mr{CH}_{m-d}(C_{\Gamma'}\Gamma)\rightarrow\mr{CH}_{m-d}(N')
  \xrightarrow[\sim]{g^!}\mr{CH}_{n-d}(\Gamma')
\end{equation*}
where the last isomorphism\footnote{This isomorphism holds even when $X$
is an admissible stack. Indeed, by \cite[3.5.7]{Kr}, an admissible stack
admits ``a stratification by global quotients''. Then by [{\it ibid.},
4.3.2], we have the required homotopy invariance property.}
follows by \cite[Theorem 3.3]{Fu}.

\begin{lem}[{\cite[2.1.2]{KS}}]
 \label{pullcalccyccl}
 We preserve the notation.
 Let $f^*\colon H^{2d}_{\Gamma}(\st{Y})\rightarrow
 H^{2d}_{\Gamma'}(X)$ be the pull-back. Then the class
 $\mr{cl}(f^!([\Gamma]))\in H^{2d}_{\Gamma'}(X)$ is equal to
 $f^*\mr{cl}(\Gamma)$.
\end{lem}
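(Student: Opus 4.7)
The plan is to adapt the deformation-to-the-normal-cone argument of \cite[2.1.2]{KS} to the present cohomological setting. Since $X$ and $\st{Y}$ are smooth and $f$ is representable l.c.i., the Kresch deformation space $M^\circ_X\st{Y}$ is smooth of dimension $m+1$, the embedding $M^\circ_{\Gamma'}\Gamma\hookrightarrow M^\circ_X\st{Y}$ is a generalized cycle of codimension $d$, and the structure morphisms $M^\circ_X\st{Y}\to\mb{A}^1$ and $M^\circ_{\Gamma'}\Gamma\to\mb{A}^1$ are both flat and equidimensional, with fibers over $1\in\mb{A}^1$ giving back $(\Gamma,\st{Y})$ and fibers over $0$ giving the normal cone pair $(C_{\Gamma'}\Gamma,N_f)$. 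The first step is to form the cycle class $C:=\mr{cl}_{M^\circ_X\st{Y}}(M^\circ_{\Gamma'}\Gamma)\in H^{2d}_{M^\circ_{\Gamma'}\Gamma}(M^\circ_X\st{Y})(d)$.

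By the right square of (\ref{cartdiagfulton}), the strict-transform embedding $\pi\colon X\times\mb{A}^1\hookrightarrow M^\circ_X\st{Y}$ satisfies $\pi^{-1}(M^\circ_{\Gamma'}\Gamma)=\Gamma'\times\mb{A}^1$, and so functoriality of local cohomology produces $c:=\pi^+C\in H^{2d}_{\Gamma'\times\mb{A}^1}(X\times\mb{A}^1)(d)$. The heart of the argument is to specialize $c$ to the two fibers $i_t\colon X\hookrightarrow X\times\mb{A}^1$ over $t=0,1$. The composite $\pi\circ i_1$ is the fiber-over-$1$ embedding $X\xrightarrow{f}\st{Y}\hookrightarrow M^\circ_X\st{Y}$, while $\pi\circ i_0$ is the fiber-over-$0$ embedding $X\xrightarrow{s}N_f\hookrightarrow M^\circ_X\st{Y}$, where $s$ is the zero section. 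Applying Lemma \ref{lemonpullcycle} to the flat morphism $M^\circ_X\st{Y}\to\mb{A}^1$ and its fibers over $0$ and $1$ identifies the restrictions of $C$ to these fibers with $\mr{cl}_{\st{Y}}(\Gamma)$ and $\mr{cl}_{N_f}(C_{\Gamma'}\Gamma)$ respectively. Transitivity of functorial pullback then yields
\[
i_1^+c=f^*\mr{cl}(\Gamma)\qquad\text{and}\qquad i_0^+c=s^+\mr{cl}_{N_f}(C_{\Gamma'}\Gamma).
\]

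The two specializations agree by $\mb{A}^1$-homotopy invariance: the projection $p_X\colon X\times\mb{A}^1\to X$ induces an isomorphism $H^{2d}_{\Gamma'}(X)\xrightarrow{\sim}H^{2d}_{\Gamma'\times\mb{A}^1}(X\times\mb{A}^1)$ with $i_0^+$ and $i_1^+$ as two-sided inverses, so $i_0^+c=i_1^+c$. Hence $f^*\mr{cl}(\Gamma)=s^+\mr{cl}_{N_f}(C_{\Gamma'}\Gamma)$. To identify the latter with $\mr{cl}(f^!([\Gamma]))$, recall that by construction $f^!([\Gamma])\in\mr{CH}_{n-d}(\Gamma')$ is the unique class satisfying $g^*f^!([\Gamma])=[C_{\Gamma'}\Gamma]$ in $\mr{CH}_{m-d}(N')$, where $g\colon N_f\to X$ is the vector-bundle projection and $N':=N_f|_{\Gamma'}$. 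The compatibility of the cycle class map (Lemma \ref{chowgroupfacto}) with flat pullback along $g$, which is a case of Lemma \ref{lemonpullcycle}, then gives $\mr{cl}_{N_f}(C_{\Gamma'}\Gamma)=g^+\mr{cl}_X(f^!([\Gamma]))$, and applying $s^+$ while using $g\circ s=\mr{id}_X$ concludes the proof.

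The principal technical point will be verifying the factorizations $\pi\circ i_t=\iota_t\circ(\text{fiber embedding})$ in such a way that the non-flat morphism $\pi$ disappears from the analysis: both composites are then flat base changes along $\{t\}\hookrightarrow\mb{A}^1$ followed by functorial pullbacks between smooth objects, to which the machinery of \S\ref{consttracsubsec} applies directly via Lemma \ref{lemonpullcycle}. The $\mb{A}^1$-homotopy invariance itself follows from the computation $p_{X+}L_{X\times\mb{A}^1}\cong L_X$ (together with base change for supports), which in turn is a consequence of Proposition \ref{Kunnethextprod} and the cohomological acyclicity of $\mb{A}^1$.
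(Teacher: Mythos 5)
Your plan follows the same deformation-to-the-normal-cone strategy as the paper's proof: form the cycle class on $M^\circ_{\Gamma'}\Gamma$, pull it back along the strict-transform map $X\times\mb{A}^1\to M^\circ_X\st{Y}$, compare the two specializations using Lemma~\ref{lemonpullcycle} and $\mb{A}^1$-invariance, and then identify the $t=0$ specialization with $\mr{cl}(f^![\Gamma])$. Up through the equality $f^*\mr{cl}(\Gamma)=s^+\mr{cl}_{N_f}(C_{\Gamma'}\Gamma)$ your argument and the paper's coincide, modulo a harmless notational clash: the paper's $g$ is the zero section $X\to N_f$, while yours is the projection $N_f\to X$.

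The final step is where there is a real gap. You assert that the compatibility $\mr{cl}_{N_f}(C_{\Gamma'}\Gamma)=g^+\mr{cl}_X(f^![\Gamma])$ ``is a case of Lemma~\ref{lemonpullcycle}'', but Lemma~\ref{lemonpullcycle} only applies when the ambient space is a fiber product $\st{X}\times_S S'$ with $\st{X}\to S$ and $\Gamma\to S$ flat equidimensional over a base scheme $S$. For a non-trivial vector bundle $N_f\to X$ this condition does not hold directly. The paper handles this by first shrinking $X$ to an open dense $U$ over which $N_f$ trivializes to $X\times\mb{A}^n$, checking that $H^{2d}_{\Gamma'}(X)(d)\cong H^{2d}_{\Gamma'\cap U}(U)(d)$ when $\Gamma'\cap U$ is dense in $\Gamma'$, and only then invoking Lemma~\ref{lemonpullcycle} (with $S=\mr{Spec}(k)$, $S'=\mb{A}^n$). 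Your proposal omits this trivialization-and-shrinking step entirely, and also names the factorizations $\pi\circ i_t$ as the principal technical point when those are actually routine; the substantive difficulty is exactly the one you've compressed into ``is a case of.'' Incidentally, your route through the projection $p^*$ and the identity $p\circ s=\mr{id}_X$ is a slightly cleaner organization than the paper's, which first reduces the Gysin-section square to the projection square; but both variants need the trivialization to make Lemma~\ref{lemonpullcycle} applicable.
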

\begin{proof}
 The verification is essentially the same as \cite{KS}. We have the
 following commutative diagram:
 \begin{equation*}
  \xymatrix@C=40pt{
   H^{2d}_{\Gamma}(\st{Y})(d)\ar[d]_{f^*}&
   H^{2d}_{Z}(M^{\circ}_X\st{Y})(d)
   \ar[r]^{0^*}\ar[l]_{1^*}\ar[d]&
   H^{2d}_{C'}(N_f)(d)\ar[d]^{g^*}\\
  H^{2d}_{\Gamma'}(X)(d)&
   H^{2d}_{\Gamma'\times\mb{A}^1}(X\times\mb{A}^1)(d)
   \ar[r]^-{0^*}_-{\sim}\ar[l]_-{1^*}^-{\sim}&
   H^{2d}_{\Gamma'}(X)(d)
   }
 \end{equation*}
 where $Z:=M^{\circ}_{\Gamma'}\Gamma$, $C':=C_{\Gamma'}\Gamma$, and the
 middle vertical homomorphism is induced by the morphism
 $X\times\mb{A}^1\rightarrow M^{\circ}_X\st{Y}$ defined by the strict
 transform. At the upper row, the image of the cycle class $\mr{cl}(Z)$
 is sent to $\mr{cl}(\Gamma)$ and $\mr{cl}(C')$ by $1^*$ and $0^*$
 respectively by Lemma \ref{lemonpullcycle} and the flatness of
 $Z\rightarrow\mb{A}^1$.

 Recall the diagram of schemes (\ref{realvarfulintdef}).
 It is reduced to showing that $\mr{cl}(g^![C'])=g^*\mr{cl}(C')$. Using
 Lemma \ref{chowgroupfacto}, this amounts to proving the commutativity
 of the following diagram on the left:
 \begin{equation*}
  \xymatrix{
   \mr{CH}_{m-d}(N')\ar[d]_{g^!}\ar[r]&
   H^{2d}_{N'}(N_f)(d)\ar[d]^{g^*}\\
  \mr{CH}_{n-d}(\Gamma')\ar[r]&
   H^{2d}_{\Gamma'}(X)(d),
   }\qquad
   \xymatrix{
   \mr{CH}_{n-d}(\Gamma')\ar[r]\ar[d]_{p^*}&
   H^{2d}_{\Gamma'}(X)(d)\ar[d]^{p^*}\\
   \mr{CH}_{m-d}(N')\ar[r]&
    H^{2d}_{N'}(N_f)(d).
   }
 \end{equation*}
 Note that the stacks appearing in these diagrams are, in fact,
 schemes. Consider the diagram on the
 right above, where $p$ denotes the projection $N_f\rightarrow X$.
 Since $g^*\circ p^*$ is the identity on $H^{2d}_{\Gamma'}(X)(d)$, and
 $g^!$ is an isomorphism whose inverse is $p^*$ on the Chow groups,
 the verification of the commutativity of the left diagram is reduced to
 that of the right one. There exists an open dense subscheme $U\subset
 X$ such that $U\cap\Gamma'\subset\Gamma'$ is dense, and
 $N_f\times_XU\rightarrow U$ is a trivial bundle so that we can write
 $N_f\times_XU\cong U\times\mb{A}^n$. Since $H^{2d}_{\Gamma'}(X)(d)\cong
 H^{2d}_{\Gamma'\cap U}(U)(d)$, we may replace $X$ by $U$ and $\Gamma'$
 by $\Gamma'\cap U$. Thus, the claim follows by Lemma
 \ref{lemonpullcycle}.
\end{proof}

\subsubsection{}
\label{identificationofcycl}
Now, we only consider the case where $\base=\emptyset$
(but $k$ is still not necessarily finite).
Let us recall the construction of \cite[after Lemma 2.3.1]{KS}. Let
$\st{U}$ be a smooth c-admissible stack of dimension $d$, and let
$\Gamma$ be a correspondence on $\st{U}$ (over $\mr{Spec}(k)$). Let
$j\colon\st{U}\hookrightarrow\st{X}$ be a compactification. Since the
second projection $p_2\colon\Gamma\rightarrow\st{U}$ is assumed
proper, the morphism $(j\circ
p_1,p_2)\colon\Gamma\rightarrow\st{X}\times\st{U}$ is proper, and we
have isomorphisms
\begin{equation*}
 H^{2d}_{\Gamma}\bigl(\st{X}\times\st{U},
  (j\times\mr{id})_!L(d)\bigr)
  \xrightarrow{\sim}
  H^{2d}_{\Gamma}\bigl(\st{X}\times\st{U},L(d)\bigr)
  \xrightarrow{\sim}
  H^{2d}_{\Gamma}\bigl(\st{U}\times\st{U},L(d)\bigr).
\end{equation*}
Thus, the cycle class $\mr{cl}(\Gamma)$ defined in
$H^{2d}_{\Gamma}\bigl(\st{U}\times\st{U},L(d)\bigr)$
induces an element in
\begin{equation*}
 H^{2d}_{!,*}(\st{U}\times\st{U})(d):=H^{2d}\bigl(\st{X}\times\st{U},
  (j\times\mr{id})_!L(d)\bigr).
\end{equation*}
Since $\base=\emptyset$, this cohomology is isomorphic to
$\prod_{i}\mr{End}\bigl(H^i_{\mr{c}}(\st{U})\bigr)$ as in {\it ibid.}\
using the K\"{u}nneth formula (cf.\ Corollary
\ref{Kunnethformulastform}) and the Poincar\'{e} duality (cf.\ Theorem
\ref{relativedualprop}).

\begin{lem*}[{\cite[2.3.2]{KS}}]
 \label{identifcyclend}
 The action $\Gamma^*$ can be identified with the class
 $\mr{cl}(\Gamma)$ via this isomorphism.
\end{lem*}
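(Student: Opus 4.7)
The plan is to apply Lemma \ref{2.1.1ofKS} to the smooth c-admissible stack $\st{U}\times\st{U}$, open inside the c-admissible stack $\st{X}\times\st{U}$ via $j\times\mr{id}$, with the generalized cycle $c_\Gamma\colon\Gamma\rightarrow\st{U}\times\st{U}$, which has codimension $d$ since $\dim(\Gamma)=\dim(\st{U})=d$. First I would verify the properness hypothesis of that lemma, namely that $(j\times\mr{id})\circ c_\Gamma=(j\circ p_1,p_2)\colon\Gamma\rightarrow\st{X}\times\st{U}$ is proper. For this I factor it as
\begin{equation*}
\Gamma\xrightarrow{(j\circ p_1,\mr{id}_\Gamma)}\st{X}\times\Gamma\xrightarrow{\mr{id}_{\st{X}}\times p_2}\st{X}\times\st{U},
\end{equation*}
where the first arrow is the graph of $j\circ p_1$ and hence a closed immersion (the diagonal of the admissible stack $\st{X}$ is finite, so $\st{X}$ is separated), and the second arrow is proper since $p_2$ is proper by the definition of a correspondence.

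Lemma \ref{2.1.1ofKS} then identifies the cup product with the image of $\mr{cl}(\Gamma)$ in $H^{2d}_{!,*}(\st{U}\times\st{U})(d)$, as a map on $H^*_\mr{c}(\st{X}\times\st{U},(j\times\mr{id})_+L_{\st{U}\times\st{U}})$, with the composition $c_{\Gamma *}\circ c_\Gamma^*$, where the pushforward is the one of (\ref{pushhombasesm}) attached to the codimension $d$ generalized cycle $c_\Gamma$ on the smooth stack $\st{U}\times\st{U}$. The remaining task is to translate this composite through the isomorphism $H^{2d}_{!,*}(\st{U}\times\st{U})(d)\cong\prod_i\mr{End}(H^i_\mr{c}(\st{U}))$ recalled in \ref{identificationofcycl}, which is built from the K\"unneth formula (Corollary \ref{Kunnethformulastform}) and Poincar\'e duality (Theorem \ref{relativedualprop}).

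Under that isomorphism, the endomorphism attached to a class $\alpha\in H^{2d}_{!,*}(\st{U}\times\st{U})(d)$ is characterized by $\beta\mapsto\pi_{1*}(\alpha\cup\pi_2^+\beta)$, where $\pi_i\colon\st{U}\times\st{U}\rightarrow\st{U}$ are the projections and $\pi_{1*}$ is the fake-trace pushforward of Poincar\'e duality. Using the identities $\pi_i\circ c_\Gamma=p_i$ together with functoriality of pullbacks and transitivity of fake trace maps (Theorem \ref{poincaredualrela} (III)), the composite $\pi_{1*}\circ c_{\Gamma *}\circ c_\Gamma^*\circ\pi_2^+$ reduces to $p_{1*}\circ p_2^*\colon H^*_\mr{c}(\st{U})\rightarrow H^*_\mr{c}(\st{U})$, which is precisely $\Gamma^*$ by the unpacking of the action following Definition \ref{corresnotdef}.

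The main obstacle is the bookkeeping of the second step: checking carefully that the K\"unneth--Poincar\'e identification of $H^{2d}_{!,*}(\st{U}\times\st{U})(d)$ with $\prod_i\mr{End}(H^i_\mr{c}(\st{U}))$ is indeed the ``cup-and-push'' pairing described above, with the correct Tate twists and degree shifts, and that the composition of fake traces behaves functorially under $p_1=\pi_1\circ c_\Gamma$ so that the iterated pushforward $\pi_{1*}\circ c_{\Gamma *}$ is the single fake-trace pushforward $p_{1*}$. Once this translation is verified, Lemma \ref{2.1.1ofKS} and the definition of $\Gamma^*$ immediately yield the equality $\mr{cl}(\Gamma)=\Gamma^*$ in $\prod_i\mr{End}(H^i_\mr{c}(\st{U}))$.
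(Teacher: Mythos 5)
Your proposal is correct and follows essentially the same route as the paper, whose proof simply invokes Lemma \ref{2.1.1ofKS} and defers the remaining translation to [KS, 2.3.2]; your write-up just fills in what that translation amounts to, including a useful explicit check that $(j\circ p_1,p_2)$ is proper. One small citation wobble: the transitivity $\pi_{1*}\circ c_{\Gamma *}=p_{1*}$ of \emph{fake}-trace pushforwards is not directly an instance of Theorem \ref{poincaredualrela}(III), since the generalized cycle $c_\Gamma$ need not lie in $\mf{M}^{\mr{st}}_d$; it is better deduced from the characterization of the fake trace in Lemma \ref{sga4halfcharact} combined with the transitivity of the honest trace $\mr{Tr}$.
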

\begin{proof}
 Using Lemma \ref{2.1.1ofKS}, the proof of {\it ibid.}\ works exactly in
 the same manner.
\end{proof}

\subsubsection{Proof of Theorem \ref{indepofltrace}}\mbox{}\\
By Corollary \ref{alterationforadmisst}, we can take a proper
generically finite surjective morphism $f\colon X\rightarrow\st{X}$ such
that $X$ is a smooth scheme. Using the same corollary and Corollary
\ref{tracepropmuln}, we may assume $\Gamma$ to be a scheme.
Let $H^*_{\mr{c}}(\st{X})$ be
$H^*_{\mr{c}}(\st{X}\otimes\overline{k},\mb{Q}_\ell)$ or
$H^*_{\mr{c}}(\st{X},L_{\st{X}})$. Using Lemma
\ref{identifcyclend} and Corollary \ref{commutabasechancorre}, by
arguing as \cite[2.3.3]{KS}, we have
\begin{equation}
 \label{altrtrrelcalc}
  \tag{$\star$}
 \mr{Tr}\bigl(\Gamma^*: H^*_{\mr{c}}(\st{X})\bigr)=
  \deg(f)^{-1}\cdot\mr{Tr}\bigl((f\times f)^*
  \mr{cl}_{\st{X}\times\st{X}}(\Gamma):
  H^*_{\mr{c}}(X)\bigr),
\end{equation}
where we regard classes in $H^{2d}_{!,*}(\st{X}\times\st{X})(d)$
($d:=\dim(\st{X})$) as endomorphisms of $H^*_{\mr{c}}(\st{X})$ using
Lemma \ref{identificationofcycl}.
Consider the following commutative diagram:
\begin{equation*}
 \xymatrix{
  \Gamma'\ar[r]\ar@{}[rd]|{\square}\ar[d]&X\times X\ar[r]^-{\pi_2}
  \ar[d]^{f\times f}&X\ar[d]^f\\
 \Gamma\ar[r]&\st{X}\times\st{X}\ar[r]_-{\pi_2}&\st{X}.
  }
\end{equation*}
Since the composition of the horizontal morphisms below is proper by
assumption, the composition morphism from $\Gamma'$ to $\st{X}$ is
proper. Thus the composition of the horizontal morphisms of the first
row is proper.
By Lemma \ref{pullcalccyccl}, $(f\times
f)^*\mr{cl}_{\st{X}\times\st{X}}(\Gamma)$ is equal to
$\mr{cl}_{X\times X}\bigl((f\times f)^!(\Gamma)\bigr)$.
Let $\widetilde{\Gamma}'$ be the correspondence defined by $(f\times
f)^!(\Gamma)$. By applying Lemma
\ref{identifcyclend} once again, the trace of the right hand side of
(\ref{altrtrrelcalc}) is equal to
$\mr{Tr}\bigl(\widetilde{\Gamma'}\colon H^*_{\mr{c}}(X)\bigr)$. This
implies that it suffices to show the theorem in the case where
$\st{X}=:X$ and $\Gamma$ are schemes. By Corollary \ref{genconstdegrel},
we may replace $\Gamma$ by its image in $X\times X$, and assume that
$\Gamma\subset X\times X$. In this case, we just repeat the argument of
\cite[Prop 2.3.6]{KS}. Further details are left to the reader.
\qed

\section{Langlands correspondence}
\label{langcorrsec}
In this final section, we establish the Langlands correspondence, and in
particular, prove the existence of {\it petits camarades cristallins}
for curves. We try this section to be as independent as possible.

\subsection{Preliminaries}
First, let us very briefly recall basic notions of the $p$-adic
cohomology theory and notations of this paper for the convenience of the
reader. We do not have new inputs, and those who have read the previous
sections may skip.

\subsubsection{}
Let $k$ be a perfect field, $R$ be a complete discrete valuation rings
whose residue field is $k$, and $K$ be the field of fractions of $R$.
We assume further that the $s$-th Frobenius automorphism of $k$ can be
lifted to an automorphism $\sigma\colon R\xrightarrow{\sim}R$. The
induced automorphism between $K$ is also denoted by $\sigma$.
With this setup, let $X$ be a scheme of finite type over $k$. Berthelot
defined the category of {\em overconvergent isocrystals} (resp.\ {\em
overconvergent $F$-isocrystals}) denoted by
$\mr{Isoc}^\dag(X/K)$ (resp.\ $F\mbox{-}\mr{Isoc}^\dag(X/K)$).
We do not try to recall the definition here, but a standard
reference is \cite{Berrigcoh}. We may find other references in
\cite{Kesurv} and \cite{Kenote}.
This category is a $p$-adic analogue of the category of smooth
$\mb{Q}_\ell$-sheaves over $X\otimes_k\overline{k}$ (resp.\ over
$X$). In this paper, we denote $F\mbox{-}\mr{Isoc}^\dag(X/K)$
by $\mr{Isoc}^\dag(X/K_F)$.
The description of these categories when $X=\mr{Spec}(k)$ is
simple but important: $\mr{Isoc}^\dag(\mr{Spec}(k)/K)$
is canonically equivalent to the category of finite dimensional
$K$-vector spaces, and $\mr{Isoc}^\dag(\mr{Spec}(k)/K_F)$
is canonically equivalent to that of finite dimensional $K$-vector
spaces $V$ equipped with an isomorphism
$K\otimes_{\sigma,K}V\xrightarrow{\sim}V$. In particular, the trivial
vector space $K$ with trivial isomorphism determines an object of
$\mr{Isoc}^\dag(\mr{Spec}(k)/K_F)$, which is denoted by $K$ abusing the
notation.

Let $f\colon X\rightarrow Y$ be a morphism between schemes of
finite type over $k$. Then the pull-back functor
$f^+\colon\mr{Isoc}^\dag(Y/K)\rightarrow\mr{Isoc}^\dag(X/K)$
(resp.\ $f^+\colon\mr{Isoc}^\dag(Y/K_F)
\rightarrow\mr{Isoc}^\dag(X/K_F)$)
is defined in \cite[2.3.2 (iv)]{Berrigcoh}
\footnote{The pull-back functor is denoted by $f^*$ in {\it ibid.}.
}. If $p$ is the structural
morphism of $X$, we put $K_X:=p^+K$, which is also denoted by $K$.
The category is equipped with tensor product $\otimes$, with which
$\mr{Isoc}^\dag(X/K)$ (resp.\ $\mr{Isoc}^\dag(X/K_F)$) forms a tensor
category (cf.\ \cite[2.3.3 (iii)]{Berrigcoh}). The unit object of the
tensor category is $K_X$. It also
possesses an internal hom functor $\shom$. Finally, we have the notion
of ranks (cf.\ \cite[2.3.3 (ii)]{Berrigcoh}). This implies that any
object in $\mr{Isoc}^\dag(X/K)$ (resp.\ $\mr{Isoc}^\dag(X/K_F)$) is of
finite length.

\begin{rem*}
 The category $\mr{Isoc}^\dag(X/K)$ used in
 \S\ref{firstsection}--\ref{lindepsec} is smaller than the one recalled
 here, but the category with Frobenius $\mr{Isoc}^\dag(X/K_F)$ is the
 same. See, the paragraph right after
 \ref{fundproprealsch} \eqref{frobdesrecall}.
 In the following, we only use $\mr{Isoc}^\dag(X/K_F)$, so this
 does not cause any problem.
\end{rem*}

\subsubsection{}
In Langlands correspondence, we consider the case where $k$ is a
finite field with $q=p^s$ elements, $R:=W(k)$, and $\sigma\colon
R\xrightarrow{\sim}R$ is the identity.
Then we may consider the category $\mr{Isoc}^\dag(X/K_F)$ which is
$K$-abelian. Note that this would be $K^{\sigma=1}$-abelian if $\sigma$
were not the identity. Now, we need to extend the scalar from $K$ to
$\overline{\mb{Q}}_p$. This was done in
\ref{defofcatwithcoeff} and \ref{algcloscoefftheory}, and let us
recall the idea briefly\footnote{Actually the ``definition'' presented
here is the one in Remark \ref{Frobsetupp}.}:
For a finite extension $L/K$, we define $\mr{Isoc}^\dag(X/L_F)$ to be
the category of pairs $(\mc{E},\rho)$ where
$\mc{E}\in\mr{Isoc}^\dag(X/K_F)$ and $\rho\colon
L\rightarrow\mr{End}(\mc{E})$ be a homomorphism of
$K$-algebras, and the morphisms are defined in the obvious
way. To define $\mr{Isoc}^\dag(X/\overline{\mb{Q}}_{p,F})$, we take the
2-inductive limit of $\mr{Isoc}^\dag(X/L_F)$ over all finite extension
$L$ of $K$.
We have the scalar extension functor $\otimes\overline{\mb{Q}}_p\colon
\mr{Isoc}^\dag(X/K_F)\rightarrow\mr{Isoc}^\dag(X/\overline{\mb{Q}}_{p,F})$.
We remark that even though $\mc{E}\in\mr{Isoc}^\dag(X/K_F)$ is
irreducible, $\mc{E}\otimes\overline{\mb{Q}}_p$ may not be irreducible
in general, and this is why we needed to extend the scalar.
For a morphism $f\colon X\rightarrow Y$, the pull-back functor can
formally be extended to
$f^+\colon\mr{Isoc}^\dag(Y/\overline{\mb{Q}}_{p,F})\rightarrow
\mr{Isoc}^\dag(X/\overline{\mb{Q}}_{p,F})$, and similarly for $\otimes$,
$\shom$. The data $\mf{T}:=(k,R,K,L,s,\sigma=\mr{id})$ (where $\sigma$
is an extension to $L$ of a lifting of $s$-th Frobenius automorphism on
$k$ to $K$) we used to define $\mr{Isoc}^\dag(X/L_F)$ is called the base
tuple. To clarify the base, we also use the notation
$\mr{Isoc}^\dag(X/\mf{T})$ for $\mr{Isoc}^\dag(X/L_F)$.
See \ref{5tuplesover}, \ref{algcloscoefftheory} for details.

\subsubsection{}
Before going to the next section, let us briefly recall what we have
done so far. Let $\st{X}$ be a scheme, or more generally,
algebraic stack of finite type over $k$. We constructed a triangulated
category $D^{\mr{b}}_{\mr{hol}}(\st{X}/\overline{\mb{Q}}_{p,F})$  over
$\overline{\mb{Q}}_p$ with a natural t-structure $\H$ in Definition
\ref{dfnofcat}. When $\st{X}$ is a smooth separated scheme over $k$,
$\mr{Isoc}^\dag(\st{X}/\overline{\mb{Q}}_{p,F})$ is fully faithfully
embedded into $D^{\mr{b}}_{\mr{hol}}(\st{X}/\overline{\mb{Q}}_{p,F})$ by
\ref{relbetourrig}. There is another t-structure on
$D^{\mr{b}}_{\mr{hol}}(\st{X}/\overline{\mb{Q}}_{p,F})$: the
constructible t-structure $\cH$ defined in Definition
\ref{Artinstackcdctstr}. Philosophically, $\H$ corresponds
to the perverse t-structure in the $\ell$-adic setting, and $\cH$
corresponds to the standard (constructible) t-structure.
In this section, we mostly use $\cH$
since its heart, denoted by $\mr{Con}(\st{X}/\overline{\mb{Q}}_{p,F})$,
contains $\mr{Isoc}^\dag(\st{X}/\overline{\mb{Q}}_{p,F})$ when $\st{X}$
is a smooth separated scheme. Objects of $\mr{Con}$ are called
{\em constructible objects}.
Over the category of ``compactifiable admissible stacks (c-admissible
stacks)'' , we have six functor formalism:
$D^{\mr{b}}_{\mr{hol}}(\st{X}/\overline{\mb{Q}}_{p,F})$ is endowed with
tensor and dual functors. Given a morphism
$f\colon\st{X}\rightarrow\st{Y}$, we have
\begin{equation*}
 f_+,f_!\colon D^{\mr{b}}_{\mr{hol}}(\st{X}/\overline{\mb{Q}}_{p,F})
  \rightarrow
  D^{\mr{b}}_{\mr{hol}}(\st{Y}/\overline{\mb{Q}}_{p,F}),\quad
  f^+,f^!\colon D^{\mr{b}}_{\mr{hol}}(\st{Y}/\overline{\mb{Q}}_{p,F})
  \rightarrow
  D^{\mr{b}}_{\mr{hol}}(\st{X}/\overline{\mb{Q}}_{p,F}),
\end{equation*}
and satisfy standard properties. Here, $f_+$ and $f^+$ are analogues of
$f_*$ and $f^*$ in the $\ell$-adic theory, and we adopted these
notations in order to follow the tradition of the theory of
$\ms{D}$-modules. We do not need this far in the statement of Langlands
correspondence, but these techniques are required in the proof.

\subsection{Langlands correspondence}
The aim of this subsection is to state the main theorem of this paper,
namely the Langlands correspondence, and overview the strategy of
the proof.

\label{langcorresstate}
\subsubsection{}
\label{situationfix}
First of all, let us fix the basis (cf.\ \ref{5tuplesover},
\ref{algcloscoefftheory}).
We assume $k$ to be a finite field with $q=p^s$ elements.
We fix an algebraic closure $\overline{\mb{Q}}_p$ of $K$, and denote by
$\overline{k}$ the residue field of $\overline{\mb{Q}}_p$ which is
algebraically closed as well.
We define an arithmetic base tuple
$\mf{T}_k:=(k,R:=W(k),K:=\mr{Frac}(R),
\overline{\mb{Q}}_p,s,\sigma:=\mr{id})$.
Likewise, for a finite extension $k'$ of $k$ in $\overline{k}$, we put
$\mf{T}_{k'}:=(k',W(k'),\mr{Frac}(R'),
\overline{\mb{Q}}_p,[k':k]\cdot s,\mr{id})$.
With these data, we may consider the
$\overline{\mb{Q}}_p$-coefficient
cohomology theory (cf.\ \ref{algcloscoefftheory}), which we mainly use.
For any finite extension $k'$ of $k$, the category
$\mr{Isoc}^\dag(k'/\mf{T}_{k'})$ is equivalent to 
the category of finite dimensional $\overline{\mb{Q}}_p$-vector spaces
$V$ endowed with isomorphism $\mr{id}^*(V)\xrightarrow{\sim}V$ (cf.\
Definition \ref{defofcatwithcoeff}).
By identifying $\mr{id}^*(V)$ with $V$, we view the isomorphism as an
automorphism of $V$.

Let $X$ be a smooth scheme over $k$, and let $i_x\colon
x\hookrightarrow X$ be a closed point of $X$. Then
$\mf{T}_{k(x)}=(k(x),W(k(x)),K_x,\overline{\mb{Q}}_p,s',\mr{id})$ where
$s':=[k(x):k]\cdot s$.
Let us define the ``linearized Frobenius automorphism at $x$''.
Choose a geometric point $\overline{x}\in X(\overline{k})$ lying above
$x$. This defines an embedding $K_x\hookrightarrow\overline{\mb{Q}}_p$,
and we have the following functor
\begin{equation*}
 \iota_{\overline{x}}\colon
 \mr{Isoc}^\dag(X/\mf{T}_k)\xrightarrow{i_x^+}
  \mr{Isoc}^\dag(k(x)/\mf{T}_k)\cong
  \mr{Isoc}^\dag(k(x)/\mf{T}_{k(x)})
\end{equation*}
where the equivalence follows by Corollary \ref{behaviorofbasech} using
the embedding.
For $\mc{E}\in\mr{Isoc}^\dag(X/\mf{T}_k)$, the equipped
automorphism on $\iota_{\overline{x}}(\mc{E})$ is called the
{\em linearized geometric Frobenius automorphism at $x$ of $\mc{E}$}.
The inverse of the linearized geometric Frobenius automorphism is
denoted by $\mr{Frob}_x$ and is simply called the
{\em Frobenius automorphism at $x$}.
The multiset of eigenvalues of $\mr{Frob}_x$ acting on
$\iota_{\overline{x}}(\mc{E})$ depends only on the choice of $x$ and not
on $\overline{x}$. By abuse of language, we call this multiset the {\em
set of Frobenius eigenvalues at $x$}.

\begin{rem*}
 We defined $\mr{Frob}_x$ so that the notation is compatible with that
 of Lafforgue. In the $\ell$-adic theory, the corresponding automorphism
 is sometimes called the arithmetic Frobenius.
\end{rem*}

\begin{thm}[Langlands correspondence for isocrystals]
 \label{main}
 We fix an isomorphism $\overline{\mb{Q}}_p\cong\mb{C}$.
 Let $X$ be a geometrically connected proper smooth curve over
 $k$. Denote by $F$ the function field of $X$, and let
 $\mb{A}_{F}$ be the ring of ad\`{e}les. For an integer $r\geq1$,
 consider the following two sets:
 \begin{center}
  \begin{tabular}{lp{30em}}
   $\mc{I}_r$:& The set of isomorphism classes of irreducible
       isocrystals of rank $r$ in
       $2\text{-}\indlim\,\mr{Isoc}^\dag(U/\mf{T}_{k})$, where the
       limit runs over open subschemes $U\subset X$, such that the
       determinant is of finite order.\\
   $\mc{A}_r$:& The set of isomorphism classes of cuspidal automorphic
       representations $\pi$ of $\mr{GL}_r(\mb{A}_{F})$ such that
       the order of the central character of $\pi$ is finite.\\
  \end{tabular}
 \end{center}
 \begin{enumerate}
  \item\label{maincorres}
       There exist maps
       \begin{equation*}
	\mc{E}_\bullet\colon\mc{A}_r\rightleftarrows
	 \mc{I}_r\colon \pi_{\bullet}
       \end{equation*}
       with which $\mc{A}_r$ and $\mc{I}_r$ correspond in the sense of
       Langlands:
       for $\pi\in\mc{A}_r$ (resp.\ $\mc{E}\in\mc{I}_r$),
       the sets of unramified places of $\pi$ (resp.\ $\mc{E}$) and
       $\mc{E}_\pi$ (resp.\ $\pi_{\mc{E}}$) coincide, which we denote
       by $U$, and for any $x\in |U|$, the set of Frobenius
       eigenvalues of $\mc{E}_\pi$ (resp.\ $\mc{E}$) at $x$ and that
       of Hecke eigenvalues of $\pi$ (resp.\ $\pi_{\mc{E}}$) at
       $x$ coincide.

  \item\label{epsilonlocalL}
       Assume that $\pi^{(\prime)}\in\mc{A}_{r^{(\prime)}}$ and
       $\mc{E}^{(\prime)}\in\mc{I}_{r^{(\prime)}}$ correspond in the
       sense of Langlands. Then the local $L$-functions and local
       $\varepsilon$-factors of pairs $(\pi,\pi')$ and
       $(\mc{E},\mc{E}')$ coincide for any point $x\in |X|$.
       (cf.\ \cite{ALang} and \cite[VI.9 (ii)]{Laf})
 \end{enumerate}
\end{thm}

\begin{rem*}
 The correspondence is unique if it exists.
\end{rem*}

\subsubsection{}
\label{thmdeligneprinciple}
For a proof of the theorem, we follow the program of Drinfeld and
Lafforgue. We shortly recall the outline of the proof to introduce some
notations we use in the next subsection. The idea is explained
in detail and clearly in the introduction of \cite{Laf}, so we
encourage the reader who are not familiar with Lafforgue's proof to read
through it before entering our proof.

In \cite{ALang}, with the help of the product formula proven in
\cite{AM}, we have the following theorem, which is nothing but the
$p$-adic version of ``{\it principe de r\'{e}currence}'' by Deligne:

\begin{thm*}[{\cite[\S5]{ALang}}]
 Let $n$ be a positive integer, and assume Theorem \ref{main} is known
 for $r,r'\leq n$, then we have the map
 $\mc{I}_{n+1}\rightarrow\mc{A}_{n+1}$ in the sense of Langlands such
 that the corresponding cuspidal representation is unramified at the
 places where the isocrystal is. Moreover, if we have a map
 $\mc{A}_{n+1}\rightarrow\mc{I}_{n+1}$ in the sense of Langlands such
 that the corresponding isocrystal is unramified at the places where the
 cuspidal representation is, then Theorem \ref{main} holds for $r',r\leq
 n+1$. In other words, \eqref{epsilonlocalL} of the theorem holds
 automatically once we prove \eqref{maincorres}.
\end{thm*}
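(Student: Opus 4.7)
The plan is to adapt Deligne's \emph{principe de récurrence} from the $\ell$-adic to the $p$-adic setting. The essential new analytic input is the product formula for $p$-adic epsilon factors proven in \cite{AM}, which plays the role of Laumon's product formula in the $\ell$-adic treatment of Lafforgue. The formal manipulation of $L$- and $\varepsilon$-factors then parallels \cite[VI]{Laf}, and the final automorphic input is the Cogdell--Piatetski-Shapiro converse theorem for $\mr{GL}_{r_0}$ over the function field $F$.

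For the forward direction $\mc{I}_{r_0}\to\mc{A}_{r_0}$: fix an irreducible $\mc{E}\in\mc{I}_{r_0}$ with finite-order determinant, defined on an open dense $U\subset X$. For each irreducible $\mc{E}'\in\mc{I}_{r'}$ with $r'<r_0$, the inductive hypothesis attaches a cuspidal $\pi'\in\mc{A}_{r'}$ whose unramified Hecke eigenvalues match the Frobenius eigenvalues of $\mc{E}'$, with equality of local $L$- and $\varepsilon$-factors at every place by part (\ref{epsilonlocalL}). The Grothendieck-style Lefschetz trace formula for rigid cohomology (via the interpretation recalled in \ref{relbetourrig}) expresses the global $L$-function $L(\mc{E}\otimes\mc{E}',t)$ as a rational function; the product formula of \cite{AM} controls its functional equation, writing the global $\varepsilon$-factor as a product of local ones computed on the crystalline side. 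Completing these $L$-functions at the finitely many bad places by the standard local factors, I would verify that the resulting family $\{L(\mc{E}\otimes\mc{E}',t)\}_{\mc{E}'}$ satisfies exactly the functional equations required by the converse theorem, producing a cuspidal $\pi_{\mc{E}}\in\mc{A}_{r_0}$ matching $\mc{E}$ on unramified data. Irreducibility of $\mc{E}$ forces cuspidality of $\pi_{\mc{E}}$ via Jacquet--Shalika non-vanishing.

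For the reverse direction, given the hypothetical $\mc{A}_{r_0}\to\mc{I}_{r_0}$ matching unramified factors, one combines it with the forward map above. Strong multiplicity one for $\mr{GL}_{r_0}$ on the automorphic side, together with the $p$-adic analogue of Chebotarev density for irreducible overconvergent $F$-isocrystals (they are determined up to isomorphism by the Frobenius characteristic polynomials at a density-one set of closed points), forces the two maps to be mutually inverse bijections. To upgrade this to the equality of local $L$-factors and $\varepsilon$-factors at ramified places claimed in part (\ref{epsilonlocalL}) of Theorem \ref{main} for $r,r'\leq r_0$, I would divide the global functional equations established above, for carefully chosen twists $\mc{E}\otimes\mc{E}'$ whose ramification is concentrated at a prescribed place, and apply a Henniart-style local uniqueness of $\varepsilon$-factors.

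The main obstacle is the careful matching, at bad places, of the ad hoc local factors that feed into the converse theorem with the ``honest'' local factors of the putative $\pi_{\mc{E}}$: the converse theorem applies only when these coincide. This is propagated by induction on $r,r'<r_0$: applying the full statement of Theorem \ref{main} to the twisting representations $\pi'$ of smaller rank guarantees that the local constants appearing in the product formula are exactly those attached to $\pi'$, so the functional equations have the precise shape demanded by Cogdell--Piatetski-Shapiro and no new local input is required at rank $r_0$.
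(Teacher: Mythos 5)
The paper itself gives no proof of this theorem: it is cited verbatim from \cite[\S5]{ALang}, whose argument is Deligne's \emph{principe de r\'{e}currence} transplanted to the $p$-adic setting via the product formula of \cite{AM} and the converse theorem. Your sketch reproduces exactly that strategy — product formula controlling the functional equation of twisted $L$-functions, converse theorem producing $\pi_{\mc{E}}$, strong multiplicity one plus \v{C}ebotarev to pin down the correspondence, and an inductive matching of local factors at bad places — so it is the same approach as the cited reference.
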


\subsubsection{}
\label{tablechoutsatis}
Thanks to the theorem above, our task is only to construct a map
$\mc{A}_r\rightarrow\mc{I}_r$ such that the corresponding isocrystal is
unramified at the places where the cuspidal representation
is. A rough idea is to realize this as the relative
cohomologies of moduli spaces of ``shtukas'' {\em \'{a} la} Drinfeld.
Even though the moduli spaces we use here are the same as that of
Lafforgue, we take $p$-adic cohomologies that we have developed in the
preceding sections instead of $\ell$-adic cohomologies to carry this
out.

In this paper, we use the following various types of moduli spaces of
shtukas. We remind that the notation is the same as that of Lafforgue.
Let $r$ be a positive integer. Let $N$ be a {\em level}, {\it
i.e.}\ a closed subscheme $N=\mr{Spec}(\mc{O}_N)\hookrightarrow X$
which is not equal to $X$, $p\colon[0,r]\rightarrow\mb{R}$ be a convex
polygon, and $a\in\mb{A}^{\times}_{F}$ of degree $1$.
Given these data, we have c-admissible stacks
(cf.\ Definition \ref{defcompactcadmmorph})
over the surface $(X-N)\times(X-N)$ as follows:
\begin{center}
 \begin{threeparttable}
 \begin{tabular}{|lc||c|c|c|c|}
  \hline
  &Moduli space&smooth?&proper?&correspondence&Reference\\
  \hline\hline
  $\ccirc{1}$&
      \parbox[c][0.9cm][c]{0cm}{}
      $\Ch^{r,\overline{p}\leq p}_N/a^{\mb{Z}}$&\Circle&\Cross&
		  \TriangleUp\tnote{1}&
		      \parbox[c]{8em}{\cite[right after Prop I.3]{Laf}}
		      \\\hline
  $\ccirc{2}$&
      \parbox[c][0.9cm][c]{0cm}{}
      $\Chtprop{r,\overline{p}\leq p}{N}/a^{\mb{Z}}$&
	  \Cross&\Circle&$-$&
		      [{\it ibid.}, Def III.8]
		      \\\hline
  $\ccirc{3}$&
      \parbox[c][0.9cm][c]{0cm}{}
      $\Chtpr{r,\overline{p}\leq p}{N}/a^{\mb{Z}}$&
	  \Circle&\Cross&\TriangleUp\tnote{2}&
		      \parbox[c]{8em}{[{\it ibid.}, Cor III.14, Thm
		      V.14]}
		      \\\hline
 \end{tabular}
  \begin{tablenotes}[flushleft]
   \footnotesize
   \item[1] We have the action of Hecke algebra
   only after taking the inductive limit over the convex polygon $p$.

   \item[2] We have the action of Hecke algebra for each element, but the
   action may not be compatible with the product structure.
  \end{tablenotes}
 \end{threeparttable}
\end{center}
In this table, the second column (resp.\ the third column) refers to the
smoothness (resp.\ properness) over $(X-N)\times(X-N)$ of the
corresponding moduli spaces in the first column, and definitions
and proofs of the properties listed here can be found in
``Reference''. These stacks are c-admissible by \cite[V.1]{Laf}, or more
precisely, in
the first line of its proof, it is said that these stacks are
quasi-projective over $\Chtprop{r,d,\overline{p}\leq p}{}$, and the
last stack is serene (cf.\ \cite[Appendix A]{Laf}) which is proper over
$X\times X$.
The components of these spaces are indexed by integers $1\leq d\leq r$
called the {\em degree}. The component corresponding to $d$ is denoted
by $\Ch^{r,d,\overline{p}\leq p}_N$, $\Chtprop{r,d,\overline{p}\leq
p}{N}$, $\Chtpr{r,d,\overline{p}\leq p}{N}$.

\subsubsection{}
Let $f\colon\st{X}\rightarrow(X-N)\times(X-N)$ be one of the three
moduli spaces of shtukas. Then
$\mc{H}_{\mr{c}}^*:=f_!\overline{\mb{Q}}_{p,\st{X}}$ contains the
isocrystals which correspond to cuspidal representations in the set
$\{\pi\}^r_N$ (cf.\ \ref{defofsomenotepi}). However, it also contains a
lot of ``junk'' which have already appeared in the Langlands
correspondence of lower ranks and we need to throw these away.
The junk is called the ``{\em $r$-negligible part}'',
and the part we need for the correspondence is called the
``{\em essential part}''.
We first need to show that the essential part is concentrated at a
certain degree of $\mc{H}_{\mr{c}}^*$.
For this, we need to use the purity of intersection cohomology, and we
need the compact space $\ccirc{2}$. Still, the essential part is
a mixture of isocrystals corresponding to $\{\pi\}^r_N$, and we need to
extract the particular isocrystal which corresponds to a given cuspidal
representation $\pi\in\{\pi\}^r_N$.
For this, we need to define an action of the Hecke algebra
$\mc{H}^r_N$. We have ring homomorphism from $\mc{H}^r_N$ to the ring of
correspondences on the moduli space $\ccirc{1}$ if we pass to the limit
of $p$. Since we are passing to the limit to define the action,
the resulting stack is not of finite type anymore. For the calculation
of the trace of the action of correspondences, we use $\ccirc{3}$.
We note that even though we have the correspondences associated to
elements of the Hecke algebra on $\ccirc{3}$, this map might not be a
homomorphism of rings.
Finally, we use the $\ell$-independence result to calculate the trace,
and extract exactly the information we need.

\subsection{Proof of the theorem}
\subsubsection{}
In this subsection, the base tuple $\mf{T}_k$ is fixed as
\ref{situationfix}, and we also fix an isomorphism
$\iota\colon\overline{\mb{Q}}_p\cong\mb{C}$ as in Theorem \ref{main}.
Let $Y$ be a smooth scheme of finite type and
geometrically connected over $k$.
We usually omit ``$/\mf{T}_k$'' ({\it e.g.}\
$\mr{Isoc}^\dag(Y)$ instead of
$\mr{Isoc}^\dag(Y/\mf{T}_k)$).
We denote the category
$\mr{Isoc}^\dag(Y)$ of
overconvergent $\overline{\mb{Q}}_p$-isocrystals with Frobenius
structure by $\mc{I}(Y)$ to shorten the notations.
We identify $\mc{I}(Y)=\mr{Isoc}^\dag(Y)$ and
$\mr{Sm}(Y)\subset D^{\mr{b}}_{\mr{hol}}(Y)$ via
$\widetilde{\mr{sp}}_+$ defined in \ref{relbetourrig}.
Because of this identification, we often say $\mc{E}\in\mr{Con}(Y)$ is
{\em smooth} if it comes from $\mc{I}(Y)$.
Let $p\colon\st{X}\rightarrow\mr{Spec}(k)$ be the structural morphism of
a c-admissible stack. For $\mc{E}\in\mr{Isoc}^\dag(\st{X})$ or more
generally an object of $D^{\mr{b}}_{\mr{hol}}(\st{X})$, we put
$H^*(\st{X},\mc{E}):=\H^*p_+(\mc{E})$ and
$H^*_{\mr{c}}(\st{X},\mc{E}):=\H^*p_!(\mc{E})$ and regard these as
$\overline{\mb{Q}}_p$-vector spaces with automorphism.
We abbreviate $\iota$-pure (resp.\ $\iota$-mixed, $\iota$-weight, {\it
etc.})\ simply by pure (resp.\ mixed, weight, {\it etc.}).

When we say $\ell$-adic sheaf, it refers to $\ell$-adic Weil sheaf (cf.\
\cite[1.1.10]{De}). The category of smooth Weil
sheaves is denoted by $\mc{W}_\ell(Y)$.
For a scheme $X$ over $k$, we denote by $\mr{Frob}_X\colon X\rightarrow
X$ the absolute Frobenius endomorphism; $f\in\mc{O}_X$ is sent to
$f^q$.
For an abelian category $\mc{A}$, we denote by $\Gr(\mc{A})$
the Grothendieck group of $\mc{A}$, and
$\mb{Q}\Gr(\mc{A}):=\Gr(\mc{A})\otimes\mb{Q}$. For an object
$X\in\mc{A}$ of finite length, we denote by $X^{\mr{ss}}$ the
semi-simplification of $X$, namely the direct sum of constituents of
$X$.

\subsubsection{}
\label{Lfuncreveiw}
Let $X$ be a smooth scheme of finite type over $k$, and $\mc{E}$ be in
$\mc{I}(X)$. Take a closed point $x\in |X|$.
We take a geometric point $\overline{x}\in X(\overline{k})$ which lies
above $x$, and recall the functor $\iota_{\overline{x}}$ defined in
\ref{situationfix}. The {\em local $L$-function at $x$} is defined to be
\begin{equation*}
 L_x(\mc{E},Z):=\det\bigl(1-Z^{\deg(x)}\mr{Frob}^{-1}_x;
  \iota_{\overline{x}}(\mc{E})
  \bigr)^{-1}
\end{equation*}
in $\overline{\mb{Q}}_p\dd{Z}$, which does not depend on the choice of
$\overline{x}$.
Using the fixed isomorphism $\overline{\mb{Q}}_p\cong\mb{C}$, we usually
consider this series as a series in $\mb{C}\dd{Z}$.
The {\em global $L$-function} is defined as
\begin{equation*}
 L_X(\mc{E},Z):=\prod_{x\in|X|}L_x(\mc{E},Z).
\end{equation*}
Analogous to Grothendieck's formula, this $L$-function has the following
cohomological interpretation:
\begin{equation}
 \label{LTFforpcoh}
  L_X(\mc{E},Z)=\prod_{\nu=0}^{2\dim(X)}
  \det\bigl(1-Z\cdot\mr{Frob}_X;
  H^\nu_{\mr{c}}(X,\mc{E})
  \bigr)^{(-1)^{\nu+1}},
\end{equation}
which is an identity of formal power series (cf.\ Corollary
\ref{cohintGroth}).
We refer to \S\ref{LFTforsch} for further details.

\subsubsection{}
\label{conseqweighidenti}
We use the theory of weights. We refer to
\ref{theoryofweightsetup}--\ref{theoweiarst},
\ref{theoryofweightadmiss} for more details.
Let $t\in\mb{C}$. Using $\iota$, we may consider $q^t$ as an element
in $\overline{\mb{Q}}_p$. We have the automorphism of
$\overline{\mb{Q}}_p$, considered as a $\overline{\mb{Q}}_p$-vector
space, sending $1$ to $q^{-t}$, which defines an object
in $\mc{I}(\mr{Spec}(k))$ denoted by
$\overline{\mb{Q}}_p(t)$. This is of weight $-2\,\mr{Re}(t)$.
When $t$ is an integer, the notation is compatible with Tate twists
(cf.\ \ref{tatetwistdfn}).
The following standard consequences of the theory of weights are
important tools in the proof of Langlands correspondence:

\begin{prop*}[{\cite[VI.3]{Laf}}]
 Let $Y$ be a smooth geometrically connected scheme of finite type
 over $k$. Let $\mc{E}\in\mc{I}(Y)$ be mixed of weight
 $\leq n$, and $\mc{E}'$ be an irreducible object in $\mc{I}(Y)$ pure of
 weight $m$.

 (i) The rational function $L_Y(\mc{E}\otimes\mc{E}'^\vee,Z)$ does not
 have zeros in the region $|Z|<q^{\frac{m-n}{2}-\dim(Y)+\frac{1}{2}}$.

 (ii) Let $t\in\mb{C}$ such that $\mr{Re}(t)=(m-n)/2$, and assume that
 the multiplicity of $\mc{E}'(t)$ in the semi-simplification of $\mc{E}$
 is $\mu\geq0$. Then the $L$-function appeared in (i) has a pole of
 order $\mu$ at $Z=q^{t-\dim(Y)}$. Moreover, it does not have any pole
 in $|Z|<q^{\frac{m-n}{2}-\dim(Y)}$.
\end{prop*}
\begin{proof}
 Use Theorem \ref{theoryofweightadmiss} and (\ref{LTFforpcoh}) to show
 (i). We have
 \begin{align*}
  \mr{Hom}_{\mc{I}(Y)}(\mc{E},\mc{E}'(t))&\cong
  \bigl(H^0\bigl(Y,\mc{E}^\vee\otimes\mc{E}'(t)
  \bigr)\bigr)^{\mr{Frob}_Y}\\
   &\cong
   \bigl(H_{\mr{c}}^{2\dim(Y)}\bigl(Y,\mc{E}\otimes
   (\mc{E}'(t))^\vee\bigr)(\dim(Y))\bigr)^{\mr{Frob}_Y},
 \end{align*} 
 where $(-)^{\mr{Frob}_Y}$ denotes the fixed part by the action of
 $\mr{Frob}_Y$, the first isomorphism is by \ref{basicprophomanddfn},
 and the second by Theorem \ref{poincaredualrela}.
 If $\mc{E}$ is semi-simple, then the action of $\mr{Frob}_Y$ on
 $H^0\bigl(Y,\mc{E}^\vee\otimes\mc{E}'(t)\bigr)$ is semi-simple, thus
 the fixed part of $\mr{Frob}_Y$ is equal to the generalized eigenspace
 of $\mr{Frob}_Y$ with eigenvalue $1$, and we get (ii).
\end{proof}

We usually use this proposition in the following form:

\begin{cor*}
 Let $Y$ be as in the proposition.
 Let $\mc{E}$ be in $\Gr(\mc{I}(Y))$, and $\mc{E}'$ be an
 irreducible object in $\mc{I}(Y)$.
 Assume that any component of $\mc{E}$ is mixed of weight $\leq n$. For
 $t\in\mb{C}$ such that $\mr{Re}(t)=(m-n)/2$, the multiplicity of
 $\mc{E}'(-t)$ in $\mc{E}$ is exactly the order of pole of
 $L_Y(\mc{E}\otimes\mc{E}'^\vee,Z)$ at $Z=q^{t-\dim(Y)}$.
\end{cor*}

\begin{dfn}
 Let $Y$ be a smooth scheme of finite type and geometrically connected
 over $k$, and let $U\subset Y\times Y$ such that
 $(\mr{Frob}_Y\times\mr{id}_Y)(U)\subset U$.
 We denote by $\mb{Z}\mc{I}(U)$ the category of overconvergent
 $\overline{\mb{Q}}_p$-isocrystals on $U$ with Frobenius structure
 equipped with an isomorphism
 $(\mr{Frob}_Y\times\mr{id}_Y)^+\mc{E}\xrightarrow{\sim}\mc{E}$.
 Let $F$ be the function field of $Y$. We put
 \begin{equation*}
  \mc{I}(F):=2\text{-}\indlim_{U\subset Y}\mc{I}(U),\qquad
   \mb{Z}\mc{I}(F^2):=2\text{-}
   \indlim_{U\subset Y\times Y}\mb{Z}\mc{I}(U).
 \end{equation*}
\end{dfn}

\begin{rem*}
 Take a geometric point $\overline{y}\in Y(\overline{k})$.
 Using the notation of \ref{defofisocweilgroup}, $\mc{I}(Y)$ is
 equivalent to $\mr{Rep}_{\overline{\mb{Q}}_p}(\WI(Y,\overline{y}))$,
 the category of finite dimensional representations of the algebraic
 group $\WI(Y,\overline{y})$ over
 $\overline{\mb{Q}}_p$. Similarly, we have $\mb{Z}\mc{I}(U)\cong
 \mr{Rep}_{\overline{\mb{Q}}_p}(\mb{Z}\WI(U,\overline{y}))$ by
 \ref{surjectwisweilgroup}.
 We often ignore the base points of $\WI$ and $\mb{Z}\WI$.
\end{rem*}

\subsubsection{}
We preserve the notation, and $q', q''\colon Y\times Y\rightarrow Y$ be
the first and second projection respectively.

\begin{dfn*}[{\cite[VI.14]{Laf}}]
 Let $r\geq1$ be an integer.
 An object $\mc{E}\in\mc{I}(F^2)$
 (resp.\ element of $\mb{Q}\Gr\mc{I}(F^2)$)
 is said to be {\em $r$-negligible} if
 any of its subquotient (resp.\ any of its component)
 is a direct factor of an object of the form
 $q'^+\mc{E}'\otimes q''^+\mc{E}''$ where $\mc{E}'$ and $\mc{E}''$ are
 objects of rank $<r$ in $\mc{I}(F)$. It is said to be {\em essential}
 if all the subquotients are not $r$-negligible. A semi-simple
 $r$-negligible object of $\mc{I}(F^2)$ (resp.\ an $r$-negligible
 element of $\mb{Q}\Gr\mc{I}(F^2)$) is said to be {\em
 complete} if it is a direct sum (resp.\ sum) of objects of the form
 $q'^+\mc{E}'\otimes q''^+\mc{E}''$.
\end{dfn*}

\begin{lem}
 (i) Let $\mc{E}'$, $\mc{E}''$ be irreducible objects in
 $\mc{I}(F)$. Then $q'^+\mc{E}'\otimes q''^+\mc{E}''$ is irreducible as
 an object in $\mb{Z}\mc{I}(F^2)$.

 (ii) A semi-simple $r$-negligible object, or an $r$-negligible element
 of $\mb{Q}\Gr\mc{I}(F^2)$, $\mc{E}$ is
 complete if it is invariant under the action of
 $(\mr{Frob}_Y\times\mr{id}_Y)^+$, namely if there exists an isomorphism
 $(\mr{Frob}_Y\times\mr{id}_Y)^+(\mc{E})\cong\mc{E}$. In particular, for a
 semi-simple $r$-negligible object $\mc{E}$,
 $\bigoplus_{n=1}^{r^2!}(\mr{Frob}^n_Y\times\mr{id}_Y)^+(\mc{E})$ is
 complete.
\end{lem}
\begin{proof}
 Let us prove (i). We may assume $\mc{E}'$ and $\mc{E}''$ are defined on
 $U\subset Y$. By Lemma \ref{surjofisocpione}, it suffices to show
 that $q'^+\mc{E}'\otimes q''^+\mc{E}''$ is irreducible as an object in
 $\mb{Z}\mc{I}(U\times U)$. This follows by Lemma
 \ref{surjectwisweilgroup}.
 Let us check (ii). There exists $U\subset Y$ such that $\mc{E}$ is an
 isocrystal on $U\times U$. Since $\mc{E}$ is assumed
 negligible, there exists a complete $r$-negligible object
 $\widehat{\mc{E}}\in\mb{Z}\mc{I}(U\times U)$ such that
 $\mc{E}\subset\widehat{\mc{E}}$ in $\mc{I}(U\times U)$ by definition.
 Since $\mc{E}$ is invariant under the pull-back by
 $\mr{Frob}_Y\times\mr{id}$, we may assume that $\mc{E}$ is invariant
 under the equipped isomorphism
 $\alpha\colon(\mr{Frob}_Y\times\mr{id}_Y)^+
 (\widehat{\mc{E}})\cong
 \widehat{\mc{E}}$, by changing the inclusion if necessarily.
 Let $\rho_{\widehat{\mc{E}}}$ be the
 corresponding representation of $\mb{Z}\WI(U\times U)$,
 and $\rho_{\mc{E}}$ be the subrepresentation of
 $\rho_{\widehat{\mc{E}}}$ corresponding to $\mc{E}$ defined since
 $\mc{E}$ is invariant under the isomorphism $\alpha$.
 Let $K:=\mr{Ker}\bigl(\mb{Z}\WI(U\times U)\rightarrow
 \WI(U)\times\WI(U)\bigr)$. Then since $\widehat{\mc{E}}$ is assumed
 complete, $\rho_{\widehat{\mc{E}}}(K)=\mr{id}$. Thus,
 $\rho_{\mc{E}}(K)=\mr{id}$, which implies that $\rho_{\mc{E}}$ is the
 pull-back of a representation of $\WI(U)\times\WI(U)$, and the first
 claim follows.
 To check the last claim, we note that, for any
 irreducible objects $\mc{E}'$, $\mc{E}''$ in $\mc{I}(F)$ of rank $r'$
 and $r''$ respectively, $q'^+\mc{E}'\otimes q''^+\mc{E}''$ is
 semi-simple in $\mc{I}(F^2)$ by (i), and the number of constituents $N$
 of $q'^+\mc{E}'\otimes q''^+\mc{E}''$ is $\leq r'r''$. This
 implies that for any constituent $\mc{F}$ of $q'^+\mc{E}'\otimes
 q''^+\mc{E}''$ and any integer $k>0$,
 $\bigoplus_{n=1}^{kN}(\mr{Frob}^n_Y\times\mr{id}_Y)^+(\mc{F})$ is
 invariant by the action of $(\mr{Frob}_Y\times\mr{id}_Y)^+$. Since
 $N\leq r'r''\leq r^2$, $N$ divides $r^2!$, and the claim follows by the
 first part of (ii).
\end{proof}

\begin{rem*}
 In (ii) of the corollary, we may take $\bigoplus_{n=1}^{r!}$ as in
 \cite{Laf}, but for our purpose, $\bigoplus_{n=1}^{r^2!}$ is enough.
\end{rem*}

\subsubsection{}
\label{defofsomenotepi}
From now on, we use the notations of \S\ref{langcorresstate} freely.
In the following, we fix $a\in\mb{A}_F^{\times}$ of degree $1$, and a
level $N=\mr{Spec}(\mc{O}_N)\hookrightarrow X$. Let $p$ be a large
enough convex function.  For $\pi\in\mc{A}^r(F)$, we denote by
$\chi_\pi$ the central character of $\pi$. We define a set by
\begin{equation*}
 \{\pi\}^r_N:=\bigl\{\pi\in\mc{A}^r(F)\mid
  \chi_{\pi}(a)=1 \text{ and }
  \pi\cdot\mathbf{1}_N\neq0
  \bigr\}.
\end{equation*}
where $\mathbf{1}_N$ is the quotient of the characteristic function of
$K_N:=\mr{Ker}\bigl(\mr{GL}_r(\mb{A}_F)\rightarrow
\mr{GL}_r(\mc{O}_N)\bigr)$ by its volume. It suffices to construct
isocrystals corresponding to the cuspidal representations belonging to
$\{\pi\}^r_N$. We put $S_N:=(X-N)\times (X-N)$.

Let $q',q''\colon X\times X\rightarrow X$ be the first and second
projection respectively.
For a morphism of c-admissible stacks $f\colon\st{X}\rightarrow
S_{N}$, we denote the relative cohomology
$\cH^{\nu}f_!\overline{\mb{Q}}_{p,\st{X},F}$ by
$\mc{H}^\nu_{\mr{c}}(\st{X})$, where $\overline{\mb{Q}}_{p,\st{X},F}$
denotes the unit object in $\mr{Con}(\st{X})$. This is an object in
$\mr{Con}(S_N)$.
Assume $f$ is proper. There exists an open
dense substack $j\colon\st{U}\hookrightarrow\st{X}$ such that
$\overline{\mb{Q}}_{p,\st{U},F}$ is pure of weight $0$.
Then we denote by
$\IH^{\nu}(\st{X}):=\cH^{\nu}f_+j_{!+}
\overline{\mb{Q}}_{p,\st{U},F}$, which is pure of weight $\nu$.

We also use $\ell$-adic cohomologies. We denote by
$\mc{H}^\nu_{\mr{c}}(\st{X},\overline{\mb{Q}}_\ell):=
\H^\nu f_!\overline{\mb{Q}}_\ell$ and
$\IH^\nu(\st{X},\overline{\mb{Q}}_\ell):=\H^\nu
f_+j_{!+}\overline{\mb{Q}}_\ell$, where $\H^\nu$ denotes the standard
(constructible) t-structure.
In most cases in this paper, these $\ell$-sheaves are smooth,
namely an object of $\mc{W}_{\ell}(S_N)$ (cf.\ \cite[p.165]{Laf}).

\subsubsection{}
\label{starproof}
{\em We start the proof of the theorem from here}.
We prove slightly stronger statement than the theorem.
For an integer $n\geq1$, we call the following two statements
$(\mr{S})_n$:
\begin{enumerate}
 \item\label{prodfoumindhyp}
      Theorem \ref{main} is true for $r,r'\leq n$. As a consequence, we
      have $\mc{I}_{n+1}\rightarrow\mc{A}_{n+1}$ as well by Theorem
      \ref{thmdeligneprinciple}.

 \item\label{rneghyp}
      For $r'\leq n$, the constructible object
      $\mc{H}_{\mr{c}}^\nu(\Ch^{r',\overline{p}\leq
       p}_N/a^{\mb{Z}})$ is smooth and $(n+1)$-negligible for any $\nu$,
      level $N\colon\mr{Spec}(\mc{O}_N)\hookrightarrow X$,
      $a\in\mb{A}^{\times}_F$ such that $\deg(a)=1$, and convex polygon
      $p$ large enough with respect to $X$ and $N$.
\end{enumerate}
We know that $(\mr{S})_1$ holds.
Indeed, \eqref{prodfoumindhyp} is nothing but the class field theory
together with the theorem of Tsuzuki \cite[Thm 7.1.1]{T}. See
\cite[6.5]{ALang} for more details. Let us check
\eqref{rneghyp}. We know that $f\colon\Ch^{1,\overline{p}\leq p}_N
=\Ch^{1}_N/a^{\mb{Z}}\rightarrow S_N$
is an abelian covering and $f_!\overline{\mb{Q}}_\ell=\bigoplus
q'^*\chi'\otimes q''^*\chi''$ where $\chi'$ and $\chi''$ are smooth
sheaf on $X-N$ of rank $1$ by \cite[remark of VI.15]{Laf}. Using
\eqref{prodfoumindhyp}, let $\chi'_p$ and $\chi''_p$ be the
corresponding isocrystals of rank $1$ in $\mc{I}(X-N)$. By Proposition
\ref{basechforshrik} (or Theorem \ref{Gabberfujiwarateh} if one
prefers), $f_!\overline{\mb{Q}}_\ell$ and $f_!\overline{\mb{Q}}_p$ have
the same Frobenius eigenvalues. Thus, by \v{C}ebotarev density
\ref{Cebdensity}, we get
$f_!\overline{\mb{Q}}_p\cong\bigoplus q'^+\chi'_p\otimes q''^+\chi'_p$.

In the following, we fix an integer $r>1$, and assume that
$(\mr{S})_{r-1}$ holds. Our goal is to show $(\mr{S})_{r}$ under this
assumption, which is attained at the very end of this subsection.

\begin{dfn}
 \label{defofneglibasch}
 Let $k'$ be the extension of $k$ of degree $d_0\geq1$. For a smooth
 scheme $U$ over $k'$, we put
 $\mc{I}_{d_0}(U):=\mr{Isoc}^\dag(U/\mf{T}_{k'})$ (cf.\
 \ref{situationfix} for the notation of base tuple).
 We denote by $F_{d_0}$ and $F^2_{d_0}$
 the function fields of $X\otimes_kk'$ and
 $(X\times X)\otimes_{k}k'$, and we define
 $\mc{I}_{d_0}(F_{d_0})$, $\mc{I}_{d_0}(F^2_{d_0})$ accordingly.
 An irreducible object in $\mc{I}(F)$ is said to be {\em $r$-negligible}
 if it is of rank $<r$. An irreducible object $\mc{E}$ in
 $\mc{I}_{d_0}(F_{d_0})$ (resp.\ $\mc{I}_{d_0}(F^2_{d_0})$) is said to
 be {\em $r$-negligible} if there exists an irreducible $r$-negligible
 object $\mc{E}'$ in $\mc{I}(F)$ (resp.\ $\mc{I}(F^2)$) such that the
 pull-back $\mc{E}'\otimes k'$ contains $\mc{E}$. Sums of
 $r$-negligible objects are said to be $r$-negligible as well.
\end{dfn}

\begin{lem}[{\cite[VI.16]{Laf}}]
 \label{bachnegok}
 Let $d_0\geq1$ be an integer. An irreducible object $\mc{E}$ in
 $\mc{I}(F^2)$ (resp.\ $\mc{I}(F)$) is $r$-negligible if $\mc{E}\otimes
 k'$ contains an $r$-negligible object in $\mc{I}_{d_0}(F^2_{d_0})$
 (resp.\ $\mc{I}_{d_0}(F_{d_0})$).
\end{lem}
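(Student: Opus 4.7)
The strategy is to work with an irreducible constituent of the given $r$-negligible subobject and then use a finite étale descent argument along the base change $\pi\colon (X\times X)\otimes_k\mb{F}_{q^{d_0}}\to X\times X$. First, unwinding Definition~\ref{defofneglibasch}, I may assume without loss of generality that the $r$-negligible subobject inside $\mc{E}\otimes\mb{F}_{q^{d_0}}$ is an irreducible $\mc{G}\in\mc{I}(F^2_{d_0})$. By definition, there then exists an irreducible $r$-negligible object $\mc{E}_0\in\mc{I}(F^2)$ together with an embedding $\mc{G}\hookrightarrow\pi^+\mc{E}_0$. Combined with the given inclusion $\mc{G}\hookrightarrow\pi^+\mc{E}$, the goal becomes to deduce that $\mc{E}$ itself is $r$-negligible.

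Since $\pi$ is finite étale, the functors $\pi^+$ and $\pi_+$ form an adjoint pair on $\mc{I}(F^2)\rightleftarrows\mc{I}(F^2_{d_0})$. Using the projection formula together with the fact that $\pi$ is the base change of $\mathrm{Spec}(\mb{F}_{q^{d_0}})\to\mathrm{Spec}(\mb{F}_q)$, one obtains
\begin{equation*}
 \pi_+\pi^+\mc{F}\cong \mc{F}\otimes\pi_+\overline{\mb{Q}}_{p,X\times X}\cong \bigoplus_{\chi}(\mc{F}\otimes\chi)
\end{equation*}
for any $\mc{F}\in\mc{I}(F^2)$, where $\chi$ runs over the characters $\mathrm{Gal}(\mb{F}_{q^{d_0}}/\mb{F}_q)\to\overline{\mb{Q}}_p^{\times}$, viewed as rank-one $F$-isocrystals on $\mathrm{Spec}(\mb{F}_q)$ pulled back to $X\times X$. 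Applying this to $\mc{E}_0$, the embedding $\mc{G}\hookrightarrow\pi^+\mc{E}_0$ adjoints to an embedding $\pi_+\mc{G}\hookrightarrow\bigoplus_{\chi}(\mc{E}_0\otimes\chi)$, so $\pi_+\mc{G}$ is semisimple with every irreducible constituent of the form $\mc{E}_0\otimes\chi$. On the other hand, the embedding $\mc{G}\hookrightarrow\pi^+\mc{E}$ adjoints to a nonzero map $\pi_+\mc{G}\to\mc{E}$, which is surjective since $\mc{E}$ is irreducible, hence splits by semisimplicity of $\pi_+\mc{G}$. Therefore $\mc{E}\cong\mc{E}_0\otimes\chi$ for some character $\chi$.

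To conclude, it remains to absorb the twist $\chi$ into the $r$-negligibility presentation of $\mc{E}_0$. Being irreducible and $r$-negligible, $\mc{E}_0$ is a direct factor of some $q'^+\mc{E}_1\otimes q''^+\mc{E}_2$ with $\mathrm{rank}(\mc{E}_i)<r$. Since $\chi$ comes from $\mathrm{Spec}(\mb{F}_q)$, it is of the form $q'^+\widetilde{\chi}$ for the pullback $\widetilde{\chi}$ of $\chi$ to $X$, so
\begin{equation*}
 \mc{E}\cong\mc{E}_0\otimes q'^+\widetilde{\chi}\subset q'^+(\mc{E}_1\otimes\widetilde{\chi})\otimes q''^+\mc{E}_2,
\end{equation*}
and $\mathrm{rank}(\mc{E}_1\otimes\widetilde{\chi})=\mathrm{rank}(\mc{E}_1)<r$, exhibiting $\mc{E}$ as $r$-negligible.

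The principal technical point requiring care is the decomposition $\pi_+\overline{\mb{Q}}_{p,X\times X}\cong\bigoplus_\chi\chi$ and the compatibility of the $(\pi_+,\pi^+)$-adjunction with the Tannakian equivalence \eqref{equivcattannfrob} within the framework of overconvergent $F$-isocrystals. Both should be accessible via Corollary~\ref{behaviorofbasech}, which identifies the relevant base-change categories, together with the projection formula for finite étale morphisms provided by the six functor formalism built in the preceding sections.
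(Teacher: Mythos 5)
Your argument is correct and follows the standard étale-descent strategy that Lafforgue uses for [Laf, VI.16]; the paper itself gives no proof beyond citing Lafforgue, so your write-up is simply a legitimate filling-in of that citation. One small note: the identification of $\mc{E}\otimes\mb{F}_{q^{d_0}}$ with a $\pi^+$-pullback involves the change of base tuple (Corollary~\ref{behaviorofbasech}) rather than being literally the six-functor $\pi^+$, which you correctly flag at the end — it is worth keeping that distinction explicit since the two categories carry Frobenius structures for $q$ and $q^{d_0}$ respectively and the adjunction statements must be transported across that equivalence.
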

\begin{proof}
 Let $\varphi\in\mr{Gal}(k'/k)$ be a generator, and $\varphi^*\colon
 U\otimes k'\rightarrow U\otimes k'$ be the automorphism {\em
 over $k$} induced by $\varphi$ for some smooth scheme $U$ over
 $k$. Giving an object in $\mc{I}_{d_0}(U\otimes k')$ is equivalent to
 giving $\mc{F}\in\mc{I}(U\otimes k')$ with isomorphism
 $\varphi^*(\mc{F})\cong\mc{F}$. This observation implies that if
 $\mc{E},\mc{E}'\in\mc{I}(F^2)$ (resp.\ $\mc{I}(F)$)
 are irreducible objects such that
 $\mc{E}\otimes k'\cong\mc{E}'\otimes k'$, then there exists a character
 $\chi$ of $\mr{Gal}(k'/k)\cong\mb{Z}/d_0\mb{Z}$ (which can be seen as a
 rank $1$ object of $\mc{I}(\mr{Spec}(k))$) such that
 $\mc{E}\otimes\chi\cong\mc{E}'$, thus the lemma follows.
\end{proof}

\subsubsection{}
\label{keylemmLaff}
We need to show the following technical proposition.
In the statement and the proof, the algebraic stack $\st{C}^{r,N}$ and
its variants\footnote{In \cite{Laf}, Lafforgue uses script fonts ({\it
e.g.}\ $\mc{C}^{r,N}$).} are used. We remark that these stacks are used
only in this proposition and its corollary \ref{impcorrnegbound}.
In \cite[III 3a)]{Laf}, Lafforgue defined a morphism 
$\mr{Res}\colon\overline{\Ch^{r,d,\overline{p}\leq p}}
\times_{X\times X}S_N\rightarrow\st{C}^{r,N}$ between algebraic
stacks locally of finite type over $k$.
We remind that all the three stacks in the table of
\ref{tablechoutsatis} are defined over the source of $\mr{Res}$. The
open substack $\st{C}^{r,N}_{\emptyset}$ of $\st{C}^{r,N}$ is defined in
{\it ibid.}\ as well, and the complement
$\st{C}^{r,N}-\st{C}^{r,N}_{\emptyset}$ is called the {\em
boundary}. See Step \ref{recalldifflem} of the proof of
the following proposition for some review of these stacks.

\begin{prop*}[{\cite[VI.17]{Laf}}]
 Let $p$ be a convex polygon large enough with respect to $X$, $N$, and an
 integer $1\leq d\leq r$. Let $\st{C}$ be an algebraic stack
 representable and quasi-projective (cf.\ \cite[14.3.4]{LM}) over
 $\st{C}^{r,N}$, and consider the following cartesian diagram:
 \begin{equation*}
  \xymatrix@C=50pt{
   \st{X}
   \ar[r]^{\mr{Res}}\ar[d]_{g'}\ar@{}[rd]|\square&
   \st{C}\ar[d]^{g}\\
  \overline{\Ch^{r,d,\overline{p}\leq p}}
   \times_{X\times X}S_N\ar[r]_-{\mr{Res}}&
   \st{C}^{r,N}.
   }
 \end{equation*}
 We denote by $p_{\st{X}}\colon\st{X}\rightarrow S_N$ the projection.

 (i) Let $\ms{M}\in\mr{Con}(\st{C})$. Then
 $\cH^\nu p_{\st{X}!}\bigl(\mr{Res}^+\ms{M}\bigr)$ is smooth on $S_N$
 for any $\nu$.

 (ii) Assume, moreover, that $\ms{M}$ is supported on the boundary
 of $\st{C}$, namely
 $g^{-1}(\st{C}^{r,N}-\st{C}^{r,N}_\emptyset)$. Then
 $\cH^\nu p_{\st{X}!}\bigl(\mr{Res}^+\ms{M}\bigr)$ is
 $r$-negligible as an object in $\mc{I}(F^2)$ for any $\nu$.
\end{prop*}
\begin{proof}
 Before beginning the proof, let us remark that $\st{X}$ is
 c-admissible since it is quasi-projective over the proper admissible
 stack $\overline{\Ch^{r,d,\overline{p}\leq p}}$ thus we are able to use
 full six functor formalism. However, $\st{C}$ is locally of finite type
 but not even admissible. Currently, we only have partial formalism in
 such a situation, so we need slightly to be careful.
 Nevertheless, most of the cohomological functors are available
 on the level of the category $\mr{Con}(-)$.
 See the first part of \ref{miscstsubsec} for the details.
 The proof is divided into several steps.

\begin{step}[Proof of (i) and the first reduction of (ii)]
 \label{firststepdifflem}
 The morphism $(p_{\st{X}},\mr{Res})\colon\st{X}\rightarrow
 S_N\times\st{C}$ is known to be smooth by \cite[Prop III.7
 (ii)]{Laf}, which implies that $\mr{Res}$ is smooth as well since $S_N$
 is smooth.
 Using Lemma \ref{Artinstackpushfor}, we have
 \begin{equation*}
  \cH^\nu g'_!\mr{Res}^+(\ms{M})\cong
   \mr{Res}^+ g^\nu _!(\ms{M}).
 \end{equation*}
 Since $p_{\st{X}!}\cong\mr{pr}_{2!}\circ g'_!$, for $\ms{N}\in
 D^{\mr{b}}_{\mr{hol}}(\st{X})$, $\cH^\nu p_{\st{X}!}(\ms{N})$ can be
 expressed as extensions of subquotients of
 $\bigoplus_{i+j=\nu}\cH^i\mr{pr}_{2!}\,\cH^jg'_!(\ms{N})$. As a
 consequence, it suffices to show (i) and (ii) for
 $\st{C}=\st{C}^{r,N}$. In this case, (i) is an easy consequence of
 Lemma \ref{propsmoobcspeci}: the algebraic stack
 $\overline{\Ch^{r,d,\overline{p}\leq p}}\times_{X\times X}S_N$ is
 admissible by \cite[Prop V.I]{Laf}, proper over $S_N$ by \cite[Prop
 III.7 (i)]{Laf}, and we already recalled that $(p_{\st{X}},\mr{Res})$
 is smooth, so the lemma is applicable. We concentrate on proving (ii)
 from now on. In the following, we initialize the notation $\mf{C}$,
 and use it for other stacks.
\end{step}

\begin{step}[Induction hypothesis]
 \label{reductiondifflem}
 Let $c_0:=\mr{sup}\bigl\{\dim(I_x)\mid x\in|\mr{Res}(\st{X})|\bigr\}$
 where $I_x$ is the inertia algebraic group space of $\st{C}^{r,N}$ at
 $x$. By the quasi-compactness of $\st{X}$, we have $c_0>-\infty$, and
 the dimension of any locally closed substack of $\st{C}^{r,N}$ in
 the image of $\mr{Res}$ can be bounded below by $-c_0$.
 We use the induction on $k:=c_0+\dim(\mr{Supp}(\ms{M}))\,(\geq0)$
 (cf.\ \ref{isocsuppdef} for the definition of support).
 Assume that the proposition holds for
 $k=k_0\geq-1$. We will show the proposition for modules whose support
 is of dimension $k=k_0+1$. Now, we frequently use the the following
 reduction,
 which follows by using the localization exact sequence Lemma
 \ref{localtriangs} and the induction hypothesis:
 \begin{quote}
  ($\clubsuit$)\,
  Let $j\colon\mf{U}\subset\st{S}:=\mr{Supp}(\ms{M})$ be an open
  substack such that $\dim(\st{S}\setminus\mf{U})<\dim(\st{S})=k_0+1$.
  Let $\ms{M}'\in\mr{Con}(\st{S})$ such that $j^+\ms{M}\cong
  j^+\ms{M}'$. Then proving the proposition for $\ms{M}$ and $\ms{M}'$
  are equivalent.
 \end{quote}
\end{step}

\begin{step}[Recall of geometry of moduli spaces]
 \label{recalldifflem}
 Now, we put aside the cohomology theory for a while, and summarize
 the complicated geometry of moduli spaces in {\it ibid.}\ very
 briefly.
 Let $\st{A}^{r,1}:=\mb{A}^{r-1}$ the toric variety, and
 $\st{A}^{r,1}_{\emptyset}$ be the torus $\mb{G}_m^{r-1}$. Let
 $\underline{r}=(r_1,\dots,r_k)$ be a partition $r_1+\dots+r_k=r$. We
 define $\st{A}^{r,1}_{\underline{r}}$ to be the locally closed
 subscheme of $\st{A}^{r,1}$ consisting of points whose coordinates
 indexed by $r_1+\dots+r_e$ for $1\leq e<k$ are zero and invertible
 for other coordinates.
 These are orbits of $\st{A}^{r,1}_\emptyset$. See
 \cite[III 1a)]{Laf} for more information on these schemes.
 Now, in \cite[III 2a), 3a)]{Laf}, the sequence of algebraic stacks
 locally of finite type
 $\st{C}^{r,N}\subset\widetilde{\st{C}}^{r,N}\subset
 \overline{\st{C}}^{r,N}$ and the morphism
 $\overline{\st{C}}^{r,N}\rightarrow\st{A}^{r,1}/\st{A}^{r,1}_\emptyset$
 are defined. For a partition $\underline{r}$, the pull-back of
 $\st{A}^{r,1}_{\underline{r}}/\st{A}^{r,1}_\emptyset$ to the three
 stacks are denoted by
 $\st{C}^{r,N}_{\underline{r}}\subset\widetilde{\st{C}}^{r,N}
 _{\underline{r}}\subset\overline{\st{C}}^{r,N}_{\underline{r}}$, and
 similarly for $\st{A}^{r,1}_{\emptyset}/\st{A}^{r,1}_\emptyset$. These
 are stratifications of the boundary.

 Let $\mb{A}^N$ and $\mb{G}_m^N$ be schemes induced from $\mb{A}^1$ and
 $\mb{G}_m$ by the Weil restriction from $\mc{O}_N$ to $k$.
 We also use a variant ${}^{r}\overline{\st{C}}^{N}_\emptyset$, and both
 ${}^{r}\overline{\st{C}}^{N}_\emptyset$
 and $\overline{\st{C}}^{r,N}_\emptyset$ have natural morphisms to
 $(\mb{A}^N/\mb{G}_m^N)^2$ (cf.\ \cite[II 1a)]{Laf}).
 We have the finite surjective radicial morphism
 $\mr{sw}\colon
 {}^{r}\overline{\st{C}}^{N}_\emptyset\rightarrow
 \overline{\st{C}}^{r,N}_\emptyset$
 over the endomorphism $\mr{id}\times\mr{Frob}$ of
 $(\mb{A}^N/\mb{G}_m^N)^2$ sending
 $(\mc{F}\leftarrow\mc{F}'\rightarrow{}^{\tau}\mc{F})$ to
 $(\mc{F}'\rightarrow{}^{\tau}\mc{F}\leftarrow{}^{\tau}\mc{F}')$ using
 the notation of {\it ibid.}.

 We have the morphism
 $\overline{\Ch^{r,d,\overline{p}\leq p}}\rightarrow
 \overline{\st{C}}^{r,N}$, which factorizes through
 $\widetilde{\st{C}}^{r,N}$ (cf.\ \cite[III 3a)]{Laf}).
 The morphism $\mr{Res}$ in the statement of the proposition is induced
 by this. The pull-back of $\st{A}^{r,1}_{\underline{r}}$ is denoted by
 $\Ch^{r,d,\overline{p}\leq p}_{\underline{r}}$ following the notation
 of \cite[III 1c)]{Laf}.
\end{step}

\begin{step}[Geometric construction of Lafforgue]
 Consider the following commutative diagram of solid arrows which
 appears in \cite[right before III.6]{Laf}:
 \begin{gather*}
  \xymatrix{
   \Ch^{r,d,\overline{p}\leq p}_{\underline{r}}\ar[r]^<>(.5){\beta}
   \ar[d]&
   \widetilde{\st{C}}^{r,N}_{\underline{r}}\ar[d]_{\gamma}
   &\st{D}\ar@{_{(}.>}[l]\ar@{.>}[ld]^-{\gamma_{\st{D}}}\\
  \Ch^{\underline{r},d,\overline{p}\leq p}\ar[r]_<>(.5){}&
   \overline{\st{C}}^{\underline{r},N},&
   }\\
  \hspace{13em}
  {\scriptstyle (\overline{\st{C}}^{\underline{r},N}:=
  \overline{\st{C}}^{r_1,N}_\emptyset\times_{\mb{A}^N/\mb{G}_m^N}
  {}^{r_2}\overline{\st{C}}^{N}_\emptyset
  \times_{\mb{A}^N/\mb{G}_m^N}\dots\times_{\mb{A}^N/\mb{G}_m^N}
  {}^{r_k}\overline{\st{C}}^{N}_\emptyset)}
 \end{gather*}
 where $\gamma$ is defined in \cite[III 2b)]{Laf},
 $\Ch^{\underline{r},d,\overline{p}\leq p}$ is defined in
 \cite[Prop III.3]{Laf}. Lafforgue proved the following two claims in
 the proof of \cite[VI.17]{Laf}:
 \medskip

 \noindent
 (i) Assume that we are given a locally closed substack $\st{D}$ of
 $\mr{Im}(\beta)$ in $\widetilde{\st{C}}^{r,N}_{\underline{r}}$. Then
 there exists an open dense substack $\st{D}'$ of $\st{D}$
 such that $\gamma_{\st{D}'}$ can be written as the composition of the
 morphisms of the following three types;
 1.\ a gerb-like morphism whose structural group is flat fiber-wise
 geometrically connected, 2.\ a finite flat radicial
 morphism, 3.\ a finite \'{e}tale morphism.

 \noindent
 (ii) Consider the compositions
 \begin{equation}
  \label{composiimfinitept}
  \tag{$\star$}
  \st{C}^{r,N}_{\underline{r}}\xrightarrow{\gamma}
   \overline{\st{C}}^{\underline{r},N}
   \xrightarrow{\mr{pr}_1}\overline{\st{C}}^{r_1,N}
   _{\emptyset},\qquad
   \st{C}^{r,N}_{\underline{r}}\xrightarrow{\gamma}
   \overline{\st{C}}
   ^{\underline{r},N}
   \xrightarrow{\mr{pr}_k}
   {}^{r_k}\overline{\st{C}}^{N}
   _{\emptyset}
   \xrightarrow{\mr{sw}}
   \overline{\st{C}}^{r_k,N}_{\emptyset}.
 \end{equation}
 The images of these morphisms consist of finitely many points.

 A proof of (i) is written at the end of \cite[p.171]{Laf}, and that
 for (ii) is in the second paragraph of \cite[p.172]{Laf}. Let us add a
 little explanation on the proof of (i). By dimension counting,
 Lafforgue proves that the preimage of a point of
 $\mr{Im}(\gamma\circ\beta)$ by $\gamma$ consists of finitely many
 points. To conclude, use Lemma \ref{factorrepdimsu}.
\end{step}

\begin{step}[Cohomological reduction]
 Now, let us come back to the cohomologies.
 Let
 $j\colon\st{C}\hookrightarrow\mf{S}:=\mr{Supp}(\ms{M})\,(\subset\st{C}^{r,N})$
 be a dense open immersion.
 By ($\clubsuit$) of Step \ref{reductiondifflem}, we may replace
 $\ms{M}$ by $j_!j^+(\ms{M})$ (cf.\ see \ref{representpushfor} for the
 definition of functors $j_!$ and $j^+$ in this situation).
 By shrinking $\st{C}$ using ($\clubsuit$), we may assume $\st{C}$ is of
 dimension $k_0+1$ in the stratum $\st{C}^{r,N}_{\underline{r}}$ for
 some non-trivial partition $\underline{r}$ such that
 $\ms{M}':=j^+\ms{M}$ is smooth.
 Using (i) above and shrinking $\st{C}$ always using ($\clubsuit$),
 we may assume $\gamma':=\gamma_{\st{C}}$ is the composition of
 morphisms of the three types.
 We may change $\ms{M}'$ by a smooth object
 which contains $\ms{M}'$ as a direct factor, so by using Lemma
 \ref{diagconngeblike}, Corollary \ref{invariantclassflrdgr},
 and Lemma \ref{directfactorlemmaeasy} (ii), we may assume that there exists
 a smooth object $\ms{N}'$ on a locally closed substack of
 $\overline{\st{C}}^{\underline{r},N}$ such that
 $\ms{M}'=\gamma'^+(\ms{N}')$.
 By (ii), shrinking $\st{C}$ if needed, we may assume that the images of
 $\mf{C}$ by the two morphisms of (\ref{composiimfinitept}) consist of
 unique locally closed points
 $\widetilde{\st{B}}_\star\cong
 \left[\mr{Spec}(\mb{F}_{q_0})/\mr{Aut}(
 \widetilde{\st{B}}_\star)\right]\in
 \overline{\st{C}}^{r_{\star},N}_{\emptyset}$
 where $\star=1$ or $k$, and $\mb{F}_{q_0}$ is the field extension of
 $\mb{F}_q$ of degree $d_0$. We define the Galois coverings
 $\widetilde{\st{B}}'_{\star}$ of $\widetilde{\st{B}}_{\star}$ defined
 by the ``discrete part''
 $\mr{Aut}(\widetilde{\st{B}}_{\star})/
 (\mr{Aut}(\widetilde{\st{B}}_{\star}))^{\circ}$
 of $\mr{Aut}(\widetilde{\st{B}}_{\star})$.
 We denote by $\alpha\colon\st{C}'\rightarrow\st{C}$ the Galois
 covering induced by
 $\widetilde{\st{B}}'_1\times\widetilde{\st{B}}'_k\rightarrow
 \widetilde{\st{B}}_1\times\widetilde{\st{B}}_k$. We may replace
 $\ms{M}'$ by $\alpha_+\alpha^+\ms{M}'$.

 Now, consider the finite surjective radicial morphism
 \begin{equation}
  \label{morphstackreduclem}
  \tag{$\star\star$}
  \mr{id}\times\mr{sw}\times\dots\times\mr{sw}\colon
   \overline{\st{C}}^{\underline{r},N}\rightarrow
   \overline{\st{C}}^{r_1,N}_\emptyset\times_{\mb{A}^N/\mb{G}_m^N}
   \overline{\st{C}}^{r_2,N}_\emptyset
   \times_{\mb{A}^N/\mb{G}_m^N,\mr{Frob}}\dots
   \times_{\mb{A}^N/\mb{G}_m^N,\mr{Frob}}
   \overline{\st{C}}^{r_k,N}_\emptyset.
 \end{equation}
 We may and do identify the holonomic modules on
 these stacks by Lemma \ref{directfactorlemmaeasy}.
 By considering Lemma \ref{diagconngeblike}, we may assume that
 $\ms{N}'\cong\ms{M}_1\boxtimes_{\mb{A}^N/\mb{G}_m^N}
 \ms{M}''\boxtimes_{\mb{A}^N/\mb{G}_m^N,\mr{Frob}}\ms{M}_{k}$ (cf.\
 \ref{defpullbackconst} for the notation) where
 \begin{itemize}
  \item $\ms{M}_\star$ ($\star=1,k$) is the push-forward of
	the trivial smooth object on $\widetilde{\st{B}}'_\star$ by the
	finite \'{e}tale covering $\widetilde{\st{B}}'_i\rightarrow
	\widetilde{\st{B}}_i$.

  \item $\ms{M}''$ is a smooth object on a locally
	closed substack of
	$\overline{\st{C}}^{r_2,N}_\emptyset
	\times_{\mb{A}^N/\mb{G}_m^N,\mr{Frob}}\dots
	\times_{\mb{A}^N/\mb{G}_m^N,\mr{Frob}}
	\overline{\st{C}}^{r_{k-1},N}_\emptyset$.
 \end{itemize}
\end{step}

\begin{step}
 We denote by $\mr{pr}_i$ the $i$-th projection, and put
 \begin{align*}
  X^2&:=X\times X,\qquad
  \mr{Cht}^\star:=\mr{Cht}^{r_\star,d_\star,\overline{p}\leq
  p_\star},\\
  (\mr{Cht})'&:=\mr{Cht}^{r_2,d_2,\overline{p}\leq p_2}
  \times_{X,\mr{Frob}}\dots\times_{X,\mr{Frob}}
  \mr{Cht}^{r_{k-1},d_{k-1},\overline{p}\leq p_{k-1}},\\
  (\overline{\st{C}}^{\underline{r}',N}_{\emptyset})'&:=
  \overline{\st{C}}^{r_2,N}_\emptyset
  \times_{\mb{A}^N/\mb{G}_m^N,\mr{Frob}}\dots\times
  _{\mb{A}^N/\mb{G}_m^N,\mr{Frob}}\overline{\st{C}}
  ^{r_{k-1},N}_\emptyset.
 \end{align*}
 Consider the following commutative diagram:
 \begin{equation*}
  \xymatrix@C=15pt{
   \overline{\Ch^{r,d,\overline{p}\leq p}}
   \ar[d]_{\mr{Res}}\ar@{}[rd]|\square&
   \mr{Cht}^{r,d,\overline{p}\leq p}_{\underline{r}}
   \ar@{_{(}->}[l]\ar[r]^-{g}\ar[d]_{\beta}&
   \mr{Cht}^{1}\times_{X}(\mr{Cht})'\times_{X,\mr{Frob}}
   \mr{Cht}^{k}
   \ar@/^50pt/[]+R+<-3pt,-3pt>;[dd]+R+<3pt,3pt>
   |(0.8){q=q_1\times q''\times q_k}
   \ar[d]_{\beta'}\\
  \overline{\st{C}}^{r,N}&
   \widetilde{\st{C}}^{r,N}_{\underline{r}}\ar[r]^-{g'}\ar[l]&
   \overline{\st{C}}^{r_1,N}_\emptyset\times_{\mb{A}^N/\mb{G}_m^N}
   (\overline{\st{C}}^{\underline{r}',N}_{\emptyset})'
   \times_{\mb{A}^N/\mb{G}_m^N,\mr{Frob}}
   \overline{\st{C}}^{r_k,N}_\emptyset\\
  &
   X^2\times X^2&
   X^2\times_X X^2\times_{X,\mr{Frob}}X^2
   \ar[l]_-{r=\mr{pr}_1\times\mr{pr}_3}^-{\sim}\\
   }
 \end{equation*}
 Moreover, $g'$ is the composition of $\gamma$ and
 (\ref{morphstackreduclem}), and the morphism $g$ is the composition of
 a gerb-like morphism whose structural group is flat finite radicial and
 a representable universal homeomorphism defined in \cite[Cor
 III.4]{Laf}. Thus, we get $g_!g^+\cong\mr{id}$ by Lemma
 \ref{directfactorlemmaeasy} (i) and Corollary
 \ref{invariantclassflrdgr}.
 Put $p':=r\circ q\circ g$, and we can compute
 \begin{align}
  \tag{$\star\!\star\!\star$}
  \label{comptcohdiflem}
  \cH^\nu p'_!\beta^+\ms{M}&\cong
  \cH^\nu p'_!g^+\beta'^+(\ms{N}')\\
  \notag&\cong
  \cH^\nu r_!q_!\,\beta'^+\bigl(
  \ms{M}_1\boxtimes_{\mb{A}^N/\mb{G}_m^N}
  \ms{M}''\boxtimes_{\mb{A}^N/\mb{G}_m^N,\mr{Frob}}\ms{M}_{k}
  \bigr)\\
  \notag&\cong
  \cH^\nu r_!q_!\bigl(\mr{Res}^+(\ms{M}_1)\boxtimes_X
  \mr{Res}^+(\ms{M}'')\boxtimes_{X,\mr{Frob}}
  \mr{Res}^+\ms{M}_k\bigr)\\
  \notag&\cong
  \cH^\nu r_!\bigl(
  q_{1!}\mr{Res}^+(\ms{M}_1)\boxtimes_Xq''_{!}\mr{Res}^+(\ms{M}'')
  \boxtimes_{X,\mr{Frob}}q_{k!}\mr{Res}^+\ms{M}_k
  \bigr)\\
  \notag&\hspace{15em}
  (\because\mbox{by K\"{u}nneth \ref{Kunnethextprod}})\\
  \notag&\cong
  \cH^\nu\bigl(
  (q_{1!}\mr{Res}^+(\ms{M}_1)\boxtimes
  q_{k!}\mr{Res}^+\ms{M}_k)\otimes
  r'^+(q''_{!}\mr{Res}^+(\ms{M}''))
  \bigr),
 \end{align}
 where we identify $\ms{N}'$ and its zero extension, and similarly for
 other objects to simplify the notation. Moreover, $r'$ is the composition
 \begin{align*}
  (X\times X)\times
  (X\times X)\xrightarrow{\mr{pr}_2\times\mr{pr}_3}
  X\times X
  &\xleftarrow[\sim]{\mr{pr}_1\times\mr{pr}_4}
  X\times_X(X\times
  X)\times_{X,\mr{Frob}}X\\
  &\xrightarrow{\mr{pr}_2\times\mr{pr}_3}
  (X\times X).
 \end{align*}
\end{step}

\begin{step}
 \label{twoclaimsdifflem}
 Let $\star=1$ or $k$.
 Let $f_{\star}\colon\mr{Cht}^{r_\star,d_\star,\overline{p}\leq p_\star}
 \times_{\overline{\st{C}}^{r_\star,N}_\emptyset}
 \widetilde{\st{B}}'_\star\rightarrow X\times X$ be the canonical
 morphism. By definition \cite[II 1a)]{Laf}, the image of this morphism
 is contained in $X\times(X-N)\cup(X-N)\times X$.
 We say that {\em $\widetilde{\st{B}}_1$ hits a zero} (resp.\ {\em
 $\widetilde{\st{B}}_k$ hits a pole}) if the image of
 $f_1$ (resp.\ $f_k$) is contained in $(X-N)\times\{0\}$ for some $0\in
 N$ (resp.\ $\{\infty\}\times(X-N)$ for some $\infty\in N$), and
 {\em does not hit a zero} (resp.\ {\em does not hit a pole})
 otherwise. We are reduced to the following two claims:
 \begin{itemize}
  \item If $\widetilde{\st{B}}_1$ hits a zero (resp.\ does not hit a
	zero), then the relative cohomology $\H^\nu(\mr{pr}_1\circ
	f_1)_!\overline{\mb{Q}}_p$ (resp.\
	$\H^\nu f_{1!}\overline{\mb{Q}}_p$)
	over the generic point of $X$ (resp.\ $X\times X$) is
	$r$-negligible in $\mc{I}_{d_0}(F_{d_0})$ (resp.\
	$\mc{I}_{d_0}(F^2_{d_0})$).
 
  \item If $\widetilde{\st{B}}_k$ hits a pole (resp.\ does not hit a
	pole), then the relative cohomology $\H^\nu(\mr{pr}_2\circ
	f_k)_!\overline{\mb{Q}}_p$ (resp.\
	$\H^\nu f_{k!}\overline{\mb{Q}}_p$)
	over the generic point of $X$ (resp.\ $X\times X$) is
	$r$-negligible in $\mc{I}_{d_0}(F_{d_0})$ (resp.\
	$\mc{I}_{d_0}(F^2_{d_0})$.)
 \end{itemize}
 Indeed, if these hold, any constituent of
 $q_{\star!}\mr{Res}^+(\ms{M}_\star)$ is a direct factor of an object of
 the form $\ms{E}_{\star}\boxtimes\ms{F}_{\star}$
 such that $\ms{E}_{\star}$ and $\ms{F}_{\star}$ are irreducible of rank
 $<r$ or supported on a point.
 Since $p_{\st{X}!}\mr{Res}^+\ms{M}\cong
 (\mr{pr}_1\times\mr{pr}_4)_!p'_!\beta^+\ms{M}$, using
 (\ref{comptcohdiflem}) and K\"{u}nneth formula again, we see that any
 constituent of $\H^\nu p_{\st{X}!}\mr{Res}^+\ms{M}$ is a
 direct factor of
 \begin{equation*}
  (\ms{E}_1\boxtimes\ms{F}_k)\otimes H^\nu
   \bigl(X\times X,r'^+q''_{!}\mr{Res}^+(\ms{M}'')\otimes
   (\ms{F}_1\boxtimes\ms{E}_k)\bigr).
 \end{equation*}
 Thus, by considering Lemma \ref{bachnegok}, the proposition follows.
\end{step}

\begin{step}[Use of induction hypothesis]
 \label{laststepdifflem}
 Let us show the claims in Step \ref{twoclaimsdifflem}. We only consider
 $\star=1$ case. When $\widetilde{\st{B}}_1$ does not hit a zero,
 $\mr{Cht}^{r_1,d_1,\overline{p}\leq p_1}\times
 _{\overline{\st{C}}^{r_1,N}_\emptyset}\widetilde{\st{B}}'_1$ and
 $\mr{Cht}^{r_1,d_1,\overline{p}\leq p_1}_N\otimes_{\mb{F}_q}
 \mb{F}_{q_0}$ are isomorphic as written in \cite[p.173]{Laf}.
 Since $r_1<r$, by the induction hypothesis
 \ref{starproof} \eqref{rneghyp}, we get the desired $r$-negligibility.
 Let us treat the case where $\widetilde{\st{B}}_1$ hits a zero
 $\{0\}\in N$ defined over $\mb{F}_{q_0}$. We put
 $\st{X}^{d_1}_1:=\overline{\mr{Cht}^{r_1,d_1,\overline{p}\leq p_1}}
 \times_{X\times X}\bigl((X-N)\times\{0\}\bigr)$.
 Let $(p_{\st{X}^{d_1}_1},\mr{Res}_1)\colon
 \st{X}^{d_1}_{1}\rightarrow (X-N)\times
 \bigl(\overline{\st{C}}^{r_1,N}
 \times_{\mb{A}^N/\mb{G}^N_m}\{0\}\bigr)$ be the
 canonical morphism which is smooth by \cite[III.5]{Laf}.
 Since $\overline{\mr{Cht}^{r_1,d_1,\overline{p}\leq p_1}}
 \times_{\overline{\st{C}}^{r_1,N}}
 \overline{\st{C}}^{r_1,N}_\emptyset\cong
 \mr{Cht}^{r_1,d_1,\overline{p}\leq p_1}$ by \cite[III.2]{Laf}, we have
 the following cartesian diagram:
 \begin{equation*}
  \xymatrix{
   \mr{Cht}^{r_1,d_1,\overline{p}\leq p_1}\times
   _{\overline{\st{C}}^{r_1,N}_\emptyset}\widetilde{\st{B}}'_1
   \ar[r]\ar[d]\ar@{}[rd]|\square&
   \widetilde{\st{B}}'_1\ar[d]\\
  \st{X}^{d_1}_1\ar[r]^-{\mr{Res}_1}&
   \overline{\st{C}}^{r_1,N}
   \times_{\mb{A}^N/\mb{G}^N_m}\{0\}.
   }
 \end{equation*}
\end{step}

\begin{step}
 We are now reduced to the following claim:
 \begin{cl}[{\cite[VI.18]{Laf}}]
  For any constructible object $\ms{M}_1$ on
  $\overline{\st{C}}^{r_1,N}\times_{\mb{A}^N/\mb{G}^N_m}\{0\}$,
  the relative cohomology $\H^\nu
  p_{\st{X}^{d_1}_1}\mr{Res}_1^+\ms{M}_1$
  is a smooth object and $r$-negligible as an object in
  $\mc{I}_{d_0}(F_{d_0})$ for any $\nu$.
 \end{cl}
 \begin{proof}
  Since $(p_{\st{X}^{d_1}_1},\mr{Res}_1)$ is smooth, the relative
  cohomology is smooth by Lemma \ref{propsmoobcspeci}.
  We show the $r$-negligibility by the induction on $r_1$ (called the
  {\em rank}) and the dimension of the support of $\ms{M}_1$. Assume
  that the result is
  known for rank $<r_1$ and for the dimension of the support being
  $<c$. It suffices to show the claim for $\ms{M}_1$ which is the zero
  extension of a smooth object defined on a locally closed substack
  $\st{C}$ in
  $\overline{\st{C}}^{r_1,N}\times_{\mb{A}^N/\mb{G}^N_m}\{0\}$ of
  dimension $c$. By induction hypothesis, we may assume that $\st{C}$ is
  in the stratification associated to a partition $\underline{r}_1$ of
  $r_1$. When $\underline{r}_1$ is non-trivial, we can reduce to the
  claim for smaller ranks by arguments similar to Step
  \ref{firststepdifflem} to \ref{laststepdifflem}. Thus, this case is
  true by the induction hypothesis, and we only need to treat the case
  where $\underline{r}_1$ is trivial.
 In this case, we may assume that $\st{C}$ is a locally
  closed point $\widetilde{\st{B}}_1$ of
  $\overline{\st{C}}^{r_1,N}_{\emptyset}\times_{\mb{A}^N/\mb{G}^N_m}
  \{0\}$. Denote by $\widetilde{\st{B}}'_1$ the finite \'{e}tale
  covering of $\widetilde{\st{B}}_1$ constructed as before, and let
  $\widetilde{\st{B}}'_1\xrightarrow{\alpha}
  \widetilde{\st{B}}_1\xrightarrow{j}
  \overline{\st{C}}^{r_1,N}_{\emptyset}\times_{\mb{A}^N/\mb{G}^N_m}
  \{0\}$. Recalling the intermediate extension from
  \ref{openimmdefshri}, put
  $\ms{M}'_1:=j_{!+}\alpha_+(\overline{\mb{Q}}_p)$, which is pure of
  weight $0$ by the result of the same paragraph.
  It suffices to show the claim for
  $\ms{M}'_1$ by the induction hypothesis. Since $\mr{Res}_1$ is smooth
  and $p_{\st{X}^{d_1}_1}$ is proper,
  $\cH^\nu p_{\st{X}^{d_1}_1!}\mr{Res}^+_1(\ms{M}'_1)$ is pure of weight
  $\nu$ by Theorem \ref{theoryofweightadmiss}. Thus, it suffices to show
  that the alternating sum
  \begin{equation}
   \label{padicsumneedneg}
    \tag{$\star$}
    \sum(-1)^\nu
    \left[\cH^\nu p_{\st{X}^{d_1}_1!}\mr{Res}^+_1(\ms{M}'_1)
    \right]^{\mr{ss}}
  \end{equation}
  as an object in $\Gr\mc{I}_{d_0}(F_{d_0})$ is $r$-negligible. Let
  $\mc{F}'_1:=j_{!+}\alpha_+(\overline{\mb{Q}}_\ell)$, the $\ell$-adic
  counterpart.
  Since $\st{X}_1$ is smooth over $(X-N)\otimes_{\mb{F}_q}\{0\}$
  and admissible by \cite[II.4]{Laf}, we can use
  Theorem \ref{indepofltrace} to see that the Frobenius eigenvalues of
  each point of $(X-N)\otimes0$ coincide with those of
  \begin{equation*}
   \sum(-1)^\nu
    \left[\H^\nu p_{\st{X}^{d_1}_1!}\mr{Res}^*_1(\mc{F}'_1)\right]
    ^{\mr{ss}}.
  \end{equation*}
  This sum is known to be $r$-negligible exactly by the corresponding
  claim for $\ell$-adic situation, namely \cite[VI.18]{Laf}.
  Thus, there exists a finite collection $\{\sigma'_\iota\}$ in
  $\mc{W}_{\ell}((X-N)\otimes\{0\})$ such that $\sigma'_\iota$ is a
  constituent of the pull-back of an irreducible object
  $\sigma_\iota\in\mc{W}_{\ell}(X-N)$ of rank $<r$
  and integers $a_\iota$ such that the sum is equal to
  $\sum a_\iota\sigma'_\iota$. By the
  induction hypothesis \ref{starproof} \eqref{prodfoumindhyp}, there
  exists $\mc{E}'_\iota\in\mc{I}((X-N)\otimes\{0\})$ which corresponds
  to $\sigma'_\iota$ in the sense of Langlands.
  By \v{C}ebotarev density \ref{Cebdensity}, the sum
  (\ref{padicsumneedneg}) is equal to $\sum a_\iota\mc{E}'_\iota$. Let
  $\mc{E}_\iota\in\mc{I}(X-N)$ which corresponds to $\sigma_\iota$ in
  the sense of Langlands, whose existence is assured by using the
  induction hypothesis once again.
  Since $\mc{E}'_\iota$ is a constituent of the pull-back of
  $\mc{E}_\iota$, we conclude that (\ref{padicsumneedneg}) is
  $r$-negligible as required.
  \renewcommand{\qedsymbol}{$\square\blacksquare$}
 \end{proof}
\end{step}
 \renewcommand{\qedsymbol}{}
\end{proof}
\vspace{-6.5ex}

\subsubsection{}
\label{impcorrnegbound}
Recall the notation of \ref{defofsomenotepi}. The proposition is used in
the following form:
\begin{cor*}
 For any $\nu$, consider the canonical homomorphisms:
 \begin{equation*}
  \mc{H}^{\nu}_{\mr{c}}(\Ch^{r,\overline{p}\leq p}_N/a^{\mb{Z}})
   \rightarrow
   \mc{H}^{\nu}_{\mr{c}}(\Chtpr{r,\overline{p}\leq
   p}{N}/a^{\mb{Z}}),\quad
   \mc{H}^{\nu}_{\mr{c}}(\Ch^{r,\overline{p}\leq p}_N/a^{\mb{Z}})
   \rightarrow
   \IH^\nu(\Chtprop{r,\overline{p}\leq p}{N}
   /a^{\mb{Z}}).
 \end{equation*}
 These four objects are smooth on $S_N$, and the kernels and cokernels
 of these homomorphisms are $r$-negligible.
\end{cor*}
\begin{proof}
 To show the claim for the first homomorphism, take
 $\st{C}:=\st{C}'^r_N$ (cf.\ \cite[III 3b)]{Laf})
 in Proposition \ref{keylemmLaff}, then $\st{X}$
 is $\Chtpr{r,d,\overline{p}\leq p}{N}$ by \cite[Cor
 III.14]{Laf}.
 Taking $\ms{M}$ to be the trivial object of the boundary, we get the
 claim for the first homomorphism by the proposition and the
 localization sequence \ref{localtriangs}. To check the
 second one, we take $\st{C}$ to be $\st{C}^r_N$, then $\st{X}$ is
 $\Chtprop{r,d,\overline{p}\leq p}{N}$ by \cite[Def III.8]{Laf}. Take
 $\ms{M}$ to be the intersection complex of $\st{C}^r_N$ restricted to
 the boundary, and we get the claim.
\end{proof}

\subsubsection{}
\label{essenpartdefcond}
Let us extract the ``essential part'' from the relative cohomology
object $\mc{H}_{\mr{c}}^*(\Ch^{r,\overline{p}\leq
p}_N/a^{\mb{Z}})$. Note that
this object is smooth on $S_N$ by Corollary \ref{impcorrnegbound}.
We fix a prime number $\ell\neq p$ from now on, till the end of this
subsection. Let $\mc{H}^*_{N,\mr{ess},\ell}$ be the object in
$\mc{W}_{\ell}(S_N)$ denoted by $H^*_{N,\mr{ess}}$ in
\cite[VI.19]{Laf}.

\begin{lem*} 
 There exists a unique element $\mc{H}^*_{N,\mr{ess}}$ in
 $\mb{Q}\Gr\mc{I}(S_N)$ such that for any closed point $x\in S_N$,
 \begin{equation}
  \label{tracecondonHess}
   \mr{Tr}_{\mc{H}^*_{N,\mr{ess}}}\bigl(\mr{Frob}_x^{s}\bigr)=
   \mr{Tr}_{\mc{H}^*_{N,\mr{ess},\ell}}
   \bigl(\mr{Frob}_x^{s}\bigr).
 \end{equation}
 Now, put $\mc{H}^*_{\mr{c}}
 \bigl(\mr{Cht}^{r,\overline{p}\leq p}_N/a^{\mb{Z}}
 \bigr)^{\mr{ss}}:=\sum(-1)^{\nu}\,\mc{H}^{\nu}_{\mr{c}}
 \bigl(\mr{Cht}^{r,\overline{p}\leq p}_N/a^{\mb{Z}}
 \bigr)^{\mr{ss}}$ in $\Gr\mc{I}(S_N)$.
 Then, the formal difference
 \begin{equation}
  \tag{$\star$}
  \label{complenegcondonHess}
   \mc{H}_{N,\mr{ess}}^*-\frac{1}{r^2!}\sum_{n=1}^{r^2!}
   \bigl(\mr{Frob}^n_X\times\mr{id}_X\bigr)^+\,\mc{H}^*_{\mr{c}}
   \bigl(\mr{Cht}^{r,\overline{p}\leq p}_N/a^{\mb{Z}}
   \bigr)^{\mr{ss}},
 \end{equation}
 considered as an element of $\mb{Q}\Gr\mc{I}(F^2)$, is complete
 $r$-negligible.
\end{lem*}
\begin{proof}
 Since $\mc{H}^*_{N,\mr{ess},\ell}$ is pure by \cite[VI.20 (i)]{Laf},
 $\mc{H}^*_{N,\mr{ess}}$ is pure as well if it exists. Thus, the
 uniqueness readily follows from \v{C}ebotarev density
 \ref{Cebdensity}. Let us show the existence. Since
 $\mc{H}^*_{N,\mr{ess},\ell}$ is stable under the pull-back by
 $\mr{Frob}_X\times\mr{id}_X$ (cf.\ \cite[right after VI.22]{Laf}) and
 using \cite[VI.19]{Laf},
 $(r!)^{-1}\,\sum_{n=1}^{r!}
 \bigl(\mr{Frob}^n_X\times\mr{id}_X\bigr)^*\,\mc{H}^*_{\mr{c}}
 \bigl(\mr{Cht}^{r,\overline{p}\leq p}_N/a^{\mb{Z}},\mb{Q}_\ell
 \bigr)^{\mr{ss}}$ is stable under the pull-back as well.
 This implies that
 $(r!)^{-1}\sum_{n=1}^{r!}(\dots)=(r^2!)^{-1}\sum_{n=1}^{r^2!}(\dots)$.
 Thus, there exist complete $r$-negligible $\ell$-adic sheaves
 $\sigma_\iota$ and rational constants $c_\iota$ such that
 \begin{equation*}
 \mc{H}^*_{N,\mr{ess},\ell}-\frac{1}{r^2!}\,\sum_{n=1}^{r^2!}
  \bigl(\mr{Frob}^n_X\times\mr{id}_X\bigr)^*\,\mc{H}^*_{\mr{c}}
  \bigl(\mr{Cht}^{r,\overline{p}\leq p}_N/a^{\mb{Z}},\mb{Q}_\ell
  \bigr)^{\mr{ss}}
  =-\sum_{\iota}c_\iota\cdot\sigma_\iota,
 \end{equation*}
 by \cite[VI.19 (i)]{Laf}.
 Since $\sigma_\iota$ is complete $r$-negligible for any $\iota$, there
 exist $\ell$-adic sheaves $\sigma'$ and $\sigma''$ on $X-N$ of rank
 $<r$ such that $\sigma_\iota\cong q'^*\sigma'\otimes q''^*\sigma''$ by
 definition.
 By the induction hypothesis \ref{starproof} \eqref{prodfoumindhyp},
 there exist $\mc{E}'$ and $\mc{E}''$ in $\mc{I}(X-N)$ which correspond
 to $\sigma'$ and $\sigma''$ in the sense of Langlands. The object
 $\mc{E}_\iota:=q'^+\mc{E}'\otimes q''^+\mc{E}''$ in $\mc{I}(S_N)$
 corresponds to $\sigma_\iota$ in the sense of
 Langlands. Note that this $\mc{E}_\iota$ is complete $r$-negligible by
 construction. Put
 \begin{equation*}
 \mc{H}^*_{N,\mr{ess}}:=\frac{1}{r^2!}\,\sum_{n=1}^{r^2!}
  \bigl(\mr{Frob}^n_X\times\mr{id}_X\bigr)^+\,\mc{H}^*_{\mr{c}}
  \bigl(\mr{Cht}^{r,\overline{p}\leq p}_N/a^{\mb{Z}}
  \bigr)^{\mr{ss}}
  -\sum_{\iota}c_\iota\cdot\mc{E}_{\iota}.
 \end{equation*}
 Now, the difference (\ref{complenegcondonHess}) is
 $\sum_{\iota}c_\iota\cdot\mc{E}_{\iota}$, thus it is complete
 $r$-negligible.
 Since the c-admissible stack
 $\mr{Cht}^{r,\overline{p}\leq p}_N/a^{\mb{Z}}$ is
 smooth over $S_N$ (cf.\ \ref{tablechoutsatis}), we may use the base
 change \ref{basechforshrik} and Theorem \ref{indepofltrace} to show
 that it satisfies (\ref{tracecondonHess}), and the element meets our
 need.
\end{proof}

\begin{prop}[{\cite[VI.20]{Laf}}]
 \label{fundpropnegH}
 (i) None of the irreducible components of $\mc{H}^*_{N,\mr{ess}}$ are
 $r$-negligible. All the components have positive multiplicity, and pure
 of weight $2r-2$.

 (ii) The object $\mc{H}^{\nu}_{\mr{c}}
   \bigl(\mr{Cht}^{r,\overline{p}\leq p}_N/a^{\mb{Z}}
   \bigr)^{\mr{ss}}$ is $r$-negligible for $\nu\neq 2r-2$. Moreover, the
 following difference is $r$-negligible:
 \begin{equation*}
  \mc{H}^*_{N,\mr{ess}}-
   \frac{1}{r^2!}\sum_{n=1}^{r^2!}
   \bigl(\mr{Frob}^n_X\times\mr{id}_X\bigr)^+\,\mc{H}^{2r-2}_{\mr{c}}
   \bigl(\mr{Cht}^{r,\overline{p}\leq p}_N/a^{\mb{Z}}
   \bigr)^{\mr{ss}}.
 \end{equation*}
\end{prop}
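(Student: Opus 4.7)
The plan is to reduce to the proper compactification $\Chtprop{r,\overline{p}\leq p}{N}/a^{\mb{Z}}$, where intersection cohomology is pure, and then transport the positivity and degree-concentration from Lafforgue's $\ell$-adic analogue \cite[VI.20]{Laf} via the $\ell$-independence result.

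First I would consider the open immersion $j\colon\Ch^{r,\overline{p}\leq p}_N/a^{\mb{Z}}\hookrightarrow\Chtprop{r,\overline{p}\leq p}{N}/a^{\mb{Z}}$, whose target is proper over $S_N$, together with the complement $i\colon\partial\hookrightarrow\Chtprop{r}{N}/a^{\mb{Z}}$. From the definition of $j_{!+}\overline{\mb{Q}}_p$ as the image of $\H^0j_!\to\H^0j_+$ and the corresponding distinguished triangles (cf.\ \ref{purityinterme}), I can bound the difference $j_!\overline{\mb{Q}}_p-j_{!+}\overline{\mb{Q}}_p$ (as a virtual object) by cones supported on $\partial$. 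Factoring the projection to $S_N$ through the smooth residue morphism $\mr{Res}\colon\overline{\Ch}\to\st{C}^{r,N}$ and applying Proposition \ref{keylemmLaff} to each stratum of $\partial$ shows that all cohomology of those boundary cones is smooth on $S_N$ and $r$-negligible in $\mc{I}(F^2)$. Therefore, in $\mb{Q}\Gr\,\mc{I}(S_N)$, one has $\mc{H}^i_c(\Ch^{r,\overline{p}\leq p}_N/a^{\mb{Z}})\equiv\IH^i(\Chtprop{r,\overline{p}\leq p}{N}/a^{\mb{Z}})$ modulo $r$-negligible objects.

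Next, by Theorem \ref{purityinterme} together with the theory of weights (\ref{theoryofweightadmiss}) applied to the proper morphism $\Chtprop{r}{N}/a^{\mb{Z}}\to S_N$, each $\IH^i$ is pure of weight $i$ on $S_N$. Combined with the previous comparison, every essential (i.e.\ non-$r$-negligible) irreducible constituent of $\mc{H}^i_c$ is pure of weight $i$. For the first two parts of (i), I would observe that $\mc{H}^*_{N,\mr{ess}}$ is by construction stable under $(\mr{Frob}_X\times\mr{id})^+$, being a Frobenius average from which the complete $r$-negligible correction $\sum c_\iota\widetilde{\sigma}_\iota$ has been subtracted; hence any $r$-negligible virtual subcomponent must itself be complete by the corollary following the definition in \S\ref{keylemmLaff}, but no such complete piece survives by construction. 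Positive integral multiplicity then follows from $\ell$-independence of Frobenius traces: Theorem \ref{indepofltrace} matches $\mr{Tr}(\mr{Frob}_x^s;\mc{H}^*_{N,\mr{ess}})$ with $\mr{Tr}(\mr{Frob}_x^s;\mc{H}^*_{N,\mr{ess},\ell})$ at all closed points $x$, and Lafforgue's $\ell$-adic VI.20 asserts positive integral multiplicities; the match at every $x\in|S_N|$ and Tannakian rigidity of $\mb{Z}\mc{I}(F^2)$ transport these to the $p$-adic side.

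For (ii), the same purity-plus-$\ell$-independence argument forces the graded virtual object $\mc{H}^*_{N,\mr{ess}}$ to sit in a single degree modulo $r$-negligibles: $\ell$-adic VI.20 gives concentration in degree $2r-2$, $\ell$-independence transports the Frobenius traces degree-by-degree (weights separate the cohomological degrees by purity of $\IH^i$), and what remains is exactly $\sum_{n=1}^{r!}(\mr{Frob}_X^n\times\mr{id})^+\mc{H}^{2r-2}_c$. The main obstacle will be producing the $r$-negligible comparison from boundary cones uniformly across the stratification of $\Chtprop{r,\overline{p}\leq p}{N}$, requiring a careful induction on depth using Proposition \ref{keylemmLaff} at each stratum, together with a density argument (Lemma \ref{bachnegok}) to pass from $\mc{I}(F^2_{d_0})$ back to $\mc{I}(F^2)$ so that the $\ell$-adic match of traces actually determines the virtual $p$-adic object.
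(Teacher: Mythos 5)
Your overall strategy matches the paper's: compare $\mc{H}^*_{N,\mr{ess}}$ to the intersection cohomology of the proper compactification via Proposition \ref{keylemmLaff}, invoke purity of $\IH^i$ from Theorem \ref{purityinterme} and \ref{theoryofweightadmiss}, and transport Lafforgue's $\ell$-adic VI.20 by $\ell$-independence. The paper gives exactly this argument in a few lines, citing Theorem \ref{Gabberfujiwarateh} (Gabber--Fujiwara style preservation of compatible systems under $f_+$ and $j_{!+}$) as the $\ell$-independence input, which is the more directly applicable tool: it establishes that $\IH^i$ and its $\ell$-adic counterpart are Langlands-compatible \emph{as objects}, so Lafforgue's structural conclusions about the $\ell$-adic side transfer wholesale. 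You cite Theorem \ref{indepofltrace} instead, which is formulated for traces of correspondences on $H^*_{\mr{c}}$ of a \emph{smooth} c-admissible stack and is not directly calibrated to $\IH^*$ of the (possibly singular) compactification; for the present proposition the paper's choice of \ref{Gabberfujiwarateh} avoids this mismatch, and moreover the trace-matching at closed points that you attribute to \ref{indepofltrace} is in fact already built into the definition of $\mc{H}^*_{N,\mr{ess}}$.

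One concrete wobble: for the first claim of (i) you assert that "no such complete piece survives by construction." This is circular as stated. The construction of $\mc{H}^*_{N,\mr{ess}}$ subtracts the $p$-adic Langlands transfers $\widetilde\sigma_\iota$ of the $\ell$-adic negligible correction $\sigma_\iota$, chosen to force trace-agreement with $\mc{H}^*_{N,\mr{ess},\ell}$; this does \emph{not} by itself rule out an $r$-negligible constituent on the $p$-adic side. The correct deduction is precisely the $\ell$-independence transport you already deploy for the multiplicity statement: Lafforgue's VI.20 gives that $\mc{H}^*_{N,\mr{ess},\ell}$ has no $r$-negligible constituent, the purity of $\IH^i$ and compatibility (via \ref{Gabberfujiwarateh} together with \v{C}ebotarev) identify the irreducible constituents degree-by-degree on both sides, and hence the $p$-adic essential part has none either. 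Replacing the "by construction" step with this argument (which you already make, in effect, two sentences later) closes the gap, and the rest of your proof of (i) and (ii) then goes through as in the paper.
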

\begin{proof}
 We have two proofs, both of which use Proposition \ref{keylemmLaff}
 substantially. The first one is just to copy the proof of {\it
 ibid.}. The details are left to the reader.
 The second one is to make use of the Lafforgue's $\ell$-adic result.
 First, we prove (i).
 For an object $A$ in $\mb{Q}\Gr\mc{W}_\ell(S_N)$ or
 $\mb{Q}\Gr\mc{I}(S_N)$, let $\{A\}$ be the set of constituents
 of $A$, and $\{A\}_{\mr{neg}}$ be the subset consisting of
 $r$-negligible objects. Writing $A=\sum_{B\in\{A\}}c_BB$ with
 $c_B\in\mb{Q}$, we put
 $A_{\mr{neg}}:=\sum_{B\in\{A\}_{\mr{neg}}}c_BB$.
 For objects $A$ and $B$ in $\mb{Q}\Gr\mc{W}_\ell(S_N)$ or
 $\mb{Q}\Gr\mc{I}(S_N)$, we write $A\eqtr B$ if
 $\mr{Tr}_{A}(\mr{Frob}^s_x)=\mr{Tr}_{B}(\mr{Frob}^s_x)$
 for any integer $s$, $x\in|S_N|$. Denoting $\Chtprop{}{}:=
 \Chtprop{r,\overline{p}\leq p}{N}/a^{\mb{Z}}$, we put
 \begin{align*}
  \mc{IH}^\nu&:=\frac{1}{r^2!}\sum_{n=1}^{r^2!}
   (\mr{Frob}_X^n\times\mr{id}_X)^+
   \IH^\nu(\Chtprop{}{})^{\mr{ss}},\\
   \mc{IH}^\nu_{\ell}&:=
   \frac{1}{r^2!}
   \sum_{n=1}^{r^2!}(\mr{Frob}_X^n\times\mr{id}_X)^*
   \IH^\nu(\Chtprop{}{},\mb{Q}_\ell)^{\mr{ss}},
 \end{align*}
 and $\mc{IH}^*_{(\ell)}:=\sum_{\nu=1}^{2(2r-2)}(-1)^\nu
 \mc{IH}^{\nu}_{(\ell)}$. Summarizing [{\it ibid.}, VI.15, 20], we
 already know the following:
 \begin{quote}
  The difference $\mc{IH}^{2r-2}_\ell-\mc{H}^*_{N,\mr{ess},\ell}$
  as well as $\mc{IH}^{\nu}_\ell$ for $\nu\neq 2r-2$ are
  $r$-negligible, and any irreducible component of
  $\mc{H}^*_{N,\mr{ess},\ell}$ is not
  $r$-negligible.
 \end{quote}  
 This implies that
 \begin{equation*}
  \mc{IH}^{2r-2}_\ell-\mc{H}^*_{N,\mr{ess},\ell}=
   (\mc{IH}^{2r-2}_\ell)_{\mr{neg}}
 \end{equation*}
 in $\mb{Q}\Gr\mc{W}_\ell(S_N)$.
 Now, let us show the following three equalities for any $\nu$:
 \begin{gather*}
  \mc{IH}_\ell^\nu\eqtr\mc{IH}^\nu,\qquad
  (\mc{IH}^{2r-2}_\ell)_{\mr{neg}}\eqtr
  (\mc{IH}^{2r-2})_{\mr{neg}},\\  
  \mc{IH}^{2r-2}_\ell-\mc{H}^*_{N,\mr{ess},\ell}\eqtr
  \mc{IH}^{2r-2}-\mc{H}^*_{N,\mr{ess}}.
 \end{gather*}
 Indeed, by $\ell$-independence Theorem \ref{Gabberfujiwarateh}, we know
 that $\mc{IH}^*_\ell\eqtr\mc{IH}^*$. By purity (cf.\ Theorem
 \ref{theoryofweightadmiss} (ii)), $\mc{H}^\nu_\ell$
 and $\mc{H}^\nu$ are pure of weight $\nu$, thus we get the first
 equality. The third one follows by (\ref{tracecondonHess}) and the
 first equality, and it remains to show the second one.
 Let $\sigma\in\mb{Q}\Gr\mc{W}_\ell(S_N)$ and
 $\mc{E}\in\mb{Q}\Gr\mc{I}(S_N)$ such that both are
 invariant under pull-back by $\mr{Frob}_X\times\mr{id}_X$,
 $\sigma\eqtr\mc{E}$, and both are positive and pure.
 The second equality follows if we can show
 $\sigma_{\mr{neg}}\eqtr\mc{E}_{\mr{neg}}$. Let us check this.
 Multiplying $\sigma$ and $\mc{E}$ by some integer, we may assume that
 they are semi-simple objects in $\mc{W}_\ell(S_N)$ and $\mc{I}(S_N)$.
 Let $\sigma'$ be an irreducible $r$-negligible constituent of
 $\sigma$. Since $\sigma$ is invariant under the pull-back by
 $\mr{Frob}_X\times\mr{id}$, $\sigma$ contains a complete $r$-negligible
 sheaf $\widetilde{\sigma}'$ which is the external tensor product of
 irreducible sheaves on $X-N$ and contains $\sigma'$ as a constituent.
 By the induction hypothesis \ref{starproof} \eqref{prodfoumindhyp},
 there exists an $r$-negligible object $\widetilde{\mc{E}}'$ in
 $\mc{I}(S_N)$ such that $\widetilde{\sigma}'\eqtr\widetilde{\mc{E}}'$.
 By Corollary \ref{conseqweighidenti}, at least one constituent of
 $\widetilde{\mc{E}}'$ is contained in $\mc{E}$, and since $\mc{E}$ is
 stable under pull-back by $\mr{Frob}_X\times\mr{id}$, $\mc{E}''$ is
 contained in $\mc{E}$. Similarly, if we are given $\widetilde{\mc{E}}'$
 which is complete $r$-negligible and can be written as the external
 tensor product of irreducible objects on $X-N$, there exists
 $\widetilde{\sigma}'$ in $\sigma$ such that
 $\widetilde{\mc{E}}'\eqtr\widetilde{\sigma}'$. Thus the claim follows.

 Combining all of these three equalities, we get
 \begin{equation*}
  \mc{IH}^{2r-2}-\mc{H}^*_{N,\mr{ess}}
   \eqtr
   (\mc{IH}^{2r-2})_{\mr{neg}},
 \end{equation*}
 thus $\mc{IH}^{2r-2}-\mc{H}^*_{N,\mr{ess}}=
 (\mc{IH}^{2r-2})_{\mr{neg}}$ in $\mb{Q}\Gr\mc{I}(S_N)$ by
 \v{C}ebotarev density \ref{Cebdensity}.

 Now, let us prove (ii). Since $\mc{IH}^{\nu}_\ell$ is $r$-negligible
 and $\mc{IH}^\nu_\ell\eqtr\mc{IH}^\nu$ as we have already seen,
 $\mc{IH}^\nu$ is $r$-negligible as well. Since we know that the
 difference of $\mc{IH}^\nu$ and $\mc{H}^{\nu}_{\mr{c}}
 \bigl(\mr{Cht}^{r,\overline{p}\leq p}_N/a^{\mb{Z}}\bigr)
 ^{\mr{ss}}$ is $r$-negligible for any $\nu$ by Corollary
 \ref{impcorrnegbound}, (ii) follows from (i).
\end{proof}

\begin{cor}[{\cite[VI.21]{Laf}}]
 \label{kercokerneg}
 Let $p\leq q$ be large enough convex polygons. The kernel and cokernel of
 the induced homomorphisms
 \begin{align*}
  \alpha&\colon
   \mc{H}^{2r-2}_{\mr{c}}(\Ch^{r,\overline{p}\leq p}_N/a^{\mb{Z}})
   \rightarrow
  \mc{H}^{2r-2}_{\mr{c}}(\Chtpr{r,\overline{p}\leq
  p}{N}/a^{\mb{Z}}),\\
  \beta&\colon
  \mc{H}^{2r-2}_{\mr{c}}(\Ch^{r,\overline{p}\leq p}_N/a^{\mb{Z}})
  \rightarrow
  \mc{H}^{2r-2}_{\mr{c}}(\Ch^{r,\overline{p}\leq q}_N/a^{\mb{Z}})
 \end{align*}
 by the inclusions $\Ch^{r,\overline{p}\leq
 p}_N/a^{\mb{Z}}\hookrightarrow\Chtpr{r,\overline{p}\leq p}{N}
 /a^{\mb{Z}}$ and $\Ch^{r,\overline{p}\leq
 p}_N/a^{\mb{Z}}\hookrightarrow\Ch^{r,\overline{p}\leq
 q}_N/a^{\mb{Z}}$ are $r$-negligible.
\end{cor}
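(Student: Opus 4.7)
For each of the two inclusions, set $\st{U} := \Ch^{r,\overline{p}\leq p}_N/a^{\mb{Z}}$ and let $j\colon \st{U}\hookrightarrow\st{Y}$ denote the open immersion with closed complement $i\colon\st{Z}\hookrightarrow\st{Y}$, where $\st{Y}$ is either $\Chtpr{r,\overline{p}\leq p}{N}/a^{\mb{Z}}$ or $\Ch^{r,\overline{p}\leq q}_N/a^{\mb{Z}}$. My plan is to apply the localization triangle $j_!j^+L_{\st{Y}}\to L_{\st{Y}}\to i_+i^+L_{\st{Y}}\xrightarrow{+1}$ from \ref{localtriangs} and push forward to $S_N$ along the structural morphism $f_{\st{Y}}\colon\st{Y}\to S_N$. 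Since $f_{\st{Y}}\circ j$ is the structural morphism of $\st{U}$, this yields a long exact sequence whose relevant fragment near degree $2r-2$ contains
\[
\H^{i-1}f_{\st{Z}!}L_{\st{Z}}\to\mc{H}^{i}_{\mr{c}}(\st{U})\to\mc{H}^{i}_{\mr{c}}(\st{Y})\to\H^{i}f_{\st{Z}!}L_{\st{Z}},
\]
where $f_{\st{Z}}:=f_{\st{Y}}\circ i$. Hence the kernel and cokernel of each map in the statement are subquotients of the relative cohomology of the closed complement $\st{Z}$.

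Next, I would verify that each closed complement $\st{Z}$ maps, via the natural morphism $\mr{Res}$, into the boundary of $\st{C}^{r,N}$. For the first inclusion, the stratification of $\Chtpr{r,\overline{p}\leq p}{N}$ given in \cite{Laf} shows that the complement of $\st{U}$ parametrizes shtukas whose Harder–Narasimhan polygon is saturated against the bound $p$, forcing a non-trivial filtration whose image under $\mr{Res}$ lies in the boundary stratum $\st{C}^{r,N}_{\underline{r}}$ of $\st{C}^{r,N}$ for some non-trivial partition $\underline{r}$ of $r$. For the second inclusion, the complement parametrizes shtukas whose polygon strictly exceeds $p$ but is still bounded by $q$, which again factors through the boundary of $\st{C}^{r,N}$ by the same mechanism.

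Once this geometric identification is made, I can present the relative cohomology $\H^{i}f_{\st{Z}!}L_{\st{Z}}$ in the form $\H^{i}p_{\st{X}!}\mr{Res}^+\ms{M}$, where $\ms{M}\in\mr{Hol}(\st{C})$ for some quasi-projective $\st{C}\to\st{C}^{r,N}$ supporting the boundary stratification, and $\ms{M}$ is supported on the boundary of $\st{C}$. Proposition \ref{keylemmLaff} then applies directly and shows that these cohomology sheaves are smooth on $S_N$ and $r$-negligible as objects of $\mc{I}(F^2)$. By the long exact sequence above, kernel and cokernel of the degree $2r-2$ map, being subquotients of $r$-negligible smooth objects, are themselves $r$-negligible, which is what we wanted.

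The principal technical obstacle is the geometric step: one must extract from Lafforgue's construction of $\Chtpr{}{}$ and of the filtration by $p$ the precise description of the closed complements $\st{Z}$ as preimages under $\mr{Res}$ of boundary strata of $\st{C}^{r,N}$, together with a quasi-projective model $\st{C}\to\st{C}^{r,N}$ and a holonomic module $\ms{M}$ supported on the boundary whose pushforward computes $f_{\st{Z}!}L_{\st{Z}}$. Once this translation is in place, the cohomological consequence is a formal application of the localization triangle and Proposition \ref{keylemmLaff}; no further delicate computation is required.
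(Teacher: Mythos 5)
Your general strategy — localize along each open immersion, reduce kernel and cokernel to subquotients of the relative cohomology of the closed complement, and invoke Proposition \ref{keylemmLaff} — is the right one, and it does handle the first map: the complement $\Chtpr{r,\overline{p}\leq p}{N}/a^{\mb{Z}} \setminus \Ch^{r,\overline{p}\leq p}_N/a^{\mb{Z}}$ consists of (mildly) degenerate shtukas, and these do map via $\mr{Res}$ into the non-open strata of $\st{C}^{r,N}$, so the proposition applies.

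But there is a genuine gap in your treatment of the second map. The complement $\Ch^{r,\overline{p}\leq q}_N/a^{\mb{Z}} \setminus \Ch^{r,\overline{p}\leq p}_N/a^{\mb{Z}}$ consists of \emph{non-degenerate} shtukas whose Harder--Narasimhan polygon merely exceeds the bound $p$; their modification data have no degeneration, so their image under $\mr{Res}$ lies in the open stratum $\st{C}^{r,N}_{(r)}$ and \emph{not} in a boundary stratum $\st{C}^{r,N}_{\underline{r}}$ with $\underline{r}$ non-trivial. Consequently the hypothesis of Proposition \ref{keylemmLaff} — that the module $\ms{M}$ be supported on the boundary of $\st{C}$ — is not met: for such $\ms{M}$, $\mr{Res}^+\ms{M}$ simply vanishes on this complement, and the proposition gives no information about its cohomology. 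Your concluding paragraph already signals this as ``the principal technical obstacle,'' but it asks for the wrong thing: one cannot ``describe the closed complements $\st{Z}$ as preimages under $\mr{Res}$ of boundary strata of $\st{C}^{r,N}$'' for the second inclusion because that description is false. In Lafforgue's argument for that case, $r$-negligibility of the complement is established by a different mechanism: the locus of shtukas with polygon exceeding $p$ (with $p$ big enough) is stratified by Harder--Narasimhan type, each stratum fibers in shtukas of strictly smaller rank (the HN sub- and quotient-shtukas), and $r$-negligibility of the relative cohomology then follows by the K\"{u}nneth formula (Proposition \ref{Kunnethextprod}). That HN-stratification step, not another application of Proposition \ref{keylemmLaff}, is what your proposal is missing.
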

\begin{proof}
 We may prove similarly to {\it ibid.}:
 The claim for $\alpha$ is just a reproduction of Corollary
 \ref{impcorrnegbound}. Let $\Gamma$ be the correspondence from
 $\Chtpr{r,\overline{p}\leq p}{N}/a^{\mb{Z}}$ to
 $\Chtpr{r,\overline{p}\leq q}{N}/a^{\mb{Z}}$ defined in
 \cite[V.14 (i)]{Laf}. This yields the following commutative diagram
 by Lemma \ref{corresnotdef}:
 \begin{equation*}
  \xymatrix{
   \mc{H}^{2r-2}_{\mr{c}}(\Ch^{r,\overline{p}\leq q}_N/a^{\mb{Z}})
   \ar[r]&
   \mc{H}^{2r-2}_{\mr{c}}(\Chtpr{r,\overline{p}\leq q}{N}/a^{\mb{Z}})
   \ar[d]^{\Gamma^*}\\
  \mc{H}^{2r-2}_{\mr{c}}(\Ch^{r,\overline{p}\leq p}_N/a^{\mb{Z}})
   \ar[u]^{\beta}\ar[r]_-{\alpha}&
   \mc{H}^{2r-2}_{\mr{c}}(\Chtpr{r,\overline{p}\leq p}{N}/a^{\mb{Z}}),
  }
 \end{equation*}
 where $\Gamma^*$ is
 the action of $\Gamma$ as in Definition \ref{corresnotdef}, and the
 other three arrows are defined by the inclusions. Thus, we
 have $\mr{Ker}(\beta)\subset\mr{Ker}(\alpha)$, and
 $\mr{Ker}(\beta)$ is $r$-negligible since we already know that
 $\mr{Ker}(\alpha)$ is. This implies that, to show the $r$-negligibility
 of $\mr{Coker}(\beta)$, it suffices to show that the ``essential
 parts'', namely the set of constituents which are not $r$-negligible,
 of the source and the target of $\beta$ are the same. This follows by
 the previous proposition, which concludes the proof.
\end{proof}

\subsubsection{}
\label{exteritensprodcat}
We digress a little, and recall some general nonsense.
Let $F$ be an algebraically closed field, and $\mc{A}$ an abelian
category over $F$ such that any object in $\mc{A}$ has finite
length. We assume, moreover, that for any
$X,X'$ in $\mc{A}$, $\mr{Hom}_{\mc{A}}(X,X')$ is a finite
dimensional $F$-vector space.
We note that for an irreducible object $X$, $\mr{End}(X)\cong F$. For
$X\in\mc{A}$, we have the functor
\begin{equation*}
 \mr{Hom}_{\mc{A}}(X,-)\colon
  \mc{A}\rightarrow\mr{Vec}^{\mr{fin}}_F,
\end{equation*}
where $\mr{Vec}^{\mr{fin}}_F$ is the category of finite dimensional
$F$-vector spaces. We can check that it has a left adjoint,
denoted by $(-)\boxtimes X$. For an integer $n>0$, we have
$F^{\oplus n}\boxtimes X\cong X^{\oplus n}$.

Let $G$ be a group, and we denote by $G\mbox{-}\mc{A}$ the category of
objects of $\mc{A}$ equipped with $G$-action, namely couples
$(X,\rho)$ where $X\in\mc{A}$ and a group homomorphism
$\rho\colon G\rightarrow\mr{Aut}(X)$. Let $\rho\colon
G\rightarrow\mr{GL}(V)$ be a finite dimensional $F$-representation of
$G$. Then $V\boxtimes X$ is equipped with action of $G$ determined
by $\rho$, and thus defines an object of $G\mbox{-}\mc{A}$. We sometimes
denote this by $\rho\boxtimes X$. The following lemma is
well-known (cf.\ \cite[4.15.8]{eting}):

\begin{lem*}
 (i) For any irreducible $F$-representation $\rho$ of $G$ and
 irreducible object $X$ of $\mc{A}$, $\rho\boxtimes X$ is an irreducible
 object of $G\mbox{-}\mc{A}$.

 (ii) Conversely, any irreducible object of $G\mbox{-}\mc{A}$ can be
 written in a such form.
\end{lem*}
In practice, we take $\mc{A}=\mc{I}(S_N)$. Of course, in this case
$F=\overline{\mb{Q}}_p$. As we have already recalled, any object of
$\mc{I}(S_N)$ has finite length, and $\mr{Hom}_{\mc{A}}$ is finite
dimensional by the existence of six functor
formalism. Thus, the results of this paragraph is applicable to this
category.

\subsubsection{}
\label{heckeactiondefined}
Next thing we need to do is to define a Hecke action on
$\mc{H}^*_{N,\mr{ess}}$. We put $(\Ch^r_N/a^{\mb{Z}})_\eta=\indlim
\Ch^{r,\overline{p}\leq p}_N/a^{\mb{Z}}\times_{S_N}U$ where
the inductive limit runs over $p$ and open dense subschemes $U\subset
X\times X$. Lafforgue constructed a homomorphism of
algebras (cf.\ \ref{corresnotdef})
\begin{equation*}
 \varrho\colon\mc{H}^r_N/a^{\mb{Z}}\rightarrow
  \mr{Corr}^{\mr{fin,et}}_{\eta}
  \bigl((\Ch^r_N/a^{\mb{Z}})_{\eta}\bigr)
\end{equation*}
sending $f$ to $\Gamma_f$, which is a finite \'{e}tale correspondence on
a certain open dense subscheme of $X\times X$ (cf.\ \cite[I 1c), V
2a)]{Laf} or \cite[I.4, Theorem 5]{La1}).
To be compatible with \cite{Laf}, we consider
$\varrho':=\mr{norm}\circ\varrho$ for the action on the relative
cohomology (cf.\ \ref{complafourcorre}).
Moreover, we have partial Frobenius endomorphisms $\mr{Frob}_\infty$ and
$\mr{Frob}_0$ on $\Ch^r_N/a^{\mb{Z}}$ over $\mr{Frob}_X\times\mr{id}_X$
and $\mr{id}_X\times\mr{Frob}_X$ respectively such that
$\mr{Frob}_0\circ\mr{Frob}_{\infty}=
\mr{Frob}_{\infty}\circ\mr{Frob}_{0}=\mr{Frob}$ 
(cf.\ \cite[I 1b)]{Laf}).
The action of correspondence on $\mc{H}^*_{N,\mr{ess}}$ commutes with
$\mr{Frob}_{\infty}$ and $\mr{Frob}_0$ (cf.\ \cite[I 1c)]{Laf}).
For any convex polygon $p$,
$f\in\mc{H}^r_N/a^{\mb{Z}}$, $s,u\in\mb{N}$, there exists a convex
polygon $q\geq p$ such that the correspondence
$f\times\mr{Frob}_\infty^s\times\mr{Frob}_0^u$ sends
$\Ch^{r,\overline{p}\leq p}_N/a^{\mb{Z}}$ to
$\Ch^{r,\overline{p}\leq q}_N/a^{\mb{Z}}$ over some open subscheme
$U\subset S_N$. Thus, via $\varrho'$, we have a homomorphism
\begin{align*}
 (f\times\mr{Frob}_\infty^s\times\mr{Frob}_0^u)^*\colon
 (\mr{Frob}_\infty^s\times\mr{Frob}_0^u)^+
 \mc{H}^{2r-2}_{\mr{c}}&(\Ch^{r,\overline{p}\leq q}
 _N/a^{\mb{Z}}\times_{S_N}U)\\
 &\rightarrow
 \mc{H}^{2r-2}_{\mr{c}}(\Ch^{r,\overline{p}\leq p}
 _N/a^{\mb{Z}}\times_{S_N}U)
\end{align*}
compatible with compositions by Lemma \ref{corresnotdef}.
Note that since $\mr{Frob}^*=(\mr{Frob}_\infty\times\mr{Frob}_0)^*$ is
an isomorphism, $(\mr{Frob}_\infty\times\mr{id})^*$ and
$(\mr{id}\times\mr{Frob}_0)^*$ are isomorphisms.

Let us define a filtration as in \cite[VI 3a)]{Laf}.
Let $\mc{E}\in\mc{I}(S_N)$.
Then we may define a canonical filtration $F^{\bullet}$ as follows:
Put $F^0\mc{E}=0$. Assume $F^{2i}\mc{E}$ has already been
constructed. We define $F^{2i+1}\mc{E}$ to be the largest submodule such
that $F^{2i+1}\mc{E}/F^{2i}\mc{E}$ is {\em $r$-negligible}. Then, we put
$F^{2i+2}\mc{E}$ to be the largest submodule such that
$F^{2i+2}\mc{E}/F^{2i+1}\mc{E}$ is {\em essential}.
This filtration is functorial, namely, given a morphism
$\mc{E}\rightarrow\mc{E}'$ in $\mc{I}(F^2)$, then it induces a
morphism $F^i\mc{E}\rightarrow F^i\mc{E}'$. We put the ``essential
part'' of $\mc{E}$
\begin{equation*}
 \mc{E}_{\mr{even}}:=\bigoplus_{i\geq0}F^{2i+2}/F^{2i+1}(\mc{E}).
\end{equation*}

For a convex polygon $p$, we put $\mc{H}^{\leq
p}:=\mc{H}^{2r-2}_{\mr{c}}(\Ch^{r,\overline{p}\leq p}
_N/a^{\mb{Z}})$, which is an object in $\mc{I}(S_N)$.
For a large enough convex polygon $p$ and $p'\geq p$, Corollary
\ref{kercokerneg} tells us that the homomorphism induced
by the canonical homomorphism $\mc{H}^{\leq p}\rightarrow\mc{H}^{\leq
p'}$
\begin{equation*}
 \mc{H}^{\leq p}_{\mr{even}}\rightarrow\mc{H}^{\leq p'}_{\mr{even}}
\end{equation*}
is an isomorphism in $\mc{I}(S_N)$. Because of the
functoriality of the filtration, the action of the correspondence
$(f\times\mr{Frob}^s_{\infty}\times\mr{Frob}^u_{0})^*$ induces an action
of $\mc{H}^r_N/a^{\mb{Z}}$ as well as the invariance by the pull-back
$(\mr{Frob}_X\times\mr{id}_X)^+$ on $\mc{H}^{\leq p}_{\mr{even}}$ as an
object of $\mc{I}(F^2)$.
Summing up, $\mc{H}^{\leq p}_{\mr{even}}$, for large enough $p$, can
be seen as an object of $\mc{H}^r_N/a^{\mb{Z}}\text{-}\mb{Z}\mc{I}(F^2)$
(not only of $\mc{I}(F^2)$!).

Now, the invariance by $(\mr{Frob}_X\times\mr{id}_X)^+$ shows that
\begin{equation*}
 \frac{1}{r^2!}\sum_{n=1}^{r^2!}(\mr{Frob}_X^n\times\mr{id}_X)^+
  (\mc{H}^{\leq p}_{\mr{even}})
  =\mc{H}^{\leq p}_{\mr{even}}
\end{equation*}
in $\Gr\mc{I}(F^2)$.
Thus by Proposition \ref{fundpropnegH} (ii), it turns out that
$\mc{H}^{\leq p}_{\mr{even}}$ considered in $\mb{Q}\Gr\mc{I}(F^2)$
coincides with $\mc{H}^*_{N,\mr{ess}}$. This implies that the
coefficients of $\mc{H}^*_{N,\mr{ess}}$, which is {\it a priori}
rational numbers by construction, are positive integers.
Now, recall that for a smooth scheme $Y$ over $k$
and an open dense subscheme $U\subset Y$, the restriction functor
$\mc{I}(Y)\rightarrow\mc{I}(U)$ is fully faithful (cf.\ \cite[Thm
5.2.1]{KeSS1}).
This implies that, since $\mc{H}^{\leq p}_{\mr{even}}\in\mc{I}(S_N)$,
the action of $\mc{H}^r_N/a^{\mb{Z}}$ and the invariance by
$(\mr{Frob}_X\times\mr{id}_X)^+$ of
$\mc{H}^{\leq p}_{\mr{even}}$ as an object of $\mc{I}(F^2)$ can be
extended uniquely to
an action of $\mc{H}^r_N/a^{\mb{Z}}$ and an invariance by
$(\mr{Frob}_X\times\mr{id}_X)^+$ as an object of $\mc{I}(S_N)$. Thus,
$\mc{H}^{\leq p}_{\mr{even}}$ can be seen as an element of
$\mc{H}^r_N/a^{\mb{Z}}\text{-}\mb{Z}\mc{I}(S_N)$. We denote by
$\mc{H}_{N,\mr{ess}}$ the semi-simplification of $\mc{H}^{\leq
p}_{\mr{even}}$ (for $p$ large enough needless to say)
as an object of $\mc{H}^r_N/a^{\mb{Z}}\text{-}\mb{Z}\mc{I}(S_N)$, which
is equal to $\mc{H}_{N,\mr{ess}}^*$ considered as elements of
$\Gr\mc{I}(S_N)$.

\subsubsection{}
We need to calculate the trace of the Hecke action on
$\mc{H}_{N,\mr{ess}}$. Let $f\in\mc{H}^r_N/a^{\mb{Z}}$.
Recall that $\Gamma_f$ is a correspondence
\begin{equation*}
 \Ch^{r,\overline{p}\leq p}_N/a^{\mb{Z}}\times_{S_N}U
  \rightsquigarrow
  \Ch^{r,\overline{p}\leq
  q}_N/a^{\mb{Z}}\times_{S_N}U
\end{equation*}
for some open dense subscheme $U\subset S_N$ and $q\geq p$.
We denote by $\Gamma'_f$ the pull-back of $\Gamma_f$ by the open
immersion
\begin{align*}
 \bigl(\Ch^{r,\overline{p}\leq p}_N/a^{\mb{Z}}\times
  \Ch^{r,\overline{p}\leq p}_N/a^{\mb{Z}}\bigr)
  &\times_{(S_N\times S_N)}(U\times U)\\
  &\hookrightarrow
  \bigl(\Ch^{r,\overline{p}\leq p}_N/a^{\mb{Z}}\times
  \Ch^{r,\overline{p}\leq q}_N/a^{\mb{Z}}\bigr)
  \times_{(S_N\times S_N)}(U\times U).
\end{align*}
Now, we take the normalization of the morphism
\begin{equation*}
 \Gamma'_f\rightarrow
  \Ch^{r,\overline{p}\leq p}_N/a^{\mb{Z}}\times
  \Ch^{r,\overline{p}\leq p}_N/a^{\mb{Z}}\hookrightarrow
  \Chtprop{r,\overline{p}\leq p}{N}/a^{\mb{Z}}\times
  \Chtprop{r,\overline{p}\leq p}{N}/a^{\mb{Z}}.
\end{equation*}
This normalization certainly defines a correspondence on
$\Chtprop{r,\overline{p}\leq p}{N}/a^{\mb{Z}}$.
A marvelous thing is that, in fact, the correspondence stabilizes
$\Chtpr{r,\overline{p}\leq p}{N}/a^{\mb{Z}}$ as Lafforgue
shows in \cite[V.14]{Laf}, and defines a correspondence on it.
Using the filtration of \ref{heckeactiondefined},
the action $\Gamma_f^*$ of the correspondence $\Gamma_f$ on
$\mc{H}^*_{\mr{c}}\bigl(\Chtpr{r,\overline{p}\leq
p}{N}/a^{\mb{Z}}\bigr)$ induces an action on
$\bigl(\mc{H}^*_{\mr{c}}\bigl(\Chtpr{r,\overline{p}\leq
p}{N}/a^{\mb{Z}}\bigr)\bigr)_{\mr{even}}$.

Let us introduce a notation. Let $Y$ be a smooth scheme over $k$,
$\mc{E}\in\mc{I}(Y)$, and $\alpha$ be an endomorphism of
$\mc{E}$. For a closed point $x$ of $Y$, take a geometric point
$\overline{x}$ above $x$. Recalling the notation of
\ref{situationfix},
$\alpha$ induces an endomorphism of $\iota_{\overline{x}}(\mc{E})$ which
commutes with $\mr{Frob}_x$. We denote
$\mr{Tr}\bigl(\alpha\circ\mr{Frob}_x^n:\iota_{\overline{x}}(\mc{E})\bigr)$
by $\mr{Tr}(\alpha\times\mr{Frob}^n_x:\mc{E})$ or
$\mr{Tr}_{\mc{E}}(\alpha\times\mr{Frob}_x^n)$, which does not depend on
the choice of $\overline{x}$.
Using this notation, we put
\begin{equation*}
 \mr{Tr}^{\leq p}_{\mc{H}^*_{N,\mr{ess}}}
  (f\times\mr{Frob}_x^n):=
  \mr{Tr}
  \Bigl(\Gamma^*_f\times\mr{Frob}_x^n:
   \bigl(\mc{H}^{2r-2}_{\mr{c}}\bigl(
   \Chtpr{r,\overline{p}\leq p}{N}/a^{\mb{Z}}
   \bigr)\bigr)_{\mr{even}}
  \Bigr).
\end{equation*}
We need to compare the trace of the action of correspondences on
$\mc{H}_{N,\mr{ess}}$ defined using $\Ch$ in the previous paragraph and
$\mr{Tr}^{\leq p}_{\mc{H}^*_{N,\mr{ess}}}$.

\begin{lem}[{\cite[VI.23, 24]{Laf}}]
 \label{calccorresprime}
 (i) Let $\mc{E}$ be a mixed object in $\mc{I}(S_N)$, and $f$
 be an endomorphism on $\mc{E}$. Assume that $\mc{E}$ has a filtration
 $F^{\bullet}\mc{E}$ compatible with $f$. Take $I\subset\mb{Z}$, and put
 $\mc{F}:=\bigoplus_{i\in I}\mr{gr}^i(\mc{E})$.
 Let $\{\mc{F}\}$ denote the set of irreducible objects in
 $\mc{I}(S_N)$ appearing in $\mc{F}$. Then
 there exists a unique set of complex numbers $\{c_{\mc{E}'}\}$ such
 that, for any $x\in|S_N|$ and $n\in\mb{Z}$, we have
 \begin{equation}
  \label{traceequaldecomp}
  \tag{$\star$}
  \mr{Tr}_{\mc{F}}(f\times\mr{Frob}_x^n)=
   \sum_{\mc{E}'\in\{\mc{F}\}}c_{\mc{E}'}\cdot
   \mr{Tr}_{\mc{E}'}(\mr{Frob}_x^n).
 \end{equation}

 (ii)  Let $f\in\mc{H}^r_N/a^{\mb{Z}}$, and take a large enough convex polygon
 $p$. We have
 \begin{equation*}
  \mr{Tr}_{\mc{H}_{N,\mr{ess}}}(f\times\mr{Frob}_x^n)=
   \mr{Tr}^{\leq p}_{\mc{H}^*_{N,\mr{ess}}}(f\times\mr{Frob}_x^n)
 \end{equation*}
 where the action on the left hand side is the one defined in
 \ref{heckeactiondefined}.
\end{lem}
\begin{proof}
 Let us show (i). Since $\mc{E}$ is assumed to be mixed, any object in
 $\{\mc{F}\}$ is pure by \cite[4.2.3, 4.3.4]{AC}, thus the uniqueness
 follows by the \v{C}ebotarev density \ref{Cebdensity}.
 We can see $\mc{F}$ as an object of $\mb{Z}\mbox{-}\mc{I}(S_N)$ where
 the action of $\mb{Z}$ is defined by $f$. Let $\mc{G}$ be an
 irreducible object in $\mb{Z}\mbox{-}\mc{I}(S_N)$, then by Lemma
 \ref{exteritensprodcat}, it can be written as $V\boxtimes\mc{G}'$ where
 $V$ is an irreducible representation of $\mb{Z}$ and $\mc{G}'$ is an
 irreducible object in $\mc{I}(S_N)$. Since $\mb{Z}$ is abelian,
 $\dim(V)=1$ and $f$ act as multiplication by $c$. This implies that
 $\mr{Tr}_{\mc{G}}(f\times\mr{Frob}^n_x)=
 c\cdot\mr{Tr}_{\mc{G}'}(\mr{Frob}^n_x)$, and (\ref{traceequaldecomp})
 follows.

 For a proof of (ii), we copy the argument of {\em ibid.}. Let us
 sketch. The commutative diagram in the proof of Corollary
 \ref{kercokerneg} yields the following commutative diagram for $p''\geq
 p'$ sufficiently large (cf.\ \cite[p.185]{Laf})
 \begin{equation*}
  \xymatrix{
   \mc{H}^{2r-2}_{\mr{c}}\bigl(\Ch^{r,\overline{p}\leq
   p''}_N/a^{\mb{Z}}\bigr)\ar[rr]&&
   \mc{H}^{2r-2}_{\mr{c}}\bigl(\Chtpr{r,\overline{p}\leq
   p''}{N}/a^{\mb{Z}}\bigr)\ar[d]\\
  \mc{H}^{2r-2}_{\mr{c}}\bigl(\Ch^{r,\overline{p}\leq
   p}_N/a^{\mb{Z}}\bigr)\ar[u]^{f}\ar[r]&
   \mc{H}^{2r-2}_{\mr{c}}\bigl(\Chtpr{r,\overline{p}\leq
   p}{N}/a^{\mb{Z}}\bigr)\ar[r]_f&
   \mc{H}^{2r-2}_{\mr{c}}\bigl(\Chtpr{r,\overline{p}\leq
   p}{N}/a^{\mb{Z}}\bigr),
   }
 \end{equation*}
 where the homomorphisms marked as ``$f$'' are induced by the
 correspondence associated to $f$, and the others are canonical ones.
 Thus the claim follows by Corollary \ref{kercokerneg}.
\end{proof}

\subsubsection{}
\label{computoftrauselp}
For an unramified irreducible admissible representation $\pi$ of
$\mr{GL}_r(F_x)$ at some place $x$ of $F$ and $t\in\mb{Z}$, we put
\begin{equation*}
 z_{\bullet}^{t}(\pi):=z_1(\pi)^{t}+\dots+z_r(\pi)^{t},
\end{equation*}
where $z_i(\pi)$ denotes the Hecke eigenvalue of $\pi$.
Take a closed point $x$ in $X\times X$. We denote by $\infty_x$ (resp.\
$0_x$) the image of $x$ by the first (resp.\ second) projection. We note that
$\deg(x)=\mr{lcm}\bigl(\deg(\infty_x),\deg(0_x)\bigr)$.

\begin{lem*}[{\cite[VI.25]{Laf}}]
 Let $f\in\mc{H}^r_N/a^{\mb{Z}}$. Then there exists an open dense
 subscheme $U_f\subset S_N$ such that for any $x\in U_f$, we have
 \begin{equation*}
  \mr{Tr}_{\mc{H}_{N,\mr{ess}}}\bigl(f\times
   \mr{Frob}_x^{-s/\deg(x)}\bigr)=
   q^{(r-1)s}\sum_{\pi\in\{\pi\}^r_N}\mr{Tr}_\pi(f)\cdot
   z_{\bullet}^{-s'}(\pi_{\infty_x})\cdot
   z_{\bullet}^{u'}(\pi_{0_x}),
 \end{equation*}
 where $s=\deg(\infty_x)s'=\deg(0_x)u'\in\mb{Z}\cdot\deg(x)$.
\end{lem*}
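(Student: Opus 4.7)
The plan is to reduce to the $\ell$-adic analogue \cite[VI.25]{Laf} via the $\ell$-independence theorem \ref{indepofltrace}. First, Lemma \ref{calccorresprime} identifies the left hand side with $\mr{Tr}^{\leq p}_{\mc{H}^*_{N,\mr{ess}}}(f\times\mr{Frob}_x^{-s/\deg(x)})$ for any sufficiently large convex polygon $p$, where the Hecke action comes from the finite \'{e}tale correspondence $\Gamma_f$ (normalized via $\varrho'=\mr{norm}\circ\varrho$, cf.\ \ref{complafourcorre}) on the proper admissible stack $\Chtpr{r,\overline{p}\leq p}{N}/a^{\mb{Z}}$ constructed in \cite[V.14]{Laf}. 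By the very definition of $\mc{H}^*_{N,\mr{ess}}$, this trace differs from the trace on $\mc{H}^{2r-2}_{\mr{c}}(\Chtpr{r,\overline{p}\leq p}{N}/a^{\mb{Z}})$ by a sum of traces over complete $r$-negligible virtual objects; the parallel construction on the $\ell$-adic side yields the analogous decomposition of an $\mc{H}^*_{N,\mr{ess},\ell}$, and by the induction hypothesis \ref{starproof}.\ref{prodfoumindhyp} (which provides the Langlands correspondence in rank $<r$) those $r$-negligible contributions have identical traces on the $p$-adic and $\ell$-adic sides at every closed point.

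Next, I will apply Theorem \ref{indepofltrace} on the smooth compactifiable admissible stack $\Chtpr{r,\overline{p}\leq p}{N}/a^{\mb{Z}}$ together with the three pairwise commuting correspondences $\Gamma_f$, $\mr{Frob}_\infty$ and $\mr{Frob}_0$. Taking $U_f\subset S_N$ to be the open dense locus on which Lemma \ref{fibercorres} applies simultaneously to $\Gamma_f\circ\mr{Frob}_\infty^{s'}\circ\mr{Frob}_0^{u'}$ for the relevant $(s',u')$, these correspondences restrict to correspondences on the fiber over $x\in U_f$, so Theorem \ref{indepofltrace} yields equality of the $p$-adic and $\ell$-adic Lefschetz numbers on the fiber. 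Combined with the identification that the combined partial Frobenius $\mr{Frob}_\infty^{s'}\mr{Frob}_0^{u'}$ specializes on the stalk at $x$ to $\mr{Frob}_x^{s/\deg(x)}$ (which follows from $s=\deg(\infty_x)s'=\deg(0_x)u'$ and the commutation of partial Frobenii with total Frobenius), this gives
\[
\mr{Tr}^{\leq p}_{\mc{H}^*_{N,\mr{ess}}}(f\times\mr{Frob}_x^{-s/\deg(x)})=\mr{Tr}^{\leq p}_{\mc{H}^*_{N,\mr{ess},\ell}}(f\times\mr{Frob}_x^{-s/\deg(x)})
\]
for every closed point $x\in U_f$.

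The right hand side has been computed by Lafforgue in \cite[VI.25]{Laf} via the Grothendieck--Lefschetz trace formula on the moduli of shtukas combined with the Arthur--Selberg trace formula for $\mr{GL}_r$, and equals precisely $q^{(r-1)s}\sum_{\pi\in\{\pi\}^r_N}\mr{Tr}_\pi(f)\cdot z_\bullet^{-s'}(\pi_{\infty_x})\cdot z_\bullet^{u'}(\pi_{0_x})$, which is the desired formula. The substantive work is thus not a new global trace formula computation but rather the verification of the compatibilities between our conventions and Lafforgue's: the normalization of Hecke actions by $\mr{norm}$, the identification of the partial Frobenii with stalkwise Frobenius at closed points of $S_N$, and the alignment of the $r$-negligible subtractions through the induction hypothesis. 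This last point is the principal obstacle, as it requires invoking the full strength of the Langlands correspondence in rank $<r$ to match $p$-adic and $\ell$-adic contributions term by term; once those compatibilities are secured, the statement follows directly.
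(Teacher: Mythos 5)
Your overall strategy is the same as the paper's (apply the $\ell$-independence theorem, match the negligible contributions, and import Lafforgue's $\ell$-adic computation), but there is a genuine gap at the critical step: the assertion that ``those $r$-negligible contributions have identical traces on the $p$-adic and $\ell$-adic sides at every closed point'' is precisely what remains to be proved, and it does not follow automatically from the rank-$<r$ Langlands correspondence. The companions $\widetilde{\sigma}_\iota$ of $\sigma_\iota$ guarantee that the \emph{Frobenius} traces of the virtual objects $\mc{H}^*_{N,\mr{ess}}$ and $\mc{H}^*_{N,\mr{ess},\ell}$ agree at every closed point; but the quantity $\mr{Tr}_{\mc{H}_{N,\mr{ess}}}(f\times\mr{Frob}_x^n)$ involves the Hecke action, and by Lemma \ref{calccorresprime} it is a linear combination $\sum_{\mc{E}}c_{\mc{E}}\,\mr{Tr}_{\mc{E}}(\mr{Frob}_x^n)$ with coefficients $c_{\mc{E}}$ that depend on $f$ and the specific construction of the filtration in \ref{heckeactiondefined}. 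Nothing so far forces these $f$-dependent coefficients to agree with the corresponding $\ell$-adic coefficients.

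Indeed, the paper, after using $\ell$-independence for the full cohomology and the rank-$<r$ correspondence to rewrite the discrepancy, arrives at an identity that has a priori an error term $q^{(r-1)s}\sum_\iota c_\iota\lambda_\iota^s\,\mr{Tr}_{q'^+\mc{E}'_\iota\otimes q''^+\mc{E}''_\iota}(\mr{Frob}_x^{-s/\deg(x)})$ with unknown coefficients $c_\iota$ and eigenvalues $\lambda_\iota$ attached to $r$-negligible objects $q'^+\mc{E}'_\iota\otimes q''^+\mc{E}''_\iota$. Showing $c_\iota=0$ is the actual content of the lemma and occupies most of the proof: it is a Rankin--Selberg $L$-function argument along the lines of Lafforgue \cite[VI.25]{Laf} (and his VI.3), exploiting the purity of $\mc{H}_{N,\mr{ess}}$ (weight $2r-2$) and the absolute value $1$ of unitary Hecke eigenvalues $z_i(\pi_{\infty_x})$, $z_j(\pi_{0_x})$ to locate poles of $\frac{d}{dZ}\log L_{U_f}(\cdot,Z)$ at $|Z|=q^{-2}$ for the negligible contributions, versus absence of such poles for the essential part, and a supplementary estimate to control the error coming from the bad locus $S_N\setminus U_f$. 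Omitting this analysis, as your proposal does, amounts to assuming the conclusion.
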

\begin{proof}
 By Corollary \ref{indepofltrace}, there exists an open dense subscheme
 $U'_f\subset S_N$ such that
 \begin{align}
  \tag{$\star$}
  \label{complandpappl}
  \mr{Tr}\Bigl(f\times\mr{Frob}_x^{-s/\deg(x)}&:
  \mc{H}^*_{\mr{c}}\bigl(\Chtpr{r,\overline{p}\leq
  p}{N}/a^{\mb{Z}}\bigr)\Bigr)\\
  \notag
  &=
  \mr{Tr}\Bigl(f\times\mr{Frob}_x^{-s/\deg(x)}:
  \mc{H}^*_{\mr{c}}\bigl(\Chtpr{r,\overline{p}\leq
  p}{N}/a^{\mb{Z}},\overline{\mb{Q}}_\ell\bigr)\Bigr)
 \end{align}
 for any $x\in U'_f$.
 Let $\mc{H}_{N,\mr{ess},\ell}$ be $H_{N,\mr{ess}}$ defined in
 \cite[after VI.22]{Laf}. Note that since
 $\mc{H}^*_{\mr{c}}(\Chtpr{r,\overline{p}\leq p}{N}
 /a^{\mb{Z}})$ is mixed by Theorem \ref{theoryofweightadmiss},
 Lemma \ref{calccorresprime} (i) is applicable.
 Let $\{\mc{E}\}$ (resp.\ $\{\sigma\}$) be the set of $r$-negligible
 objects in $\mc{I}(S_N)$ (resp.\ in $\mc{W}_{\ell}(S_N)$).
 We have
 \begin{align}
  \label{compfirstuseind}
  \notag
  \mr{Tr}\bigl(f\times\mr{Frob}&_x^{-s/\deg(x)}:
  \mc{H}_{N,\mr{ess}}\bigr)=
  \mr{Tr}^{\leq p}\bigl(f\times\mr{Frob}_x^{-s/\deg(x)}:
   \mc{H}^*_{N,\mr{ess}}\bigr)
  \\
  \tag{$\star\star$}
  =&\mr{Tr}\Bigl(f\times\mr{Frob}_x^{-s/\deg(x)}:
   \mc{H}^*_{\mr{c}}\bigl(\Chtpr{r,\overline{p}\leq
   p}{N}/a^{\mb{Z}}\bigr)\Bigr)\\
  \notag&\qquad\qquad\qquad
  +q^{(r-1)s}\sum_{\mc{E}\in\{\mc{E}\}}c_{\mc{E}}\,
  \mr{Tr}_{\mc{E}}(\mr{Frob}_x^{-s/\deg(x)})\\
  \notag
  =&\mr{Tr}\Bigl(f\times\mr{Frob}_x^{-s/\deg(x)}:
   \mc{H}^*_{\mr{c}}\bigl(\Chtpr{r,\overline{p}\leq
  p}{N}/a^{\mb{Z}},\overline{\mb{Q}}_\ell\bigr)\Bigr)\\
  \notag&\qquad\qquad\qquad
  +q^{(r-1)s}\sum_{\mc{E}\in\{\mc{E}\}}c_{\mc{E}}\,
  \mr{Tr}_{\mc{E}}(\mr{Frob}_x^{-s/\deg(x)})\\
  \notag
  =&\mr{Tr}\bigl(f\times\mr{Frob}_x^{-s/\deg(x)}:
   \mc{H}_{N,\mr{ess},\ell}\bigr)\\
  \notag&\qquad\qquad\qquad
  +q^{(r-1)s}\sum_{A\in\{\mc{E}\}\cup\{\sigma\}}c_{A}\,
  \mr{Tr}_{A}(\mr{Frob}_x^{-s/\deg(x)})
 \end{align}
 where the first and second equality hold by Lemma
 \ref{calccorresprime} (ii) and (i) respectively,
 the third by (\ref{complandpappl}), and the last by repeating the
 corresponding argument in the $\ell$-adic situation.
 Now, for $\mc{G}\in\mc{I}(S_N)$, an endomorphism $f$ of $\mc{G}$, and an
 integer $n$, we put
 \begin{align*}
  \overline{\mr{Tr}}(f\times\mr{Frob}_x^n:\mc{G}):=&
  \frac{1}{r^2!}\sum_{k=1}^{r^2!}\mr{Tr}
  \bigl(f\times\mr{Frob}_x^n:
  (\mr{Frob}_X^k\times\mr{id}_X)^+(\mc{G})\bigr)\\
  =&
  \frac{1}{r^2!}\sum_{k=1}^{r^2!}\mr{Tr}_{\mc{G}}
  \bigl(f\times\mr{Frob}^n_{(\mr{Frob}^k_X\times\mr{id}_X)(x)}\bigr),
 \end{align*}
 and similarly for objects in $\mc{W}_{\ell}(S_N)$. Note that when
 $\mc{G}\in\{\mc{E}\}$, there exist $\mc{G}'_\iota$
 and $\mc{G}''_\iota$ of rank $<r$ and pure of weight $0$ in
 $\mc{I}(X-N)$, and constants $c_\iota$, $\lambda_\iota$ such that
 \begin{equation*}
  \overline{\mr{Tr}}(\mr{Frob}_x^n:\mc{G})=
   \sum_{\iota}c_{\iota}\lambda_{\iota}^n\,
   \mr{Tr}_{q'^+\mc{G}'_{\iota}\otimes q''^+\mc{G}''_{\iota}}
   (\mr{Frob}_x^n).
 \end{equation*}
 Put $U_f:=\bigcap_{n=0}^{r^2!}
 (\mr{Frob}^n_X\times\mr{id}_X)^{-1}(U'_f)$.
 Since $\mc{H}_{N,\mr{ess}}$ and
 $\mc{H}_{N,\mr{ess},\ell}$ are invariant under the pull-back by
 $\mr{Frob}_X\times\mr{id}_X$, the computation (\ref{compfirstuseind})
 implies that for $x\in U_f$,
 \begin{align*}
  \mr{Tr}\bigl(f\times\mr{Frob}&_x^{-s/\deg(x)}:
  \mc{H}_{N,\mr{ess}}\bigr)
  =
  \overline{\mr{Tr}}\bigl(f\times\mr{Frob}_x^{-s/\deg(x)}:
  \mc{H}_{N,\mr{ess}}\bigr)
  \\
  =&
  \overline{\mr{Tr}}\bigl(f\times\mr{Frob}_x^{-s/\deg(x)}:
  \mc{H}_{N,\mr{ess},\ell}\bigr)\\
  \notag&\qquad\qquad\qquad
  +q^{(r-1)s}\sum_{A\in\{\mc{E}\}\cup\{\sigma\}}c_A\,
  \overline{\mr{Tr}}_{A}(\mr{Frob}_x^{-s/\deg(x)})\\
  =&
  \mr{Tr}\bigl(f\times\mr{Frob}_x^{-s/\deg(x)}:
  \mc{H}_{N,\mr{ess},\ell}\bigr)\\
  \notag&\qquad\qquad\qquad
  +q^{(r-1)s}
  \sum_{\iota}c_\iota\lambda_\iota^s\,
  \mr{Tr}_{q'^+\mc{E}'_\iota\otimes q''^+\mc{E}''_\iota}
  (\mr{Frob}_x^{-s/\deg(x)})
 \end{align*}
 where $\mc{E}'_\iota$ and $\mc{E}''_\iota$ are irreducible isocrystals
 on $X-N$ of rank $<r$ and pure of weight $0$, and we used Langlands
 correspondence for rank $<r$ for the last equality.
 By \cite[VI.25]{Laf}, if we further shrink $U_f$, we finally obtain
 \begin{align}
  \label{equcaldiffcor}
  \tag{$\heartsuit$}
  q^{-(r-1)s}\,\mr{Tr}
  \bigl(f\times\mr{Frob}_x^{n}:\mc{H}_{N,\mr{ess}}\bigr)-
  \sum_{\pi\in\{\pi\}^r_N}&\mr{Tr}_\pi(f)\cdot
  z_{\bullet}^{-s'}(\pi_{\infty_x})\cdot
  z_{\bullet}^{u'}(\pi_{0_x})\\
  \notag
  &=\sum_{\iota}c_\iota\lambda_\iota^s\,
  \mr{Tr}_{q'^+\mc{E}'_\iota\otimes q''^+\mc{E}''_\iota}
  (\mr{Frob}_x^{-s/\deg(x)}).
 \end{align}
 Since $\mc{H}_{N,\mr{ess}}$ is pure of weight $2r-2$ and we know that
 $|z_i(\pi_{\infty})|$ and $|z_j(\pi_0)|$ are $1$, we have
 $|\lambda_\iota|=1$.
 We need to show that the right hand side of (\ref{equcaldiffcor}) is
 $0$. The argument is essentially the same as {\it ibid}.,
 but for the reader, we recall it. Assume otherwise. Then there would
 exist irreducible isocrystals $\mc{E}'$, $\mc{E}''$ of rank $<r$ and
 pure of weight $0$ such that the series
 \begin{equation*}
  \sum_{\iota}c_\iota\frac{d}{dZ}\log L_{U_f}
   \bigl((q'^+\mc{E}'_{\iota}\otimes q''^+\mc{E}''_\iota)
   \otimes(q'^+\mc{E}'^\vee\otimes q''^+\mc{E}''^\vee),
   \lambda_{\iota}Z\bigr)
 \end{equation*}
 has a pole on $|Z|=q^{-2}$ by Corollary \ref{conseqweighidenti} since
 $|\lambda_\iota|=1$. On the other hand, the series
 \begin{equation*}
   \frac{d}{dZ}\log L_{U_f}\bigl(\mc{H}
   \otimes(q'^+\mc{E}'^\vee\otimes q''^+\mc{E}''^\vee),
   q^{1-r}Z\bigr)
 \end{equation*}
 does not have poles at $|Z|=q^{-2}$ since $\mc{H}$ is essential. Now,
 let $\pi'$ and $\pi''$ be the automorphic cuspidal representations
 corresponding to $\mc{E}'$ and $\mc{E}''$.
 For a locally closed subscheme $Y$ of $(X-N)\times(X-N)$, let us denote
 by $\mf{C}_{s,Y}$ the subset of
 $(x,s',u')\in|Y|\times\mb{N}\times\mb{N}$ such that
 $s\cdot\deg(\infty_x)^{-1}=s'\in\mb{N}$ and
 $s\cdot\deg(0_x)^{-1}=u'\in\mb{N}$, and put
 \begin{align*}
  \mr{Ser}_Y(Z):=
  \sum_{s\geq 1}Z^{s-1}\sum_{(x,s',u')\in\mf{C}_{s,Y}}
  \Bigl[
  \bigl(\deg(\infty_x)\cdot
  &z_{\bullet}^{-s'}(\pi_{\infty_x})\cdot
  z_{\bullet}^{s'}(\pi'_{\infty_x})\bigr)\\
  &\times
  \bigl(\deg(0_x)\cdot
  z_{\bullet}^{u'}(\pi_{0_x})\cdot
  z_{\bullet}^{u'}(\pi''_{0_x})\bigr)
  \Bigr].
 \end{align*}
 The series
 \begin{equation*}
  \frac{d}{dZ}\log L_{X-N}(\pi\times\pi'^\vee,Z),\qquad
   \frac{d}{dZ}\log L_{X-N}(\pi^\vee\times\pi''^\vee,Z)
 \end{equation*}
 do not have poles on $|Z|\leq q^{-1}$ by [{\it ibid.}, B.10]. Thus, the
 ``product series'' $\mr{Ser}_{S_N}$ does not have poles at $|Z|\leq
 q^{-2}$. We claim that the series $\mr{Ser}_{U_f}$ does not have
 poles at $|Z|\leq q^{-2}$ either. Indeed, putting
 $W:=S_N\setminus U_f$, we have
 $\mr{Ser}_W=\mr{Ser}_{S_N}-\mr{Ser}_{U_f}$. Since
 $|z_i(\pi)|=|z_j(\pi')|=|z_k(\pi'')|=1$, we have
 \begin{equation*}
  |\mr{Ser}_W(Z)|\leq\sum_{s}
   |Z|^{s-1}\sum_{(x,s',u')\in\mf{C}_W}
   \deg(\infty_x)\cdot\deg(0_x).
 \end{equation*}
 Since $W$ is of dimension $1$, the latter series converges on
 $|Z|<q^{-1}$, and thus $\mr{Ser}_W$ converges absolutely on the same
 area, which implies the claim.
 Combining these, if we put
 \begin{equation*}
  \sum_sZ^{s-1}\sum_{\mf{C}_{s,U_f}}
   \deg(\infty_x)\cdot\deg(0_x)\cdot
   z^{s'}(\pi'_{\infty'_x})\cdot
   z^{u'}(\pi''_{0_x})
 \end{equation*}
 at the head of the both sides of (\ref{equcaldiffcor}), the
 left side does not have poles at $|Z|=q^{-2}$ whereas the right side
 does at $|Z|=q^{-2}$, which is a contradiction.
\end{proof}

\begin{lem}[{\cite[VI.26]{Laf}}]
 \label{compuoffrobeigvalcon}
 As an object of $\mc{H}^r_N/a^{\mb{Z}}\text{-}\mb{Z}\mc{I}(S_N)$, we
 can write $\mc{H}_{N,\mr{ess}}$ as
 \begin{equation*}
  \bigoplus_{\pi\in\{\pi\}^r_N}\pi\boxtimes \mc{H}_{\pi}(1-r)
 \end{equation*}
 and there exists an open dense subscheme $U_\pi\subset S_N$ for
 any $\pi\in\{\pi\}^r_N$ such that $\mc{H}_\pi$ is pure of weight $0$,
 and for any closed point $x\in U_\pi$ and
 $s=\deg(\infty_x)s'=\deg(0_x)u'$, we have
 \begin{equation*}
  \mr{Tr}_{\mc{H}_\pi}\bigl(\mr{Frob}_x^{-s/\deg(x)}\bigr)=
      z_{\bullet}^{-s'}(\pi_{\infty_x})\cdot
   z_{\bullet}^{u'}(\pi_{0_x}).
 \end{equation*}
\end{lem}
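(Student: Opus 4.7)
The strategy is to decompose $\mc{H}_{N,\mr{ess}}$ into Hecke-isotypic components, then identify each piece via the trace formula of the preceding Lemma VI.25 together with \v{C}ebotarev density. By the construction in \ref{heckeactiondefined}, $\mc{H}_{N,\mr{ess}}$ is the semisimplification (with respect to the Hecke filtration) of a $\mc{H}^r_N/a^{\mb{Z}}$-module in $\mb{Z}\mc{I}(F^2)$, and by \ref{calccorresprime} this Hecke action extends canonically to $\mb{Z}\mc{I}(S_N)$ and commutes with the partial Frobenii. Since the Hecke algebra acts semisimply on the underlying representation, there is a canonical decomposition
\begin{equation*}
 \mc{H}_{N,\mr{ess}} = \bigoplus_\pi \pi \boxtimes \mc{H}'_\pi
\end{equation*}
in $\mc{H}^r_N/a^{\mb{Z}}\text{-}\mb{Z}\mc{I}(S_N)$, where $\pi$ ranges over isomorphism classes of irreducible $\mc{H}^r_N/a^{\mb{Z}}$-modules appearing, and each multiplicity space $\mc{H}'_\pi \in \mb{Z}\mc{I}(S_N)$ inherits a partial Frobenius structure from that on $\mc{H}_{N,\mr{ess}}$.

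Next, I would identify the components by comparing Frobenius traces. Applying the decomposition above gives, for any $f \in \mc{H}^r_N/a^{\mb{Z}}$, any closed point $x \in S_N$ and any $s \in \mb{Z}\cdot\deg(x)$ with $s = \deg(\infty_x)s' = \deg(0_x)u'$,
\begin{equation*}
 \mr{Tr}_{\mc{H}_{N,\mr{ess}}}\bigl(f \times \mr{Frob}_x^{-s/\deg(x)}\bigr) = \sum_\pi \mr{Tr}_\pi(f) \cdot \mr{Tr}_{\mc{H}'_\pi}\bigl(\mr{Frob}_x^{-s/\deg(x)}\bigr).
\end{equation*}
Comparing with Lemma VI.25 on its open dense $U_f \subset S_N$ and using linear independence of the characters $f \mapsto \mr{Tr}_\pi(f)$ on pairwise non-isomorphic irreducibles (obtained by letting $f$ range over a generating set), one deduces that $\mr{Tr}_{\mc{H}'_\pi}(\mr{Frob}_x^{-s/\deg(x)})$ vanishes for $\pi \notin \{\pi\}^r_N$ and equals $q^{(r-1)s}\, z_\bullet^{-s'}(\pi_{\infty_x})\cdot z_\bullet^{u'}(\pi_{0_x})$ for $\pi \in \{\pi\}^r_N$. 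Because $\mc{H}'_\pi$ is a direct summand of the $\iota$-pure object $\mc{H}_{N,\mr{ess}}$ (Proposition 20(i)), it is $\iota$-mixed, and \v{C}ebotarev density \ref{Cebdensity} then forces $\mc{H}'_\pi = 0$ for $\pi \notin \{\pi\}^r_N$. Setting $\mc{H}_\pi := \mc{H}'_\pi(r-1)$ absorbs the $q^{(r-1)s}$ into a Tate twist and produces the stated decomposition $\bigoplus_{\pi \in \{\pi\}^r_N} \pi \boxtimes \mc{H}_\pi(1-r)$ together with the claimed trace formula; the open dense $U_\pi$ is obtained as the intersection of the $U_{f_i}$ over a finite generating set $\{f_i\}$ of $\mc{H}^r_N/a^{\mb{Z}}$. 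Purity of weight $0$ for $\mc{H}_\pi$ follows from purity of weight $2r-2$ for $\mc{H}_{N,\mr{ess}}$ together with the weight shift by the twist $(r-1)$.

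The principal obstacle is ensuring that the isotypic decomposition actually takes place in $\mb{Z}\mc{I}(S_N)$ and not merely at the level of representation-theoretic vector spaces or in $\mb{Z}\mc{I}(F^2)$. A priori, the $\pi$-isotypic projector is a linear algebraic operation on the underlying Hecke module, and one must verify that the resulting multiplicity space inherits a compatible structure as an object of $\mb{Z}\mc{I}(S_N)$ with the correct partial Frobenius action. This is precisely what \ref{calccorresprime} provides: the Hecke correspondences, originally defined via the proper stack $\Chtpr{r,\overline{p}\leq p}{N}/a^{\mb{Z}}$, induce endomorphisms of $\mc{H}_{N,\mr{ess}}$ in $\mb{Z}\mc{I}(S_N)$ commuting with partial Frobenii, and the resulting trace agrees with the one computed via Lemma VI.25. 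A secondary technical point is the linear independence of Hecke traces separating the $\pi$-components; this reduces to standard strong multiplicity one for cuspidal automorphic representations of $\mr{GL}_r$ over $F$, which ensures that distinct irreducible $\mc{H}^r_N/a^{\mb{Z}}$-modules appearing in $\mc{H}_{N,\mr{ess}}$ have linearly independent characters.
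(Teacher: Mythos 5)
Your proposal reconstructs the argument the paper implicitly refers to (Lafforgue's VI.26): decompose $\mc{H}_{N,\mr{ess}}$ into Hecke-isotypic components (valid in $\mb{Z}\mc{I}(S_N)$ because the Hecke action of \ref{calccorresprime} lives there and commutes with the partial Frobenii), separate the components by linear independence of characters, match the multiplicity spaces' Frobenius traces against Lemma VI.25, and kill the unwanted pieces by Čebotarev density \ref{Cebdensity}; the Tate twist by $1-r$ then absorbs the $q^{(r-1)s}$ and gives purity of weight $0$. This is essentially the same proof the paper defers to with "We can copy the proof of [ibid.]", and it is correct; the only minor overstatement is the appeal to strong multiplicity one for linear independence of the Hecke characters of the finitely many non-isomorphic $\pi$'s appearing, which is just Jacobson density for the group algebra and does not require the deeper automorphic input.
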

\begin{proof}
 This can be proven similarly to the $\ell$-adic case. By Lemma
 \ref{exteritensprodcat},
 there exists a finite set of irreducible representations $\{\pi'\}$ of
 $\mc{H}^r_N/a^{\mb{Z}}$ and semi-simple objects $\mc{H}_{\pi'}$ in
 $\mb{Z}\mc{I}(S_N)$ for each $\pi'\in\{\pi'\}$ such that
 $\mc{H}_{N,\mr{ess}}=\bigoplus_{\pi'\in\{\pi'\}^r_N}\pi'\boxtimes
 \mc{H}_{\pi'}(1-r)$. For $\pi\in\{\pi\}^r_N\cup\{\pi'\}$, we
 can choose $f_\pi\in\mc{H}^r_N/a^{\mb{Z}}$ such that
 $\mr{Tr}_\pi(f_\pi)=1$ and $\mr{Tr}_{\pi'}(f_\pi)=0$ for any
 $\pi\neq\pi'\in\{\pi\}^r_N\cup\{\pi'\}$ (cf.\ proof of [{\it ibid.},
 VI.26]), and apply Lemma \ref{computoftrauselp} by taking $f=f_\pi$.
 Then the lemma holds with $U_\pi:=U_{f_\pi}$.
\end{proof}

\begin{thm}[{\cite[VI.27]{Laf}}]
 \label{concludethm}
 For any $\pi\in\{\pi\}^r_N$, we have
 \begin{equation*}
  \mc{H}_\pi=q'^+\mc{E}_\pi\otimes q''^+\mc{E}_\pi^\vee
 \end{equation*}
 as objects in $\mc{I}(S_N)$, where $\mc{E}_\pi$ is an
 isocrystal of rank $r$ on $X-N$ pure of weight $0$ corresponding to
 $\pi$ in the sense of Langlands.
\end{thm}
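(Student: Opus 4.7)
The plan is to follow the strategy of Lafforgue's proof of \cite[VI.27]{Laf}, adapted to the $p$-adic setting via the Tannakian formalism for isocrystals developed in \S\ref{defofisocweilgroup}--\ref{surjectwisweilgroup}. Choose a closed point $y=(\infty_y,0_y)$ of an open dense $U_\pi\subset S_N$ at which $\pi$ is unramified at both components and on which $\mc{H}_\pi$ is smooth. Via the equivalence (\ref{equivcattannfrob}), $\mc{H}_\pi$ corresponds to a finite-dimensional continuous $\overline{\mb{Q}}_p$-representation $\rho_\pi$ of $\mb{Z}\WI(U_\pi,y)$, the extra $\mb{Z}$-factor encoding the partial Frobenius structure $\mr{Frob}_\infty$ on $\mc{H}_\pi$.

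The key factorization step is to show that $\rho_\pi$ descends through the surjection $\mb{Z}\WI(U_\pi,y)\twoheadrightarrow\WI(X-N,\infty_y)\times\WI(X-N,0_y)$ of Lemma \ref{surjectwisweilgroup}. Granting this, $\rho_\pi$ becomes a representation of a direct product and hence, by Schur and semisimplicity, decomposes as a direct sum of exterior tensor products; Tannakian duality then yields overconvergent $F$-isocrystals $\mc{E}'$, $\mc{E}''$ on $X-N$ with $\mc{H}_\pi\cong q'^+\mc{E}'\otimes q''^+\mc{E}''$ in $\mc{I}(S_N)$. The descent itself is forced by the trace formula of the preceding lemma: at every closed point $x\in U_\pi$ and every $s=\deg(\infty_x)\,s'=\deg(0_x)\,u'$, the quantity $\mr{Tr}_{\mc{H}_\pi}(\mr{Frob}_x^{-s/\deg(x)})$ depends only on $(\infty_x,s')$ and $(0_x,u')$ and factors as a product. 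By \v{C}ebotarev density \ref{Cebdensity} (applied semisimply, using that $\mc{H}_\pi$ is pure of weight $0$ and hence a direct sum of irreducibles), the higher tensor invariants of $\rho_\pi^{\mr{ss}}$ match those of the candidate tensor lift, so the kernel of the surjection acts trivially; a rigidity argument using the partial Frobenius structure rules out non-trivial extensions, giving the full descent.

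Given $\mc{E}'$ and $\mc{E}''$, the trace formula yields $\mr{Tr}(\mr{Frob}_{\infty_x};\mc{E}')=z_\bullet(\pi_{\infty_x})$ for almost every $\infty_x$, so $\mc{E}'$ has rank $r$ and its Frobenius characteristic polynomial at unramified $\infty_x$ matches the Langlands prediction; the analogous computation at $0_x$ identifies $\mc{E}''$ with $\mc{E}'^\vee$ up to a finite-order twist which vanishes because the central character of $\pi$ has finite order. Setting $\mc{E}_\pi:=\mc{E}'$ completes the proof, with purity of weight $0$ inherited from $\mc{H}_\pi$ and the bound $|z_i(\pi_{\infty_x})|=1$. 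The main obstacle is the factorization step: the trace formula only constrains the semi-simplification of $\rho_\pi$, so passing from a product-of-traces identity to an honest exterior tensor product decomposition requires both the purity of $\mc{H}_\pi$ (to force semisimplicity) and the rigidity of the partial Frobenius action (to eliminate non-split extensions). If this direct approach runs into difficulty, the natural workaround is to transport the corresponding $\ell$-adic factorization established in \cite{Laf} to the $p$-adic side via the $\ell$-independence Theorem \ref{indepofltrace}, combined with the induction hypothesis \ref{starproof}.\ref{prodfoumindhyp} for ranks $<r$, which ensures compatibility of the Langlands correspondences at all lower ranks contributing to $\mc{H}_\pi$.
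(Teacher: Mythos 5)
Your approach takes a genuinely different (abstract Tannakian) route from the paper, but it has a serious gap at precisely the point you flag as the main obstacle. The trace identity $\mr{Tr}_{\mc{H}_\pi}(\mr{Frob}_x^{-s/\deg(x)})=z_\bullet^{-s'}(\pi_{\infty_x})\cdot z_\bullet^{u'}(\pi_{0_x})$ does \emph{not} by itself force $\rho_\pi$ to descend through $\mb{Z}\WI(U_\pi,y)\twoheadrightarrow\WI(X-N,\infty_y)\times\WI(X-N,0_y)$: knowing the trace factors as a product of two class functions does not produce a representation of the product group whose trace matches, because there is no a priori guarantee that $x\mapsto z_\bullet(\pi_x)$ is the character of an isocrystal of any rank. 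Your phrase "the candidate tensor lift" refers to an object that has not yet been constructed — it is exactly what the theorem is meant to produce — so invoking \v{C}ebotarev to compare $\rho_\pi^{\mr{ss}}$ against it is circular. The paper supplies the missing constructive step: it pulls $\mc{H}_\pi$ back to the curve $(X-N)^0=(X-N)\times\{0\}$, semi-simplifies to obtain $\mc{E}^0\in\mc{I}((X-N)^0)$, and shows by comparing Frobenius eigenvalues on $S_N\otimes k(0)$ that $q'^+\mc{E}^0\otimes q''^+\mc{E}^{0\vee}$ agrees with a twist of $\mc{H}^0_\pi$; this is what licenses the existence of $\mc{E}'$, $\mc{E}''$ on $X-N$ sharing an irreducible subobject with $\mc{H}_\pi$.

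A second gap is the rank computation. Even granting a factorization $\mc{H}_\pi\supset q'^+\mc{E}'\otimes q''^+\mc{E}''$, it does not follow "from the trace formula" that $\mc{E}'$ has rank $r$: the formula only constrains the product of the two Hecke-type sums. The paper pins this down with Lafforgue's $L$-function argument: if $\mr{rk}(\mc{E}')<r$, the induction hypothesis produces a cuspidal $\pi'$ of lower rank corresponding to $\mc{E}'$, the Rankin--Selberg logarithmic derivatives $\frac{d}{dZ}\log L(\pi\times\pi'^\vee,Z)$ and $\frac{d}{dZ}\log L(\pi^\vee\times\mc{E}''^\vee,Z)$ converge absolutely near $|Z|=q^{-1}$, and the resulting product series is holomorphic at $|Z|=q^{-2}$, contradicting the pole there forced by essentialness of $\mc{H}_\pi$ via \cite[VI.3]{Laf}; an analogous estimate then identifies $\mc{E}''\cong\mc{E}'^\vee$ up to twist. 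Your proposal omits both $L$-function estimates. Finally, the suggested $\ell$-independence fallback does not close either gap: Theorem \ref{indepofltrace} only equates traces of Frobenii on compactly supported cohomology, so it establishes that $\mc{H}_\pi$ and its $\ell$-adic analogue are Frobenius-compatible, but it cannot transport an $\ell$-adic \emph{sheaf} factorization into a factorization by $\emph{isocrystals}$ — the construction of the isocrystal $\mc{E}_\pi$ is the entire content of the theorem and has no $\ell$-adic avatar to transfer.
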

\begin{proof}
 Take a closed point $x\in U_\pi$, which lies over
 $(\infty,0)\in |X-N|\times|X-N|$. Let $X^0:=X\times0\hookrightarrow
 X\times X$ be the closed immersion, and let $(X-N)^0\subset X^0$ be the
 pull-back of $S_N$ by the closed immersion. Let
 $\mc{E}^0$ be the semi-simplification in $\mc{I}((X-N)^0)$
 of the pull-back of $\mc{H}_\pi$,
 which is pure of weight $0$. Let $\mc{H}^0_\pi$ be the pull-back on
 $S_N\otimes k(0)$.
 Let $\chi_i$ be the character ({\it i.e.}\ rank $1$ isocrystal on the
 point $0$) corresponding to the Hecke eigenvalue $z_i(\pi_0)$.
 Then the two semi-simple objects in $\mc{I}(S_N\otimes k(0))$
 \begin{equation*}
  q'^+\mc{E}^0\otimes q''^+\mc{E}^{0\vee},\qquad
   \mc{H}^0_\pi\otimes\bigoplus^{r}_{i=1}\chi_i\otimes
   \bigoplus^{r}_{i=1}\chi^\vee_i
 \end{equation*}
 have the same Frobenius eigenvalues at each closed point of
 $U_\pi\otimes k(0)$ by the explicit description of
 the Frobenius trace in Lemma \ref{compuoffrobeigvalcon}.
 Thus by the \v{C}ebotarev density \ref{Cebdensity}, these two objects
 coincide.

 Thus, there exist $\mc{E}'$ and $\mc{E}''$ on $X-N\subset X$
 which are pure of weight $0$ such that $q'^+\mc{E}'\otimes
 q''^+\mc{E}''$ and $\mc{H}_\pi$ have at least one constituent in
 common. We may assume that $\mc{E}'$ and $\mc{E}''$ are irreducible,
 and since $\mc{H}_\pi$ is stable under the action of
 $(\mr{Frob}_X\times\mr{id})^+$, we may assume that $q'^+\mc{E}'\otimes
 q''^+\mc{E}''$ is a subobject of $\mc{H}_\pi$.

 Let us show that $\mc{E}'$ is of rank $\geq r$. The argument is similar
 to the last part of the proof of Lemma \ref{computoftrauselp}.
 If $\mc{E}'$ were of rank $<r$, there would
 exist an automorphic cuspidal representation $\pi'$ corresponding to
 $\mc{E}'$. Consider the series
 \begin{equation*}
  \frac{d}{dZ}\log L_{X-N}(\pi\times\pi'^\vee,Z),\qquad
   \frac{d}{dZ}\log L_{X-N}(\pi^\vee\times\mc{E}''^\vee,Z).
 \end{equation*}
 The first one converges absolutely on $|Z|<q^{-1+\epsilon}$ for some
 $\epsilon>0$ by \cite[Thm B.10]{Laf}.
 Since $|z_i(\pi)|=1$ and $\mc{E}''$ is of weight $0$, the second one
 converges absolutely on $|Z|<q^{-1}$. Thus the ``product series''
 converges absolutely on $|Z|<q^{-2+\epsilon}$. On the other hand,
 consider the series
 \begin{equation}
  \label{wantseriprodlogd}
   \tag{$\star$}
   \frac{d}{dZ}\log L_{U_\pi}\bigl(\mc{H}_{\pi}\otimes
   (q'^+\mc{E}'^\vee\otimes q''^+\mc{E}''^\vee)\bigr).
 \end{equation}
 Let $C:=S_N\setminus U_\pi$. The difference with the product
 series is nothing but
 \begin{align}
  \label{differencesereval}
  \tag{$\star\star$}
  \sum_{s\geq1}Z^{s-1}\sum_{(x,s',u')\in\mf{C}_s}
  \Bigl[
  \bigl(\deg(\infty_x)\cdot
  &z_{\bullet}^{-s'}(\pi_{\infty_x})\cdot
  z_{\bullet}^{s'}(\pi'_{\infty_x})\bigr)\\
  \notag
  &\times
  \bigl(\deg(0_x)\cdot
   z_{\bullet}^{u'}(\pi_{0_x})\cdot
  z_{\bullet}^{u'}(\mc{E}''_{0_x})\bigr)
  \Bigr]
 \end{align}
 where $\mf{C}_s\subset|C|\times\mb{N}\times\mb{N}$ such that
 $s\cdot\deg(\infty_x)^{-1}=s'\in\mb{N}$ and
 $s\cdot\deg(0_x)^{-1}=u'\in\mb{N}$.
 Since $C$ is of dimension $1$ and the complex absolute values of
 $z_i(\pi_x)$, $z_i(\pi'_x)$, $z_i(\mc{E}'')$ are $1$, we get that the
 series (\ref{differencesereval}) converges in $|Z|<q^{-1}$. Thus,
 considering the radius of convergence of the product series,
 (\ref{wantseriprodlogd}) should converge on $|Z|<q^{-2+\epsilon}$.
 However, since by Corollary \ref{conseqweighidenti}, it should have a
 pole at $|Z|=q^{-2}$, which is a contradiction.

 This shows that the rank of $\mc{E}'$ is $\geq r$. By symmetry, the
 rank of $\mc{E}''$ is $\geq r$ as well. Since the rank of $\mc{H}_\pi$
 is $r^2$, we get that the rank
 of $\mc{E}'$ and $\mc{E}''$ are $r$,
 and $\mc{H}_\pi\cong q'^+\mc{E}'\otimes q''^+\mc{E}''$. By
 the induction hypothesis \ref{starproof} \eqref{prodfoumindhyp}, there
 exist cuspidal automorphic representations $\pi'$, $\pi''$ of
 $\mr{GL}_r(\mb{A})$ corresponding to $\mc{E}'$ and $\mc{E}''$.
 Now, since $\dim(X-N)=1$ and $|z_i(\pi)|=|z_j(\pi')|=|z_k(\pi'')|=1$,
 the rational functions
 \begin{equation*}
  \frac{d}{dZ}\log L_{X-N}(\pi\times\pi'^\vee,Z),\qquad
   \frac{d}{dZ}\log L_{X-N}(\pi^\vee\times\pi''^\vee,Z)
 \end{equation*}
 do not have poles at $|Z|<q^{-1}$. This implies that they have a pole
 at $Z=q^{-1-s}$ and $q^{-1+s}$ respectively for some $s\in\mb{C}$ such
 that $\mr{Re}(s)=0$. Otherwise, the series
 \begin{equation*}
  \frac{d}{dZ}\log L_{U_\pi}\bigl(\mc{H}_{\pi}
   \otimes(q'^+\mc{E}'^\vee\otimes q''^+\mc{E}''^\vee)\bigr)
   =
   \frac{d}{dZ}\log L_{U_\pi}\bigl(\mc{H}_{\pi}
   \otimes\mc{H}_\pi^\vee\bigr)
 \end{equation*}
 would not have a pole at $q^{-2}$. This shows that
 $\mc{E}'\cong\mc{E}''^\vee$, and $\mc{E}_\pi=\mc{E}'(s)$ as requested,
 and $\mc{H}_\pi\cong q'^+\mc{E}_\pi\otimes q''^+\mc{E}_\pi^\vee$.
\end{proof}

\subsubsection{Conclusion of the proof}
\mbox{}\\
By Theorem \ref{concludethm} and
\ref{thmdeligneprinciple}, \ref{starproof} \eqref{prodfoumindhyp} holds
for $n=r$. Let us check \ref{starproof} \eqref{rneghyp}.
Theorem \ref{concludethm} combined with Proposition \ref{fundpropnegH}
(ii) shows that the ``average''
$(r^2!)^{-1}\sum_{n=1}^{r^2!}(\mr{Frob}^n_X\times\mr{id}_X)^+
\mc{H}_\mr{c}^{2r-2}(\dots)^{\mr{ss}}$ is $(r+1)$-negligible.
Since the averaging involve only positive coefficients,
$\mc{H}_\mr{c}^{2r-2}(\dots)$ is $(r+1)$-negligible. Combining with
Proposition \ref{fundpropnegH} (i),
\ref{starproof} \eqref{rneghyp} holds for $n=r$, and $(\mr{S})_{r}$ is
shown. Thus, we conclude the proof of Theorem \ref{main} by induction.
\qed

\subsection{A few applications}
To conclude this paper, let us collect some applications of the
Langlands correspondence.

\begin{thm}[{\cite[1.2.10 (vi)]{De}}]
 Let $X$ be a smooth {\em curve} over a finite field $k$ of
 characteristic $p$. Let $\ell$ be a prime number different from $p$.
 Then for any irreducible smooth $\overline{\mb{Q}}_\ell$-sheaf whose
 determinant is of finite order, there exists a ``{\it petit camarade
 cristallin}''.
\end{thm}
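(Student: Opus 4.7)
The plan is to deduce this immediately from Theorem \ref{main} by composing with L. Lafforgue's theorem on the Langlands correspondence for $\mr{GL}_r$ over function fields. Let $F$ be the function field of $X$, and denote by $\overline{X}$ a smooth compactification of $X$. Given an irreducible smooth $\overline{\mb{Q}}_\ell$-sheaf $\ms{F}$ on an open dense subscheme $U\subset X$ with finite-order determinant, Lafforgue's theorem (which gives, in the notation of this paper, a bijection between $\mc{I}_r^{\ell}$ and $\mc{A}_r$) produces a cuspidal automorphic representation $\pi_{\ms{F}}\in\mc{A}_r$ of $\mr{GL}_r(\mb{A}_F)$ with finite central character, such that at every place $x$ where $\ms{F}$ is unramified, the Frobenius eigenvalues of $\ms{F}$ at $x$ coincide with the Hecke eigenvalues of $\pi_{\ms{F}}$ at $x$.

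Next, applying Theorem \ref{main} to $\pi_{\ms{F}}\in\mc{A}_r$, I obtain an irreducible overconvergent $F$-isocrystal $\mc{E}_{\pi_{\ms{F}}}\in\mc{I}_r$, defined on some open dense subscheme $V\subset\overline{X}$ with finite-order determinant, whose Frobenius eigenvalues at each point $y\in|V|$ agree with the Hecke eigenvalues of $\pi_{\ms{F}}$ at $y$. Replacing $U$ by the open dense subscheme $U\cap V\subset X$, I conclude that $\mc{E}_{\pi_{\ms{F}}}|_{U\cap V}$ and $\ms{F}|_{U\cap V}$ have matching characteristic polynomials of Frobenius at every closed point, which is precisely the definition of a crystalline companion (\emph{petit camarade cristallin}) of $\ms{F}$.

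The only subtlety is to verify that the compatibility asserted in Theorem \ref{main} (coincidence of unramified sets and of Frobenius/Hecke eigenvalues at those places) suffices to match the \emph{two} correspondences at a common cofinite set of places; this is formal since both correspondences are indexed by the same cuspidal representation $\pi_{\ms{F}}$, and the common open subscheme on which both companions are smooth is obtained as the intersection of the two ``unramified loci''. Thus the proof is essentially a diagram $\mc{I}_r^{\ell}\xrightarrow{\sim}\mc{A}_r\xrightarrow{\sim}\mc{I}_r$ of bijections compatible with Frobenius/Hecke data, and there is no serious obstacle once Theorem \ref{main} is established; the entire content lies in Theorem \ref{main} itself.
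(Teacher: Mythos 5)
Your proof is correct and is exactly the paper's argument, merely spelled out: the paper's proof is the one-liner ``Use Lafforgue's result and Theorem \ref{main},'' and your composition $\mc{I}_r^{\ell}\to\mc{A}_r\to\mc{I}_r$ with the eigenvalue-matching bookkeeping is precisely what that intends.
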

\begin{proof}
 Use Lafforgue's result and Theorem \ref{main}.
\end{proof}

\subsubsection{}
\label{Bertiniconj}
In the $p$-adic cohomology theory, Bertini type results do not seem to
be known as pointed out in \cite{KedWeil}. We consider the situation and
notation in \ref{algcloscoefftheory} {\em exclusively in this
paragraph}. In particular, $k$ does not need to be finite in the
following conjecture.

\begin{conj*}[Bertini type conjecture]
 Let $X$ be a smooth scheme over $k$, and $\mc{E}$ be an irreducible
 $L$-isocrystal. Then there exists a dense Zariski open subset $U$ of
 $X$ such that the following holds: for any $x\in|U|$, there
 exists an immersion (not necessarily closed) from a smooth curve
 $i\colon C\hookrightarrow U$ passing through $x$ such that $i^+\mc{E}$
 remains irreducible.
\end{conj*}

\begin{thm}
 \label{anycomplmixedcomplex}
 Assume we are in the situation of \ref{situationfix}. Let $X$ be a
 scheme of finite type over $k$. Then any complex in
 $D^{\mr{b}}_{\mr{hol}}(X/\overline{\mb{Q}}_{p,F})$ is mixed if $X$ is
 of dimension $1$. If Conjecture \ref{Bertiniconj} is true, we do not
 need to assume $X$ to be of dimension $1$.
\end{thm}
\begin{proof}
 See \cite[6.3]{ALang}.
\end{proof}

\begin{cor}
 \label{chebotarevdensity}
 The \v{C}ebotarev density theorem for smooth curves holds for
 overconvergent $F$-isocrystals. If Conjecture \ref{Bertiniconj} is
 true, it holds for any smooth variety.
\end{cor}
\begin{proof}
 Apply Proposition \ref{Cebdensity} and Theorem
 \ref{anycomplmixedcomplex}.
\end{proof}

\begin{thm}
 \label{convpcc}
 Let $X$ be a smooth scheme of finite type over a finite field $k$. Let
 $\mc{E}$ be an irreducible $\overline{\mb{Q}}_p$-isocrystal with
 Frobenius structure on $X$ such that the determinant is of finite
 order. {\em Assume that Conjecture \ref{Bertiniconj} holds}.

 (i) There exists a number field $E/\mb{Q}$ such that, for any $x\in
 |X|$, all the coefficients of the Frobenius eigenpolynomial of $\mc{E}$
 at $x$ is in $E$.

 (ii) For any prime $\ell\neq p$, there exists an
 $\overline{\mb{Q}}_{\ell}$-adic smooth sheaf $\ms{F}$ corresponding to
 $\mc{E}$ such that the sets of Frobenius eigenvalues coincide for any
 closed point of $X$.
\end{thm}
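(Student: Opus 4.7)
The plan is to follow the Deligne--Drinfeld strategy from \cite{EK} and \cite{Dr}: reduce the construction of an $\ell$-adic companion on $X$ to the case of smooth curves, and on curves use Theorem \ref{main} together with Lafforgue's Langlands correspondence on the $\ell$-adic side to swap $p$ and $\ell$. The pivot is Drinfeld's theorem from \cite{Dr}, which asserts that a smooth $\overline{\mb{Q}}_\ell$-sheaf on a smooth connected $X/k$ is determined by, and can be constructed from, a coherent assignment of Frobenius eigenpolynomials at closed points of $X$, provided that for every smooth curve $C$ and every morphism $f\colon C\to X$ the pulled-back assignment is realized by an honest smooth $\overline{\mb{Q}}_\ell$-sheaf on $C$. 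Thus my task is, for each such $(C,f)$, to produce a smooth $\overline{\mb{Q}}_\ell$-sheaf $\ms{F}_C$ on $C$ whose Frobenius traces match those of $f^+\mc{E}$ (transported via the fixed comparison $\overline{\mb{Q}}_p\cong\mb{C}\cong\overline{\mb{Q}}_\ell$).

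For a fixed $(C,f)$ I would argue as follows. Shrink $C$ to an open dense subscheme where $f^+\mc{E}$ is smooth. Its semisimplification decomposes into irreducible overconvergent $F$-isocrystals of rank $\leq r$; each irreducible factor has determinant some rank-one isocrystal, which after twisting by a suitable character of finite order becomes of finite order (the twist is encoded by a character of a finite quotient of the isocrystal Weil group of $C$, so lives in a controlled set). Theorem \ref{main} associates to each twisted irreducible factor a cuspidal automorphic representation of $\mr{GL}_{r'}(\mb{A}_{F_C})$ of rank $r'\leq r$ and with finite central character, where $F_C$ is the function field of the smooth compactification of $C$. By Lafforgue's theorem, each such cuspidal representation corresponds to an irreducible smooth $\overline{\mb{Q}}_\ell$-sheaf with matching Frobenius eigenvalues at every closed point. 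Sum and untwist to obtain $\ms{F}_C$.

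For part (i), the same analysis gives the number field $E$: the Hecke eigenvalues of any cuspidal automorphic representation of $\mr{GL}_{r'}(\mb{A}_{F_C})$ with finite-order central character lie in a number field (a classical result). For a given $x\in|X|$, compute $\iota_x(\mc{E})$ on any smooth curve through $x$; by the paragraph above its Frobenius eigenvalues come from Hecke eigenvalues of bounded rank $r'\leq r$ over the fixed number of twists of the determinant. Combining this with Theorem \ref{anycomplmixedcomplex} (which bounds the archimedean absolute values of Frobenius eigenvalues of $\mc{E}$ after normalization) and the rank bound, one extracts a single number field $E\subset\overline{\mb{Q}}_p$ containing all coefficients of all Frobenius eigenpolynomials, exactly as in \cite[1.2.10 (ii)]{De} for the $\ell$-adic case.

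The main obstacle will be verifying the compatibility hypothesis required by Drinfeld's theorem, i.e.\ that the various $\ms{F}_C$ as $(C,f)$ varies really do assemble into a Drinfeld-compatible family: for two curves $C,C'$ meeting a common closed point of $X$, the Frobenius eigenpolynomials of $\ms{F}_C$ and $\ms{F}_{C'}$ at that point must agree. By construction both come from the Frobenius eigenpolynomial of $\mc{E}$ at $x$ via the two applications of Langlands, so the agreement is a tautology once one knows that (a) Theorem \ref{main} preserves Frobenius eigenvalues at unramified places, and (b) Lafforgue's correspondence does the same. Uniqueness of the constituents decomposition is ensured by the \v{C}ebotarev density for overconvergent $F$-isocrystals (the corollary after Theorem \ref{anycomplmixedcomplex}). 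The compatibility of different choices of twists should be absorbed into the fact that a character of finite order and its $\ell$-adic companion (given by class field theory for curves) agree on Frobenius traces. Once Drinfeld's criterion is verified, applying his theorem yields the desired smooth $\overline{\mb{Q}}_\ell$-sheaf $\ms{F}$ on $X$, completing (ii).
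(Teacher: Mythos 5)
Your strategy is the right one---the paper indeed follows the Deligne--Drinfeld program, constructs companions on curves via Theorem~\ref{main} and Lafforgue, and invokes Drinfeld's reconstruction theorem and the finiteness result of \cite{EK}---but the proposal has a genuine gap: you never address the ramification bound, which is the main technical content of both (i) and (ii).

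Drinfeld's theorem from \cite{Dr} and the finiteness statement \cite[8.2]{EK} are \emph{not} applicable to an arbitrary compatible family of curve-level sheaves; they require that the family lives in $\mc{V}_r(U,D)$ for some open dense $U\subset X$ and some Cartier divisor $D$ on a normal compactification $\overline X$, i.e.\ that the conductors of all the $\ms{F}_C$ are uniformly bounded as $C$ varies. Without that bound, nothing prevents the Frobenius coefficients from generating an infinite-degree extension, and Drinfeld's gluing criterion is also vacuous. In your write-up the phrase ``once Drinfeld's criterion is verified'' silently absorbs this point, but the compatibility of the traces at common closed points (which you do discuss) is not the issue; the uniform ramification bound is, and it does not come for free from Theorem~\ref{main}(i) alone. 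This is exactly why the paper invokes Theorem~\ref{main}(\ref{epsilonlocalL}) (matching of local $L$- and $\varepsilon$-factors): it is what guarantees that the $\ell$-adic companion of a log-extendable isocrystal on a curve is itself tamely ramified along the same boundary, so that one can take $D=0$ in the log-extendable case. For a general $\mc{E}$ the paper reduces to this case by Kedlaya's semistable reduction theorem \cite{KeSS}: one passes to an alteration $\overline Y\to\overline X$ along which $\mc{E}$ becomes log-extendable, chooses $U$ over which $V:=U\times_{\overline X}\overline Y\to U$ is finite étale of degree $d$, and bounds the ramification of the companions by the ramification of $p_*\overline{\mb Q}_\ell\in\mc{V}_d(U,D)$. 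Your argument does not mention semistable reduction nor the $\varepsilon$-factor compatibility, so as written it does not produce the divisor $D$, and hence neither (i) nor the hypothesis of Drinfeld's theorem in (ii) is established. Adding the semistable-reduction step and replacing ``a coherent assignment of eigenpolynomials realized on every curve'' by ``membership in $\mc{V}_r(U,D)$, verified via Theorem~\ref{main}(\ref{epsilonlocalL})'' would close the gap and bring your argument in line with the paper's.
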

\begin{proof}
 Let us show (i). We denote by $\mc{I}_r(X)$ the set of
 $\overline{\mb{Q}}_p$-isocrystals of rank $r$ on $X$ up to isomorphism
 and semi-simplification. We use the notation of \cite[\S2]{EK}.
 We prove by induction on the dimension of $X$.
 Let $\overline{X}$ be a normal compactification of $X$, and
 $\overline{X}\backslash X$ is a Cartier divisor. Then there exists a
 map $\mc{I}_r(X)\rightarrow\mc{V}_r(X)$ using the Langlands
 correspondence. This is injective by the \v{C}ebotarev density. We show
 the following: Let $\mc{E}\in\mc{I}_r(X)$. Then there exists an dense
 open subscheme $U\subset X$ and Cartier divisor $D$
 of $\overline{X}$ contained in $\overline{X}\setminus U$ such that
 $\mc{E}_U\in\mc{V}_r(U,D)$ where $\mc{E}_U$ is the restriction of
 $\mc{E}$ on $U$. Once this is shown, Conjecture \ref{Bertiniconj}
 implies that $\mc{E}_U$ is irreducible, and we get (i) of the theorem
 for $\mc{E}_U$ by \cite[8.2]{EK}, and by induction hypothesis, we
 conclude.

 When $\overline{X}$ is proper smooth, $\overline{X}\setminus X$
 is a simple normal crossing divisor, and $\mc{E}$ is log-extendable to
 $\overline{X}$, then we may take $D=0$. Indeed, for a smooth curve,
 let $\mc{E}$ be an $\overline{\mb{Q}}_p$-isocrystal whose determinant
 is finite and $\ms{F}$ be its $\ell$-adic companion. Then the
 ramification of $\mc{E}$ and $\ms{F}$ at the boundary are the same by
 Theorem \ref{main} \eqref{epsilonlocalL}. In general, take a
 semi-stable reduction $\overline{Y}\rightarrow\overline{X}$ of $\mc{E}$
 (cf.\ \cite{KeSS}). We take $U\subset X$ so that $p\colon
 V:=U\times_{\overline{X}}\overline{Y}\rightarrow U$ is finite
 \'{e}tale. There exists a Cartier divisor $D$ such that the
 ramification of $p_*\overline{\mb{Q}}_\ell$ is in $\mc{V}_{d}(U,D)$
 where $d=\deg(V/U)$. Then we can check easily that
 $\mc{E}\in\mc{V}_r(X,D)$.

 For (ii), copy the proof of Drinfeld \cite[\S2.3]{Dr}.
\end{proof}

\begin{rem}
 Using the main results of this paper, K. S. Kedlaya recently proved
 Theorem \ref{anycomplmixedcomplex} as well as Theorem \ref{convpcc}
 without assuming Conjecture \ref{Bertiniconj} in \cite{Kenote}. His
 argument reduces to the curve case using an ingenious induction.
 By proving Conjecture \ref{Bertiniconj} when $k$ is finite, the author
 together with H. Esnault gave another proof of this Kedlaya's result in
 \cite{AH}.
 The existence of crystalline companions on smooth schemes of finite
 type over $k$ is still an open problem.
\end{rem}

\appendix

\section{Appendix}

\subsection{Beilinson-Drinfeld's gluing of derived category}
\label{BDgluingcat}
In this subsection, we recall the construction and some results of
Beilinson and Drinfeld \cite[7.4]{BD} shortly, which is used in the main
text.

\subsubsection{}
Let $\mc{M}\rightarrow\Delta^+$ be a cofibered category such that its
fiber $\mc{M}_i$ over $[i]\in\Delta^+$ is an abelian
category, and for $\phi\colon[i]\rightarrow[j]$, the push-forward
$\phi_*$ is exact. We denote by $\mc{M}_{\mr{tot}}$ the abelian category
of cartesian sections; the category of collections
$\bigl\{M_n,\alpha_\phi\bigr\}$ such that $M_n\in\mc{M}_n$ and for
$\phi\colon[i]\rightarrow[j]$, $\alpha_\phi\colon\phi_*
M_i\xrightarrow{\sim} M_j$ satisfying the cocycle condition. Now, we
want to construct a suitable triangulated category associated to
$\mc{M}$ whose heart is $\mc{M}_{\mr{tot}}$. For
this, we consider the category $\mr{sec}_+(\mc{M})$. The objects consist
of collections $\bigl\{M_n,\alpha_\phi\bigr\}$ where $M_n\in\mc{M}_n$,
and for $\phi\colon[i]\rightarrow[j]$, $\alpha_\phi\colon\phi_*
M_i\rightarrow M_j$, satisfying the condition
$\alpha_{\phi\circ\psi}=\alpha_{\phi}\circ\phi_*(\alpha_\psi)$ for
composable morphisms $\phi$ and $\psi$ in $\Delta^+$,
and $\alpha_{\mr{id}}=\mr{id}$. We put
$\mr{sec}_-:=(\mr{sec}_+(\mc{M}^\circ))^\circ$. A profound observation
of Beilinson and Drinfeld is that there are functors
\begin{equation*}
 c_+\colon C(\mr{sec}_-(\mc{M}))\rightarrow C(\mr{sec}_+(\mc{M})),\qquad
  c_-\colon C(\mr{sec}_+(\mc{M}))\rightarrow C(\mr{sec}_-(\mc{M}))
\end{equation*}
such that $(c_+,c_-)$ is an adjoint pair, and the adjunction
homomorphisms $c_+c_-\rightarrow\mr{id}$ and $\mr{id}\rightarrow c_-c_+$
are quasi-isomorphisms (cf.\ \cite[7.4.4]{BD}). With these functors, we
are able to identify $D(\mr{sec}_+(\mc{M}))$ and
$D(\mr{sec}_-(\mc{M}))$. Now, let $C_{\mr{tot\pm}}\subset
C(\mr{sec}_{\pm}(\mc{M}))$ be the full subcategory consisting of
complexes $M$ such that $\H^i(M)\in\mc{M}_{\mr{tot}}$. We
denote by $K_{\mr{tot}\pm}(\mc{M})$ and $D_{\mr{tot}\pm}(\mc{M})$ the
corresponding homotopy and derived categories.
By means of $c_{\pm}$, we are able to identify
$D_{\mr{tot+}}(\mc{M})$ and $D_{\mr{tot-}}(\mc{M})$, and denote them by
$D_{\mr{tot}}(\mc{M})$.
The functors $\H^i\colon D_{\mr{tot}\pm}\rightarrow\mc{M}_{\mr{tot}}$
induce $D_{\mr{tot}}(\mc{M})\rightarrow\mc{M}_{\mr{tot}}$.

\subsubsection{}
Now, an important aspect of the theory is the existence of a spectral
sequence connecting $\bigl\{\mc{M}_n\bigr\}$ and $\mc{M}$.
For $N\in D^-(\mr{sec}_-(\mc{M}))$ and $M\in D^+(\mr{sec}_+(\mc{M}))$,
we have the following spectral sequence by \cite[7.4.8]{BD}:
\begin{align}
 \label{BDssingen}
 E_1^{p,q}=\mr{Ext}^q_{\mc{M}_p}(N_p,M_p)\Rightarrow
 &\mr{Hom}_{D(\mc{M})}\bigl(c_+(N),M[p+q]\bigr)\\
 \notag
 \cong
 &\mr{Hom}_{D(\mc{M})}\bigl(N,c_-(M)[p+q]\bigr).
\end{align}

\begin{rem*}
 Since the proof of {\it ibid.}\ is rather sketchy, it might
 be hard to follow their argument in some cases. Let us add a short
 explanation. When $\mr{sec}_+(\mc{M})$ have enough injectives, then we
 can take the right derived functors of the
 functor $\mr{Hom}(N,-)\colon K^+\mr{sec}_+(\mc{M})\rightarrow DF$,
 where $DF$ denotes the derived category of filtered modules,
 in a usual way and we get the spectral sequence as written in {\it
 ibid.}.
 However, there might be a situation that the functor does
 not admit a right derived functor. Even in this case, we can define the
 derived functor $\mb{R}\mr{Hom}(N,-)\colon
 D^+(\mr{sec}_+(\mc{M}))\rightarrow\mr{Ind}(DF)$ as in [SGA 4, XVII,
 1.2]. We have a functor
 \begin{equation*}
  \widetilde{\H}^i\colon\mr{Ind}(DF)\xrightarrow{\H^i}\mr{Ind}(F\mr{Ab})
   \xrightarrow{\indlim} F\mr{Ab}.
 \end{equation*}
 Using this, we have
 $\widetilde{\H}^i\mr{gr}_F^n\mb{R}\mr{Hom}(N,M)\cong
 \mr{Hom}_{D(\mc{M}_n)}(N_n,M_n[i-n])$. This follows from the fact that
 we have
 $\mr{gr}_F^n\mr{Hom}(N,M)\cong\mr{Hom}_{K(\mc{M}_n)}(N_n,M_n)[-n]$ by
 construction of the functor $\mr{Hom}(N,-)$, and
 the functor $M\mapsto M_n$ has an exact right adjoint as in
 \ref{functorrhodef} which implies the existence of $g$ and $h$ in
 {\it ibid.}. Since the category of spectral sequences of abelian
 groups admits inductive limits, we get the desired result.
\end{rem*}

\subsection{Some properties of algebraic stacks}
The results of this subsection is used implicitly in Lafforgue's proof
of Langlands correspondence. Even though we believe that the results are
well-known to experts, since we are not able to find references, we
decided to write down the detail.

\begin{lem}
 \label{neutralgerbsmpres}
 Let $f\colon\st{X}\rightarrow\st{Y}$ be a gerb-like morphism
 \cite[5.1.3]{Behr} such that the structural group is flat. Then there
 exists a presentation $Y\rightarrow\st{Y}$ such that
 $\st{X}\times_{\st{Y}}Y\rightarrow Y$ is a neutral gerb.
\end{lem}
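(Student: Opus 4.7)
The plan is to build the required presentation directly from a presentation of $\st{X}$ itself, so that the pullback acquires a tautological section. Concretely, pick a smooth surjective morphism $P\colon\algsp{X}_0\rightarrow\st{X}$ from an algebraic space (which exists by definition of algebraic stack), and set $\algsp{Y}_0:=\algsp{X}_0$ together with the composed morphism $\pi:=f\circ P\colon\algsp{Y}_0\rightarrow\st{Y}$.

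The first step is to check that $\pi$ is a presentation of $\st{Y}$. Here the hypothesis that $f$ is gerb-like with flat structural group is used: a gerb-like morphism is surjective, and under the flatness assumption on the band it is smooth (equivalently, locally on $\st{Y}$ of the form $B\ms{G}\rightarrow(\cdot)$ for a smooth-enough, flat algebraic group space $\ms{G}$). Granting this, $\pi$ is a composition of smooth surjective morphisms, hence smooth surjective, so $\algsp{Y}_0\rightarrow\st{Y}$ is a presentation of $\st{Y}$ by an algebraic space.

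Next, form the base change $f_0\colon\st{X}_0:=\st{X}\times_{\st{Y}}\algsp{Y}_0\rightarrow\algsp{Y}_0$. Since being a gerb is stable under arbitrary base change (cf.\ \cite[5.1.3]{Behr}), $f_0$ is again gerb-like with the same (pulled-back) structural group. By the universal property of the fiber product applied to the pair $(P,\mr{id}_{\algsp{Y}_0})$ over $\st{Y}$, we obtain a canonical morphism $s\colon\algsp{Y}_0=\algsp{X}_0\rightarrow\st{X}_0$ whose composition with the second projection is the identity. Thus $s$ is a section of $f_0$, which proves that $f_0$ is a neutral gerb.

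The only real issue is the first step, and this is the expected main obstacle: deducing smoothness (and not merely flatness) of the gerb-like morphism $f$ from the flatness of its band. In Behrend's setup a gerb-like morphism is already assumed to be smooth, so under those conventions no extra work is required; if one is working with a weaker notion, one must instead start with a smooth presentation $\algsp{Z}\rightarrow\st{X}_0$ of the pulled-back gerb over some smooth cover and replace $\algsp{Y}_0$ by $\algsp{Z}$, invoking the fact that flat gerbs admit smooth local sections. Either way the construction of the section $s$ in the final step is purely formal.
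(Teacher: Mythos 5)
Your proof is correct and follows exactly the same route as the paper: compose a presentation $P\colon Y\rightarrow\st{X}$ with $f$ (using flatness of the band to deduce that the gerb-like $f$ is smooth surjective, so that $f\circ P$ is a presentation of $\st{Y}$), then use $(P,\mr{id})$ to produce the tautological section of $\st{X}\times_{\st{Y}}Y\rightarrow Y$. The closing hedge about weaker notions of gerb is unnecessary, since the paper simply checks smoothness of $f$ fppf-locally via \cite[5.1.2]{Behr}, but it does not detract from the argument.
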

\begin{proof}
 First, we note that $f$ is smooth surjective. Indeed, since the
 verification is fppf-local, we may assume that $f$ is neutral, and thus
 $\st{Y}=:Y$ is a scheme and $\st{X}=B(G/Y)$. Since $G$ is assumed
 flat, $f$ is smooth by \cite[5.1.2]{Behr}.

 Let $P\colon Y\rightarrow\st{X}$ be a presentation, and consider
 the smooth morphism $Q:=f\circ P\colon Y\rightarrow\st{Y}$, which is a
 presentation of $\st{Y}$ since $f$ is smooth surjective.
 We have the morphism $(P,\mr{id})\colon
 Y\rightarrow\st{X}\times_{\st{Y}}Y$. This defines a section of the
 second projection $\st{X}\times_{\st{Y}}Y\rightarrow Y$.
\end{proof}

\begin{lem}
 \label{genersepa}
 Let $f\colon\st{X}\rightarrow\st{Y}$ be a representable morphism of
 algebraic stacks over an integral scheme $S$. For the generic point
 $\eta\in S$, if $f_\eta$ is separated, then there exists an open
 subscheme $U\subset S$ such that
 $f_U\colon\st{X}\times_SU\rightarrow\st{Y}\times_SU$ is separated.
\end{lem}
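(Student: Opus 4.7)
The plan is to reduce to a spreading-out question for a diagonal of algebraic spaces, and then to argue using a locally closed immersion together with Chevalley constructibility. First, since separatedness is smooth-local on the target, I would pick a smooth presentation $Y_0 \to \st{Y}$ and replace $\st{X}\to\st{Y}$ by the pull-back $\st{X}\times_{\st{Y}}Y_0 \to Y_0$, which is a morphism of algebraic spaces over $S$ (it is representable, and $Y_0$ is an algebraic space). This reduces the statement to the following: for a morphism $f_0\colon\algsp{X}_0\to Y_0$ of algebraic spaces over $S$ whose generic fiber is separated, find an open $U\subset S$ over which $f_0$ is separated.

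Second, I would translate separatedness into a geometric condition on the diagonal $\Delta\colon\algsp{X}_0\to\algsp{X}_0\times_{Y_0}\algsp{X}_0$. Under the paper's standing convention that all algebraic spaces are quasi-separated, $\Delta$ is a quasi-compact monomorphism of algebraic spaces, hence a locally closed immersion; factor it as $\algsp{X}_0\xrightarrow{j}\algsp{X}_0'\xrightarrow{i}\algsp{X}_0\times_{Y_0}\algsp{X}_0$, where $\algsp{X}_0'$ is the scheme-theoretic image, $j$ an open immersion, and $i$ a closed immersion. Then $f_0$ is separated if and only if $j$ is an isomorphism, equivalently if the closed complement $\algsp{Z}:=\algsp{X}_0'\setminus\algsp{X}_0$ is empty. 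The hypothesis $(f_0)_\eta$ separated gives $\algsp{Z}_\eta=\emptyset$.

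Third, it remains to promote the vanishing of the generic fiber of $\algsp{Z}$ to the vanishing over an open neighborhood of $\eta$. Since in all applications the morphism $f$ is of finite type, $\algsp{Z}\to S$ is of finite type and its image is constructible in $S$ by Chevalley's theorem; as this image misses the generic point $\eta$ of the integral scheme $S$, it is contained in a proper closed subset $V\subset S$, and the open complement $U:=S\setminus V$ has the required property.

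The main obstacle will be the finite-type issue in the last step: the lemma as stated does not include a finite-type hypothesis, but the spreading argument via constructibility really does require one. In practice this is harmless because all the algebraic stacks to which the lemma is applied in the paper are of finite type over $S$, so one simply records this in the reduction step and then proceeds as above. Once that point is granted, the remaining manipulations (taking the scheme-theoretic image, identifying separatedness with the closedness of $\Delta$, and applying Chevalley) are entirely standard.
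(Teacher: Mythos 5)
Your reduction to a morphism of algebraic spaces via a smooth presentation is exactly the paper's first step; after that, the paper simply invokes [EGA IV, 8.10.5] (misprinted as \textsc{EGA} VI in the text), whereas you attempt to re-prove that spreading-out statement from scratch. The observation that the statement tacitly needs a finite-type (or finite-presentation) hypothesis is correct, and it applies equally to the paper's citation; in the applications within the paper all stacks involved are of finite type over a field, so this is harmless.

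However, the direct proof you propose has a genuine gap. You assert that the diagonal $\Delta\colon\algsp{X}_0\rightarrow\algsp{X}_0\times_{Y_0}\algsp{X}_0$, being a quasi-compact monomorphism of algebraic spaces, is a locally closed immersion and can therefore be factored through its scheme-theoretic image as an open immersion followed by a closed immersion. That implication is true for \emph{schemes} (the diagonal of a scheme morphism is always an immersion, and a quasi-compact immersion factors through its schematic closure), but it fails for algebraic spaces: the diagonal of a quasi-separated algebraic space is a quasi-compact, separated, locally finite type monomorphism, yet need not be an immersion. This is precisely the gap between ``quasi-separated'' and ``locally separated'' for algebraic spaces, and there are standard counterexamples. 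Without the factorization $\algsp{X}_0\xrightarrow{j}\algsp{X}_0'\xrightarrow{i}\algsp{X}_0\times_{Y_0}\algsp{X}_0$ with $j$ open, the complement $\algsp{Z}$ to which you apply Chevalley is not defined. A way to salvage the argument: a monomorphism of finite type is a closed immersion if and only if it is proper, so $f_U$ is separated if and only if $\Delta_{f_U}$ is proper; properness spreads out from the generic fiber by (the algebraic-space version of) EGA IV 8.10.5, applied to $\Delta_f$. But once you invoke that machinery you may as well cite 8.10.5 for separatedness directly, which is what the paper does.
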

\begin{proof}
 Let $\algsp{Y}\rightarrow\st{Y}$ be a presentation, and
 $\algsp{X}\rightarrow\st{X}$ be the induced presentation.
 Let $f'\colon\algsp{X}\rightarrow\algsp{Y}$ be the induced
 morphism. The morphism $f$ being separated is equivalent to $f'$ being
 separated, and thus by [EGA VI, 8.10.5], the lemma follows.
\end{proof}

\begin{lem}
 Let $f\colon\st{X}\rightarrow\st{Y}$ be a representable morphism of
 locally noetherian algebraic stacks. Then
 there exists an open dense substack $\st{U}$ of $\st{X}$ such that
 $f|_{\st{U}}$ is separated.
\end{lem}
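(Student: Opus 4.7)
The plan is to reduce to the case of schemes via smooth presentations and apply a form of generic separatedness already established, most naturally Lemma~\ref{genersepa}, then descend the resulting open to a substack of $\st{X}$. The main subtlety will be the descent, because the open on which separatedness holds is obtained on a presentation and need not be saturated under the equivalence relation.

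First I would reduce to $\st{X}$ being integral. Since $\st{X}$ is locally noetherian and we only seek an open \emph{dense} substack, we may work one irreducible component at a time, take their generic smooth loci, and finally form the union; replacing $\st{X}$ by its reduction does not affect the separatedness of $f$. So assume $\st{X}$ is integral, and pick a smooth presentation $P\colon X\to\st{X}$ from an integral noetherian scheme $X$ (possible by choosing an irreducible component of any smooth scheme cover). Since $f$ is representable, $\st{X}':=\st{X}\times_{\st{Y}}X$ is a locally noetherian algebraic space and the second projection $f'\colon\st{X}'\to X$ is a representable morphism of algebraic stacks over the integral scheme $S=X$. Moreover, the morphism $X\to\st{X}$ lifts, via the universal property of the fiber product over $\st{Y}$, to a section $\sigma\colon X\to\st{X}'$ of $f'$.

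Second, I would aim to apply Lemma~\ref{genersepa} to $f'\colon\st{X}'\to X$ over $S=X$. This requires knowing that the generic fiber $\st{X}'_{\eta}=\st{X}\times_{\st{Y}}\operatorname{Spec}(k(\eta))$ is separated over $\eta:=\operatorname{Spec}(k(\eta(X)))$. To secure this, I would further base-change to a smooth presentation $\widetilde{X}\to\st{X}'$ by a scheme and apply generic separatedness for morphisms of locally noetherian schemes (a diagonal argument: for any such morphism the diagonal is an immersion, so its non-closed part lies in a nowhere dense closed subset) to find an open dense $V\subset\widetilde{X}$ on which the composition $V\to X$ is separated. Shrinking $X$ if necessary (using Lemma~\ref{genersepa} applied to the integral base and to the scheme morphism $V\to X$), we may then assume that $f'$ itself is separated over an open dense $U\subset X$.

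The hard part is the last step: descending separatedness of $f'$ over $U\subset X$ to separatedness of $f$ on an open substack $\st{U}\subset\st{X}$. The naive image of $U$ in $\st{X}$ is open dense (since $X\to\st{X}$ is smooth surjective and hence open), but its preimage in $X$ may be strictly larger than $U$, so one cannot immediately conclude separatedness there. I would handle this by observing that the diagonal $\Delta_{f}\colon\st{X}\to\st{X}\times_{\st{Y}}\st{X}$ is representable (a monomorphism locally of finite type becomes an immersion after the pullbacks we have arranged), so the locus where $\Delta_f$ fails to be a closed immersion is \emph{locally closed} in $\st{X}\times_{\st{Y}}\st{X}$; the images of its two projections to $\st{X}$ are therefore contained in a proper closed substack of $\st{X}$ (one checks this after smooth pullback to $\widetilde{X}\to\st{X}$, using the separatedness of $f'$ over $U$ and the fact that constructible images of closed sets under finite type morphisms between noetherian stacks are contained in their closures). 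Removing this closed substack from $\st{X}$ and intersecting with the image of $U$ gives the required open dense $\st{U}$ on which $f|_{\st{U}}$ is separated, concluding the proof.
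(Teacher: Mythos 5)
Your plan hinges on showing that the generic fibre of $f'\colon\st{X}'=\st{X}\times_{\st{Y}}X\to X$ is separated over the generic point of $X$, so that Lemma~\ref{genersepa} applies with $S=X$. This step cannot work: $f'$ is the base change of $f$ along $X\to\st{X}\to\st{Y}$, so it carries the full non-separatedness of $f$ to \emph{every} fibre over $X$. Concretely, take $\st{X}$ to be the affine line with doubled origin, $\st{Y}=\mr{Spec}(k)$, and $X$ any smooth scheme cover: then each fibre of $f'$ is the doubled line base-changed to a residue field, which is never separated, so there is no open $U\subset X$ over which $f'$ is separated --- even though the lemma's conclusion holds trivially here by deleting the doubled point. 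Your secondary reduction, passing to a scheme presentation $\widetilde{X}\to\st{X}'$ and invoking generic separatedness for morphisms of noetherian schemes, cannot repair this: finding an open $V\subset\widetilde{X}$ with $V\to X$ separated and then ``shrinking $X$'' does not change the generic fibre of $\st{X}'\to X$, which is fixed once $\st{X}'$ is. This is precisely the non-saturation problem that you flag at the descent step, but it appears one level earlier in your argument, and there it is fatal.

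The descent step has a related but distinct gap. Separatedness of $f'$ over $U$ is the statement that $\Delta_f$ becomes a closed immersion after base change along $U\to\st{X}\to\st{Y}$; since $U\to\st{Y}$ is not a cover (its image in $\st{Y}$ may be a proper constructible subset), this gives no control on $\Delta_f$ over any open of $\st{X}\times_{\st{Y}}\st{X}$, so the claim that the bad locus of $\Delta_f$ projects into a proper closed substack cannot be ``checked after smooth pullback to $\widetilde{X}$'' in the way you describe.

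The paper avoids both traps by never working with a presentation as the base. After reducing to reduced stacks, it invokes \cite[11.5]{LM} to present a dense open of $\st{X}$ as a gerbe over its coarse moduli space $\algsp{X}$ (and similarly $\st{Y}\to\algsp{Y}$), shrinking further so that $\algsp{X}$ is a separated \emph{scheme}. Lemma~\ref{genersepa} is then applied over $S=\algsp{X}$ to the factorization $\st{X}\xrightarrow{\alpha}\st{Y}\times_{\algsp{Y}}\algsp{X}$, which reduces the problem to the fibre over the generic point of $\algsp{X}$. That fibre is a representable morphism between gerbes over a field, and the paper proves directly, via a group-scheme argument (after making the gerbes neutral over a finite extension), that such a morphism is always separated. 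The structural point you are missing is that the correct base over which to run generic separatedness is the coarse moduli space of $\st{X}$, which quotients out the stacky directions and leaves a gerbe over a field as generic fibre, rather than a smooth presentation $X\to\st{X}$, which adds extra fibre directions over which the diagonal remains unconstrained.
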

\begin{proof}
 By [EGA I, 5.5.1 (vi)], we may assume that $\st{X}$ and $\st{Y}$ are
 reduced.
 By shrinking $\st{X}$ and $\st{Y}$, we may assume that
 $\st{X}\rightarrow\algsp{X}$ and $\st{Y}\rightarrow\algsp{Y}$ are gerbs
 over algebraic spaces by \cite[11.5]{LM}. By shrinking $\st{X}$ and
 $\st{Y}$, we may assume that $\algsp{X}$ is a separated scheme and
 $\algsp{X}\rightarrow\algsp{Y}$ is separated. Consider the following
 diagram:
 \begin{equation*}
  \xymatrix{
   \st{X}\ar[r]^{\alpha}\ar@/_1.5ex/[rd]&
   \st{X}'\ar[r]\ar[d]\ar@{}[rd]|\square&
   \algsp{X}\ar[d]\\
  &\st{Y}\ar[r]&\algsp{Y}.
   }
 \end{equation*}
 Note that $\alpha$ is representable by \cite[3.12 (c)]{LM}.
 Let $\eta$ be a generic point of $\algsp{X}$.
 By Lemma \ref{genersepa}, it suffices to show that $\alpha_\eta$ is
 separated. So the statement is reduced to the following
 special case of the lemma:
 \begin{cl}
  Let $\st{X}$ and $\st{X}'$ be gerbs over $\mr{Spec}(K)$, and let
  $\alpha\colon\st{X}\rightarrow\st{X}'$ be a representable
  morphism. Then this is separated.
 \end{cl}
 \begin{proof}
  Since there exists a scheme $X\rightarrow\mr{Spec}(K)$ such that
  $\st{X}$ and $\st{X}'$ are neutral over $X$, by taking a closed point
  of $X$, $\st{X}$ and $\st{X}'$ are neutral over a
  finite extension of $K$.
  Since the claim is stable under finite
  extension, we may assume that $\st{X}$ and $\st{X}'$ are neutral gerbs
  over $\mr{Spec}(K)$. Thus, by taking the
  automorphism groups $G$ and $G'$ of $\st{X}(K)$ and $\st{X}'(K)$, we
  have a homomorphism of $K$-group spaces $\rho\colon G\rightarrow G'$
  over $\algsp{X}$, which induces $\alpha$. We have the cartesian
  diagram
  \begin{equation*}
   \xymatrix{
    G\ar[r]^<>(.5){\rho}\ar[d]\ar@{}[dr]|\square&G'\ar[d]\\
   BG\ar[r]_<>(.5){\Delta}&BG\times_{BG'}BG.
    }
  \end{equation*}
  Since the diagonal morphism $\Delta$ is quasi-compact by
  \cite[7.7]{LM}, $\rho$ is quasi-compact. By [SGA 3,
  $\mbox{VI}_{\mr{A}}$, Cor 6.7], $\rho$ decomposes as
  $G\twoheadrightarrow G/N\hookrightarrow G'$. This induces a morphism
  $BG\rightarrow B(G/N)\rightarrow BG'$. Since $\alpha$ is assumed
  representable, the morphism $BG\rightarrow B(G/N)$ is representable as
  well. This can only happen when $N$ is trivial. Thus, $\rho$ is a
  closed immersion by the same corollary of [SGA 3]. This shows that
  $\Delta$ is a closed immersion, and thus $\alpha$ is separated.
  \renewcommand{\qedsymbol}{$\square\blacksquare$}
 \end{proof}
 \renewcommand{\qedsymbol}{}
\end{proof}
\vspace{-7ex}

\begin{lem}
 \label{factorrepdimsu}
 Let $f\colon\st{X}\rightarrow\st{Y}$ be a morphism locally of finite
 type between {\em reduced} algebraic stacks such that
 \begin{equation*}
  \dim\bigl(\st{A}ut_{\st{Y}}\,\st{X}\bigr)=
   \dim(\st{Y})-\dim(\st{X}).
 \end{equation*}
 Then there exists a dense open substack $\st{V}\subset\st{Y}$ such that
 $f|_{f^{-1}(\st{V})}$ can be factorized as
 $\st{X}\xrightarrow{p}\st{X}'\xrightarrow{g}\st{Z}
 \xrightarrow{h}\st{Y}$ where $p$ is a gerb-like morphism with the
 structure group space $\st{A}ut_{\st{Y}}\,\st{X}$, $g$ is a
 representable universal homeomorphism, and $h$ is a representable
 finite \'{e}tale morphism.
\end{lem}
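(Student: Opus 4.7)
The plan is to produce the three factors in the order announced: first kill the inertia by a rigidification (producing $p$), then use dimension counting to reduce the resulting representable morphism to a quasi-finite, then finite morphism, and finally decompose the finite representable morphism into its purely inseparable and étale parts.

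First I would replace $\st{X}$ and $\st{Y}$ by open dense substacks so that both are equidimensional, and so that the relative inertia $\st{I}:=\st{A}ut_{\st{Y}}\,\st{X}$, which is an algebraic group space of finite type over $\st{X}$, is flat over $\st{X}$; this is possible since $\st{X}$ is reduced and generic flatness applies after pulling back to a presentation. Once $\st{I}$ is flat over $\st{X}$, one may form the rigidification $p\colon\st{X}\to\st{X}':=\st{X}\!\!\fatslash\,\st{I}$ in the sense of Abramovich--Olsson--Vistoli (or Romagny's rigidification); by construction $p$ is a gerb-like morphism with structure group $\st{I}$, and the original morphism $f$ factors uniquely through $p$ as $f=f'\circ p$ with $f'\colon\st{X}'\to\st{Y}$. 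Moreover, since all inertia relative to $\st{Y}$ has been killed, $f'$ is representable; its diagonal is an open and closed immersion, so after shrinking $\st{X}'$ (hence $\st{X}$) we may even assume $f'$ is separated.

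Next I would use the dimension hypothesis. Because $p$ is a gerbe with fibers of dimension $-\dim(\st{I})$ (smooth by flatness of $\st{I}$), one has $\dim(\st{X}')=\dim(\st{X})+\dim(\st{I})=\dim(\st{Y})$. A representable morphism locally of finite type between reduced equidimensional stacks of the same dimension is generically quasi-finite; shrinking $\st{Y}$ (and then $\st{X}'$) we may assume $f'$ is quasi-finite. Applying Zariski's Main Theorem for algebraic spaces to a presentation of $\st{Y}$ and descending, or equivalently the version for Deligne--Mumford stacks (which applies since $f'$ is representable separated quasi-finite), after one further shrinking we may assume $f'$ is finite.

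Finally, one has a representable finite morphism $f'\colon\st{X}'\to\st{Y}$ between reduced stacks, and it remains to factor it as $\st{X}'\xrightarrow{g}\st{Z}\xrightarrow{h}\st{Y}$ with $g$ a universal homeomorphism and $h$ finite étale. This is done by splitting off the purely inseparable part: working on a smooth presentation $Y\to\st{Y}$, pull back to obtain a finite morphism of reduced schemes, and at each generic point of $Y$ decompose the corresponding finite field extension into a separable extension inside a purely inseparable extension. The separable parts glue (after shrinking $\st{Y}$) to give a finite étale sub-cover $\st{Z}\to\st{Y}$ through which $f'$ factors, with the residual map $\st{X}'\to\st{Z}$ generically (hence, after one more shrinking, everywhere) radicial finite surjective, i.e.\ a universal homeomorphism. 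I expect the main obstacle to be the first step, namely verifying that the rigidification really exists and has the claimed structure as a gerbe under $\st{I}$ in the present generality; once that is granted, everything else is a sequence of standard shrinkings combined with the classical étale/radicial decomposition of a finite morphism between reduced schemes.
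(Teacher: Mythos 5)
Your proposal follows essentially the same route as the paper's own proof: factor off the inertia to get a gerb-like morphism $p$ followed by a representable morphism (the paper cites Behrend 5.1.13--14, which amounts to the rigidification you invoke), use the dimension hypothesis to make the representable part quasi-finite, then finite via Zariski's Main Theorem, and finally split the finite representable morphism into a radicial part and a finite \'{e}tale part by decomposing the function-field extension on a presentation. One small slip: the diagonal of $f'$ is a monomorphism, hence a locally closed immersion, but is not in general an open-and-closed immersion; the correct way to get separatedness after shrinking is Lemma \ref{genersepa}, which is exactly what the paper does, so your conclusion still stands.
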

\begin{proof}
 Locally on $\st{X}$, $f$ factors as
 $\st{X}\xrightarrow{p}\st{X}'\xrightarrow{\alpha}\st{Y}$ such that
 $p$ is gerb-like and $\alpha$ is a representable morphism
 \cite[5.1.13, 5.1.14]{Behr}. By the assumption on the dimension,
 $\alpha$ is a representable
 quasi-finite morphism. By shrinking $\st{X}$, and by using Lemma
 \ref{genersepa}, we may assume that $\alpha$ is separated. By using
 Zariski's main theorem (cf.\ \cite[16.5]{LM}), by shrinking if $\st{Y}$
 necessary, we may assume that $\alpha$ is a finite
 morphism.

 Let $f\colon X\rightarrow Y$ be a finite morphism between integral
 schemes such that $X$ is normal.
 The finite extension $K(X)/K(Y)$ of fields can be factorized
 canonically as $K(X)/M/K(Y)$ such that $K(X)/M$ is purely inseparable
 and $M/K(Y)$ is separable. Let $Z$ be the normalization of $Y$ in
 $\mr{Spec}(M)$. The morphism $f$ factors as the composition of finite
 morphisms $X\rightarrow Z\rightarrow Y$. This construction is compatible
 with smooth base change $Y'\rightarrow Y$ by \cite[16.2]{LM}. Thus,
 given a finite morphism $\st{X}\rightarrow\st{Y}$ of
 reduced algebraic stacks, by shrinking $\st{Y}$ if necessarily, we have
 a factorization
 $\st{X}\rightarrow\st{Z}\rightarrow\st{Y}$ such that the first morphism
 is generically purely inseparable and the second is generically
 finite \'{e}tale by \cite[14.2.4]{LM}.

 Apply this factorization to $\alpha$, and we get a factorization
 $\st{X}'\xrightarrow{g}\st{Z}\xrightarrow{h}\st{Y}$ satisfying the
 condition above. Take a presentation $Z\rightarrow\st{Z}$, and let
 $X'\rightarrow\st{X}'$ be the pull-back. Then by construction,
 $\widetilde{g}\colon X'\rightarrow Z$ is generically purely
 inseparable. By [EGA IV, 1.8.7], by shrinking $Z$, we may
 assume that $\widetilde{g}$ is radicial surjective for any fiber of
 $Z$. Thus $\widetilde{g}$ is radicial surjective as well, and since
 moreover $\widetilde{g}$ is finite, it is a universal homeomorphism by
 [EGA IV, 2.4.5]. By replacing $\st{Z}$ by the image of $Z$ and
 $\st{X}'$ by the pull-back of newly constructed $\st{Z}$, $g$ can be
 made universally homeomorphic. By removing the ramification locus of
 $h$ from $\st{Y}$, we may assume that $h$ if finite \'{e}tale.
\end{proof}

\subsection{Lefschetz fixed point theorem}
\label{LFTforsch}
In this subsection, we prove a Lefschetz fixed point theorem for
arithmetic $\ms{D}$-modules. In the case of realizable schemes, the
theorem has already been proven in \cite{AC}.
This can be generalized to the case of separated scheme of finite type
in an obvious manner, but we present here for the convenience of the
reader, and for the future reference.

\subsubsection{}
We consider a similar situation as \ref{situationfix}:
Let $k$ be a finite field with $q=p^s$ elements, and
we fix an algebraic closure $\overline{\mb{Q}}_p$ of $K$. We denote by
$\overline{k}$ the residue field of $\overline{\mb{Q}}_p$ which is
algebraically closed as well. We put
$\mf{T}_k:=(k,R:=W(k),K:=\mr{Frac}(R),L,s,\sigma:=\mr{id})$ to be an
arithmetic base tuple (cf.\ \ref{5tuplesover},
\ref{algcloscoefftheory}).
Let $X$ be a scheme over $k$, and let $i_x\colon
x\hookrightarrow X$ be a closed point of $X$.
Choose a geometric point $\overline{x}\in X(\overline{k})$ lying above
$x$. This defines an embedding
$K_x:=\mr{Frac}(W(k(x)))\hookrightarrow\overline{\mb{Q}}_p$.
Let $L_x$ be the field generated by $K_x$ and $L$ in
$\overline{\mb{Q}}_p$. Put $\mf{T}_{x,k}:=(k,R,K,L_x,s,\mr{id})$,
$\mf{T}_{k(x)}:=(k(x),R_x,K_x,L_x,[k(x):k]\cdot s,\mr{id})$,
and we have the following functor
\begin{equation*}
 \iota_{\overline{x}}\colon
  D^{\mr{b}}_{\mr{hol}}(X/\mf{T}_k)
  \xrightarrow{i_x^+}
  D^{\mr{b}}_{\mr{hol}}(k(x)/\mf{T}_k)
  \xrightarrow{\otimes_LL_x}
  D^{\mr{b}}_{\mr{hol}}(k(x)/\mf{T}_{x,k})\cong
  D^{\mr{b}}_{\mr{hol}}(k(x)/\mf{T}_{k(x)}),
\end{equation*}
where the equivalence follows by Corollary \ref{behaviorofbasech} using
the embedding. For $\ms{E}\in D^{\mr{b}}_{\mr{hol}}(X/\mf{T}_k)$, the
automorphism on $\iota_{\overline{x}}(\ms{E})$ is denoted by $F_x$.
We note that the eigenpolynomial of $F_x$, which is {\it a priori} in
$L_x[t]$, only depends on $x$ not on $\overline{x}$ and belongs in fact
to $L[t]$. Indeed, the independence
of $\overline{x}$ follows from the construction of the equivalence in
Corollary \ref{behaviorofbasech}. Let us check that the eigenpolynomial,
which we denote by $\chi(\iota_{\overline{x}}\ms{E},t)$ for the moment,
is in $L[t]$. We may replace $L_x$ by the Galois closure,
and may assume that $L_x$ is a Galois extension. Take an automorphism
$\gamma$ of $L_x$ over $L$. This induces an endofunctor $\gamma^*$ on
$D^{\mr{b}}_{\mr{hol}}(k(x)/\mf{T}_{k(x)})$, and
$\chi(\gamma^*\iota\ms{E},t)=\gamma(\chi(\iota\ms{E},t))$. However, we have a
canonical isomorphism
$\gamma^*\iota\ms{E}\cong\iota\ms{E}$
since we are taking $\otimes_LL_x$ in the definition of
$\iota_{\overline{x}}$, which implies that
$\gamma(\chi(\iota\ms{E},t))=\chi(\iota\ms{E},t)$.

Let $X$ be a separated scheme of finite type over $k$ and $\ms{E}\in
D^{\mr{b}}_{\mr{hol}}(X/\mf{T}_k)$. We put
$\mb{R}\Gamma_{\mr{c}}(X,\ms{E}):=f_!(\ms{E})$ where $f$ is the structural
morphism of $X$, and $H^\nu_{\mr{c}}(X,\ms{E}):=\H^\nu
\mb{R}\Gamma_{\mr{c}}(X,\ms{E})$ as usual. The object
$\mb{R}\Gamma_{\mr{c}}(X,\ms{E})$ is equipped with an automorphism, and we
denote this automorphism by $\mr{Frob}_X$.
For an extension $L'$ of $K$ and
$\ms{F}\in D^{\mr{b}}_{\mr{fin}}(\mr{Vec}_{L'})$ equipped with automorphism
$\varphi$, we put
$\mr{Tr}_{L'}(\varphi;\ms{F}):=\sum_{\nu\in\mb{Z}}(-1)^\nu
\mr{Tr}_{L'}(\varphi;\H^\nu(\ms{F}))$, and similarly for the determinant
except that we take an alternating product instead of sum.
By using the same argument as \cite[4.3.9]{AC}, we have the following
theorem:

\begin{thm}[Essetially due to \cite{EL}]
 \label{LFTforschELcoeff}
 Let $X$ be a separated scheme of finite type over $k$, and $\ms{E}\in
 D^{\mr{b}}_{\mr{hol}}(X/L_F)$. Let $k_n$ be the extension of $k$
 of degree $n$. Then we have the following identity in
 $\overline{\mb{Q}}_p$:
 \begin{equation*}
  \mr{Tr}_L\bigl(\mr{Frob}_X^n;\mb{R}\Gamma_{\mr{c}}(X,\ms{E})\bigr)=
   \sum_{x\in X(k_n)}
   \mr{Tr}_{L_x}(F^n_x;\iota_{\overline{x}}(\ms{E})).
 \end{equation*}
\end{thm}
\begin{proof}
 First of all, let us show the theorem in the case $L=K$.
 We argue by induction on the dimension of $X$.
 Since both sides of the formula is multiplicative with respect to exact
 triangles, we may assume that $X$ is affine by using the localization
 exact sequence \ref{localtriangs}.
 By the Noether normalization lemma, we may find a finite dominant
 morphism $f\colon X\rightarrow\mb{A}^d$.
 Thus, we may assume that $X=\mb{A}^d$. By induction hypothesis, we may
 shrink $X$, and assume that $X$ is affine and $\ms{E}$ is
 smooth on $X\subset\mb{A}^d$. In this case, we know that
 $H^\nu_{\mr{c}}(X,\ms{E})$ is isomorphic to the rigid cohomology by
 \ref{relbetourrig}, thus the formula is a result of Etesse and Le Stum
 \cite{EL}.

 Now, we show the general case.
 We may assume that the extension $L/K$ is finite.
 By {\it d\'{e}vissage}, we may assume that $X(k_n)=\emptyset$,
 and we only need to show that the left side of the equality vanishes.
 Replacing $L$ by its finite extension, we may assume that an
 $n$-th root of $\alpha:=\mr{Tr}_L(\mr{Frob}_X^n;
 \mb{R}\Gamma_{\mr{c}}(X,\ms{E}))$ is contained in $L$.
 We denote by $\beta$ an $n$-th root.
 Our goal is to show $\alpha=0$. We assume the contrary, so
 $\beta\neq0$.
 Let $\ms{E}=(\ms{F},\Phi_{\ms{F}})$ where $\ms{F}$ is the underlying
 object in $\mr{Hol}(X/L_{\emptyset})$ and $\Phi_{\ms{F}}$ is the
 Frobenius structure. We define $\ms{E}^{(1/\beta)}$ to be an object in
 $\mr{Hol}(X/L_F)$ such that the underlying object is the same
 as $\ms{F}$, and the Frobenius structure is the composition
 \begin{equation*}
  \ms{F}\xrightarrow[\sim]{\cdot(1/\beta)}\ms{F}
   \xrightarrow[\sim]{\Phi_{\ms{F}}}F^*\ms{F}.
 \end{equation*}
 Then $\mr{Tr}_L(\mr{Frob}_X^n;
 \mb{R}\Gamma_{\mr{c}}(X,\ms{E}^{(1/\beta)}))=
 (1/\beta)^n\times\mr{Tr}_L(\mr{Frob}_X^n;
 \mb{R}\Gamma_{\mr{c}}(X,\ms{E}))
 =1$. For any $L$-vector space $V$ with automorphism $\phi$,
 we have $\mr{Tr}_{L/K}\bigl(\mr{Tr}_L(\phi; V)\bigr)=
 \mr{Tr}_K(\phi; V)$, where $\mr{Tr}_{L/K}$
 denotes the field trace. Combining these, we have
 \begin{align*}
  0=\mr{Tr}_K(\mr{Frob}_X; \mb{R}\Gamma_{\mr{c}}(X,\ms{F}^{(1/\beta)}))
   &=
   \mr{Tr}_{L/K}\bigl(\mr{Tr}_L(\mr{Frob}_X; \mb{R}\Gamma_{\mr{c}}
   (X,\ms{E}^{(1/\beta)}))\bigr)\\
   &=\mr{Tr}_{L/K}(1)=[L:K],
 \end{align*}
 where the first equality holds by the case $L=K$.
 This is a contradiction and implies that $\alpha=0$.
\end{proof}

\subsubsection{}
\label{cohintGroth}
Now, let $\ms{E}$ be an object in $D^{\mr{b}}_{\mr{hol}}(X/L_F)$.
We define series in $L\dd{Z}$
\begin{equation*}
 L_x(\ms{E},Z):=\det\bigl(1-Z^{\deg(x)}F_x;
  \iota_{\overline{x}}(\ms{E})
  \bigr)^{-1},\qquad
   L_X(\ms{E},Z):=\prod_{x\in|X|}L_x(\ms{E},Z).
\end{equation*}
Since the first one does not depend on the choice of $\overline{x}$,
these are well-defined. The first one (resp.\ second one) is called the
{\em local $L$-function} (resp.\ {\em global $L$-function}).
By a standard argument (cf.\ for example, [SGA $4\frac{1}{2}$,
Rapport, \S3]), we have a cohomological interpretation of $L$-function
as a consequence of the Lefschetz fixed point theorem.

\begin{cor*}
 Let $X$ be a separated scheme of finite type over $k$, and $\ms{E}\in
 D^{\mr{b}}_{\mr{hol}}(X/L_F)$. Then we have an identity of formal power
 series:
 \begin{equation*}
  L_X(\ms{E},Z)=\prod_{\nu\in\mb{Z}}
   \det\bigl(1-Z\cdot\mr{Frob}_X;
   H^\nu_{\mr{c}}(X,\ms{E})
   \bigr)^{(-1)^{\nu+1}}.
 \end{equation*}
\end{cor*}

\subsection{\v{C}ebotarev density (after N. Tsuzuki)}
The \v{C}ebotarev density theorem for curves and mixed isocrystals is
proven in \cite{ALang}. We need the \v{C}ebotarev density for surfaces
and mixed isocrystals, which we show in this appendix. We could have
included in the main text, but since the author learned the proof from
N. Tsuzuki before start writing this paper, we decided to keep it
separate. We consider the situation of \ref{LFTforsch}.

\begin{prop}
 \label{Cebdensity}
 Let $X$ be a smooth variety over a finite field $k$. Let $\mc{E}$ and
 $\mc{E}'$ be $\iota$-mixed overconvergent $F$-isocrystals
 in $\mr{Isoc}^\dag(X/L_F)$ such that the sets of
 Frobenius eigenvalues are the same for any closed point of $X$. Then
 $\mc{E}^{\mr{ss}}=\mc{E}'^{\mr{ss}}$ where the semi-simplification is
 taken in $\mr{Isoc}^\dag(X/L_F)$.
\end{prop}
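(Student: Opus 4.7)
The plan is to reduce to the curve case proven in \cite{ALang} by induction on $d=\dim X$, using a $p$-adic Bertini-type argument for the isocrystal fundamental group.

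As preliminary reductions, I would first replace $\mc{E}$ and $\mc{E}'$ by their semi-simplifications, which is harmless since Frobenius characteristic polynomials at closed points are preserved and the conclusion is phrased in terms of semi-simplifications. Next, using the weight filtration for $\iota$-mixed isocrystals (the $L$-coefficient analogue of \cite[4.3.4]{AC} discussed in \S\ref{extofsclarsub}), each semi-simplification splits canonically as $\mc{E}=\bigoplus_w\mc{E}_w$ and $\mc{E}'=\bigoplus_w\mc{E}'_w$ with $\mc{E}_w,\mc{E}'_w$ pure of weight $w$. Since the $\iota$-absolute value of any Frobenius eigenvalue at a closed point $x$ reads off the weight of the pure summand it comes from, the hypothesis transfers weight-by-weight, and we reduce to the case where $\mc{E}$ and $\mc{E}'$ are semi-simple and pure of a common weight.

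Write then $\mc{E}=\bigoplus_i\mc{F}_i^{m_i}$ and $\mc{E}'=\bigoplus_j\mc{G}_j^{n_j}$ as finite sums of pairwise non-isomorphic irreducibles. The base case $d=1$ of the induction on $d$ is precisely \cite{ALang}. For $d\geq 2$, the goal is to produce a smooth locally closed curve $\iota\colon C\hookrightarrow X$---after possibly shrinking $X$ by a nonempty open, which is allowed since restriction of an irreducible overconvergent $F$-isocrystal to a nonempty open remains irreducible by \cite[1.4.6]{AC}---such that the induced map $\pi_1^{\mr{isoc}}(C)\to\pi_1^{\mr{isoc}}(X)$ on isocrystal fundamental groups is surjective. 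Granted such a $C$, each $\iota^+\mc{F}_i$ and each $\iota^+\mc{G}_j$ is automatically irreducible on $C$, and the finite list $\{\iota^+\mc{F}_i\}\cup\{\iota^+\mc{G}_j\}$ inherits the property of being pairwise non-isomorphic; both facts are formal consequences of surjectivity of a group homomorphism applied to irreducible representations of the target. Then the hypothesis on Frobenius eigenvalues transfers to the pair $(\iota^+\mc{E},\iota^+\mc{E}')$, the curve case of \cite{ALang} gives $\iota^+\mc{E}\cong\iota^+\mc{E}'$, and uniqueness of decomposition into irreducibles on $C$ matches $\{(\mc{F}_i,m_i)\}$ with $\{(\mc{G}_j,n_j)\}$ as multi-sets, completing the induction.

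The hard part is thus the Bertini-type statement: for a smooth variety $X$ of dimension $\geq 2$ over a finite field, a suitably generic smooth locally closed curve $C\hookrightarrow X$ satisfies $\pi_1^{\mr{isoc}}(C)\twoheadrightarrow\pi_1^{\mr{isoc}}(X)$. For the étale fundamental group this is classical, but in the overconvergent $F$-isocrystal setting it is subtler. I expect this to be extracted from Shiho's cut-by-curves theorem \cite[Thm 0.1]{Sh}, applied in the Frobenius-equipped form already invoked in the proof of Lemma \ref{propsmoobcspeci}: the theorem controls which isocrystals on $X$ are recognized by their restrictions to curves, and a Tannakian translation of this control gives the required surjectivity of fundamental groups for a generic curve. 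The concise resolution of this last step is presumably what justifies the attribution of the proof to N. Tsuzuki.
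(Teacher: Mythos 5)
Your proposal takes a genuinely different route from the paper, and it has a gap at its central step.

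The paper's proof avoids any Bertini-type reduction to curves. After the same preliminary reduction to pure $\mc{E},\mc{E}'$, it shows directly that $\mr{Hom}_{\mr{Isoc}^\dag(X_0)}(\mc{E}',\mc{E})\neq 0$, then inducts on \emph{rank} (not dimension). The non-vanishing of the Hom group is detected through Poincar\'e duality, which identifies it with $H^{2d}_{\mr{rig,c}}(X,\mc{E}^\vee\otimes\mc{E}')(d)$, and one need only exhibit a Frobenius eigenvalue $1$ there. The pointwise hypothesis forces $L(X,\mc{E}^\vee\otimes\mc{E}',t)=L(X,\mc{E}^\vee\otimes\mc{E},t)$; the latter has a pole at $t=q^{-d}$ because $L_X$ splits off $\mc{E}^\vee\otimes\mc{E}$ and the quotient is pure of weight $0$, so that Weil II pushes all of its zeros to $|t|>q^{-d}$; the same weight bound applied to $\mc{E}^\vee\otimes\mc{E}'$ then localizes the pole at $t=q^{-d}$ in the top compactly-supported cohomology, which is exactly the Hom group. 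No geometric Lefschetz input is needed.

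Your proposal, by contrast, hinges on the assertion that for $d\geq2$ and a suitably generic smooth curve $C\hookrightarrow X$, the map $\pi_1^{\mr{isoc}}(C)\to\pi_1^{\mr{isoc}}(X)$ is surjective. You correctly flag this as the hard step, but the proposed source does not deliver it. Shiho's cut-by-curves theorem \cite[Thm 0.1]{Sh} is a log-extendability criterion: it tests one isocrystal against its restrictions to \emph{all} curves to conclude extendability. Tannakian surjectivity for a \emph{single} curve $C$ is an entirely different statement; in Tannakian terms it requires that restriction to $C$ be fully faithful and that every subobject of $\iota^+\mc{E}$ be of the form $\iota^+\mc{F}$, and neither clause is a formal consequence of Shiho's result. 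Such a Lefschetz theorem for overconvergent $F$-isocrystals is in fact substantially harder than its $\ell$-adic counterpart (where one has open image of arithmetic monodromy plus classical Bertini), and was not available at the time this paper was written; it is precisely the type of statement that this very Langlands correspondence was later used to establish. So invoking it here would make the argument circular in spirit, and as written the step is simply unjustified. A secondary, repairable point: you shrink $X$ to a dense open before choosing $C$, so you also owe a remark that equality of semi-simplifications on a dense open propagates to all of $X$ (this follows from full faithfulness of restriction to opens, \cite[1.4.6]{AC} together with Kedlaya's full faithfulness, but it should be said). The paper's $L$-function argument sidesteps all of these issues.
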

\begin{proof}
 Since we have weight filtration on $\mc{E}$ and $\mc{E}'$ by
 \cite[4.3.4]{AC}, we may assume that $\mc{E}$ and $\mc{E}'$ are
 $\iota$-pure. Let $\mc{F}$ be an irreducible overconvergent
 $F$-isocrystal. Since Frobenius eigenvalues of $\mc{E}$ and $\mc{E}'$
 are the same, we have
 \begin{equation*}
  L(\mc{E}\otimes\mc{F}^\vee,Z)=L(\mc{E}'\otimes\mc{F}^\vee,Z).
 \end{equation*}
 If $\mu$ is the multiplicity of $\mc{F}$ in $\mc{E}^{\mr{ss}}$,
 Proposition \ref{conseqweighidenti} implies that
 $L(\mc{E}\otimes\mc{F},Z)$ has a pole of order $\mu$ at
 $Z=q^{-\dim(X)}$. The equality tells us that
 $L(\mc{E}'\otimes\mc{F}^\vee,Z)$ has a pole of the same order at the
 same point, and using the proposition again, we get that
 $\mc{E}'^{\mr{ss}}$ contains $\mc{F}$ with multiplicity $\mu$.
 Note that Proposition \ref{conseqweighidenti} is stated for
 $\overline{\mb{Q}}_p$-coefficients, but the proof goes through also for
 $L$-coefficients with obvious changes.
\end{proof}

\subsection{Gabber-Fujiwara's $\ell$-independence}
We generalize Gabber-Fujiwara's $\ell$-independence results (cf.\
\cite{FG}) to admissible stacks. For a category $\mc{C}$, we denote by
$[\mc{C}]$ the set of isomorphism classes of $\mc{C}$.

\begin{thm}[Trace formula]
 \label{traceformulasta}
 Let $\st{X}$ be a c-admissible stack over finite field $\mb{F}_q$.
 Let $\ms{M}$ be a complex in
 $D^{\mr{b}}_{\mr{hol}}(\st{X}/\overline{\mb{Q}}_{p,F})$. For
 $x\in[\st{X}(\mb{F}_q)]$, we denote by
 $i_x\colon\mr{Spec}(\mb{F}_q)\rightarrow\st{X}$ the corresponding
 morphism. Then, for any $n>0$, we have
 \begin{equation*}
  \sum_{i\in\mb{Z}}(-1)^i\cdot\mr{Tr}\bigl(F^n:H^i_{\mr{c}}
   (\st{X},\ms{M})\bigr)=\sum_{x\in[\st{X}(\mb{F}_q)]}
   \frac{1}{\#\mr{Aut}(x)}\cdot\mr{Tr}\bigl(F^n:
   i_x^+(\ms{M})\bigr).
 \end{equation*}
\end{thm}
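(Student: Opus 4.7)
The plan is to establish the formula by dévissage to the case of a holonomic module on a scheme plus a groupoid-theoretic accounting of automorphisms. Both sides of the identity are additive under distinguished triangles of $\ms{M}$, so by iterating the truncation triangles for the t-structure on $D^{\mr{b}}_{\mr{hol}}(\st{X}/\overline{\mb{Q}}_p)$ I reduce to the case $\ms{M}\in\mr{Hol}(\st{X})$. Using the localization long exact sequence (Lemma \ref{artstacklocsequ}) and induction on $\dim\mr{Supp}(\ms{M})$, I may moreover assume $\st{X}$ is reduced with $\mr{Supp}(\ms{M})=\st{X}$.

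Next I would produce a finite cover from a scheme. By Remark \ref{admissdfn}(ii), there exists a dense open substack $\st{U}\subset\st{X}$ and a finite locally free surjection $h\colon V\rightarrow\st{U}$ from a scheme; shrinking further and applying Corollary \ref{alterationforadmisst} if necessary, I may assume $V$ is smooth quasi-projective and $h$ has some constant degree $d$. Replacing $\st{X}$ by $\st{U}$ (the complement is handled by the dimension induction on $\mr{Supp}(\ms{M})$), this gives $h\colon V\rightarrow\st{X}$. By Lemma \ref{directfactorlemmaeasy}(iii), $\ms{M}$ is a direct factor of $h_+h^+\ms{M}$, and the composition $\ms{M}\rightarrow h_+h^+\ms{M}\rightarrow\ms{M}$ is multiplication by $d$ (Lemma \ref{tracepropmuln}). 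Consequently, $\mr{Tr}\bigl(F^n;H^i_\mr{c}(\st{X},\ms{M})\bigr)$ is computed, up to the factor $\frac{1}{d}$, by $\mr{Tr}\bigl(F^n;H^i_\mr{c}(V,h^+\ms{M})\bigr)$ via the projection formula, reducing the left hand side to a trace computation on the scheme $V$.

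For the resulting scheme-level statement, I would invoke the Lefschetz trace formula for holonomic arithmetic $\ms{D}$-modules on a realizable scheme (Étesse--Le Stum for overconvergent $F$-isocrystals, and its extension to general holonomic complexes via the six-functor formalism of \S\ref{revDmodtheory}), yielding
\begin{equation*}
 \sum_i(-1)^i\mr{Tr}\bigl(F^n;H^i_\mr{c}(V,h^+\ms{M})\bigr)
  =\sum_{v\in V(\mb{F}_{q^n})}\mr{Tr}\bigl(F^n;i_v^+h^+\ms{M}\bigr).
\end{equation*}
It then remains to reinterpret the right hand side as a sum over $[\st{X}(\mb{F}_{q^n})]$ weighted by $\#\mr{Aut}(x)^{-1}$. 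Partitioning $V(\mb{F}_{q^n})$ along the induced map $V(\mb{F}_{q^n})\rightarrow[\st{X}(\mb{F}_{q^n})]$ and observing that $i_v^+h^+\ms{M}\cong i_x^+\ms{M}$ with $v\mapsto x$, the desired equality reduces to the combinatorial identity $\#(h^{-1}(x)(\mb{F}_{q^n}))=d\cdot\#\mr{Aut}(x)/\#\mr{Aut}_V(\widetilde v)$ summed over fiber representatives, which is the orbit–stabilizer theorem for the action of $\mr{Aut}(x)$ on the fiber.

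The main obstacle is this last combinatorial step in the presence of non-smooth automorphism group schemes, interpreting $\#\mr{Aut}(x)$ as $|\mr{Aut}(x)(\mb{F}_{q^n})|$ (as the $B\alpha_p$ example already shows, interpreting it as the length of the group scheme would be wrong). To handle the infinitesimal part, I would use Lemma \ref{invariantclassflrdgr} together with the gerb-like decomposition available for admissible stacks (\cite[5.1.3]{Behr}) to replace $\st{X}$, without changing holonomic cohomology, by a stack whose diagonal is étale, so that the only group-theoretic counting involved is for étale group schemes where orbit–stabilizer is clear. A secondary technical point is that the scheme $V$ produced in Paragraph 2 is {\em a priori} only a scheme of finite type; the alteration step using Corollary \ref{alterationforadmisst} is what places it inside the realizable framework where the scheme trace formula is available.
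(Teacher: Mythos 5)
The proposal's main reduction step is not sound. In your second paragraph you write that, because $\ms{M}$ is a direct factor of $h_+h^+\ms{M}$ with the composite $\ms{M}\rightarrow h_+h^+\ms{M}\rightarrow\ms{M}$ being multiplication by $d$, the trace of $F^n$ on $H^i_{\mr{c}}(\st{X},\ms{M})$ is ``computed, up to the factor $\tfrac{1}{d}$,'' by the trace on $H^i_{\mr{c}}(V,h^+\ms{M})\cong H^i_{\mr{c}}(\st{X},h_+h^+\ms{M})$. This does not follow. The direct-factor structure only gives a decomposition $h_+h^+\ms{M}\cong\ms{M}\oplus\ms{C}$, hence an additivity of traces
\begin{equation*}
 \mr{Tr}\bigl(F^n;H^i_{\mr{c}}(V,h^+\ms{M})\bigr)
  =\mr{Tr}\bigl(F^n;H^i_{\mr{c}}(\st{X},\ms{M})\bigr)
  +\mr{Tr}\bigl(F^n;H^i_{\mr{c}}(\st{X},\ms{C})\bigr),
\end{equation*}
and there is no reason for the contribution of $\ms{C}$ to be $(d-1)$ times that of $\ms{M}$. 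Already for a double cover $h\colon V\rightarrow X$ of schemes, $\ms{C}\cong\ms{M}\otimes\mc{L}$ for the nontrivial rank-one summand $\mc{L}$ of $h_+\overline{\mb{Q}}_p$, and $H^*_{\mr{c}}(X,\ms{M}\otimes\mc{L})$ is in general nonzero with a different trace. Even in the stack case one can see the factor cannot be $\tfrac{1}{d}$: for $\st{X}=B\mu_2$ over $\mb{F}_q$ ($q$ odd), $\ms{M}=\overline{\mb{Q}}_p$, the universal torsor $V=\mr{Spec}(\mb{F}_q)\rightarrow B\mu_2$ has $d=2$, and both sides $\mr{Tr}(F;H^*_{\mr{c}}(\st{X},\ms{M}))$ and $\mr{Tr}(F;H^*_{\mr{c}}(V,h^+\ms{M}))$ equal $1$, not $1=\tfrac12\cdot 1$. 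The subsequent ``combinatorial identity'' about $\#h^{-1}(x)(\mb{F}_{q^n})$ in your third paragraph is likewise false in general (the fiber over a nontrivial torsor can have no rational points), so the two errors do not cancel by design.

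Because of this, the reduction from the stack to a finite locally free scheme cover cannot carry the trace computation, and the proposal does not close. The paper takes a genuinely different route: it works with the relative statement $L(\st{X},\ms{M})=L(\st{Y},f_!\ms{M})$ for arbitrary morphisms $f$ of c-admissible stacks, localizes on $\st{X}$, reduces via \cite[11.5]{LM} to the case that $f$ is gerb-like, and then, after killing infinitesimals with the universal homeomorphism $BG_{\mr{red}}\rightarrow BG$, computes $H^*_{\mr{c}}(BG,\overline{\mb{Q}}_p)$ directly from the finite universal torsor. This avoids having to say anything about the trace on the complementary summand, which is exactly the information your approach is missing. Your final paragraph's remarks about interpreting $\#\mr{Aut}(x)$ via $\mb{F}_{q^n}$-points and stripping the infinitesimal automorphism group with Lemma \ref{invariantclassflrdgr} are consistent in spirit with the paper's $BG_{\mr{red}}\rightarrow BG$ step, but they do not repair the trace reduction in the second paragraph.
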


\begin{rem*}
 Note that both sides of the equality are finite sums since we are
 dealing with c-admissible stacks, contrary to the case of more general
 algebraic stacks. This prevents us from struggling with the
 convergence issues as in \cite{Behr}, which makes it much easier to
 formulate and prove.
\end{rem*}

\begin{proof}
 Since we can prove similarly to \cite[A.14]{Laf} or
 \cite[6.4.10]{Behr}, we only sketch. Let us denote the right hand side
 of the equality by $L(\st{X},\ms{M})$. For a morphism
 $f\colon\st{X}\rightarrow\st{Y}$ of c-admissible stacks, it suffices to
 show the equality $L(\st{X},\ms{M})=L(\st{Y},f_!(\ms{M}))$ since the
 theorem is the particular case where $\st{Y}=\mr{Spec}(\mb{F}_q)$. When
 $f$ is a morphism between schemes, then this equality is
 already known by Theorem \ref{LFTforschELcoeff}.
 Now, by localization triangle, the verification is local
 with respect to $\st{X}$. By using \cite[11.5]{LM} and some standard
 {\it d\'{e}vissage} argument, it suffices to treat the case where $f$
 is gerb-like. By definition of $L(-,-)$ combining with
 \cite[6.4.2]{Behr}, it is reduced to showing the theorem in the case
 $\st{X}=BG$ with a finite flat group scheme $G$ over
 $\mr{Spec}(\mb{F}_q)$ and $\ms{M}$ is $\overline{\mb{Q}}_p$. Since the
 morphism $BG_{\mr{red}}\rightarrow BG$ is a representable universal
 homeomorphism, we have $H^i(BG,\overline{\mb{Q}}_p)\xrightarrow{\sim}
 H^i(BG_{\mr{red}},\overline{\mb{Q}}_p)$, and we may assume $G$ to be
 smooth. By considering the universal torsor
 $\mr{Spec}(\mb{F}_q)\rightarrow BG$, which is finite since $G$ is, we
 get $H^i(BG,\overline{\mb{Q}}_p)=0$ for $i\neq0$. The calculation of
 $H^0$ is left to the reader.
\end{proof}

\begin{dfn}
 Let $\st{X}$ be an algebraic stack over $\mb{F}_q$, and $\ms{E}$ (resp.\
 $\mc{F}$) be an object in
 $D^{\mr{b}}_{\mr{hol}}(\st{X}/\overline{\mb{Q}}_{p,F})$ (resp.\
 $D^{\mr{b}}_{\mr{c}}(X/\overline{\mb{Q}}_\ell)$). For
 $x\in[\st{X}(\mb{F}_{q^d})]$, we denote by
 $i_x\colon\mr{Spec}(\mb{F}_{q^d})\rightarrow\st{X}$ and
 $\rho\colon\mr{Spec}(\mb{F}_{q^d})\rightarrow\mr{Spec}(\mb{F}_q)$ the
 canonical morphisms. We say that $\ms{E}$ and $\mc{F}$ are {\em
 compatible} if for any point $x\in[\st{X}(\mb{F}_{q^d})]$, the Frobenius
 trace of $\rho_+\circ i_x^+(\ms{E})$ and $\rho_*\circ i_x^*(\mc{F})$ are
 equal.
\end{dfn}

\begin{lem}
 \label{charofcompsys}
 The couple $(\ms{E},\mc{F})$ are compatible if and only if for any
 $X\in\st{X}_{\mr{sm}}$, the pull-back $\ms{E}_X$ and $\mc{F}_X$ are
 compatible.
\end{lem}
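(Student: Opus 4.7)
The plan is to prove the two directions separately, with the ``only if'' direction being essentially formal and the ``if'' direction relying on a pointwise lifting argument combined with the Lang--Weil estimates.

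The ``only if'' direction is straightforward. Let $X \in \st{X}_{\mr{sm}}$ with structural smooth morphism $\rho\colon X \to \st{X}$, and let $i_x\colon \mr{Spec}(\mb{F}_{q^m}) \hookrightarrow X$ be a closed point. The composition $\widetilde{x} := \rho \circ i_x$ is a closed point of $\st{X}$ in $[\st{X}(\mb{F}_{q^m})]$, and the definitions of $\ms{E}_X$ and $\mc{F}_X$ (as smooth pull-backs in the sense of \S\ref{smoothpullback}, up to shift by the relative dimension) yield canonical identifications of $i_x^+(\ms{E}_X)$ with $\widetilde{x}^+(\ms{E})$ and of $i_x^*(\mc{F}_X)$ with $\widetilde{x}^*(\mc{F})$, up to the same degree shift by the relative dimension of $\rho$ on both sides. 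The Frobenius traces at $i_x$ on the two pull-backs therefore agree up to a common sign, and compatibility of $(\ms{E},\mc{F})$ at $\widetilde{x}$ immediately gives that of $(\ms{E}_X,\mc{F}_X)$ at $x$.

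For the ``if'' direction, fix $x\colon\mr{Spec}(\mb{F}_{q^d})\to\st{X}$; we must establish the equality of Frobenius traces of $i_x^+\ms{E}$ and $i_x^*\mc{F}$. Choose a smooth presentation $P\colon X \to \st{X}$ with $X$ an affine scheme whose image contains $x$, and form the base change $X_x := X \times_{\st{X}} \mr{Spec}(\mb{F}_{q^d})$, a non-empty smooth scheme over $\mr{Spec}(\mb{F}_{q^d})$. For any closed point $y \in X_x$ with residue field $\mb{F}_{q^{dm}}$, the induced morphism $\widetilde{y}\colon\mr{Spec}(\mb{F}_{q^{dm}})\to X$ satisfies $P\circ\widetilde{y} = x \circ \iota_m$, where $\iota_m\colon \mr{Spec}(\mb{F}_{q^{dm}}) \to \mr{Spec}(\mb{F}_{q^d})$ is the natural inclusion. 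Applying the hypothesis to $(\ms{E}_X, \mc{F}_X)$ at $\widetilde{y}$ and using the identifications of the previous paragraph, one obtains an equality of traces of $\mr{Frob}_{q^d}^m$ on the stalks $i_x^+\ms{E}$ and $i_x^*\mc{F}$. By applying the Lang--Weil estimates to the geometrically irreducible components of $X_x$, and, if necessary, by varying the smooth presentation $P$ so as to cover all residue classes modulo the degrees of those components over $\mb{F}_{q^d}$, one can arrange such a point $y$ for a set of integers $m$ cofinal in $\mb{Z}_{>0}$. Knowing $\mr{Tr}(\mr{Frob}_{q^d}^m; i_x^+\ms{E}) = \mr{Tr}(\mr{Frob}_{q^d}^m; i_x^*\mc{F})$ for such a cofinal family is equivalent to equality of the characteristic polynomials of $\mr{Frob}_{q^d}$ on the two stalks, which in particular yields equality of the traces of $\mr{Frob}_{q^d}$ themselves, as required.

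The main obstacle is the last step of the ``if'' direction: ensuring that the integers $m$ obtained as residue degrees of closed points of $X_x$, as $P$ is allowed to vary, form a sufficiently rich subset of $\mb{Z}_{>0}$ so that the traces of the corresponding powers $\mr{Frob}_{q^d}^m$ recover the trace of $\mr{Frob}_{q^d}$. This requires some care because a given $X_x$ may be a disjoint union of components each contributing only a single arithmetic progression of residue degrees, but becomes automatic once one is free to refine or change the presentation. A secondary, purely bookkeeping point is to match the conventions on shifts and Tate twists between the $p$-adic and $\ell$-adic smooth pull-backs in the ``only if'' direction, so that the resulting Frobenius-trace identities align with the same sign on both sides.
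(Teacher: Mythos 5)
Your proof of the ``only if'' direction is essentially correct, up to the shift/twist bookkeeping you flag at the end.

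The ``if'' direction, however, has a genuine gap that the Lang--Weil detour does not close. You correctly anticipate that for a fixed smooth presentation $X_0\to\st{X}$ the fiber $X_{0,x}$ need not have an $\mb{F}_{q^d}$-rational point, but the concluding step is wrong: knowing $\mr{Tr}(\mr{Frob}_{q^d}^m;\,i_x^+\ms{E})=\mr{Tr}(\mr{Frob}_{q^d}^m;\,i_x^*\mc{F})$ for $m$ in a merely \emph{cofinal} (i.e.\ unbounded) subset of $\mb{Z}_{>0}$ does \emph{not} determine the characteristic polynomials, contrary to what you assert. For instance, Frobenius eigenvalues $\{1,-1\}$ and $\{1,1\}$ give equal traces for every even $m$ --- a cofinal set --- yet have different traces at $m=1$ and different characteristic polynomials $t^2-1$ and $(t-1)^2$. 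The power-sum/generating-function argument requires trace agreement for all sufficiently large $m$, or at least on arithmetic progressions hitting every residue class modulo every $N$. Lang--Weil applied to a fixed $X_0$ only produces a single arithmetic progression $\{em : m\ge M\}$, where $e$ is the degree over $\mb{F}_{q^d}$ of the constant field of a geometrically irreducible component of $X_{0,x}$; to conclude one would have to force $e=1$, which is exactly the lifting problem you set out to avoid. Your proposed remedy, ``by varying the smooth presentation $P$ so as to cover all residue classes,'' is asserted without argument, and the natural candidates fail: replacing $X_0$ by $X_0\otimes_{\mb{F}_q}\mb{F}_{q^m}$ makes the fiber live over $\mb{F}_{q^{\mr{lcm}(d,m)}}$ and only pushes the problem out, while the fiber powers $X_0\times_{\st{X}}\cdots\times_{\st{X}}X_0$ have fiber $X_{0,x}\times_{\mb{F}_{q^d}}\cdots\times_{\mb{F}_{q^d}}X_{0,x}$ over $x$, whose constant fields are not any smaller. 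The paper's proof is a one-line appeal to \cite[6.3]{LM}, which furnishes directly the pointwise lifting you are trying to replace; as written, your Lang--Weil argument leaves the ``if'' direction unproved.
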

\begin{proof}
 Use \cite[6.3]{LM}.
\end{proof}

\begin{thm}
 \label{Gabberfujiwarateh}
 (i) Let $f\colon\st{X}\rightarrow\st{Y}$ be a morphism between
 c-admissible stacks. Then $f_+$, $f_!$, $f^+$, $f^!$, $\mb{D}$, and
 $\otimes$ preserve compatible systems.

 (ii) When $j\colon\st{U}\hookrightarrow\st{X}$ is an immersion of
 c-admissible stacks, $j_{!+}$ preserves compatible systems.
\end{thm}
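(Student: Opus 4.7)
The plan is to reduce (i) to the case of realizable schemes by means of Lemma \ref{charofcompsys}, and then to compare the $p$-adic and $\ell$-adic trace formulas on fibers. For $f^+$ (hence $f^! = \mb{D}\circ f^+\circ\mb{D}$), the bifunctor $\otimes$, and the duality $\mb{D}$, Lemma \ref{charofcompsys} reduces the problem directly to realizable schemes since these functors commute with smooth pullback (Lemma \ref{starplusequal}, Proposition \ref{easypropfunc}, and the definition of $\mb{D}$); the scheme case is classical. The substantive case of (i) is $f_!$, and $f_+ \cong \mb{D}\circ f_!\circ\mb{D}$ via Poincaré duality (Theorem \ref{relativedualprop}) then takes care of $f_+$.

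For $f_!$, by Lemma \ref{charofcompsys} and smooth base change (Lemma \ref{basechforshrik}), I may assume $\st{Y}=Y$ is a realizable scheme. For each $y\in Y(\mb{F}_{q^d})$ with canonical morphism $i_y$, proper base change gives $i_y^+ f_!(\ms{E})\cong p_{y!}(\ms{E}|_{\st{X}_y})$ where $\st{X}_y=\st{X}\times_Y y$. By Theorem \ref{traceformulasta} (and Behrend's $\ell$-adic analogue from \cite{Behr}), the alternating Frobenius trace on both sides equals
\begin{equation*}
\sum_{x\in[\st{X}_y(\mb{F}_{q^d})]}\frac{1}{\#\mr{Aut}(x)}\cdot\mr{Tr}(F;i_x^+(-)),
\end{equation*}
and the hypothesized pointwise compatibility of $\ms{E}$ and $\mc{F}$ on $\st{X}_y(\mb{F}_{q^d})$ identifies the two sums. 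This yields (i).

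For (ii), using weight filtrations (Theorem \ref{theoryofweightadmiss}, \cite[4.3.4]{AC}), I reduce to the case where $\ms{M}$ is $\iota$-pure of some weight $w$. Then Theorem \ref{purityinterme} shows $j_{!+}(\ms{M})$ is pure of weight $w$, and the same holds on the $\ell$-adic side. Writing $j_{!+}(\ms{M})$ via the defining exact sequences
\begin{equation*}
0\to\mr{K}\to\H^0 j_!(\ms{M})\to j_{!+}(\ms{M})\to 0,\qquad
0\to j_{!+}(\ms{M})\to\H^0 j_+(\ms{M})\to\mr{C}\to 0,
\end{equation*}
with $\mr{K}$, $\mr{C}$ supported on $\st{X}\setminus\st{U}$, purity forces $\mr{K}$ to be the maximal weight-$<w$ subobject of $\H^0 j_!(\ms{M})$ and $\mr{C}$ the maximal weight-$>w$ quotient of $\H^0 j_+(\ms{M})$. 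Since $\H^0 j_!$ and $\H^0 j_+$ preserve compatibility by (i) applied to $j_!$ and $j_+$, and since at points $x\in\st{U}$ the intermediate extension reduces to $\ms{M}$ itself, the only remaining task is to match Frobenius traces of $i_x^+ j_{!+}(\ms{M})$ for $x\in\st{X}\setminus\st{U}$ with its $\ell$-adic counterpart; this is done by induction on $\dim(\st{X}\setminus\st{U})$, using that the truncation by weight is determined canonically on both sides (via Weil II, known in the arithmetic $\ms{D}$-module setting by \cite{AC}).

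The main obstacle is the argument for (ii): one must establish that weight truncation is compatible between the two cohomology theories, i.e., that the maximal weight-$<w$ subobject of a compatible system is itself a compatible system. This does not follow formally from (i) and requires the inductive use of purity together with the fact, available through the Langlands correspondence (Theorem \ref{main}) and Chebotarev density (Proposition \ref{Cebdensity}), that the weights of simple constituents are determined by Frobenius traces at points and hence match between the $p$-adic and $\ell$-adic sides. Once this matching of weights is in place, purity isolates $j_{!+}$ canonically from $\H^0 j_+$, and the compatibility of (ii) follows from that of (i).
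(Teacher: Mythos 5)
Your treatment of part (i) is correct and follows essentially the same route as the paper: reduce $f^+$, $\mb{D}$, $\otimes$ (hence $f^!$) to the realizable scheme case via Lemma \ref{charofcompsys}, and handle $f_!$ (hence $f_+$ by duality) with the trace formula \ref{traceformulasta}. The extra detail you supply on the $f_!$ case — proper base change to fibers and comparison of the two Lefschetz formulas — is a reasonable unpacking of what the paper leaves implicit.

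For part (ii) your argument diverges sharply from the paper's, and it has a genuine problem. You invoke the Langlands correspondence (Theorem \ref{main}) together with Chebotarev density to match weights of simple constituents between the $p$-adic and $\ell$-adic sides. But Theorem \ref{Gabberfujiwarateh} is itself one of the inputs to the proof of Theorem \ref{main}: it is used explicitly in the proof of the proposition in \S\ref{starproof} establishing the purity of $\mc{H}^*_{N,\mr{ess}}$, where the compatibility of $\IH^i(\Chtprop{r,\overline{p}\leq p}{N}/a^{\mb{Z}})^{\mr{ss}}$ with its $\ell$-adic counterpart is deduced precisely from Theorem \ref{Gabberfujiwarateh} applied to $j_{!+}$. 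So appealing to Theorem \ref{main} here is circular. Beyond the circularity, the weight-truncation approach you sketch also runs into the difficulty that $i_x^+$ is not weight-exact, so the pullback of the maximal weight-$<w$ subobject need not be the maximal weight-$<w$ subobject of the pullback, and it is not clear the pointwise trace matching you need can be extracted this way.

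The paper's proof of (ii) is much more economical and avoids all of this: the intermediate extension commutes with smooth pull-back by presentations (as recorded in \ref{purityinterme}, $f^*(j_{!+}\ms{M})\cong j_{X!+}(f^*\ms{M})$ for $f\colon X\rightarrow\st{X}$ smooth), so Lemma \ref{charofcompsys} reduces the compatibility of $j_{!+}$ to the case of realizable schemes, which is already known (\cite[4.3.11]{AC}). In other words, (ii) is handled by exactly the same reduction mechanism as $f^+$ and $\mb{D}$ in (i) — no weight filtration or Weil II input is required at this stage. You should replace your argument for (ii) with this reduction.
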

\begin{proof}
 By Lemma \ref{charofcompsys}, the theorem for $f^+$, $\mb{D}$, $j_{!+}$
 follows from \cite[4.3.11]{AC}. We only need to show the theorem for
 $f_!$. For this, use the trace formula \ref{traceformulasta}.
\end{proof}

\end{document}